\newcommand{\B}{\mathbb{B}}
\newcommand{\C}{\mathbb{C}}
\newcommand{\DD}{\mathbb{D}}
\newcommand{\N}{\mathbb{N}}
\renewcommand{\P}{\mathbb{P}}
\newcommand{\Q}{\mathbb{Q}}
\newcommand{\R}{\mathbb{R}}
\newcommand{\Z}{\mathbb{Z}}
\newcommand{\fb}{\mathfrak{b}}
\newcommand{\hcE}{\widehat{\cE}}
\newcommand{\hcF}{\widehat{\cF}}
\newcommand{\cET}{\cE^{1,T}}
\newcommand{\ra}{\mathrm{rad}}
\newcommand{\tv}{\widetilde{v}}
\newcommand{\tmu}{\widetilde{\mu}}
\newcommand{\tnu}{\widetilde{\nu}}
\newcommand{\tphi}{\widetilde{\phi}}
\newcommand{\tPhi}{\widetilde{\Phi}}
\newcommand{\refe}{\mathrm{ref}}
\newcommand{\cA}{\mathcal{A}}
\newcommand{\cC}{\mathcal{C}}
\newcommand{\cD}{\mathcal{D}}
\newcommand{\cE}{\mathcal{E}}
\newcommand{\cF}{\mathcal{F}}
\newcommand{\cG}{\mathcal{G}}
\newcommand{\cH}{\mathcal{H}}
\newcommand{\cJ}{\mathcal{J}}
\newcommand{\cL}{\mathcal{L}}
\newcommand{\cM}{\mathcal{M}}
\newcommand{\cO}{\mathcal{O}}
\newcommand{\cP}{\mathcal{P}}
\newcommand{\cS}{\mathcal{S}}
\newcommand{\cT}{\mathcal{T}}
\newcommand{\cX}{\mathcal{X}}
\renewcommand{\a}{\alpha}
\renewcommand{\b}{\beta}
\renewcommand{\d}{\delta}
\newcommand{\e}{\varepsilon}
\newcommand{\f}{\varphi}
\newcommand{\g}{\gamma}
\newcommand{\la}{\lambda}
\newcommand{\om}{\omega}
\newcommand{\Ga}{\Gamma}
\newcommand{\p}{\psi}
\newcommand{\hK}{\widehat{\mathrm{K}}}
\newcommand{\sgh}{semi-globally H\"older}
\newcommand{{\EL}}{Euler--Lagrange}
\newcommand{\ie}{{\rm i.e.\ }}
\newcommand{\rel}{\mathrm{rel}}
\newcommand{\winter}{\wedge\dots\wedge}
\newcommand{\CAT}{\mathrm{CAT}(0)}
\newcommand{\hto}{\hookrightarrow}
\newcommand{\hyb}{\mathrm{hyb}}
\newcommand{\Bs}{\mathrm{Bs}}
\newcommand{\dd}{\mathrm{d}}
\newcommand{\dT}{\mathrm{d}_{\mathrm{Tits}}}
\newcommand{\done}{\delta_1}
\renewcommand{\DH}{\mathrm{DH}}
\DeclareMathOperator{\en}{E}
\newcommand{\ent}{\mathrm{H}}
\DeclareMathOperator{\ii}{I}
\DeclareMathOperator{\jj}{J}
\DeclareMathOperator{\rr}{R}
\DeclareMathOperator{\env}{P}
\DeclareMathOperator{\mab}{M}
\DeclareMathOperator{\Cz}{C^0}
\newcommand{\redu}{\mathrm{red}}
\newcommand{\ext}{\mathrm{ext}}
\newcommand{\enR}{\mathrm{R}}
\DeclareMathOperator{\CH}{CH}
\DeclareMathOperator{\Aut}{Aut}
\DeclareMathOperator{\DF}{DF}
\DeclareMathOperator{\Ent}{Ent}
\DeclareMathOperator{\Exc}{Exc}
\DeclareMathOperator{\FS}{FS}
\DeclareMathOperator{\Fut}{Fut}
\DeclareMathOperator{\Hom}{Hom}
\DeclareMathOperator{\MA}{MA}
\DeclareMathOperator{\Spec}{Spec}
\DeclareMathOperator{\supp}{supp}
\DeclareMathOperator{\vol}{vol}
\DeclareMathOperator{\Pic}{Pic}
\DeclareMathOperator{\id}{id}
\DeclareMathOperator{\im}{Im}
\DeclareMathOperator{\ord}{ord}
\newcommand{\slo}{{\prime\infty}}
\DeclareMathOperator{\PSH}{PSH}
\DeclareMathOperator{\Ric}{Ric}
\DeclareMathOperator{\GL}{GL}
\DeclareMathOperator{\PL}{PL}
\DeclareMathOperator{\Hnot}{H^0}
\DeclareMathOperator{\Lie}{\mathrm{Lie}}
\newcommand{\ddc}{dd^c}
\newcommand{\tr}{\mathrm{tr}}
\newcommand{\ddcT}{dd^c_T}
\newcommand{\supstar}{\mathrm{sup}^\star }
\renewcommand{\div}{\mathrm{div}}
\newcommand{\triv}{\mathrm{triv}}
\newcommand{\NA}{\mathrm{na}}
\newcommand{\na}{non-Archimedean }
\newcommand{\ma}{Monge--Amp\`ere }
\newcommand{\D}{\Delta}
\newcommand{\simto}{\overset\sim\to}
\numberwithin{equation}{section}       % Number formulas within sections
\newtheorem{prop} {Proposition} [section]
\newtheorem{thm}[prop] {Theorem} 
\newtheorem{defi}[prop] {Definition}
\newtheorem{lem}[prop] {Lemma}
\newtheorem{cor}[prop]{Corollary}
\newtheorem{prop-def}[prop]{Proposition-Definition}
\newtheorem*{thmA}{Theorem A} 
\newtheorem*{thmAp}{Theorem A'} 
\newtheorem*{thmB}{Theorem B} 
\newtheorem*{thmC}{Theorem C}
\newtheorem*{defI}{Definition} 
\newtheorem{exam}[prop]{Example}
\newtheorem{rmk}[prop]{Remark}
\theoremstyle{remark}
\title[The Yau--Tian--Donaldson conjecture]{On the Yau--Tian--Donaldson conjecture for weighted cscK metrics}
\date{\today}
\author{S{\'e}bastien Boucksom \and Mattias Jonsson}
\address{Sorbonne Universit\'e and Universit\'e Paris Cit\'e\\
CNRS\\
IMJ-PRG\\
F-75005 Paris\\
France}
\email{sebastien.boucksom@imj-prg.fr}
\address{Dept of Mathematics\\
  University of Michigan\\
  Ann Arbor, MI 48109-1043\\
  USA}
\email{mattiasj@umich.edu}
\begin{document}

\begin{abstract}
We establish a version, formulated in terms of non-Archimedean pluripotential theory, of the Yau--Tian--Donaldson conjecture for constant scalar curvature and, more generally, weighted extremal K\"ahler metrics with prescribed compact symmetry group on an arbitrary polarized projective manifold. This is accomplished by extending important previous work of Chi Li to the weighted case, and by establishing general slope formulas for the relevant weighted entropy and energy functionals. Among other things, our approach relies on key \emph{a priori} estimates for cscK metrics due to Chen--Cheng (recently extended to the weighted case by Di Nezza--Jubert--Lahdili and Han--Liu), and on a crucial estimate for psh envelopes due to Berman--Demailly--Di Nezza--Trapani. 
\end{abstract}

\maketitle

\setcounter{tocdepth}{1}
\tableofcontents
% 
%
%%%%%%%%%%%%%%%%%%%%%%%%%%%%%%%%%%%%%%%%%%%%%%%%%%%%%%%%%%%%%%%%%%%
%
%
% 
%
%%%%%%%%%%%%%%%%%%%%%%%%%%%%%%%%%%%%%%%%%%%%%%%%%%%%%%%%%%%%%%%%%%%
%
%
%
%

%%%%%%%%%%%%%%%%%%%%%%%%%%%%%%%%%%%%%%%%%%%%%%%%%%%%%%%%%%%%%%%%%%%
%
%\section*{TODO} 
%
%%%%%%%%%%%%%%%%%%%%%%%%%%%%%%%%%%%%%%%%%%%%%%%%%%%%%%%%%%%%%%%%%%%
%
%\begin{itemize}

%\item 

%\end{itemize}

\section*{Introduction}
Let $X$ be a smooth complex projective variety, and $L$ an ample line bundle on $X$. Broadly understood, the Yau--Tian--Donaldson (YTD) conjecture~\cite{Yau93,Tia97,Don02} predicts that the existence of a constant scalar curvature K\"ahler (cscK) metric (and, more generally, of a `canonical' metric) $\om\in c_1(L)$ is equivalent to a stability condition on $(X,L)$ of purely algebro-geometric nature. In the special case where $X$ is Fano and $L=-K_X$ (for which cscK metrics are K\"ahler--Einstein), this is now a famous result, see~\cite{CDS15}, and~\cite{Tia15}. 

Extending the variational/non-Archimedean approach initiated in~\cite{YTD}, we provide in this paper a solution to a general version of the YTD conjecture, formulated in terms of non-Archimedean pluripotential theory, and dealing with weighted extremal K\"ahler metrics with prescribed compact symmetry group. In the simpler and better known case of cscK metrics, our result can be stated as follows\footnote{As this manuscript was nearing completion, we were informed by T.~Darvas and K.~Zhang~\cite{DZ25} that they were able to prove a version of the equivalence between~(i) and~(iii) in Theorem~A when $X$ has discrete automorphism group. The relation between the results is discussed later in the introduction.}:

\begin{thmA} The following conditions are equivalent: 
\begin{itemize}
\item[(i)] there exists a cscK metric $\om\in c_1(L)$; 
\item[(ii)] $(X,L)$ is $\hK$-polystable;
\item[(iii)] $(X,L)$ is uniformly $\hK$-polystable.
\end{itemize}
\end{thmA}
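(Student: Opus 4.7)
The plan is to establish the three-way equivalence via the cycle (iii) $\Rightarrow$ (ii) $\Rightarrow$ (i) $\Rightarrow$ (iii). The implication (iii) $\Rightarrow$ (ii) is immediate from the definitions, since uniform $\hK$-polystability imposes a strictly stronger positivity condition on the non-Archimedean Mabuchi functional $\mab^{\NA}$ than bare $\hK$-polystability.

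The technical engine for the two substantive implications is a slope formula identifying
\[
\lim_{t \to \infty} \frac{\mab(\f_t)}{t} = \mab^{\NA}(\Phi)
\]
for every maximal geodesic ray $(\f_t)_{t \ge 0}$ in the space of Kähler potentials with non-Archimedean radial limit $\Phi$. Via an entropy--energy decomposition $\mab = \ent + \elle$ this splits into separate slope formulas. The energy slope is controlled by comparing $(\f_t)$ with Fubini--Study approximants of $\Phi$, with the Berman--Demailly--Di Nezza--Trapani envelope estimate as the decisive ingredient; the entropy slope requires a more delicate lower semicontinuity argument, carried out in the spirit of Chi Li's work but adapted so as to be stable under the weighted generalization promised in the introduction.

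For (i) $\Rightarrow$ (iii), given a cscK metric $\om\in c_1(L)$, the Chen--Cheng a priori estimates, together with a Bando--Mabuchi type uniqueness statement modulo the reduced identity component $T\subset\Aut_0(X,L)$, yield that $\mab$ is coercive modulo $T$ on the space of finite-energy potentials. The slope formula then transports this coercivity into the non-Archimedean bound $\mab^{\NA}(\Phi) \ge \delta\,\jj^{\NA}_T(\Phi)$, which is exactly the definition of uniform $\hK$-polystability.

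The deepest implication is (ii) $\Rightarrow$ (i), to be proved via the variational method. The strategy is to construct a minimizer of $\mab$ by regularization: replace $\mab$ by a perturbed functional $\mab_\e$ whose minimizer exists and is smooth by the Chen--Cheng estimates, and then pass to $\e \to 0$. The key point, and the main obstacle, is that bare $\hK$-polystability provides only a non-quantitative strict inequality at infinity, so ruling out the degeneration of minimizing sequences modulo $T$ requires a compactness argument for non-Archimedean potentials with bounded $\mab^{\NA}$ combined with a rigidity/uniqueness statement identifying the limiting rays with one-parameter subgroups of $T$. This is exactly where the variational reformulation pays off: the slope formula tightens $\hK$-polystability into the form needed to preclude escape to infinity along non-automorphism directions, and the extension of the Chen--Cheng regularity theory to the weighted setting by Di Nezza--Jubert--Lahdili and Han--Liu supplies the final regularity of the resulting minimizer as a genuine cscK metric.
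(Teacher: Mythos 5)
Your high-level cycle $(iii)\Rightarrow(ii)\Rightarrow(i)\Rightarrow(iii)$ matches the paper's, and the idea of using a slope formula $\mab_\ra=\mab_\NA$ to convert between geodesic stability and $\hK$-polystability is the right one. However, there are several places where the proposal misidentifies what is actually hard and how it is overcome.

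First, you have the roles of the main ingredients in Theorem~C reversed. The decomposition is the Chen--Tian formula $\mab=\ent+\rr+\bar S\,\en$, and the energy slope $\en_\ra=\en_\NA$ and Ricci energy slope $\rr_\ra=\rr_\NA$ were already established in~\cite{YTD,LiGeod}; they are comparatively soft. The Berman--Demailly--Di Nezza--Trapani envelope estimate enters the \emph{entropy} slope, not the energy slope. Moreover, Chi Li's lower semicontinuity argument only gives the inequality $\ent_\NA\le\ent_\ra$; the genuinely new step in the paper is the reverse inequality $\ent_\ra\le\ent_\NA$, proved (after reducing to $\f=\env(\f_\cL)$ for a model $(\cX,\cL)$) by realizing the maximal geodesic ray via a global psh envelope $\env(\Psi_\cL)$ on $\cL\to\P^1$, applying the BDDNT curvature bound away from $\B_+(\cL)$, and exploiting the identity $\B_+(\cL)=\B_+(\cL/\P^1)$ (Lemma~\ref{lem:B+tc}) together with the support statement for $\MA_\NA(\env(\f_\cL))$ from Lemma~\ref{lem:suppenv}. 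Saying the entropy slope ``requires a more delicate lower semicontinuity argument'' understates the problem: lsc alone only gives one direction and would leave Theorem~C, and hence the whole theorem, unproved.

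Second, your argument for $(ii)\Rightarrow(i)$ via a regularized functional $\mab_\e$ is not the route the paper takes, and as stated it has a gap. The paper proves $(ii)\Rightarrow(i)$ via the abstract dichotomy in Corollary~\ref{cor:crit}/Theorem~\ref{thm:mincrit}: either $\mab$ is coercive modulo $G=\Aut^0(X,L)$ (hence admits a minimizer, by strong lsc), or there exists a $G$-minimal unit-speed geodesic ray along which $\mab$ is nonincreasing; geodesic polystability then forces the direction of that ray to lie in $\cD_G$, which contradicts $G$-minimality. The crucial compactness is the \emph{Archimedean} strong lsc of $\mab$ on $\cE^1$ (from~\cite{BBEGZ}), not a compactness statement about non-Archimedean potentials with bounded $\mab_\NA$; the latter is neither proved nor used. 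A perturbation $\mab_\e$ in the style of a continuity method is not needed here (it is closer in spirit to the Darvas--Zhang approach), and simply passing $\e\to0$ without a uniform coercivity estimate does not by itself prevent divergence of the minimizers.

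Finally, some smaller points. The relevant group is $G=\Aut^0(X,L)$, not a torus $T$; the Matsushima--Lichnerowicz theorem makes $G$ reductive once a cscK metric exists, and this is essential for the closedness of $\cP_\R\simeq\cD_G$ (Corollaries~\ref{cor:prodclosed} and~\ref{cor:dirclosed}), without which $(iii)\Rightarrow(ii)$ is not actually ``immediate.'' For $(i)\Rightarrow(iii)$ you also need the new slope formula $\lim_t t^{-1}\dd_G(\phi_t)=\dd_{1,\ra}(\phi_\infty,\cD_G)$ (Theorem~\ref{thm:slopedG}), which is a genuine statement about Busemann convex spaces with reductive group actions and is not a routine consequence of coercivity; your proposal leaves this step implicit.
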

More generally, we establish an equivariant version with respect to a given compact group of automorphisms (see Theorem~B). 

We need to make sense of conditions~(ii) and~(iii), but the general idea is to strengthen the classical definition of K-(poly)stability using objects more general than test configurations---an idea already explored in several works, see~\cite{Don12,Szefil,Mab16,LiGeod,nakstab2,DR24}. Specifically, we replace the space $\cT$ of ample test configurations with its completion $\widehat\cT$ with respect to a natural metric (which is what the chosen notation is meant to reflect). However, let us point out already now that~(iii) admits a more concrete formulation in terms of divisorial valuations, based on~\cite{nakstab2}, see~\S\ref{sec:divstab}.  

Let us also mention that (modulo the terminology) the implication (iii)$\Rightarrow$(i) was already established in~\cite{LiGeod} in the case $\Aut^0(X,L)=\C^\times$ is trivial\footnote{Li actually proves, more generally, that (i) follows from `$\mathbb{G}$-uniform K-stability for models', a condition that is in general strictly stronger than the equivariant version of (iii), see Lemma~\ref{lem:Licomp}.}, \ie reduced to the fiberwise scaling action on $L$ (see also~\cite{MP} for a general K\"ahler version).

The objects we use here arise from \emph{non-Archimedean geometry}, specifically the Berkovich analytification $(X_\NA,L_\NA)$ of $(X,L)$ with respect to the trivial absolute value on the ground field $\C$. The Berkovich space $X_\NA$, whose elements can be viewed as semivaluations on $X$, provides a natural compactification of the set of divisorial valuations.
This type of non-Archimedean geometry allows us to study asymptotic algebro-geometric constructions in a systematic way. The idea that non-Archimedean geometry can be used in K-stability goes back at least to S.-W.~Zhang, see~\cite[Remark~6]{PRS08}, and is systematically developed in~\cite{BHJ1,BHJ2,nakstab2,nakstab1}.

Recall that $(X,L)$ is called \emph{K-polystable} if, for each ample test configuration\footnote{All test configurations in this paper are assumed to be normal and defined over $\P^1$.} $(\cX,\cL)$ for $(X,L)$, the \emph{Donaldson--Futaki invariant} $\DF(\cX,\cL)\in\Q$ is nonnegative, and vanishes only on \emph{product test configurations}, arising from cocharacters $ \C^\times\to\Aut(X,L)$, see~\cite{Don02}. 

The existence of a cscK metric is known to imply K-polystability~\cite{Don05,Sto09,BDL2}, but the expectation, based on examples in~\cite{ACGTF}, seems to be that the converse may not be true; see also~\cite{Hat21}.

Following~\cite{BHJ1} we identify the space of ample test configurations with a space $\cH_\NA$ of continuous metrics on $L_\NA$. Thanks to the canonical `trivial' metric on $L_\NA$, the elements of $\cH_\NA$ can alternatively be viewed more concretely as continuous functions (potentials) on $X_\NA$, and hence on the set of divisorial valuations. As the notation suggests, we view $\cH_\NA$ as a \na analogue of the space $\cH$ of smooth positive Hermitian metrics on $L$ (viewed as a holomorphic line bundle.) 

In analogy to the complex analytic situation~\cite{Dar15}, the space $\cH_\NA$ can be equipped with a natural \emph{Darvas metric} $\dd_{1,\NA}$, and its metric completion can be identified with the space $\cE^1_\NA$ of \emph{potentials of finite energy}. For example, any reasonable \na graded norm $\chi\colon R(X,L)\to\R\cup\{+\infty\}$ (or, equivalently, filtration) of the section ring $R(X,L)$ gives rise to an element of $\cE^1_\NA$, see~\cite{nakstab1}. In particular, $\cE^1_\NA$ contains the set of \emph{real test configurations} (or $\R$-test configurations), associated to graded norms of finite type, and which also admit geometric realizations, see~\cite{CSW,DS,HLi24,InoEnt2,OdaNovikov}. Specializing further, $\cE^1_\NA$ contains the set $\cP_\R$ of \emph{real product test configurations}, corresponding to graded norms $\chi_\xi$ associated to an algebraic torus $T_\C\subset\Aut(X,L)$ and a vector $\xi\in\Lie T\simeq\in N_\R(T)$ on the Lie algebra of the maximal compact torus $T\subset T_\C$ (see~\S\ref{sec:lattices}). Elements in $\cP_\R\cap\cH_\NA=\cP_\Q$ correspond to usual product test configurations, up to the scaling action of $\Q_{>0}$, \ie up to (normalized) base change. 

We replace the Donaldson--Futaki invariant by the closely related \emph{\na Mabuchi functional} $\mab_\NA\colon\cH_\NA\to\Q$, introduced in~\cite{BHJ1}, which is homogeneous for the action of $\Q_{>0}$, and coincides with $\DF$ on test configurations with reduced central fiber. Via \na pluripotential theory, as developed in \cite{trivval,nakstab1,nakstab2}, this functional admits a natural lower semicontinuous extension $\mab_\NA\colon\cE^1_\NA\to\R\cup\{+\infty\}$. 

\begin{defI} We say that $(X,L)$ is: 
\begin{itemize}
\item[(a)] \emph{$\hK$-polystable} if $\mab_\NA(\f)\ge0$ for all $\f\in\cE^1_\NA$, with equality only if $\f\in\cP_\R$; 
\item[(b)]  \emph{uniformly $\hK$-polystable} if $\Aut^0(X,L)$ is reductive and there exists $\sigma>0$ such that $\mab_\NA(\f)\ge\sigma\dd_{1,\NA}(\f,\cP_\R)$ for all $\f\in\cE^1_\NA$. 
\end{itemize}
\end{defI}
Here $\dd_{1,\NA}(\f,\cP_\R)=\inf_{\p\in\cP_\R}\dd_{1,\NA}(\f,\p)$ denotes as usual the distance of $\f\in\cE^1_\NA$ to the subset $\cP_\R\subset\cE^1_\NA$; we will prove that the latter is closed as soon as $\Aut^0(X,L)$ is reductive, so that (b) implies (a), as suggested by the terminology. Note that this reductivity condition is natural in the setting of Theorem~A, as it holds as soon as a cscK metric exists, by the classical Matsushima--Lichnerowicz theorem. 

The notion of K-polystability, as introduced in~\cite{Tia97,Don02}, is equivalent to restricting (a) to $\f\in\cH_\NA\subset\cE^1_\NA$. We could similarly replace $\cE^1_\NA$ by $\cH_\NA$ in (b) and obtain a notion of \emph{uniform K-polystability}. When $\Aut^0(X,L)$ is further trivial, this amounts to \emph{uniform K-stability} as studied in~\cite{SzeThesis,Der,BHJ1}, which may in fact imply (b) (see~\cite[Conjecture~5.15]{nakstab2}). 

As already mentioned, the idea of using more general objects than test configurations is not new. For example, Sz\'ekelyhidi introduced in~\cite{Szefil} a condition that he later called $\hK$-stability, see~\cite[p.123]{Szebook}. His definition is similar to (a) but uses the strict subset of $\cE^1_\NA$ corresponding to norms/filtrations of the section ring (see~\cite{nakstab1} for a precise description), and involves a slightly different functional in the left-hand side. The main result of~\cite{Szefil} states that his condition holds when $(X,L)$ admits a cscK metric and $\Aut^0(X,L)$ is trivial. Our notion of $\hK$-(poly)stability is stronger than his; we do not know whether the two are equivalent in general.  

Beyond the set $\cH_\NA$ of ample test configurations, there are several natural subsets of $\cE^1_\NA$. We do not know whether any of them can \emph{a priori} replace $\cE^1_\NA$ in (a), but the situation is more favorable for (b). For example, we can equivalently use the subset of $\cE^1_\NA$ arising from test configurations for $(X,L)$ that are \emph{not necessarily ample}. These play a key role in the work of Chi Li~\cite{LiGeod,LiFuj} (who called them \emph{models}), and they are crucial here, too; see below. Alternatively, as in~\cite{nakstab2}, we can consider \emph{divisorial measures}, that is, (rational) convex combinations of divisorial valuations, leading to a valuative characterization of uniform $\hK$-polystability, see~\S\ref{sec:divstab}. From this valuative point of view, the difference between uniform $\hK$-polystability and uniform K-polystability is actually rather slim, as the $\beta$-invariant does not appear to be more easily computed for divisorial measures coming from an ample test configuration.

In the Fano case $L=-K_X$, the notions of K-polystability and $\hK$-polystability, as well as their uniform counterparts, are all equivalent. This follows from combining Theorem~A with~\cite{CDS15,LiKE} since a cscK metric is a K\"ahler--Einstein metric in this case, but it can also be seen algebraically, using the many reformulations of K-polystability available in the Fano setting. % see~\S\ref{sec:Fano}.

% 
%%%%%%%%%%%%%%%%%%%%%%%%%%%%%%%%%%%%%%%%%%%%%%%%%%%%%%%%%%%%
%
\medskip\noindent\textbf{Weighted extremal metrics}.
We next generalize Theorem~A to an algebro-geometric characterization of the existence of extremal K\"ahler metrics in the sense of Calabi, and, more generally, of \emph{weighted extremal metrics} in the sense of Lahdili~\cite{Lah19}, with a prescribed compact group of symmetries. 

More specifically, let $T\subset S\subset\Aut(X,L)$ be, respectively, a compact torus and a compact group commuting with $T$. Let also 
$$
P=P_L\subset M_\R(T)\simeq(\Lie T)^\vee
$$
be the moment polytope of the $T$-action on $(X,L)$, and fix positive `weights' $v,w\in C^\infty(P,\R_{>0})$. 

\begin{thmB} Assume that $v$ is log-concave. Then the following are equivalent: 
\begin{itemize}
\item[(i)] there exists an $S$-invariant weighted extremal metric $\om\in c_1(L)$; 
\item[(ii)] $(X,L)$ is $S$-equivariantly relatively weighted $\hK$-polystable; 
\item[(iii)] $(X,L)$ is uniformly $S$-equivariantly relatively weighted $\hK$-polystable.
\end{itemize}
\end{thmB}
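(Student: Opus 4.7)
The plan is to follow the structure of the proof of Theorem~A, but working throughout with $S$-invariant objects and replacing $\mab_\NA$ by a weighted relative non-Archimedean Mabuchi functional $\mab_\NA^{v,w,\rel}$, obtained from Lahdili's weighted K-energy by the usual non-Archimedean construction and then subtracting the contribution of the extremal $T$-cocharacter $\xi_{\rm ext}$. The log-concavity of $v$ enters precisely to guarantee convexity of the weighted entropy part of this functional along non-Archimedean geodesics in $\cE^1_\NA$, which in turn yields lower semicontinuity and makes $\mab_\NA^{v,w,\rel}$ amenable to a variational treatment. The first step is therefore to set up the weighted non-Archimedean framework: define $\mab_\NA^{v,w,\rel}\colon\cE^{1,S}_\NA\to\R\cup\{+\infty\}$, prove its lower semicontinuity and its invariance under translation by $\xi_{\rm ext}$, and establish a slope formula relating it along maximal subgeodesic rays to Lahdili's Archimedean weighted K-energy $\mab^{v,w}$. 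This extends the unweighted slope formulas underlying Theorem~A by adapting the calculus of weighted Deligne pairings of Di Nezza--Jubert--Lahdili to the non-Archimedean side.

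For (i)$\Rightarrow$(iii), assume the existence of an $S$-invariant weighted extremal metric. The weighted Chen--Cheng type a priori estimates of Di Nezza--Jubert--Lahdili and Han--Liu then imply coercivity of the relative weighted K-energy modulo the identity component $G$ of the centralizer of $S$ in $\Autr(X,L)$. The slope formula from the previous step transfers this coercivity into an inequality of the form $\mab_\NA^{v,w,\rel}(\f)\ge\sigma\,\dd_{1,\NA}(\f,\cP_\R^{S,\rel})$ for some $\sigma>0$, where $\cP_\R^{S,\rel}$ denotes the set of $S$-invariant real product configurations modulo $\xi_{\rm ext}$; this is precisely uniform $S$-equivariant relative weighted $\hK$-polystability. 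The implication (iii)$\Rightarrow$(ii) then follows essentially by definition, once one proves the $\dd_{1,\NA}$-closedness of $\cP_\R^{S,\rel}$ using the reductivity of $G$ provided by Lahdili's weighted Matsushima--Lichnerowicz theorem, combined with the already-established closedness of $\cP_\R$ from the ambient Theorem~A.

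The main difficulty lies in (ii)$\Rightarrow$(i), which I would establish by extending Chi Li's non-Archimedean variational method to the weighted equivariant setting. Starting from $S$-equivariant relative weighted $\hK$-polystability, one takes a minimizing sequence for the relative weighted K-energy, controls its asymptotic weighted $\dd_1^v$-geometry through filtrations and, crucially, through models (\ie test configurations that are not necessarily ample), and finally uses the weighted a priori estimates to upgrade the limiting minimizer to a smooth weighted extremal metric. The technical heart of this direction will be proving the weighted analogue of Li's slope inequality relating the Archimedean K-energy along a geodesic ray emanating from a model to the non-Archimedean energy of that model; this requires combining the Berman--Demailly--Di Nezza--Trapani psh envelope estimate with the log-concavity of $v$, while simultaneously enforcing $S$-equivariance through an averaging argument that preserves the envelope constructions. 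Handling this interplay between weights, models, and equivariance is where I expect the main obstacle to lie.
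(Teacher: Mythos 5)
The broad architecture -- translate into a radial/geodesic stability statement via slope formulas, apply a Darvas--Rubinstein-type existence dichotomy, and feed in the weighted Chen--Cheng estimates of Di Nezza--Jubert--Lahdili and Han--Liu for regularity of minimizers -- is correct and matches the paper's approach (Theorem~\ref{thm:mincrit} bundles all three equivalences together rather than chaining them, but that is cosmetic). However, you have misplaced the two distinctive new ingredients in a way that would leave a real gap.

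First, the Berman--Demailly--Di Nezza--Trapani envelope estimate is \emph{not} needed for (ii)$\Rightarrow$(i). For that direction Chi Li's inequality $\ent_{v,\NA}\le\ent_{v,\ra}$ (Lemma~\ref{lem:entcomp}, proved by the adapted-measure argument of BHJ2 without any envelope regularity) and his dichotomy $\ent_{v,\ra}=+\infty$ off $\cE^1_\NA$ already suffice: any $G$-minimal geodesic ray along which $\mab^\rel$ is non-increasing has non-Archimedean direction $\f\in\cE^{1,S}_\NA$ with $\mab^\rel_\NA(\f)\le\mab^\rel_\ra(\f)\le 0$, which polystability forces into $\cP_\R^S$. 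Where BDN--T is actually required is in proving the \emph{reverse} inequality $\ent_{v,\ra}\le\ent_{v,\NA}$ (Theorem~\ref{thm:adapted}), and this is what makes (i)$\Rightarrow$(iii) work: coercivity of $\mab^\rel$ modulo $G$ gives the radial bound $\mab^\rel_\ra(\f)\ge\sigma\dd_{1,\ra}(\f,\cD_G)$, and converting this into the non-Archimedean bound $\mab^\rel_\NA(\f)\ge\sigma\dd_{1,\NA}(\f,\cP_\R^S)$ needs $\mab^\rel_\NA\ge\mab^\rel_\ra$, i.e.\ the new inequality. As written, your plan would not prove (i)$\Rightarrow$(iii), since Chi Li's inequality goes the wrong way for that step.

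Second, log-concavity of $v$ does not go into convexity of the weighted entropy along geodesics; that convexity is a theorem of Apostolov--Jubert--Lahdili, valid for general positive weights with no concavity hypothesis. Log-concavity is used (a) to show that $\mab_{v,w}$ is automatically invariant under unipotent subgroups of $\Aut^S(X,L)$ (Proposition~\ref{prop:mabinv}, via the curvature bound of Lemma~\ref{lem:curvbound}), which is what lets Corollary~\ref{cor:mabinv} correctly identify the non-Archimedean polystability conditions of Definition~\ref{defi:stab} with geodesic stability modulo $G$; and (b) in the regularity of minimizers (Theorem~\ref{thm:regmin}). Neither of these is a convexity statement, and neither interacts with the envelope estimate, so the "combine BDN--T with log-concavity" step in your plan does not correspond to anything that needs to be proved.
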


In the usual extremal case (\ie $v=w=1$), a slightly weaker version of (iii)$\Rightarrow$(i) was very recently established by Hashimoto~\cite{Has25}, extending the approach of~\cite{LiGeod}, and a version of~(i)$\Rightarrow$(ii) was proved earlier by Stoppa and Sz\'ekelyhidi~\cite{SSz11}. We refer to~\S\ref{sec:wext2} for the definition of weighted extremal metrics, which correspond to critical points of the relative weighted Mabuchi energy $\mab^\rel$. Beyond extremal metrics in the sense of Calabi, Theorem~B covers K\"ahler--Ricci solitons~\cite{TZ02}, certain types of generalized solitons in the sense of~\cite{HLi23}, $\mu$-cscK metrics in the sense of Inoue~\cite{InoFound,InoEnt1}; see~\cite{Lah19,AJL} for details. Theorem~A is a special case of Theorem~B, obtained by taking $S=T=\{e\}$.

\medskip

The relative $\hK$-polystability notions in Theorem~B extend the ones in Theorem~A, and are obtained from the latter by replacing:
\begin{itemize}
    \item the \na Mabuchi energy $\mab_\NA$ with its relative weighted version $\mab^\rel_\NA$, constructed in this paper; 
    \item the set $\cE^1_\NA$ with the subset $\cE^{1,S}_\NA$ of $S$-invariant \na potentials; 
    \item the set $\cP_\R$ with the subset $\cP_\R^S$ of $S$-equivariant real product test configurations;
    \item the condition of reductivity of $\Aut^0(X,L)$ with that of the identity component $\Aut^S(X,L)$ of the centralizer\footnote{Note that~\cite{His,LiGeod} instead proposed to use the \emph{center} of $S$, which turns out to be too small to get a converse implication, see Lemma~\ref{lem:Licomp}.} of $S$ (which holds when an $S$-invariant weighted extremal metric exists~\cite{Cal,AJL}.) 
\end{itemize}

The elements of $\cP_\R^S$ are induced by tori in $\Aut^S(X,L)$; we show here again that $\cP_\R^S$ is closed in $\cE^{1,S}_\NA$ when $\Aut^S(X,L)$ is reductive. This ensures that $S$-equivariant relative weighted $\hK$-polystability follows, as it should, from its uniform version. 

As above, replacing $\cE^{1,S}_\NA$ with the subspace $\cH^S_\NA$ of $S$-equivariant ample test configurations yields the notion of \emph{$S$-equivariant relative weighted K-polystability} and its uniform version. When $S=T$ is a maximal compact torus of $\Aut^0(X,L)$ and $v=w=1$, they respectively correspond to \emph{relative K-polystability} in the sense of~\cite{SzeExtKstab} and \emph{reduced uniform K-stability} in the sense of~\cite{His,LiKE,XZ20} (assuming $\Aut^T(X,L)$ to be reductive, and hence equal to $T_\C$). 

\medskip

In the nontrivially weighted case, Lahdili~\cite{Lah19} gave an analytic definition of a weighted Donaldson--Futaki invariant on ample $T$-equivariant test configurations with smooth total space, and proved that the existence of a weighted cscK metric implies that the Donaldson-Futaki invariant is nonnegative on such test configurations when the central fiber is furthermore reduced. A uniform version of this result, for weighted extremal metrics, was proved in~\cite{AJL}. In this paper we give a purely algebraic definition of the \na relative weighted Mabuchi functional $\mab^\rel_\NA$ on $\cH_\NA^T$, and then use non-Archimedean pluripotential theory to extend this functional to all of $\cE^{1,T}_\NA$. This should be compared with the work of Han and Li~\cite{HLi23}, who defined the weighted Ding functional on $\cE^{1,T}_\NA$ (see also~\cite{DJ}). For technical reasons, the non-Archimedean weighted Mabuchi functional turns out to be more delicate.

% 
%%%%%%%%%%%%%%%%%%%%%%%%%%%%%%%%%%%%%%%%%%%%%%%%%%%%%%%%%%%%
%
\medskip\noindent\textbf{Outline of proofs}. The proof of Theorem~A builds upon deep work by many different authors. It follows the variational method developed in~\cite{YTD} for the Fano case, but is more subtle since we do not have access to the Ding functional. Let us give an outline, emphasizing where our main new contributions lie. 

As discovered by Mabuchi~\cite{Mab86}, cscK metrics correspond to critical points of the Mabuchi functional $\mab\colon\cH\to\R$, which further is (infinitesimally) convex with respect to the `tautological' Riemannian $L^2$-metric of $\cH$. Later, X.~Chen proved in~\cite{Che00b} that the associated length (pseudo)metric defines a metric $\dd_2$ on $\cH$, and this was generalized by Darvas~\cite{Dar15} to all `tautological' Finsler $L^p$-norms of $\cH$, yielding for each $p\in [1,\infty]$ a metric $\dd_p$ on $\cH$. By~\cite{Dar17}, the $\dd_p$-completion of $\cH$ can further be identified with the space $\cE^p$ of plurisubharmonic (psh) metrics of finite $L^p$-energy~\cite{GZ}. We will work with the space $\cE^1$, which corresponds, when $\dim X=1$, to the classical space of (quasi)subharmonic functions with gradient in $L^2$. 

By~\cite{BDL1}, the complete metric space $(\cE^1,\dd_1)$ is uniquely geodesic\footnote{All geodesics are assumed to be psh.}, and \emph{Busemann convex}, \ie the metric $\dd_1$ is convex along geodesics. The Mabuchi energy admits a maximal lower semicontinuous (lsc) extension $\mab\colon\cE^1\to\R\cup\{+\infty\}$, which satisfies the following deep and crucial properties:
\begin{itemize}
\item $\mab$ is geodesically convex~\cite{BerBer,BDL1};
\item $\mab$ is \emph{strongly lsc}, in the sense that the intersection of each sublevel set with a closed ball is compact~\cite{BBEGZ};
\item any minimizer of $\mab$ in $\cE^1$ lies in $\cH$, and hence defines a cscK metric~\cite{BDL2,CC2}; 
\item minimizers of $\mab$ are unique modulo the action of $G:=\Aut^0(X,L)$~\cite{BerBer}. 
\end{itemize}
Using these properties, a general argument originating in~\cite{DR,YTDold} (see Theorem~\ref{thm:crit} below) then leads to the following version of Tian's properness conjecture: a cscK metric exists iff $\mab$ is \emph{coercive modulo $G$}, \ie $\mab\ge\sigma\dd_{1,G}-C$ for some $\sigma,C>0$, where 
$$
\dd_{1,G}(\phi):=\inf_{g\in G}\dd_1(\phi,g\cdot\phi_\refe)
$$
denotes the distance of $\phi\in\cE^1$ to the (closed) $G$-orbit of some reference metric $\phi_\refe$. This equivalently means that $\mab$ descends to a function on $\cE^1/G$ growing at least linearly with respect to the quotient metric, and it further suffices to check this condition on each individual geodesic ray. 

Exploiting the Busemann convexity of $\cE^1$, this last point admits in turn a nice reformulation in term of the \emph{asymptotic cone} $\cE^1_\ra$, that is, the space of equivalence classes $\f$, called \emph{directions}, of geodesic rays $\{\phi_t\}$ in $\cE^1$ modulo parallelism; this is a complete metric cone for the radial metric 
$$
\dd_{1,\ra}(\f,\p):=\lim_{t\to\infty} t^{-1}\dd_1(\phi_t,\psi_t).
$$
By geodesic convexity, $\mab$ admits a \emph{radial transform} $\mab_\ra\colon\cE^1_\ra\to\R\cup\{+\infty\}$, similarly computing the slope at infinity\footnote{This corresponds to \textyen~in~\cite{Che09,CC2}.}
$$
\mab_\ra(\f):=\lim_{t\to\infty} t^{-1}\mab(\phi_t), 
$$
and we may now state that a cscK metric exists iff $(X,L)$ is \emph{geodesically polystable}, in the sense that $\mab_\ra(\f)\ge 0$ for each $\f\in\cE^1_\ra$, with equality iff $\f$ lies in the set $\cD_G\subset\cE^1_\ra$ of \emph{toric $G$-directions}, induced by real one-parameter subgroups of tori of $G$~\cite{Don99,CC2,BDL2}. 

\medskip

In a first step towards the proof of Theorem~A, we show that this is also equivalent to \emph{uniform} geodesic polystability: 

\begin{thmAp}
The following three conditions are equivalent:
\begin{itemize}
 \item[(i)]
    there exists a cscK metric in $c_1(L)$;
 \item[(ii')]
    $(X,L)$ is geodesically polystable; 
   \item[(iii')]
     $(X,L)$ is \emph{uniformly geodesically polystable}, \ie $G=\Aut^0(X,L)$ is reductive, and there exists $\sigma>0$ such that $\mab_\ra(\f)\ge\sigma\dd_{1,\ra}(\f,\cD_G)$ for all $\f\in\cE^1_\ra$.\end{itemize}
\end{thmAp}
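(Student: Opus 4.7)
The equivalence (i) $\iff$ (ii$'$) is essentially already recorded in the outline preceding Theorem~A$'$: the four deep properties of $\mab$ on $\cE^1$ (geodesic convexity, strong lower semicontinuity, cscK-regularity of minimizers, and uniqueness modulo $G$) combine, via the abstract existence criterion of~\cite{DR,YTDold}, to show that a cscK metric exists iff $\mab$ is coercive modulo $G$, iff $\mab_\ra \geq 0$ with equality characterizing $\cD_G$---that is, iff $(X,L)$ is geodesically polystable. The substantive new content of Theorem~A$'$ is therefore the upgrade to the \emph{uniform} condition (iii$'$). My plan is to prove (iii$'$) $\Rightarrow$ (ii$'$) and (i) $\Rightarrow$ (iii$'$).

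\textbf{(iii$'$) $\Rightarrow$ (ii$'$).} Uniformity immediately yields $\mab_\ra \geq 0$, and if $\mab_\ra(\f) = 0$ then $\dd_{1,\ra}(\f, \cD_G) = 0$, forcing $\f \in \overline{\cD_G}$. So the only nontrivial point is the closedness of $\cD_G$ in $\cE^1_\ra$ when $G = K_\C$ is reductive. I would exploit the Cartan decomposition and the fact that $K$ acts by $\dd_1$-isometries on $\cE^1$ (hence by $\dd_{1,\ra}$-isometries on $\cE^1_\ra$) to reduce to a single maximal torus $T \subset K$, and then check that the map $N_\R(T) \simeq \Lie(T)_\R \to \cE^1_\ra$ sending $V$ to the direction of $\{\exp(tV)\cdot\phi_\refe\}$ is a proper embedding, by comparison with the moment polytope of the $T$-action.

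\textbf{(i) $\Rightarrow$ (iii$'$).} Existence of a cscK metric forces $G$ reductive (Matsushima--Lichnerowicz) and, as recalled in the outline, yields the coercivity $\mab(\phi) \geq \sigma\,\dd_{1,G}(\phi) - C$ on $\cE^1$. Along a geodesic ray $\{\phi_t\}$ representing $\f$, dividing by $t$ and letting $t \to \infty$ gives $\mab_\ra(\f) \geq \sigma \liminf_{t\to\infty} t^{-1}\dd_{1,G}(\phi_t)$, so it remains to prove
$$
\liminf_{t\to\infty} t^{-1}\dd_{1,G}(\phi_t) \;\geq\; \dd_{1,\ra}(\f,\cD_G).
$$
The reverse inequality follows at once from Busemann convexity: for any $\p \in \cD_G$ realized by a ray $\{\exp(tV)\cdot\phi_\refe\}$, the ratio $t^{-1}\dd_1(\phi_t,\exp(tV)\phi_\refe)$ increases to $\dd_{1,\ra}(\f,\p)$. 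To prove the displayed bound, I would choose $g_t \in G$ nearly minimizing $\dd_1(\phi_t, g_t\cdot\phi_\refe)$, apply the Cartan decomposition $g_t = k_t \exp(X_t)$ with $k_t \in K$ and $X_t \in i\Lie K$, absorb $k_t$ via $K$-invariance of a well-chosen reference $\phi_\refe$, and, after passing to a subsequence, extract a limit $X_\infty$ of $X_t/t$ lying in some Cartan subalgebra $i\Lie K_T$ by finite-dimensional compactness. The toric direction $[\{\exp(tX_\infty)\cdot\phi_\refe\}] \in \cD_G$ is then the candidate realizing the desired bound.

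\textbf{Main obstacle.} The delicate point is the asymptotic control: one must show that $t^{-1}\dd_1(\phi_\refe, g_t\cdot\phi_\refe)$ is, to leading order, governed by $\|X_t\|/t$, and that replacing $g_t$ by its limiting one-parameter subgroup produces only subleading error. This amounts to a bilipschitz comparison between the asymptotic $\dd_1$-distance to the $G$-orbit and the radial distance to $\cD_G$, reflecting the Riemannian symmetric-space (in particular $\CAT$) geometry of $G/K$ seen inside $\cE^1$. Related comparisons have been carried out in Fano settings (e.g.\ \cite{His,LiKE,XZ20}); the main task is to establish them in the present general polarized framework, presumably by combining $\dd_1$-Lipschitz estimates for the $G$-action with convexity of $\dd_1$ along pairs of geodesics.
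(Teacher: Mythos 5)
Your skeleton is close to the paper's: the substantive new content is indeed (iii$'$) $\Rightarrow$ (ii$'$) (via closedness of $\cD_G$ when $G$ is reductive, which is Corollary~\ref{cor:dirclosed}) and (i) $\Rightarrow$ (iii$'$) (via a slope formula for $\dd_G$, which is Theorem~\ref{thm:slopedG}), with the rest fed through the abstract criterion (Theorem~\ref{thm:mincrit}). However, the step you explicitly flag as your ``main obstacle'' --- controlling the asymptotics of $\|X_t\|/t$ and justifying that $X_t/t$ has a convergent subsequence --- is precisely the point you do not resolve, and it is not merely a matter of ``finite-dimensional compactness'': boundedness of the sequence is not free. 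The paper proves it by a slicker two-step argument that your sketch is missing. First, after writing $h_j=e^{t_j\xi_j}h_j'$ (polar decomposition) and absorbing $h_j'\in G_c$ by $G_c$-invariance of $\phi_\refe$, one uses Busemann convexity of $t\mapsto\dd(\phi_t,e^{it\xi_j}\phi_\refe)$ to bound the \emph{time-one} quantity $\dd(\phi_1,e^{i\xi_j}\phi_\refe)\le t_j^{-1}\dd(\phi_{t_j},e^{it_j\xi_j}\phi_\refe)\to L$. Second --- and this is the ingredient absent from your proposal --- one invokes \emph{properness of the $G$-action} on $(\cE^1,\dd_1)$ (established in~\cite[Proposition~1.8]{BJT} and recalled in \S\ref{sec:pshgeod}): a uniform bound on $\dd(\phi_1,e^{i\xi_j}\phi_\refe)$ forces $\{e^{i\xi_j}\}$ into a compact subset of $G$, hence $\{\xi_j\}$ into a compact subset of $\Lie G_c$. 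The bilipschitz asymptotic comparison you hope to establish by ``$\dd_1$-Lipschitz estimates for the $G$-action'' is thus replaced by convexity-plus-properness, which is both simpler and sharper, and sidesteps the ``subleading error'' worry entirely. You should also note that the limit $\xi$ need not lie in a \emph{fixed} Cartan subalgebra; it simply lies in $\Lie G_c=\bigcup_{g\in G_c}g\cdot\Lie T$, and the resulting direction $\f_\xi$ is in $\cD_G$ by definition.

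For closedness of $\cD_G$, your plan to ``reduce to a single maximal torus via the Cartan decomposition and check a proper embedding $\Lie T\hto\cE^1_\ra$'' captures the right idea, but as written it does not obviously handle the uncountable union $\cD_G=\bigcup_{g\in G_c}g\cdot\cD_T$: closedness of each piece does not imply closedness of the union. The paper (Lemma~\ref{lem:qi1}, Proposition~\ref{prop:Gdir}) packages the entire union as the image of a single $G$-equivariant quasi-isometric embedding $\Sigma_\ra(G)\hto\cE^1_\ra$ from the \emph{complete} conical Tits building $\Sigma_\ra(G)$, so closedness is automatic. The quasi-isometry is obtained not from moment polytopes (which would be the concrete route via Proposition~\ref{prop:qi4}) but from a homogeneity argument: $\xi\mapsto\dd(e^{i\xi}\phi,\phi)$ is continuous, positive on the unit sphere and positively homogeneous, hence comparable to $\|\xi\|$.
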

Here (iii')$\Rightarrow$(ii') since we prove that $\cD_G\subset\cE^1_\ra$ is closed as soon as $G$ is reductive. The latter holds when a cscK metric exists, and the proof of (i)$\Rightarrow$(iii') then relies on the slope formula
$$
\lim_{t\to\infty} t^{-1}\dd_G(\phi_t)=\dd_{1,\ra}(\f,\cD_G)
$$
for any geodesic ray $\{\phi_t\}$ in $\cE^1$ with direction $\f\in\cE_\ra$. 

Our next task is to reformulate~(ii') and~(iii') in terms of algebraic, or rather, non-Archimedean geometry. Following work by Phong--Sturm~\cite{PS06,PS07,PS10}, Chen--Tang~\cite{ChenTang}, Ross--Witt Nystr\"om~\cite{RWN14} and others, Berman~\cite{Berm16} gave a simple construction of a geodesic ray in $\cE^1$ associated to any ample test configuration $(\cX,\cL)$ for $(X,L)$. This yields an embedding $\cH_\NA\hto\cE^1_\ra$, which was extended in~\cite{YTD,Reb} to an isometric embedding 
$$
(\cE^1_\NA,\dd_{1,\NA})\hto(\cE^1_\ra,\dd_{1,\ra}), 
$$
taking the set $\cP_\R$ of real test configurations onto the set $\cD_G$ of toric $G$-directions. 

The key result in the present cscK case is now:
\begin{thmC}
For any $\f\in\cE^1_\ra$ we have $\mab_\ra(\f)=\left\{
\begin{array}{ll}
\mab_\NA(\f) &\  \text{if } \f\in\cE^1_\NA \\
+\infty & \text{ otherwise}. 
\end{array}
\right. 
$
\end{thmC}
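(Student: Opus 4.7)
The plan is to reduce Theorem~C to a slope formula for the relative entropy along geodesic rays in $\cE^1$, combined with the known slope formulas for Monge--Amp\`ere type energy functionals. Using the Chen--Tian decomposition
$$
\mab(\phi) = \Ent(\phi) + \enR(\phi),
$$
where $\Ent(\phi)$ is the entropy of the Monge--Amp\`ere measure $\MA(\phi)$ against a fixed smooth volume form and $\enR(\phi)$ is a linear combination of Monge--Amp\`ere energies twisted by the Ricci form of a reference metric, I would establish a parallel \na decomposition $\mab_\NA = \Ent_\NA + \enR_\NA$ on $\cE^1_\NA$ and prove the slope formula for each piece separately.

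For the energy piece, the identity $\enR_\ra = \enR_\NA$ on $\cE^1_\ra$ follows from the Monge--Amp\`ere slope formulas of~\cite{BHJ1,BHJ2}, valid first on $\cH_\NA$ and extended to $\cE^1_\NA$ by the Darvas-continuity of $\en_\NA$ developed in~\cite{nakstab1,nakstab2}. Crucially, $\enR_\NA$ is finite throughout $\cE^1_\ra$, so the divergence to $+\infty$ in Theorem~C must be carried entirely by the entropy, and the whole problem reduces to proving
$$
\lim_{t\to\infty} t^{-1}\Ent(\phi_t) = \Ent_\NA(\f),
$$
with $\Ent_\NA\colon\cE^1_\ra\to[0,+\infty]$ a \na entropy functional that is finite precisely on $\cE^1_\NA$.

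The upper bound $\limsup_{t\to\infty} t^{-1}\Ent(\phi_t)\le\Ent_\NA(\f)$ should be accessible by approximating $\f$ by directions coming from ample test configurations $(\cX,\cL)$, where the slope is computed classically as a sum of log discrepancies along components of the central fiber $\cX_0$, and passing to the limit through the lower semicontinuity and monotonicity properties of $\Ent_\NA$; when $\f\notin\cE^1_\NA$ this inequality is vacuous since the right-hand side is then $+\infty$.

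The main obstacle is the matching lower bound $\liminf_{t\to\infty} t^{-1}\Ent(\phi_t)\ge\Ent_\NA(\f)$, which in particular must force the slope to $+\infty$ for $\f\notin\cE^1_\NA$. Here the psh envelope estimate of Berman--Demailly--Di Nezza--Trapani plays the crucial role flagged in the abstract: it provides sharp control on the Monge--Amp\`ere measure of psh envelopes with prescribed singularities, which via Jensen's inequality translates into a lower bound on $\Ent(\phi_t)$ in terms of integrals of log discrepancies against divisorial measures approximating $\f$. Exhausting $\f$ by such divisorial data, together with the characterization of $\cE^1_\NA$ by the finiteness of the \na entropy, then yields the lower bound and completes the proof of Theorem~C.
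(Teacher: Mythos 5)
You have the two halves of the entropy slope estimate swapped, and this is not just a matter of notation. The lower bound $\liminf_{t\to\infty}t^{-1}\ent(\phi_t)\ge\ent_\NA(\f)$, together with $\ent_\ra\equiv+\infty$ off $\cE^1_\NA$, is Chi Li's earlier result (Lemma~\ref{lem:entcomp} and Proposition~\ref{prop:hent}(iii) here): it uses only the crude Jensen-type bound $\Ent(\mu|\nu)\ge\mu(Z)\log(\mu(Z)/\nu(Z))$ and the monotone approximation of $A_X$ from below by the PL functions $A_\cX$, with no psh-envelope estimate involved. The genuinely new direction, and the one where the Berman--Demailly--Di Nezza--Trapani estimate enters, is the \emph{upper} bound $\ent_\ra(\f)\le\ent_\NA(\f)$, which you dismiss as accessible by approximating $\f$ by directions from ample test configurations and passing to the limit via lower semicontinuity. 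That argument cannot close: both $\ent_\ra$ and $\ent_\NA$ are only lsc, so a sequence $\f_j\to\f$ in $\cH_\NA$ gives $\ent_\ra(\f)\le\liminf_j\ent_\NA(\f_j)$, but lsc of $\ent_\NA$ pushes the right-hand side in the wrong direction (it yields only $\ent_\NA(\f)\le\liminf_j\ent_\NA(\f_j)$). Precisely to repair this, the paper introduces the strictly stronger notion of convergence in entropy, proves in Theorem~\ref{thm:adapted} that envelopes $\env(f)$ of PL functions $f$ satisfy $\ent_\ra\le\ent_\NA$, and uses the (weighted) non-Archimedean Calabi--Yau theorem to show such envelopes are dense for convergence in entropy.

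Your account of how the envelope estimate is applied is also reversed. The bound $0\le\ddc\Phi\le Ae^{-BF}\om_\cX$, where $F$ has analytic singularities along $\B_+(\cL)$, gives an \emph{upper} bound on the density of $\MA(\phi_t)$ against a ray of reference volume forms $\nu_t$, hence the upper bound $\Ent(\MA(\phi_t)|\nu_t)\le C\bigl(1+\int_X(-f_t)\MA(\phi_t)\bigr)$ — not a lower bound via Jensen. Getting the right-hand side to vanish at the level of slopes then needs the further ingredients that $\MA_\NA(\env(\f_\cL))$ is supported on valuations of components of $\cX_0$ not lying in $\B_+(\cL/\P^1)$ (Proposition~\ref{prop:orthoPL}), that after a suitable translation of $\cL$ the relative and absolute augmented base loci coincide (Lemma~\ref{lem:B+tc}), and that the \emph{global} psh envelope of a smooth metric on $\cL$ over all of $\P^1$ induces an almost geodesic ray directed by $\env(\f_\cL)$ (Lemma~\ref{lem:tworays}). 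None of these steps, which are what actually make the proof close, appear in your outline.
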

Indeed, this implies that (uniform) $\hK$-polystability is equivalent to (uniform) geodesic polystability, so that Theorem~A follows from Theorem~A'. 

The second point in Theorem~C is due to Chi Li~\cite{LiGeod}. To explain our proof of the first point (the toric case of which was established in~\cite[Theorem~10]{Berm19}) recall that the Chen--Tian formula~\cite{Che00a,Tia00} yields, for $\phi\in\cE^1$, a decomposition
\[
\mab(\phi)=\ent(\phi)+\rr(\phi)+\bar{S}\en(\phi), 
\]
where $\ent(\phi)$ is the (relative) entropy of the Monge--Amp\`ere measure $\MA(\phi)$, $\rr(\phi)$ is the Ricci energy, $\en(\phi)$ the Monge--Amp\`ere energy, and $\bar{S}$ a cohomological constant. These functionals also admit radial transforms 
$$
\ent_\ra\colon\cE^1_\ra\to\R\cup\{+\infty\},\quad\en_\ra\colon\cE^1_\ra\to\R\quad\text{and}\quad\rr_\ra\colon\cE^1_\ra\to\R.
$$
The non-Archimedean Mabuchi functional $\mab_\NA$ decomposes as $\ent_\ra+\rr_\ra+\bar S\en_\ra$, and it follows from~\cite{YTD,LiGeod} that $\en_\ra=\en_\NA$ and $\rr_\ra=\rr_\NA$ on $\cE^1_\NA\subset\cE^1_\ra$. 

The problem in Theorem~C is therefore to prove the entropy asymptotics $\ent_\ra=\ent_\NA$ on $\cE^1_\NA$. This equality was established on the subspace $\cH_\NA$ in~\cite{BHJ2}, but while $\cH_\NA$ is dense in $\cE^1_\NA$, the radial and \na entropy functionals are only lsc, so the equality does not immediately propagate to $\cE^1_\NA$. However, C.~Li was able to use the ideas of~\cite{BHJ2} to establish the inequality $\ent_\NA\le\ent_\ra$ on $\cE^1_\NA$. Here we prove (also) the reverse inequality, thus establishing Conjecture~1.6 in~\cite{LiGeod}. 

In order to prove $\ent_\ra(\f)=\ent_\NA(\f)$ for $\f\in\cE^1_\NA$, we first show that it suffices to assume that $\f$ is associated (through a psh envelope construction) to a (not necessarily ample) test configuration $(\cX,\cL)/\P^1$. This reduction, which is based on solving a non-Archimedean Monge--Amp\`ere equation~\cite{nama,trivval}, was also observed by C.~Li, who, as already mentioned, calls such test configurations models. The geodesic ray associated to such a $\f$ is represented as a psh envelope: the largest $S^1$-invariant metric $\tilde\Phi$ on $\cL|_\DD$ whose restriction to $\cL|_{\partial\DD}$ is given by $\phi_\refe$. We next show that this envelope can be replaced with a global (over $\P^1$) psh envelope, namely the largest psh metric $\Phi$ on $\cL$ dominated by some given smooth $S^1$-invariant metric on $\cL$, which induces an almost geodesic ray with direction $\f$. The curvature current $\ddc\Phi$ is not smooth, but as a key new ingredient we use a result by Berman--Demailly~\cite{BerDem} (which contained a gap later fixed by Di Nezza--Trapani~\cite{DNT}) that controls its singularities, which are in particular contained in the augmented base locus $\B_+(\cL)$. We also use the fact, observed in~\cite{LiFuj}, that the Monge--Amp\`ere measure $\MA_\NA(\f)$ is an atomic measure with support on divisorial valuations associated to irreducible components of $\cX_0$ that are \emph{not} contained in the augmented relative base locus $\B_+(\cL/\P^1)$. After adding a large enough constant to $\f$, we show that the two base loci become equal, and use the ingredients above to prove that $\ent_\ra(\f)=\ent_\NA(\f)$, thus establishing Theorem~C.

\medskip
The proof of Theorem~B follows the same pattern as that of Theorem~A, but requires a significant amount of additional work.

First, we need to develop the relevant \na weighted pluripotential theory, and make sure that the relevant functionals and operators are defined in a purely algebraic way. Some of this was done in~\cite{HLi23}, but here we also need the \na versions of the twisted weighted \ma energy and weighted \ma measure; these turn out to be rather delicate to handle (see also the very recent work~\cite{HLiuklt}).

Second, we need to show that the (Archimedean) energy functionals and operators, defined on the space $\cE^{1,T}$ of $T$-invariant metrics of finite energy, have a well-defined slope at infinity along  geodesic rays, and that this slope agrees with the corresponding non-Archimedean quantity when the ray in question is maximal. 

With the energy asymptotics under control, the proof of the weighted analogue of Theorem~C goes along the same lines as before, although the approximation arguments require us to solve a weighted non-Archimedean Monge--Amp\`ere equation. 

Finally, the variational approach also requires us to know that minimizers of the relative weighted Mabuchi functional $\mab^\rel$ on $\cE^{1,S}$ are smooth, for which we adapt the arguments of~\cite{CC2} using the recent work of Di Nezza, Jubert and Lahdili~\cite{DJL1,DJL2}, and Han--Liu~\cite{HLiu} extending~\cite{CC1,He19} to the weighted setting.  

\medskip\noindent\textbf{Questions and further remarks}.
While we prove the YTD conjecture in the form of Theorems~A and~B here, there are a number of questions left, of which we present a sample.

First, Theorem~A shows that $\hK$-polystability is equivalent to uniform $\hK$-polystability, but the proof goes through the existence of a cscK metric. Can we find a direct, algebraic proof?

To test that a smooth polarized variety $(X,L)$ is $\hK$-polystable is impractical with the current definition. The situation was a bit similar in the Fano case at the time the proof of the YTD conjecture was announced, but various subsequent developments, such as the valuative criterion by Fujita and Li~\cite{Fuj19,Li17}, the delta invariant of Fujita and Odaka~\cite{FO18} (see also~\cite{BlJ20,RTZ21,Zha21,Zha24,DJ}), and the work of Abban and Zhuang~\cite{AZ22}, has made it much more manageable to test K-stability. 

Specifically, we can consider uniform $\hK$-polystability. Here it suffices to test objects such as divisorial measures, that is, convex combinations of divisorial valuations. It is probably not sufficient to consider Dirac masses~\cite{DerLeg}, but one may wonder if it suffices to consider measures whose support has a cardinality bounded above, say in terms of the dimension of $X$.

Another question concerns equivariance. It follows from Theorems~A and~B that if $(X,L)$ is $S$-equivariantly $\hK$-polystable for some compact subgroup $S\subset\Aut^0(X,L)$, then $(X,L)$ is also $\hK$-polystable. Conversely, as the isometry group of a cscK metric complexifies to $\Aut^0(X,L)$, it follows that if $(X,L)$ is $\hK$-polystable, then it is $T$-equivariantly $\hK$-polystable for some (hence any) maximal compact torus $T$ of $\Aut^0(X,L)$ (see Corollary~\ref{cor:equivstab}). Can these statements be proved algebraically? See~\cite{Zhua21} for related results in the Fano case.

We could also consider the singular case. Chi Li~\cite{LiKE} proved the YTD conjecture for log Fano varieties. Extending our work to polarized klt pairs is probably challenging, but there has been some recent work on singular cscK metrics, see~\cite{BJT,PTT23,PT24,Sze24}.

Finally we could consider the case of a (non-projective) K\"ahler manifold $(X,\omega)$. In this context, Dervan--Ross~\cite{DR17} and Sj\"ostr\"om-Dyrefelt~\cite{SD18} proved that the existence of a cscK metric implies K-semistability, and even uniform K-stability when the automorphism group is finite. More recently, Mesquita-Piccione~\cite{MP} made a nontrivial adaptation of~\cite{LiGeod} and gave a sufficient stability condition for the existence of a cscK metric, which was further refined in very recent joint work with Witt Nystr\"om~\cite{MW}. 

In the Fano case $L=-K_X$, the YTD conjecture was proven in~\cite{CDS15}, so in this case, $\hK$-polystability and uniform $\hK$-polystability are both equivalent to K-polystability (as well as its uniform version, see Remark~\ref{rmk:uKps}). All of this can be proven algebraically, using the powerful techniques that have been developed over the last few years, see~\cite{XuBook} and the references therein.

\medskip\noindent\textbf{Comparison to~\cite{DZ25}}.
As this manuscript was being completed, we were informed by T.~Darvas and K.~Zhang that they had obtained a different version of the YTD conjecture. In~\cite{DZ25}, $X$ is assumed to have discrete automorphism group. The authors prove that the existence of a cscK metric in $c_1(L)$ is equivalent to $(X,L)$ being uniformly $K^\beta$-stable for some $\beta>0$. This notion of stability is defined on ample test configurations, but using a deformation $\mab^\beta_\NA$ of the functional $\mab_\NA$ depending on a parameter $\beta$. The algebraic description of $\mab^\beta_\NA$ is somewhat more involved, but it corresponds to the radial transform of a functional $\mab^\beta$, which is a deformation of $\mab$ with good properties. The proofs in~\cite{DZ25} and here are both rooted in some form of Theorem~A', but they diverge substantially after that.

% 
%%%%%%%%%%%%%%%%%%%%%%%%%%%%%%%%%%%%%%%%%%%%%%%%%%%%%%%%%%%%
%
\medskip\noindent\textbf{Organization}.
After recalling some relevant material in~\S\ref{sec:prelim}, we prove in~\S\ref{sec:asymgeod} an abstract result characterizing when a suitable functional on a Busemann convex space admits a minimum, the characterization being in terms of the slopes at infinity along geodesic rays. In~\S\ref{sec:ppt} and~\S\ref{sec:NAppt}, we review the surprisingly parallel words of global complex analytic and non-Archimedean pluripotential theory, respectively. The relation between the non-Archimedean and complex analytic worlds is explored in~\S\ref{sec:NAlimmet}, where we first prove some new results about maximal geodesic rays, and then set up a general framework for viewing non-Archimedean objects as limits of their complex analytic counterparts. This is used in~\S\ref{sec:wppt} to define and study non-Archimedean analogues of the functionals appearing in weighted pluripotential theory; extending some of these---notably the non-Archimedean weighted Mabuchi functional---to the space of finite energy metrics is quite subtle. In~\S\ref{sec:entasym}, we prove that the weighted non-Archimedean entropy is the slope at infinity of the weighted entropy along any maximal geodesic ray: this is the key to proving (a weighted generalization of) Theorem~C, which in turn is a key new ingredient needed to prove Theorem~B.
Finally, in~\S\ref{sec:wstab}, we put everything together, and prove Theorem~B, of which Theorem~A is a special case. The paper ends with two appendices: one collecting material about complex reductive groups, and one proving that minimizers of the weighted Mabuchi functional are smooth, following~\cite{DJL2} and~\cite{CC2}.
% 
%%%%%%%%%%%%%%%%%%%%%%%%%%%%%%%%%%%%%%%%%%%%%%%%%%%%%%%%%%%%
%

\medskip\noindent\textbf{Acknowledgement}. We thank V.~Apostolov, R.~Berman, T.~Darvas, R.~Dervan, E.~Di Nezza, S.~Finski, Y.~Hashimoto, S.~Jubert, A.~Lahdili, Y.~Odaka, P.~Piccione, R.~Reboulet, and T.D.~Tô for fruitful discussions. We are grateful to M.~Brion for his help with Lemma~\ref{lem:reduc}, and to T.~Darvas and K.~Zhang for sharing their manuscript~\cite{DZ25}. We further thank T.~Darvas, J.~Han, E.~Inoue,  Yaxiong Liu, P.~Piccione, J.~Ross, and V.~Tosatti for helpful comments on a first draft of this paper. The first author was partially supported by the ANR project AdAnAr (ANR-24-CE40-6184) and the KRIS project. The second author was partially supported by NSF grants DMS-2154380 and DMS-2452797, and award 1034361 from the Simons Foundation.

%%
%%%%%%%%%%%%%%%%%%%%%%%%%%%%
\section{Preliminaries}\label{sec:prelim}
Besides setting up some notation, this preliminary section gathers some basic facts on Euler--Lagrange functionals and Futaki invariants, and also provides a comparison result between absolute and relative base loci of big line bundles that plays an important role in the proof of the main result.   

%%%%%%%%%%%%%%%%%%%%%%%%%%%%
\subsection{Notation and terminology}\label{sec:not}

\subsubsection{Hermitian metrics}
We use additive notation for (possibly singular) Hermitian metrics $\phi$ on a holomorphic $\Q$-line bundle $F$ over a complex space, \ie $m\phi$ denotes the induced metric on $mF=F^{\otimes m}$ for $m\in\Z$. We denote by $C^\infty(F)$ the affine space of smooth metrics $\phi$ on $F$, modeled on the real vector space $C^\infty(X)$ of smooth functions $f\colon X\to\R$. We write $\ddc\phi$ for the curvature form of any metric $\phi\in C^\infty(F)$. 

\subsubsection{Tori}\label{sec:lattices} Given a compact torus $T\simeq(S^1)^r$ we denote by $T_\C\simeq(\C^\times)^r$ its complexification, and by 
\[
M_\Z(T):=\Hom(T,S^1)=\Hom(T_\C,\C^\times)\simeq\Z^r,\quad N_\Z(T):=\Hom(S^1,T)=\Hom(\C^\times,T_\C)\simeq\Z^r
\]
the character and cocharacter lattices, respectively. We also set $M_\Q(T):=M_\Z(T)\otimes_\Z\Q$, and similarly define $N_\Q(T)$, $M_\R(T)$ and $N_\R(T)$. We then have a canonical isomorphism 
$\Lie T\simeq N_\R(T)$. 

\subsubsection{Centralizers}\label{sec:cent} For any topological group $G$ and subgroup $S\subset G$, we denote by $G^S$ the identity component of the centralizer of $S$ in $G$, \ie of the set of fixed points in $G$ under the action of $S$ by conjugation. In particular, we write $\Aut^S(X,L)$ for the identity component of the group automorphisms of a polarized variety $(X,L)$ that commute with a given subgroup $S$. 

\subsubsection{} In this paper, a \emph{quasi-metric} on a set $\cE$ is a symmetric function $\dd\colon\cE\times\cE\to\R_{\ge 0}$ such that for all $\phi_1,\phi_2,\phi_3\in\cE$ we have
\begin{itemize}
\item $\dd(\phi_1,\phi_2)=0$ iff $\phi_1=\phi_2$;
\item $\dd(\phi_1,\phi_3)\le C\left(\dd(\phi_1,\phi_2)+\dd(\phi_2,\phi_3)\right)$ for some uniform constant $C>0$. 
\end{itemize}
A quasi-metric space $(\cE,\dd)$ is equipped with a Hausdorff topology and a uniform structure, and admits a completion $(\hcE,\dd)$. 

We say that a map $F\colon(\cE,\dd)\to(\cF,\dd)$ between quasi-metric spaces is a \emph{quasi-isometry} if there exists $C>0$ such that $C^{-1}\dd(\phi_1,\phi_2)\le\dd(F(\phi_1),F(\phi_2))\le C\dd(\phi_1,\phi_2)$ for all $\phi_1,\phi_2\in\cE$. (We do not allow an additive constant, as is sometimes the case in the literature.)

The following notion will play a recurring role in this paper.

\begin{defi}\label{defi:sgh} We say that a map $F\colon(\cE,\dd)\to(\cF,\dd)$ between two quasi-metric spaces is \emph{\sgh} if there exists $C>0$ and $\a\in (0,1)$ such that 
\begin{equation}\label{equ:sgh}
\dd(F(\phi_1),F(\phi_2))\le C\dd(\phi_1,\phi_2)^\a\max_i\left(1+\dd(\phi_i)\right)^{1-\a} 
\end{equation}
for all $\phi_1,\phi_2\in\cE$. 
\end{defi}
Here we have set $\dd(\phi):=\dd(\phi,\phi_\refe)$ for some choice of basepoint $\phi_\refe\in\cE$, the definition being independent of that choice. Note that~\eqref{equ:sgh} implies the linear growth property
\begin{equation}\label{equ:sghbd}
\dd(F(\phi))\le A\dd(\phi)+B
\end{equation}
for uniform constants $A,B>0$. Any {\sgh} map $F\colon\cE\to\cF$ is uniformly continuous on balls, and hence uniquely extends to a {\sgh} map 
$F\colon\hcE\to\hcF$ between the completions of $\cE,\cF$. 

\subsubsection{Isometric group actions}

We say that an isometric action $G\times\cE\to\cE\quad(g,\phi)\mapsto g\cdot\phi$ of a locally compact group $G$ on a metric space $(\cE,\dd)$ is \emph{proper} if, for each ball $B$ in $\cE$, 
$\{g\in G\mid gB\cap B\ne\emptyset\}$ is relatively compact in $G$. Equivalently, for some (hence any) $\phi,\p\in\cE$ and all $C>0$, 
$\{g\in G\mid\dd(\phi,g\cdot\p)\le C\}$ is relatively compact in $G$. As a result, the infimum 
$$
\dd_G(\phi,\p):=\inf_{g\in G}\dd(\phi,g\cdot\p)
$$ 
is achieved, and descends to a \emph{quotient metric} on $\cE/G$.

\subsubsection{Relative entropy}\label{sec:ent}
Given a compact topological space $Z$, we denote by $\Cz(Z)^\vee$ the space of signed Radon measures endowed with the weak topology, and by 
$$
\cM(Z)\subset\Cz(Z)^\vee
$$
the subspace of positive measures. The \emph{relative entropy} of $\mu,\nu\in\cM(Z)$ is defined by setting
$$
\Ent(\mu|\nu):=\int (\rho\log \rho)\,\nu\in\R\cup\{+\infty\}
$$
if $\mu=\rho \nu$, $\rho\in L^1(\nu)$, and $\Ent(\mu|\nu):=+\infty$ otherwise. Up to normalization, $\Ent(\cdot|\nu)$ is the Legendre transform of the convex functional $\Cz(Z)\ni f\mapsto\log\int e^f\,\nu$. More specifically, 
\begin{equation}\label{equ:entleg}
\Ent(\mu|\nu)=\sup_{f\in \Cz(Z)}\left\{\int f\,d\mu-\mu(Z)\log\int e^f \nu\right\}+\mu(Z)\log\mu(Z). 
\end{equation}
This shows that $\Ent(\cdot|\nu)\colon\cM(Z)\to\R\cup\{+\infty\}$ is convex, lsc, and satisfies 
\begin{equation}\label{equ:entlow}
\Ent(\mu|\nu)\ge\log\left(\frac{\mu(Z)}{\nu(Z)}\right)\mu(Z). 
\end{equation}

%%%
%%%%%%%%%%%%%%%%%%%%%%%%%%%%%%%%%%%%%%%%%%%%%%%%%%%%%%%%%%%%%
\subsection{{\EL} functionals and Futaki invariants}\label{sec:EL} 
We introduce here some general terminology and useful observations that will apply to both the complex and the \na cases we shall be dealing with. 

\subsubsection{{\EL} functionals} 

Consider a real affine space $\cA$ of `metrics' $\phi$, directed by a real vector space of $\cF$ of `functions' $f$, a convex subset $\cC\subset\cA$, and a `measure-valued' map
$$
\Ga\colon \cC\to \cF^\vee\quad\phi\mapsto \Ga(\phi)
$$
assumed to be sufficiently regular for the various derivatives below to make sense. Here $ \cF^\vee$ denotes the (algebraic) dual of $\cF$, the duality pairing $\cF\times \cF^\vee\to\R$ being suggestively written as $(f,\mu)\mapsto\int f\,\mu$. 

\begin{defi} An \emph{{\EL} functional} for $\Ga$ is a map $F\colon \cC\to\R$ such that 
$$
\frac{d}{ds}\bigg|_{s=0}F(\phi+s f)=\int f\,\Ga(\phi)
$$
for all $\phi\in\cC$ and $f\in\cF$ such that $\phi+f\in\cC$. For brevity we also write $F'(\phi)=\Ga(\phi)$. 
\end{defi} 
For all $\phi,\p\in \cC$, we then have 
\begin{equation}\label{equ:EL}
F(\phi)-F(\p)=\int_0^1 ds\int (\phi-\p)\Ga(s\phi+(1-s)\p). 
\end{equation}

\begin{rmk} It is sometimes convenient to normalize $F$ by requiring $F(\phi)=0$ for a given $\phi\in\cC$. Denoting by $F(\phi,\cdot):=F-F(\phi)$ this choice of normalization, we then trivially have the \emph{cocycle formula} 
$$
F(\phi_1,\phi_3)=F(\phi_1,\phi_2)+F(\phi_2,\phi_3)
$$
for all $\phi_1,\phi_2,\phi_3\in\cC$.
\end{rmk}

For any $\phi\in\cC$ and $f\in\cF$ such that $\phi+f\in\cC$, we consider the directional derivative $\Ga'(\phi;f)\in \cF^\vee$ defined by
$$
\int g\,\Ga'(\phi;f):=\frac{d}{ds}\bigg|_{s=0}\int g\,\Ga(\phi+sf)
$$
for $g\in \cF$. One then easily checks: 

\begin{lem}\label{lem:EL} Pick a basepoint $\phi_\refe\in \cC$. The following properties are equivalent:
\begin{itemize}
\item[(i)] $\Ga\colon \cC\to \cF^\vee$ admits an {\EL} functional; 
\item[(ii)] the functional defined by 
$$
F(\phi):=\int_0^1 ds\int(\phi-\phi_\refe)\Ga(s\phi+(1-s)\phi_\refe)
$$ 
satisfies~\eqref{equ:EL} for all $\phi,\p\in \cC$; 
\item[(iii)] the directional derivatives of $\Ga$ satisfy the symmetry property
\begin{equation}\label{equ:sym}
\int f_1\Ga'(\phi;f_2)=\int f_2\Ga'(\phi;f_1)
\end{equation}
for all $\phi\in\cC$ and $f_1,f_2\in\cF$ such that $\phi+f_i\in\cC$. 
\end{itemize}
\end{lem}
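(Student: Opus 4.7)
The plan is to prove the cycle (i)$\Rightarrow$(ii)$\Rightarrow$(i) and then (i)$\Leftrightarrow$(iii) separately, with the nontrivial direction being (iii)$\Rightarrow$(i). Throughout, I assume enough regularity of $\Gamma$ that differentiations under the integral sign and interchanges of $\partial_s\partial_t$ are legitimate.

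For (i)$\Rightarrow$(ii), suppose $\tilde{F}$ is an EL functional. For fixed $\phi,\p\in\cC$, apply the fundamental theorem of calculus to $g(s):=\tilde{F}(s\phi+(1-s)\p)$; since $g'(s)=\int(\phi-\p)\,\Gamma(s\phi+(1-s)\p)$ by the EL property applied with the direction $f=\phi-\p$, we get $\tilde{F}(\phi)-\tilde{F}(\p)=\int_0^1 g'(s)\,ds$, which is exactly \eqref{equ:EL}. Taking $\p=\phi_\refe$ identifies $\tilde{F}-\tilde{F}(\phi_\refe)$ with the candidate $F$ of (ii), and the displayed identity transfers verbatim. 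For (ii)$\Rightarrow$(i), plug $\phi+uf$ and $\phi$ into \eqref{equ:EL} to get $F(\phi+uf)-F(\phi)=u\int_0^1 dt\int f\,\Gamma(\phi+utf)$; dividing by $u$ and sending $u\to 0$ recovers the EL relation $F'(\phi)=\Gamma(\phi)$.

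For (i)$\Rightarrow$(iii), start from an EL functional $F$ and compute the mixed second derivative
\[
\frac{\partial^2}{\partial s\,\partial t}\bigg|_{s=t=0} F(\phi+sf_1+tf_2)=\int f_1\,\Gamma'(\phi;f_2),
\]
which must be symmetric in $f_1,f_2$ by Schwarz's theorem; this is \eqref{equ:sym}.

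The substantive direction is (iii)$\Rightarrow$(i). Take the candidate $F$ from (ii) and differentiate it directly: for $\phi\in\cC$ and $f\in\cF$ with $\phi+f\in\cC$,
\[
\frac{d}{du}\bigg|_{u=0} F(\phi+uf)=\int_0^1\!ds\int f\,\Gamma(\p_s)+\int_0^1\!ds\int(\phi-\phi_\refe)\,\Gamma'(\p_s;sf),
\]
where I have set $\p_s:=s\phi+(1-s)\phi_\refe$. Apply the symmetry assumption \eqref{equ:sym} to the second integrand to swap $\phi-\phi_\refe$ with $sf$, and note that $\Gamma'(\p_s;\phi-\phi_\refe)=\tfrac{d}{ds}\Gamma(\p_s)$. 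Integration by parts in $s$ gives
\[
\int_0^1 s\frac{d}{ds}\Big(\int f\,\Gamma(\p_s)\Big)\,ds=\int f\,\Gamma(\phi)-\int_0^1\!ds\int f\,\Gamma(\p_s),
\]
and the second term cancels the first integral in the derivative computation above, leaving exactly $\int f\,\Gamma(\phi)$. Hence $F$ is an EL functional and (i) holds, closing the equivalences. The only delicate point is the legitimacy of the integration by parts and interchange of derivatives, which is absorbed into the standing regularity hypothesis on $\Gamma$; no further obstacle is expected.
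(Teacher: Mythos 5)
The paper itself offers no proof of this lemma — it is stated right after the remark ``One then easily checks:'' — so there is nothing to compare directly; your proposal is simply supplying the omitted argument. Your proof is correct and is the natural one: (i)$\Leftrightarrow$(ii) is the fundamental theorem of calculus along segments, (i)$\Rightarrow$(iii) is Schwarz's theorem, and (iii)$\Rightarrow$(i) is the radial-integration Poincar\'e lemma on the convex set $\cC$, with the key computation being the integration by parts in $s$ using $\frac{d}{ds}\Gamma(\p_s)=\Gamma'(\p_s;\phi-\phi_\refe)$.

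One small point worth stating explicitly: when you ``swap $\phi-\phi_\refe$ with $sf$'' via~\eqref{equ:sym} at the point $\p_s$, the hypothesis of~(iii) as written requires $\p_s+(\phi-\phi_\refe)\in\cC$, which need not hold for $s>0$ (it lies past $\phi$ on the line through $\phi_\refe$ and $\phi$). This is harmless because $\Gamma'(\p_s;\cdot)$ is linear in the direction (it is a directional derivative), so you can apply~\eqref{equ:sym} with the rescaled direction $(1-s)(\phi-\phi_\refe)$ — for which $\p_s+(1-s)(\phi-\phi_\refe)=\phi\in\cC$ — and then divide by $1-s$. A one-line remark to that effect would make the argument fully airtight; otherwise the proof is complete.
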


\begin{lem}\label{lem:Fconc} Assume $F\colon\cE\to\R$ is an {\EL} functional for $\Ga\colon\cC\to\cF^\vee$. Then the following are equivalent:
\begin{itemize}
\item[(i)] $F$ is concave; 
\item[(ii)] for all $\phi,\p\in\cC$, we have 
\begin{equation}\label{equ:FJ}
\jj(\phi,\p):=\int(\phi-\p)\Ga(\p)-F(\phi)+F(\p)\ge 0; 
\end{equation}
\item[(iii)] for all $\phi,\p\in\cC$, we have 
\begin{equation}\label{equ:FI} 
\ii(\phi,\p):=\int(\phi-\p)\left(\Ga(\phi)-\Ga(\p)\right)\ge 0. 
\end{equation}
\end{itemize}
\end{lem}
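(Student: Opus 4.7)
The plan is to reduce all three conditions to one-variable calculus by restricting $F$ to line segments in $\cC$. Fix $\phi,\p\in\cC$, and set
\[
h(t):=F\bigl(\p+t(\phi-\p)\bigr),\qquad t\in [0,1],
\]
which is well defined since $\cC$ is convex. The {\EL} hypothesis makes $h$ of class $C^1$, with derivative
\[
h'(t)=\int(\phi-\p)\,\Ga\bigl(\p+t(\phi-\p)\bigr);
\]
in particular $h'(0)=\int(\phi-\p)\Ga(\p)$, $h'(1)-h'(0)=\ii(\phi,\p)$, and $\int_0^1 h'(s)\,ds=F(\phi)-F(\p)$, which recovers the {\EL} integral representation~\eqref{equ:EL}. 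Concavity of $F$ on $\cC$ is equivalent, by definition, to concavity of every such $h$ on $[0,1]$, so everything that follows is ultimately a transcription of standard facts about $C^1$ concave functions of one real variable through the dictionary $F'=\Ga$.

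For (i)$\Rightarrow$(ii), I would invoke the tangent-line characterization $h(1)\le h(0)+h'(0)$ of concavity of $h$, which unravels immediately to $F(\phi)-F(\p)\le\int(\phi-\p)\Ga(\p)$, \ie $\jj(\phi,\p)\ge 0$. For the link between (ii) and (iii), the key ingredient is the identity
\[
\jj(\phi,\p)+\jj(\p,\phi)=-\ii(\phi,\p),
\]
which is obtained by expanding the definitions; together with the manifest symmetry $\ii(\phi,\p)=\ii(\p,\phi)$, it relates the two signs directly and is what makes (ii) and (iii) simultaneously compatible.

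For (iii)$\Rightarrow$(i), I would apply~(iii) to pairs $\phi_s:=\p+s(\phi-\p)$, $\phi_t:=\p+t(\phi-\p)$ with $0\le s<t\le 1$: since $\phi_t-\phi_s=(t-s)(\phi-\p)$, the resulting inequality, divided by $(t-s)^2$, controls the sign of $h'(t)-h'(s)$. Monotonicity of $h'$ on $[0,1]$, hence concavity of $h$, follows, and varying $(\phi,\p)$ over $\cC$ yields concavity of $F$. The only step requiring attention is the sign bookkeeping in the symmetrization identity above; no substantive obstacle is expected, and the whole proof should fit in a few lines once the dictionary with the one-variable calculus for $h$ is in place.
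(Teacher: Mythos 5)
Your approach matches the paper's: restrict to line segments, reduce to one-variable calculus ($h'(0)=\int(\phi-\p)\Ga(\p)$, $h'(1)-h'(0)$ recovering $\ii$), use the tangent-line characterization for~(ii), monotonicity of the derivative for~(iii), and the symmetrization identity between $\jj$ and $\ii$ as a shortcut. This is exactly what the paper does, only spelled out.

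However, the sign bookkeeping you flag as ``requiring attention'' is not a formality — it is where the argument as written comes apart, and following it through would have revealed a typo in the statement you were asked to prove. You correctly compute, for the definition printed in~\eqref{equ:FI}, that
\[
\jj(\phi,\p)+\jj(\p,\phi)=\int(\phi-\p)\bigl(\Ga(\p)-\Ga(\phi)\bigr)=-\ii(\phi,\p).
\]
But then~(ii), i.e.\ $\jj\ge0$, forces $\ii\le0$, which contradicts~(iii) as stated. And in your (iii)$\Rightarrow$(i) step, with the printed integrand $\Ga(\phi)-\Ga(\p)$ one gets
$\ii(\phi_t,\phi_s)=(t-s)\bigl(h'(t)-h'(s)\bigr)\ge0$ for $t>s$, so $h'$ is \emph{increasing} and $h$ is \emph{convex}, not concave; your closing line ``Monotonicity of $h'$\dots, hence concavity of $h$'' does not follow. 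The resolution is that~\eqref{equ:FI} contains a sign typo: the integrand should read $\Ga(\p)-\Ga(\phi)$, matching the conventions used for $\ii$, $\ii_\NA$, $\ii_v$ in~\eqref{equ:I}, \eqref{equ:INA}, \eqref{equ:Iv}, and matching the identity $\ii(\phi,\p)=\jj(\phi,\p)+\jj(\p,\phi)$ that the paper's own proof invokes. With that correction your proof is complete and agrees with the paper's in every respect; as submitted, though, the final implication has the wrong sign and the deferral ``no substantive obstacle is expected'' papers over the one point that actually needed checking.
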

\begin{proof} Given a line segment in $\cC$, (ii) expresses that $F$ lies below its affine interpolation, while (iii) expresses that the derivative of $F$ is monotone decreasing. They are thus both equivalent to (i). Note that (ii)$\Rightarrow$(iii) also follows from $\ii(\phi,\p)=\jj(\phi,\p)+\jj(\p,\phi)$.
\end{proof}

%%%%%%%
\subsubsection{Futaki invariants}

Assume next given a group action 
$$
G\times\cA\to \cA\quad (g,\phi)\mapsto g\cdot\phi
$$
by affine transformations preserving $\cC$, with respect to which the map $\Ga\colon \cC\to \cF^\vee$ is $G$-equivariant. Using for instance~\eqref{equ:EL}, one sees that any {\EL} functional $F$ is then \emph{quasi-invariant}, in the sense that 
\begin{equation}\label{equ:quasi}
F(g\cdot\phi)-F(g\cdot\p)=F(\phi)-F(\p)
\end{equation}
for all $\phi,\p\in \cC$ and $g\in G$. However, $F$ is not $G$-invariant in general. Indeed, as a consequence of~\eqref{equ:quasi}, 
\begin{equation}\label{equ:char}
\chi_\Ga(g):=F(g\cdot\phi)-F(\phi)
\end{equation}
is independent of $\phi\in \cC$, and defines a group character 
$$
\chi_\Ga\colon G\to\R,
$$
which vanishes iff $F$ is $G$-invariant (compare, for instance~\cite{Fut}). 

\begin{exam}\label{exam:transinv} Assume given an affine action of $G=\R$ on $\cA$, denoted by $(\phi,c)\mapsto\phi+c$. Set $1_\cF:=0_\cF+1\in \cF$, and assume that $\Ga$ is `translation-invariant', \ie $\Ga(\phi+c)=\Ga(\phi)$ for all $\phi\in \cC$ and $c\in\R$. By~\eqref{equ:sym}, the `total mass' $\bar\Ga:=\int 1_\cF\,\Ga(\phi)$ is independent of $\phi\in \cC$, and 
$$
F(\phi+c)=F(\phi)+c\bar\Ga
$$
for all $\phi\in \cC$ and $c\in\R$.
\end{exam}

Assume further that $G$ is a connected Lie group whose action on $\cA$ is sufficiently regular for the differential of $\chi_\Ga$ to make sense. The latter then defines a Lie algebra character 
$$
\Fut_\Ga\colon\Lie G\to\R,
$$
the \emph{Futaki character} of $\Ga$, which vanishes iff $F$ is $G$-invariant.  Differentiating~\eqref{equ:char} yields the more explicit formula
\begin{equation}\label{equ:FutLie}
\Fut_\Ga(\xi)=\int(\cL_\xi\phi)\,\Ga(\phi)
\end{equation}
for all $\xi\in\mathrm{Lie}\,G$ and $\phi\in \cC$, where 
$$
\cL_\xi\phi:=\frac{d}{dt}\bigg|_{t=0} e^{t\xi}\cdot\phi\in \cF.
$$

\begin{exam} If $F$ admits a critical point $\phi\in \cC$, \ie $\Ga(\phi)=0$, then $\Fut_\Ga\equiv 0$, \ie $F$ is $G$-invariant.
\end{exam}

\begin{exam} If $F$ is bounded below, then so is the group homomorphism $\chi_\Ga\colon G\to\R$; it thus vanishes, \ie $F$ is $G$-invariant. 
\end{exam}

%
%%%%%%%%%%%%%%%%%%%%%%%%%%%%%%%%%%%%%%%%%%%%%%%%%%%%%%%%%%%%%%
\subsection{Absolute vs.~relative base loci of big line bundles}\label{sec:loci}
We use~\cite{ELMNP,BBP} as general references for what follows. Given a $\Q$-line bundle $L$ on a normal projective variety $X$, recall that the \emph{asymptotic},  \emph{diminished} and \emph{augmented base loci} of $L$ are respectively defined as
$$
\B(L):=\bigcap_m\Bs(mL),\quad\B_-(L):=\bigcup_{\e\in\Q_{>0}}\B(L+\e A),\quad\B_+(L):=\bigcap_{\e\in\Q_{>0}}\B(L-\e A).
$$
Here $A$ denotes any given ample line bundle on $X$ (the definitions being independent of the choice of $A$), $m$ ranges over all sufficiently divisible positive integers,  $\Bs(mL)$ denotes the base locus of $mL$, \ie the Zariski closed subset cut out by the \emph{base ideal}
$$
\fb_{mL}:=\im\left(\Hnot(X,mL)\otimes\cO_X(-mL)\to\cO_X\right). 
$$
We obviously have 
$$
\B_-(L)\subset\B(L)\subset\B_+(L);
$$
where $\B(L)$, $\B_+(L)$ are Zariski closed, but \emph{a priori} not $\B_-(L)$ (see~\cite{Les} for an example with $L$ an $\R$-divisor). Furthermore, 
\begin{align*}
\B_-(L)\ne X\Leftrightarrow L\text{ psef},& \quad \B_-(L)=\emptyset\Leftrightarrow L\text{ nef},\\
\B_+(L)\ne X\Leftrightarrow L\text{ big},& \quad\B_+(L)=\emptyset\Leftrightarrow L\text{ ample}.
\end{align*}
Note also that for any $\e\in\Q_{>0}$ small enough we have 
\begin{equation}\label{equ:Bpert}
\B_+(L)=\B_-(L-\e A). 
\end{equation}
More generally, given a morphism $\pi\colon X\to S$ to another projective variety, one defines relative versions 
$$
\Bs(mL/S)\subset\Bs(mL),\quad\B(L/S)\subset\B(L),\quad\B_-(L/S)\subset\B_-(L),\quad\B_+(L/S)\subset\B_+(L)
$$
of the above loci by replacing $\Hnot(X,mL)$ with the sheaf of relative sections $\pi_\star\cO_X(mL)$ (and allowing $A$ to be merely relatively ample).

The proof of~\cite[Proposition~2.3]{BBP} directly extends to the relative case and yields the following birational invariance property: 

\begin{lem}\label{lem:B+bir} Given a $\Q$-line bundle $L$ on $X$ and any birational morphism $\rho\colon Y\to X$ with $Y$ normal projective, we have 
$$
\B_+(\rho^\star L/S)=\rho^{-1}(\B_+(L/S))\cup\mathrm{Exc}(\rho).
$$
In particular, a prime divisor on $X$ lies in $\B_+(L/S)$ iff its strict transform on $Y$ lies in $\B_+(\rho^\star L/S)$.
\end{lem}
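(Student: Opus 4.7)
The plan is to adapt the proof of the absolute version of this statement from \cite{BBP} to the relative setting, since the arguments transfer essentially unchanged. Fix $A$ ample on $X$ and $B$ ample on $Y$; by~\eqref{equ:Bpert}, we have $\B_+(L/S)=\B(L-\e_0 A/S)$ and $\B_+(\rho^\star L/S)=\B(\rho^\star L-\e_1 B/S)$ for all sufficiently small $\e_0,\e_1>0$. Two additional facts will be used freely: first, $\rho^\star A$ is big and semiample on $Y$ with $\B_+(\rho^\star A)=\mathrm{Exc}(\rho)$ (the latter via Nakamaye applied to the big and nef class $\rho^\star A$); second, $kB-\rho^\star A$ is ample on $Y$ for $k\gg 0$, since $B-\tfrac1k\rho^\star A$ lies in the open ample cone for $k$ large.

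For the inclusion $\supseteq$, the part $\mathrm{Exc}(\rho)\subseteq\B_+(\rho^\star L/S)$ follows from the standard curve argument: any $y\in\mathrm{Exc}(\rho)$ lies on a curve $C$ contracted by $\rho$, for which $(\rho^\star L\cdot C)=0$ and hence $((\rho^\star L-\e B)\cdot C)=-\e(B\cdot C)<0$ for all $\e>0$, forcing every relative section of every $m(\rho^\star L-\e B)$ to vanish identically on $C$. For the part $\rho^{-1}(\B_+(L/S))\subseteq\B_+(\rho^\star L/S)$, I would argue by contraposition: if $y\notin\B_+(\rho^\star L/S)$, some relative section $s'$ of $m(\rho^\star L-\e B)$ satisfies $s'(y)\ne 0$; picking a relative section $u$ of a suitable positive multiple of $kB-\rho^\star A$ with $u(y)\ne 0$ (available by ampleness) and multiplying appropriate powers of $s'$ and $u$ cancels the $B$-contribution and produces, for $\e'=\e/k$ arbitrarily small, a relative section of $m'\rho^\star(L-\e' A)$ on $Y$ not vanishing at $y$. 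Since $\rho_\star\cO_Y=\cO_X$ by normality, this section descends to a relative section of $m'(L-\e' A)$ on $X$ not vanishing at $\rho(y)$, whence $\rho(y)\notin\B_+(L/S)$ once $\e'$ is small enough to invoke~\eqref{equ:Bpert}.

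The reverse inclusion $\subseteq$ is proved by the dual construction. Given $y\notin\rho^{-1}(\B_+(L/S))\cup\mathrm{Exc}(\rho)$, the pullback of a relative section of $m(L-\e_0 A)$ on $X/S$ not vanishing at $\rho(y)$ yields $\tilde s$, a relative section of $m\rho^\star(L-\e_0 A)$ with $\tilde s(y)\ne 0$. The hypothesis $y\notin\mathrm{Exc}(\rho)=\B_+(\rho^\star A)$ yields a section $t$ of some $k(\rho^\star A-\eta B)$ with $t(y)\ne 0$, and multiplying suitable powers of $\tilde s$ and $t$ cancels the $\rho^\star A$-contribution, producing a relative section of some $m'(\rho^\star L-\e' B)$ not vanishing at $y$. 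Hence $y\notin\B_+(\rho^\star L/S)$.

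The main technical nuisance will be bookkeeping with the integer multiplicities in these section-multiplication arguments, but since all classes involved are $\Q$-line bundles, one can absorb this by working with sufficiently divisible multiples throughout. The final assertion about prime divisors is an immediate corollary: the strict transform $\tilde D\subset Y$ of a prime divisor $D\subset X$ is not contained in $\mathrm{Exc}(\rho)$ and satisfies $\rho(\tilde D)=D$, so the first equality forces $\tilde D\subseteq\B_+(\rho^\star L/S)$ iff $D\subseteq\B_+(L/S)$.
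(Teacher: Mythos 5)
Your proof is correct and takes the same approach as the paper: the paper gives no proof at all, simply asserting that the argument of \cite[Proposition~2.3]{BBP} "directly extends to the relative case," and your write-up is precisely a careful unpacking of that extension, with the same structure (Nakamaye's theorem for $\B_+(\rho^\star A)=\mathrm{Exc}(\rho)$, the section-multiplication trick to trade $\rho^\star A$ for $B$ and back, and descent via $\rho_\star\cO_Y=\cO_X$). The only quibble is that "for $\e'=\e/k$ arbitrarily small" is a slight misstatement — $\e'$ is a single fixed value — but as you correctly use it, any one $\e'>0$ suffices since $\B_+(L/S)\subseteq\B(L-\e'A/S)$.
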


Similarly, \cite[Lemma~V.1.9]{Nak} or~\cite[Proposition 2.8]{ELMNP}, which are based on Nadel's vanishing theorem, adapt without change to the relative case to yield: 

\begin{lem}\label{lem:B-} Assume that $L$ is relatively big. Then the \emph{relative asymptotic vanishing order} 
\begin{equation}\label{equ:vanordrel}
v\|L/S\|:=\lim_{m\to\infty} m^{-1}\min\left\{v(s)\mid 0\ne s\in\pi_\star\cO_X(mL)\right\}\in [0,+\infty)
\end{equation} 
is well-defined for any valuation $v\colon\C(X)^\times\to\R$. If $X$ is further smooth, then $v\|L/S\|>0$ iff $\B_-(L/S)$ contains the center of $v$ on $X$.
\end{lem}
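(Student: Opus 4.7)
First, for the existence of the limit: the sequence $a_m := \min\{v(s) \mid 0 \ne s \in \pi_\star \cO_X(mL)\}$ is subadditive (tensoring minimizers adds valuations) and finite for some $m$ by relative bigness, so Fekete's subadditive lemma yields $v\|L/S\| = \inf_m a_m/m \in [0, +\infty)$. Applied to pairs $L_1, L_2$, the same tensor-product argument shows subadditivity of $v\|\cdot/S\|$ on the cone of relatively big $\Q$-line bundles, which combined with $\Q_{>0}$-homogeneity makes it convex, hence Lipschitz on the interior. In particular, for any relatively ample $A$,
\[
\lim_{\varepsilon \to 0^+} v\|L + \varepsilon A/S\| = v\|L/S\|.
\]

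For the iff characterization, I observe that the Zariski closed sets $\B((L+\varepsilon A)/S)$ decrease in $\varepsilon$, so irreducibility of $c_X(v)$ gives $c_X(v) \subset \B_-(L/S)$ if and only if $c_X(v) \subset \B((L + \varepsilon A)/S)$ for some rational $\varepsilon > 0$. The forward direction ($v\|L/S\| > 0 \Rightarrow c_X(v) \subset \B_-(L/S)$) is then elementary: continuity gives $v\|L + \varepsilon A/S\| > 0$ for small $\varepsilon$, so for $m$ large every $s \in \pi_\star \cO_X(m(L+\varepsilon A))$ satisfies $v(s) > 0$, hence vanishes on $c_X(v)$; intersecting over $m$ yields $c_X(v) \subset \B((L+\varepsilon A)/S) \subset \B_-(L/S)$.

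The reverse direction ($c_X(v) \subset \B_-(L/S) \Rightarrow v\|L/S\| > 0$) is the delicate one, and here I would follow the strategy of \cite[Lemma~V.1.9]{Nak} or \cite[Proposition~2.8]{ELMNP}, transcribed to the relative setting. The naive argument is too weak: containment $c_X(v) \subset \B((L + \varepsilon A)/S)$ yields only $v(s) > 0$ for every section of $m(L+\varepsilon A)$, not the quantitative bound $v(s) \ge c m$ needed to conclude $v\|L+\varepsilon A/S\| > 0$. The remedy is the machinery of asymptotic multiplier ideals $\cJ(\|mL/S\|) \subset \cO_X$, whose construction requires smoothness of $X$ and the relative form of Kawamata--Viehweg (Nadel) vanishing. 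The key output is the identity $v(\cJ(\|mL/S\|))/m \to v\|L/S\|$, together with a relative Nadel vanishing statement linking the vanishing of $\cJ(\|m(L+\varepsilon A)/S\|)$ to containment in $\B((L+\varepsilon A)/S)$; combined these yield $v\|L + \varepsilon A/S\| > 0$, and the elementary inequality $v\|L/S\| \ge v\|L+\varepsilon A/S\|$, obtained by tensoring $mL$-sections with a section $t \in \pi_\star \cO_X(kA)$ satisfying $v(t) = 0$ (available because $A$ is relatively base-point-free after passing to a power), concludes the argument.

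The main obstacle is the reverse direction, specifically the transcription of the Nakayama/ELMNP multiplier-ideal machinery from the absolute to the relative setting. This transcription is essentially routine, since the relative form of Kawamata--Viehweg vanishing is standard, but it requires careful bookkeeping with $\pi_\star \cO_X(mL)$ in place of $H^0(X, mL)$ throughout.
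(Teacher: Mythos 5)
Your proposal matches the paper's approach: the paper's proof is a one-line remark that the multiplier-ideal arguments of Nakayama \cite[Lemma~V.1.9]{Nak} and ELMNP \cite[Proposition~2.8]{ELMNP}, both resting on Nadel vanishing, adapt without change to the relative setting, and your outline correctly reconstructs what that adaptation involves. The structure you describe --- Fekete subadditivity for the existence of the limit, a perturbation-by-$\varepsilon A$ argument for the elementary implication, and asymptotic multiplier ideals plus relative Nadel vanishing to upgrade the positivity of a single $v(\cJ(\|m_0L/S\|))$ to the linear lower bound giving $v\|L/S\|>0$ --- is precisely the content of the cited sources, so there is nothing further to compare.
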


Using this we show: 

\begin{lem}\label{lem:B+} Assume $X$ is smooth. If $\pi(\B_+(L))$ is a finite set, then $\B_+(L)=\B_+(L/S)$.
\end{lem}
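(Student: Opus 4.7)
The inclusion $\B_+(L/S)\subset\B_+(L)$ is automatic: global sections restrict to local ones, so the relative base ideal $\fb_{mL/S}$ contains $\fb_{mL}$ for every $m$. For the reverse, the plan is to use (\ref{equ:Bpert}) together with Lemma~\ref{lem:B-} to reformulate the statement in terms of asymptotic vanishing orders along divisorial valuations, and then to compare them via a ``globalization'' trick using pullbacks of sections from $S$.

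Fix $\e\in\Q_{>0}$ small enough that $\B_+(L)=\B_-(L')$ and $\B_+(L/S)=\B_-(L'/S)$, where $L':=L-\e A$. The hypothesis $\pi(\B_+(L))$ finite implies that for $s\in S$ outside this finite set, the fiber $\pi^{-1}(s)$ is disjoint from $\B_+(L)$, so $L|_{\pi^{-1}(s)}$ is ample on the general fiber. In particular $L'$ is both big and relatively big, so Lemma~\ref{lem:B-} applies in both its absolute and its relative form. The desired inclusion $\B_-(L')\subset\B_-(L'/S)$ thus reduces to showing that, for every divisorial valuation $v$ on $X$ whose center maps to a point $s_0\in\pi(\B_+(L))$, we have $v\|L'\|\le v\|L'/S\|$ (the reverse inequality being automatic).

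To achieve the latter, fix such a $v$ and a very ample $H$ on $S$, and choose $t\in H^0(S,MH)$ with $t(s_0)\ne 0$ and $\mathrm{div}(t)\supset S\setminus U$ for a neighborhood $U$ of $s_0$. Given any local section $s\in H^0(\pi^{-1}(U),mL')$, the product $s\cdot(\pi^\star t)^k$ is regular on all of $X$ once $k$ is large enough to cancel the poles of $s$ along the finitely many prime divisors contained in $\pi^{-1}(\mathrm{div}(t))$, and defines a global section of $mL'+kM\pi^\star H$. Since $t(s_0)\ne 0$ forces $v(\pi^\star t)=0$, this section has the same $v$-order as $s$, yielding
\[
v\|L'+(kM/m)\pi^\star H\|\;\le\;v(s)/m.
\]
Applying this to a sequence $s_m$ computing $v\|L'/S\|$, and exploiting the freedom to choose $t$ vanishing to prescribed order along $\mathrm{div}(t)$ to drive the ratio $kM/m$ down to~$0$ (the crux of the argument; this relies on the finiteness of $\pi(\B_+(L))$, which forces only finitely many prime divisors on $X$ to contribute to the pole bound), we obtain $v\|L'+\eta_m\pi^\star H\|\to 0$ with $\eta_m\to 0^+$.

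Finally, conclude using continuity of $v\|\cdot\|$ on the interior of the big cone: since $L'$ is big and $\pi^\star H$ is nef, the function $\eta\mapsto v\|L'+\eta\pi^\star H\|$ is concave on $[0,\infty)$, non-increasing, and converges to $v\|L'\|$ as $\eta\to 0^+$. The estimate above then gives $v\|L'\|\le v\|L'/S\|=0$, as required.

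The main obstacle in this plan is the control $\eta_m=kM/m\to 0$: without the hypothesis on $\pi(\B_+(L))$ one cannot in general extract local sections with both arbitrarily small $v$-order and arbitrarily small pole orders along the divisors of $\pi^{-1}(S\setminus U)$; the finiteness hypothesis ensures that only finitely many fibers contribute to the relevant pole divisors and that $L'$ is relatively ample outside those, which is what ultimately allows the twist by $\pi^\star H$ to be taken asymptotically negligible.
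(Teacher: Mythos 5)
Your proposal takes a genuinely different route from the paper's, but it has a real gap at the step you yourself flag as ``the crux of the argument.''

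The reduction to comparing $v\|L'\|$ and $v\|L'/S\|$ via Lemma~\ref{lem:B-} and~\eqref{equ:Bpert} is sound, and the final continuity/concavity step is fine. The problem is the extension step. Given $s_m\in(\pi_\star\cO_X(mL'))(U)$, the rational section $s_m$ of $mL'$ has some pole divisor $D_m$ supported in $\pi^{-1}(S\setminus U)$, and to globalize you need $k_m$ with $D_m\le k_m\div(\pi^\star t)$, so $k_m$ is dictated by the pole \emph{orders} of $s_m$, not merely by the finite number of components in $\supp D_m$. Your justification — ``only finitely many prime divisors on $X$ contribute to the pole bound'' — controls the support, which is automatically finite, but says nothing about how the multiplicities in $D_m$ grow with $m$. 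There is no reason for them to grow sublinearly: as $m\to\infty$, a local section achieving $v(s_m)/m\to v\|L'/S\|$ may well do so at the cost of poles of order $\Theta(m)$ (or worse) along fibers over $S\setminus U$, which would make $k_m M/m$ bounded away from $0$ and the concluding limit argument collapse. ``Choosing $t$ to vanish to prescribed order'' does not help either, since raising the vanishing order of $t$ raises its degree $M$ commensurately. In short, the finiteness of $\pi(\B_+(L))$ is invoked rhetorically but is never actually wired into a bound on the pole multiplicities, and that bound is precisely what the argument needs.

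By contrast, the paper avoids pole-order estimates entirely. After fixing $m$ and $\e$ so that $B:=\B_+(L)=\Bs(mL_\e)$ and $B_S:=\B_+(L/S)=\Bs(mL_\e/S)$, it blows up both base ideals via $\rho\colon Y\to X$ and writes $\rho^\star(mL_\e)=M+E=M_S+E_S$ with $M$ globally generated, $M_S$ relatively globally generated, and $E\ge E_S$ effective. The finiteness of $\pi(\B_+(L))$ enters exactly once, in a clean curve-by-curve argument: any irreducible curve $C\subset Y$ mapping to a curve in $S$ cannot lie in $\rho^{-1}(B)$ (since $\pi(B)$ is finite), hence not in $\supp(E-E_S)$, so $(E-E_S)\cdot C\ge0$ and $M_S\cdot C\ge M\cdot C\ge0$; curves contracted by $\pi$ are handled by relative nefness. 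This makes $M_S$ globally nef, whence $\rho(\supp E_S)=B_S$ contains the non-nef locus of $L_\e$, which equals $\B_-(L_\e)=B$ by Lemma~\ref{lem:B-}. I'd suggest either adopting that mechanism, or, if you want to pursue your extension idea, first proving a genuine uniform (in $m$) bound on the pole orders of relative sections — which is the missing lemma your plan silently assumes.
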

\begin{proof} Pick an ample line bundle $A$ on $X$. By noetherianity, we can choose $\e\in\Q_{>0}$ and then $m$ sufficiently divisible such that $L_\e:=L-\e A$ is big and satisfies 
$$
B:=\B_+(L)=\B_-(L_\e)=\B(L_\e)=\Bs(mL_\e), 
$$
$$
B_S:=\B_+(L/S)=\B_-(L_\e/S)=\B(L_\e/S)=\Bs(mL_\e/S). 
$$
Pick a birational morphism $\rho\colon Y\to X$ that dominates the blowup of both base ideals $\fb_{mL_\e},\fb_{mL_\e/S}$ and is an isomorphism outside their zero loci $B\supset B_S$. Then
$$
\rho^\star(mL_\e)=M+E=M_S+E_S
$$
where $M$ (resp.~$M_S$) is globally generated (resp.~relatively globally generated over $S$) and $E\ge E_S$ are effective Cartier divisors on $Y$ such that $\rho(\supp E)=B$, $\rho(\supp E_S)=B_S$. We claim that $M_S$ is (globally) nef on $Y$. To see this, consider any irreducible curve $C\subset Y$. If $C$ maps to a point of $S$, then $M_S\cdot C\ge0$ as $M_S$ is relatively nef over $S$. Assume now that $C$ maps to a curve on $S$. Then $C$ cannot be contained in the finite set 
$$
\pi(\B_+(L))=\pi(\rho(\supp E)).
$$
Thus $C$ cannot be contained in the support of the effective divisor $E-E_S$, and hence $(E-E_S)\cdot C\ge0$, which yields as desired $M_S\cdot C\ge M\cdot C\ge0$, since $M$ is nef. 

Since $L_\e$ is big and $M_S=\rho^\star(mL_\e)-E_S$ is nef, $B_S=\rho(\supp E_S)$ contains the non-nef locus of $L_\e$ (see~\cite[Lemma~2.4]{BBP}), which coincides with its diminished base locus $\B_-(L_\e)=B$ by Lemma~\ref{lem:B-}. Thus $B=B_S$, and the result follows. 
\end{proof}

%
%%%%%%%%%%%%%%%%%%%%%%%%%%%%%%%%%%%%%%%%%%%%%%%%%%%%%%%%%%%%%%
%%
%%%%%%%%%%%%%%%%%%%%%%%%%%%%
\section{Asymptotic cones and geodesic stability}\label{sec:asymgeod}
%%
%%%%%%%%%%%%%%%%%%%%%%%%%%%%
This section provides a general study of the growth properties of geodesically convex functions on Busemann convex metric spaces, culminating with a characterization of coercivity modulo a reductive group action in terms of geodesic stability. Most results here are closely related to~\cite{DR,BDL2,DL,CC2}, but the slope formula for the quotient metric (Theorem~\ref{thm:slopedG}) appears to be new. In this whole section, $(\cE,\dd)$ denotes a complete metric space (which will be the space $\cE^1$ of finite energy metrics in our concrete application). 

%%
%%%%%%%%%%%%%%%%%%%%%%%%%%%%
\subsection{The asymptotic cone}\label{sec:asym}
Recall  that a \emph{(constant-speed) geodesic} in $\cE$ is a path $\{\phi_t\}_{t\in I}$ parametrized by a (possibly unbounded) interval $I\subset\R$, such that 
$$
\dd(\phi_s,\phi_t)=c|s-t|
$$
for $s,t\in I$ and a constant $c\in\R_{\ge 0}$, called the \emph{speed} of $\{\phi_t\}$. It is called a \emph{geodesic segment} if $I$ is a closed interval, and a \emph{geodesic ray} if  $I=\R_{\ge 0}$. The set of geodesics is preserved under affine reparametrization and pointwise limits. 

\begin{defi}\label{defi:disting} We shall say that a class $\cG$ of geodesics in $(\cE,\dd)$ is  \emph{distinguished} if: 
\begin{itemize}
\item $\cG$ is closed under affine reparametrization and pointwise limits; 
\item a geodesic lies in $\cG$ iff its restriction to each compact subinterval lies in $\cG$. 
\end{itemize}
\end{defi}
A distinguished class $\cG$ is uniquely determined by the set of geodesic segments it contains; conversely, any set of geodesic segments generates a minimal distinguished class of geodesics. From now on we fix a distinguished class $\cG$, and implicitly assume all geodesics to lie in $\cG$ unless otherwise specified. 

We say that $(\cE,\dd)$ is \emph{(uniquely) geodesic} (with respect to $\cG$) if any pair of points can be joined by a geodesic segment (unique up to reparametrization). Recall that $\cE$ is then locally compact iff its closed balls are compact, by the Hopf--Rinow theorem. 

\begin{exam} Any Banach space $(\cE,\dd)$ is uniquely geodesic with respect to the distinguished class of affine paths. 
\end{exam}

The metric space $(\cE,\dd)$ is said to be \emph{Busemann convex} (with respect to $\cG$) if it is geodesic, and its metric $\dd$ is geodesically convex, \ie $t\mapsto \dd(\phi_t,\p_t)$ is convex for any pair of geodesics $\{\phi_t\}$, $\{\p_t\}$. As one easily sees, it is enough to check this when $\phi_0=\p_0$. It then amounts to 
$$
\dd(\phi_t,\p_t)\le t \dd(\phi_1,\p_1),\quad t\in [0,1], 
$$
which says that geodesic triangles are `thinner' than Euclidean ones in the sense that Thales' theorem is an inequality. Busemann convexity is thus a \emph{nonpositive curvature} condition (see~\cite{Pap} as a general reference). 

\begin{exam} Any Banach space is Busemann convex with respect to the distinguished class of affine geodesics. 
\end{exam}

\begin{exam} If $(\cE,\dd)$ is $\CAT$, then it is Busemann convex with respect to the class of all geodesics, see for instance~\cite[II.2.2]{BrH}. For a Riemannian manifold $\cE$, the converse holds, and this is also equivalent to $\cE$ being of nonpositive sectional curvature and simply connected (and hence contractible, see Proposition~\ref{prop:Buse}~(iii)). 

In contrast, while any strictly convex Banach space is Busemann convex with respect to all geodesics (which are line segments in that case), it is $\CAT$ iff it satisfies the parallelogram law, and hence is a Hilbert space. 
\end{exam}

Busemann convex spaces enjoy the following general properties: 

\begin{prop}\label{prop:Buse} Assume that $(\cE,\dd)$ is Busemann convex. Then:
\begin{itemize}
\item[(i)] $(\cE,\dd)$ is uniquely geodesic; 
\item[(ii)] geodesic segments depend continuously on their end-points;
\item[(iii)] $\cE$ is contractible; 
\item[(iv)] every compact group of isometries of $(\cE,\dd)$ admits a fixed point. 
\end{itemize}
\end{prop}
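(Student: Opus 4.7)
All four parts flow from the two-point inequality implied by geodesic convexity of $\dd$: for any two geodesics $\{\phi_t\}_{t\in[0,1]},\{\p_t\}_{t\in[0,1]}\in\cG$,
\begin{equation*}
\dd(\phi_t,\p_t)\le(1-t)\dd(\phi_0,\p_0)+t\dd(\phi_1,\p_1),\qquad t\in[0,1],
\end{equation*}
which is just the bound of the convex function $t\mapsto\dd(\phi_t,\p_t)$ by its affine interpolation. My plan is to establish this first and then read off (i)--(iii) as direct consequences, handling (iv) as the real work.

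For (i), applied to two geodesic segments with matching endpoints $\phi_0=\p_0$ and $\phi_1=\p_1$, the right-hand side vanishes and forces $\phi_t=\p_t$; uniqueness on arbitrary intervals then follows from stability of the distinguished class $\cG$ under restriction to and pointwise limits from compact subintervals (Definition~\ref{defi:disting}). For (ii), the very same inequality exhibits the endpoints-to-segment map as $1$-Lipschitz with respect to the sup metric on $[0,1]$, giving the continuous dependence. For (iii), I would fix a basepoint $\phi_\refe\in\cE$, use the uniqueness in (i) to define $\gamma_\phi\colon[0,1]\to\cE$ as the geodesic from $\phi_\refe$ to $\phi$, and verify that $H(\phi,t):=\gamma_\phi(t)$ is jointly continuous by (ii) and provides a strong deformation retraction of $\cE$ to $\phi_\refe$.

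For (iv), the plan is the classical \emph{circumcenter} strategy. Given a compact group $K$ acting by isometries and any $\phi_0\in\cE$, the orbit $K\cdot\phi_0$ is compact, and the function
\begin{equation*}
F\colon\cE\to[0,\infty),\qquad F(\phi):=\sup_{g\in K}\dd(\phi,g\phi_0)
\end{equation*}
is continuous, $K$-invariant (as a supremum over a $K$-stable set), geodesically convex (as a supremum of geodesically convex functions), and coercive by the triangle inequality. A midpoint/Cauchy argument along a minimizing sequence $\{\phi_n\}$, using the two-point inequality above to bound $F$ at the midpoint of $[\phi_n,\phi_k]$ by $\tfrac12 F(\phi_n)+\tfrac12 F(\phi_k)$, then produces a minimizer of $F$ in $\cE$ by completeness.

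The step I expect to be the genuine obstacle is $K$-invariance (equivalently, uniqueness) of such a minimizer. In a CAT$(0)$ space this would be automatic from the quantitative Bruhat--Tits inequality, but Busemann convexity only gives qualitative convexity of $\dd$, and the minimum set $\mathrm{Min}(F)$ may \emph{a priori} be a nondegenerate closed convex $K$-invariant subset. My preferred workaround is to replace $F$ by the Haar-averaged $L^2$-energy $\phi\mapsto\int_K\dd(\phi,g\phi_0)^2\,d\mu(g)$, combining convexity of $\dd$ with strict convexity of $x\mapsto x^2$ on $[0,\infty)$ to force the minimum to be unique, and hence $K$-fixed. Alternatively, one can invoke a Cartan--Bruhat--Tits-type fixed-point theorem for compact isometric actions on complete Busemann convex spaces from the standard literature on metric spaces of non-positive curvature.
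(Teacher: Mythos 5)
Your treatment of (i)--(iii) is exactly the paper's: (i) and (ii) are read off directly from convexity of $t\mapsto\dd(\phi_t,\p_t)$, and (iii) uses the geodesic contraction to the basepoint $\phi_\refe$. The gap is in your ``preferred workaround'' for (iv).

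The $L^2$-averaged energy $E(\phi)=\int_K\dd(\phi,g\phi_0)^2\,d\mu(g)$ is \emph{not} strictly geodesically convex in a general Busemann convex space, because the composition of a strictly convex increasing function with a convex function is strictly convex only where the inner function is non-constant --- and $t\mapsto\dd(\gamma(t),z)$ can be constant along a non-degenerate geodesic. Concretely, take $\cE=(\R^2,\|\cdot\|_\infty)$ with the distinguished class of affine segments (a Busemann convex space, as noted after Definition~\ref{defi:disting}); let $K=\Z/2$ act by $(x,y)\mapsto(x,-y)$ and $\phi_0=(0,1)$. Then $E(x,0)=\max(|x|,1)^2$, so the minimum set of $E$ is the entire segment $\{(x,0):|x|\le 1\}$, not a point. (In this toy example a fixed point still exists, but the $L^2$ argument does not produce it.) A second, subtler issue is \emph{existence} of a minimizer: the midpoint--Cauchy argument you sketch for $F=\sup_g\dd(\cdot,g\phi_0)$ relies on the quantitative CAT$(0)$ inequality to force a minimizing sequence to be Cauchy; Busemann convexity supplies only the qualitative $F(m)\le\tfrac12F(\phi_n)+\tfrac12F(\phi_k)$, which does not bound $\dd(\phi_n,\phi_k)$. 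This is precisely why the circumcenter strategy breaks down outside CAT$(0)$ (or uniformly convex) settings. Your fallback --- citing a fixed-point theorem from the metric-geometry literature --- is what the paper does: it invokes Basso's theorem for spaces with a conical geodesic bicombing (of which Busemann convex spaces are a special case), a result whose proof deliberately bypasses circumcenters in favour of a barycenter construction on probability measures.
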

\begin{proof} Properties (i), (ii)  are straightforward consequences of the convexity of $\dd$. Fixing a basepoint $\phi_\refe\in\cE$ and mapping each $\phi\in\cE$ to $H_t(\phi):=\phi_t$ for the unique geodesic segment $\{\phi_t\}_{t\in [0,1]}$ joining $\phi_0=\phi_\refe$ to $\phi_1=\phi$ then defines a homotopy $H_t\colon\cE\to\cE$ between $H_1=\id_\cE$ and $H_0\equiv\phi_\refe$, proving (iii). Finally, (iv) follows from~\cite[Theorem~1.1]{Bas}. 
\end{proof}

\begin{defi} The \emph{radial distance}\footnote{The term \emph{chordal metric} used in~\cite{DL} sounds more appropriate for the spherical version of $\cE_\ra$ using unit speed geodesic rays, as in~\cite[II.9]{BrH}.}  between two geodesic rays $\{\phi_t\}_{t\ge 0}$, $\{\p_t\}_{t\ge 0}$ is defined as
$$
\dd_\ra(\{\phi_t\},\{\p_t\}):=\lim_{t\to +\infty} t^{-1}  \dd(\phi_t,\p_t).
$$
\end{defi}
Note that the limit exists in $\R_{\ge 0}$, since the function $t\mapsto \dd(\phi_t,\p_t)$ is convex and has at most linear growth (by the triangle inequality). Convexity further yields
\begin{equation}\label{equ:drays}
\dd(\phi_t,\p_t)\le\dd(\phi_0,\p_0)+t\,\dd_\ra(\{\phi_t\},\{\p_t\}), 
\end{equation}
so that $\dd_\ra(\{\phi_t\},\{\p_t\})=0$ iff $\{\phi_t\}$ and $\{\p_t\}$ are \emph{parallel}, \ie $\dd(\phi_t,\p_t)$ is bounded. In line with the usual definition for $\CAT$ spaces, we introduce: 

\begin{defi} The \emph{asymptotic cone} of $(\cE,\dd)$ (with respect to $\cG$) is defined as the metric space $(\cE_\ra,\dd_\ra)$ where $\cE_\ra$ denotes the set of (distinguished) geodesic rays modulo parallelism, equipped with the radial metric $\dd_\ra$. 
\end{defi}
The equivalence class of a geodesic ray $\{\phi_t\}$ will be denoted by $\phi_\infty\in\cE_\ra$ and called its \emph{direction}; we also say that $\{\phi_t\}$ is \emph{directed} by $\f\in\cE_\ra$ if $\f=\phi_\infty$. 

The asymptotic cone $(\cE_\ra,\dd_\ra)$ is indeed a metric cone with respect to the scaling action
$$
\R_{>0}\times \cE_\ra\to \cE_\ra\quad (c,\f)\mapsto c\f
$$
induced by $c\cdot\{\phi_t\}:=\{\phi_{ct}\}$, in that 
$$
\dd_\ra(c\f,c\p)=c\,\dd_\ra(\f,\p)
$$
for all $\f,\p\in\cE_\ra$ and $c>0$. The vertex $0\in\cE_\ra$ corresponds to constant rays, and unit-speed geodesic rays yield a section of the cone. 

\begin{prop}\label{prop:Busecone} The metric space $(\cE_\ra,\dd_\ra)$ is complete. Given any basepoint $\phi_\refe\in \cE$, $\cE_\ra$ further is in 1--1 correspondence with the set of geodesic rays emanating from $\phi_\refe$. 
\end{prop}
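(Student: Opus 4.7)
The plan is to establish the 1-1 correspondence between $\cE_\ra$ and geodesic rays in $\cG$ emanating from $\phi_\refe$ first, since completeness of $(\cE_\ra,\dd_\ra)$ falls out cleanly once this is in hand. For the correspondence, I will prove uniqueness and existence separately.

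Uniqueness is easy: if $\{\phi_t\}$ and $\{\psi_t\}$ are two rays in $\cG$ with $\phi_0=\psi_0=\phi_\refe$ and $\dd_\ra(\phi_\infty,\psi_\infty)=0$, then $f(t):=\dd(\phi_t,\psi_t)$ is convex on $[0,\infty)$ by Busemann convexity, with $f(0)=0$. Convexity plus $f(0)=0$ makes $f(t)/t$ non-decreasing, with limit $\dd_\ra(\phi_\infty,\psi_\infty)=0$; hence $f(t)/t\le0$, and combined with $f\ge0$ this forces $f\equiv0$, so $\phi=\psi$. (As a byproduct, parallel rays from a common basepoint must share the same speed, since $\dd(\phi_t,\psi_t)\ge|c_\phi-c_\psi|t$ by the triangle inequality.)

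For existence, given a ray $\{\psi_t\}$ of speed $c$, I would build a parallel ray from $\phi_\refe$ via geodesic approximation. For each $T>0$, let $\sigma^T\colon[0,T]\to\cE$ be the constant-speed geodesic joining $\phi_\refe$ to $\psi_T$ (its speed $\dd(\phi_\refe,\psi_T)/T$ converging to $c$ by the triangle inequality). Applying Busemann convexity to $\sigma^T$ and $\psi|_{[0,T]}$, both constant-speed geodesics on $[0,T]$ agreeing at $t=T$, yields the a priori bound
\[
\dd(\sigma^T_t,\psi_t)\le(1-t/T)\,\dd(\phi_\refe,\psi_0),
\]
so $(\sigma^T_t)_T$ stays in a bounded neighborhood of $\psi_t$ for every fixed $t$. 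I would then show that $T\mapsto\sigma^T_t$ is Cauchy as $T\to\infty$ by iterating Busemann convexity across nearby values $T<T'$ (comparing $\sigma^T$ with $\sigma^{T'}$ restricted to $[0,T]$, using the bound above to control the endpoint discrepancy) and then invoke completeness of $\cE$ to define $\phi_t:=\lim_{T\to\infty}\sigma^T_t$. Because pointwise limits of constant-speed geodesics are constant-speed geodesics, and $\cG$ is closed under such limits together with the local-to-global clause in the definition of a distinguished class, the resulting $\phi=\{\phi_t\}$ is a ray in $\cG$ from $\phi_\refe$; passing to the limit in the displayed inequality gives $\dd(\phi_t,\psi_t)\le\dd(\phi_\refe,\psi_0)$, so $\phi$ is parallel to $\psi$.

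With the 1-1 correspondence established, completeness follows easily. Given a Cauchy sequence $(\f^n)\subset\cE_\ra$, lift each $\f^n$ to the unique representative ray $\phi^n$ from $\phi_\refe$. Busemann convexity at the common basepoint makes $t\mapsto\dd(\phi^n_t,\phi^m_t)$ convex with value $0$ at $t=0$, so $\dd(\phi^n_t,\phi^m_t)/t$ is non-decreasing in $t$ with limit $\dd_\ra(\f^n,\f^m)$, giving the sharpened linear bound
\[
\dd(\phi^n_t,\phi^m_t)\le t\cdot\dd_\ra(\f^n,\f^m).
\]
Thus for each $t$ the sequence $(\phi^n_t)_n$ is Cauchy in $\cE$, uniformly on bounded intervals, and by completeness of $\cE$ and closure of $\cG$ it converges pointwise to a ray $\phi\in\cG$ from $\phi_\refe$. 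Passing $m\to\infty$ in the displayed bound yields $\dd_\ra(\f^n,\phi_\infty)\le\limsup_m\dd_\ra(\f^n,\f^m)\to0$, so $\f^n\to\phi_\infty$ in $\cE_\ra$. The main obstacle is the Cauchy claim in the existence step: Busemann convexity gives boundedness of $(\sigma^T_t)_T$ essentially for free, but upgrading to convergence of the full net requires a delicate combination of iterated convex comparisons between $\sigma^T$ and $\sigma^{T'}$, the completeness of $\cE$, and the closure properties of $\cG$ under pointwise limits.
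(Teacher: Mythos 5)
Your proof is correct and follows essentially the same route as the paper: constructing the parallel ray from $\phi_\refe$ by taking the geodesic segments $\sigma^T$ from $\phi_\refe$ to $\psi_T$, proving $T\mapsto\sigma^T_t$ is Cauchy via two applications of Busemann convexity, and then deriving completeness from the pointwise bound $\dd(\phi^n_t,\phi^m_t)\le t\,\dd_\ra(\f^n,\f^m)$. The step you flag as the ``main obstacle'' is not actually delicate: the sketch you give already works as written, since comparing $\sigma^T$ with $\sigma^{T'}|_{[0,T]}$ (common origin) gives $\dd(\sigma^T_t,\sigma^{T'}_t)\le\tfrac{t}{T}\,\dd(\psi_T,\sigma^{T'}_T)$, and your a priori bound controls the right-hand side by $\tfrac{t}{T}\cdot\tfrac{T'-T}{T'}\,\dd(\phi_\refe,\psi_0)\le\tfrac{t}{T}\,\dd(\phi_\refe,\psi_0)$, which is small once $T$ is large; no iteration is needed.
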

Note that $\cE_\ra$ is generally not locally compact even when $\cE$ is (see Example~\ref{exam:building} below). 

\begin{proof} We first prove the second point. Pick a basepoint $\phi_\refe\in\cE$. By~\eqref{equ:drays}, geodesic rays emanating from $\phi_\refe$ are uniquely determined by their image in $\cE_\ra$. Conversely, pick a unit-speed geodesic ray $\{\p_t\}$ emanating from some other point $\p_0\in\cE$. We need to construct a unit-speed geodesic ray $\{\phi_t\}$ emanating from $\phi_\refe$ and parallel to $\{\p_t\}$. For each $T>0$, denote by $\{\phi_{T,t}\}_{0\le t\le T}$ the geodesic joining $\phi_\refe$ to $\p^T$, and note that its speed 
$$
c^T=\frac{\dd(\phi_\refe,\p^T)}{\dd(\p_0,\p^T)}
$$
tends to $1$ as $T\to\infty$. For $0\le t\le T\le T'$, convexity yields 
$$
\dd(\phi_{T,t},\phi_{T',t})\le\frac{t}{T} \dd(\p^T,\phi_{T',T})\text{ and } \dd(\p^T,\phi_{T',T})\le\frac{T'-T}{T}\dd(\phi_\refe,\p_0)
$$
(draw a picture!). For each $t$ fixed, it follows that $T\mapsto\phi_{T,t}$ is Cauchy. Since $\{\phi_{T,t}\}_t$ has speed $c^T\to 1$ as $T\to+\infty$, it thus converges to a unit-speed geodesic ray $\{\phi_t\}$ emanating from $\phi_\refe$, which is easily seen to be parallel to $\{\p_t\}$. This proves the second point. 

Now pick a Cauchy sequence $(\f_j)$ in $\cE_\ra$, and for each $j$ denote by $\{\phi_{j,t}\}_{t\ge 0}$ the geodesic ray emanating from $\phi_\refe$ and directed by $\f_j$. Then~\eqref{equ:drays} shows that, for each $t$, the sequence $(\phi_{j,t})_j$ is Cauchy. Thus $\{\phi_{j,t}\}$ converges pointwise to a geodesic ray $\{\phi_t\}$, whose direction $\f$ is easily seen to be the $\dd_\ra$-limit of $(\f_j)$, using again~\eqref{equ:drays}. This shows that $(\cE_\ra,\dd_\ra)$ is complete. 
\end{proof}

\begin{rmk}\label{rmk:radgeod} While this will not be needed in this paper, similar elementary arguments show that $(\cE_\ra,\dd_\ra)$ is geodesic and Busemann convex with respect to a natural distinguished class $\cG_\ra$ of geodesics induced by $\cG$ (compare~\cite[Theorem~4.7]{DL}). 
\end{rmk}

\ 

On top of geodesic rays, the following class of rays also defines a direction in the asymptotic cone:

\begin{defi}\label{defi:almost} We shall say that an arbitrary ray $\{\phi_t\}_{t\ge 0}$ in $\cE$ is an \emph{almost geodesic ray} if $\dd(\phi_t,\p_t)=o(t)$ as $t\to\infty$ for some geodesic ray $\{\p_t\}$. 
\end{defi}
The direction of $\{\p_t\}$ is then uniquely determined; it will be denoted by $\phi_\infty\in\cE_\ra$ and called the \emph{direction of} $\{\phi_t\}$. In fact, the geodesic ray $\{\p_t\}$ emanating from $\phi_\refe$ and directed by $\phi_\infty$ is necessarily equal to the pointwise limit as $T\to\infty$ of the geodesic segments $\{\p^T_t\}_{t\in [0,T]}$ joining $\phi_\refe$ to $\phi^T$, since $\dd(\p^T_t,\p_t)\le\frac{t}{T} \dd(\phi^T,\p^T)$ by Busemann convexity. 

As a simple consequence of the triangle equality, we note: 

\begin{lem}\label{lem:almostd} For any two almost geodesic rays $\{\phi_{1,t}\}$, $\{\phi_{2,t}\}$ we have
$$
\lim_{t\to\infty}t^{-1}\dd(\phi_{1,t},\phi_{2,t})=\dd_\ra(\phi_{1,\infty},\phi_{2,\infty}).
$$
\end{lem}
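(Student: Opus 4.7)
The plan is straightforward: apply the triangle inequality twice to compare the distances along $\{\phi_{1,t}\}, \{\phi_{2,t}\}$ to the distances along the two geodesic rays $\{\psi_{1,t}\}, \{\psi_{2,t}\}$ witnessing the almost-geodesic property, and use the fact that errors are $o(t)$.

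Concretely, by Definition~\ref{defi:almost} choose geodesic rays $\{\psi_{i,t}\}$ with $\dd(\phi_{i,t},\psi_{i,t})=o(t)$ and direction $\phi_{i,\infty}\in\cE_\ra$ for $i=1,2$. The triangle inequality gives
\[
\bigl|\dd(\phi_{1,t},\phi_{2,t})-\dd(\psi_{1,t},\psi_{2,t})\bigr|\le \dd(\phi_{1,t},\psi_{1,t})+\dd(\phi_{2,t},\psi_{2,t})=o(t).
\]
Dividing by $t$ and letting $t\to\infty$, the right-hand side vanishes in the limit, so it suffices to show that
\[
\lim_{t\to\infty} t^{-1}\dd(\psi_{1,t},\psi_{2,t})=\dd_\ra(\phi_{1,\infty},\phi_{2,\infty}).
\]

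This last equality is almost the definition of $\dd_\ra$, but the latter is defined only up to parallelism. To bridge this, fix a basepoint $\phi_\refe$ and, using Proposition~\ref{prop:Busecone}, let $\{\psi'_{i,t}\}$ be the geodesic ray emanating from $\phi_\refe$ with direction $\phi_{i,\infty}$. Then $\{\psi_{i,t}\}$ and $\{\psi'_{i,t}\}$ are parallel, so $\dd(\psi_{i,t},\psi'_{i,t})$ is bounded (and, in particular, $o(t)$); one more application of the triangle inequality reduces the task to
\[
\lim_{t\to\infty} t^{-1}\dd(\psi'_{1,t},\psi'_{2,t})=\dd_\ra(\phi_{1,\infty},\phi_{2,\infty}),
\]
which is the definition of the radial metric applied to the two rays from $\phi_\refe$ representing the given directions.

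There is no serious obstacle here; the only thing to be careful about is that the rays $\{\psi_{i,t}\}$ provided by Definition~\ref{defi:almost} need not emanate from a common basepoint, so the reduction via Proposition~\ref{prop:Busecone} (or equivalently an appeal to~\eqref{equ:drays}) is needed to express the right-hand side directly in terms of $\dd_\ra$. Once this is done, the combination of the two triangle-inequality estimates yields matching $\limsup$ and $\liminf$ bounds, and hence the claimed equality.
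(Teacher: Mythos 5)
Your argument is correct, and the paper itself merely states that the lemma is ``a simple consequence of the triangle [in]equality'' without spelling it out; what you wrote is precisely that spelling-out. The only small redundancy is your second reduction: $\dd_\ra$ is defined on pairs of geodesic rays and descends to parallelism classes, so once you know $\dd(\phi_{i,t},\psi_{i,t})=o(t)$ with $\{\psi_{i,t}\}$ geodesic of direction $\phi_{i,\infty}$, the limit $\lim_t t^{-1}\dd(\psi_{1,t},\psi_{2,t})$ already equals $\dd_\ra(\phi_{1,\infty},\phi_{2,\infty})$ by definition; passing to representatives emanating from a common basepoint is not needed.
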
 

%
%%%%%%%%%%%%%%%%%%%%%%%%%%%%%%%%%%%%%%%%%%%%%%%%%%%%%%%%%%%%
\subsection{Radial transforms and coercivity thresholds}\label{sec:radtransf}
Assume as above that $(\cE,d)$ is Busemann convex, with asymptotic cone $(\cE_\ra,\dd_\ra)$, and consider a function
$$
F\colon\cE\to\R\cup\{+\infty\}. 
$$
Following the terminology of~\cite{DL}, we introduce: 

\begin{defi}\label{defi:radial} We say that a function $F_\ra\colon\cE_\ra\to\R\cup\{+\infty\}$ is the \emph{radial transform} of $F$ if, for every geodesic ray $\{\phi_t\}$ in $\cE$ not entirely contained in $\{F=+\infty\}$, we have 
$$
F_\ra(\phi_\infty)=\lim_{t\to\infty} t^{-1}F(\phi_t).
$$
If $F\equiv+\infty$ we set $F_\ra\equiv+\infty$. 
\end{defi}
Since every $\f\in\cE_\ra$ is the direction of a unique geodesic ray emanating from a given point in $\{F<\infty\}$ (assuming such a point exists) (see Proposition~\ref{prop:Busecone}), the radial transform $F_\ra$, when it exists, is uniquely determined. It is further trivially homogeneous (of degree $1$) with respect to the scaling action of $\R_{>0}$. Note also that $F_\ra$ takes values in $[0,+\infty]$ as soon as $F$ is bounded below. 

\begin{prop}\label{prop:radial} Assume $F\colon\cE\to\R\cup\{+\infty\}$ is lsc and geodesically convex, \ie $t\mapsto F(\phi_t)$ is convex for each geodesic $\{\phi_t\}$. Then $F$ admits a radial transform $F_\ra\colon\cE_\ra\to\R\cup\{+\infty\}$, that is further lsc. 
\end{prop}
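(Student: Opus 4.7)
My plan proceeds in three steps: (i) showing the limit $\lim_{t\to\infty}t^{-1}F(\phi_t)$ exists, (ii) verifying it depends only on the direction $\f=\phi_\infty\in\cE_\ra$, and (iii) establishing lower semicontinuity of the resulting function $F_\ra$. Step~(ii) is the main point.

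For (i), geodesic convexity of $F$ makes $g(t):=F(\phi_t)$ a convex function on $[0,\infty)$ with values in $\R\cup\{+\infty\}$, and since $\{\phi_t\}$ is not entirely in $\{F=+\infty\}$ its finite set is a nonempty sub-interval of $[0,\infty)$. If that interval is bounded, $g$ is eventually $+\infty$ and the limit is $+\infty$; otherwise the secant slopes $(g(t)-g(t_0))/(t-t_0)$ are nondecreasing in $t$ on its interior, and hence $\lim_{t\to\infty}t^{-1}g(t)$ exists in $\R\cup\{+\infty\}$.

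For (ii), the key is to build a comparison geodesic using Busemann convexity. Given two parallel unit-speed rays $\{\phi_t\}$, $\{\psi_t\}$ neither entirely in $\{F=+\infty\}$, fix $s_0$ with $F(\phi_{s_0})<+\infty$ and, for each large $T$, consider the unit-speed geodesic segment $\{\eta^T_u\}_{u\in[0,L_T]}$ from $\phi_{s_0}$ to $\psi_T$. Parallelism forces $L_T=T+O(1)$, and the argument already used in the proof of Proposition~\ref{prop:Busecone} gives $\eta^T_u\to\phi_{s_0+u}$ in $\cE$ for every fixed $u\ge0$. Convexity of $F$ along $\{\eta^T_u\}$ rearranges to an inequality which, after taking $\liminf_{T\to\infty}$ and using lsc of $F$, reads
\[
\liminf_{T\to\infty}T^{-1}F(\psi_T)\ge u^{-1}\bigl(F(\phi_{s_0+u})-F(\phi_{s_0})\bigr).
\]
Letting $u\to\infty$ and invoking (i) yields $\liminf_T T^{-1}F(\psi_T)\ge\lim_t t^{-1}F(\phi_t)$; the symmetric argument (using some $s_1$ with $F(\psi_{s_1})<+\infty$) gives the reverse inequality, so the two slopes coincide. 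In the remaining vacuous case where all representatives of $\f$ lie in $\{F=+\infty\}$, we simply set $F_\ra(\f):=+\infty$.

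For (iii), I may assume $F\not\equiv+\infty$ (otherwise $F_\ra\equiv+\infty$ is trivially lsc) and fix a basepoint $\phi_\refe$ with $F(\phi_\refe)<+\infty$. By Proposition~\ref{prop:Busecone}, each $\f\in\cE_\ra$ is represented by a unique geodesic ray $\{\phi_{\f,t}\}$ from $\phi_\refe$, and $\f_j\to\f$ in $\cE_\ra$ implies pointwise convergence $\phi_{\f_j,t}\to\phi_{\f,t}$ in $\cE$. The secant-slope monotonicity from (i) gives $F_\ra(\f_j)\ge t^{-1}(F(\phi_{\f_j,t})-F(\phi_\refe))$ for every $t>0$; combining this with lsc of $F$ and letting $t\to\infty$ yields $\liminf_j F_\ra(\f_j)\ge F_\ra(\f)$. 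The hard part is step (ii), which crucially uses both Busemann convexity (to produce a comparison geodesic of length $T+O(1)$) and lsc of $F$ (to pass pointwise limits through $F$); the remaining steps are routine convex analysis together with what is already contained in Proposition~\ref{prop:Busecone}.
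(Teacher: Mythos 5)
Your proof is correct and follows essentially the same approach as the paper's: show parallel rays have equal slopes by comparing against geodesic segments (Busemann convexity plus lsc of $F$), then establish lsc of $F_\ra$ from pointwise convergence of rays emanating from a fixed basepoint together with the slope upper bound. The only cosmetic difference is the orientation of the comparison segments (yours go from the $\phi$-ray to the $\psi$-ray, the paper's from the $\psi$-ray to the $\phi$-ray), which both symmetrize away.
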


\begin{exam} If $(\cE,\dd)$ is a Banach space, then $(\cE_\ra,\dd_\ra)$ can be canonically identified with $(\cE,\dd)$, and the radial transform
$F_\ra(\f)=\lim_{t\to\infty} t^{-1}  F(t\f)$ of a convex lsc function $F$ coincides with its  \emph{homogenization}, characterized as the smallest convex homogeneous function above $F$. 
\end{exam}

\begin{proof}[Proof of Proposition~\ref{prop:radial}] For any geodesic ray $\{\phi_t\}$, $t\mapsto F(\phi_t)$ is a convex function on $\R_{\ge 0}$, and we denote by 
$$
F^\slo(\{\phi_t\}):=\lim_{t\to\infty} t^{-1} F(\phi_t)
$$
its slope at infinity. Note that 
\begin{equation}\label{equ:Fslope}
F(\phi_t)\le F(\phi_0)+t F^\slo(\{\phi_t\})\quad\text{ for all}\quad t\ge 0,
\end{equation}
by convexity. We first claim that if $\{\phi_t\}$, $\{\p_t\}$ are two parallel geodesic rays not entirely contained in the locus $\{F=+\infty\}$, then $F^\slo(\{\phi_t\})=F^\slo(\{\p_t\})$.

To see this, pick $t_0\ge 0$ such that $F(\p_{t_0})<\infty$. Pick $T>t_0$, and consider as above the geodesic segment $\{\p^T_t\}_{t\in [t_0,T]}$ joining $\p_{t_0}$ to $\phi^T$. By convexity of $d$, for $t\in [t_0,T]$ we have 
$$
\dd(\p^T_t,\p_t)\le\frac{t-t_0}{T-t_0}\dd(\phi^T,\p^T),
$$
and hence $\lim_{T\to\infty} \p^T_t=\p_t$ for each $t$, since $\dd(\phi^T,\p^T)=o(T)$. On the other hand, by convexity of $F$ we have 
$$
F(\p^T_t)\le\left(1-\frac{t-t_0}{T-t_0}\right)F(\p_{t_0})+\frac{t-t_0}{T-t_0} F(\phi^T).
$$
Since $F$ is lsc, letting $T\to \infty$ yields $F(\p_t)\le F(\p_{t_0})+tF^\slo(\{\phi_t\})$, and hence $F^\slo(\{\p_t\})\le F^\slo(\{\phi_t\})$. The claim follows, by symmetry. 

Now fix a basepoint $\phi_\refe\in\cE$, chosen so that $F(\phi_\refe)<\infty$. Since the set of $\f\in\cE_\ra$ is in 1--1 correspondence with the set of geodesic rays $(\phi_t)$ emanating from $\phi_\refe$ (see Proposition~\ref{prop:Busecone}~(i)), setting $F_\ra(\f):=F^\slo(\{\phi_t\})$ yields the desired radial transform
$$
F_\ra\colon\cE_\ra\to\R\cup\{+\infty\}. 
$$

Next pick a convergent sequence $\f_j\to\f$ in $\cE_\ra$, and denote by $\{\phi_{j,t}\}_t$, $\{\phi_t\}_t$ the corresponding geodesic rays emanating from $\phi_\refe$. By convexity of $\dd$, for all $j$ and $t\ge 0$ we have $\dd(\phi_{j,t},\phi_t)\le t\dd_\ra(\f_j,\f)$. In particular, $\phi_{j,t}\to \phi_t$ for each $t$. By~\eqref{equ:Fslope} we further have $t^{-1} F(\phi_{j,t})\le F_\ra(\f_j)$. Since $F$ is lsc, we infer $t^{-1} F(\phi_t)\le\liminf_j F_\ra(\f_j)$. Letting $t\to\infty$ yields $F_\ra(\f)\le\liminf_jF_\ra(\f_j)$, which proves that $F_\ra$ is lsc. 
\end{proof}

\begin{rmk} Similar arguments show that $F_\ra$ further is geodesically convex with respect to the distinguished class $\cG_\ra$, see Remark~\ref{rmk:radgeod}. 
\end{rmk}

The following useful observation originates from~\cite[Proposition~5.1]{Xia21}. 

\begin{lem}\label{lem:slopemono} Assume $F\colon\cE\to\R\cup\{+\infty\}$ is convex and lsc. For any almost geodesic ray $\{\phi_t\}$ we then have 
$$
F_\ra(\phi_\infty)\le\liminf_{t\to\infty} t^{-1} F(\phi_t). 
$$
\end{lem}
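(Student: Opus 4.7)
The plan is to exploit Busemann convexity together with convexity of $F$ to compare the almost geodesic ray $\{\phi_t\}$ with the geodesic ray $\{\psi_t\}$ emanating from a basepoint and sharing the same asymptotic direction $\phi_\infty$, following the same pattern as the proof of Proposition~\ref{prop:radial}.

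First I would reduce to the nontrivial situation: if $\liminf_{t\to\infty} t^{-1}F(\phi_t)=+\infty$ the inequality is immediate, so we may assume this $\liminf$ equals some $\ell\in\R$. The case $F\equiv+\infty$ being also immediate, we may pick a basepoint $\phi_\refe\in\cE$ with $F(\phi_\refe)<\infty$, and let $\{\psi_t\}$ be the unique geodesic ray in the distinguished class $\cG$ emanating from $\phi_\refe$ with direction $\phi_\infty$, provided by Proposition~\ref{prop:Busecone}. Since $\{\phi_t\}$ is almost geodesic with direction $\phi_\infty$ and since parallel geodesic rays remain at bounded distance, Definition~\ref{defi:almost} gives $\dd(\phi_t,\psi_t)=o(t)$.

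Next, choose a sequence $T_n\to\infty$ with $T_n^{-1}F(\phi_{T_n})\to\ell$; in particular $F(\phi_{T_n})<\infty$ for large $n$. Let $\{\sigma^{T_n}_s\}_{s\in[0,1]}$ denote the affinely parametrized geodesic segment in $\cG$ joining $\phi_\refe$ to $\phi_{T_n}$, which exists and is unique by Proposition~\ref{prop:Buse}. Comparing it with the affine reparametrization $s\mapsto\psi_{sT_n}$ of the geodesic segment from $\phi_\refe$ to $\psi_{T_n}$, both emanating from $\phi_\refe$, Busemann convexity yields
$$
\dd\bigl(\sigma^{T_n}_s,\psi_{sT_n}\bigr)\le s\,\dd(\phi_{T_n},\psi_{T_n})=s\cdot o(T_n).
$$
For each fixed $t\ge 0$, taking $s=t/T_n$ gives $\sigma^{T_n}_{t/T_n}\to\psi_t$ as $n\to\infty$.

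Finally, the geodesic convexity of $F$ along $\{\sigma^{T_n}_s\}_{s\in[0,1]}$ gives
$$
F\bigl(\sigma^{T_n}_{t/T_n}\bigr)\le(1-t/T_n)F(\phi_\refe)+(t/T_n)F(\phi_{T_n}).
$$
Letting $n\to\infty$ and using the lower semicontinuity of $F$, we obtain $F(\psi_t)\le F(\phi_\refe)+t\ell$ for every $t\ge 0$; dividing by $t$ and letting $t\to\infty$, the definition of the radial transform forces $F_\ra(\phi_\infty)\le\ell$, as required. I do not anticipate any serious obstacle: the main care needed is in selecting the subsequence $(T_n)$ realizing the $\liminf$, in verifying that the segments $\{\sigma^{T_n}_s\}$ belong to the distinguished class $\cG$ (which is automatic in a Busemann convex space with respect to $\cG$), and in passing to the limit using lsc rather than continuity of $F$.
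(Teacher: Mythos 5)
Your proof is correct and takes essentially the same route as the paper: both arguments convert the almost geodesic ray to the geodesic ray $\{\psi_t\}$ directed by $\phi_\infty$ by showing (via Busemann convexity) that the geodesic segments from a basepoint to $\phi_T$ converge pointwise to $\{\psi_t\}$, then apply convexity of $F$ along those segments and lower semicontinuity to pass to the limit. The only differences are cosmetic — you take an arbitrary basepoint $\phi_\refe$ with $F(\phi_\refe)<\infty$ and a subsequence realizing the $\liminf$, whereas the paper places the basepoint on the ray itself ($\phi_\refe=\phi_{t_0}$) and works directly with the $\liminf$.
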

\begin{proof} We may assume that $F$ is finite at $\phi_\refe:=\phi_{t_0}$ for some $t_0\ge 0$, as the result is otherwise trivial. The unique geodesic ray $\{\p_t\}$ emanating from $\phi_\refe$ and directed by $\phi_\infty$ is then pointwise limit of the geodesic segments $\{\p^T_t\}_{t\in [0,T]}$ joining $\phi_\refe$ to $\phi^T$. For all $0<t\le T$ we then have 
$$
t^{-1}\left(F(\p^T_t)-F(\phi_\refe)\right)\le T^{-1} \left(F(\phi^T)-F(\phi_\refe)\right),
$$
by convexity. Since $F$ is lsc and $\lim_{T\to\infty} \p^T_t=\p_t$, we infer  
$$
t^{-1}\left(F(\p_t)-F(\phi_\refe)\right)\le\liminf_{T\to\infty} T^{-1} F(\phi^T), 
$$
and letting $t\to\infty$ concludes the proof. 
\end{proof}

\begin{prop}\label{prop:sgh} Assume that $F\colon\cE\to\R$ is {\sgh} (see Definition~\ref{defi:sgh}), and that it admits a radial transform $F_\ra$. Then:
\begin{itemize}
\item[(i)] $F_\ra\colon\cE_\ra\to\R$ is finite valued and {\sgh};
\item[(ii)] $F_\ra(\phi_\infty)=\lim_{t\to\infty} t^{-1}  F(\phi_t)$ for any almost geodesic ray $\{\phi_t\}$ in $\cE$. 
\end{itemize}
\end{prop}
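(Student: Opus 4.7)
My plan is to prove both parts by applying the sgh inequality \eqref{equ:sgh} along suitable rays and then passing to the limit after dividing by $t$; in effect, $F_\ra$ is the ``homogenization'' of $F$, and the sgh bound is well-adapted to this scaling. I do not expect any serious obstacle here: once the ray representatives are fixed, everything is bookkeeping.

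For part (i), I would fix a basepoint $\phi_\refe\in\cE$ and represent each $\f\in\cE_\ra$ by the unique geodesic ray $\{\phi_t\}_{t\ge 0}$ emanating from $\phi_\refe$ and directed by $\f$ (Proposition~\ref{prop:Busecone}). Its speed is $\dd_\ra(\f):=\dd_\ra(\f,0)$, so $\dd(\phi_t)=t\dd_\ra(\f)$. The linear growth bound \eqref{equ:sghbd} then yields $|t^{-1}F(\phi_t)|\le A\dd_\ra(\f)+B/t$, and the existence of the limit $F_\ra(\f)=\lim_{t\to\infty}t^{-1}F(\phi_t)$ (part of the hypothesis) forces $|F_\ra(\f)|\le A\dd_\ra(\f)<\infty$.

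To get the sgh bound on $F_\ra$, I would pick $\f_1,\f_2\in\cE_\ra$ and let $\{\phi_{i,t}\}$ be the rays from $\phi_\refe$ directed by $\f_i$. Since both share the endpoint $\phi_\refe$, inequality \eqref{equ:drays} (or convexity of $t\mapsto\dd(\phi_{1,t},\phi_{2,t})$ vanishing at $0$) gives $\dd(\phi_{1,t},\phi_{2,t})\le t\dd_\ra(\f_1,\f_2)$ for every $t\ge 0$. Applying \eqref{equ:sgh} to $\phi_{1,t},\phi_{2,t}$, using $\dd(\phi_{i,t})=t\dd_\ra(\f_i)$, dividing by $t$, and sending $t\to\infty$ produces
\[
|F_\ra(\f_1)-F_\ra(\f_2)|\le C\,\dd_\ra(\f_1,\f_2)^{\a}\,\max_i\dd_\ra(\f_i)^{1-\a},
\]
which is slightly stronger than the required sgh estimate (as one can crudely bound $\dd_\ra(\f_i)^{1-\a}$ by $(1+\dd_\ra(\f_i))^{1-\a}$).

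For part (ii), let $\{\phi_t\}$ be an almost geodesic ray and $\{\p_t\}$ a geodesic ray with the same direction $\phi_\infty$, so $\dd(\phi_t,\p_t)=o(t)$ by Definition~\ref{defi:almost}. The triangle inequality applied to $\{\p_t\}$ gives $\dd(\p_t)=O(t)$, hence $\dd(\phi_t)\le\dd(\p_t)+\dd(\phi_t,\p_t)=O(t)$ as well. Plugging this into \eqref{equ:sgh} yields
\[
|F(\phi_t)-F(\p_t)|\le C\,\dd(\phi_t,\p_t)^{\a}\,\max\{1+\dd(\phi_t),1+\dd(\p_t)\}^{1-\a}=o(t^{\a})\cdot O(t^{1-\a})=o(t),
\]
so $t^{-1}F(\phi_t)$ shares the limit of $t^{-1}F(\p_t)$, namely $F_\ra(\phi_\infty)$, concluding the proof.
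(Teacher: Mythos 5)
Your proof is correct and takes essentially the same approach as the paper's (much terser) proof: for (i) pass to slopes at infinity in the sgh inequality along the geodesic rays from a fixed basepoint, and for (ii) compare the almost geodesic ray to its parallel geodesic ray via $\dd(\phi_t,\p_t)=o(t)$ and the linear growth bound $\dd(\phi_t)=O(t)$.
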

\begin{proof} (i) is straightforward by taking the slopes at infinity in~\eqref{equ:sgh}. To see (ii), pick a geodesic ray $\{\p_t\}$ directed by $\phi_\infty$. Since $\dd(\p_t,\p_0)=O(t)$ and $\dd(\phi_t,\p_t)=o(t)$, we have $\dd(\phi_t)=O(t)$. By~\eqref{equ:sgh} we infer $|F(\phi_t)-F(\p_t)|=o(t)$, and (ii) follows. 
\end{proof}

\begin{defi} The \emph{coercivity threshold} of a function $F\colon \cE\to\R\cup\{+\infty\}$ is defined as 
$$
\sigma(F):=\sup\{\sigma\in\R\mid F\ge\sigma \dd(\cdot,\phi_\refe)-C\text{ for some }C\in\R\}\in [-\infty,+\infty]. 
$$
We say that $F$ is \emph{coercive} if $\sigma(F)>0$, \ie $F\ge\sigma \dd-C$ for some positive constants $\sigma,C>0$. 
\end{defi}
The coercivity threshold is easily seen to be independent of the choice of basepoint $\phi_\refe\in\cE$. 

\begin{exam}\label{exam:convcoer} If $F\colon\R_{\ge 0}\to\R\cup\{+\infty\}$ is convex, then $\sigma(F)$ coincides with the slope at infinity $F'(\infty)=\lim_{t\to+\infty}t^{-1}  F(t)\in\R\cup\{+\infty\}$. In particular, $F$ is either coercive or nonincreasing. 
\end{exam}

As we shall see, basically following~\cite{DR,YTDold,CC2}, a similar dichotomy holds in far greater generality. To state this we introduce: 
 
\begin{defi}\label{defi:stronglsc} We say that $F\colon \cE\to\R\cup\{+\infty\}$ is \emph{strongly lsc} if $B\cap\{F\le c\}$ is compact for each closed ball $B$ and $c\in\R$. 
\end{defi} 

This definition is motivated by the compactness properties of the entropy functional, see Proposition~\ref{prop:hent} below. Note that $F$ strongly lsc implies $F$ lsc, the converse being true when $\cE$ is locally compact. Further, if $F$ is strongly lsc then it is bounded below on each bounded subset of $\cE$. The following perturbation result is straighforward to check: 

\begin{lem}\label{lem:holdpert} If $F\colon\cE\to\R\cup\{+\infty\}$ is strongly lsc and $G\colon\cE\to\R$ is {\sgh}  (see Definition~\ref{defi:sgh}), then $F+G$ is strongly lsc as well.
\end{lem}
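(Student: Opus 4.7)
The plan is to reduce strong lower-semicontinuity of $F+G$ to that of $F$ by exploiting the linear growth bound~\eqref{equ:sghbd} satisfied by every semi-globally Hölder map. First I would observe that the Hölder estimate~\eqref{equ:sgh} implies uniform continuity of $G$ on bounded sets, so that $F+G$ is lsc (as the sum of an lsc function and a continuous one). Hence, for any closed ball $B$ and any $c\in\R$, the sublevel set $B\cap\{F+G\le c\}$ is automatically closed.

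Next I would fix a basepoint $\phi_\refe\in\cE$ and let $B$ be a closed ball of radius $R$ around $\phi_\refe$. By~\eqref{equ:sghbd} applied to $G$, there exist constants $A,B_0>0$ such that $|G(\phi)|\le A\dd(\phi)+B_0\le AR+B_0$ for every $\phi\in B$. Consequently, on $B$ we have
\[
\{F+G\le c\}\;\subset\;\{F\le c+AR+B_0\},
\]
so that $B\cap\{F+G\le c\}$ is contained in $B\cap\{F\le c+AR+B_0\}$. The latter is compact by the strong lower-semicontinuity of $F$, and since $B\cap\{F+G\le c\}$ is a closed subset of it, it is compact as well. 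This proves strong lower-semicontinuity of $F+G$.

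There is no real obstacle here: the argument is purely a matter of combining the linear growth of $G$ on balls (from the semi-global Hölder property) with the defining compactness property of strong lsc functions. The only mild subtlety is checking that the closed ball estimate on $G$ is uniform, which follows directly from~\eqref{equ:sghbd} and the independence of the Hölder constants from the choice of basepoint.
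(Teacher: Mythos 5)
Your proof is correct and is precisely the ``straightforward'' argument the paper has in mind (the paper states the lemma without proof). One tiny point of precision: the linear growth bound~\eqref{equ:sghbd} gives $|G(\phi)-G(\phi_\refe)|\le A\dd(\phi)+B$, so the bound on $|G|$ over the ball picks up an extra additive constant $|G(\phi_\refe)|$, which of course does not affect the conclusion.
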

%
%%%%%%%%%%%%%%%%%%%%%%%%%%%%%%%%%%%%%%%%%%%%%%%%%%%%%%%%%%%%
\subsection{The equivariant coercivity threshold}
Suppose next given an isometric action 
$$
G\times\cE\to\cE\quad(g,\phi)\mapsto g\cdot\phi
$$
of a locally compact group $G$, preserving the distinguished class $\cG$ of geodesics. We assume that the $G$-action is proper, so that the infimum
$$
\dd_G(\phi,\p)=\inf_{g\in G}\dd(\phi,g\cdot\p)
$$ 
is achieved, and defines the quotient metric on $\cE/G$. 

\begin{defi} We say that a subset $\cM\subset\cE$ is \emph{$G$-minimal} if $\dd(\phi,\p)=\dd_G(\phi,\p)$ for all $\phi,\p\in\cM$. 
\end{defi}
By properness of the $G$-action, each pair of points in $\cE/G$ is represented by some $G$-minimal pair in $\cE$. 
\begin{lem}\label{lem:minclosed} Pick two convergent sequences $\phi_j\to\phi$, $\p_j\to\p$ in $\cE$. If $\{\phi_j,\p_j\}$ is $G$-minimal for all $j$, then $\{\phi,\p\}$ is $G$-minimal as well. 
\end{lem}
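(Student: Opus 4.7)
The plan is to verify $G$-minimality of $\{\phi,\p\}$ directly from the definition, by taking limits in the minimality inequality for $\{\phi_j,\p_j\}$. The only subtlety is ensuring continuity of the distance function under the $G$-action, which is automatic because $G$ acts by isometries.

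First I would fix an arbitrary $g\in G$. By $G$-minimality of $\{\phi_j,\p_j\}$, I have the inequality
\[
\dd(\phi_j,\p_j)\le\dd(\phi_j,g\cdot\p_j)\quad\text{for all }j.
\]
Since $g$ acts as an isometry, $\dd(g\cdot\p_j,g\cdot\p)=\dd(\p_j,\p)\to 0$, so $g\cdot\p_j\to g\cdot\p$. Combining this with the convergence $\phi_j\to\phi$ and the continuity of $\dd$ (from the triangle inequality), I can pass to the limit $j\to\infty$ on both sides to obtain
\[
\dd(\phi,\p)\le\dd(\phi,g\cdot\p).
\]
Since $g\in G$ was arbitrary, this yields $\dd(\phi,\p)\le\dd_G(\phi,\p)$. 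The reverse inequality is trivial, achieved by taking $g=e$, so $\dd(\phi,\p)=\dd_G(\phi,\p)$, proving that $\{\phi,\p\}$ is $G$-minimal.

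There is no real obstacle here; the argument is essentially a continuity-of-infimum statement, valid because each individual inequality defining $G$-minimality passes to the limit under convergence of the endpoints. Note also that the properness of the $G$-action is not actually used for this direction—it only guarantees that $G$-minimal representatives exist, but closedness of the $G$-minimality condition does not require properness.
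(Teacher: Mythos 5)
Your proof is correct and follows essentially the same approach as the paper: fix $g\in G$, use $G$-minimality of $\{\phi_j,\p_j\}$ to get $\dd(\phi_j,\p_j)\le\dd(\phi_j,g\cdot\p_j)$, and pass to the limit using continuity of $\dd$ and the isometric action. (The paper applies $g$ to the first slot rather than the second, but this is immaterial by symmetry of the definition.) Your side remark that properness of the action is not used is also accurate.
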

\begin{proof} Pick $g\in G$. For each $j$ we have $\dd(\phi_j,\p_j)\le\dd(g\cdot\phi_j,\p_j)$. Thus $\dd(\phi,\p)\le\dd(g\cdot\phi,\p)$, and the result follows.
\end{proof}

The next observation is certainly well-known (cf.~\cite[Lemma~7.4]{CC2}).

\begin{lem}\label{lem:Gmin} A geodesic segment $\{\phi_t\}_{t\in [0,1]}$ in $\cE$ is $G$-minimal iff $\{\phi_0,\phi_1\}$ is $G$-minimal. 
\end{lem}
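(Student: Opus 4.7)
The forward direction is immediate: if $\dd(\phi_s,\phi_t)=\dd_G(\phi_s,\phi_t)$ for all $s,t\in[0,1]$, then in particular this holds for $(s,t)=(0,1)$, so $\{\phi_0,\phi_1\}$ is $G$-minimal.

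For the reverse direction, assume $\{\phi_0,\phi_1\}$ is $G$-minimal, set $c:=\dd(\phi_0,\phi_1)$, and suppose, for contradiction, that $\dd(\phi_s,\phi_t)>\dd_G(\phi_s,\phi_t)$ for some $0\le s<t\le 1$. Then there exists $g\in G$ with
\[
\dd(\phi_s,g\cdot\phi_t)<\dd(\phi_s,\phi_t)=(t-s)c.
\]
The plan is to construct a short path from $\phi_0$ to $g\cdot\phi_1$ by concatenating three pieces: first the geodesic segment $\{\phi_u\}_{u\in[0,s]}$, of length $sc$; then any sufficiently short path from $\phi_s$ to $g\cdot\phi_t$, of length strictly less than $(t-s)c$; and finally the image $\{g\cdot\phi_u\}_{u\in[t,1]}$ of the tail of the original geodesic, which is itself a geodesic segment of length $(1-t)c$ since $g$ acts by isometries and preserves the distinguished class~$\cG$.

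Applying the triangle inequality to this concatenation yields
\[
\dd(\phi_0,g\cdot\phi_1)\le sc+\dd(\phi_s,g\cdot\phi_t)+(1-t)c<c=\dd(\phi_0,\phi_1).
\]
On the other hand, by definition of $\dd_G$,
\[
\dd(\phi_0,g\cdot\phi_1)\ge\dd_G(\phi_0,\phi_1)=\dd(\phi_0,\phi_1),
\]
where the equality uses the $G$-minimality of $\{\phi_0,\phi_1\}$. This is the desired contradiction.

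There is no real obstacle here: the argument is a one-line triangle-inequality estimate once one notices the right three-piece path. The only subtlety to verify explicitly is that $\{g\cdot\phi_u\}_{u\in[t,1]}$ is still a geodesic of the same length---which follows from the standing assumption that $G$ acts by isometries and preserves $\cG$, so in particular geodesic segments are sent to geodesic segments of the same speed.
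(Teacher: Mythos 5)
Your proof is correct and is essentially the same argument as the paper's, just phrased as a proof by contradiction rather than directly (the paper fixes arbitrary $g\in G$ and chains the same triangle inequality to conclude $\dd(\phi_s,g\cdot\phi_t)\ge\dd(\phi_s,\phi_t)$ for all $g$). One minor remark: the discussion of concatenating paths and of $\cG$-preservation is not actually needed --- once you note that $g$ is an isometry, so $\dd(g\cdot\phi_t,g\cdot\phi_1)=\dd(\phi_t,\phi_1)=(1-t)c$, the estimate $\dd(\phi_0,g\cdot\phi_1)\le\dd(\phi_0,\phi_s)+\dd(\phi_s,g\cdot\phi_t)+\dd(g\cdot\phi_t,g\cdot\phi_1)$ is a bare triangle inequality in the metric space $(\cE,\dd)$, with no appeal to geodesics or to $\cG$ being preserved.
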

\begin{proof} Assume $\{\phi_0,\phi_1\}$ is $G$-minimal, and pick $0\le s\le t\le 1$. Since $G$ acts by isometries, the triangle inequality yields, for all $g\in G$, 
\begin{multline*}
\dd(\phi_0,\phi_1)\le \dd(\phi_0,g\cdot\phi_1)  \\
\le \dd(\phi_0,\phi_s)+\dd(\phi_s,g\cdot\phi_t)+\dd(\phi_t,\phi_1)
 =\dd(\phi_0,\phi_1)-\dd(\phi_s,\phi_t)+\dd(\phi_s,g\cdot\phi_t).
\end{multline*}
This implies $\dd(\phi_s,g\cdot\phi_t)\ge \dd(\phi_s,\phi_t)$, and the result follows. 
\end{proof}

The image in $\cE/G$ of any $G$-minimal geodesic in $\cE$ is a geodesic for the quotient metric on $\cE/G$, which is thus a geodesic metric space with respect to a natural distinguished class of geodesics induced by $\cG$. It is, however, not uniquely geodesic in general (think of $S^1=\R/\Z)$. In particular, $\cE/G$ is generally not Busemann convex. 

Pick a basepoint $\phi_\refe\in\cE$, and set 
\[
\dd_G(\phi):=\dd_G(\phi,\phi_\refe),
\]
which is thus the distance of $\phi$ to the (closed) $G$-orbit of $\phi_\refe$. 

\begin{defi} We define the \emph{$G$-equivariant coercivity threshold} of a function $F\colon\cE\to\R\cup\{+\infty\}$ as
$$
\sigma_G(F)=\sup\{\sigma\in\R\mid F\ge\sigma \dd_G-C\text{ on }\cE\text{ for some }C\in\R\}\in[-\infty,+\infty]. 
$$
\end{defi}
Trivially, $\sigma(F)\le\sigma_G(F)$. In practice, $F$ will be quasi $G$-invariant (see~\eqref{equ:quasi}), and hence $G$-invariant as soon as it is bounded below; then $\sigma_G(F)$ coincides with the coercivity threshold of the induced function on the quotient space $(\cE/G,\dd_G)$. 

As in~\cite[Lemma~1.11]{BJT} we note:

\begin{lem}\label{lem:coersub} Assume $F\colon\cE\to\R\cup\{+\infty\}$ is quasi $G$-invariant, and let $H\subset G$ be a closed subgroup. Then the following are equivalent:
\begin{itemize}
    \item $F$ is coercive modulo $H$; 
    \item $F$ is coercive modulo $G$, and $G/H$ is compact. 
\end{itemize}
\end{lem}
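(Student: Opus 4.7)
The strategy rests on two observations. First, any coercivity bound for $F$ (modulo a closed subgroup) forces $F$ to be bounded below on $\cE$; combined with quasi-$G$-invariance this implies that the character $\chi\colon G\to\R$ defined by $\chi(g)=F(g\cdot\phi_\refe)-F(\phi_\refe)$ vanishes identically, so $F$ is genuinely $G$-invariant. Indeed, $\chi$ is a group homomorphism, and if $\chi(g_0)\ne 0$ for some $g_0\in G$ then $\chi(g_0^{-n})=-n\chi(g_0)$ would be unbounded below, contradicting the lower bound $\chi\ge -C-F(\phi_\refe)$. Second, $H\subset G$ always gives $\dd_H\ge\dd_G$, and a matching reverse inequality $\dd_H\le\dd_G+R$ will follow from the compactness of $G/H$.

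For the forward direction, assume $F\ge\sigma\dd_H-C$. The trivial inequality $\dd_H\ge\dd_G$ immediately yields $F\ge\sigma\dd_G-C$, so $F$ is coercive modulo $G$. Moreover, the $G$-invariance of $F$ just established means that evaluating the bound at $\phi=g\cdot\phi_\refe$ yields $\sigma\,\dd_H(g\cdot\phi_\refe)\le F(\phi_\refe)+C$, so $\dd_H(g\cdot\phi_\refe)\le R$ uniformly in $g\in G$. Unpacking: for every $g\in G$ there exists $h\in H$ with $g^{-1}h\in K:=\{k\in G\mid\dd(k\cdot\phi_\refe,\phi_\refe)\le R\}$, equivalently $G=HK^{-1}$. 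Properness of the $G$-action applied to the ball of radius $R$ around $\phi_\refe$ shows that $K$, and hence $K^{-1}$, is relatively compact in $G$; then $G/H$ is the image of $K^{-1}$ under the quotient map, and is therefore compact.

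For the reverse direction, assume $F\ge\sigma\dd_G-C$ and $G/H$ compact, so that $F$ is again $G$-invariant by the general observation above. Choose a compact subset $K\subset G$ whose image covers $G/H$ (possible since $H$ is closed and $G$ is locally compact), so that $G=HK$, and set $R:=\sup_{k\in K}\dd(k\cdot\phi_\refe,\phi_\refe)<\infty$. For any $\phi\in\cE$ and any $g\in G$, writing $g=hk$ with $h\in H$, $k\in K$ gives
\[
\dd(\phi,h\cdot\phi_\refe)\le\dd(\phi,g\cdot\phi_\refe)+\dd(k\cdot\phi_\refe,\phi_\refe)\le\dd(\phi,g\cdot\phi_\refe)+R;
\]
taking the infimum over $g\in G$ yields $\dd_H\le\dd_G+R$, and therefore $F\ge\sigma\dd_H-C-\sigma R$, which is the desired coercivity modulo $H$. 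The only slightly subtle steps are the character-vanishing argument and the existence of a compact lift $K$ of $G/H$ inside $G$, both of which are standard, so no genuine obstacle is expected.
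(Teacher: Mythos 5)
Your proof is correct, and since the paper simply cites [BJT, Lemma~1.11] rather than reproducing an argument, there is nothing to compare against directly; but a couple of small points are worth flagging.

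The character-vanishing step is exactly the right observation: any coercivity bound forces $F$ to be bounded below, and a group homomorphism $G\to(\R,+)$ bounded below must vanish, so $F$ is genuinely $G$-invariant. Note, however, that this is only used in the forward implication (to bound $\dd_H(g\cdot\phi_\refe)$ uniformly in $g$); for the converse the inequality $\dd_H\le\dd_G+R$ is established directly from $G=HK$ and the triangle inequality, so invoking $G$-invariance there is redundant.

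There is a small left/right coset slip, though it cancels out. From $\dd_H(g\cdot\phi_\refe)\le R$ for all $g$, you get $g^{-1}h\in K$, hence $G=HK^{-1}$, which (since $K=K^{-1}$ by isometry, and $G=G^{-1}$) is the same as $G=KH$; thus $\pi\colon G\to G/H$ restricted to $K$ is surjective, and $G/H$ is compact. Conversely, ``a compact $K$ whose image covers $G/H$'' gives $G=KH$, not $G=HK$; but these are again equivalent after replacing $K$ by $K^{-1}$, and $G=HK$ is the form you actually need, since it is $\dd(hk\cdot\phi_\refe,h\cdot\phi_\refe)=\dd(k\cdot\phi_\refe,\phi_\refe)\le R$ that pushes through isometrically. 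One should also implicitly assume $F\not\equiv+\infty$ (equivalently, choose $\phi_\refe$ with $F(\phi_\refe)<\infty$, which is harmless since the coercivity threshold does not depend on the basepoint): otherwise both coercivity conditions hold vacuously with no constraint on $G/H$. None of this affects the soundness of the argument.
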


We may now generalize Example~\ref{exam:convcoer} as follows: 

\begin{thm}\label{thm:crit} Suppose $F\colon \cE\to\R\cup\{+\infty\}$ is strongly lsc, geodesically convex and $G$-invariant. Then
$$
\sigma_G(F)=\inf_\f F_\ra(\f)
$$
where $\f\in\cE_\ra$ ranges over the directions of all $G$-minimal, unit-speed geodesic rays in $\cE$ emanating from $\phi_\refe$. Furthermore, the infimum is achieved. 
\end{thm}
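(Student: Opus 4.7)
The inequality $\sigma_G(F)\le\inf_\f F_\ra(\f)$ is essentially formal. If $\{\phi_t\}$ is a $G$-minimal unit-speed ray emanating from $\phi_\refe$, then $\dd_G(\phi_t)=\dd(\phi_t,\phi_\refe)=t$, so any coercivity estimate $F\ge\sigma\dd_G-C$ evaluated along $\{\phi_t\}$ and divided by $t$ yields $F_\ra(\phi_\infty)\ge\sigma$ in the limit; taking the sup over admissible $\sigma$ gives $F_\ra(\phi_\infty)\ge\sigma_G(F)$ for every such direction $\f=\phi_\infty$.

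For the reverse inequality together with the attainment of the infimum, the plan is to run a direct-method compactness argument adapted to the asymptotic cone. Fix $\sigma>\sigma_G(F)$, and pick $\phi_j\in\cE$ with $F(\phi_j)-\sigma\dd_G(\phi_j)\to-\infty$. Using $G$-invariance of $F$ and the fact that properness makes the infimum defining $\dd_G(\phi_j)$ attained, I replace each $\phi_j$ by a $G$-translate (without affecting $F(\phi_j)$ or $\dd_G(\phi_j)$) so that $\dd(\phi_\refe,\phi_j)=\dd_G(\phi_j)$, i.e., $\{\phi_\refe,\phi_j\}$ is $G$-minimal. Strong lsc of $F$ implies $F$ is bounded below on bounded sets, which forces $T_j:=\dd(\phi_j,\phi_\refe)\to+\infty$; otherwise $F(\phi_j)-\sigma T_j$ would remain bounded below.

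Let $\{\phi_{j,t}\}_{t\in[0,T_j]}$ denote the unit-speed geodesic segment joining $\phi_\refe$ to $\phi_j$, which is $G$-minimal by Lemma~\ref{lem:Gmin}. Geodesic convexity of $F$ combined with the upper bound $F(\phi_j)\le\sigma T_j+O(1)$ gives
$$
F(\phi_{j,t})\le (1-t/T_j)F(\phi_\refe)+(t/T_j)F(\phi_j),
$$
which stays uniformly bounded above in $j$ for each fixed $t$. Since $\phi_{j,t}\in\overline{B}(\phi_\refe,t)$, strong lsc places the sequence $\{\phi_{j,t}\}_j$ in a compact subset of $\cE$. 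A diagonal extraction along rational $t$ then produces a subsequence converging pointwise on $\Q_{\ge 0}$, and since $\cG$ is closed under pointwise limits, the limit extends to a unit-speed geodesic ray $\{\phi_t\}_{t\ge 0}$ in $\cG$ with $\phi_0=\phi_\refe$. Applying Lemma~\ref{lem:minclosed} to each convergence $\{\phi_\refe,\phi_{j,t}\}\to\{\phi_\refe,\phi_t\}$ shows that $\{\phi_\refe,\phi_t\}$ is $G$-minimal for every $t$, so the ray $\{\phi_t\}$ itself is $G$-minimal by Lemma~\ref{lem:Gmin}.

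Passing to the limit in the convexity inequality using lsc of $F$,
$$
\frac{F(\phi_t)-F(\phi_\refe)}{t}\le\liminf_j\frac{F(\phi_j)-F(\phi_\refe)}{T_j}\le\sigma,
$$
and letting $t\to\infty$ yields $F_\ra(\phi_\infty)\le\sigma$. Since $\sigma>\sigma_G(F)$ was arbitrary, this gives $\inf_\f F_\ra(\f)\le\sigma_G(F)$. For attainment of the infimum, I rerun the extraction with $\sigma=\sigma_k\downarrow\sigma_G(F)$ and diagonalize (or apply the construction directly at $\sigma=\sigma_G(F)$ when $F-\sigma_G(F)\dd_G$ is itself unbounded below), producing a $G$-minimal unit-speed ray $\f_*$ from $\phi_\refe$ with $F_\ra(\f_*)\le\sigma_G(F)$; combined with the easy direction this forces $F_\ra(\f_*)=\sigma_G(F)$. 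The central technical point is the compactness/extraction step: strong lsc is precisely what guarantees limits of rays along which $F$ grows at a controlled linear rate, while Busemann convexity of $\dd$ and the structure of $\cG$ ensure that the limit is again an admissible $G$-minimal ray on which $F_\ra$ is defined.
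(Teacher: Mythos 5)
Your proof is correct and follows essentially the same approach as the paper: the easy inequality by evaluating the coercivity estimate along a $G$-minimal ray, and the reverse inequality by extracting a $G$-minimal geodesic ray from the geodesic segments joining $\phi_\refe$ to almost-optimal points $\phi_j$, using strong lsc for compactness, Lemma~\ref{lem:minclosed} and Lemma~\ref{lem:Gmin} to preserve $G$-minimality, and geodesic convexity plus lsc to pass the slope bound to the limit. The only presentational difference is that you first establish $\inf_\f F_\ra\le\sigma$ for each fixed $\sigma>\sigma_G(F)$ and then handle attainment by a second diagonalization, whereas the paper bakes attainment in from the start by choosing $\sigma_j\downarrow\sigma_G(F)$ and $C_j\to+\infty$ simultaneously so that the single extracted ray already satisfies $F_\ra(\phi_\infty)\le\sigma_G(F)$; you should carry out that one-shot version since the ``rerun and diagonalize'' step, as sketched, does not obviously produce a single limit ray.
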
 

As a consequence, we get the following result, which can be traced back to~\cite{YTD,ICM,CC2}. 

\begin{cor}\label{cor:crit} Under the same assumptions, exactly one of the following holds:
\begin{itemize}
\item[(i)] $F$ is coercive modulo $G$;   
\item[(ii)] there exists a $G$-minimal, unit-speed geodesic ray, emanating from any given basepoint $\phi_\refe\in \cE$, along which $F$ is nonincreasing. 
\end{itemize}
Moreover, (i) implies that $F$ admits a minimizer, and the converse holds if the minimizer is further unique modulo $G$.
\end{cor}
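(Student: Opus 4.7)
\textbf{Proof plan for Corollary~\ref{cor:crit}.}
The corollary is essentially a direct packaging of Theorem~\ref{thm:crit} together with a standard minimizing-sequence argument, so I will not expect any genuine obstacle; the work lies in organizing the four implications cleanly. Assume throughout that $F\not\equiv+\infty$ (the other case being vacuous) and fix a basepoint $\phi_\refe\in\cE$ with $F(\phi_\refe)<\infty$.

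\emph{Dichotomy (i) vs.~(ii).} The plan is to split on the sign of $\sigma_G(F)$. If $\sigma_G(F)>0$, then by definition some $0<\sigma<\sigma_G(F)$ satisfies $F\ge\sigma\dd_G-C$, giving (i). If instead $\sigma_G(F)\le 0$, Theorem~\ref{thm:crit} produces a $G$-minimal unit-speed geodesic ray $\{\phi_t\}$ from $\phi_\refe$ with $F_\ra(\phi_\infty)=\sigma_G(F)\le 0$. The function $t\mapsto F(\phi_t)$ is convex (by geodesic convexity of $F$) with slope at infinity $\le 0$; its right-derivative is then nondecreasing with nonpositive limit, hence nonpositive throughout, so $F$ is nonincreasing along $\{\phi_t\}$. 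This gives (ii). For mutual exclusivity, observe that along a $G$-minimal unit-speed ray from $\phi_\refe$ we have $\dd_G(\phi_t)=\dd(\phi_\refe,\phi_t)=t$, so under (i) we get $F(\phi_t)\ge\sigma t-C\to+\infty$, incompatible with $F(\phi_t)\le F(\phi_\refe)<\infty$.

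\emph{(i) implies existence of a minimizer.} From $F\ge\sigma\dd_G-C$, the functional is bounded below; set $m:=\inf F$. For any minimizing sequence $\phi_j$, coercivity forces $\dd_G(\phi_j)$ to be bounded, so after replacing each $\phi_j$ by a suitable $G$-translate (using $G$-invariance of $F$) we may assume $\dd(\phi_j,\phi_\refe)$ is bounded, \ie $(\phi_j)$ sits in a closed ball $B$. For large $j$, $\phi_j\in B\cap\{F\le m+1\}$, which is compact by strong lower semicontinuity of $F$ (Definition~\ref{defi:stronglsc}). Extract a convergent subsequence and invoke lower semicontinuity to conclude the limit is a minimizer.

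\emph{Converse under uniqueness.} Suppose $F$ has a minimizer $\phi_\star$, unique modulo $G$, but $F$ is not coercive modulo $G$. Choose $\phi_\refe=\phi_\star$; by the dichotomy proved above, (ii) holds, so there is a $G$-minimal unit-speed geodesic ray $\{\phi_t\}$ from $\phi_\star$ along which $F$ is nonincreasing. Since $F(\phi_\star)=\min F\le F(\phi_t)\le F(\phi_\star)$, every $\phi_t$ is also a minimizer, hence $\phi_t\in G\cdot\phi_\star$ by uniqueness, which gives $\dd_G(\phi_\star,\phi_t)=0$. Combined with $G$-minimality, this forces $t=\dd(\phi_\star,\phi_t)=\dd_G(\phi_\star,\phi_t)=0$, contradicting $t>0$. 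Hence (i) holds, which completes the proof.
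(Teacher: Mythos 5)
Your proof is correct and follows essentially the same route as the paper: the dichotomy is read off from Theorem~\ref{thm:crit} (with the convexity of $t\mapsto F(\phi_t)$ converting ``slope at infinity $\le 0$'' into ``nonincreasing'', which the paper leaves implicit), the existence of a minimizer under (i) uses the same minimizing-sequence plus strong-lsc argument, and the converse under uniqueness is the same contradiction against $G$-minimality. You simply spell out a couple of steps the paper compresses.
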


\begin{proof}[Proof of Theorem~\ref{thm:crit}] Assume $F\ge\sigma\dd_G-C$ with $\sigma,C\in\R$, and pick a $G$-invariant, unit-speed geodesic ray $\{\phi_t\}$ emanating from $\phi_\refe$ and with direction $\f\in\cE_\ra$. Then $F(\phi_t)\ge\sigma\dd(\phi_t,\phi_\refe)-C$, and hence 
$F_\ra(\f)\ge\sigma$. This show $\sigma_G(F)\le\inf_\f F_\ra(\f)$. 

To get the reverse inequality, we may assume $\sigma_G(F)<+\infty$. Pick a decreasing sequence $\sigma_j\in\R$ such that $\sigma_j>\sigma_G(F)$ and $\sigma_j\to\sigma_G(F)$, and pick any sequence $C_j\to+\infty$. By definition of $\sigma_G(F)$, for each $j$ we can find $\phi_j\in \cE$ such that 
\begin{equation}\label{equ:Fnonp}
F(\phi_j)\le\sigma_j T_j-C_j
\end{equation}
with $T_j:=\dd_G(\phi_j)$. By $G$-invariance of $F$, we may further assume that $\{\phi_\refe,\phi_j\}$ is $G$-minimal, \ie $T_j=\dd(\phi_j,\phi_\refe)$. 

If $T_j\le R$ for some uniform constant $R>0$, then $F(\phi_j)\le\sigma_0 R-C_j\to-\infty$, which contradicts the fact that $F$ is bounded below on $B(0,R)$, being strongly lsc. After passing to a subsequence, we may thus assume $T_j\to+\infty$. 

Consider the unit-speed geodesic $\{\phi_{j,t}\}_{0\le t\le T_j}$ joining $\phi_\refe$ to $\phi_j$; this is $G$-minimal by Lemma~\ref{lem:Gmin}. By convexity of $F$ and~\eqref{equ:Fnonp}, we have 
\begin{equation}\label{equ:fup}
F(\phi_{j,t})-F(\phi_\refe)\le \frac{t}{T_j}(F(\phi_j)-F(\phi_\refe))\le \sigma_j t
\end{equation}
for $j$ large enough and $t\in [0,T_j]$. For any $T>0$, this implies that $\{\phi_{j,t}\}_{0\le t\le T}$ stays in a compact set of $\cE$. By the Arzel\`a--Ascoli theorem, we may thus assume, after passing to a subsequence, that $\{\phi_{j,t}\}$ converges on compact sets to a unit-speed geodesic ray $\{\phi_t\}_{t\ge 0}$ as $j\to\infty$, which remains $G$-minimal (see Lemma~\ref{lem:minclosed}). Since $F$ is lsc and $\sigma_j\to\sigma_G(F)$, \eqref{equ:fup} yields $F(\phi_t)-F(\phi_\refe)\le \sigma_G(F) t$, and $\f:=\phi_\infty$ therefore satisfies, as desired, $F_\ra(\f)\le\sigma_G(F)$. 
\end{proof}

\begin{proof}[Proof of Corollary~\ref{cor:crit}] The dichotomy is a direct consequence of Theorem~\ref{thm:crit}. If (i) holds, then any minimizing sequence of $F$ is bounded, and hence lies in the compact set $B\cap\{F\le c\}$ for some closed ball $B$ and $c\in\R$, and any limit point yields a minimizer of $F$.

Conversely, assume $F$ admits a unique minimizer modulo $G$, and pick the basepoint $\phi_\refe$ as a minimizer of $F$. If $F$ is not coercive modulo $G$, then by Theorem~\ref{thm:crit} there exists a $G$-minimal unit-speed geodesic ray $\{\phi_t\}$ emanating from $\phi_\refe$ along which $F$ is nonincreasing. But then $\phi_t$ is a minimizer of $F$ for all $t$, and hence lies in $H\phi_0$, which contradicts $G$-minimality. 
\end{proof}

%
%%%%%%%%%%%%%%%%%%%%%%%%%%%%%%%%%%%%%%%%%%%%%%%%%%%%%%%%%%%%
\subsection{Admissible $G$-directions and the slope formula}\label{sec:slformula}
Consider first a proper isometric action on $\cE$ of an algebraic torus $T_\C\simeq(\C^\times)^r$, with maximal compact torus $T\simeq(S^1)^r$. By Proposition~\ref{prop:Buse}~(iv), the set $\cE^T$ of $T$-fixed points is non-empty. We further assume that the following holds:
\begin{equation}\label{equ:assum}
\emph{For any }\phi\in\cE^T\emph{ and }\xi\in\Lie T,\,\{e^{it\xi}\cdot\phi\}_{t\ge 0}\emph{ is a (distinguished) geodesic ray in }\cE.
\end{equation}
The direction $\f_\xi\in\cE_\ra$ of this geodesic ray is independent of the choice of $\phi$. Pick a norm on $\Lie T$. 

\begin{lem}\label{lem:qi1} The map $\xi\mapsto\f_\xi$ defines a quasi-isometric embedding $\Lie T\hto\cE_\ra$. 
\end{lem}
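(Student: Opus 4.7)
The plan is to establish the stronger statement that $\xi\mapsto\f_\xi$ is a bi-Lipschitz embedding with respect to a suitable norm on $\Lie T$; the quasi-isometric embedding claim will then follow from the equivalence of norms on the finite-dimensional space $\Lie T$.

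Fix a $T$-fixed point $\phi\in\cE^T$, which exists by Proposition~\ref{prop:Buse}(iv), and set $\|\xi\|_\phi:=\dd(e^{i\xi}\cdot\phi,\phi)$ for $\xi\in\Lie T$. The central computation exploits the abelianness of $T_\C$: since $e^{it\xi}$ and $e^{it\eta}$ commute, applying the isometry $e^{-it\eta}$ to both arguments gives
\[
\dd(e^{it\xi}\cdot\phi,e^{it\eta}\cdot\phi)=\dd(e^{it(\xi-\eta)}\cdot\phi,\phi)=t\|\xi-\eta\|_\phi\quad\text{for }t\ge 0,
\]
where the last equality uses that $\{e^{it(\xi-\eta)}\cdot\phi\}_{t\ge 0}$ is a constant-speed geodesic ray, by assumption~\eqref{equ:assum}. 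Dividing by $t$ and letting $t\to\infty$ then yields $\dd_\ra(\f_\xi,\f_\eta)=\|\xi-\eta\|_\phi$, which in particular reconfirms that $\f_\xi$ is independent of the choice of $\phi$.

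It remains to check that $\|\cdot\|_\phi$ is a norm on $\Lie T$. Positive homogeneity $\|s\xi\|_\phi=|s|\,\|\xi\|_\phi$ follows from applying the constant-speed property to $\pm\xi$ (using~\eqref{equ:assum} for both), and subadditivity from the triangle inequality combined with commutativity of $T_\C$ and the isometric action. The delicate point is definiteness: if $\|\xi\|_\phi=0$, then $e^{it\xi}$ fixes $\phi$ for all $t\in\R$, so the one-parameter subgroup $\{e^{it\xi}:t\in\R\}\subset T_\C$ lies in the stabilizer of $\phi$, which is compact by properness of the $T_\C$-action. However, under the Cartan-type decomposition $T_\C=T\cdot e^{i\Lie T}$, this subgroup sits inside the noncompact factor $e^{i\Lie T}\simeq(\R_{>0})^r$ and is itself noncompact for $\xi\ne 0$, forcing $\xi=0$. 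Since any two norms on the finite-dimensional space $\Lie T$ are equivalent, we obtain $C^{-1}\|\xi\|\le\|\xi\|_\phi\le C\|\xi\|$ for some $C>0$, which gives the desired quasi-isometric embedding. The only genuinely nontrivial step in this plan is definiteness; although only the "imaginary" one-parameter subgroups $\{e^{it\xi}\}_t$ of $T_\C$ appear in the construction of the geodesic rays, properness of the full $T_\C$-action is exactly what ensures these subgroups act without fixed points on $\cE^T$.
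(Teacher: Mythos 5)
Your proof is correct and follows essentially the same approach as the paper's: both consider the function $\|\xi\|_\phi := \dd(e^{i\xi}\cdot\phi,\phi)$, show it is equivalent to the given norm on $\Lie T$, and then transfer the resulting two-sided estimate to $\dd_\ra$ by taking slopes at infinity. You usefully make explicit two points the paper's very terse argument leaves implicit---the use of commutativity of $T_\C$ to reduce the two-variable distance $\dd(e^{i\xi}\phi,e^{i\xi'}\phi)$ to the one-variable function, and the role of properness of the $T_\C$-action in establishing definiteness (the paper's unjustified ``positive on the unit sphere'')---and you record the slightly stronger observation that the embedding is an exact isometry onto its image with respect to $\|\cdot\|_\phi$.
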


\begin{proof} Pick $\phi\in\cE^T$. The function $\xi\mapsto\dd(e^{i\xi}\phi,\phi)$ is continuous on $\Lie T$, positive on the unit sphere, and positively homogeneous of degree $1$ as a consequence of~\eqref{equ:assum}. Thus there exists $C>0$ such that 
$$
C^{-1}\|\xi-\xi'\|\le\dd(e^{i\xi}\phi,e^{i\xi'}\phi)\le C\|\xi-\xi'\|
$$ 
for all $\xi,\xi'\in\Lie T$. Passing to slopes at infinity, we infer
$$
C^{-1}\|\xi-\xi'\|\le\dd_\ra(\f_\xi,\f_{\xi'})\le C\|\xi-\xi'\|, 
$$
which proves the result. 
\end{proof}

By completeness of $\Lie T$, the image of the quasi-isometric embedding $\Lie T\hto\cE_\ra$ is closed; we denote it by
$$
\cD_T\subset\cE_\ra
$$
and call it the set of \emph{$T$-directions}. 

\medskip

Consider, more generally, a proper isometric action on $\cE$ of a complex linear algebraic group $G$ (not necessarily reductive), and assume~\eqref{equ:assum} holds for any compact torus $T\subset G$. 

\begin{defi} We define the set of \emph{toric $G$-directions} as 
$$
\cD_G:=\bigcup_T\cD_T\subset\cE_\ra,
$$
where $T\subset G$ ranges over all (maximal) compact tori. 
\end{defi}
Since all maximal compact tori of $G$ are conjugate, we equivalently have 
$\cD_G=\bigcup_{g\in G} g\cdot\cD_T$ for any given maximal compact torus $T\subset G$. 
\smallskip

Assume further that $G$ is \emph{reductive} (see Appendix~\ref{sec:reductive}). Pick a maximal compact subgroup $G_c\subset G$ and a $G_c$-invariant $\phi\in\cE$ (see Proposition~\ref{prop:Buse}~(iv)). Pick also a maximal compact torus $T\subset G_c$. The asymptotic cone of the symmetric space $\Sigma(G)=G/G_c$ then satisfies 
\begin{equation}\label{equ:coneconj}
\Sigma_\ra(G)\simeq\Lie G_c=\bigcup_{g\in G_c} g\cdot\Lie T,
\end{equation}
and~\eqref{equ:assum} thus implies that $\{e^{it\xi}\cdot\phi\}_{t\ge 0}$ is a geodesic ray in $\cE$ for any $\xi\in\Sigma_\ra(G)$. Denote by $\f_\xi\in\cE_\ra$ its direction. 

\begin{prop}\label{prop:Gdir} The map $\xi\mapsto\f_\xi$ defines a $G\times\R_{>0}$-equivariant quasi-isometric embedding $\Sigma_\ra(G)\hto\cE_\ra$ with image $\cD_G$. 
\end{prop}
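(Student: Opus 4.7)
The idea is to compute $\dd(e^{it\xi_1}\phi, e^{it\xi_2}\phi)$ and the analogous quantity in the symmetric space via a common algebraic object---the \emph{Mostow component} of $e^{-it\xi_1}e^{it\xi_2}$---and thereby reduce the quasi-isometric comparison to the equivalence of two continuous, degree-one positively homogeneous functions on the finite-dimensional $\Lie G_c$. The image identification will then follow formally from Lemma~\ref{lem:qi1} combined with $G_c$-equivariance and the conjugacy of maximal compact tori.

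\textbf{Well-definedness and equivariance.} For any $\xi \in \Lie G_c$, any element of a compact Lie algebra lies in a maximal torus $\Lie T'\subset \Lie G_c$, and since $\phi \in \cE^{G_c}\subset \cE^{T'}$, assumption~\eqref{equ:assum} makes $\{e^{it\xi}\phi\}_{t\ge 0}$ a (distinguished) geodesic ray; its direction $\f_\xi\in\cE_\ra$ is independent of the $G_c$-fixed basepoint, because for two such basepoints the isometry $e^{it\xi}$ keeps the two rays at constant distance, hence parallel. The $\R_{>0}$-equivariance $\f_{c\xi}=c\f_\xi$ is just reparametrization, and $G_c$-equivariance reduces to $\{ge^{it\xi}\phi\} = \{e^{it\mathrm{Ad}(g)\xi}\phi\}$, matching the adjoint action of $G_c$ on $\Sigma_\ra(G)\simeq \Lie G_c$. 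Full $G$-equivariance will follow by combining this with the Mostow decomposition $G = G_c\cdot \exp(i\Lie G_c)$ and checking that the transvection action of $\exp(i\Lie G_c)$ on the two asymptotic cones is intertwined.

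\textbf{Key computation.} Write $e^{-it\xi_1}e^{it\xi_2} = k(t)\exp(i\eta(t))$ by Mostow, with $k(t)\in G_c$, $\eta(t)\in \Lie G_c$. Since $G$ acts by isometries and $k(t)\phi = \phi$,
$$\dd(e^{it\xi_1}\phi, e^{it\xi_2}\phi) = \dd(\phi, \exp(i\eta(t))\phi) =: \|\eta(t)\|_\cE.$$
The function $\|\cdot\|_\cE\colon \Lie G_c \to \R_{\ge 0}$ is continuous, positively homogeneous of degree one (by~\eqref{equ:assum} applied along each line $\R\cdot\eta$), symmetric, and nonzero outside the origin: the stabilizer of $\phi$ in $G$ is compact by properness, contains the maximal compact subgroup $G_c$, and hence equals $G_c$, while $G_c\cap \exp(i\Lie G_c)=\{e\}$ by uniqueness of the Mostow decomposition. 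Running the identical computation in the symmetric space $\Sigma(G)$---a complete CAT(0) manifold on which $G$ acts by isometries---yields $\dd_\Sigma(e^{it\xi_1}\phi_\Sigma, e^{it\xi_2}\phi_\Sigma) = \|\eta(t)\|_\Sigma$ with the \emph{same} $\eta(t)$, Mostow being purely group-theoretic. Both functions $\|\cdot\|_\cE, \|\cdot\|_\Sigma$ being continuous, degree-one positively homogeneous and positive off the origin on the finite-dimensional $\Lie G_c$, compactness of the unit sphere in $\|\cdot\|_\Sigma$ yields $C>0$ with $C^{-1}\|\cdot\|_\Sigma \le \|\cdot\|_\cE \le C\|\cdot\|_\Sigma$. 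Dividing by $t$ and letting $t\to \infty$ gives
$$C^{-1}\dd_{\Sigma_\ra}(\xi_1,\xi_2) \le \dd_\ra(\f_{\xi_1},\f_{\xi_2}) \le C\dd_{\Sigma_\ra}(\xi_1,\xi_2),$$
the required quasi-isometric embedding. For the image: by Lemma~\ref{lem:qi1} each $\Lie T$ maps onto $\cD_T$, and $G_c$-equivariance together with $\Lie G_c = \bigcup_{g\in G_c}\mathrm{Ad}(g)\Lie T$ and the conjugacy of maximal compact tori in $G_c$ yields $\bigcup_{g\in G_c} g\cdot \cD_T = \bigcup_T \cD_T = \cD_G$.

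\textbf{Main obstacle.} The most delicate point is the full $G$-equivariance (rather than just $G_c$-equivariance), which requires comparing the action of $\exp(i\Lie G_c)$-transvections on $\Sigma_\ra(G)$ and on $\cE_\ra$; this is a routine but somewhat technical unraveling of definitions for the asymptotic cone of a CAT(0) space. In contrast, the quasi-isometric comparison itself turns out to be the clean part, since the Mostow reformulation sidesteps any direct description of the metric $\dd_{\Sigma_\ra}$ on $\Lie G_c$.
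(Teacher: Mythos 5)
Your proposal is correct, but it takes a genuinely different route from the paper. The paper reduces to the torus case: it invokes Lemma~\ref{lem:qi1} (which says $\xi\mapsto\f_\xi$ is a quasi-isometric embedding of each $\Lie T$) together with the decomposition~\eqref{equ:coneconj}, $\Sigma_\ra(G)\simeq\Lie G_c=\bigcup_{g\in G_c}g\cdot\Lie T$, and leaves the patching (via the building structure, where any two points of the cone lie in a common apartment $g\cdot\Lie T$ with $g\in G_c$, and both metrics are $G_c$-invariant) to the reader. You instead work globally on $\Lie G_c$: the Cartan (Mostow) decomposition $e^{-it\xi_1}e^{it\xi_2}=k(t)\exp(i\eta(t))$ reduces both distances to functions of a common $\eta(t)\in\Lie G_c$, and the comparison is then a clean compactness argument for two continuous, degree-one positively homogeneous, positive-definite functions on the finite-dimensional $\Lie G_c$---the same trick underlying Lemma~\ref{lem:qi1}, but applied once directly rather than apartment by apartment. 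This buys you a cleaner quantitative argument at the cost of invoking Cartan, but it also means you never need to argue that the apartments of $\Sigma_\ra(G)$ through the vertex are precisely the $G_c$-translates of $N_\R$, which is the step the paper's proof tacitly uses.

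A small remark on your ``main obstacle'': the $G$-equivariance is less delicate than you suggest. Under the identification of $\Sigma_\ra(G)$ with directions of geodesic rays in $\Sigma(G)$, the element $\xi$ corresponds to the ray $\{[e^{it\xi}]\}$, and the $G$-action on $\Sigma_\ra(G)$ is by definition $g\cdot(\text{direction of }\{[e^{it\xi}]\})=\text{direction of }\{[ge^{it\xi}]\}$. Since $\f_\xi$ is by definition the direction of $\{e^{it\xi}\phi\}$ and the $G$-action on $\cE_\ra$ sends this to the direction of $\{ge^{it\xi}\phi\}$, equivariance is immediate from the constructions; no unwinding of transvection actions on asymptotic cones is needed. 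The paper suppresses this point for the same reason. Everything else in your argument---including the use of properness and maximality of $G_c$ to show the stabilizer of $\phi$ equals $G_c$, hence that $\|\cdot\|_\cE$ is positive off the origin---checks out.
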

\begin{proof} The first point is a direct consequence of~\eqref{equ:coneconj} and Lemma~\ref{lem:qi1}. The second one follows from the Iwasawa decomposition $G=G_c T G_c$, which implies $\cD_G=\bigcup_{g\in G_c} g\cdot\cD_T$.  
\end{proof}

Since $\Sigma_\ra(G)$ is complete, we infer: 

\begin{cor}\label{cor:dirclosed} If $G$ is reductive, then the set $\cD_G\subset\cE_\ra$ of toric $G$-directions is closed. 
\end{cor}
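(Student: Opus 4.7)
The proof is essentially a direct consequence of Proposition~\ref{prop:Gdir}, so the plan is very short. The key is to combine two facts: first, that $\xi \mapsto \f_\xi$ is a quasi-isometric embedding $\Sigma_\ra(G) \hookrightarrow \cE_\ra$ with image $\cD_G$; and second, that the asymptotic cone $\Sigma_\ra(G)$ of the symmetric space is complete. Completeness of $\Sigma_\ra(G)$ follows from the identification~\eqref{equ:coneconj} with $\Lie G_c = \bigcup_{g\in G_c} g\cdot\Lie T$, which, once equipped with a $G_c$-invariant norm on $\Lie G_c$, becomes a finite-dimensional normed space (hence complete), noting that conjugacy of maximal tori under the compact group $G_c$ guarantees that this union carries a well-defined complete metric.

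Given this, the closedness of $\cD_G$ is a general fact: the image of any quasi-isometric embedding of a complete metric space into a metric space is closed. Concretely, suppose $\f_j = \f_{\xi_j} \in \cD_G$ converges to some $\f \in \cE_\ra$ with respect to $\dd_\ra$. Since the sequence $(\f_j)$ is Cauchy in $\cE_\ra$, the quasi-isometric inequality
\[
C^{-1}\|\xi_j-\xi_k\|\le\dd_\ra(\f_{\xi_j},\f_{\xi_k})\le C\|\xi_j-\xi_k\|
\]
provided by Proposition~\ref{prop:Gdir} (and Lemma~\ref{lem:qi1}) forces $(\xi_j)$ to be Cauchy in $\Sigma_\ra(G)$. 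By completeness, $\xi_j \to \xi$ for some $\xi \in \Sigma_\ra(G)$, and the same inequality gives $\f_{\xi_j}\to \f_\xi$, whence $\f = \f_\xi\in\cD_G$.

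The only real subtlety — and it is genuinely minor — is justifying that $\Sigma_\ra(G)$ is complete, which requires a moment's thought about the metric structure on the union $\bigcup_{g\in G_c} g\cdot\Lie T$ in~\eqref{equ:coneconj}; but compactness of $G_c$ makes this routine (any Cauchy sequence, after passing to a subsequence, can be translated by elements of $G_c$ into a single flat $\Lie T$, which is itself complete). No delicate analysis is required beyond what is already packaged into Proposition~\ref{prop:Gdir}, so I would expect the whole argument to occupy just a few lines.
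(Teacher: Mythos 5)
Your overall strategy is exactly the paper's: Proposition~\ref{prop:Gdir} gives a quasi-isometric embedding $\Sigma_\ra(G)\hto\cE_\ra$ with image $\cD_G$, and the image of a complete metric space under a quasi-isometric embedding is closed; the paper records this in a single line ("Since $\Sigma_\ra(G)$ is complete, we infer\dots").

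However, your justification for the completeness of $\Sigma_\ra(G)$ contains a genuine error. You argue that $\Sigma_\ra(G)\simeq\Lie G_c$ is a finite-dimensional normed vector space, hence complete. But the identification~\eqref{equ:coneconj} equips $\Sigma_\ra(G)$ with the \emph{Tits metric} $\dT$, not the $G_c$-invariant Euclidean norm on $\Lie G_c$. The paper emphasizes (in Appendix~\ref{sec:reductive}, just below~\eqref{equ:expred}) that the Tits topology is strictly stronger than the vector space topology and that $\Sigma_\ra(G)$ is typically \emph{not} locally compact, so it is certainly not a finite-dimensional normed space under $\dT$. Your fallback argument (pass to a subsequence so that $g_j\to g$ in $G_c$ and reduce to a single flat $\Lie T$) likewise only yields convergence in the Euclidean topology; since $\dT$ is strictly finer, a $\dT$-Cauchy sequence having a Euclidean limit does not automatically converge to that limit in $\dT$. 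The correct justification is either Proposition~\ref{prop:Busecone} applied to the Busemann convex (indeed $\CAT$) space $\Sigma(G)$ — asymptotic cones of Busemann convex complete spaces are complete — or the completeness of the conical Tits building $\Sigma_\NA(G)\simeq\Sigma_\ra(G)$ recorded in Appendix~\ref{sec:Tits} (via~\cite{BE}). With either of those substituted in, the rest of your argument is correct.
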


We may now state the following general slope formula  (compare~\cite[Theorem~B]{His} and~\cite[Proposition~5.15]{HLi23} for related results when $G$ is a torus).

\begin{thm}\label{thm:slopedG} Assume $G$ is reductive. For each almost geodesic ray $\{\phi_t\}$ in $\cE$ with direction $\f\in\cE_\ra$, we then have
$$
\lim_{t\to\infty} t^{-1} \dd_G(\phi_t)=\dd_\ra(\f,\cD_G)=\inf_{\xi\in\Sigma_\ra(G)}\dd_\ra(\f,\f_\xi). 
$$
Furthermore, the right-hand infimum is achieved. 
\end{thm}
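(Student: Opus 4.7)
My plan is to split the statement into the two opposite inequalities together with the existence of a minimizer, fixing once and for all a basepoint $\phi_\refe\in\cE^{G_c}$ (which exists by Proposition~\ref{prop:Buse}(iv)), so that $k\phi_\refe=\phi_\refe$ for every $k\in G_c$.

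For the upper bound and the existence of the minimizer, I observe that for any $\xi\in\Sigma_\ra(G)\simeq\Lie G_c$ the orbit $t\mapsto e^{it\xi}\phi_\refe$ is a geodesic ray with direction $\f_\xi\in\cD_G$ by~\eqref{equ:assum}, so $\dd_G(\phi_t)\le\dd(\phi_t,e^{it\xi}\phi_\refe)$ combined with Lemma~\ref{lem:almostd} yields
\[
\limsup_{t\to\infty}t^{-1}\dd_G(\phi_t)\le\dd_\ra(\f,\f_\xi),
\]
and taking the infimum over $\xi$ gives $\limsup_{t\to\infty}t^{-1}\dd_G(\phi_t)\le\dd_\ra(\f,\cD_G)$. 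Since $\xi\mapsto\f_\xi$ is a quasi-isometric embedding of the finite-dimensional space $\Sigma_\ra(G)=\Lie G_c$ (Proposition~\ref{prop:Gdir}), any minimizing sequence for $\inf_\xi\dd_\ra(\f,\f_\xi)$ stays bounded and admits a convergent subsequence, so the infimum is achieved.

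For the reverse inequality I first replace $\{\phi_t\}$ by the nearby geodesic ray, which is harmless because $\dd_G$ is $1$-Lipschitz. For each $t$, properness lets me pick $g_t\in G$ with $\dd_G(\phi_t)=\dd(\phi_t,g_t\phi_\refe)$; the $KAK$ Cartan decomposition of $G$, combined with $G_c\phi_\refe=\phi_\refe$, then produces $g_t\phi_\refe=k_t\,e^{i\xi_t}\phi_\refe$ with $k_t\in G_c$ and $\xi_t$ in a closed positive Weyl chamber $\mathfrak{a}^+$ of a fixed Cartan subalgebra $\mathfrak{a}\subset\Lie G_c$. The triangle inequality forces $\dd(e^{i\xi_t}\phi_\refe,\phi_\refe)=O(t)$, and Lemma~\ref{lem:qi1} then gives $\|\xi_t\|=O(t)$; passing to a subsequence $t_j\to\infty$, I may assume $k_{t_j}\to k_\infty\in G_c$ and $\xi_{t_j}/t_j\to\hat\eta\in\mathfrak{a}^+$. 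The candidate nearest direction then becomes $\hat\f':=k_\infty\cdot\f_{\hat\eta}=\f_{\mathrm{Ad}(k_\infty)\hat\eta}\in\cD_G$, realized by the geodesic ray $\p_s:=k_\infty\,e^{is\hat\eta}\phi_\refe$.

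The core technical claim is that $\dd(g_{t_j}\phi_\refe,\p_{t_j})=o(t_j)$; granting this, the triangle inequality gives $\dd(\phi_{t_j},\p_{t_j})\le\dd_G(\phi_{t_j})+o(t_j)$, and Lemma~\ref{lem:almostd} applied to $\{\phi_t\}$ and $\{\p_s\}$ yields $\dd_\ra(\f,\hat\f')\le\liminf_j t_j^{-1}\dd_G(\phi_{t_j})$, which, since $\hat\f'\in\cD_G$, matches the upper bound and exhibits $\hat\f'$ as a minimizer. By the triangle inequality and the isometry of $k_{t_j}$, the claim reduces to bounding
\[
\dd(e^{i\xi_{t_j}}\phi_\refe,e^{it_j\hat\eta}\phi_\refe)+\dd(k_{t_j}\,e^{it_j\hat\eta}\phi_\refe,k_\infty\,e^{it_j\hat\eta}\phi_\refe).
\]
Since $\xi_{t_j}$ and $t_j\hat\eta$ both lie in $\mathfrak{a}$, the first summand is $\asymp\|\xi_{t_j}-t_j\hat\eta\|=t_j\cdot o(1)$ via the isometric embedding $(\mathfrak{a},\|\cdot\|_\ast)\hto\cE$, $\xi\mapsto e^{i\xi}\phi_\refe$, coming from~\eqref{equ:assum}. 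The second summand is the main obstacle, since $k_{t_j}\to k_\infty$ is applied at the drifting point $e^{it_j\hat\eta}\phi_\refe$; I will handle it by noting that both rays $s\mapsto k_{t_j}\,e^{is\hat\eta}\phi_\refe$ and $s\mapsto k_\infty\,e^{is\hat\eta}\phi_\refe$ emanate from the $G_c$-fixed point $\phi_\refe$, so Busemann convexity makes $s^{-1}\dd(k_{t_j}\,e^{is\hat\eta}\phi_\refe,k_\infty\,e^{is\hat\eta}\phi_\refe)$ non-decreasing in $s$ with limit $\dd_\ra(k_{t_j}\cdot\f_{\hat\eta},k_\infty\cdot\f_{\hat\eta})$. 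By the quasi-isometry of Proposition~\ref{prop:Gdir} and continuity of $\mathrm{Ad}$, this limit is $\asymp\|\mathrm{Ad}(k_{t_j})\hat\eta-\mathrm{Ad}(k_\infty)\hat\eta\|\to 0$, so at $s=t_j$ the second summand is at most $t_j\cdot o(1)=o(t_j)$, completing the proof.
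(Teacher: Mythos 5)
Your proof is correct, and it takes a genuinely different route from the paper's. Both arguments start identically (the upper bound and the reduction to a geodesic ray by Lipschitz continuity of $\dd_G$), but the lower bound diverges. The paper writes the optimal group element as $h_j=e^{it_j\xi_j}h'_j$ using the \emph{polar decomposition} $G=e^{i\Lie G_c}G_c$, with the scaling factor $t_j$ built in from the outset; since $h'_j\in G_c$ fixes $\phi_\refe$, this gives $h_j\phi_\refe=e^{it_j\xi_j}\phi_\refe$ with no leftover compact factor. Properness of the $G$-action then delivers compactness of $(\xi_j)$ directly, and a single application of Busemann convexity (nondecrease of $t^{-1}\dd(\phi_t,e^{it\xi_j}\phi_\refe)$) followed by $j\to\infty$ for fixed $t$ closes the argument. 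You instead use the $KAK$ decomposition $g_t=k_te^{i\xi_t}k'_t$, which leaves the factor $k_t\in G_c$ in front, so you must control both $\xi_{t_j}/t_j\to\hat\eta$ and $k_{t_j}\to k_\infty$ and then split the error into two pieces; the second piece (the drift $k_{t_j}\to k_\infty$ applied at the escaping point $e^{it_j\hat\eta}\phi_\refe$) requires the extra observation that $s^{-1}\dd(k_{t_j}e^{is\hat\eta}\phi_\refe,k_\infty e^{is\hat\eta}\phi_\refe)$ is nondecreasing, combined with the $G$-equivariance of Proposition~\ref{prop:Gdir}. You also replace properness by the quasi-isometry of Lemma~\ref{lem:qi1} to bound $\|\xi_t\|$. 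The net effect is that your proof is longer but avoids explicitly invoking properness in the compactness step; the paper's polar-decomposition trick is the simplification you were missing.

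Two small points to tidy up. First, what you call the ``isometric embedding $(\mathfrak{a},\|\cdot\|_\ast)\hto\cE$'' is only a \emph{quasi}-isometric embedding (this is exactly Lemma~\ref{lem:qi1}); condition~\eqref{equ:assum} gives that each single ray $t\mapsto e^{it\xi}\phi_\refe$ is a geodesic, not that the whole map is isometric for some norm. Your $\asymp$ already reflects this, so the estimate $t_j\cdot o(1)$ survives, but the phrasing should be corrected. Second, you extract a subsequence $t_j$ without first arranging $t_j^{-1}\dd_G(\phi_{t_j})\to\liminf_t t^{-1}\dd_G(\phi_t)$; the final chain of inequalities does close anyway (any subsequential $\liminf$ lies between the overall $\liminf$ and $\limsup$, which are squeezed between the same upper and lower bounds), but the argument is cleaner, and the existence of the limit more transparent, if you normalize the subsequence first, as the paper does.
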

\begin{proof} Pick $\xi\in\Sigma_\ra(G)\simeq\Lie G_c$. For each $t\ge 0$ we have 
$\dd(\phi_t,e^{it\xi}\phi_\refe)\ge \dd_G(\phi_t)$. Lemma~\ref{lem:almostd} thus yields 
$$
\dd_\ra(\f,\f_\xi)=\lim_{t\to\infty} t^{-1}\dd(\phi_t,e^{it\xi}\phi_\refe)\ge\limsup_{t\to\infty} t^{-1}\dd_G(\phi_t), 
$$
and hence $\dd_\ra(\f,\cD_G)\ge\limsup_{t\to\infty} t^{-1}  \dd_G(\phi_t)$. Conversely, pick a sequence $t_j\to+\infty$ such that 
$$
t_j^{-1} \dd_G(\phi_{t_j})\to L:=\liminf_{t\to\infty} t^{-1}  \dd_G(\phi_t). 
$$
For each $j$, pick $h_j\in G$ such that $ \dd_G(\phi_{t_j})=\dd(\phi_{t_j},h_j\phi_\refe)$, and consider its polar decomposition $h_j=e^{t_j\xi_j} h'_j$ with $\xi_j\in i\Lie G_c$ and $h'_j\in G_c$. By $G_c$-invariance of $\phi_\refe$ we get
$$
 \dd_G(\phi_{t_j})=\dd(\phi_{t_j},e^{it_j\xi_j}\phi_\refe). 
$$ 
Pick $j$ large enough, so that $t_j\ge 1$. The convexity of $t\mapsto\dd(\phi_t,e^{it\xi_j} \phi_\refe)$ yields 
$$
\dd(\phi_1,e^{i\xi_j} \phi_\refe)\le t_j^{-1} \dd(\phi_{t_j},e^{i t_j\xi_j} \phi_\refe)=t_j^{-1} \dd_G(\phi_{t_j}), 
$$
which converges to $L$, and hence is bounded. By properness of the $G$-action on $\cE$, $(\xi_j)$ thus stays in a fixed compact subset of $\Lie G_c$, and we may hence assume, after passing to a subsequence, that $\xi_j\to\xi\in\Lie G_c$ (in the vector space topology). Using again Busemann convexity, we have 
$$
t^{-1}  \dd(\phi_t,e^{it\xi_j} \phi_\refe)\le t_j^{-1}\dd(\phi_{t_j},e^{it_j\xi_j} \phi_\refe)=t_j^{-1} \dd_G(\phi_{t_j})
$$
for $t\in (0,t_j]$. Letting $j\to\infty$, this yields $t^{-1} \dd(\phi_t,e^{it\xi} \phi_\refe)\le L$ for any $t>0$. We conclude
\begin{multline*}
\dd_\ra(\f,\cD_G)\le\dd_\ra(\f,\f_\xi)=\lim_{t\to\infty} t^{-1}\dd(\phi_t,e^{it\xi} \phi_\refe)\\
\le L=\liminf_{t\to\infty} t^{-1}  \dd_G(\phi_t)\le\limsup_{t\to\infty} t^{-1}  \dd_G(\phi_t)\le\dd_\ra(\f,\cD_G), 
\end{multline*}
which proves the result, with $\xi$ achieving the infimum in the statement. 
\end{proof}

%
%%%%%%%%%%%%%%%%%%%%%%%%%%%%%%%%%%%%%%%%%%%%%%%%%%%%%%%%%%%%
\subsection{Geodesic stability}
Here we combine the above results to state a refined version of~\cite[Theorem~3.4]{DR}. As above, $\cE$ is a Busemann convex, complete metric space endowed with a proper isometric action of a complex linear algebraic group $G$, assumed to satisfy condition~\eqref{equ:assum} of the previous section. Consider also a $G$-invariant function 
$$
F\colon\cE\to\R\cup\{+\infty\},
$$
assumed to be lsc and geodesically convex, with radial transform $F_\ra\colon\cE_\ra\to\R\cup\{+\infty\}$. By $G$-invariance of $F$,  $F_\ra$ automatically vanishes on the set $\cD_G\subset\cE_\ra$ of toric $G$-directions.  

\begin{defi}\label{defi:geodstab} We say that $F$ is: 
\begin{itemize}
    \item[(a)] \emph{geodesically stable modulo $G$} if $F_\ra(\f)\ge 0$ for each $\f\in\cE_\ra$, with equality iff $\f$ lies in $\cD_G$; 
    \item[(b)] \emph{uniformly geodesically stable modulo $G$} if $G$ is reductive and there exists $\sigma>0$ such that $F_\ra(\f)\ge\sigma\dd_\ra(\f,\cD_G)$ on $\cE_\ra$. 
\end{itemize}
\end{defi} 
As the terminology suggests, (b) indeed implies (a), since $\cD_G$ is closed when $G$ is reductive (see Corollary~\ref{cor:dirclosed}). We may now state: 

\begin{thm}\label{thm:mincrit} Assume that the $G$-invariant function $F\colon\cE\to\R\cup\{+\infty\}$ is strongly lsc and geodesically convex. Assume further that its minimizers are unique mod $G$, and that the existence of a minimizer implies that $G$ is reductive. Then the following are equivalent: 
\begin{itemize}
    \item[(i)] $F$ admits a minimizer; 
    \item[(ii)] $F$ is geodesically stable modulo $G$; 
    \item[(iii)] $F$ is uniformly geodesically stable modulo $G$. 
    \end{itemize}
\end{thm}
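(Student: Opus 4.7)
The plan is to prove the implications (i)$\Rightarrow$(iii)$\Rightarrow$(ii)$\Rightarrow$(i), leveraging the abstract results already established in this section (Corollary~\ref{cor:crit}, Theorem~\ref{thm:crit}, Theorem~\ref{thm:slopedG}, and Corollary~\ref{cor:dirclosed}).

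For (i)$\Rightarrow$(iii), I first invoke the standing assumption that the existence of a minimizer forces $G$ to be reductive, so the statement of (iii) makes sense. Uniqueness of the minimizer modulo $G$ together with Corollary~\ref{cor:crit} then upgrades existence to coercivity modulo $G$, namely $F\ge\sigma\dd_G-C$ for some $\sigma,C>0$. For any geodesic ray $\{\phi_t\}$ with direction $\f\in\cE_\ra$, I take the slope at infinity:
\[
F_\ra(\f)=\lim_{t\to\infty}t^{-1}F(\phi_t)\ge\sigma\lim_{t\to\infty}t^{-1}\dd_G(\phi_t)=\sigma\,\dd_\ra(\f,\cD_G),
\]
where the last equality is Theorem~\ref{thm:slopedG} (applicable since $G$ is reductive). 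This is exactly uniform geodesic stability modulo $G$. The implication (iii)$\Rightarrow$(ii) is then essentially formal: since $\cD_G$ is closed when $G$ is reductive (Corollary~\ref{cor:dirclosed}), the inequality $F_\ra(\f)\ge\sigma\,\dd_\ra(\f,\cD_G)$ forces $F_\ra\ge 0$ with equality iff $\dd_\ra(\f,\cD_G)=0$ iff $\f\in\cD_G$.

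For (ii)$\Rightarrow$(i), I argue by contradiction: suppose no minimizer exists. By Corollary~\ref{cor:crit} (combined with the uniqueness-mod-$G$ assumption), $F$ is not coercive modulo $G$, so $\sigma_G(F)\le 0$. Theorem~\ref{thm:crit} then provides a $G$-minimal unit-speed geodesic ray $\{\phi_t\}$ emanating from $\phi_\refe$ whose direction $\f:=\phi_\infty$ satisfies $F_\ra(\f)=\sigma_G(F)\le 0$. By (ii), $F_\ra(\f)\ge 0$, so $F_\ra(\f)=0$ and $\f\in\cD_G$. Hence $\f=\f_\xi$ for some compact torus $T\subset G$ and some $\xi\in\Lie T$. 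Using Proposition~\ref{prop:Buse}(iv) applied to $T$, I pick a $T$-invariant point $\phi_0\in\cE^T$; then $\{e^{it\xi}\phi_0\}_{t\ge 0}$ is the geodesic ray from $\phi_0$ with direction $\f_\xi=\f$, and this ray lies entirely in the $G$-orbit $G\cdot\phi_0$. Since a change of basepoint alters $\dd_G(\cdot)$ by a bounded amount,
\[
\lim_{t\to\infty}t^{-1}\dd_G(\phi_t,\phi_\refe)=\lim_{t\to\infty}t^{-1}\dd_G(\phi_t,\phi_0)\le\lim_{t\to\infty}t^{-1}\dd(\phi_t,e^{it\xi}\phi_0)=\dd_\ra(\f,\f_\xi)=0,
\]
where the last equality uses Lemma~\ref{lem:almostd}. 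On the other hand, $G$-minimality of the ray from $\phi_\refe$ gives $\dd_G(\phi_t,\phi_\refe)=\dd(\phi_\refe,\phi_t)=t$ for all $t\ge 0$, so the above slope equals $1$. This contradiction completes the proof.

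The main subtlety lies in the last step: one must carry out the comparison with a torus-orbit ray based at a $T$-invariant point $\phi_0$ which is \emph{not} \emph{a priori} the chosen basepoint $\phi_\refe$ of Theorem~\ref{thm:crit}. Exploiting that the slope of $\dd_G$ at infinity is independent of the basepoint (via the triangle inequality) is precisely what bridges this gap, and explains why the argument works even though $G$ is not assumed reductive in condition (ii)---reductivity only enters for (i)$\Rightarrow$(iii) through the standing hypothesis and Theorem~\ref{thm:slopedG}.
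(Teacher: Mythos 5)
Your proof is correct, and for the chain (i)$\Rightarrow$(iii)$\Rightarrow$(ii) it follows the paper's own argument: Corollary~\ref{cor:crit} upgrades existence of a minimizer to coercivity modulo $G$, and Theorem~\ref{thm:slopedG} converts that into the uniform inequality; closedness of $\cD_G$ (Corollary~\ref{cor:dirclosed}) then gives (ii).

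Where you differ — and in fact are more careful — is in (ii)$\Rightarrow$(i). The paper obtains, as you do, a $G$-minimal unit-speed ray $\{\phi_t\}$ from $\phi_\refe$ with $F_\ra(\phi_\infty)\le 0$, uses (ii) to get $\phi_\infty=\f_\xi$, and then asserts that $\{\phi_t\}$ and $\{e^{it\xi}\phi_0\}$ ``share the same origin and direction, and hence are equal,'' concluding $\{\phi_t\}$ lies in a single $G$-orbit. But for $\{e^{it\xi}\phi_0\}$ to be a geodesic ray with direction $\f_\xi$ one needs $\phi_0\in\cE^T$ (condition~\eqref{equ:assum}), and the compact torus $T$ is only determined after running Corollary~\ref{cor:crit}, so one cannot simply choose $\phi_\refe$ to be $T$-invariant in advance. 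Your argument sidesteps this: you take the toric ray at some genuine $T$-fixed $\phi_0$ (Proposition~\ref{prop:Buse}(iv)), observe that $t^{-1}\dd_G(\phi_t,\phi_\refe)$ and $t^{-1}\dd_G(\phi_t,\phi_0)$ have the same slope (they differ by a constant), bound the latter by $t^{-1}\dd(\phi_t,e^{it\xi}\phi_0)\to 0$ via Lemma~\ref{lem:almostd}, and contradict $G$-minimality, which forces that slope to be $1$. This is essentially the same contradiction the paper aims for — $G$-minimality is incompatible with the ray being asymptotically contained in a $G$-orbit — but packaged as a slope estimate that does not require the origin of the toric ray to coincide with $\phi_\refe$. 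Your version is thus the one that holds up without the unstated hypothesis $\phi_\refe\in\cE^T$, and it more cleanly illustrates why reductivity of $G$ is not needed for this implication.
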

\begin{proof} By Corollary~\ref{cor:crit}, (i) implies $F\ge\sigma\dd_G-C$ for some $\sigma,C>0$. Passing to slopes at infinity yields $F_\ra(\f)\ge\sigma\dd_\ra(\f,\cD_G)$, by Theorem~\ref{thm:slopedG}. This proves (i)$\Rightarrow$(iii), and we already observed (iii)$\Rightarrow$(ii). Finally, assume (ii). If (i) fails, then Corollary~\ref{cor:crit} yields a $G$-minimal unit-speed geodesic ray $\{\phi_t\}$ along which $F$ is nonincreasing. Thus $F_\ra(\phi_\infty)\le 0$, and (ii) yields $\phi_\infty=\f_\xi$ for with $\xi\in \Lie T$ for some algebraic torus $T_\C\subset G$. The two geodesic rays $\{\phi_t\}$ and \{$e^{it\xi}\phi_0\}$ thus share the same origin and direction, and hence are equal. In particular, $\{\phi_t\}$ lies in a single $G$-orbit, and hence cannot be both $G$-minimal and non-constant, a contradiction. This shows (ii)$\Rightarrow$(i), and concludes the proof. 
\end{proof}

%
%%%%%%%%%%%%%%%%%%%%%%%%%%%%%%%%%%%%%%%%%%%%%%%%%%%%%%%%%%%%%%
\section{Complex pluripotential theory}\label{sec:ppt}
%%%%%%%%%%%%%%%%%%%%%%%%%%%%%%%%%%%%%%%%%%%%%%%%%%%%%%%%%%%%%%
In the remainder of this paper, $(X,L)$ will denote a smooth complex projective variety of dimension $n=\dim X$, equipped with an $\Q$-ample line bundle $L$ of volume $V=V_L:=(L^n)$. We write
$$
\cH=\cH(L)\subset C^\infty(L)
$$
for the (open, convex) set of smooth strictly psh metrics, and fix a reference metric $\phi_\refe\in\cH$ whenever convenient. 

As in~\cite{trivval,synthetic}, we write $x\lesssim y$ if $x\le C_n y$ for a constant $C_n>0$ only depending on $n$, and $x\approx y$ if $x\lesssim y$ and $y\lesssim x$.  We use~\cite{GZ,BBGZ,synthetic} as general references. 
%
%%%%%%%%%%%%%%%%%%%%%%%%%%%%%%%%%%%%%%%%%%%%%%%%%%%%%%%%%%%%%%
\subsection{Monge--Amp\`ere operator and energy}
The (normalized) \emph{\ma operator}
$$
\MA\colon C^\infty(L)\to \Cz(X)^\vee
$$
is defined by 
$$
\MA(\phi):=V^{-1}(\ddc\phi)^{n}
$$
for $\phi\in C^\infty(L)$. Its restriction to $\cH$ takes values in the space of smooth positive probability measures. 

The \ma operator admits an {\EL} functional (see~\S\ref{sec:EL}) 
$$
\en\colon C^\infty(L)\to\R,
$$
called the \emph{Monge--Amp\`ere energy}, which we normalize by $\en(\phi_\refe)=0$. It satisfies the translation equivariance property
$$
\en(\phi+c)=\en(\phi)+c
$$
for $\phi\in C^\infty(L)$ and $c\in\R$ (see Example~\ref{exam:transinv}). 

The restriction $\en\colon\cH\to\R$ is further monotone increasing, \ie 
$\phi\le\p\Longrightarrow\en(\phi)\le\en(\p)$ for $\phi,\p\in\cH$, and concave, which by Lemma~\ref{lem:Fconc} amounts to 
\begin{equation}\label{equ:Econc}
\int(\phi-\p)\MA(\phi)\le\en(\phi)-\en(\p)\le\int(\phi-\p)\MA(\p). 
\end{equation}
%

%%%%%%%%%%%%%%%%%%%%%%%%%%%%%%%%%%%%%%%%%%%%%%%%%%%%%%%%%%%%%%
\subsection{The Darvas metric}\label{sec:Darvas}
There exists a unique metric $\dd_1$ on $\cH$, called the \emph{Darvas metric}, such that for all $\phi,\p\in\cH$ we have 
\begin{itemize}
\item[(D1)] $\phi\ge\p\Longrightarrow\dd_1(\phi,\p)=\en(\phi)-\en(\p)$; 
\item[(D2)] $\dd_1(\phi,\p)=\inf\left\{\dd_1(\phi,\tau)+\dd_1(\tau,\p)\mid\tau\in\cH,\,\tau\le\min\{\phi,\p\}\right\}$,
\end{itemize}
Equivalently, setting
\begin{equation}\label{equ:Darvas}
\dd_1(\phi,\p):=\inf\left\{\en(\phi)+\en(\p)-2\en(\tau)\mid \tau\in\cH,\,\tau\le\min\{\phi,\p\}\right\}
\end{equation}
defines a metric on $\cH$, the most difficult part being to check the triangle inequality (see for instance~\cite{DDL}). 

Furthermore, by~\cite{Dar15}, the completion of $(\cH,\dd_1)$ can be identified with the space 
$$
\cE^1=\cE^1(L)\subset\PSH(L)
$$
of psh metrics on $L$ of \emph{finite energy}. We refer to the metric space topology of $\cE^1$ as the \emph{strong topology}, and its subspace topology induced by $\PSH(L)$ as the \emph{weak topology}. 

The \ma operator and energy admit unique continuous extensions 
$$
\MA\colon\cE^1\to \Cz(X)^\vee,\quad\en\colon\cE^1\to\R, 
$$
the latter still being an {\EL} of the former. The strong topology of $\cE^1$ can then be characterized as the coarsest refinement of the weak topology in which $\en\colon\cE^1\to\R$ becomes continuous. 

For all $\phi,\p\in\cE^1$ we have
\begin{equation}\label{equ:d1I1}
\dd_1(\phi,\p)\approx\ii_1(\phi,\p):=\int|\phi-\p|\left(\MA(\phi)+\MA(\p)\right), 
\end{equation}
see~\cite[Theorem~3.7]{DDL}. Moreover, 
$$
\ii_1(\phi,\p)=\ii_1(\phi,\max\{\phi,\p\})+\ii_1(\max\{\phi,\p\},\p),
$$
which implies
\begin{equation}\label{equ:dmax}
\dd_1(\max\{\phi,\p\},\tau)\lesssim\max\{\dd_1(\phi,\tau),\dd_1(\p,\tau)\}
\end{equation}
for all $\phi,\p,\tau\in\cE^1$.

The \emph{psh envelope} 
\begin{equation}\label{equ:pshenv}
\env(\p):=\sup\left\{\phi\mid \PSH(L)\ni\phi\le\p\right\}
\end{equation}
of any continuous metric $\p$ on $L$ is continuous and psh, and hence lies in $\cE^1$.  Trivially, 
\begin{equation}\label{equ:envlip}
\sup_X|\env(\p_1)-\env(\p_2)|\le\sup_X|\p_1-\p_2|. 
\end{equation}
We further have the \emph{orthogonality relation}
\begin{equation}\label{equ:ortho}
\int(\p-\env(\p))\MA(\env(\p))=0.
\end{equation}
In other words, the \ma measure $\MA(\env(\p))$ is supported in the \emph{contact locus} $\{\p=\env(\p)\}$.

%
%%%%%%%%%%%%%%%%%%%%%%%%%%%%%%%%%%%%%%%%%%%%%%%%%%%%%%%%%%%%%%
\subsection{Measures of finite energy}

\begin{defi}\label{defi:finen} We say that a signed measure $\mu\in \Cz(X)^\vee$ has \emph{finite energy} if each function $\phi-\phi_\refe$ with $\phi\in\cE^1$ is $|\mu|$-integrable. 
\end{defi}
A probability measure $\mu$ has finite energy iff its \emph{pluricomplex energy}\footnote{The present notation follows~\cite{synthetic}.}
$$
\jj(\mu)=\jj(\mu_\refe,\mu):=\sup_{\phi\in\cE^1}\left(\en(\phi)-\int(\phi-\phi_\refe)\,\mu\right)\in [0,+\infty]
$$
is finite. We denote by 
$$
\cM^1\subset \Cz(X)^\vee
$$
the (convex) set of probability measures of finite energy. 
\begin{exam} For any $\phi\in\cE^1$, $\mu=\MA(\phi)$ has finite energy, with 
$\jj(\mu)=\jj(\phi_\refe,\phi)$. 
\end{exam} 
For any $\mu\in\cM^1$, the map
$$
\cE^1\to L^1(\mu)\quad\phi\mapsto\phi-\phi_\refe
$$
is {\sgh} (see Definition~\ref{defi:sgh}). More specifically:

\begin{lem}\label{lem:holdmu} For all $\mu\in\cM^1$ and $\phi_1,\phi_2\in\cE^1$ we have 
\begin{equation}\label{equ:holdmu}
\|\phi_1-\phi_2\|_{L^1(\mu)}\lesssim\dd_1(\phi_1,\phi_2)^{\a_n}\left(\jj(\mu)+\max_i\dd_1(\phi_i)\right)^{1-\a_n}
\end{equation}
with $\a_n:=2^{-n}$. 
\end{lem}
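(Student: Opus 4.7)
The plan is to reduce to the case where $\mu$ is itself the Monge--Amp\`ere measure of some $\tau\in\cE^1$, and then argue by induction on $n=\dim X$ via iterated Cauchy--Schwarz estimates. This follows the classical template of~\cite{BBGZ} for H\"older-type bounds of finite-energy potentials in $L^1(\mu)$.

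First I would invoke the solvability of the complex Monge--Amp\`ere equation for finite-energy measures (see~\cite{BBGZ}): every $\mu\in\cM^1$ can be written as $\mu=\MA(\tau)$ for some $\tau\in\cE^1$ with $\dd_1(\tau)$ controlled by $\jj(\mu)$. Next, I would set $\tau':=\min(\phi_1,\phi_2)$, which lies in $\cE^1$ by \eqref{equ:dmax}, and check via \eqref{equ:d1I1} that $\dd_1(\tau')\lesssim\max_i\dd_1(\phi_i)$ and $\dd_1(\phi_i,\tau')\lesssim\dd_1(\phi_1,\phi_2)$. Writing $|\phi_1-\phi_2|=(\phi_1-\tau')+(\phi_2-\tau')$, the problem reduces to bounding $\int(\phi-\p)\MA(\tau)$ for $\phi,\p,\tau\in\cE^1$ with $\phi\ge\p$; by~(D1) we then have $\dd_1(\phi,\p)=\en(\phi)-\en(\p)$.

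For this bound I would induct on $n$. When $n=1$, integration by parts yields $V\int(\phi-\p)\ddc\tau=\int d(\phi-\p)\wedge d^c\tau$, and Cauchy--Schwarz combined with the control of the Dirichlet energies by $\dd_1(\phi,\p)$ and $\jj(\mu)$ gives the estimate with $\a_1=1/2$. For the inductive step, the identity
\[
V\int(\phi-\p)\MA(\tau)=\int d(\phi-\p)\wedge d^c\tau\wedge(\ddc\tau)^{n-1}
\]
and Cauchy--Schwarz against the positive current $(\ddc\tau)^{n-1}$ give
\[
V\int(\phi-\p)\MA(\tau)\le\left(\int d(\phi-\p)\wedge d^c(\phi-\p)\wedge(\ddc\tau)^{n-1}\right)^{1/2}\left(\int d\tau\wedge d^c\tau\wedge(\ddc\tau)^{n-1}\right)^{1/2}.
\]
The second factor is controlled by the energy of $\tau$, hence by $\jj(\mu)$; integrating by parts in the first transforms it into a sum of mixed Monge--Amp\`ere integrals of the form $\int(\phi-\p)(\ddc\phi)\wedge(\ddc\tau)^{n-1}$ and $\int(\phi-\p)(\ddc\p)\wedge(\ddc\tau)^{n-1}$, to which the inductive hypothesis applies, the associated mixed measures having finite energy comparable to $\jj(\mu)+\max_i\dd_1(\phi_i)$. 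Each Cauchy--Schwarz halves the exponent, yielding $\a_n=2^{-n}$ after $n$ iterations.

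The main obstacle will be the careful bookkeeping of the energy factors across the induction, to ensure that the exponents produced by each Cauchy--Schwarz step combine to exactly $1-\a_n$ on the $\jj(\mu)+\max_i\dd_1(\phi_i)$ factor, and to keep the mixed Monge--Amp\`ere measures at every stage within the inductive hypothesis (with an appropriate scaling). A secondary technical point is justifying the integration-by-parts and Cauchy--Schwarz manipulations for currents associated with potentials in $\cE^1$ rather than bounded ones; this relies on the extended Bedford--Taylor theory and the density of smooth metrics in $(\cE^1,\dd_1)$, as developed in~\cite{BBGZ}.
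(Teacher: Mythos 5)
Your overall plan---reduce to bounding a signed integral and iterate Cauchy--Schwarz against mixed Monge--Amp\`ere currents, halving the exponent at each step to land on $\a_n=2^{-n}$---is indeed what underlies the inequality~\cite[(2.25)]{synthetic} that the paper cites; the paper's own proof only performs the reduction from $\|\cdot\|_{L^1(\mu)}$ to a signed integral and then delegates the real estimate to that reference. However, two of your intermediate steps are not correct as stated.

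First, $\tau':=\min(\phi_1,\phi_2)$ is in general \emph{not} psh: the minimum of two psh metrics is not psh (only the maximum is), so $\tau'\notin\cE^1$ and the claimed bounds on $\dd_1(\tau')$ and $\dd_1(\phi_i,\tau')$ are meaningless. Note also that~\eqref{equ:dmax}, which you invoke, is a statement about $\max$. The correct decomposition is the one the paper uses, $|\phi_1-\phi_2|=2(\max\{\phi_1,\phi_2\}-\phi_2)+(\phi_2-\phi_1)$, where $\max\{\phi_1,\phi_2\}$ is psh, dominates both $\phi_i$, and is controlled via~\eqref{equ:dmax}; this gives the same reduction to $\int(\phi-\p)\MA(\tau)$ with $\phi\ge\p$.

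Second, and more seriously, the identity
\[
V\int(\phi-\p)\MA(\tau)=\int d(\phi-\p)\wedge d^c\tau\wedge(\ddc\tau)^{n-1}
\]
is not valid: $\tau$ is a psh metric on $L$, not a global function, so $\ddc\tau$ is the curvature form of $L$---closed but not exact---and $d^c\tau$ has no global meaning. One cannot integrate by parts to strip a $\ddc$ off $\tau$. (As a sanity check: if the identity held, taking $\phi-\p\equiv c$ constant would give $cV=0$, which is absurd.) What is needed is a centering. Write $\int(\phi_1-\phi_2)\,\mu=\int(\phi_1-\phi_2)\left(\mu-\MA(\phi_1)\right)+\int(\phi_1-\phi_2)\MA(\phi_1)$. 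The second term is controlled by $\dd_1(\phi_1,\phi_2)$ via~\eqref{equ:d1I1}; in the first, the difference $(\ddc\tau)^n-(\ddc\phi_1)^n$ factors through $\ddc(\tau-\phi_1)$ with $\tau-\phi_1$ a genuine global function, so that integration by parts is legitimate and your iterated Cauchy--Schwarz scheme then goes through, accumulating the factor $\left(\jj(\mu)+\max_i\dd_1(\phi_i)\right)^{1-\a_n}$. This is exactly the inequality~\cite[(2.25)]{synthetic} that the paper quotes, and its proof there follows this corrected pattern. Your $n=1$ base case has the same defect and needs the same fix.
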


\begin{proof} Writing $|\phi_1-\phi_2|=2\left(\max\{\phi_1,\phi_2\}-\phi_2\right)+(\phi_2-\phi_1)$ and using~\eqref{equ:dmax}, it is enough to show
$$
\left|\int(\phi_1-\phi_2)\,\mu\right|\lesssim\dd_1(\phi_1,\phi_2)^{\a_n}\left(\jj(\mu)+\max_i\dd_1(\phi_i)\right)^{1-\a_n}. 
$$
This follows from
$$
\left|\int(\phi_1-\phi_2)\left(\mu-\MA(\phi_1)\right)\right|\lesssim\ii(\phi_1,\phi_2)^{\a_n}\left(\jj(\mu)+\max_i\dd_1(\phi_i)\right)^{1-\a_n},  
$$
see~\cite[(2.25)]{synthetic}, combined with~\eqref{equ:d1I1}. 
\end{proof}

Following~\cite[\S 2.7]{synthetic}, we next introduce a metric on $\cM^1$ by setting for $\mu,\nu\in\cM^1$ 
\begin{equation}\label{equ:dM1}
\done(\mu,\nu):=\sup_{\phi\in\cE^1,\,\ii(\phi)\le 1}\left|\int(\phi-\phi_\refe)(\mu-\nu)\right|. 
\end{equation}
By~\cite[(2.31)]{synthetic}, for all $\phi_1,\phi_2\in\cE^1$ we have
\begin{equation}\label{equ:estMA}
\left|\int(\phi_1-\phi_2)(\mu-\nu)\right|\lesssim \done(\mu,\nu)\max_i\left(1+\ii(\phi_i)\right)^{1/2}. 
\end{equation}
By~\cite[Proposition~2.28]{synthetic}, the metric space $(\cM^1,\done)$ is complete, and 
$$
\jj(\mu)\approx\done(\mu):=\done(\mu,\mu_\refe)\quad\text{with}\quad\mu_\refe:=\MA(\phi_\refe).
$$
The metric space topology of $\cM^1$ is called the \emph{strong topology}; it is characterized as the coarsest refinement of the weak topology in which $\jj\colon\cM^1\to [0,+\infty)$ becomes continuous. 

While the metric $\done$ depends on the choice of $L$ and $\phi_\refe$, the set $\cM^1$, its strong topology, and even the Lipschitz equivalence class of $(\cM^1,\done)$, are all independent of any choice, and thus intrinsically attached to $X$~\cite[Theorem~3.1]{synthetic}. 

By~\cite{GZ,BBEGZ}, the \ma operator induces a homeomorphism 
$$
\MA\colon\cE^1/\R\simto\cM^1
$$
for the strong topologies on both sides (see also Theorem~\ref{thm:wMA} below), a finite-energy version of~\cite{Yau78}. Using this we show: 

\begin{lem}\label{lem:weakcont} For any $\mu\in\cM^1$ and $\p\in\cE^1$, the restriction of $\phi\mapsto\int(\phi-\p)\,\mu$ to any bounded subset of $\cE^1$ is continuous in the weak topology.
\end{lem}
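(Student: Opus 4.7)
Let $\{\phi_j\}\subset\cE^1$ satisfy $\dd_1(\phi_j)\le M$ and $\phi_j\to\phi$ weakly. Since $\int\p\,d\mu$ is finite by Definition~\ref{defi:finen}, the statement reduces by linearity to showing $\int\phi_j\,d\mu\to\int\phi\,d\mu$. The first ingredient is the homeomorphism $\MA\colon\cE^1/\R\simto\cM^1$ recalled just above, which lets me write $\mu=\MA(\tau)$ for some $\tau\in\cE^1$ and, in particular, guarantees that $\mu$ is non-pluripolar.

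My plan is to combine pointwise $\mu$-a.e.~convergence with uniform integrability. First, by Hartogs's lemma applied to the weakly convergent sequence of psh potentials, I get both a uniform upper bound $\sup_X\phi_j\le C$ and, after extracting a subsequence, pointwise convergence to $\phi$ outside a pluripolar set. Since $\mu$ charges no pluripolar set \cite{BBEGZ}, this gives $\mu$-a.e.~convergence.

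The core of the argument is uniform $L^1(\mu)$-integrability of $\{\phi_j\}$, which I will obtain via the truncations $\phi_j^N:=\max(\phi_j,-N)\ge\phi_j$, decreasing to $\phi_j$ as $N\to\infty$. By property~(D1) of the Darvas metric, $\dd_1(\phi_j,\phi_j^N)=\en(\phi_j^N)-\en(\phi_j)$, and both $\dd_1(\phi_j)$ and $\dd_1(\phi_j^N)$ are bounded by a constant $C_{M,N}$. Lemma~\ref{lem:holdmu} then yields
\[
\|\phi_j-\phi_j^N\|_{L^1(\mu)}\lesssim\bigl(\en(\phi_j^N)-\en(\phi_j)\bigr)^{\a_n}\bigl(\jj(\mu)+C_{M,N}\bigr)^{1-\a_n}.
\]
The crux is to establish $\sup_j\bigl(\en(\phi_j^N)-\en(\phi_j)\bigr)\to 0$ as $N\to\infty$; here I plan to use the concavity inequality~\eqref{equ:Econc} to bound $\en(\phi_j^N)-\en(\phi_j)\le\int_{\{\phi_j<-N\}}(-N-\phi_j)\,\MA(\phi_j)$, together with the uniform finite-energy tail estimates on $\dd_1$-balls from \cite{BBGZ,BBEGZ}, which express equi-continuity of the Monge--Amp\`ere energy under truncation.

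With uniform integrability in hand, I would finish by an $\varepsilon/3$ argument. For fixed $N$, the sequence $\phi_j^N$ converges $\mu$-a.e.~to $\phi^N:=\max(\phi,-N)$ within the uniform bounds $-N\le\phi_j^N\le C$, so the dominated convergence theorem gives $\int\phi_j^N\,d\mu\to\int\phi^N\,d\mu$. Monotone convergence as $N\to\infty$ then yields $\int\phi^N\,d\mu\to\int\phi\,d\mu$, using that $\phi-\phi_\refe\in L^1(\mu)$ by finite energy. Splitting $\int(\phi_j-\phi)\,d\mu$ as the sum of $\int(\phi_j-\phi_j^N)\,d\mu$, $\int(\phi_j^N-\phi^N)\,d\mu$ and $\int(\phi^N-\phi)\,d\mu$ then completes the proof. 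The main obstacle is the uniform equi-continuity of $\en$ under truncation on Darvas balls, which reflects a characteristic subtlety of finite-energy pluripotential theory and is the only reason the weak topology on $\dd_1$-bounded sets is strong enough to test against arbitrary finite-energy measures.
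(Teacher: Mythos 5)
Your approach is genuinely different from the paper's and, unfortunately, it has a gap that I do not see how to repair. The paper's proof is dual to yours: rather than truncating $\phi$, it approximates $\mu$. Writing $\mu_j=\MA(\tau_j)$ with $\tau_j\in\cH$ and $\mu_j\to\mu$ strongly in $\cM^1$, each functional $\phi\mapsto\int(\phi-\p)\,\mu_j$ is weakly continuous (since $\mu_j$ is a smooth volume form and the weak topology on bounded sets of psh potentials agrees with $L^1_{\mathrm{loc}}$), and the estimate~\eqref{equ:estMA} gives $\left|\int(\phi-\p)(\mu-\mu_j)\right|\lesssim\done(\mu,\mu_j)\max\bigl(1+\ii(\phi),1+\ii(\p)\bigr)^{1/2}$, which tends to $0$ uniformly on $\dd_1$-bounded sets. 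A uniform limit of weakly continuous functions is weakly continuous, and the proof is complete in three lines.

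The gap in your argument is the ``crux'' step, namely the claim that $\sup_j\bigl(\en(\phi_j^N)-\en(\phi_j)\bigr)\to0$ as $N\to\infty$ for a $\dd_1$-bounded, weakly convergent sequence $(\phi_j)$. This is false, and no ``uniform finite-energy tail estimates on $\dd_1$-balls'' of the kind you invoke exist in~\cite{BBGZ,BBEGZ}. Indeed, since by~(D1) one has $\en(\phi_j^N)-\en(\phi_j)=\dd_1(\phi_j,\phi_j^N)$, your claim asserts that the truncation map $\phi\mapsto\max(\phi,-N)$ is \emph{uniformly} close to the identity on $\dd_1$-balls, which it is not. A concrete counterexample on $(\P^1,\cO(1))$: let $u_j:=a_j\max(\log(|Z_1|/\|Z\|),-R_j)$ with $a_jR_j=N_j$ and $a_j^2R_j=M$, and set $\phi_j:=\phi_{\mathrm{FS}}+u_j$. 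Then $\phi_j\in\cE^1$, $\sup_X(\phi_j-\phi_{\mathrm{FS}})=0$, and $\MA(\phi_j)$ carries mass $m_j\approx a_j=M/N_j$ on a tiny circle where $u_j\equiv-N_j$, so $\dd_1(\phi_j)\approx N_j m_j\approx M$ is bounded; moreover $a_j\to0$, so $u_j\to0$ in $L^1$ and $\phi_j\to\phi_{\mathrm{FS}}$ weakly. But for fixed $N$ and $j$ large (so $N_j\gg N$), $\int(\phi_j^N-\phi_j)\,\MA(\phi_j)\approx(N_j-N)m_j=M-Nm_j\to M$, and a direct computation via the $n=1$ formula $\en(\phi)-\en(\psi)=\tfrac12\int(\phi-\psi)(\MA(\phi)+\MA(\psi))$ yields $\en(\phi_j^N)-\en(\phi_j)\to M/2$. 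So the supremum over $j$ stays bounded away from $0$ for every fixed $N$, and your chain of estimates through Lemma~\ref{lem:holdmu} never forces $\|\phi_j-\phi_j^N\|_{L^1(\mu)}$ to be small uniformly in $j$. This is a characteristic failure mode of truncation arguments on $\dd_1$-balls: $\dd_1$-boundedness controls the $L^1(\MA(\phi))$-norm of $\phi-\phi_\refe$ but gives no uniform integrability of the Monge--Amp\`ere energy under truncation, precisely because the bad set can shrink while the potential deepens.

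Two smaller cautions. Extracting a subsequence converging pointwise $\mu$-a.e.~from weak convergence requires more than Hartogs's lemma (which only gives the $\limsup$ bound); one typically passes through Lebesgue-a.e.~convergence of a subsequence and then argues separately that the exceptional set is $\mu$-null, which is not automatic for a general non-pluripolar $\mu$. And the preliminary reduction $\mu=\MA(\tau)$ is not needed: you only use that $\mu$ charges no pluripolar sets, which is part of the definition of $\cM^1$. None of this would matter if the uniform tail estimate held, but since it does not, the argument as written cannot be completed; the clean way through is the approximation of $\mu$ used in the paper.
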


\begin{proof} Pick a sequence $(\tau_j)$ in $\cH$ such that $\mu_j:=\MA(\tau_j)\to\mu$ strongly. For each $j$, $\mu_j$ is smooth, and $\phi\mapsto\int(\phi-\p)\,\mu_j$ is thus continuous in the weak topology of $\cE^1$, which coincides with the $L^1(X)$ topology. By~\eqref{equ:estMA}, it converges to $\phi\mapsto\int(\phi-\p)\,\mu$ uniformly on bounded subsets of $\cE^1$, and the result follows.
\end{proof}

%
%%%%%%%%%%%%%%%%%%%%%%%%%%%%%%%%%%%%%%%%%%%%%%%%%%%%%%%%%%%%%%
\subsection{Quasi-metrics and semi-globally H\"older operators}\label{sec:comppt}
By Lemma~\ref{lem:Fconc}, the concavity of $\en\colon\cE^1\to\R$ means that 
\begin{equation}\label{equ:J}
\jj(\phi,\p):=\int(\phi-\p)\MA(\p)+\en(\p)-\en(\phi)
\end{equation}
and
\begin{equation}\label{equ:I}
\ii(\phi,\p):=\int(\phi-\p)(\MA(\p)-\MA(\phi))
\end{equation}
are both non-negative. They are further translation-invariant, and satisfy 
\begin{equation}\label{equ:JI}
\jj(\phi,\p)\approx\ii(\phi,\p)\lesssim \ii_1(\phi,\p)\approx\dd_1(\phi,\p). 
\end{equation}
By convexity of $\jj(\cdot,\p)$, we thus have
\begin{equation}\label{equ:Iconv}
\ii((1-s)\phi_0+s\phi_1,\p)\lesssim\max_i\ii(\phi_i,\p)
\end{equation}
for all $\phi_0,\phi_1,\p\in\cE^1$ and $s\in [0,1]$. The symmetric functional $\ii$ further satisfies the quasi-triangle inequality 
$$
\ii(\phi_1,\phi_3)\lesssim\ii(\phi_1,\phi_2)+\ii(\phi_2,\phi_3), 
$$
and it descends to a quasi-metric on $\cE^1/\R$ that determines the quotient (strong) topology (see~\cite{BBEGZ,synthetic}).

On several occasions we shall consider translation-invariant measure-valued operators $\Ga\colon\cS\to\cM^1$, where $\cS\subset\cE^1$ is a translation-invariant subset, with the property that the induced map of quasi-metric spaces $(\cS/\R,\ii)\to(\cM^1,\done)$ is {\sgh} (see Definition~\ref{defi:sgh}). Let us spell out what that means: 

\begin{lem}\label{lem:sghop} With the above notation, $\Ga\colon(\cS/\R,\ii)\to(\cM^1,\done)$ is {\sgh} iff there exist $C>0$ and $\a\in (0,1)$ such that 
\begin{equation}\label{equ:sgh1}
\left|\int(\phi_1-\phi_2)\left(\Ga(\phi_3)-\Ga(\phi_4)\right)\right|\le C\ii(\phi_3,\phi_4)^\a\max_i\left(1+\ii(\phi_i)\right)^{1-\a} 
\end{equation}
for all $\phi_1,\phi_2\in\cH$ (or, equivalently, in $\cE^1$) and $\phi_3,\phi_4\in\cS$. When this holds we further have
\begin{equation}\label{equ:sgh2}
\|\phi_1-\phi_2\|_{L^1\left(\Ga(\phi_3)\right)}\le C\dd_1(\phi_1,\phi_2)^{\a_n}\max_i\left(1+\dd_1(\phi_i)\right)^{1-\a_n}
\end{equation}
for all $\phi_1,\phi_2\in\cE^1$ and $\phi_3\in\cS$, with $\a_n=2^{-n}$ and a possibly larger constant $C>0$. 
\end{lem}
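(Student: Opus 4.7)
The strategy is to unpack the definition of $\done$ against the sgh condition to obtain the equivalence, and then to derive the $L^1$-estimate~\eqref{equ:sgh2} from Lemma~\ref{lem:holdmu} combined with the linear growth of sgh maps.

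\medskip

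The $(\Leftarrow)$ direction of the equivalence is the easy one. Given~\eqref{equ:sgh1}, substitute $\phi_1=\phi\in\cE^1$ with $\ii(\phi)\le 1$ and $\phi_2=\phi_\refe$ (so that $\ii(\phi_2)=0$). Since $1+\ii(\phi_j)\ge 1$ for every $j$, the maximum on the right-hand side is comparable to $\max_{j\in\{3,4\}}(1+\ii(\phi_j))^{1-\a}$. Taking the supremum over such $\phi$ in the definition~\eqref{equ:dM1} of $\done$ then yields the sgh bound $\done(\Ga(\phi_3),\Ga(\phi_4))\lesssim\ii(\phi_3,\phi_4)^\a\max_{j\in\{3,4\}}(1+\ii(\phi_j))^{1-\a}$.

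\medskip

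For the $(\Rightarrow)$ direction, assume $\Ga$ is sgh with exponent $\a_0\in(0,1)$. Apply~\eqref{equ:estMA} to $\mu=\Ga(\phi_3)$, $\nu=\Ga(\phi_4)$:
\[
\left|\int(\phi_1-\phi_2)(\Ga(\phi_3)-\Ga(\phi_4))\right|\lesssim\done(\Ga(\phi_3),\Ga(\phi_4))\,\max_{i\in\{1,2\}}(1+\ii(\phi_i))^{1/2}.
\]
Substituting the sgh bound and consolidating into a single maximum $M:=\max_{i\in\{1,2,3,4\}}(1+\ii(\phi_i))$ produces an estimate of the form $\ii(\phi_3,\phi_4)^{\a_0}\,M^{3/2-\a_0}$. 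Since $\ii(\phi_3,\phi_4)\lesssim M$ (by quasi-triangle for $\ii$), one can absorb the excess powers of $M$ into powers of $\ii(\phi_3,\phi_4)$ so as to reach a bound of the form~\eqref{equ:sgh1} for some $\a\in(0,1)$.

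\medskip

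Finally, to establish~\eqref{equ:sgh2}: the sgh property implies the linear growth~\eqref{equ:sghbd}, so $\done(\Ga(\phi_3))\lesssim 1+\ii(\phi_3)\lesssim 1+\dd_1(\phi_3,\phi_\refe)$, and hence $\jj(\Ga(\phi_3))\lesssim 1+\dd_1(\phi_3,\phi_\refe)$ since $\jj\approx\done$ on $\cM^1$. Applying Lemma~\ref{lem:holdmu} to $\mu=\Ga(\phi_3)\in\cM^1$ then gives
\[
\|\phi_1-\phi_2\|_{L^1(\Ga(\phi_3))}\lesssim\dd_1(\phi_1,\phi_2)^{\a_n}\bigl(\jj(\Ga(\phi_3))+\max_{i\in\{1,2\}}\dd_1(\phi_i)\bigr)^{1-\a_n},
\]
and bounding the parenthesis by $\max_{i\in\{1,2,3\}}(1+\dd_1(\phi_i))$ yields~\eqref{equ:sgh2} with $\a_n=2^{-n}$.

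\medskip

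The main obstacle is the exponent bookkeeping in the $(\Rightarrow)$ direction: the $1/2$-power from~\eqref{equ:estMA} combines with the $1-\a_0$ coming from sgh to produce a $\max$-exponent of $3/2-\a_0$, which must be brought back below $1$ by trading powers of $M$ for powers of $\ii(\phi_3,\phi_4)$ via the quasi-triangle inequality. This interpolation is purely algebraic but requires some care to ensure that the resulting exponent $\a$ lies in $(0,1)$.
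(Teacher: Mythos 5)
The direction $(\Leftarrow)$ and the derivation of~\eqref{equ:sgh2} are both correct and match the paper's (very terse) argument: you obtain~\eqref{equ:sghbd}-type linear growth for $\done(\Ga(\cdot))$, transfer it to $\jj(\Ga(\phi_3))$ via $\jj\approx\done$ and $\ii\lesssim\dd_1$, and feed Lemma~\ref{lem:holdmu}.

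However, your proof of $(\Rightarrow)$ has a genuine gap, and the ``absorption'' you describe does not close. Writing $r:=\ii(\phi_3,\phi_4)$ and $M:=\max_i(1+\ii(\phi_i))$, combining the sgh bound on $\done(\Ga(\phi_3),\Ga(\phi_4))$ with~\eqref{equ:estMA} gives $r^{\a_0}M^{3/2-\a_0}$, which has total degree $3/2$ in $(r,M)$ while the target $r^\a M^{1-\a}$ has degree $1$. The quasi-triangle inequality gives $r\lesssim M$, but this only lets you trade $r$-powers for $M$-powers in the \emph{wrong direction}: for any $\a<\a_0$ one gets $r^{\a_0}M^{3/2-\a_0}\lesssim r^\a\,r^{\a_0-\a}M^{3/2-\a_0}\lesssim r^\a M^{3/2-\a}$, which still carries the excess $M^{1/2}$; and taking $\a>\a_0$ produces a factor $r^{\a_0-\a}$ that blows up as $r\to0$. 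Concretely, if all four $\ii(\phi_i)$ are comparable to some large $M$ and $r\approx M$, the combined estimate is of size $M^{3/2}$ while the right side of~\eqref{equ:sgh1} is only of size $M$, so no choice of $\a\in(0,1)$ and $C>0$ bounds one by the other. In short, \eqref{equ:estMA} is too lossy for $(\Rightarrow)$: it would need a companion bound that is not available at this level of generality.

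You should also note that the proof of $(\Leftarrow)$ you give does not actually invoke~\eqref{equ:estMA}; it uses only the definition~\eqref{equ:dM1} of $\done$. The paper's one-line proof attributes ``the first equivalence'' to~\eqref{equ:estMA}, presumably with the naive combination argument in mind for $(\Rightarrow)$; as your own closing remark senses, the exponent bookkeeping there is the real obstacle, and it is not merely a matter of ``care''---the implication does not follow from~\eqref{equ:estMA} alone. For the purposes of the paper this does not cause downstream trouble, because every operator to which the lemma is applied (e.g.\ in Proposition~\ref{prop:MAsgh}) is shown to satisfy~\eqref{equ:sgh1} directly, so the direction actually needed in the applications is the $(\Leftarrow)$ one you proved correctly. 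But as a standalone proof of the stated iff, the $(\Rightarrow)$ step is missing.
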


\begin{proof} The first equivalence is an easy consequence of~\eqref{equ:estMA}, while~\eqref{equ:sgh2} follows from Lemma~\ref{lem:holdmu} together with~\eqref{equ:sghbd}. 
\end{proof}

\begin{exam} The \ma operator $\MA\colon(\cE^1/\R,\ii)\to(\cM^1,\done)$ is {\sgh}. 
\end{exam}

More generally, the (normalized) \emph{mixed Monge--Amp\`ere operator}
$\cH^n\to\cM^1$
$$
(\phi_1,\dots,\phi_n)\mapsto V^{-1}\ddc\phi_1\winter\ddc\phi_n
$$
also admits a unique (strongly) continuous extension $(\cE^1)^n\to\cM^1$, which satisfies 
\begin{equation}\label{equ:enmixed}
\done\left(V^{-1}\ddc\phi_1\winter\ddc\phi_n\right)\le C\max_i\left(1+\ii(\phi_i)\right)
\end{equation}
for all $\phi_i\in\cE^1$ and a constant $C>0$ only depending on $\phi_\refe$ (see~\cite[Theorem~3.4]{synthetic}). 

Similarly, given a smooth psh metric $\tau\in C^\infty(F)$ on another (necessarily nef) $\Q$-line bundle $F$, the operator $\cH^{n-1}\to\cM^1$
$$
(\phi_1,\dots,\phi_n)\mapsto V_G^{-1} \ddc\phi_1\winter\ddc\phi_{n-1}\wedge\ddc\tau 
$$
with $V_G:=(L^{n-1}\cdot G)$ extends by continuity to a map $(\cE^1)^{n-1}\to\cM^1$, which satisfies 
\begin{equation}\label{equ:enmixed2}
\done\left(V_G^{-1} \ddc\phi_1\winter\ddc\phi_{n-1}\wedge\ddc\tau\right)\le C\max_i\left(1+\ii(\phi_i)\right)
\end{equation}
for a constant $C>0$ only depending on $\phi_\refe$ and $\tau$. As a consequence we get: 
\begin{lem}\label{lem:quaden}
For all $\phi_0,\dots,\phi_n\in\cH$ we have
\begin{equation}\label{equ:quaden1}
0\le\int d(\phi_0-\phi_1)\wedge d^c(\phi_0-\phi_1)\wedge\ddc\phi_2\winter\ddc\phi_n\le C_1\ii(\phi_0,\phi_1)^{\a_n}\max_i\left(1+\ii(\phi_i)\right)^{1-\a_n}
\end{equation}
and
\begin{equation}\label{equ:quaden2}
0\le\int d(\phi_0-\phi_1)\wedge d^c(\phi_0-\phi_1)\wedge\ddc\phi_2\winter\ddc\phi_{n-1}\wedge\ddc\tau\le C_2\ii(\phi_0,\phi_1)^{\a_n}\max_i\left(1+\ii(\phi_i)\right)^{1-\a_n}
\end{equation}
for any smooth psh metric $\tau\in C^\infty(F)$ on a $\Q$-line bundle $F$, where $C_1$ (resp.~$C_2$) is a positive constant only depending on $\phi_\refe$ (resp.~$\phi_\refe$ and $\tau$). 
\end{lem}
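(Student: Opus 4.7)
The plan is to prove both bounds together, via integration by parts and the {\sgh} property of the mixed Monge--Amp\`ere operator.

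The left-hand inequality in each of \eqref{equ:quaden1} and \eqref{equ:quaden2} is immediate. Since each $\phi_i\in\cH$ is smooth (as is $\tau$ in the twisted case), the background current $T:=\ddc\phi_2\winter\ddc\phi_n$ (resp.\ $T:=\ddc\phi_2\winter\ddc\phi_{n-1}\wedge\ddc\tau$) is a smooth positive $(n-1,n-1)$-form on $X$, and hence $d(\phi_0-\phi_1)\wedge d^c(\phi_0-\phi_1)\wedge T$ is pointwise a non-negative volume form.

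For the upper bound in \eqref{equ:quaden1}, setting $u:=\phi_0-\phi_1$ and exploiting that $T$ is $d$-closed on the compact manifold $X$, Stokes' theorem yields
\[
\int du\wedge d^cu\wedge T=-\int u\,\ddc u\wedge T=V\int(\phi_0-\phi_1)(\mu_1-\mu_0),
\]
where $\mu_j:=V^{-1}\ddc\phi_j\wedge T\in\cM^1$ for $j=0,1$. The plan is then to combine \eqref{equ:estMA}, applied to the test function $\phi_0-\phi_1$, with the quantitative continuity of the mixed \ma map $\phi\mapsto V^{-1}\ddc\phi\wedge T$, regarded as an operator $\cE^1\to\cM^1$ with $\phi_2,\dots,\phi_n$ fixed: one asserts that this operator is {\sgh} in the sense of \eqref{equ:sgh1}, and plugging that into the right-hand side of the Stokes identity above produces the desired bound with exponent $\a_n=2^{-n}$.

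The hard step is this quantitative continuity of the mixed \ma operator in one argument, and this is the main technical obstacle. I would prove it by iterated application of the Cauchy--Schwarz inequality for positive closed currents,
\[
\left|\int df\wedge d^cg\wedge S\right|^2\le\left(\int df\wedge d^cf\wedge S\right)\!\left(\int dg\wedge d^cg\wedge S\right),
\]
combined with successive integrations by parts that trade one factor $\ddc\phi_i$ for $\ddc\phi_\refe$ at the cost of a square root, using translation invariance to normalize $\ii(\phi_i)$ at each step. After $n-1$ iterations one reduces to the diagonal pluricomplex energy $\ii(\phi_0,\phi_1)$, the compounded square roots explaining the exponent $\a_n=2^{-n}$; this is essentially the argument underlying \eqref{equ:enmixed}, cf.~\cite[\S 2]{synthetic}. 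Finally, the twisted estimate \eqref{equ:quaden2} follows by applying the same scheme to the enlarged background current $T:=\ddc\phi_2\winter\ddc\phi_{n-1}\wedge\ddc\tau$, the dependence of $C_2$ on $\tau$ entering via the analogous mass bound \eqref{equ:enmixed2}.
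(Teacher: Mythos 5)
You start the same way the paper does: the positivity is clear since the background current is a smooth positive form, and the core identity is the Stokes formula
\[
\int d(\phi_0-\phi_1)\wedge d^c(\phi_0-\phi_1)\wedge T=-\int(\phi_0-\phi_1)\,\ddc(\phi_0-\phi_1)\wedge T
=\int(\phi_0-\phi_1)\bigl(\ddc\phi_1-\ddc\phi_0\bigr)\wedge T.
\]
The divergence is in how you estimate the right-hand side, and as written your route has an exponent gap that cannot close.

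You propose to apply~\eqref{equ:estMA} with test function $\phi_0-\phi_1$ and measures $\mu_j:=V^{-1}\ddc\phi_j\wedge T$, which yields a factor $\done(\mu_0,\mu_1)\max_{j\in\{0,1\}}(1+\ii(\phi_j))^{1/2}$, and then to control $\done(\mu_0,\mu_1)$ by asserting that $\phi\mapsto V^{-1}\ddc\phi\wedge T$ is {\sgh}, i.e.\ $\done(\mu_0,\mu_1)\lesssim\ii(\phi_0,\phi_1)^\a\max(1+\ii(\phi_j))^{1-\a}$ for some $\a\in(0,1)$. But composing these two estimates produces
\[
\ii(\phi_0,\phi_1)^{\a}\,\max_i(1+\ii(\phi_i))^{\,3/2-\a},
\]
and since $\a<1$ the exponent $3/2-\a$ always strictly exceeds the target $1-\a_n$; for $n=1$ one would even need $\a\ge1$. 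The extra $1/2$ coming from~\eqref{equ:estMA} is irreducible: that inequality is designed for comparing two unrelated measures and pays a square-root price in the normalization factor, so it is the wrong tool here.

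The paper avoids~\eqref{equ:estMA} altogether. From the Stokes identity it bounds the two contributions $\int(\phi_0-\phi_1)\,\mu_j$ separately by $\|\phi_0-\phi_1\|_{L^1(\mu_j)}$ and applies~\eqref{equ:holdmu}, which already delivers the correct $(\a_n,1-\a_n)$ split:
\[
\|\phi_0-\phi_1\|_{L^1(\mu_j)}\lesssim\dd_1(\phi_0,\phi_1)^{\a_n}\bigl(\jj(\mu_j)+\max\dd_1(\phi_i)\bigr)^{1-\a_n}.
\]
Then~\eqref{equ:enmixed} (resp.~\eqref{equ:enmixed2} for the twisted case) gives $\jj(\mu_j)\approx\done(\mu_j)\lesssim\max_i(1+\ii(\phi_i))$, and a translation-normalization of the $\phi_i$ converts $\dd_1$ to $\ii$; no additional power is lost. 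In short: you do not need the {\sgh} property of the one-slot mixed Monge--Amp\`ere operator as a map into $(\cM^1,\done)$, only the much cheaper finite-energy bound~\eqref{equ:enmixed} on each individual mixed measure, paired with the H\"older estimate~\eqref{equ:holdmu} for integration of a finite-energy difference against a single measure.
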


\begin{proof} Integration-by-parts yields
$$
\int d(\phi_0-\phi_1)\wedge d^c(\phi_0-\phi_1)\wedge\ddc\phi_2\winter\ddc\phi_n=-\int (\phi_0-\phi_1)\ddc(\phi_0-\phi_1)\wedge\ddc\phi_2\winter\ddc\phi_n
$$
$$
=\int(\phi_0-\phi_1) \ddc\phi_1\winter\ddc\phi_n+\int(\phi_1-\phi_0)\ddc\phi_0\wedge\ddc\phi_2\winter\ddc\phi_n,
$$
The first estimate now follows from~\eqref{equ:enmixed} and~\eqref{equ:holdmu}, while~\eqref{equ:enmixed2} similarly yields
the second one.
\end{proof}

%
%%%%%%%%%%%%%%%%%%%%%%%%%%%%%%%%%%%%%%%%%%%%%%%%%%%%%%%%%%%%%%
\subsection{Psh geodesics and reductive groups actions}\label{sec:pshgeod}
Next we show that the complete metric space $(\cE^1,\dd_1)$ fits into the framework of~\S\ref{sec:slformula}.

There is a 1--1 correspondence between paths $\{\phi_t\}_{t\in I}$ of (possibly singular) metrics on $L$ parametrized by an interval $I\subset\R$ and $S^1$-invariant metrics $\Phi$ on $L\times\DD_I$, where 
\begin{equation}\label{equ:DI}
\DD_I:=\{z\in\C^\times\mid -\log|z|\in I\},\quad\Phi(v,z)=\phi_{-\log|z|}(v),
\end{equation}
%and $p_1\colon X\times\DD_I\to X$ denotes the first projection. 
A path $\{\phi_t\}_{t\in I}$ of psh metrics on $L$ is called \emph{psh}\footnote{This is also known as a \emph{subgeodesic} in the literature.} if the corresponding metric $\Phi$ is psh over $X\times\DD_{\mathring{I}}$. A psh path $\{\phi_t\}$ such that $\phi_t\in\cE^1$ for all $t\in I$ is a geodesic in $\cE^1$ iff $\Phi$ is maximal locally over $\DD_{\mathring{I}}$. We then say that $\{\phi_t\}$ is a \emph{psh geodesic} in $\cE^1$.

By~\cite{Dar15}, the complete metric space $(\cE^1,\dd_1)$ is geodesic with respect to the distinguished class of (metric space) geodesics that are psh in the above sense. In fact,  $(\cE^1,\dd_1)$ is Busemann convex, as follows from~\cite[Proposition~5.1]{BDL1}. 

The group $\Aut(X,L)$ acts on $\cE^1$ by isometries, and by~\cite[Proposition~1.8]{BJT}, this action is proper. 

\begin{lem}\label{lem:mabgeod} For any algebraic torus $T_\C\subset\Aut(X,L)$, condition~\eqref{equ:assum} holds, \ie $\{e^{it\xi}\cdot\phi\}_{t\ge 0}$ is a psh geodesic ray for any $\xi\in\Lie T$ and $\phi\in\cE^{1,T}$. 
\end{lem}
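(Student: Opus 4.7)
The plan is to realize the ray $\{\phi_t\}_{t\ge 0}$ with $\phi_t := e^{it\xi}\cdot\phi$ explicitly as a psh geodesic ray in $(\cE^1,\dd_1)$ in the sense of~\S\ref{sec:pshgeod}, by constructing the associated $S^1$-invariant psh metric $\Phi$ on $L$ over a punctured disc as the pullback of $\phi$ along the (locally) holomorphic complexified flow of $i\xi$.

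First, each $\phi_t$ lies in $\cE^1$, since $e^{it\xi}\in T_\C\subset\Aut(X,L)$ acts by isometries on $(\cE^1,\dd_1)$. Next, writing $I=[0,\infty)$ so that $\DD_I=\{0<|z|\le 1\}$, I would define
$\Phi(v,z):=(e^{-i\log z\cdot\xi}\cdot\phi)(v)$
on $L$ pulled back to $X\times\DD_I$. Although $\log z$ is multivalued, different branches change $e^{-i\log z\cdot\xi}$ only by elements of $T$, which act trivially on $\phi$ by $T$-invariance, so $\Phi$ is globally well-defined. Writing $-i\log z=\arg z+it$ with $t:=-\log|z|$, and invoking $T$-invariance once more to discard the factor $e^{\arg z\cdot\xi}\in T$, one gets $\Phi(v,z)=(e^{it\xi}\cdot\phi)(v)=\phi_t(v)$, which depends only on $|z|$ and is thus $S^1$-invariant. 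Psh-ness of $\Phi$ follows from the general fact that the pullback of a psh metric by a holomorphic family of automorphisms of $(X,L)$ is psh: indeed, $\Phi$ is the pullback of $\phi$ under the locally holomorphic map $(v,z)\mapsto e^{-i\log z\cdot\xi}\cdot v$ on any simply connected open subset of $\DD_I$.

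The main remaining, and most technical, step is to show that $\Phi$ is maximal on $\DD_{\mathring I}$, i.e., $(\ddc\Phi)^{n+1}=0$, so that $\{\phi_t\}$ is a true geodesic and not merely a subgeodesic. When $\phi\in\cH^T$ is smooth this is immediate by a dimension count: the pullback factors through the $n$-dimensional variety $X$, so $(\ddc\Phi)^{n+1}=0$ for type reasons. For general $\phi\in\cE^{1,T}$, I would regularize by a decreasing sequence of smooth $T$-invariant strictly psh metrics $\phi^j\searrow\phi$ (using $T$-equivariant Demailly regularization applied to a convolution with a $T$-invariant kernel), for each of which the corresponding $\Phi^j$ is maximal, then pass to the limit using that maximality is stable under decreasing limits in the Bedford--Taylor sense, together with continuity of $\MA$ in the strong topology of $\cE^1$ to handle the potential unboundedness at the singular locus of $\phi$. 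This last approximation step is where the bulk of the technical work lies.

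Putting the pieces together, $t\mapsto\phi_t$ is a psh path valued in $\cE^1$ whose associated $\Phi$ is maximal, which is precisely the definition of a psh geodesic ray in $(\cE^1,\dd_1)$ from~\S\ref{sec:pshgeod}, establishing~\eqref{equ:assum} for the isometric $T_\C$-action on $\cE^1$.
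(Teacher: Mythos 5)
Your construction of $\Phi$ as the pullback of $\phi$ along the locally holomorphic complexified flow, together with the psh-ness and $S^1$-invariance arguments, matches the paper's. Where you diverge is in proving local maximality of $\Phi$: you first prove the smooth case by the dimension count, then regularize $\phi$ by smooth $T$-invariant $\phi^j\searrow\phi$ and pass to the limit. The paper avoids this entirely by one more observation: the map $(v,z)\mapsto(e^{(-\log z)\xi}\cdot v,z)$ is a \emph{local biholomorphism of $L\times\DD^\times$ onto itself} (over $\DD^\times$), and under it $\Phi$ is the image of the constant extension $\tilde\Phi$ of $\phi$. Since $\tilde\Phi$ is the metric associated to the constant, speed-zero geodesic, it is already locally maximal, and local maximality is preserved by biholomorphic pullback --- so $\Phi$ is locally maximal for every $\phi\in\cE^{1,T}$ at once, with no regularization and no separate smooth base case. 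Your route would also go through (local maximality is stable under decreasing limits, and averaging a Demailly-type approximation over the compact torus $T$ gives the required $T$-invariant $\phi^j\searrow\phi$), but since $\phi$, and hence $\Phi$, may be unbounded below, the Bedford--Taylor formulation $(\ddc\Phi)^{n+1}=0$ is not the cleanest one to carry through the limit; one should rather work with the domination characterization of maximality, which is what the stability-under-decreasing-limits lemma actually uses. In short: correct, but the step you flag as the bulk of the technical work evaporates once one views $\Phi$ as a biholomorphic pullback of the trivial-geodesic metric $\tilde\Phi$, which is exactly the paper's trick.
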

\begin{proof}
    When $\phi\in\cH$ the result is a direct consequence of~\cite[Theorem~3.5]{Mab87}. In general, let $\tilde\Phi$ be the metric on $L\times\DD^\times$ defined as the pull-back of $\phi$. Then the $S^1$-invariant metric $\Phi$ on $L\times\C$ associated to $\{e^{it\xi}\cdot\phi\}_{t\ge 0}$ is the image of $\tilde\Phi$ under the local line bundle isomorphism of $L\times\DD^\times$ given by 
    \[
    (v,z)\mapsto(e^{(-\log z)\xi}\cdot v,z)
    \]
    for some local choice of holomorphic logarithm, the resulting metric being independent of that choice, by $T$-invariance of $\phi_\refe$. It follows that  $\Phi$ is psh and locally maximal, and the result follows. 
\end{proof}
As a result, \S\ref{sec:slformula} applies: to any algebraic subgroup $G\subset\Aut(X,L)$ is associated a set $\cD_G\subset\cE^1_\ra$ of toric $G$-directions, which is closed whenever $G$ is reductive. 

%
%%%%%%%%%%%%%%%%%%%%%%%%%%%%%%%%%%%%%%%%%%%%%%%%%%%%%%%%%%%%%%
\section{Non-Archimedean pluripotential theory}\label{sec:NAppt}

We use~\cite{trivval,nakstab1,synthetic} as references for what follows. 

%
%%%%%%%%%%%%%%%%%%%%%%%%%%%%%%%%%%%%%%%%%%%%%%%%%%%%%%%%%%%%%%
\subsection{The Berkovich space}\label{sec:Berkspace}
We denote by $X_\NA$ the Berkovich analytification of $X$ with respect to the trivial absolute value on the ground field $\C$~\cite{BerkBook}. Its elements can be viewed as \emph{semivaluations} on $X$, \ie valuations $v\colon\C(Z)^\times\to\R$ on the function field of a subvariety $Z\subset X$ (or, alternatively, as `tropical characters' on the semiring of coherent ideal sheaves of $X$, a description that still applies in the non-algebraic case, see~\cite{MP}). 

The space $X_\NA$ is compact Hausdorff, and contains as a dense subset the space $X_\div\subset X_\NA$ of \emph{divisorial valuations}, of the form $v=c\ord_D$ with $c\in\Q_{>0}$ and $D\subset Y$ a prime divisor on a smooth birational model of $X$. We include in $X_\div$ the \emph{trivial valuation} $v_\triv$, given by $v_\triv\equiv 0$ on $\C(X)^\times$; this is a fixed point for the scaling action 
$$
\R_{>0}\times X_\NA\to X_\NA\quad (c,v)\mapsto c v.
$$
 For any section $s$ of a line bundle $F$ on $X$ and $v\in X_\NA$, we can make sense of $v(s)\in [0,+\infty]$ by trivializing $F$ at the center of $v$ in $X$. Setting $|s|(v):=e^{-v(s)}$ determines a continuous function $|s|\colon X_\NA\to\R_{\ge 0}$
 (which coincides with the pointwise length of $s$ in the \emph{trivial metric} on $F_\NA$).        

We denote by 
$$
\cH_\NA=\cH_\NA(L)\subset \Cz(X_\NA)
$$ 
the set of \emph{Fubini--Study potentials} $\f\colon X_\NA\to\R$ (with respect to the trivial metric of $L_\NA$), of the form 
\begin{equation}\label{equ:RFS}
    \f:=m^{-1}\max_i\{\log|s_i|+\la_i\}
\end{equation}
for $m\in\Z_{>0}$, a finite set $(s_i)$ of sections of $mL$ without common zeroes, and $\la_i\in\Q$. The set $\cH_\NA$ spans a dense $\Q$-linear subspace
$$
\PL(X_\NA)\subset \Cz(X_\NA), 
$$
which is independent of $L$, and whose elements are called \emph{piecewise linear functions} (PL functions for short). The scaling action of $\R_{>0}$ on $X_\NA$ induces an action 
$$
\Q_{>0}\times\PL(X_\NA)\to\PL(X_\NA)\quad (c,\f)\mapsto c\cdot\f:=c\f(c^{-1}\bullet)
$$
that preserves $\cH_\NA$.

%
%%%%%%%%%%%%%%%%%%%%%%%%%%%%%%%%%%%%%%%%%%%%%%%%%%%%%%%%%%%%%%
\subsection{Test configurations}

In this paper, a \emph{test configuration} $\cX$ for $X$ is defined as a normal projective variety equipped with a $\C^\times$-action together with a $\C^\times$-equivariant surjective morphism $\pi\colon\cX\to\P^1$ and an equivariant isomorphism 
\begin{equation}\label{equ:tc}
\pi^{-1}(\P^1\setminus\{0\})\simeq X\times(\P^1\setminus\{0\})
\end{equation}
over $\P^1$. We say that a test configuration $\cX$ is \emph{snc} if the variety $\cX$ is smooth and the central fiber
$$
\pi^{-1}(0)=\cX_0=\sum_E b_E E
$$ 
has simple normal crossing support $\cX_{0,\redu}=\sum_E E$. The set of test configurations for $X$ naturally forms a poset, in which snc test configurations are cofinal, by resolution of singularities.

To each irreducible component $E$ of the central fiber of a test configuration $\cX$ is attached a divisorial valuation 
$v_{E}\in X_\div$, defined as the pullback of $b_E^{-1}\ord_{E}$ by the function field inclusion $\C(X)\hto\C(\cX)$ induced by~\eqref{equ:tc}. Conversely, every element of $X_\div$ arises in this way. 

More generally, the \emph{dual complex} $\D_\cX$ of any snc test configuration $\cX$, defined as the dual intersection complex of the snc divisor $\cX_{0,\redu}$, admits a canonical embedding $\D_\cX\hto X_\NA$ realized by monomial valuations, the vertices of $\D_\cX$ being mapped to the valuations $v_E$. We further have a canonical retraction map $p_\cX\colon X_\NA\to\D_\cX$, and this induces a homeomorphism 
$$
X_\NA\simto\varprojlim_\cX\D_\cX
$$
compatible with the PL structures, in the sense that 
\begin{equation}\label{equ:PLdual}
\PL(X_\NA)=\bigcup_\cX p_\cX^\star\PL(\D_\cX).
\end{equation}
A \emph{test configuration} $\cL$ for $L$ is defined as a $\C^\times$-equivariant $\Q$-line bundle $\cL$ on a test configuration $\cX$ for $X$ together an equivariant $\Q$-line bundle isomorphism 
$$
\cL|_{\cX\setminus\cX_0}\simeq p_1^\star L|_{X\times(\P^1\setminus\{0\})}
$$
compatible with~\eqref{equ:tc}, where $p_1\colon X\times\P^1\to X$ denotes the first projection. Denoting by $\tau$ the coordinate on $\C\subset\P^1$, this induces for any $m\in\Z_{>0}$ sufficiently divisible an inclusion of $\C[\tau]$-modules
\begin{equation}\label{equ:sectc}
\Hnot(\cX,m\cL)\hto\Hnot(X,mL)\otimes\C[\tau^{\pm}]. 
\end{equation}
Assume that $\cX$ dominates $X\times\P^1$, and denote by $L_\cX$ the pullback of $L$ by the composition
$$
\cX\to X\times\P^1\to X. 
$$
We can then write $\cL=L_\cX+D$ for unique $\Q$-Cartier divisor $D$ on $\cX$ that is \emph{vertical}, \ie supported in $\cX_0$. Setting for each component $E$ of $\cX_0$ 
$$
\f_\cL(v_E):=b_E^{-1}\ord_E(D)
$$
defines a PL function $\f_\cL\in\PL(X_\NA)$, and every PL function arises in this way. 

If $\cL$ is relatively semiample (with respect to $\pi\colon\cX\to\P^1$), then $\f_\cL$ lies in $\cH_\NA$. Indeed, choose $m\in\Z_{>0}$ sufficiently divisible, a basis $(s_i)$ of $\Hnot(X,mL)$ and $\la_i\in\Z$ such that $s_i\tau^{\la_i}\in\Hnot(X,mL)\otimes\C[\tau^\pm]$ yields a $\C[\tau]$-basis of $\Hnot(\cX,m\cL)\otimes\C[\tau]$ (see~\eqref{equ:sectc}). Then
\begin{equation}\label{equ:FS}
\f_\cL=\frac 1m\max_i\{\log|s_i|+\la_i\}.
\end{equation}
Moreover, $(\cX,\cL)\mapsto\f_\cL$ restricts to a bijection between the set of (relatively) ample test configurations $(\cX,\cL)$ for $(X,L)$ and the set $\cH_\NA$ of Fubini--Study potentials. 

\begin{exam}\label{exam:prodtc} Any cocharacter $\rho\colon\C^\times\to\Aut(X,L)$ induces a \emph{product test configuration} for which $(\cX,\cL)|_{\C}$ is $\C^\times$-equivariantly isomorphic to $(X,L)\times\C$ via $\rho$. The associated Fubini--Study potential is given by~\eqref{equ:FS} for any basis of $\C^\times$-eigensections $s_i\in\Hnot(X,mL)$ of weight $\la_i\in\Z$. 
\end{exam}

The next observation plays a key role in the proof of this paper's main result.

\begin{lem}\label{lem:B+tc} For any test configuration $(\cX,\cL)$ for $(X,L)$ such that $\f_\cL>0$ on $X_\NA$, the relative and absolute augmented base loci of $\cL$ satisfy
$$
\B_+(\cL/\P^1)=\B_+(\cL)\subset\cX_0.
$$
\end{lem}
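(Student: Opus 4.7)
The plan is to prove three things in turn: (i) $\B_+(\cL/\P^1)\subset\cX_0$, (ii) $\B_+(\cL)\subset\cX_0$, and (iii) the two loci coincide. The first is immediate from the definition of a test configuration: on $\cX\setminus\cX_0\simeq X\times(\P^1\setminus\{0\})$, the bundle $\cL$ identifies with $p_1^\star L$, which is $\pi$-ample there since $L$ is ample on $X$. The hypothesis $\f_\cL>0$ on $X_\NA$ enters only in (ii).

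For (ii), I first replace $\cX$ by an snc model dominating $X\times\P^1$ (harmless by Lemma~\ref{lem:B+bir}) and decompose $\cL=L_\cX+D$, where $L_\cX=\sigma^\star p_1^\star L$ for some $\sigma\colon\cX\to X\times\P^1$ and $D=\sum_E b_E\f_\cL(v_E)\,E$ is the vertical part. Since $\f_\cL(v_E)>0$ for every component $E$ of $\cX_0=\sum_E b_E E$, setting $c:=\min_E\f_\cL(v_E)>0$ yields $D\ge c\cX_0$. Using $\cX_0\sim\pi^\star\cO_{\P^1}(1)$ in $\Pic(\cX)$, the class of $\cL$ splits as
\[
\cL=M+(D-c\cX_0),\qquad M:=L_\cX+c\pi^\star\cO_{\P^1}(1)=\sigma^\star\bigl(p_1^\star L+c\,p_2^\star\cO_{\P^1}(1)\bigr).
\]
The bundle $p_1^\star L+c\,p_2^\star\cO_{\P^1}(1)$ on $X\times\P^1$ is ample by Nakai--Moishezon---positive intersection with every irreducible curve, checked separately according to whether the curve is contracted by $p_1$, by $p_2$, or by neither---so $M$ is semiample and big on $\cX$ with $\B_+(M)=\Exc(\sigma)\subset\cX_0$. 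Combined with $\supp(D-c\cX_0)\subset\cX_0$, the standard inclusion $\B_+(M+E)\subset\B_+(M)\cup\supp E$ (decompose $M=A+F$ with $A$ ample and $\supp F=\B_+(M)$) then gives $\B_+(\cL)\subset\cX_0$.

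Step (ii) ensures that $\pi(\B_+(\cL))\subset\{0\}$ is a finite set, so Lemma~\ref{lem:B+} directly yields $\B_+(\cL)=\B_+(\cL/\P^1)$ as soon as $\cX$ is smooth. For a general normal $\cX$, my plan is to pick a log-resolution $\rho\colon\cX'\to\cX$ that is an isomorphism outside $\mathrm{Sing}(\cX)\subset\cX_0$ (so that $\rho(\Exc\rho)\subset\mathrm{Sing}(\cX)$ has codimension $\ge 2$ in $\cX$), apply (i), (ii), and Lemma~\ref{lem:B+} on the smooth $\cX'$ to conclude $\B_+(\rho^\star\cL)=\B_+(\rho^\star\cL/\P^1)\subset\cX_0'$, and transfer this equality back to $\cX$ via Lemma~\ref{lem:B+bir}. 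The main obstacle I anticipate is precisely this last transfer: the divisorial part of Lemma~\ref{lem:B+bir} matches the prime-divisor components of $\B_+(\cL)$ and $\B_+(\cL/\P^1)$ (the same subset of the components of $\cX_0$), and the two loci agree set-theoretically outside $\rho(\Exc\rho)$; but in principle one of them could still contain a lower-dimensional component hidden inside $\rho(\Exc\rho)$. I would rule this out by using the explicit decomposition $\cL=M+(D-c\cX_0)$ from Step~(ii) to exhibit $\B_+(\cL)$ as a union of actual divisors supported in $\cX_0$, or by refining the choice of resolution.
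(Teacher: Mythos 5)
Your approach matches the paper's: decompose $\rho^\star\cL$ on a dominating model $\cX'$ as $\mu^\star\cA+E$ with $\cA$ ample on $X\times\P^1$ and $E\ge0$ vertical, deduce $\B_+(\cL)\subset\cX_0$, and then invoke Lemma~\ref{lem:B+} using the finiteness of $\pi(\B_+(\cL))$ to conclude $\B_+(\cL/\P^1)=\B_+(\cL)$. Your step (i) and the Nakai--Moishezon check are not needed (a product of amples on $X\times\P^1$ is ample for formal reasons), but the decomposition is essentially the one the paper uses.

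The subtlety you raise at the end---that Lemma~\ref{lem:B+} requires a smooth total space while a general test configuration $\cX$ is merely normal, and that transferring $\B_+(\rho^\star\cL)=\B_+(\rho^\star\cL/\P^1)$ down via Lemma~\ref{lem:B+bir} only gives $\B_+(\cL)\setminus\B_+(\cL/\P^1)\subset\rho(\Exc\rho)$---is a genuine one, and the paper's proof appears to share the same lacuna: it applies Lemma~\ref{lem:B+} directly to $\cL$ on $\cX$ without remarking that $\cX$ need not be smooth. Your proposed remedy via the decomposition $\cL=M+(D-c\cX_0)$ does not quite close it, since that decomposition only bounds $\B_+(\cL)$ from above by $\Exc(\sigma)\cup\supp(D-c\cX_0)$ and does not force $\B_+(\cL)$ to be a union of divisors, so a lower-dimensional component hiding in $\rho(\Exc\rho)$ is not obviously excluded. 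In practice this is harmless: the only place the paper invokes Lemma~\ref{lem:B+tc} is in the proof of Theorem~\ref{thm:adapted}, where $(\cX,\cL)$ is taken to be snc, so $\cX$ is smooth and Lemma~\ref{lem:B+} applies directly---exactly the case your step (iii) settles cleanly. If one wants the statement in full generality for normal $\cX$, one would need either a version of Lemma~\ref{lem:B+} (hence of the vanishing input behind Lemma~\ref{lem:B-}) valid for normal total spaces, or a more careful choice of resolution; neither is addressed in the paper.
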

\begin{proof} Pick a test configuration $\cX'$ that dominates both $\cX$ and $X\times\P^1$, with corresponding morphisms of test configurations $\rho\colon\cX'\to\cX$, $\mu\colon\cX'\to X\times\P^1$. Write $\rho^\star\cL=L_{\cX'}+D$, and set $c:=\inf\f_\cL\in\Q_{>0}$. Then $E:=D-c\cX'_0$ satisfies $\f_E=\f_\cL-c\ge 0$, so that $E$ is an effective $\Q$-divisor (see~\cite[Lemma~2.8]{trivval}). On the other hand, $\rho^\star(L_\cX+c\cX_0)=L_{\cX'}+c\cX'_0=\mu^\star\cA$ where $\cA:=p_1^\star L+c p_2^\star\cO_{\P^1}(1)$ is ample on $X\times\P^1$. By Lemma~\ref{lem:B+bir} we infer
$$
\rho^{-1}(\B_+(\cL))=\B_+(\rho^\star\cL)\cup\Exc(\rho),\quad\B_+(\rho^\star\cL)=\B_+(\mu^\star\cA+E)\subset\Exc(\mu)\cup\supp E,
$$
and hence 
$$
\rho^{-1}(\B_+(\cL))\subset\Exc(\mu)\cup\supp E\cup\Exc(\rho)\subset\cX'_0. 
$$
Thus $\B_+(\cL)\subset\cX_0$, which implies $\B_+(\cL/\P^1)=\B_+(\cL)$ thanks to Lemma~\ref{lem:B+}. 
\end{proof}

%
%%%%%%%%%%%%%%%%%%%%%%%%%%%%%%%%%%%%%%%%%%%%%%%%%%%%%%%%%%%%%%
\subsection{\ma operator and energy}
The (normalized) \emph{\na \ma operator} 
$$
\MA_\NA=\MA_{L,\NA}\colon\PL(X_\NA)\to \Cz(X_\NA)^\vee
$$
associates to each PL function $\f=\f_\cL$ attached to a test configuration $(\cX,\cL)$ the signed measure
$$
\MA_\NA(\f_\cL):=V^{-1}\sum_E b_E(\cL^n\cdot E)\d_{v_E}, 
$$
which has finite support in $X_\div$ and total mass $1$ (see Proposition--Definition~\ref{prop:wNAMA} below for a justification of this definition). In particular, 
$$
\MA_\NA(0)=\d_{v_\triv}.
$$
From there on, as illustrated by~\cite{synthetic}, the story can be developed in a quite parallel way to the complex case; we only emphasize here the aspects of relevance to this paper.  

The operator $\MA_\NA$ admits an {\EL} functional 
$$
\en_\NA=\en_{L,\NA}\colon\PL(X_\NA)\to\R,
$$
the \emph{\ma energy}, normalized by $\en_\NA(0)=0$, and simply given by 
$$
\en_\NA(\f_\cL)=\frac{(\cL^{n+1})}{(n+1)V}
$$
for any test configuration $\cL$ for $L$. For any $\f\in\cH_\NA$, $\MA_\NA(\f)$ is a probability measure, and $\en_\NA\colon\cH_\NA\to\R$ is monotone increasing and concave.

%
%%%%%%%%%%%%%%%%%%%%%%%%%%%%%%%%%%%%%%%%%%%%%%%%%%%%%%%%%%%%%%
\subsection{The \na Darvas metric}
By~\cite[\S 5.3]{nakstab1} (see also~\cite{Rebgeod}), there exists a unique metric $\dd_{1,\NA}$ on $\cH_\NA$ that satisfies the analogue of (D1), (D2) in~\S\ref{sec:Darvas}, \ie
\begin{equation}\label{equ:DarvasNA}
\dd_{1,\NA}(\f,\p)=\inf\left\{\en_\NA(\f)+\en_\NA(\p)-2\en_\NA(\tau)\mid \tau\in\cH_\NA,\,\tau\le\min\{\f,\p\}\right\}
\end{equation}
for all $\f,\p\in\cH_\NA$. Further, the completion of $(\cH_\NA,\dd_{1,\NA})$ can be identified
%\footnote{This is actually equivalent to the envelope property, see~\S\ref{sec:env} below.} 
with the space 
$$
\cE^1_\NA=\cE^1_\NA(L)\subset\PSH_\NA(L)
$$
of \emph{$L$-psh potentials of finite energy}. Here $\PSH_\NA(L)$ denotes the set of \emph{$L$-psh potentials} $\f\colon X_\NA\to\R\cup\{-\infty\}$, defined as pointwise limits of an decreasing net $(\f_i)$ in $\cH_\NA$ such that $\f\not\equiv-\infty$. Then $\f$ lies in $\cE^1_\NA$ iff $\en_\NA(\f)=\lim_i\en(\f_i)>-\infty$. 

Each $\f\in\PSH_\NA(L)$ is uniquely determined by its restriction to $X_\div$, which is finite valued, and $\PSH_\NA(L)$ is endowed with the topology of pointwise convergence on $X_\div$. 

%The space $\cE^1_\NA$ contains the space $\cH_\R_\NA$ of real Fubini--Study potentials, and as already mentioned, the bijection $\FS\colon(\cT_\R^{\mathrm{hom}},\dd_1)\to(\cH_\R_\NA,\dd_{1,\NA})$ is an isometry.

\begin{rmk}\label{rmk:NABusemann} As in the complex case, one can show that the complete metric space $(\cE^1_\NA,\dd_{1,\NA})$ is geodesic with respect to a distinguished class of psh geodesics, and that 
%that satisfies the analogue of Lemma~\ref{lem:max}, see~\cite{Rebgeod}. The proof of Lemma~\ref{lem:E1conv} further directly adapts to show that 
$(\cE^1_\NA,\dd_{1,\NA})$ is Busemann convex with respect to this class. See~\cite{Reb25}.
\end{rmk} 

\bigskip
The next result makes explicit a canonical approximation procedure for functions in $\cE^1_\NA$ from~\cite{YTD}, based on multiplier ideals. 

\begin{prop}\label{prop:canapp} For any $\f\in\cE^1_\NA$ such that $\f\le 0$, consider the ideal sheaf $\cJ(\f)$ on $X\times\P^1$ consisting of all germs $f\in\cO_{X\times\P^1}$ for which there exists $\e>0$ such that 
$$
\sigma(v)(f)\ge -(1+\e)\f(v)-A_X(v)-1
$$
for all $v\in X_\div$, where $\sigma(v)\in (X\times\P^1)^\div$ denotes the Gauss extension of $v$ (see~\cite[\S1.3]{trivval}). Then:

\begin{itemize}
\item[(i)] $\cJ(\f)$ is a $\C^\times$-invariant coherent ideal sheaf cosupported on $X\times\{0\}$;
\item[(ii)] there exists $m_0\gg 1$ such that setting
$$
\f_m(v):=-(m+m_0)^{-1}\sigma(v)(\cJ(m\f));
$$
for $v\in X_\NA$ defines $\f_m\in\cH_\NA$ for each $m$, and $\f_m\to\f$ in $\cE^1_\NA$.
\end{itemize}
\end{prop}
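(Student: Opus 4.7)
The plan is to realize $\cJ(\f)$ as the stable value of an increasing family of multiplier ideals attached to an $S^1$-invariant psh weight on $X\times\P^1$, and to extract the $\f_m$ via a uniform global generation argument of Siu--Castelnuovo--Mumford type.

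For (i), I would first fix $\e>0$ and denote by $\cJ_\e(\f)$ the ideal cut out by the condition $\sigma(v)(f)\ge-(1+\e)\f(v)-A_X(v)-1$ for all $v\in X_\div$, so that $\cJ(\f)=\bigcup_{\e>0}\cJ_\e(\f)$. Using the correspondence between $L$-psh potentials on $X_\NA$ and $S^1$-invariant $L$-psh weights on the pullback of $L$ to $X\times\DD^\times$, $\f$ determines a maximal psh ray $\Phi_\f$ with boundary values given by a fixed smooth reference metric. Via Boucksom's valuative characterization of multiplier ideals---and using that $A_{X\times\P^1}(\sigma(v))=A_X(v)+1$ for a Gauss extension---$\cJ_\e(\f)$ is identified with the multiplier ideal $\cJ((1+\e)\Phi_\f)$, the $S^1$-invariance of $\Phi_\f$ ensuring that valuative data at $\sigma(v)$ controls the entire multiplier ideal. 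Hence $\cJ_\e(\f)$ is coherent by Nadel vanishing. Since $\f\le 0$, the family $\{\cJ_\e(\f)\}_{\e>0}$ is increasing as $\e\to 0^+$, so by noetherianity it stabilizes locally and $\cJ(\f)$ is itself coherent. The $S^1$-invariance of $\Phi_\f$ gives the $\C^\times$-invariance of $\cJ(\f)$, and the cosupport claim follows from the fact that, for $v\in X_\div$, the Gauss extension $\sigma(v)$ has center contained in $X\times\{0\}$.

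For (ii), the choice of $m_0$ relies on a uniform relative global generation statement: there exists $m_0=m_0(X,L)$ such that, for every $m\ge 0$, the sheaf $(m+m_0)p_1^\ast L\otimes\cJ(m\f)$ is generated over $X\times\P^1$ by its direct image to $\P^1$. This is a standard Castelnuovo--Mumford regularity statement, obtained by combining Nadel vanishing with the ampleness of $L$ (and an application of Mumford's regularity criterion in the fibre direction). A choice of a $\C[\tau]$-basis of the resulting sections yields a test configuration $\cL_m$ whose Fubini--Study potential is $(m+m_0)\f_m$, exhibiting $\f_m\in\cH_\NA$ via~\eqref{equ:FS}. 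Pointwise convergence $\f_m\to\f$ on $X_\div$ then follows from two-sided bounds: directly from the definition of $\cJ(m\f)$ one gets $\f_m(v)\ge\f(v)-C(1+A_X(v))/m$ for a uniform $C$, while the Demailly--Ein--Lazarsfeld subadditivity $\cJ(k\f)\cdot\cJ(\ell\f)\subset\cJ((k+\ell)\f)$ yields $\f_m(v)\le\f(v)+O(1/m)$.

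The main obstacle, and the last step, is to upgrade pointwise convergence on $X_\div$ to convergence in the strong topology of $\cE^1_\NA$. By the characterization of the strong topology as the coarsest refinement of the pointwise topology in which $\en_\NA$ is continuous, this reduces to proving $\en_\NA(\f_m)\to\en_\NA(\f)$. Here $\en_\NA(\f_m)$ is a normalized intersection number $\frac{(\cL_m^{n+1})}{(n+1)V(m+m_0)^{n+1}}$ which may be reinterpreted as a normalized volume of the graded sequence of ideals $\{\cJ(m\f)\}_m$ on $X\times\P^1$. Establishing its convergence to $\en_\NA(\f)$ is a non-Archimedean analogue of Demailly's theorem asserting that volumes of multiplier ideals asymptotically compute the Monge--Amp\`ere energy of $\Phi_\f$; it exploits crucially the finite-energy hypothesis $\f\in\cE^1_\NA$, which ensures that the graded volumes saturate at the expected value, together with the subadditivity property used above to control the deficit.
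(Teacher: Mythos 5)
Your overall blueprint---realize $\cJ(\f)$ as the multiplier ideal of an $S^1$-invariant psh weight on $p_1^\star L|_{\DD}$, use $A_{X\times\P^1}(\sigma(v))=A_X(v)+1$ to translate the valuative condition, and produce $\f_m$ via a uniform Castelnuovo--Mumford/Nadel generation bound---is exactly the route the paper takes; the paper simply cites \cite[Theorem~6.6, Theorem~B.5, Lemma~5.7]{YTD} for the three ingredients, so your part~(i) amounts to re-deriving Theorem~B.5. In part~(ii), however, you have the directions of the two pointwise bounds swapped. From the definition of $\cJ(m\f)$, every germ $f$ therein satisfies $\sigma(v)(f)\ge -m\f(v)-A_X(v)-1$ (letting $\e\to0$, using $\f\le 0$), so $\sigma(v)(\cJ(m\f))\ge -m\f(v)-A_X(v)-1$ and therefore
\[
\f_m(v)\le\frac{m\f(v)+A_X(v)+1}{m+m_0}=\f(v)+\frac{-m_0\f(v)+A_X(v)+1}{m+m_0},
\]
an \emph{upper} bound, giving $\limsup_m\f_m(v)\le\f(v)$. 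The complementary inequality $\liminf_m\f_m(v)\ge\f(v)$ is an \emph{existence} statement---one must exhibit germs of $\cJ(m\f)$ whose $\sigma(v)$-order is close to $-m\f(v)$---and that requires the analytic identification $\cJ(m\f)=\cJ(m\Phi)$ together with Ohsawa--Takegoshi/Demailly approximation; it does not come from the definition. Relatedly, the Demailly--Ein--Lazarsfeld subadditivity theorem reads $\cJ((k+\ell)\f)\subset\cJ(k\f)\cdot\cJ(\ell\f)$; the reverse containment you wrote is false in general.

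The more serious gap is your final paragraph. Appealing to ``a non-Archimedean analogue of Demailly's theorem on volumes of graded sequences of multiplier ideals'' is not an argument: the finite-energy hypothesis alone does not force $\en_\NA(\f_m)\to\en_\NA(\f)$ from pointwise convergence on $X_\div$. One needs additional structure---in \cite[Lemma~5.7]{YTD}, to which the paper delegates all of~(ii), this is extracted from the pointwise bounds above together with monotonicity properties of the approximants that allow one to invoke continuity of $\en_\NA$ along monotone nets. As written, your sketch correctly identifies the reduction to energy convergence but leaves precisely that step, which is the actual technical content, unaddressed.
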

\begin{proof} By~\cite[Theorem~6.6]{YTD}, we can find an $S^1$-invariant psh metric $\Phi$ on $p_1^\star L|_{\DD}$ with zero Lelong numbers away from $X\times\{0\}$, and such that $\sigma(v)(\Phi)=-\f(v)$ for all $v\in X_\div$. By~\cite[Theorem~B.5]{YTD}, $\cJ(\f)$ coincides with the multiplier ideal sheaf of $\Phi$, and is thus coherent, and (i), (ii) now follow from~\cite[Lemma~5.7]{YTD}. 
\end{proof}

If $\f\in\cE^1_\NA$ is invariant under a subgroup of $\Aut(X,L)$, then so is $\cJ(\f)$. We infer: 

\begin{cor}\label{cor:invdense} For any subgroup $S\subset\Aut(X,L)$, $\cH^S_\NA$ is dense in $\cE^{1,S}_\NA$ in the strong topology. 
\end{cor}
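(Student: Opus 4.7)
The plan is to deduce the corollary directly from Proposition~\ref{prop:canapp} together with the remark immediately preceding the statement, namely that $S$-invariance of $\f$ implies $S$-invariance of the multiplier ideal $\cJ(\f)$. Since essentially everything needed is assembled in the two preceding items, the corollary should follow from a short verification that the canonical approximants inherit the symmetry.

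First I would reduce to the case $\f\le 0$. Given $\f\in\cE^{1,S}_\NA$, pick a constant $c\ge\sup_{X_\NA}\f$ and replace $\f$ by $\f-c$; this lies in $\cE^{1,S}_\NA$ (the $S$-action on $L$ commutes with additive shifts) and is nonpositive, and any strong approximation by elements of $\cH^S_\NA$ gives, after adding $c$ back, a strong approximation of the original $\f$. Next I would apply Proposition~\ref{prop:canapp} to obtain a sequence $\f_m\in\cH_\NA$ with $\f_m\to\f$ in $\cE^1_\NA$, where $\f_m(v):=-(m+m_0)^{-1}\sigma(v)(\cJ(m\f))$.

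It then remains to check that $\f_m\in\cH^S_\NA$. The group $S\subset\Aut(X,L)$ acts on $X\times\P^1$ through its action on the first factor (and trivially on $\P^1$), and this action preserves $X\times\{0\}$ and commutes with the Gauss extension in the sense that $\sigma(\tau\cdot v)=\tilde\tau\cdot\sigma(v)$ for any $\tau\in S$, where $\tilde\tau$ is the induced automorphism of $X\times\P^1$. Since the log discrepancy $A_X$ is automorphism-invariant and $\f$ is $S$-invariant, the defining condition of $\cJ(m\f)$ is preserved by $\tilde\tau$, so $\cJ(m\f)$ is $S$-invariant as an ideal on $X\times\P^1$. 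Evaluating through $\sigma(v)$ then yields $\f_m(\tau\cdot v)=\f_m(v)$ for all $\tau\in S$ and $v\in X_\NA$, \ie $\f_m\in\cH^S_\NA$.

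There is no genuine obstacle here, as the essential ingredients are already in place; the only point that deserves a careful line is the equivariance identity $\sigma(\tau\cdot v)=\tilde\tau\cdot\sigma(v)$, which follows directly from the functoriality of the Gauss extension construction described in~\cite[\S1.3]{trivval}. Combining the three steps above yields strong density of $\cH^S_\NA$ in $\cE^{1,S}_\NA$.
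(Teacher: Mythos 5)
Your proposal is correct and follows essentially the same approach the paper takes: the paper's entire justification is the one-sentence remark immediately preceding the corollary (that $S$-invariance of $\f$ implies $S$-invariance of $\cJ(\f)$), and you have simply fleshed out the three small verifications implicit in that remark — the harmless normalization $\f\le 0$, the equivariance $\sigma(\tau\cdot v)=\tilde\tau\cdot\sigma(v)$ of the Gauss extension, and the automorphism-invariance of $A_X$ — all of which are correct.
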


The \na \ma operator and energy both admit unique continuous extensions 
$$
\MA_\NA\colon\cE^1_\NA\to \Cz(X_\NA)^\vee,\quad\en_\NA\colon\cE^1_\NA\to\R, 
$$
the latter still being monotone increasing, concave, and an {\EL} functional of the former. The Darvas metric satisfies 
\begin{equation}\label{equ:d1I1NA}
\dd_{1,\NA}(\f,\p)\approx\ii_{1,\NA}(\f,\p):=\int|\f-\p|\left(\MA_\NA(\f)+\MA_\NA(\p)\right)
\end{equation}
for all $\f,\p\in\cE^1_\NA$, while the translation-invariant functional
\begin{equation}\label{equ:INA}
\ii_\NA(\f,\p):=\int(\f-\p)(\MA_\NA(\p)-\MA_\NA(\f))
\end{equation}
associated to the concave Euler--Lagrange functional $\en_\NA$ (see Lemma~\ref{lem:Fconc}) descends to a quasi-metric on $\cE^1_\NA/\R$ that determines the quotient topology. We use $0\in\cE^1_\NA$ as a basepoint and set
$$
\dd_{1,\NA}(\f):=\dd_{1,\NA}(\f,0),\quad\ii_\NA(\f):=\ii_\NA(\f,0). 
$$
 
%
%%%%%%%%%%%%%%%%%%%%%%%%%%%%%%%%%%%%%%%%%%%%%%%%%%%%%%%%%%%%%%
\subsection{Measures of finite energy}

We say that a signed measure $\mu\in \Cz(X_\NA)^\vee$ has \emph{finite energy} if each $\f\in\cE^1_\NA$ is $|\mu|$-integrable. A probability measure $\mu$ lies in the set $\cM^1_\NA$ of probability measures of finite energy iff
\begin{equation}\label{equ:NAenmeas}
\jj_\NA(\mu)=\jj_{L,\NA}(\mu):=\sup_{\f\in\cE^1_\NA}\left(\en_\NA(\f)-\int\f\,\mu\right)\in [0,+\infty]
\end{equation}
is finite. The identity map $\cE^1_\NA\to L^1(\mu)$ then satisfies the {\sgh} estimate
$$
\|\f_1-\f_2\|_{L^1(\mu)}\lesssim\dd_{1,\NA}(\f_1,\f_2)^{\a_n}\left(\jj_\NA(\mu)+\max_i\dd_{1,\NA}(\f_i)\right)^{1-\a_n}
$$
for all $\f_1,\f_2\in\cE^1_\NA$, with $\a_n:=2^{-n}$, and $\f\mapsto\int\f\,\mu$ is further weakly continuous on bounded subsets of $\cE^1_\NA$.
`

Setting for $\mu,\nu\in\cM^1_\NA$ 
\begin{equation}\label{equ:dM1NA}
\d_{1,\NA}(\mu,\nu):=\sup_{\f\in\cE^1_\NA,\,\ii_\NA(\f)\le 1}\left|\int\f\,(\mu-\nu)\right|
\end{equation}
defines a complete metric on $\cM^1_\NA$, such that 
\begin{equation}\label{equ:estMANA}
\left|\int(\f_1-\f_2)(\mu-\nu)\right|\lesssim \d_{1,\NA}(\mu,\nu)\max_i\left(1+\ii_\NA(\f_i)\right)^{1/2}
\end{equation}
for all $\f_1,\f_2\in\cE^1_\NA$ and $\mu,\nu\in\cM^1_\NA$. Furthermore, 
$$
\d_{1,\NA}(\mu,\d_{v_\triv})\approx\jj_\NA(\mu), 
$$
and the \na \ma operator
$$
\MA_\NA\colon(\cE^1_\NA/\R,\ii_\NA)\to(\cM^1_\NA,\d_{1,\NA})
$$
is {\sgh} (see Lemma~\ref{lem:sghop}). 

\begin{exam} The set $\cM_\div$ of \emph{divisorial measures}, \ie probability measures $\mu:=\sum_i m_i\d_{v_i}$ with finite support in $X_\div$, is contained in $\cM^1_\NA$. 
\end{exam}

For such measures, the energy~\eqref{equ:NAenmeas} can be more concretely computed as the (finite dimensional) Legendre transform of an $S$-invariant type function, of common use in the `concrete K-stability' literature. We are grateful to Pietro Piccione for discussions on this result, which generalizes~\cite[Theorem~2.18]{nakstab2} (see also~\cite[\S 10.1]{MW}). 

\begin{prop}\label{prop:endivmeas} Pick a finite subset $\{v_1,\dots,v_r\}$ in $X_\div$, write $v_i:=d_i^{-1}\ord_{D_i}$ with $d_i\in\Q_{>0}$ and $D_i$ a prime divisor on a projective birational model $\rho\colon Y\to X$. Consider the simplex $\sigma:=\{a\in\R_{\ge 0}^r\mid \sum_i a_i=1\}$ and the divisorial measure $\mu_a=\sum_{i=1}^r a_i\d_{v_i}$ with $a\in\sigma$. Then $a\mapsto\jj_\NA(\mu_a)$ is convex on $\sigma$, and equal to the Legendre transform
$$
\jj_\NA(\mu_a)=\sup_{t\in\R_{\ge 0}^r} \left(S(t)-a\cdot t\right)
$$
of the concave function 
$$
S(t):=V^{-1}\int_0^\infty\vol\left(\rho^\star L-\sum_i d_i(\la-t_i)_+D_i\right)d\la.
$$
\end{prop}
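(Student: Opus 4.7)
Convexity of $a\mapsto\jj_\NA(\mu_a)$ on $\sigma$ is immediate from the variational formula
$$\jj_\NA(\mu_a)=\sup_{\f\in\cE^1_\NA}\Bigl(\en_\NA(\f)-\sum_i a_i\,\f(v_i)\Bigr),$$
which presents it as a pointwise supremum of affine functions of $a$ (note that $\f\mapsto\f(v_i)$ is continuous on $\cE^1_\NA$ for each divisorial $v_i$, so the functionals being supremized are well defined).

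The core of the proof is to construct, for each $t\in\R_{\geq 0}^r$, the canonical $L$-psh envelope
$$\p_t:=\envstar\bigl(\sup\{\p\in\PSH_\NA(L):\p(v_i)\le t_i\text{ for all }i\}\bigr)$$
and establish the energy identity $\en_\NA(\p_t)=S(t)$. Via the dictionary between $L$-psh potentials of finite energy and bounded graded filtrations of $R(X,L)$ developed in~\cite{nakstab1}, $\p_t$ corresponds to the filtration
$$F_t^\lambda H^0(X,mL):=\{s\in H^0(X,mL):v_i(s)\ge m(\lambda-t_i)_+\text{ for all }i\},$$
whose graded pieces have dimension $h^0\bigl(Y,\rho^\star(mL)-\sum_i m\,d_i(\lambda-t_i)_+D_i\bigr)$ by definition of the $v_i$. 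The general volume formula for the energy of a filtration-type potential (successive-minima or Okounkov-body description, see~\cite{BHJ1,nakstab1}) then gives
$$\en_\NA(\p_t)=V^{-1}\int_0^\infty\vol\Bigl(\rho^\star L-\sum_i d_i(\lambda-t_i)_+D_i\Bigr)\,d\lambda=S(t),$$
and concavity of $S$ comes for free from concavity of $t\mapsto\p_t$ (the envelope depends concavely on the pointwise constraints) combined with concavity of $\en_\NA$.

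Granted the identity $\en_\NA(\p_t)=S(t)$, both directions of the Legendre formula follow by a short argument. For the lower bound, $\p_t(v_i)\le t_i$ gives $\int\p_t\,\mu_a=\sum_i a_i\p_t(v_i)\le a\cdot t$, so plugging $\p_t$ into the variational formula yields
$$\jj_\NA(\mu_a)\ge\en_\NA(\p_t)-\int\p_t\,\mu_a\ge S(t)-a\cdot t,$$
and taking the supremum over $t$ concludes. For the upper bound, given $\f\in\cE^1_\NA$, the translation invariance of $\en_\NA(\cdot)-\int(\cdot)\,\mu_a$ (since $\mu_a$ is a probability measure) lets me add a constant to reduce to $\f(v_i)\ge 0$ for all $i$. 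Setting $t_i:=\f(v_i)\ge 0$, the function $\f$ is a competitor in the definition of $\p_t$, so $\f\le\p_t$ pointwise, whence $\en_\NA(\f)\le\en_\NA(\p_t)=S(t)$ by monotonicity of $\en_\NA$; together with $\int\f\,\mu_a=a\cdot t$ this gives $\en_\NA(\f)-\int\f\,\mu_a\le S(t)-a\cdot t\le\sup_{t'}(S(t')-a\cdot t')$.

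The delicate point I expect to spend most effort on is the energy identity $\en_\NA(\p_t)=S(t)$. The envelope $\p_t$ is not a Fubini--Study potential in general (so the intersection-theoretic formula $\en_\NA(\f_\cL)=(\cL^{n+1})/((n+1)V)$ does not apply directly), and one has to pass through the filtration picture carefully. I would first treat rational $t\in\Q_{\ge 0}^r$, where $\p_t$ is realized by an explicit (not necessarily ample) model test configuration obtained from $Y\times\P^1$ after appropriate blowups encoding the weights $d_i t_i$, and compute $\en_\NA(\p_t)$ using the Duistermaat--Heckman/Okounkov body interpretation from~\cite{nakstab1}. The general case would follow by continuity of both sides in $t$, using the strong continuity of $\en_\NA$ on $\cE^1_\NA$ combined with the fact that $\p_t$ depends continuously on $t$ in the strong topology (since $t\mapsto\p_t$ is $1$-Lipschitz for the sup norm of the constraints).
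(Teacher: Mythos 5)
Your proposal is correct and follows essentially the same architecture as the paper's proof: in both, one introduces the divisorial norm $\chi_t=\min_i\{\chi_{v_i}+t_i\}$, identifies $\f_t:=\FS(\chi_t)$ with the envelope $\sup\{\f\in\cE^1_\NA:\f(v_i)\le t_i\}$, uses the energy-volume identity $\en_\NA(\f_t)=S(t)$, gets the lower bound by plugging $\f_t$ into the variational formula for $\jj_\NA$, and gets the upper bound by reducing each competitor $\f$ (normalized so that $\f(v_i)\ge0$) to $t_i:=\f(v_i)$. Two points of divergence are worth recording. For the upper bound you apply monotonicity of $\en_\NA$ directly to $\f\le\f_t$, whereas the paper first solves the Monge--Amp\`ere equation $\MA_\NA(\f)=\mu_a$ and then upgrades $\f\le\f_t$ to an equality via the domination principle before evaluating; your route avoids both the non-Archimedean Calabi--Yau theorem and the domination principle, and is arguably the simpler of the two. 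Second, you flag $\en_\NA(\f_t)=S(t)$ as the delicate step and propose a rational-then-continuity argument; this is more work than needed, since $\chi_t$ makes sense for all real $t$ and the cited results ([nakstab1, Theorems~3.6(v) and~5.1]) give $\en_\NA(\FS(\chi_t))=\vol(\chi_t)=S(t)$ at once, without any rationality hypothesis or test-configuration realization.
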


\begin{proof} We use freely the notation and terminology of~\cite{nakstab1}. For any $t\in\R_{\ge 0}^r$ consider the divisorial norm 
$\chi_t:=\min_i\{\chi_{v_i}+t_i\}$. The associated filtration is given by 
$$
F^\la \Hnot(X,mL)\simeq\Hnot(Y,m\rho^\star L-\sum_i c_i^{-1}(\la-t_i)_+ D_i)
$$
and~\cite[Theorem~3.6~(v)]{nakstab1} thus yields $\vol(\chi_t)=\int_0^\infty\vol(\chi_t\ge\la)\,d\la=S(t)$. The function $\f_t:=\FS(\chi_t)\in\cE^1_\NA$ now satisfies 
\begin{equation}\label{equ:envt}
\f_t=\sup\{\f\in\cE^1_\NA\mid\f(v_i)\le t_i\}. 
\end{equation}
Then $\int\f_t\,\mu_a\le a\cdot t$, while~\cite[Theorem~5.1]{nakstab1} yields $\en(\f_t)=\vol(\chi_t)=S(t)$. By~\eqref{equ:NAenmeas} we infer  
$$
\jj_\NA(\mu_a)\ge\sup_{t\in\R_{\ge 0}^r}\left(S(t)-a\cdot t\right). 
$$
Conversely, pick $\f\in\cE^1_\NA$ such that $\MA_\NA(\f)=\mu_a$, and hence $\jj_\NA(\mu_a)=\en(\f)-\int\f\,\mu_a$. After adding a constant, we may assume $\f\ge 0$. Set $t_i:=\f(v_i)$. By~\eqref{equ:envt} we have $\f\le\f_t$, with equality on the support of $\MA_\NA(\f)$, and hence everywhere by the domination principle~\cite[Corollary~10.6]{trivval}. Thus $\en(\f)=S(t)$, $\int\f\,\mu_a=a\cdot t$, and hence $\jj_\NA(\mu_a)=S(t)-a\cdot t$. The result follows. 
\end{proof}

%
%%%%%%%%%%%%%%%%%%%%%%%%%%%%%%%%%%%%%%%%%%%%%%%%%%%%%%%%%%%%%%
\subsection{Psh envelopes}\label{sec:env} 

Since $X$ is smooth, the \emph{envelope property} holds, \ie for any bounded-above family $\{\f_\a\}_\a$ in $\PSH_\NA(L)$, the usc upper envelope $\supstar_\a\f_\a$ also lies in $\PSH_\NA(L)$. 

As a consequence, for any $f\in \Cz(X_\NA)$ the \emph{psh envelope} 
$$
\env(f):=\sup\left\{\f\in\PSH_\NA(L),\,\f\le f\right\}
$$
is a continuous $L$-psh function, which thus lies in $\cE^1_\NA$. The psh envelope satisfies the \emph{orthogonality relation}
\begin{equation}\label{equ:orthoNA}
\int(f-\env(f))\MA_\NA(\env(f))=0,
\end{equation}
\ie $\MA_\NA(\env(f))$ is supported in the contact locus $\{f=\env(f)\}$. 

As a key ingredient in the proof of the main theorem, we further show:  
 
\begin{lem}\label{lem:suppenv} Pick $f\in\PL(X_\NA)$ with psh envelope $\env(f)$, and write $f=\f_\cL$ for an snc test configuration $(\cX,\cL)$. Then $\supp\MA(\env(f))$ is contained in the (finite) set of divisorial valuations $v_E\in X_\div$ attached to the irreducible components $E$ of $\cX_0$ that are not contained in the augmented base locus $\B_+(\cL/\P^1)$ (relative to the morphism $\cX\to\P^1$, see~\S\ref{sec:loci}). 
\end{lem}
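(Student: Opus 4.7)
The plan is to approximate $\env(f)$ from below, strongly in $\cE^1_\NA$, by FS potentials associated to ample test configurations dominating $\cX$, and to control which components of their central fibers can carry mass in the limit \ma measure.

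Adding a rational constant $c$ to $f$ corresponds to replacing $\cL$ by $\cL+c\cX_0$: this shifts $\env(f)$ by $c$ and leaves both $\MA_\NA(\env(f))$ and $\B_+(\cL/\P^1)$ unchanged. Taking $c$ large enough we may therefore assume $f>0$ on $X_\NA$, in which case Lemma~\ref{lem:B+tc} yields $\B:=\B_+(\cL/\P^1)=\B_+(\cL)\subset\cX_0$ and $\cL$ is big. For each sufficiently divisible $m$, take a log resolution $\rho_m\colon\cX_m\to\cX$ of the relative base ideal $\fb_{m\cL/\P^1}$, chosen to be an isomorphism over $\cX\setminus\cX_0$, and write $\rho_m^\star(m\cL)=M_m+F_m$ with $M_m$ relatively globally generated over $\P^1$ and $F_m\ge0$ supported on $\cX_{m,0}$; by Lemma~\ref{lem:B+bir} applied relatively, every $\rho_m$-exceptional divisor as well as every strict transform of a component of $\cX_0$ contained in $\B$ lies in $\supp F_m$. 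A standard construction (perturbing $m^{-1}M_m$ by a small rel. ample class on $\cX_m$ and normalizing by a rational constant) then yields ample test configurations $\cL_m$ for $L$ on $\cX_m$ whose FS potentials $\f_m:=\f_{\cL_m}\in\cH_\NA$ satisfy $\f_m\le f$ and $\f_m\uparrow\env(f)$ pointwise on $X_\div$, hence $\f_m\to\env(f)$ strongly in $\cE^1_\NA$ by monotone convergence of $\en_\NA$. Strong continuity of $\MA_\NA$ then gives that the atomic measures
\begin{equation*}
\MA_\NA(\f_m)=V^{-1}\sum_{E'\subset\cX_{m,0}}b_{E'}(\cL_m^n\cdot E')\,\d_{v_{E'}}
\end{equation*}
converge weakly to $\MA_\NA(\env(f))$.

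Call a component $E'\subset\cX_{m,0}$ \emph{good} if it is the strict transform of some component $E\subset\cX_0$ with $E\not\subset\B$; the corresponding valuation $v_{E'}\in X_\div$ then coincides with $v_E$. All remaining (\emph{bad}) components are, by the preceding paragraph, contained in $\supp F_m$. Expanding $\cL_m^n$ via $M_m=\rho_m^\star(m\cL)-F_m$ and invoking the relative asymptotic vanishing characterization of $\B_+$ (Lemma~\ref{lem:B-} applied after a small ample perturbation of $\cL$), one shows that the intersection numbers $(\cL_m^n\cdot E')$ for bad components $E'$ sum to a quantity of order $o(1)$ as $m\to\infty$. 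Hence the total mass of $\MA_\NA(\f_m)$ concentrated on bad valuations tends to zero, and passing to the weak limit, $\MA_\NA(\env(f))$ is supported on $\{v_E:E\subset\cX_0,\,E\not\subset\B\}$, as claimed. The main obstacle is the quantitative mass-decay estimate on bad components: this rests on a Fujita-type asymptotic decomposition for big line bundles combined with the relative-volume characterization of $\B_+$ given by Lemma~\ref{lem:B-}.
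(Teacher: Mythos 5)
Your strategy (approximate $\env(f)$ from below by Fubini--Study potentials and track where the mass can concentrate) is in the spirit of Chi Li's \emph{K-stability and Fujita approximation}, which is explicitly credited in the paper for the stronger statement that the containment is actually an equality. But as you've written it the argument has a genuine gap, and it is in exactly the place the paper takes pains to handle carefully.

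The problem is that you conflate the \emph{asymptotic} base locus $\B(\cL/\P^1)$ with the \emph{augmented} base locus $\B_+(\cL/\P^1)$. The relative base ideals $\fb_{m\cL/\P^1}$ that you resolve detect only $\B(\cL/\P^1)=\Bs(m\cL/\P^1)$ for $m$ sufficiently divisible; Lemma~\ref{lem:B+bir} says nothing about $\supp F_m$. So a component $E\subset\cX_0$ with $E\subset\B_+(\cL/\P^1)\setminus\B(\cL/\P^1)$ (a genuinely possible configuration: $\B_+$ is in general strictly larger than $\B$) has a strict transform $E'$ that is \emph{not} in $\supp F_m$ for any large divisible $m$, contrary to what you assert. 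Such an $E'$ would be classified ``bad'' by your scheme and yet have no reason to carry vanishing mass in $\MA_\NA(\f_m)$ by your decomposition-of-$\cL_m$ argument. To detect $\B_+$ via a base ideal you would need to resolve $\fb_{m(\cL-\e\cA)/\P^1}$ for a small relatively ample $\cA$, but then $\rho_m^\star(m\cL)=M_m+F_m$ no longer makes sense as you set it up. Note also that Lemma~\ref{lem:B-} characterizes $\B_-$, not $\B_+$, so the asymptotic-vanishing-order tool you invoke is not the one you need either.

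Beyond this, the ``mass-decay'' estimate you acknowledge as the main obstacle is stated without any proof. Even after correcting the $\B$ versus $\B_+$ confusion this is a nontrivial intersection-number estimate (you'd need $(\cL_m^n\cdot E')\to0$ as $m\to\infty$ for the relevant $E'$, with $\cL_m\approx m^{-1}M_m$), and your weak-convergence step requires care because, unlike the paper's perturbed measures $\mu_\e$, your $\MA_\NA(\f_m)$ are not supported on a fixed finite set: the dual complexes $\D_{\cX_m}$ grow with $m$. The paper avoids all of this by working on the fixed model $\cX$ throughout: first the orthogonality relation together with the identity $(f-\env(f))(v_E)=b_E^{-1}\ord_E\|\cL/\P^1\|$ kills components of $\B_-$ outright, with no limits taken, and then a perturbation $\cL_\e:=(1+\e)^{-1}(\cL-\e\cA)$ with $\cA$ relatively ample reduces the $\B_+$ case to the $\B_-$ case via the identity $\B_+(\cL/\P^1)=\B_-(\cL_\e/\P^1)$, using weak convergence of measures $\mu_\e\to\mu$ all supported on the same finite set $\{v_E\}$. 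I would encourage you to revisit the orthogonality relation~\eqref{equ:orthoNA}: it does in one line what your mass-decay estimate attempts to do by approximation.
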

While this will not be needed here, one can show as in~\cite[\S 3]{LiFuj} that $\supp\MA(\env(f))$ is in fact equal to the set of divisorial valuations in question. 

\begin{proof} Set $\mu:=\MA(\env(f))$. The PL function $f=\f_\cL$ satisfies $f=f\circ p_\cX$, and $f|_{\D_\cX}$ is linear on each face of $\D_\cX$. On the other hand, any $L$-psh function $\f$ satisfies $\f\le\f\circ p_\cX$, and $\f$ is convex on any face of $\D_\cX$ (see~\cite[Appendix~A]{trivval} and~\cite[\S7]{siminag}). 
This implies
$$
\env(f)=\sup\left\{\f\in\PSH(L_\NA)\mid \f(v_E)\le f(v_E)\text{ for all irreducible components }E\text{ of }\cX_0\right\}, 
$$
and the orthogonality relation can then be used to show that $\supp\mu$ is contained in the set of all $v_E$ (see~\cite[Lemma~8.5]{nama} or~\cite[\S 6.1]{NAGreen}). 

It remains to show that each component $E$ of $\cX_0$ contained in $\B_+(\cL/\P^1)$ satisfies $\mu(\{v_E\})=0$. Assume first $E\subset\B_-(\cL/\P^1)\subset\B_+(\cL/\P^1)$, \ie $\ord_E\|\cL/\P^1\|>0$ (see Lemma~\ref{lem:B-}). Since 
$$
(f-\env(f))(v_E)=b_E^{-1}\ord_E\|\cL/\P^1\|
$$
by~\cite[Lemma~5.22]{trivval}, it follows that $v_E$ does not belong to the contact locus $\{f=\env(f)\}$, and hence $\mu(\{v_E\})=0$ by~\eqref{equ:orthoNA}. 

Suppose now that $E$ is merely contained in $\B_+(\cL/\P^1)$. Thanks to Lemma~\ref{lem:B+bir} we may assume, after perhaps passing to a higher test configuration, that $\cX$ dominates $X\times\P^1$ and admits a vertical $\Q$-Cartier divisor $H$ such that $\cA:=L_\cX+H$ is relatively ample (see~\cite[Lemma~3.11]{trivval}). For any $\e>0$, 
$$
\cL_\e:=(1+\e)^{-1}(\cL-\e\cA)
$$
is a test configuration for $L$ defined on $\cX$. Since $\cA$ is relatively ample, the relative version of \eqref{equ:Bpert} yields 
$$
\B_+(\cL/\P^1)=\B_-((\cL-\e\cA)/\P^1)=\B_-(\cL_\e/\P^1) 
$$
for all $\e\in\Q_{>0}$ small enough. Set $f_\e:=\f_{\cL_\e}$, $\mu_\e:=\MA(\env(f_\e))$. Since $E$ is contained in $\B_-(\cL_\e/\P^1)$, the previous step yields $\mu_\e(\{v_E\})=0$.  Now 
$$
f_\e=(1+\e)^{-1}(f-\e\f_\cA)
$$
converges uniformly to $f$ as $\e\to 0$, so $\env(f_\e)\to\env(f)$ uniformly (see~\eqref{equ:envlip}), and hence $\mu_\e\to\mu$ weakly. Since these measures are all supported in a fixed finite set, it follows that $\mu_\e(\{v_E\})\to\mu(\{v_E\})$, which thus vanishes as desired. 

\end{proof}

Crucially, the envelope property implies that $\PSH_\NA(L)/\R$ is compact, and that the \na \ma operator induces a homeomorphism 
$$
\MA_\NA\colon\cE^1_\NA/\R\simto\cM^1_\NA
$$
for the strong topologies on both sides (see~\cite[Theorems~5.11, 12.8]{trivval} or Theorem~\ref{thm:NAwMA} below). 

\begin{exam} Pick a probability measure $\mu$ supported in the dual complex $\D_\cX\subset X_\NA$ of some snc test configuration $\cX$. Then $\mu$ lies in $\cM^1_\NA$, and the solution $\f\in\cE^1_\NA$ to $\MA_\NA(\f)=\mu$ belongs to $\Cz(X_\NA)$, and hence is a uniform limit of functions in $\cH_\NA$ (see~\cite[Theorems~A.4, 12.12]{trivval}).
\end{exam}

%
%%%%%%%%%%%%%%%%%%%%%%%%%%%%%%%%%%%%%%%%%%%%%%%%%%%%%%%%%%%%%%
\subsection{Entropy}
The \emph{log discrepancy function} $A_X\colon X_\div\to\Q_{\ge 0}$ admits a maximal lsc extension 
$$
A_X\colon X_\NA\to [0,+\infty], 
$$
which is homogeneous with respect to the scaling action of $\R_{>0}$. It can be written as the pointwise limit of the increasing net of PL functions 
$$
A_\cX:=\f_{K^{\log}_{\cX/\P^1}}\in\PL(X_\NA), 
$$
indexed by the poset of all snc models $\cX$, where the relative log canonical bundle
$$
K^{\log}_{\cX/\P^1}:=K_{\cX/\P^1}+\cX_{0,\redu}-\cX_0
$$
is viewed as a test configuration for $K_X$. (See~\cite[Appendix~A]{nakstab2} for details.)

The \emph{\na entropy} of a positive measure $\mu$ on $X_\NA$ is defined as
\begin{equation}\label{equ:NAentsup}
\Ent_\NA(\mu):=\int_{X_\NA} A_X\,\mu=\sup_\cX\int A_\cX\,\mu\in [0,+\infty]. 
\end{equation} 
This yields an additive, lsc functional on the space of positive measures. Projection onto dual complexes further yields the following regularization result (see~\cite[Lemma~3.14]{nakstab2}). 

\begin{lem}\label{lem:cvent} Pick $\mu\in\cM^1_\NA$, and for each snc test configuration $\cX$ with associated dual complex $\D_\cX\hto X_\NA$ and retraction map $p_\cX\colon X_\NA\to\D_\cX$ set $\mu_\cX:=(p_\cX)_\star\mu$. Then $\mu_\cX\in\cM^1_\NA$ converges strongly to $\mu$, and $\Ent_\NA(\mu_\cX)\to\Ent_\NA(\mu)$. 
\end{lem}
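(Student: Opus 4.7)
The plan is to establish three things: weak convergence $\mu_\cX \to \mu$, convergence of the energies $\jj_\NA(\mu_\cX) \to \jj_\NA(\mu)$, and convergence of the entropies $\Ent_\NA(\mu_\cX) \to \Ent_\NA(\mu)$. The first two together deliver strong convergence in $\cM^1_\NA$, by the characterization of the strong topology as the coarsest refinement of the weak one making $\jj_\NA$ continuous.

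For weak convergence, I would first verify that $f \circ p_\cX \to f$ uniformly on $X_\NA$ for every $f \in \Cz(X_\NA)$. By~\eqref{equ:PLdual}, $\PL(X_\NA)$ is dense in $\Cz(X_\NA)$ and every PL function factors as $g \circ p_{\cX_0}$ for some snc $\cX_0$; for any $\cX$ refining $\cX_0$, the compatibility $p_{\cX_0} = \pi_{\cX,\cX_0} \circ p_\cX$ of the natural retractions gives $g \circ p_{\cX_0} \circ p_\cX = g \circ p_{\cX_0}$, and a uniform approximation argument concludes. Pushing forward, $\int f\, \mu_\cX = \int (f \circ p_\cX)\, \mu \to \int f\, \mu$, so $\mu_\cX \to \mu$ weakly.

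The heart of the argument is then the inequality $\jj_\NA(\mu_\cX) \le \jj_\NA(\mu)$, which in particular places $\mu_\cX$ in $\cM^1_\NA$. The key ingredient is the retraction inequality $\f \le \f \circ p_\cX$ for every $\f \in \PSH_\NA(L)$, already invoked in the proof of Lemma~\ref{lem:suppenv}. Integrating against $\mu$ and using $\int (\f \circ p_\cX)\, \mu = \int \f\, \mu_\cX$ gives $\int \f\, \mu \le \int \f\, \mu_\cX$, and taking the supremum in~\eqref{equ:NAenmeas} yields the claim. Combined with the weak lower semicontinuity of $\jj_\NA$ (it is the supremum of the weakly lsc functionals $\mu \mapsto \en_\NA(\f) - \int \f\, \mu$), we conclude $\jj_\NA(\mu) \le \liminf_\cX \jj_\NA(\mu_\cX) \le \limsup_\cX \jj_\NA(\mu_\cX) \le \jj_\NA(\mu)$, hence convergence of energies and strong convergence in $\cM^1_\NA$.

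For the entropy, I would exploit the identity $A_X \circ p_\cX = A_\cX$ on all of $X_\NA$: indeed, $A_\cX$ factors through $p_\cX$ since it is the PL function associated to a test configuration supported on $\cX$; on the other hand $A_X$ is linear on each face of the dual complex $\D_\cX$ (by the standard quasi-monomial formula) and matches $A_\cX$ at the divisorial vertices $v_E$, so the two functions coincide on $\D_\cX$. It follows that
\[
\Ent_\NA(\mu_\cX) = \int A_X\, \mu_\cX = \int (A_X \circ p_\cX)\, \mu = \int A_\cX\, \mu,
\]
and monotone convergence for the increasing net of nonnegative PL functions $A_\cX \nearrow A_X$ yields $\int A_\cX\, \mu \nearrow \Ent_\NA(\mu)$. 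The main subtlety I foresee is the need to apply the retraction inequality $\f \le \f \circ p_\cX$ for general $\f \in \PSH_\NA(L)$ rather than only on $\cH_\NA$, together with the compatibility of pushforward against upper semicontinuous, possibly $-\infty$-valued, potentials; both are standard consequences of the face-wise convexity of $L$-psh functions on dual complexes.
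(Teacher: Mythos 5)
Your proof is correct. Since the paper cites this lemma to~\cite[Lemma~3.14]{nakstab2} without reproducing a proof, I cannot compare to the paper's argument line by line, but the three-step scheme you use — uniform convergence $f\circ p_\cX\to f$ for continuous $f$, the monotonicity $\jj_\NA(\mu_\cX)\le\jj_\NA(\mu)$ from the retraction inequality $\f\le\f\circ p_\cX$ combined with weak lower semicontinuity of $\jj_\NA$, and the identity $A_X\circ p_\cX=A_\cX$ followed by $\sup_\cX\int A_\cX\,\mu=\int A_X\,\mu$ — is exactly the natural argument in this framework, and I see no gaps. Two small remarks: for the entropy step, the ``monotone convergence'' you invoke is in fact precisely the content of the paper's own display~\eqref{equ:NAentsup}, $\Ent_\NA(\mu)=\sup_\cX\int A_\cX\,\mu$, so you could cite that directly rather than re-derive it; and the claim that $A_X$ and $A_\cX$ coincide on $\D_\cX$ (not merely at the vertices) rests on the quasi-monomial affineness of $A_X$ on dual complexes, which the paper already delegates to~\cite[Appendix~A]{nakstab2}, so you are entitled to cite it rather than argue it.
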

%
%%%%%%%%%%%%%%%%%%%%%%%%%%%%%%%%%%%
\subsection{Real product test configurations}
Consider first an algebraic torus $T_\C\subset\Aut(X,L)$ and, for each $m\in\N$, the corresponding weight decomposition
$$
\Hnot(X,mL)=:R_m=\bigoplus_{\a\in M_\Z(T)} R_{m,\a}.
$$
To any $\xi\in N_\R(T)$ we associate the potential 
\begin{equation}\label{equ:FSprod}
\f_\xi:=\frac 1m\max_{\a\in M_\Z(T)}\max_{s\in R_{m,\a}\setminus\{0\}}\left(\log|s|+\langle\a,\xi\rangle\right),
\end{equation}
which is easily see to be independent of $m$ sufficiently divisible (by finite generation of $\bigoplus_m R_m$) and lies in $\cE^1_\NA\cap C^0(X_\NA)$. Fix a norm on $N_\R(T)$. 

\begin{prop}\label{prop:qi4} The map $\xi\mapsto\f_\xi$ defines a quasi-isometric embedding $N_\R(T)\hto\cE^1_\NA$. 
\end{prop}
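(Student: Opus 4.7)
The plan is to establish the two-sided bound $C^{-1}\|\xi-\xi'\|\le\dd_{1,\NA}(\f_\xi,\f_{\xi'})\le C\|\xi-\xi'\|$ via separate upper and lower estimates.

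\textbf{Upper bound.} From the explicit formula~\eqref{equ:FSprod}, all weights $\a\in M_\Z(T)$ with $R_{m,\a}\ne 0$ satisfy $m^{-1}\a\in P_L$, the compact moment polytope. Subadditivity of $\max$ then gives
\[
\|\f_\xi-\f_{\xi'}\|_{\Cz(X_\NA)}\le\sup_{\a\in P_L}|\langle\a,\xi-\xi'\rangle|\le C_P\|\xi-\xi'\|,
\]
and combining with $\dd_{1,\NA}(\f,\p)\lesssim\ii_{1,\NA}(\f,\p)\le 2\|\f-\p\|_{\Cz}$ (from~\eqref{equ:d1I1NA} together with the probability mass of the Monge--Amp\`ere measures) yields the upper bound.

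\textbf{Lower bound.} I would first check that $\f_{t\xi}=t\cdot\f_\xi$ for $t>0$ by inspection of~\eqref{equ:FSprod}, and that the Darvas distance is $1$-homogeneous under the scaling action of $\R_{>0}$ on $\cE^1_\NA$, as a consequence of the $1$-homogeneity of $\en_\NA$ and the infimum formula~\eqref{equ:DarvasNA}. Hence the function $F(\xi,\xi')\=\dd_{1,\NA}(\f_\xi,\f_{\xi'})$ is continuous on $N_\R(T)^2$ by the upper bound and $1$-homogeneous under the diagonal scaling. To promote this to the required two-sided estimate, I would compute $\ii_{1,\NA}(\f_\xi,\f_{\xi'})$ explicitly. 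For rational $\xi\in N_\Q(T)$, $\f_\xi$ is the Fubini--Study potential of the product test configuration of Example~\ref{exam:prodtc}, whose irreducible central fiber makes $\MA_\NA(\f_\xi)$ a Dirac mass at a $T$-invariant monomial valuation $v_\xi\in X_\NA$ determined by $\xi$, for which $\f_\eta(v_\xi)$ reduces to the support function $h_{P_L}(\xi+\eta)$. Combining this with the standard toric dictionary between $T$-invariant potentials and convex functions on $P_L$---applied after restriction to the closure of a generic $T$-orbit---and passing to averages against the Duistermaat--Heckman probability measure $\nu_{DH}$ on $P_L$ yields
\[
\ii_{1,\NA}(\f_\xi,\f_{\xi'})\asymp\int_{P_L}|\langle\a,\xi-\xi'\rangle|\,d\nu_{DH}(\a),
\]
extended from $N_\Q(T)$ to $N_\R(T)$ by continuity. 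The right-hand side defines a norm on $N_\R(T)$: positive homogeneity and subadditivity are immediate, while nondegeneracy follows from the fact that $P_L$ is full-dimensional in its affine span, by faithfulness of the inclusion $T\hookrightarrow\Aut(X,L)$. Finite-dimensional equivalence of norms, together with $\dd_{1,\NA}\approx\ii_{1,\NA}$, then gives the lower bound.

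\textbf{Main obstacle.} The delicate point is the identification of $\ii_{1,\NA}(\f_\xi,\f_{\xi'})$ with a Duistermaat--Heckman $L^1$-norm. The most transparent route is reduction to the toric subvariety cut out by the closure of a generic $T$-orbit, where the classical toric pluripotential-theoretic dictionary makes the computation direct; alternatively one can argue from the explicit intersection-theoretic description of $\MA_\NA$ on product test configurations, at the cost of extra bookkeeping to track how the central fiber valuation acts as a point of $X_\NA$ via the twisted test configuration isomorphism.
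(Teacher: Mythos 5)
Your route is genuinely different from the paper's, and in fact closer in spirit to the Monge--Amp\`ere side of the theory. The paper's proof never touches $\ii_{1,\NA}$ or the MA Dirac masses: instead it invokes the identification $\f_\xi=\FS(\chi_\xi)$ and the exact formula $\dd_{1,\NA}(\FS(\chi),\FS(\chi'))=\dd_1(\chi,\chi')=\int_\R|\la|\,\sigma(d\la)$ from~\cite[\S 5.2]{nakstab1}, where $\sigma$ is the relative spectral measure of the two filtrations. Since $\chi_\xi$ and $\chi_{\xi'}$ are simultaneously diagonalized by the weight decomposition $R_m=\bigoplus_\a R_{m,\a}$, the relative spectral measure is the limit of $\frac1{\dim R_m}\sum_\a\dim R_{m,\a}\,\delta_{m^{-1}\langle\a,\xi-\xi'\rangle}$, i.e.\ the pushforward of the Duistermaat--Heckman measure $\mu=\DH^T(X,L)$ under $\langle\cdot,\xi-\xi'\rangle$. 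This gives the \emph{exact} identity $\dd_{1,\NA}(\f_\xi,\f_{\xi'})=\int_P|\langle\a,\xi-\xi'\rangle|\,\mu(d\a)$, and the quasi-isometry follows from equivalence of norms on $N_\R(T)$. Your upper bound via the sup norm and the general strategy ``reduce to showing $\dd_{1,\NA}(\f_\xi,\f_{\xi'})$ is comparable to a norm on $N_\R(T)$'' match the paper's, but the spectral-measure identity short-circuits everything else.

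The genuine gap is in the lower bound, specifically in the claimed identity $\ii_{1,\NA}(\f_\xi,\f_{\xi'})\asymp\int_{P_L}|\langle\a,\xi-\xi'\rangle|\,d\nu_{DH}(\a)$ and the route to it. For rational $\xi,\xi'$ the Monge--Amp\`ere measures are Dirac masses at $v_\xi,v_{\xi'}$, so by~\eqref{equ:d1I1NA} the quantity $\ii_{1,\NA}(\f_\xi,\f_{\xi'})$ is a sum of exactly two evaluations, $|\f_\xi(v_\xi)-\f_{\xi'}(v_\xi)|+|\f_\xi(v_{\xi'})-\f_{\xi'}(v_{\xi'})|$; working through the support-function expression this comes out as a \emph{symmetrized support function} $h_{P_L}(\xi-\xi')+h_{P_L}(\xi'-\xi)$ (the support function of the difference body $P_L-P_L$), not a DH $L^1$-integral --- the DH density plays no role in a two-point sum. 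Your proposed ``reduction to the closure of a generic $T$-orbit'' would compound the issue, since that closure is a lower-dimensional toric variety whose own DH measure is essentially Lebesgue on $P_L$, not $\DH^T(X,L)$. The good news is that the misattribution is harmless: $h_{P_L}(\zeta)+h_{P_L}(-\zeta)$ is a norm on $N_\R(T)$ provided $P_L$ has nonempty interior in $M_\R(T)$ (which, as you note, follows from faithfulness of the $T$-action), and equivalence of finite-dimensional norms then finishes the argument. So the strategy survives, but you should replace the DH $L^1$-norm with the difference-body support function in the intermediate step, and drop the toric-orbit digression; alternatively, if you really want the DH $L^1$-norm to appear, you have to work with the filtration/spectral-measure distance $\dd_1(\chi_\xi,\chi_{\xi'})$ rather than $\ii_{1,\NA}$, which is what the paper does.
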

\begin{proof} In the terminology of~\cite{nakstab1}, $\f_\xi=\FS(\chi_\xi)$ is the Fubini--Study potential associated to the finitely generated norm $\chi_\xi$ on $R(X,L)$ defined in terms of the weight decomposition by 
\begin{equation}\label{equ:norm}
\chi_\xi(s):=\min\{\langle\a,\xi\rangle\mid s_\a\ne 0\}
\end{equation}
for $R_m\ni s=\sum_\a s_\a$ with $s_\a\in R_{m,\a}$. For any two $\xi,\xi'\in N_\R(T)$, \cite[\S 5.2]{nakstab1} thus yields 
$$
\dd_{1,\NA}(\f_\xi,\f_{\xi'})=\dd_1(\chi_\xi,\chi_{\xi'}):=\int_\R|\la|\,\sigma(d\la)
$$
with $\sigma=\sigma(\chi_\xi,\chi_{\xi'})$ the relative spectral measure. We claim that $\sigma$ is the pushforward under the map $\langle\cdot,\xi-\xi'\rangle$ of the Duistermmaat--Heckman measure $\mu:=\DH^T(X,L)$ on the moment polytope $P\subset M_\R(T)$ (see~\S\ref{sec:DH}). This will imply
$$
\dd_{1,\NA}(\f_\xi,\f_{\xi'})=\int_P|\langle\xi-\xi',\a\rangle|\,\mu(d\a),
$$
and the result follows by equivalence of all norms on $N_\R(T)$. 
On the one hand, $\mu$ is the weak limit of the measures 
$$
\mu_m:=\frac{1}{\dim R_m}\sum_{\a\in M_\Z(T)}\dim R_{m,\a}\,\d_{m^{-1}\a}. 
$$
On the other hand, the decomposition $R_m=\bigoplus_\a R_{m,\a}$ is orthogonal for both $\chi$ and $\chi'$, with $\chi=\langle\a,\xi\rangle$ and $\chi'=\langle\a,\xi'\rangle$ on $R_{m,\a}\setminus\{0\}$, so $\sigma$ is the limit of the measures 
    \[
    \sigma_m:=\frac{1}{\dim R_m}\sum_{\a\in M_\Z(T)}\dim R_{m,\a}\,\delta_{m^{-1}\langle\a,\xi-\xi'\rangle},
    \]
    that is, the pushforward of $\mu_m$ under $\langle\cdot,\xi-\xi'\rangle$. The result follows.
\end{proof}

\begin{defi} We define the set $\cP_\R\subset\cE^1_\NA$ of \emph{real product test configurations} as the union of the images of all embeddings $N_\R(T)\hto\cE^1_\NA$ where $T_\C\subset\Aut(X,L)$ ranges over all (maximal) tori. 
\end{defi}
By Example~\ref{exam:prodtc}, using $N_\Z(T)$ in place of $N_\R(T)$ recovers the set 
$\cP_\Z\subset\cP_\R$ of (Fubini--Study potentials of) product test configurations. 

We similarly define the set $\cP_\Q\subset\cP_\R$ of \emph{rational product test configurations} by using $N_\Q(T)$ in the definition. As one easily checks, 
$$
\cP_\Q=\cP_\R\cap\cH_\NA
$$
can be identified with the set of ample test configurations $(\cX,\cL)$ that are also real product test configurations, which happens iff the normalization of the base change of $(\cX,\cL)$ by $z\mapsto z^d$ is a (usual) product test configuration for any sufficiently divisible $d$. 

Given a subgroup $S\subset\Aut(X,L)$, we slightly abusively call the elements of 
$$
\cP_\R^S=\cP_\R\cap\cE^{1,S}_\NA
$$
\emph{$S$-equivariant real product test configurations}. We similarly define the set $\cP_\Q^S=\cP_\R^S\cap\cH_\NA$ of \emph{$S$-equivariant rational product test configurations}.

\medskip

Next consider any complex reductive subgroup $G\subset\Aut(X,L)$. The choice of a maximal compact torus $T\subset G$ and a norm on $N_\R=N_\R(T)$ invariant under the Weyl group $W$ endows the conical Tits building $\Sigma_\NA(G)=\bigcup_{g\in G} g\cdot N_\R$ with a $G$-invariant complete metric (see Appendix~\ref{sec:reductive}). 

\begin{prop}\label{prop:qi3} The quasi-isometric embedding $N_\R\hto\cE^1_\NA$ $\xi\mapsto\f_\xi$ uniquely extends to a $G\times\R_{>0}$-equivariant quasi-isometric embedding $\Sigma_\NA(G)\hto\cE^1_\NA$. 
\end{prop}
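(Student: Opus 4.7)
The plan is to build the extension in two steps: first define a natural candidate map from $G\times N_\R$ into $\cE^1_\NA$, then check that it descends to the quotient defining the Tits building $\Sigma_\NA(G)$. I would define
\[
\tilde F\colon G\times N_\R\to\cE^1_\NA,\qquad\tilde F(g,\xi):=g\cdot\f_\xi,
\]
using the isometric action of $G\subset\Aut(X,L)$ on $\cE^1_\NA$. Recall from Appendix~\ref{sec:reductive} that $\Sigma_\NA(G)$ is obtained from $G\times N_\R$ by the identifications $(gn,\xi)\sim(g,w\cdot\xi)$ for $n\in N_G(T)$ projecting to $w\in W=N_G(T)/T_\C$. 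So to descend $\tilde F$ to $\Sigma_\NA(G)$ it suffices to check that $n\cdot\f_\xi=\f_{w\cdot\xi}$ for all such $n$, $\xi$, $w$.

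The verification relies on the key trivially-valued observation that every scalar $\lambda\in\C^\times$ satisfies $|\lambda|=1$, so the function $|s|\in\Cz(X_\NA)$ is invariant under scalar rescaling of the section $s$. Applied to any $t\in T_\C$ and $s\in R_{m,\a}$, where $t\cdot s=\a(t)s$, this yields $|t\cdot s|=|s|$; taking sups over each weight space in~\eqref{equ:FSprod} then gives $t\cdot\f_\xi=\f_\xi$, which already shows $T_\C$-invariance. For general $n\in N_G(T)$ projecting to $w$, multiplication by $n$ maps $R_{m,\a}$ isomorphically onto $R_{m,w\a}$, so the change of variables $s'=n\cdot s$ together with $\langle\a,\xi\rangle=\langle w\a,w\cdot\xi\rangle$ turns the Fubini--Study formula for $\f_\xi$ into the one for $\f_{w\cdot\xi}$, giving the required identity. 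Consequently $F\colon\Sigma_\NA(G)\to\cE^1_\NA$ is well-defined, and $G$-equivariance is built in; $\R_{>0}$-equivariance is immediate from inspection of~\eqref{equ:FSprod}, since replacing $\xi$ by $c\xi$ multiplies the linear part $\langle\a,\xi\rangle$ by $c$, which matches the scaling action $\f\mapsto c\cdot\f=c\f(c^{-1}\bullet)$ on $\cE^1_\NA$.

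For the quasi-isometric embedding property, I would use the standard fact that any two points of the conical Tits building $\Sigma_\NA(G)$ lie in a common apartment: given $\eta_1,\eta_2\in\Sigma_\NA(G)$ one may write $\eta_i=[g,\xi_i]$ for a common $g\in G$ and $\xi_i\in N_\R$, and the metric on $\Sigma_\NA(G)$ is exactly $\|\xi_1-\xi_2\|$. Then, since $G$ acts by isometries on $(\cE^1_\NA,\dd_{1,\NA})$,
\[
\dd_{1,\NA}(F(\eta_1),F(\eta_2))=\dd_{1,\NA}(g\cdot\f_{\xi_1},g\cdot\f_{\xi_2})=\dd_{1,\NA}(\f_{\xi_1},\f_{\xi_2})\approx\|\xi_1-\xi_2\|
\]
by Proposition~\ref{prop:qi4}, with multiplicative constants independent of $g$, $\xi_1$, $\xi_2$. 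Uniqueness of the extension follows from $G$-equivariance together with the surjectivity of $G\times N_\R\to\Sigma_\NA(G)$.

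The genuine content, and main obstacle, is the $N_G(T)$-equivariance in the second paragraph: everything hinges on the fact that in trivially-valued Berkovich geometry the scalar rescaling of weight sections by a character $\a(t)$ has no effect on $|s|$, so that the Weyl-group action on $N_\R$ lifts consistently along~\eqref{equ:FSprod}. Once this compatibility is secured, all remaining assertions—global existence of a common apartment, the quasi-isometry estimate, and uniqueness—reduce either to formal consequences of $G$-equivariance or to Proposition~\ref{prop:qi4} restricted to a single apartment.
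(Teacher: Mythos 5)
Your overall strategy (define on $G\times N_\R$ and check that it descends) is the right one, and your verifications of $T_\C$-invariance and $W$-equivariance are both essentially correct. However, there is a genuine gap in the descent step, and it stems from a misstatement of the equivalence relation defining $\Sigma_\NA(G)$.

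In the paper's Appendix~\ref{sec:Tits}, the building is $\Sigma_\NA(G)=G\times N_\R/\!\sim$, where $(g,\xi)\sim(g',\xi')$ iff there exists $w\in N_G(T)$ such that $w\cdot\xi=\xi'$ and $g^{-1}g'w\in P_\xi$, with $P_\xi$ the parabolic $T_\C\prod_{\langle\g,\xi\rangle\ge 0}U_\g$. Your description of the quotient retains only the Weyl-twist identifications $(gn,\xi)\sim(g,w\cdot\xi)$ for $n\in N_G(T)$, and drops the parabolic stabilizer. Taking $w=e$, $\xi'=\xi$, and $g'=gu$ for $u\in U_\g$ with $\langle\g,\xi\rangle\ge0$, one gets an identification $(g,\xi)\sim(gu,\xi)$ which is \emph{not} a Weyl twist; your proposed sufficient condition for descent therefore misses it. For your map $\tilde F(g,\xi)=g\cdot\f_\xi$ to descend, one must additionally show that $\f_\xi$ is fixed by $U_\g$ for every root $\g$ with $\langle\g,\xi\rangle\ge0$. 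That is precisely the content of Lemma~\ref{lem:descend}, and proving the unipotent invariance is the substantive part of the paper's argument: it uses the weight-space shift $U_\g\cdot R_{m,\a}\subset\bigoplus_{k\ge0}R_{m,\a+k\g}$ from~\eqref{equ:unipact} together with the ``min'' structure of the norm $\chi_\xi$ in~\eqref{equ:norm}, with the sign condition $\langle\g,\xi\rangle\ge0$ entering crucially because the shift $\a\mapsto\a+k\g$ can only increase $\langle\cdot,\xi\rangle$. This does not follow from $T_\C$-invariance or Weyl-equivariance, and without it the well-definedness of $F$ on $\Sigma_\NA(G)$ is not established.

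Once well-definedness is secured, the remainder of your argument — common-apartment reduction, the quasi-isometry estimate from Proposition~\ref{prop:qi4}, and uniqueness from $G$-equivariance and surjectivity of $G\times N_\R\to\Sigma_\NA(G)$ — is correct and matches the paper's.
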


\begin{proof} According to Lemma~\ref{lem:descend}, we need to show that $w\cdot\f_\xi=\f_{w\cdot\xi}$ for $w\in W=N_G(T)/T$, and that $\f_\xi$ is invariant under the unipotent subgroup $U_\g\simeq(\C,+)$ associated to a root $\g\in\Phi$ such that $\langle\g,\xi\rangle\ge0$ (since $\f_\xi$ is already $T$-invariant). As $\xi\mapsto\f_\xi$ factors through $\xi\mapsto\chi_\xi$ (see~\eqref{equ:norm}), it suffices to show these properties hold for $\chi_\xi$. 

Since $R_m$ is a $G$-module, its weight decomposition $R_m=\bigoplus_\a R_{m,\a}$ with respect to $T$ satisfies 
$$
R_{m,w\cdot\a}=w\cdot R_{m,\a},\quad U_\g\cdot R_{m,\a}\subset\bigoplus_{k\in\N} R_{m,\a+k\g}
$$
for all $\a$ (see~\eqref{equ:unipact}). This implies $\chi_{w\cdot\xi}=w\cdot\chi_\xi$, while~\eqref{equ:norm} shows that $\chi_\xi$ is $U_\g$-invariant, since $\langle\g,\xi\rangle\ge0$. 
\end{proof}

\begin{cor}\label{cor:prodclosed} Pick a subgroup $S\subset\Aut(X,L)$, and assume that $\Aut^S(X,L)$ is reductive. Then the set $\cP_\R^S\subset\cE^{1,S}_\NA$ of $S$-equivariant real product test configurations is closed. 
\end{cor}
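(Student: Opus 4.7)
The plan is to identify $\cP_\R^S$ with the image of the quasi-isometric embedding $\iota\colon\Sigma_\NA(G)\hookrightarrow\cE^1_\NA$ of Proposition~\ref{prop:qi3}, where $G:=\Aut^S(X,L)$ is reductive by hypothesis. Since $\Sigma_\NA(G)$ is a complete metric space and $\iota$ is quasi-isometric, $\iota(\Sigma_\NA(G))$ is automatically closed in $\cE^1_\NA$, and hence in $\cE^{1,S}_\NA$; the conclusion will follow from this identification.

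One inclusion is direct: for any maximal torus $T\subset G$ and $\xi\in N_\R(T)$, the graded norm $\chi_\xi$ on $R(X,L)$ is $S$-invariant because $T$ commutes with $S$, so the associated potential $\f_\xi=\FS(\chi_\xi)$ is $S$-invariant, and this property is preserved under $G$-translation since $G$ centralizes $S$. Hence $\iota(\Sigma_\NA(G))\subseteq\cP_\R^S$.

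For the reverse inclusion, I would pick $\f\in\cP_\R^S$ and write $\f=\f_\xi$ with $\xi\in N_\R(T_0)$ for some torus $T_0\subset\Aut(X,L)$. The $S$-invariance of $\f$ amounts to $S$-invariance of the graded norm $\chi_\xi$. Since $S$ is compact, hence reductive, it acts completely reducibly on each $R_m$, and the $S$-invariant filtration $\chi_\xi$ admits an $S$-invariant splitting into a compatible $\R$-grading $R=\bigoplus_\la R^\la$ of the whole section ring. This grading determines a real one-parameter subgroup $\R\to\Aut(X,L)$ acting by $e^{t\la}$ on $R^\la$, which commutes with $S$ by construction; its closure in $\Aut(X,L)$ is a compact torus whose complexification is an algebraic torus $T'\subset\Aut(X,L)$ commuting with $S$, so that $T'\subseteq G$. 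The associated real cocharacter $\xi'\in N_\R(T')$ produces the same graded norm $\chi_{\xi'}=\chi_\xi$, whence $\f=\f_{\xi'}\in\iota(\Sigma_\NA(G))$.

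The main obstacle is the $S$-equivariant splitting step in the third paragraph. In the rational case $\xi\in N_\Q(T_0)$, the potential $\f_\xi$ determines the associated product test configuration, whose $S$-equivariance directly forces the defining cocharacter to commute with $S$. The general real case is more delicate since the splitting must be compatible with the ring structure: this is handled via the degeneration picture, as the trivial degeneration of $R(X,L)$ onto $\gr_{\chi_\xi}R(X,L)$ admits an $S$-equivariant trivialization by reductivity of $S$, yielding an algebraic torus action commuting with $S$ that realizes the $\R$-grading. Once this splitting step is secured, the closedness of $\cP_\R^S$ in $\cE^{1,S}_\NA$ is essentially formal.
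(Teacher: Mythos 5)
Your overall strategy matches the paper's proof exactly: identify $\cP_\R^S$ with the image of the quasi-isometric embedding $\Sigma_\NA(G)\hto\cE^1_\NA$ from Proposition~\ref{prop:qi3}, and conclude from the completeness of $\Sigma_\NA(G)$. The paper dispatches the identification with the words ``by construction'' and stops there, whereas you make the two inclusions explicit.

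Where you go further is in recognizing that the reverse inclusion is not literally ``by construction.'' The set $\cP_\R^S$ is defined as $\cP_\R\cap\cE^{1,S}_\NA$, where $\cP_\R$ is taken over \emph{all} maximal tori of $\Aut(X,L)$; so an element of $\cP_\R^S$ is a priori of the form $\f_\xi$ with $\xi\in N_\R(T_0)$ for a torus $T_0$ that may not commute with $S$, with the $S$-invariance being imposed only at the level of the potential $\f_\xi$. That such $\f_\xi$ must in fact come from a torus inside $G=\Aut^S(X,L)$ is the content of your reverse inclusion, and it is a genuine claim rather than a tautology. You are right to flag it.

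However, your fill of that claim is not yet a proof. The weak point is the existence of an $S$-invariant $\R$-grading of $R(X,L)$ lifting the filtration $\chi_\xi$ and compatible with multiplication. Complete reducibility of a compact (or reductive) $S$ gives $S$-invariant complements degreewise, producing an $S$-equivariant vector-space splitting of each $F^\la R_m$; but such a splitting has no reason to be multiplicative. Your appeal to ``an $S$-equivariant trivialization of the degeneration $R\rightsquigarrow\gr_{\chi_\xi}R$ by reductivity of $S$'' is exactly the lemma that would close this gap, but as stated it is an assertion rather than an argument, and it is delicate precisely in the real case you are treating: for irrational $\xi$ there is no flat $\C^\times$-degeneration over $\A^1$ to which the usual equivariant-trivialization technology applies directly, so one would need to reduce to a rational skeleton (writing $\xi$ as a lexicographic/rational combination and degenerating stepwise) or argue via the fixed-point set of $S$ on a suitable building. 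You should either supply that argument in detail or note, as the paper does implicitly, that $\cP_\R^S$ is being \emph{identified} with the image of $\Sigma_\NA(G)$, at the cost of leaving the equality with $\cP_\R\cap\cE^{1,S}_\NA$ as a separate (true but nontrivial) fact.

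Two smaller remarks. First, your forward inclusion is fine and matches the computation already carried out in the proof of Proposition~\ref{prop:qi3} (invariance of $\chi_\xi$ under $T_\C$ and the appropriate $U_\g$). Second, you tacitly assume $S$ compact; the corollary only assumes $\Aut^S(X,L)$ reductive, though in all of the paper's uses $S$ is a compact subgroup, so this is not a practical issue.
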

\begin{proof} By construction, $\cP_\R^S$ coincides with the image of the quasi-isometric embedding $\Sigma_\NA(G)\hto\cE^1_\NA$ associated to the reductive group $G:=\Aut^S(X,L)$. The result follows since $\Sigma_\NA(G)$ is complete. 
\end{proof}

%
%
%%%%%%%%%%%%%%%%%%%%%%%%%%%%%%%%
%
%
\section{Non-Archimedean limits of metrics and measures}\label{sec:NAlimmet}
We now look at situations where we can relate objects on $X_\NA$ and rays of objects on $X$.
%%%%%%%%%%%%%%%%%%%%%%%%%%%%%%%%%%%%%%%%%%%%%%%%%%%%%%%%%%%%%%%%%
\subsection{Maximal geodesic rays}\label{sec:geodrays} 
Of fundamental importance to our study is the relation between $\cE^1_\NA$ and psh geodesic rays in $\cE^1$.
Any psh geodesic ray $\{\phi_t\}$ in $\cE^1$ induces a unique function $\phi_\NA\in\cE^1_\NA$, whose restriction $\phi_\NA\colon X_\div\to\R$ encodes the Lelong numbers of the corresponding $S^1$-invariant psh metric $\Phi$ (see Theorem~6.2 and Proposition~4.1 in~\cite{YTD}.) This gives rise to an $\R_{>0}$-equivariant map
\[
\cE^1_\ra:=(\cE^1)_\ra\to\cE^1_\NA,
\]
%\begin{exam} Any psh geodesic ray $\{\phi_t\}$ in $\cE^1$ has linear growth, and the associated function $\phi_\NA$ further lies in $\cE^1_\NA\subset\PSH_\NA(L)$. 
%\end{exam}
which furthermore has a canonical section. Namely, given any $\f\in\cE^1_\NA$ and any reference metric $\phi_\refe\in\cH$, there exists a unique psh geodesic ray $\{\phi_t\}$ emanating from a $\phi_\refe$, such that $\phi_\NA=\f$ and $\psi_t\le\phi_t$ for any psh geodesic ray $\{\psi_t\}$ in $\cE^1$ with $\psi_0\le\phi_\refe$ and $\psi_\NA\le\f$. We say that $\{\phi_t\}$ is the \emph{maximal} (psh) geodesic ray directed by $\f$ and emanating from $\phi_\refe$. This leads to an isometric $\R_{>0}$-equivariant embedding
$$
(\cE^1_\NA,\dd_{1,\NA})\hto(\cE^1_\ra,\dd_{1,\ra}), 
$$
see~\cite[Theorem~6.6]{YTD} and~\cite[Theorem~B]{Reb}. Often we shall view $\cE^1_\NA$ as a subset of $\cE^1_\ra$. This subset is closed, since $\cE^1_\NA$ is complete. 

The above embedding is equivariant under the action of $\Aut(X,L)$. Given any compact group $S\subset\Aut(X,L)$, it restricts to an isometric embedding 
$$
\cE^{1,S}_\NA\hto\cE^{1,S}_\ra
$$
that is equivariant with respect to $\Aut^S(X,L)$, as one sees by choosing $\phi_\refe\in\cH^S$.  

We will later need the following concrete description of the maximal geodesic rays associated to the psh envelope of a (not necessarily ample) test configuration. It generalizes~\cite[Proposition~2.7]{Berm16}.

\begin{prop}\label{prop:tcgray} Pick a test configuration $(\cX,\cL)$ for $(X,L)$, with associated function $\f_\cL\in\PL(X_\NA)$ and psh envelope $\env(\f_\cL)\in\cE^1_\NA$. Let $\Phi$ be the metric on $p_1^\star L|_{\DD^\times}\simeq\cL|_{\DD^\times}$ corresponding to the maximal geodesic ray $\{\phi_t\}$ emanating from $\phi_\refe$ and directed by $\env(\f_\cL)$. Then $\Phi$ extends to a psh metric on $\cL|_{\DD}$, characterized as the largest psh metric whose restriction to $\cL|_{\partial\DD}$ coincides with the metric induced by $\phi_\refe$. 
\end{prop}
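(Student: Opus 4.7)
The plan is to show that $\Phi$ (originally defined only on $\cL|_{\DD^\times}\simeq p_1^\star L|_{\DD^\times}$) coincides with the restriction to $\DD^\times$ of the $S^1$-invariant upper envelope
\[
\tilde\Phi:=\supstar\bigl\{\Psi\text{ psh metric on }\cL|_\DD\text{ with }\Psi|_{\cL|_{\partial\DD}}\le\phi_\refe\bigr\},
\]
which will simultaneously produce the desired extension of $\Phi$ to $\cL|_\DD$ and identify it with $\tilde\Phi$. First, to see that $\tilde\Phi$ is a well-defined psh metric via the envelope property on a desingularization (which holds since $\cX$ can be taken snc), one checks that the defining family is nonempty: since $\cL|_{\DD^\times}\simeq p_1^\star L|_{\DD^\times}$ and $L$ is ample, $\cL$ is relatively big over $\P^1$ and carries singular psh metrics. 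After normalizing by replacing $\cL$ with $\cL-c\cX_0$ for $c=\sup_{X_\NA}\f_\cL$, we may further assume $\f_\cL\le 0$, so that on a snc model dominating $X\times\P^1$ the vertical divisor $D:=\cL-L_\cX$ is antieffective.

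For the first inequality $\Phi\le\tilde\Phi$ on $\DD^\times$: the metric $\Phi$ is psh on $p_1^\star L|_\DD$, and since $-D\ge 0$ under our normalization, tensoring the pullback of $\Phi$ with the canonical (holomorphic) section of $\cO_\cX(-D)$ produces a psh metric on $\cL|_\DD$ that agrees with $\Phi$ on $\cX\setminus\cX_0$ under the canonical identification $\cL|_{\cX\setminus\cX_0}\simeq p_1^\star L|_{X\times(\P^1\setminus\{0\})}$. Under the natural trivialization of $\cO_\cX(-D)$ away from $\supp D\subset\cX_0$, this extension has boundary $\phi_\refe$ on $\partial\DD$, so it belongs to the family defining $\tilde\Phi$, whence $\Phi\le\tilde\Phi$ on $\DD^\times$.

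For the reverse inequality $\tilde\Phi\le\Phi$: local maximality of $\tilde\Phi$ on $\DD^\times$ (any local improvement would preserve the boundary values and contradict the defining maximality) ensures that the induced $S^1$-invariant path $\{\tilde\phi_t\}$ is a psh geodesic ray in $\cE^1$ with $\tilde\phi_0=\phi_\refe$ and well-defined direction $\tilde\phi_\NA\in\cE^1_\NA$ (cf.~\S\ref{sec:geodrays}). Translating the pshness of $\tilde\Phi$ on $\cL=L_\cX+D$ into the corresponding condition on the generic vanishing orders of $\tilde\Phi$ at the divisorial valuations $v_E$ attached to the components $E$ of $\cX_0$, one deduces $\tilde\phi_\NA(v_E)\le\f_\cL(v_E)$; since $\tilde\phi_\NA$ is $L$-psh, this propagates to $\tilde\phi_\NA\le\env(\f_\cL)$ on all of $X_\NA$. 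The defining maximality of the Berman ray directed by $\env(\f_\cL)$ then yields $\tilde\phi_t\le\phi_t$ for all $t$, hence $\tilde\Phi\le\Phi$ on $\DD^\times$.

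The main obstacle is the precise Lelong-number-to-direction correspondence for psh metrics on $\cL|_\DD$ (as opposed to on $p_1^\star L|_\DD$, where the Berman--Darvas theory directly applies): the conversion between the two viewpoints involves the singular correction coming from $D$ (a factor of $\log|s_{\pm D}|$-type), and one must verify both that this correction preserves the prescribed boundary values on $\partial\DD$ and that the generic vanishing orders along the components of $\cX_0$ transform as required, so that the two chains of inequalities actually close up at the common value $\env(\f_\cL)$.
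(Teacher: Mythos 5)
Your proposal takes essentially the same approach as the paper — normalize so that $\f_\cL\le 0$, define the upper envelope $\tilde\Phi$ on $\cL|_\DD$ with boundary $\phi_\refe$, then sandwich $\Phi\le\tilde\Phi$ and $\tilde\Phi\le\Phi$. But the ``main obstacle'' you flag at the end is precisely the content of Lemma~\ref{lem:extpsh}, which the paper quotes (it rests on~\cite[Prop.~4.1, Lem.~4.4]{YTD}): for a psh geodesic ray $\{\phi_t\}$ with associated metric $\Phi$ on $\cL|_{\DD^\times}$, $\Phi$ extends to a psh metric on $\cL|_\DD$ if and only if $\phi_\NA\le\f_\cL$. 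The paper uses this lemma twice, once to get the extension of $\Phi$ (since $\phi_\NA=\env(\f_\cL)\le\f_\cL$), and once to deduce $\psi_\NA\le\f_\cL$ for the envelope ray. Your first step (tensoring the psh extension of $\Phi$ to $L_\cX|_\DD$ by the canonical section of $\cO_\cX(-D)$ with $-D\ge0$ effective) is a valid alternative proof of the easy direction of that lemma under your normalization, but your second step (``translating the pshness of $\tilde\Phi$ on $\cL$ into the corresponding condition on generic vanishing orders'') is asserted, not carried out — and as you say yourself, it is exactly the Lelong-number/direction dictionary that needs verifying.

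There is also a small gap in the reverse inequality: you assert that local maximality of $\tilde\Phi$ on $\DD^\times$ ``ensures that $\{\tilde\phi_t\}$ is a psh geodesic ray in $\cE^1$'', but local maximality alone does not give finite energy. The paper supplies this from the already-established inequality $\tilde\Phi\ge\Phi$: since $\tilde\phi_t\ge\phi_t$ with $\phi_t\in\cE^1$, each $\tilde\phi_t$ has finite energy, and then local maximality makes the path a metric-space geodesic. You have $\tilde\Phi\ge\Phi$ available at that point, so the fix is available in your set-up; it just needs to be invoked. In short: correct skeleton, but the two lemma invocations the paper leans on are replaced by sketches, one of which you acknowledge is missing.
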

The proof relies on the following consequence of Proposition~4.1 and Lemma~4.4 of~\cite{YTD}.
\begin{lem}\label{lem:extpsh}Let $\cL$ be a test configuration for $L$, and $\{\phi_t\}$ a psh geodesic ray in $\cE^1$. The following are equivalent: 
\begin{itemize}
\item[(i)] the $S^1$-invariant psh metric $\Phi$ on $p_1^\star L|_{\DD^\times}\simeq\cL|_{\DD^\times}$ induced by $\{\phi_t\}$ extends to a psh metric on $\cL|_{\DD}$; 
\item[(ii)] 
%$\{\phi_t\}$ has linear growth and 
$\phi_\NA\le\f_\cL$.
\end{itemize}
\end{lem}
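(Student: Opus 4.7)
The plan is to translate both conditions into Lelong number conditions on a natural extension of $\Phi$ along irreducible components of the central fiber.

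First, I would reduce to the case where $\cX$ is snc and dominates $X\times\P^1$, by pulling $\cL$ back to a resolution; both conditions (i) and (ii) are invariant under this reduction. Writing $\cX_0=\sum_E b_E E$ and $\cL=L_\cX+D$ with $D=\sum_E a_E E$ a vertical $\Q$-Cartier divisor, one has $v_E=b_E^{-1}\ord_E$ and $\f_\cL(v_E)=a_E/b_E$. It then suffices to check (ii) at the valuations $v_E$: indeed $\f_\cL=\f_\cL\circ p_\cX$ is affine on each face of the dual complex $\D_\cX$, while by \cite[Appendix~A]{trivval} $\phi_\NA$ is convex on each face of $\D_\cX$ and satisfies $\phi_\NA\le\phi_\NA\circ p_\cX$, so the inequality at the vertices propagates to all of $X_\NA$.

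Second, I would construct a natural $L_\cX$-psh extension $\widetilde\Phi$ of $\Phi$ across $\cX_0$. In any local $L_\cX$-trivialization, $\Phi$ is bounded above near $\cX_0$ thanks to the at-most-linear growth $\sup_X|\phi_t-\phi_\refe|=O(t)$ of the geodesic ray and the smoothness of $\phi_\refe$; the usc regularization thus yields an $L_\cX$-psh extension $\widetilde\Phi$ on all of $L_\cX|_\DD$. By \cite[Proposition~4.1, Lemma~4.4]{YTD}, its generic Lelong numbers along the components of $\cX_0$ satisfy
\[
\phi_\NA(v_E)=-b_E^{-1}\,\nu_E(\widetilde\Phi).
\]

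Third, I would reformulate (i) in terms of these Lelong numbers. The canonical section $s_D$ of $\cO(D)$ realizes the isomorphism $\cL\simeq L_\cX\otimes\cO(D)$ and identifies a $\cL$-metric $\Psi$ with the $L_\cX$-metric $\Psi-\log|s_D|$; locally near a generic point of $E=\{z_E=0\}$, $\log|s_D|=a_E\log|z_E|$, so $\Phi$ extends as $\cL$-psh across $\cX_0$ iff $\widetilde\Phi+a_E\log|z_E|$ is locally bounded above near each $E$, iff $\nu_E(\widetilde\Phi)\ge-a_E$. Combined with step~2, this reads $\phi_\NA(v_E)\le a_E/b_E=\f_\cL(v_E)$, which by step~1 is exactly (ii). The main technical obstacle lies in step~2, namely the identification of $\phi_\NA$ at divisorial valuations coming from components of $\cX_0$ with the corresponding Lelong numbers of the $L_\cX$-psh extension $\widetilde\Phi$; this is the heart of the construction in~\cite{YTD}, and requires careful bookkeeping of multiplicities $b_E$ and of the relative signs between psh metrics and the associated divisorial valuations.
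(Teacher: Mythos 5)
Your overall strategy is the right one, and it is essentially the argument the paper invokes: the lemma is stated in the paper precisely as a consequence of Proposition~4.1 and Lemma~4.4 of~\cite{YTD}, which encode the Lelong-number description of $\phi_\NA$ at divisorial valuations that you reconstruct in Steps~2--3. Step~1 (reduction to snc models and to checking the inequality at the vertices $v_E$ via convexity of $\phi_\NA$ on faces and $\phi_\NA\le\phi_\NA\circ p_\cX$) is fine, and Step~3 is the right bookkeeping once the Lelong-number formula is in hand.

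There is, however, a concrete error in Step~2. You claim that the linear bound $\sup_X|\phi_t-\phi_\refe|=O(t)$ makes $\Phi$ locally bounded above in an $L_\cX$-trivialization near $\cX_0$, so that usc regularization yields an $L_\cX$-psh extension $\widetilde\Phi$. This is false in general: with $t=-\log|z|$, the bound says $\Phi-\Phi_\refe=O(\log|z|^{-1})$, which blows up to $+\infty$ as $z\to0$ whenever the ray has positive slope somewhere (equivalently, whenever $\phi_\NA$ takes a positive value, which is the typical case). In that situation no $L_\cX$-psh extension exists, and the asserted identity $\phi_\NA(v_E)=-b_E^{-1}\nu_E(\widetilde\Phi)$ would force $\phi_\NA\le0$, which is wrong. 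The fix is exactly what~\cite{YTD} does: first twist by $c\cX_0$ for $c\ge\sup_{X_\NA}\phi_\NA$ (equivalently, replace $\phi_t$ by $\phi_t-ct$, i.e. $\Phi$ by $\Phi+c\log|z|$, and $\cL$ by $\cL-c\cX_0$), obtain an $(L_\cX+c\cX_0)$-psh extension, and read off $\phi_\NA(v_E)=c-b_E^{-1}\nu_E(\widetilde\Phi)$. With this correction the rest of your argument goes through and matches the paper's intended proof.
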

\begin{proof}[Proof of Proposition~\ref{prop:tcgray}] 
As $\phi_\NA=\env(\f_\cL)\le\f_\cL$, it follows from Lemma~\ref{lem:extpsh} that $\Phi$ extends to $\cL$. Now let $\Psi$ be the largest $S^1$-invariant psh metric on $\cL|_\DD$ whose restriction to $\cL|_{\partial\DD}$ is given by $\phi_\refe$. Then $\Psi$ induces a psh path $\{\psi_t\}$ of metrics on $L$, and as $\Psi\ge\Phi$ it follows that $\{\psi_t\}$ is a psh geodesic ray in $\cE^1$. On the one hand, $\Phi\le\Psi$ implies $\phi_t\le\psi_t$ for all $t$. On the other hand, Lemma~\ref{lem:extpsh} gives $\psi_\NA\le\f_L$, and hence $\psi_\NA\le\env(\f_\cL)=\phi_\NA$, which by maximality of $\{\phi_t\}$ yields $\psi_t\le\phi_t$, and we are done.
\end{proof}

\begin{cor}\label{cor:charmray}
Let $\{\phi_t\}_{t\ge0}$ be a psh geodesic ray in $\cE^1$, emanating from a locally bounded reference metric $\phi_\refe$, and let $(\cX,\cL)$ be a semiample test configuration for $(X,L)$. Then the following are equivalent:
\begin{itemize}
\item[(i)] $\{\phi_t\}$ is a maximal geodesic ray directed by $\f_\cL$;
\item[(ii)] the $S^1$-invariant metric $\Phi$ on $L\times\DD^\times$ induced by $\{\phi_t\}$ extends to a locally bounded metric on $\cL|_{\DD}$.
\end{itemize}
\end{cor}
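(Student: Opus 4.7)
The starting observation is that, for $(\cX,\cL)$ semiample, $\f_\cL$ is itself $L$-psh: indeed, fixing an ample test configuration $\cA$ for $L$, the potential $\f_{(1+\e)^{-1}(\cL+\e\cA)}=(1+\e)^{-1}(\f_\cL+\e\f_\cA)$ lies in $\cH_\NA$ and converges uniformly to $\f_\cL$ as $\e\searrow 0$, so $\f_\cL\in\PSH_\NA(L)$ and hence $\env(\f_\cL)=\f_\cL$. Applying Proposition~\ref{prop:tcgray}, the unique maximal geodesic ray $\{\phi^*_t\}$ emanating from $\phi_\refe$ and directed by $\f_\cL$ induces a metric $\Phi^*$ on $\cL|_{\DD^\times}$ that extends to the \emph{largest} $S^1$-invariant psh metric on $\cL|_\DD$ whose restriction to $\cL|_{\partial\DD}$ is given by $\phi_\refe$.

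To prove (i)$\Rightarrow$(ii), it remains to check that $\Phi^*$ is locally bounded. Since $m\cL$ is relatively globally generated for $m\gg0$, there is a morphism $\iota\colon\cX\to\P^N_{\P^1}$ with $m\cL=\iota^\star\cO(1)$; pulling back the Fubini--Study metric yields a continuous psh reference $\Phi_{\mathrm{ref}}$ on $\cL|_\DD$. Choosing $C>0$ with $|\phi_\refe-\Phi_{\mathrm{ref}}|\le C$ on $\cL|_{\partial\DD}$, the lower bound $\Phi^*\ge\Phi_{\mathrm{ref}}-C$ is immediate from maximality of $\Phi^*$, and the upper bound $\Phi^*\le\Phi_{\mathrm{ref}}+C$ follows from the classical Bedford--Taylor/Phong--Sturm analysis of the Dirichlet problem for the maximal psh extension with continuous boundary data.

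For (ii)$\Rightarrow$(i), suppose $\Phi$ extends to a locally bounded psh metric on $\cL|_\DD$. Maximality of $\Phi^*$ gives $\Phi\le\Phi^*$, whence $\phi_t\le\phi^*_t$. For the reverse inequality, being a psh geodesic ray forces $\Phi$ to be locally maximal on $\cL|_{\DD^\times}$, so $(\ddc\Phi)^{n+1}=0$ there. Since $\cX_0$ is pluripolar and $\Phi$ is locally bounded on $\cL|_\DD$, the Bedford--Taylor theorem that $(\ddc\Phi)^{n+1}$ puts no mass on pluripolar sets yields $(\ddc\Phi)^{n+1}=0$ on the entire interior of $\cX|_\DD$. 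Hence $\Phi$ is itself a locally bounded psh solution of the homogeneous Monge--Amp\`ere equation with boundary data $\phi_\refe$, and the Bedford--Taylor domination principle applied to the pair $(\Phi^*,\Phi)$ (both locally bounded, with equal continuous boundary values and $(\ddc\Phi)^{n+1}=0$ in the interior) forces $\Phi^*\le\Phi$; combined with $\Phi\le\Phi^*$ this gives $\Phi=\Phi^*$, so $\{\phi_t\}=\{\phi^*_t\}$ is the maximal geodesic ray directed by $\f_\cL$. The main technical point is this last invocation of Bedford--Taylor uniqueness on the (possibly singular) fibration $\cX|_\DD$, which proceeds by standard arguments once the local boundedness of $\Phi$ and the pluripolarity of $\cX_0$ have been established.
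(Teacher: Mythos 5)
Your proof is correct and follows essentially the same strategy as the paper's: apply Proposition~\ref{prop:tcgray} (noting that $\env(\f_\cL)=\f_\cL$ since $\cL$ is semiample), deduce local boundedness of the maximal envelope $\Psi$ from semiampleness of $\cL$ and local boundedness of $\phi_\refe$, and in the converse direction use that local boundedness of $\Phi$ prevents $(\ddc\Phi)^{n+1}$ from charging the pluripolar set $\cX_0$, then conclude $\Phi=\Psi$ from the Bedford--Taylor comparison/domination principle for locally bounded psh metrics with the same boundary data and vanishing Monge--Amp\`ere measure. Your preliminary observation that $\f_\cL\in\PSH_\NA(L)$ and $\env(\f_\cL)=\f_\cL$, via the $\e$-perturbation by an ample test configuration, is a worthwhile explicit step that the paper leaves implicit.
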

\begin{proof}
    Let $\Psi$ be the largest $S^1$-invariant psh metric on $\cL|_\DD$ whose restriction to $\cL|_{\partial\DD}$ coincides with $\phi_\refe$. As $\cL$ is semiample and $\phi_\refe$ locally bounded, $\Psi$ is locally bounded. 

    If~(i) holds, then $\Phi=\Psi$ by Proposition~\ref{prop:tcgray}, so $\Phi$ is locally bounded and~(ii) holds. Conversely, if~(ii) holds, then $\Phi$ is locally bounded, and $(\ddc\Phi)^{n+1}=0$ on $\cX|_{\DD^\times}$. Now $\Phi$ being locally bounded implies that the Monge--Amp\`ere measure $(\ddc\Phi)^{n+1}$ does not put mass on the pluripolar set $\cX_0$, so $(\ddc\Phi)^{n+1}=0$ on $\cX|_\DD$. As we also have $(\ddc\Phi)^{n+1}=0$ on $\cX|_\DD$ and $\Phi=\Psi$ on $\cL|_{\partial\DD}$, we must have $\Phi=\Psi$, so that~(i) holds.
\end{proof}

%%%%%%%%%%%%%%%%%%%%%%%%%%%%%%%%%%%%%%%%%%%%%%%%%%%%%%%%%%%%%%%%
\subsection{Real product test configurations vs.~toric directions}\label{sec:Titsrays} 

Fix a compact subgroup $S\subset\Aut(X,L)$, and set $G:=\Aut^S(X,L)$. 

\begin{prop}\label{prop:qi5} The isometric embedding $\cE^{1,S}_\NA\hto\cE^{1,S}_\ra$ maps the set $\cP_\R^S$ of $S$-equivariant real test configurations onto the set $\cD_G$ of toric $G$-directions. 
\end{prop}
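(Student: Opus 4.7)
The plan is to reduce the statement to a single maximal torus and then exploit density of rational points. By Proposition~\ref{prop:qi3} (resp.~Proposition~\ref{prop:Gdir}), $\cP_\R^S$ is the image of a $G\times\R_{>0}$-equivariant quasi-isometric embedding $\Sigma_\NA(G)\hookrightarrow\cE^{1,S}_\NA$ (resp.~$\cD_G$ is the image of $\Sigma_\ra(G)\hookrightarrow\cE^{1,S}_\ra$), where $G=\Aut^S(X,L)$ and both conical Tits buildings are described as $\bigcup_{g\in G_c}g\cdot N_\R(T)$ for a maximal compact torus $T$ in a maximal compact subgroup $G_c\subset G$. Since the isometric embedding $\cE^{1,S}_\NA\hookrightarrow\cE^{1,S}_\ra$ is $G$-equivariant, it suffices, for a fixed such $T$, to show that the composition $N_\R(T)\xrightarrow{\xi\mapsto\f_\xi}\cE^{1,S}_\NA\hookrightarrow\cE^{1,S}_\ra$ of Proposition~\ref{prop:qi4} agrees with the map $\xi\mapsto\f_\xi^{\mathrm{geod}}$ of Lemma~\ref{lem:qi1} sending $\xi\in\Lie T\simeq N_\R(T)$ to the direction of the orbit geodesic ray $\{e^{it\xi}\cdot\phi_\refe\}$, for a chosen $T$-invariant reference $\phi_\refe\in\cH$.

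The core computation is the case of an integral cocharacter $\xi\in N_\Z(T)$. Here $\f_\xi\in\cH_\NA$ is the Fubini--Study potential of the product test configuration $(\cX,\cL)$ induced by $\xi$ (Example~\ref{exam:prodtc}). The orbit metric $\Phi(v,z):=(e^{(-\log z)\xi}\cdot\phi_\refe)(v)$ on $\cL|_{\C}$, well-defined thanks to $T$-invariance of $\phi_\refe$, is smooth, $S^1$-invariant, psh, and locally bounded on $\cL|_{\DD}$; the psh ray it induces is $\{e^{it\xi}\cdot\phi_\refe\}$, which by Lemma~\ref{lem:mabgeod} is a geodesic ray in $\cE^1$. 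Corollary~\ref{cor:charmray} then identifies this orbit ray as the maximal geodesic ray emanating from $\phi_\refe$ and directed by $\f_\xi$, so its image in $\cE^{1,S}_\ra$ under the isometric embedding is precisely $\f_\xi^{\mathrm{geod}}$.

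Rational cocharacters $\xi\in N_\Q(T)$ are handled by writing $\xi=k^{-1}\xi_0$ with $\xi_0\in N_\Z(T)$, $k\in\Z_{>0}$, and invoking the $\R_{>0}$-equivariance of both constructions (built into Proposition~\ref{prop:qi3} on the non-Archimedean side, and immediate from $\{e^{it(c\xi)}\phi_\refe\}=\{e^{i(ct)\xi}\phi_\refe\}$ on the radial side). Finally, both $\xi\mapsto\f_\xi$ and $\xi\mapsto\f_\xi^{\mathrm{geod}}$ are quasi-isometric embeddings (hence continuous) from $N_\R(T)$ into $\cE^{1,S}_\ra$, and they agree on the dense subset $N_\Q(T)\subset N_\R(T)$; by continuity they agree on all of $N_\R(T)$, completing the proof.

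The main (and rather mild) obstacle is the identification of the maximal geodesic ray with the orbit ray in the integer cocharacter case: this hinges on the observation that the orbit metric extends to a locally bounded psh metric on the total space of the product test configuration, which is made transparent by the characterization provided by Corollary~\ref{cor:charmray}. Every other step is formal, being a combination of $\R_{>0}$-equivariance, density of $N_\Q(T)$ in $N_\R(T)$, and Lipschitz continuity of the two quasi-isometric embeddings in play.
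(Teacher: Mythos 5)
Your proof is correct and follows the same overall skeleton as the paper's: reduce to a single maximal compact torus by $G$-equivariance, then to integral cocharacters by $\R_{>0}$-equivariance and continuity, and in that core case apply Corollary~\ref{cor:charmray} to a semiample test configuration realizing $\f_\xi$. The difference is in the choice of that test configuration. You use the product test configuration $(\cX,\cL)$ of Example~\ref{exam:prodtc} directly: in the trivialization $\cL|_\C\simeq L\times\C$ the orbit metric becomes the constant pullback of $\phi_\refe$, hence smooth on $\cL|_\DD$, so Corollary~\ref{cor:charmray} applies immediately. The paper instead starts from the non-smooth Fubini--Study reference $\phi=\frac1m\max_j\log|s_j|$ and builds an auxiliary semiample test configuration by extending the map $X\times\C^\times\to\P^{N_m-1}\times\C^\times$, $(x,z)\mapsto([s_j(x)z^{\la_j}]_j,z)$, to a model $\cX$ dominating $X\times\P^1$; the extension of $\Phi$ is then obtained as the pullback of the continuous metric $\max_j\log|w_j|$ on $\cO_\P(1)$. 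Both routes work; yours is a bit more economical because it exploits the fact that the product test configuration is already relatively ample and that $\Phi$ is tautologically the constant metric in its trivialization, rather than having to match $\Phi$ with a Fubini--Study metric on a projective ambient. One small point to make precise: for the map $\cE^{1,S}_\NA\hookrightarrow\cE^{1,S}_\ra$ to be well-defined in the equivariant setting you should take $\phi_\refe\in\cH^S$ (not merely $\cH^T$), as the paper notes at the end of \S\ref{sec:geodrays}; this costs nothing since $S$-invariant strictly psh metrics exist by averaging, and it also ensures the product test configuration and the orbit ray are $S$-equivariant/invariant, as required for the identification to land in $\cE^{1,S}_\ra$.
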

Note that this is consistent with Corollaries~\ref{cor:prodclosed} and~\ref{cor:dirclosed}, which respectively say that the subsets $\cP_\R^S\subset\cE^{1,S}_\NA$ and $\cD_G\subset\cE^{1,S}_\ra$ are closed when $G$ is reductive. 
\begin{proof} Given a compact torus $T\subset\Aut^S(X,L)$, we need to show that the map $N_\R(T)\hto\cE^1_\NA$ $\xi\mapsto\f_\xi$ of Proposition~\ref{prop:qi4} corresponds to the map
$N_\R(T)\simeq\Lie T\hto\cE^1_\ra$ in Lemma~\ref{lem:qi1} under the embedding $\cE^1_\NA\hto\cE^1_\ra$. As all the maps are continuous and $\R_{>0}$-equivariant, it suffices to consider an integral point $\xi\in N_\Z(T)$, corresponding to a cocharacter $\rho\colon\C^\times\to\Aut(X,L)$. 
    
Pick $m$ sufficiently divisible, and a basis of $\rho$-eigensections $s_j\in\Hnot(X,mL)$ with weights $\la_j\in\Z$. Then the image of $\xi\in N_\Z(T)$ in $\cE^1_\NA$ is the potential 
\[
  \f_\xi=\frac1m\max_j(\log|s_j|+\la_j)\in\cP_\Z^S, 
\] 
see Example~\ref{exam:prodtc}. If we assume, as we may, that the $s_j$ are $S$-invariant, then $\phi:=\frac1m\max_j\log|s_j|$ lies in $\cE^{1,S}$. We claim that the image of $\f_\xi$ in $\cE^{1,S}_\ra$ is represented by the ray
    \[
    \phi_t:=\frac1m\max_j(\log|s_j|+\la_jt).
    \]
To see this, note that $\phi_0=\phi$, and that the $S^1$-invariant metric $\Phi$ on $L\times\DD^\times$ defined by $\{\phi_t\}$ is given by $\Phi(\cdot,z)=\frac1m\max\{\log|s_j|+|z|^{-\la_j}\}$. Clearly, $\Phi$ is well-defined and psh on $L\times\C^\times$, and it is also locally maximal, since it is the image of the constant (and hence locally maximal) metric $\Phi_{\refe}(v,z)=\phi_\refe(v)$ by the automorphism of $L\times\C^\times$ given by $(v,z)\mapsto(\rho(z)\cdot v,z)$. 

    Thus $\{\phi_t\}$ is a psh geodesic ray, and we claim that it is directed by $\f_\xi$. 
    To this end, write $\P:=\P^{N_m-1}$, consider the embedding $X\times\C^\times\hto\P\times\C^\times$ given by 
    $$
    (x,z)\mapsto([s_j(x)z^{\la_j}]_j,z),
    $$
    and pick a test configuration $\cX$ such that $\cX$ dominates $X\times\P^1$, and the embedding extends to a morphism $\kappa\colon\cX\to\P\times\C$, and set $\cL:=m^{-1}\kappa^\star\cO(1)$. Then $\cL$ is semiample and $\f=\f_\cL$. Further, $\Phi$ is the restriction to $\cL|_{\C^\times}\simeq L\times\C^\times$ of the pullback of the Hermitian metric $\max_j\log|w_j|$ on $\cO_{\P}(1)$. In particular, $\Phi$ extends as a locally bounded psh metric on $\cL$, and Corollary~\ref{cor:charmray} now shows that $\{\phi_t\}$ defines a maximal geodesic ray directed by $\f_\cL=\f$.

    \smallskip
    On the other hand, the image of $\xi\in\Lie T\simeq N_\R(T)$ in $\cE^1_\ra$ is the direction of the geodesic ray $\{e^{it\xi}\cdot\phi\}$. But $e^{it\xi}\cdot\log|s_j|=\log|s_j|+t\la_j$, so $e^{it\xi}\cdot\phi=\frac1m\max_j(\log|s_j|+t\la_j)=\phi_t$, and we are done.
\end{proof}

\begin{rmk}
    One can view the above pair $(\{\phi_t\}_t,\f_\xi)$ as a continuous psh metric on the analytification of $L$ with respect to the hybrid norm on $\C$, see~\cite[\S2.4]{PS23}.
\end{rmk}
%
%%%%%%%%%%%%%%%%%%%%%%%%%%%%%%%%%%%%%%%%%%%%%%%%
%

%
%%%%%%%%%%%%%%%%%%%%%%%%%%%%%%%%
\subsection{Non-Archimedean limits of metrics}
Beyond geodesic rays, there are other situations when rays of metrics give rise to non-Archimedean metrics/potentials. Following~\cite[\S 3]{BHJ2}, we introduce: 

\begin{defi}\label{defi:anal} Given any $\Q$-line bundle $F$ on $X$, we say that a ray $\{\phi_t\}_{t\ge 0}$ of smooth metrics on $F$ has \emph{analytic singularities} if there exists a test configuration $\cF$ for $F$ such that the induced $S^1$-invariant metric $\Phi$ on $p_1^\star F|_{\DD^\times}\simeq\cF|_{\DD^\times}$ extends to a smooth metric on $\cF|_\DD$. 
\end{defi} 
The PL function 
$$
\phi_\NA:=\f_\cF\in\PL(X_\NA)
$$
is then uniquely determined by $\{\phi_t\}$ (see~\cite[Lemma~3.2]{BHJ2}); we call it the \emph{\na limit} of $\{\phi_t\}$. 

\begin{exam}\label{exam:raycst} For any $c\in\Q$, the ray of constant functions $f_t\equiv ct$ in $C^\infty(X)$ has analytic singularities, with \na limit $f_\NA\equiv c$. 
\end{exam}

\begin{exam}\label{exam:raytriv} Any ray $\{f_t\}$ with analytic singularities  in $C^\infty(X)$ satisfies 
\begin{equation}\label{equ:anbd}
\sup_X|f_t|=O(t). 
\end{equation}
In fact, one can show $t^{-1}\sup_X|f_t|\to\sup_{X_\NA}|f_\NA|$. 
\end{exam}

\begin{prop}\label{prop:anal} The following properties hold: 
\begin{itemize}
\item[(i)] if $\{\phi_{i,t}\}$ is a ray with analytic singularities in $C^\infty(F_i)$, $i=1,2$, then $\{\phi_{1,t}\pm\phi_{2,t}\}$ is a ray with analytic singularities in $C^\infty(F_1\pm F_2)$, with non-Archimedean limit $\phi_{1,\NA}\pm\phi_{2,\NA}$;
\item[(ii)] conversely, for any ray with analytic singularities $\{\phi_t\}$ in $C^\infty(F)$, we can find ample $\Q$-line bundles $L_1,L_2$ and psh rays with analytic singularities $\{\phi_{i,t}\}$ in $\cH(L_i)$, $i=1,2$ such that $F=L_1-L_2$ and $\phi_t=\phi_{1,t}-\phi_{2,t}$;
\item[(iii)] any $\f\in\cH_\NA$ can be realized as the \na limit of a psh ray with analytic singularities $\{\phi_t\}$ in $\cH$;
\item[(iv)] any $\f\in\PL(X_\NA)$ can be realized as the \na limit of a ray with analytic singularities $\{f_t\}$ in $C^\infty(X)$.
\end{itemize} 
If $\f$ in (iii), (iv) is invariant under a compact subgroup $S\subset\Aut(X,L)$, then the rays can be chosen to be $S$-invariant.  
\end{prop}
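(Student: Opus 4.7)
The natural strategy is to prove (iii) first, as it provides the core construction; (iv), (i), and (ii) then follow with relatively little additional work. $S$-equivariance will be handled uniformly throughout by averaging chosen smooth metrics over the compact group $S$.

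For (iii), write $\f=\f_\cL$ for an ample test configuration $(\cX,\cL)$ for $(X,L)$; for $\f\in\cH^S_\NA$, by a standard equivariance argument we may choose $(\cX,\cL)$ to be $S$-equivariant, and then an $S$-equivariant resolution yields a smooth model. Since $\cL$ is relatively ample over $\P^1$, I can pick an $S^1\times S$-invariant smooth Hermitian metric $\Phi_0$ on $\cL|_\DD$ whose restriction to each fiber $\cX_z$ has strictly positive curvature; averaging the standard relatively positive metric over $S^1\times S$ preserves these properties. To upgrade this fiberwise positivity to psh-ness on the total space $\cX|_\DD$, I then set $\Phi:=\Phi_0+C\pi^\star u$, where $u$ is a smooth $S^1$-invariant strictly psh function on $\DD$ and $C\gg0$: by compactness of $\cX|_\DD$, the form $C\pi^\star\ddc u$ dominates the horizontal (\ie transverse to the fibers of $\pi$) negative part of $\ddc\Phi_0$, while the vertical direction is handled by the fiberwise positivity. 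The ray $\{\phi_t\}$ associated to $\Phi$ is then a psh ray in $\cH^S$ with analytic singularities and non-Archimedean limit $\f_\cL=\f$.

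For (iv), any $\f\in\PL(X_\NA)$ equals $\f_\cF$ for a test configuration $\cF$ of the trivial bundle $\cO_X$, and $\cF$ admits smooth Hermitian metrics with no positivity required. Picking one on $\cF|_\DD$, averaging over $S^1\times S$, and reading off values on the open part $\cF|_{\DD^\times}\simeq\cO_X\times\DD^\times$ yields the desired $S$-invariant ray $\{f_t\}$ in $C^\infty(X)$ with analytic singularities and NA limit $\f$. For (i), I pass to a common refinement of the test configurations carrying $\cF_1,\cF_2$; the pulled-back smooth extensions $\Phi_1,\Phi_2$ combine into a smooth extension $\Phi_1\pm\Phi_2$ of the $S^1$-invariant metric associated to $\{\phi_{1,t}\pm\phi_{2,t}\}$, and linearity of $\cF\mapsto\f_\cF$ delivers the NA limit statement. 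For (ii), let $\cF$ be a test configuration for $F$ realizing $\{\phi_t\}$; after passing to a common smooth model, via (iii) applied to an ample test configuration $\cA$ for some ample $\Q$-line bundle $A$ on $X$ that is relatively ample on $\cX$, I produce a psh ray $\{\chi_t\}\in\cH(A)$ with analytic singularities whose smooth extension $\Xi$ to $\cA|_\DD$ has strictly positive curvature on $\cX|_\DD$. For $m\gg0$, $\ddc(\Phi+m\Xi)>0$ on the compact space $\cX|_\DD$, so $\phi_{1,t}:=\phi_t+m\chi_t$ is a psh ray in $\cH(L_1)$ with $L_1:=F+mA$ ample and with analytic singularities; setting $\phi_{2,t}:=m\chi_t$ and $L_2:=mA$ gives $\phi_t=\phi_{1,t}-\phi_{2,t}$.

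The main obstacle is the positivity upgrade in (iii): turning fiberwise strict psh-ness of a smooth metric on a relatively ample line bundle into psh-ness on the total space $\cX|_\DD$, which is what a psh ray in $\cH$ actually requires. Compactness of $\cX|_\DD$ together with the standard trick of adding a constant multiple of $\pi^\star u$ for $u$ strictly psh on $\DD$ resolves this. A secondary, routine bookkeeping issue is arranging all test configurations in (i), (ii) on a single smooth $S$-equivariant total space via resolution and equivariant completion.
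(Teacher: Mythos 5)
Your proof is correct and takes essentially the same route as the paper's; the only organizational difference is that you prove (iv) directly from a test configuration of $\cO_X$ rather than deducing it from (ii) and (iii), which is a cosmetic choice. The core construction you spell out for (iii) — picking a fiberwise strictly positive $S^1\times S$-invariant metric on $\cL|_\DD$ and adding $C\pi^\star u$ for $u$ strictly psh on $\DD$ — is exactly the standard argument that the paper compresses into the phrase ``an $S^1$-invariant smooth strictly psh metric on $\cL$'', and your (ii) via $\phi_{1,t}=\phi_t+m\chi_t$ implements the paper's stated fact that smooth $S^1$-invariant metrics decompose as differences of strictly psh ones. One small slip to tidy: the compactness appeals should be to $\cX|_{\overline\DD}$ (or $\cX$ itself, which is proper over $\P^1$), not to $\cX|_\DD$, which is not compact; everything you need holds there and restricts to $\DD$.
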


\begin{proof} The proof of (i) is straightforward, and (ii) follows from the fact that any smooth $S^1$-invariant on a test configuration $\cF$ for $F$ can be written as a difference of smoot strictly psh $S^1$-invariant metrics on ample test configurations. 

To prove (iii) pick $\f\in\cH_\NA$. Then $\f=\f_\cL$ for an ample test configuration $\cL$ for $L$, and the ray $\{\phi_t\}$ in $\cH$ associated to an $S^1$-invariant smooth strictly psh metric on $\cL$ similarly does the job. If $\f$ is $S$-invariant, then $\cL$ can be chosen $S$-invariant, and choosing an $S^1\times S$-metric thereon yields an $S$-invariant ray. Finally (iv) follows from (ii) and (iii). 
\end{proof}

\begin{exam}\label{exam:bdext} Assume $\{\phi_t\}_{t\ge 0}$ is a ray in $(\cE^1,\dd_1)$ that induces a locally bounded $S^1$-invariant metric on $\cL|_\DD$ for some semiample test configuration $\cL$ for $L$. Then $\{\phi_t\}$ is an almost geodesic ray directed by $\phi_\infty=\f_\cL\in\cH_\NA\subset\cE^1_\NA\subset\cE^1_\ra$. Indeed, the maximal geodesic ray $\{\p_t\}$ directed by $\f_\cL$ corresponds to a locally bounded metric on $\cL|_\DD$ as well (see Proposition~\ref{prop:tcgray}), and hence satisfies $\dd_1(\phi_t,\p_t)\le\sup_X|\phi_t-\p_t|=O(1)$. 
\end{exam}

This implies in turn: 
\begin{exam}\label{exam:analmost} Let $\{\phi_t\}$ be a ray in $\cH\subset C^\infty(L)$ with analytic singularities, and assume $\phi_\NA\in\cH_\NA$. Then $\{\phi_t\}$ is an almost geodesic ray in $\cE^1$ directed by $\phi_\infty=\phi_\NA\in\cE^1_\NA\subset\cE^1_\ra$. 
\end{exam}

%
%%%%%%%%%%%%%%%%%%%%%%%%%%%%%%%%
\subsection{Non-Archimedean limits of measures}\label{sec:NAlimmeas}
Next we turn our attention from metrics to measures.
\begin{defi}\label{defi:NAmeas} We say that a ray $\{\mu_t\}_{t\ge 0}$ of signed measures on $X$ admits a \emph{\na limit} $\mu_\NA\in \Cz(X_\NA)^\vee$ if, for every ray $\{f_t\}$ in $C^\infty(X)$ with analytic singularities and \na limit $f_\NA\in\PL(X_\NA)$, we have
$$
\lim_{t\to\infty}t^{-1}\int_X f_t\,\mu_t=\int_{X_\NA}f_\NA\,\mu_\NA.
$$
\end{defi}
Since every element of $\PL(X_\NA)$ can be realized as the \na limit of a ray with analytic singularities in $C^\infty(X)$ (see Proposition~\ref{prop:anal}), and since $\PL(X_\NA)$ is dense in $\Cz(X_\NA)$ (see~\S\ref{sec:NAppt}), the \na limit $\mu_\NA$ is unique. By Example~\ref{exam:raycst}, we further have 
\begin{equation}\label{equ:masscv}
\lim_{t\to\infty} \int_X\mu_t=\int_{X_\NA} \mu_\NA. 
\end{equation}
As a consequence of~\cite[Lemma~3.9]{BHJ2}, we get: 

\begin{exam}\label{exam:limMA} For any ray $\{\phi_t\}$ in $C^\infty(L)$ with analytic singularities, the signed measure $\MA_\NA(\phi_\NA)$ is the \na limit of the ray of signed measures $\{\MA(\phi_t)\}_{t\ge 0}$. 
\end{exam}

The above concept of \na limit of measures is in fact a reformulation of the notion of weak convergence of measures in the \emph{hybrid space} 
$$
X_\hyb\simeq\left(X\times(0,1]\right)\coprod \left(X_\NA\times\{0\}\right), 
$$
which is a compact Hausdorff space for the hydrid topology (see for instance~\cite{konsoib,PS22,Hybrid}). 
More specifically, $\mu_\NA\in \Cz(X_\NA)^\vee$ is the \na limit of a ray $\{\mu_t\}$ in $\Cz(X)^\vee$ iff $\mu_t$, viewed as a measure on $X\times\{e^{-t}\}\subset X_\hyb$, converges weakly in $X_\hyb$ to $\mu_\NA$ viewed as a measure on $X_\NA\times\{0\}\subset X_\hyb$. Indeed, each ray $\{f_t\}$ with analytic singularities in $C^\infty(X)$ induces a continuous function $f\colon X_\hyb\to\R$ by setting 
$$
f|_{X\times\{e^{-t}\}}:=t^{-1} f_t,\quad f|_{X_\NA\times\{0\}}=f_\NA,
$$
and one shows using the Stone--Weierstrass theorem that such functions form a dense subset of $\Cz(X_\hyb)$ (see~\cite[\S 6]{Hybrid}). 

As an illustration, \cite[Lemma~3.11]{BHJ2} (or~\cite{konsoib}) yield: 

\begin{exam}\label{exam:adapted} Let $\{\p_t\}_{t\ge0}$ be a ray in $C^\infty(K_X)$ with analytic singularities, and denote by $\nu_t:=e^{2\p_t}$ the associated ray of smooth volume forms on $X$. Then there exists $d\in\{0,\dots,n\}$ such that $t^{-d}\nu_t$ admits as \na limit a Lebesgue-type measure on a subcomplex of the dual complex of some snc test configuration. 
\end{exam}

The next result follows from Pille--Schneider's handy criterion for hybrid convergence~\cite[Theorem~B]{PS22} (see also~\cite[Proposition~6.4]{Hybrid}).

\begin{prop}\label{prop:PS} Let $\{\mu_t\}_{t\ge 0}$ be a ray of positive measures on $X$. Denote by $\{\tmu_z\}_{z\in\DD^\times}$ the corresponding $S^1$-invariant family of measures on $X\times\DD^\times$, \ie 
$$
\tmu_z:=\mu_{-\log|z|}\quad\text{on}\quad X\times\{z\},
$$
and assume given an snc test configuration $\cX$ with central fiber $\cX_0=\sum_E b_E E$ such that:
\begin{itemize}
\item[(i)] as $z\in\DD^\times$ tends to $0$, $\tmu_z$ converges weakly in $\cX$ to a positive measure $\tmu_0$ on $\cX_0$;
\item[(ii)] $\tmu_0$ puts no mass on nowhere dense Zariski closed subsets of $\cX_0$.
\end{itemize}
Then $\mu_\NA:=\sum_E\tmu_0(E)\d_{v_{E}}$ is the \na limit of $\{\mu_t\}$. 
\end{prop}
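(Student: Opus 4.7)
\smallskip

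\noindent\textit{Plan.} The proof is essentially a direct application of the hybrid convergence criterion of Pille--Schneider, once one unpacks the identification between the notion of non-Archimedean limit from Definition~\ref{defi:NAmeas} and weak convergence of measures in the hybrid space $X_\hyb$.

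First, as recalled in the discussion following Definition~\ref{defi:NAmeas}, the assertion that $\mu_\NA$ is the non-Archimedean limit of $\{\mu_t\}$ is equivalent to weak convergence in $X_\hyb$ of the measures $\mu_t$, each viewed as supported on the slice $X\times\{e^{-t}\}\subset X_\hyb$, to $\mu_\NA$ viewed as supported on the central fiber $X_\NA\times\{0\}\subset X_\hyb$. Indeed, any ray $\{f_t\}$ with analytic singularities and non-Archimedean limit $f_\NA$ assembles into a continuous function on $X_\hyb$ via $f|_{X\times\{e^{-t}\}}=t^{-1}f_t$ and $f|_{X_\NA\times\{0\}}=f_\NA$, and by Stone--Weierstrass such functions are dense in $C^0(X_\hyb)$.

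Next, I would invoke [PS22, Theorem~B] (equivalently~\cite[Proposition~6.4]{Hybrid}). The given snc test configuration $\cX$ provides a smooth model of $X\times\DD$ over $\DD$, with central fiber $\cX_0=\sum_E b_E E$, and the $S^1$-invariant family $\tmu_z$ lives on $\cX|_{\DD^\times}$ via the trivialization. Hypothesis~(i) asserts that this family extends weakly to a limit $\tmu_0$ on $\cX_0$, and hypothesis~(ii) asserts that $\tmu_0$ puts no mass on nowhere-dense Zariski closed subsets of $\cX_0$, in particular no mass on the codimension-one strata where several components meet. Under exactly these conditions, the Pille--Schneider criterion yields weak convergence in the hybrid compactification of $\cX|_\DD$ of the family $\tmu_z$ to a combinatorial limit on the NA fiber, entirely supported on the vertices of the dual complex $\D_\cX\hto X_\NA$, with mass $\tmu_0(E)$ at the vertex $v_E$. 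Passing to the quotient under the proper continuous map from the hybrid compactification of $\cX|_\DD$ to $X_\hyb$ (which over $\DD^\times$ is simply the $S^1$-equivariant contraction $\cX|_{\DD^\times}\to X\times\DD^\times$ induced by the trivialization, and over $0$ is the closed immersion $\D_\cX\hto X_\NA$ composed with the retraction), one obtains the desired weak convergence $\mu_t\to\mu_\NA:=\sum_E\tmu_0(E)\d_{v_E}$ in $X_\hyb$.

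The main obstacle is not so much analytic as bookkeeping: one must check that the continuous test functions on $X_\hyb$ arising from rays with analytic singularities in $C^\infty(X)$ pull back to admissible test functions on the hybrid compactification of $\cX|_\DD$, and that the asymptotic value at $v_E$ of such a function is consistent with the normalized integral over the open stratum $E^\circ$ appearing in the Pille--Schneider criterion. This is where hypothesis~(ii) plays its role: it allows one to discard the codimension $\ge1$ strata on which the normalized logarithmic singularities of the underlying test configurations are ambiguous, thereby reducing each integral $t^{-1}\int_X f_t\,\mu_t$ in the limit to a finite sum $\sum_E f_\NA(v_E)\tmu_0(E)$ as required.
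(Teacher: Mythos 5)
Your proposal takes essentially the same route as the paper: the paper gives no proof beyond the citation to~\cite[Theorem~B]{PS22} (equivalently~\cite[Proposition~6.4]{Hybrid}), and your argument is a correct unpacking of why that criterion applies, using the reformulation of non-Archimedean limits as hybrid weak convergence given just before Proposition~\ref{prop:PS}. The only mild informality is the description of ``passing to the quotient'' from the hybrid compactification of $\cX|_\DD$ to $X_\hyb$; what is really being used is the compatibility of the embedding $\D_\cX\hto X_\NA$ with the hybrid topology and the density of functions coming from rays with analytic singularities, but this is standard and the paper itself leaves it implicit.
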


%%%%%%%%%%%%%%%%%%%%%%%%%%%%%%%%%%%%%%%%%%%%%%%%%%%%%%%%%%%%%%%%%%%
%
\subsection{Non-Archimedean transforms}\label{sec:NAtrans}
Recall from~\S\ref{sec:pshgeod} that the space $(\cE^1,\dd_1)$ of finite energy metrics is Busemann convex, and that its \na counterpart $(\cE^1_\NA,\dd_{1,\NA})$ naturally sits in the asymptotic cone $(\cE^1_\ra,\dd_{1,\ra})$ (see~\S\ref{sec:geodrays}). Relying on this, our goal is to introduce a general mechanism producing \na analogues of measure-valued operators and energy functionals. 

In what follows we fix a compact subgroup $S\subset\Aut(X,L)$, and denote by $\cH^S\subset\cH$, $\cH_\NA^S=\cH_\NA^{S_\C}\subset\cH_\NA$ the subspaces of $S$-fixed points.

\begin{defi}\label{defi:NAtrfunc} Given two functionals 
$$
F\colon\cH^S\to\R\cup\{+\infty\},\quad F_\NA\colon\cH^S_\NA\to\R\cup\{+\infty\},
$$
we say that $F_\NA$ is the \emph{\na transform} of $F$ if 
$$
F_\NA(\phi_\NA)=\lim_{t\to\infty} t^{-1} F(\phi_t)
$$
for any ray with analytic singularities $\{\phi_t\}$ in $\cH^S$ such that $\phi_\NA\in\cH^S_\NA$. 
\end{defi}
Of course, a similar definition makes sense for functionals defined on $\cH^S\times\cH^S$ etc...

Since any $\f\in\cH^S_\NA$ can be realized as the \na limit $\f=\phi_\NA$ of some ray with analytic singularities $\{\phi_t\}$ lying in $\cH^S$ (see Propostion~\ref{prop:anal}), $F_\NA$ is uniquely determined by $F$. It is further necessarily homogeneous with respect to the scaling action of $\Q_{>0}$ on $\cH^S_\NA$ (see~\cite[Proposition~3.5]{BHJ2}). 
\begin{defi}\label{defi:NAtrmeas} Given two operators
$$
\Ga\colon\cH^S\to \Cz(X)^\vee,\quad \Ga_\NA\colon\cH^S_\NA\to \Cz(X_\NA)^\vee,
$$
we say that $\Ga_\NA$ is the \emph{\na transform} of $\Ga$ if, for any ray $\{\phi_t\}_{t\ge 0}$ in $\cH^S$ with analytic singularities and such that $\phi_\NA\in\cH^S_\NA$, the measure $\Ga_\NA(\phi_\NA)$ on $X_\NA$ is the \na limit of the ray of measures $\{\Ga(\phi_t)\}_{t\ge 0}$ on $X$ (see Definition~\ref{defi:NAmeas}). 
\end{defi}
Again, $\Ga_\NA$ is uniquely determined by $\Ga$. 

\begin{exam} The \na \ma operator $\MA_\NA\colon\cH_\NA\to\cM^1_\NA$ is the \na transform of $\MA\colon\cH\to\cM^1$ (see Example~\ref{exam:limMA}). 
\end{exam}
In view of~\eqref{equ:INA}, this implies: 

\begin{exam}\label{exam:NAfunc} The functional $\ii_\NA\colon\cH_\NA\times\cH_\NA\to\R_{\ge 0}$ is the \na transform of $\ii\colon\cH\times\cH\to\R_{\ge 0}$. Similarly,  the \na \ma energy $\en_\NA$, Ricci energy $\rr_\NA$ and entropy $\ent_\NA$ on $\cH_\NA$ are the \na transforms of their complex counterparts $\en$, $\enR$, $\ent$ on $\cH$ (see~\cite[Theorem~3.6]{BHJ2}). 
\end{exam}

We next provide a general extension result for measure-valued operators and their \na transforms. We consider a translation-invariant operator 
$$
\Ga\colon\cH^S\to\cM^1,
$$
assumed to admit a \na transform $\Ga_\NA\colon\cH^S_\NA\to\cM^1_\NA$.

\begin{lem}\label{lem:NAtrans1} Assume $\Ga\colon(\cH^S/\R,\ii)\to(\cM^1,\done)$ is {\sgh} (see Lemma~\ref{lem:sghop}). Then:
\begin{itemize}
\item[(i)] both $\Ga$ and $\Ga_\NA$ uniquely extend to translation-invariant operators
$$
\Ga\colon\cE^{1,S}\to\cM^1,\quad\Ga_\NA\colon\cE^{1,S}_\NA\to\cM^1_\NA
$$
such that 
$\Ga\colon(\cE^{1,S}/\R,\ii)\to(\cM^1,\done)$ and $\Ga_\NA\colon(\cE^{1,S}_\NA/\R,\ii_\NA)\to(\cM^1_\NA,\d_{1,\NA})$ are both {\sgh}; 
\item[(ii)] for all almost geodesic rays $\{\phi_{1,t}\}$, $\{\phi_{2,t}\}$ in $\cE^1$, $\{\phi_{3,t}\}$ in $\cE^{1,S}$, such that $\phi_{1,\infty},\phi_{2,\infty}\in\cE^1_\NA$, $\phi_{3,\infty}\in\cE^{1,S}_\NA$, we further have 
\begin{equation}\label{equ:slopeint}
\lim_{t\to\infty} t^{-1}\int_X(\phi_{1,t}-\phi_{2,t})\Ga(\phi_{3,t})=\int_{X_\NA}(\phi_{1,\infty}-\phi_{2,\infty})\Ga_\NA(\phi_{3,\infty}). 
\end{equation}
In particular, the measure $\Ga_\NA(\phi_{3,\infty})$ on $X_\NA$ is the \na limit of the ray of measures $\{\Ga(\phi_{3,t})\}$ on $X$. 
\end{itemize}
\end{lem}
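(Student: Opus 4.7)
The plan is to extend $\Ga$ from $\cH^S$ to $\cE^{1,S}$ first, then to establish the sgh property for $\Ga_\NA$ on $\cH^S_\NA$ via a slope argument, to extend $\Ga_\NA$ to $\cE^{1,S}_\NA$, and finally to prove the slope formula~\eqref{equ:slopeint}.

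For (i), the sgh estimate~\eqref{equ:sgh1} combined with~\eqref{equ:sgh2} shows that $\Ga\colon(\cH^S/\R,\dd_1)\to(\cM^1,\done)$ is uniformly continuous on bounded sets. Since $\cH^S$ is $\dd_1$-dense in $\cE^{1,S}$ and $(\cM^1,\done)$ is complete, $\Ga$ extends uniquely to a continuous, translation-invariant, sgh map $\cE^{1,S}\to\cM^1$. To obtain the sgh property for $\Ga_\NA$ on $\cH^S_\NA$, pick $\f_1,\f_2\in\cH_\NA$ and $\f_3,\f_4\in\cH^S_\NA$, and realize them via Proposition~\ref{prop:anal} as NA limits of rays with analytic singularities $\{\phi_{i,t}\}$ in $\cH$ (with $\{\phi_{3,t}\},\{\phi_{4,t}\}$ chosen in $\cH^S$). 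By Proposition~\ref{prop:anal}~(i), $\{\phi_{1,t}-\phi_{2,t}\}$ is a ray with analytic singularities in $C^\infty(X)$ with NA limit $\f_1-\f_2$, so Definition~\ref{defi:NAtrmeas} yields
\[
\lim_{t\to\infty}t^{-1}\int(\phi_{1,t}-\phi_{2,t})(\Ga(\phi_{3,t})-\Ga(\phi_{4,t}))=\int(\f_1-\f_2)(\Ga_\NA(\f_3)-\Ga_\NA(\f_4)).
\]
Applying~\eqref{equ:sgh1} to the left-hand side, dividing by $t=t^\a\cdot t^{1-\a}$, and passing to the limit using $t^{-1}\ii(\phi_{i,t})\to\ii_\NA(\f_i)$ and $t^{-1}\ii(\phi_{3,t},\phi_{4,t})\to\ii_\NA(\f_3,\f_4)$ (Example~\ref{exam:NAfunc}), we obtain the sgh estimate for $\Ga_\NA$ on $\cH^S_\NA$. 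Since $\cH^S_\NA$ is strongly dense in $\cE^{1,S}_\NA$ by Corollary~\ref{cor:invdense}, the same density-plus-completeness argument extends $\Ga_\NA$ to a translation-invariant, sgh map $\cE^{1,S}_\NA\to\cM^1_\NA$.

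For (ii), let $\{\psi_{i,t}\}$ denote the maximal geodesic ray emanating from $\phi_\refe$ and directed by $\phi_{i,\infty}$, so that $\dd_1(\phi_{i,t},\psi_{i,t})=o(t)$ by Lemma~\ref{lem:almostd}. Applying~\eqref{equ:sgh1} for the extended $\Ga$ (with $\ii\lesssim\dd_1$ and $\dd_1(\phi_{i,t}),\dd_1(\psi_{i,t})=O(t)$), one bounds
\[
t^{-1}\left|\int(\psi_{1,t}-\psi_{2,t})(\Ga(\phi_{3,t})-\Ga(\psi_{3,t}))\right|=o(1),
\]
while Lemma~\ref{lem:holdmu} applied to $\Ga(\phi_{3,t})$ (whose $\jj$-energy is $O(t)$ by~\eqref{equ:sghbd}) gives $t^{-1}\|\phi_{i,t}-\psi_{i,t}\|_{L^1(\Ga(\phi_{3,t}))}=o(1)$ for $i=1,2$. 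This reduces~\eqref{equ:slopeint} to the case of maximal geodesic rays. When $\phi_{1,\infty},\phi_{2,\infty}\in\cH_\NA$ and $\phi_{3,\infty}\in\cH^S_\NA$, Example~\ref{exam:analmost} allows us to further replace each $\{\psi_{i,t}\}$ by a ray with analytic singularities realizing the same NA limit, reducing the claim to the NA transform identity for $\Ga$. For arbitrary directions in $\cE^1_\NA$, approximate $\phi_{i,\infty}$ strongly by $\f_{i,k}\in\cH_\NA$ and denote by $\{\psi_{i,k,t}\}$ the maximal geodesic ray from $\phi_\refe$ directed by $\f_{i,k}$; the Busemann convexity of $(\cE^1,\dd_1)$ then gives $\dd_1(\psi_{i,t},\psi_{i,k,t})\le t\,\dd_{1,\NA}(\phi_{i,\infty},\f_{i,k})$, and the sgh estimates for $\Ga$ control the resulting slope discrepancy uniformly in $t$; letting $k\to\infty$ using the continuity of $\Ga_\NA$ established in (i) then concludes the proof.

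The main obstacle is the final double-limit argument, since the slope is defined only asymptotically as $t\to\infty$ and the limits $k\to\infty$ and $t\to\infty$ do not automatically commute. The Busemann convexity of $(\cE^1,\dd_1)$ provides the crucial uniform-in-$t$ Lipschitz dependence of maximal geodesic rays on their direction, and combined with the sgh H\"older control on both $\Ga$ and $\Ga_\NA$, this yields the necessary uniform bounds to interchange limits.
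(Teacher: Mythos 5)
Your proof is correct, and for part~(i) it is essentially identical to the paper's. For part~(ii), you take a slightly different route: you first reduce from the given almost geodesic rays to the maximal geodesic rays emanating from $\phi_\refe$ with the same directions (using Lemma~\ref{lem:almostd} together with the {\sgh} and $L^1$ estimates to kill the $o(t)$ error), then handle the $\cH_\NA$-direction case via Example~\ref{exam:analmost}, and finally approximate a general direction by $\cH_\NA$-directions, invoking Busemann convexity to obtain the \emph{pointwise} bound $\dd_1(\psi_{i,t},\psi_{i,k,t})\le t\,\dd_{1,\NA}(\phi_{i,\infty},\f_{i,k})$ so that the resulting slope error is controlled uniformly in $t$. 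The paper skips the intermediate passage through maximal geodesic rays: it compares the original almost geodesic rays $\{\phi_{i,t}\}$ directly with rays with analytic singularities $\{\phi_{i,t}^j\}$ whose directions $\f_i^j\in\cH_\NA$ converge to $\phi_{i,\infty}$, using a two-parameter error estimate and computing the slopes $\lim_t t^{-1}\dd_1(\phi_{i,t},\phi_{i,t}^j)=\dd_{1,\ra}(\phi_{i,\infty},\f_i^j)$ via Lemma~\ref{lem:almostd} alone, with a $\liminf/\limsup$ sandwich replacing your explicit uniform-in-$t$ bound. Both approaches rest on exactly the same ingredients (the {\sgh} estimates, the density of $\cH_\NA$, Example~\ref{exam:analmost}, and asymptotic control of the distance between rays), and neither is substantially shorter; your explicit use of Busemann convexity is arguably a cleaner way to make the interchange of the $t$- and $k$-limits transparent, at the modest cost of one extra reduction step.
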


\begin{proof} Since $(\cM^1,\done)$ is complete and $\Ga$ is {\sgh}, $\Ga$ admits a unique {\sgh} extension $\Ga\colon(\cE^{1,S}/\R,\ii)\to(\cM^1,\done)$. 

In the \na case, it will similarly suffice to check that $\Ga_\NA$ is {\sgh} on $\cH^S_\NA$ with respect to $\ii_\NA$ and $\d_{1,\NA}$, which amounts to 
the existence of $C>0$, $\a\in(0,1)$ such that 
\begin{equation}\label{equ:sghNA}
\left|\int(\f_1-\f_2)\left(\Ga_\NA(\f_3)-\Ga_\NA(\f_4)\right)\right|\le C\ii_\NA(\f_3,\f_4)^\a\max_i\left(1+\ii_\NA(\f_i)\right)^{1-\a}
\end{equation}
for all $\f_1,\f_2\in\cH_\NA$, $\f_3,\f_4\in\cH^S_\NA$ (see Lemma~\ref{lem:sghop}, which applies without change to the \na case). By Proposition~\ref{prop:anal}, we can choose a ray with analytic singularities $\{\phi_{i,t}\}_t$ in $\cH$ (resp.~$\cH^S$) such that $\phi_{i,\NA}=\f_i$. Since $\Ga_\NA(\f_i)$ is the \na limit of $\{\Ga(\phi_{i,t})\}_t$ and $\{\phi_{1,t}-\phi_{2,t}\}$ is a ray in $C^\infty(X)$ with analytic singularities and \na limit $\f_1-\f_2$, we have
$$
\lim_{t\to\infty} t^{-1}\int(\phi_{1,t}-\phi_{2,t})\Ga(\phi_{i,t})=\int(\f_1-\f_2)\Ga_\NA(\f_i). 
$$
By Example~\ref{exam:NAfunc}, we further have 
$$
t^{-1}\ii(\phi_{i,t},\phi_{j,t})\to\ii_\NA(\f_i,\f_j),\quad t^{-1}\ii(\phi_{i,t})\to\ii_\NA(\f_i).
$$
Since $\Ga$ is {\sgh}, \eqref{equ:sgh1} yields 
$$
t^{-1}\left|\int(\phi_{1,t}-\phi_{2,t})\left(\Ga(\phi_{3,t})-\Ga(\phi_{4,t})\right)\right|\le C\left(t^{-1}\ii(\phi_{3,t},\phi_{4,t})\right)^\a\left(t^{-1}+\max_i t^{-1}\ii(\phi_{i,t})\right)^{1-\a}, 
$$
Letting $t\to\infty$ yields \eqref{equ:sghNA}\footnote{In fact without the additive constant $1$ on the right, but this is anyway automatic by homogeneity wrt the action of $\R_{>0}$.}, and concludes the proof of (i). 

To prove (ii), for $i=1,2$ (resp.~$i=3$) pick a sequence $(\f_i^j)_j$ in $\cH_\NA$ (resp~$\cH^S_\NA$) such that $\lim_j\dd_\ra(\f_i^j,\phi_{i,\infty})=0$, and for each $j$ choose a ray with analytic singularities $\{\phi_{i,t}^j\}$ in $\cH$ (resp.~$\cH^S$) with \na limit $\f_i^j$. Writing 
$$
\int(\phi_{1,t}-\phi_{2,t})\Ga(\phi_{3,t})-\int(\phi_{1,t}^j-\phi_{2,t}^j)\Ga(\phi_{3,t}^j)
$$
$$
=\int(\phi_{1,t}-\phi_{2,t})\left(\Ga(\phi_{3,t})-\Ga(\phi_{3,t}^j)\right)+\sum_{i=1,2}\int(\phi_{i,t}-\phi_{i,t}^j)\Ga(\phi_{3,t}^j),
$$
and using~\eqref{equ:sgh1}, \eqref{equ:sgh2}, we get
\begin{equation}\label{equ:bihold}
\left|\int(\phi_{1,t}-\phi_{2,t})\Ga(\phi_{3,t})-\int(\phi_{1,t}^j-\phi_{2,t}^j)\Ga(\phi_{3,t}^j)\right|\le C\left[\e_{j,t}^\a M_{j,t}^{1-\a}+\e_{j,t}^{\a_n} M_{j,t}^{1-\a_n}\right]
\end{equation}
with
$$
\e_{j,t}:=\max_i\dd_1(\phi_{i,t},\phi_{i,t}^j),\quad M_{j,t}:=1+\max_i\max\left\{\dd_1(\phi_{i,t}),\dd_1(\phi_{i,t}^j)\right\}.
$$
Since $\Ga_\NA$ is the \na transform of $\Ga$, we get for each $j$ 
$$
\lim_t t^{-1}\int(\phi_{1,t}^j-\phi_{2,t}^j)\Ga(\phi_{3,t}^j)=\int(\f_1^j-\f_2^j)\Ga_\NA(\f_3^j). 
$$
On the other hand, since $\{\phi_{i,t}^j\}$ is an almost geodesic ray directed by $\f_i^j$ (see Example~\ref{exam:analmost}), Lemma~\ref{lem:almostd} yields
$$
\lim_t t^{-1}\e_{j,t}=\e_j:=\max_i\dd_{1,\ra}(\f_i,\f_i^j),\quad\lim_t t^{-1} M_{j,t}=M_j:=\max_i\max\left\{\dd_{1,\ra}(\f_i),\dd_{1,\ra}(\f_i^j)\right\}.
$$
By~\eqref{equ:bihold} we thus get
$$
\int(\f_1^j-\f_2^j)\Ga_\NA(\f_3^j)-C\left[\e_j^\a M_j^{1-\a}+\e_j^{\a_n} M_j^{1-\a_n}\right]
$$
$$
\le\liminf_t t^{-1}\int(\phi_{1,t}-\phi_{2,t})\Ga(\phi_{3,t})\le\limsup_t t^{-1}\int(\phi_{1,t}-\phi_{2,t})\Ga(\phi_{3,t})
$$
$$
\le \int(\f_1^j-\f_2^j)\Ga_\NA(\f_3^j)+C\left[\e_j^\a M_j^{1-\a}+\e_j^{\a_n} M_j^{1-\a_n}\right].
$$
Since we have already proved that $\Ga_\NA$ is {\sgh}, and hence continuous, we have $\lim_j \int(\f_1^j-\f_2^j)\Ga_\NA(\f_3^j)=\int(\phi_{1,\infty}-\phi_{2,\infty})\Ga_\NA(\phi_{3,\infty})$. Further, $M_j$ converges to $\max_i\dd_{1,\ra}(\f_i)$, hence remains bounded, while $\e_j\to 0$. Letting $j\to\infty$ thus yields~\eqref{equ:slopeint}, which concludes the proof. 
\end{proof}

\begin{lem}\label{lem:NAtrans2} Assume as in Lemma~\ref{lem:NAtrans1} that $\Ga\colon(\cH^S/\R,\ii)\to(\cM^1,\done)$ is {\sgh}, and suppose further that it admits an {\EL} functional $F\colon\cH^S\to\R$. Then:
\begin{itemize}
\item[(i)] $F$ admits a unique {\sgh} extension to $\cE^{1,S}$, which further is an {\EL} functional for $\Ga\colon\cE^{1,S}\to\cM^1$; 
\item[(ii)] $F$ admits a \na transform $F_\NA\colon\cH^s\to\R$ with a unique {\sgh} extension to $\cE^{1,S}_\NA$, which further is an {\EL} functional for $\Gamma_\NA:\cE^{1,S}_\NA\to\R$;
\item[(iii)] for any almost geodesic ray $\{\phi_t\}$ in $\cE^{1,S}$ whose direction $\phi_\infty$ lies in $\cE^{1,S}_\NA$, we further have 
\begin{equation}\label{equ:slopeF}
F_\NA(\phi_\infty)=\lim_{t\to\infty} t^{-1} F(\phi_t);
\end{equation}
\item[(iv)] if $F$ is concave (with respect to linear segments) on $\cE^{1,S}$, then $F_\NA$ is concave on $\cE^{1,S}_\NA$. 
\end{itemize}
\end{lem}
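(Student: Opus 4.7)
The plan is to derive all four assertions from the integrated Euler--Lagrange formula~\eqref{equ:EL}, combined with the sgh control of $\Ga$ and the extension/slope machinery already established in Lemma~\ref{lem:NAtrans1}.

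For (i), I would use Lemma~\ref{lem:EL}(ii) to write, for $\phi_1,\phi_2\in\cH^S$,
\[
F(\phi_1)-F(\phi_2)=\int_0^1 \left(\int(\phi_1-\phi_2)\,\Ga(s\phi_1+(1-s)\phi_2)\right)ds,
\]
and split the inner integrand as $\int(\phi_1-\phi_2)\,\Ga(\phi_\refe)$ plus $\int(\phi_1-\phi_2)(\Ga(s\phi_1+(1-s)\phi_2)-\Ga(\phi_\refe))$. Bound the first term by Lemma~\ref{lem:holdmu} (Hölder of exponent $\a_n$ in $\dd_1(\phi_1,\phi_2)$), and the second by the sgh property~\eqref{equ:sgh1} of $\Ga$, using~\eqref{equ:Iconv} to absorb the $s$-dependence and $\ii\lesssim\dd_1$ to convert the resulting $\ii$-bound into a $\dd_1$-bound. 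This yields a sgh estimate for $F$ on $\cH^S$, so $F$ extends uniquely to a sgh functional on $\cE^{1,S}$. That the extension remains EL for $\Ga$ follows by passing to the limit in the integrated formula, using the continuity of $\Ga\colon\cE^{1,S}\to\cM^1$ supplied by Lemma~\ref{lem:NAtrans1}(i).

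For (ii), define $F_\NA\colon\cE^{1,S}_\NA\to\R$ directly by the NA analogue of the integrated formula
\[
F_\NA(\f):=\int_0^1 \left(\int\f\,\Ga_\NA(s\f)\right)ds,
\]
which makes sense because $s\f=s\f+(1-s)\cdot 0$ lies in $\cE^{1,S}_\NA$ by convexity of $\PSH_\NA(L)$, and $\Ga_\NA$ extends to $\cE^{1,S}_\NA$ by Lemma~\ref{lem:NAtrans1}(i). The same argument as in~(i), transposed to the NA side using the sgh property of $\Ga_\NA$, shows $F_\NA$ is sgh and EL for $\Ga_\NA$. To see $F_\NA$ is the NA transform of $F$, take a ray $\{\phi_t\}$ in $\cH^S$ with analytic singularities and $\phi_\NA=\f\in\cH^S_\NA$ (provided by Proposition~\ref{prop:anal}); the integrated formula yields
\[
t^{-1}\left(F(\phi_t)-F(\phi_\refe)\right)=\int_0^1 t^{-1}\left(\int(\phi_t-\phi_\refe)\,\Ga(s\phi_t+(1-s)\phi_\refe)\right)ds.
\]
For each fixed $s$, the three rays $\{\phi_t\}$, $\{\phi_\refe\}$, $\{s\phi_t+(1-s)\phi_\refe\}$ are almost geodesic rays directed respectively by $\f$, $0$, $s\f$ (all in $\cE^{1,S}_\NA$), so Lemma~\ref{lem:NAtrans1}(ii) gives convergence of the inner integral to $\int\f\,\Ga_\NA(s\f)$. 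A uniform-in-$s$ bound, obtained from the sgh control of $\Ga$ together with $\dd_1(\phi_t)=O(t)$, then allows one to exchange limit and integral by dominated convergence, producing $F_\NA(\f)$.

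For (iii), approximate $\phi_\infty\in\cE^{1,S}_\NA$ by a sequence $\f_j\in\cH^S_\NA$ (Corollary~\ref{cor:invdense}), choose rays with analytic singularities $\{\phi^j_t\}$ in $\cH^S$ with $\phi^j_\NA=\f_j$, which are almost geodesic and directed by $\f_j$ (Example~\ref{exam:analmost}). By (ii), $t^{-1}F(\phi^j_t)\to F_\NA(\f_j)$, and the sgh estimate for $F$ combined with Lemma~\ref{lem:almostd} controls $\limsup_t|t^{-1}F(\phi_t)-t^{-1}F(\phi^j_t)|$ by a constant times $\dd_{1,\ra}(\phi_\infty,\f_j)^\a$, which tends to $0$. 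Continuity of $F_\NA$ then yields~\eqref{equ:slopeF}. Finally, for (iv), concavity on $\cE^{1,S}_\NA$ reduces by continuity and density to checking concavity on rational convex combinations $\la\f_1+(1-\la)\f_0$ with $\f_0,\f_1\in\cH^S_\NA$; lifting these to rays with analytic singularities (using that $\la\phi^1_t+(1-\la)\phi^0_t$ has analytic singularities with NA limit $\la\f_1+(1-\la)\f_0$ when $\la\in\Q$), the concavity of $F$ on $\cH^S$ passes to the slope at infinity by~(ii).

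The main technical obstacle is the sgh bound for $F$ in~(i): one must marry the $\a_n$-Hölder estimate on the fixed reference measure (Lemma~\ref{lem:holdmu}) with the $\a$-Hölder sgh estimate for $\Ga$, while keeping the dependence in $\dd_1(\phi_1,\phi_2)$ (not only in $\ii(\phi_1,\phi_2)$) and uniformly absorbing the parameter $s\in[0,1]$ via~\eqref{equ:Iconv}. Once this is in hand, the remaining steps reduce to careful applications of Lemma~\ref{lem:NAtrans1} and the density of $\cH^S_\NA$ in $\cE^{1,S}_\NA$.
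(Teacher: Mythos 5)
Your overall strategy matches the paper's: define $F_\NA$ by the integral Euler--Lagrange formula, control it via the semi-global H\"older estimates for $\Ga$ and $\Ga_\NA$, and pass to slopes along almost geodesic rays using Lemma~\ref{lem:NAtrans1}(ii) and dominated convergence. Your tactical variations for (iii) (diagonal approximation by elements of $\cH^S_\NA$, rather than the paper's direct dominated convergence in the integral formula) and for (iv) (passing the pointwise concavity inequality of $F$ to the slope at infinity for rational convex combinations, rather than the paper's use of the monotonicity criterion from Lemma~\ref{lem:Fconc}(iii)) are both sound.

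However, the decomposition you propose for the sgh bound in (i)---which you yourself single out as the main technical obstacle---does not work as stated. You split
\[
\int(\phi_1-\phi_2)\,\Ga(s\phi_1+(1-s)\phi_2)
=\int(\phi_1-\phi_2)\,\Ga(\phi_\refe)
+\int(\phi_1-\phi_2)\bigl(\Ga(s\phi_1+(1-s)\phi_2)-\Ga(\phi_\refe)\bigr),
\]
and propose to bound the second term by~\eqref{equ:sgh1}. But that estimate gives a bound of the order $\ii(s\phi_1+(1-s)\phi_2,\phi_\refe)^\a\max_i(1+\ii(\phi_i))^{1-\a}$, which carries \emph{no} dependence on $\dd_1(\phi_1,\phi_2)$ and so does not decay as $\phi_1\to\phi_2$ when $\phi_2$ is far from $\phi_\refe$. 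Hence this splitting cannot produce a sgh estimate for $F$. The obstacle dissolves once one notices there is no need to split at all: apply~\eqref{equ:sgh2} directly with $\phi_3=s\phi_1+(1-s)\phi_2\in\cS$, giving
\[
\Bigl|\int(\phi_1-\phi_2)\,\Ga(s\phi_1+(1-s)\phi_2)\Bigr|
\le\|\phi_1-\phi_2\|_{L^1(\Ga(s\phi_1+(1-s)\phi_2))}
\le C\,\dd_1(\phi_1,\phi_2)^{\a_n}\max_i\bigl(1+\dd_1(\phi_i)\bigr)^{1-\a_n},
\]
uniformly in $s\in[0,1]$ (the maximum over $\phi_1,\phi_2,\phi_3$ is controlled by $\max\{\dd_1(\phi_1),\dd_1(\phi_2)\}$ up to a constant, using~\eqref{equ:Iconv} and $\ii\approx\jj\lesssim\dd_1$). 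Integrating over $s$ gives the sgh bound for $F$ with exponent $\a_n$. The same remark applies to the sgh of $F_\NA$ in your step~(ii); once corrected, the remaining parts of your argument stand.
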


\begin{proof} Define $F_\NA\colon\cE^{1,S}_\NA\to\R$ by setting 
$$
F_\NA(\f):=\int_0^1 ds\int\f\,\Ga_\NA(s\f)
$$
for $\f\in\cE^{1,S}_\NA$. Since $\Ga_\NA\colon\cE^{1,S}_\NA\to\cM^1_\NA$ is {\sgh}, it is straightforward to see that $F_\NA$ is {\sgh} as well. To show that $F_\NA$ is an {\EL} functional for $\Ga_\NA$ we need to show
\begin{equation}\label{equ:ELNA}
F_\NA(\f)-F_\NA(\p)=\int_0^1ds \int(\f-\p)\Ga_\NA(s\f+(1-s)\p)
\end{equation}
for all $\f,\p\in\cE^{1,S}_\NA$ (see Lemma~\ref{lem:EL}). By semi-global H\"older continuity, it is enough to show this when $\f,\p\in\cH^S_\NA$. Pick rays $\{\phi_t\}$, $\{\p_t\}$ in $\cH^S$ with analytic singularities and \na limits $\f,\p$. Since $F\colon\cE^{1,S}\to\R$ is an {\EL} functional for $\Ga$, we have for each $t\ge 0$ 
$$
F(\phi_t)=\int_0^1ds\int(\phi_t-\phi_\refe)\Ga(s\phi_t+(1-s)\phi_\refe),\quad F(\p_t)=\int_0^1ds\int(\p_t-\phi_\refe)\Ga(s\p_t+(1-s)\phi_\refe),
$$
$$
F(\phi_t)-F(\p_t)=\int_0^1 ds\int(\phi_t-\p_t)\Ga(s\phi_t+(1-s)\p_t). 
$$
By~\eqref{equ:sgh1} and~\eqref{equ:Iconv},  
$$
t^{-1}\int(\phi_t-\phi_\refe)\Ga(s\phi_t+(1-s)\phi_\refe),\quad t^{-1}\int(\p_t-\phi_\refe)\Ga(s\p_t+(1-s)\phi_\refe),
$$
$$
t^{-1}\int(\phi_t-\p_t)\Ga(s\phi_t+(1-s)\p_t)
$$
are uniformly bounded, and they respectively converge for each $s\in [0,1]$ to 
$$
\int\f\,\Ga_\NA(s\f),\quad\int\p\,\Ga_\NA(s\p),\quad\int(\f-\p)\Ga_\NA(s\f+(1-s)\p)
$$
as $t\to\infty$, by~\eqref{equ:slopeint}. By dominated convergence, we thus get
$$
t^{-1} F(\phi_t)\to F_\NA(\f),\quad t^{-1} F(\p_t)\to F_\NA(\p), 
$$
and
$$
t^{-1}\int_0^1 ds\int(\phi_t-\p_t)\Ga(s\phi_t+(1-s)\p_t)\to\int_0^1 ds\int (\f-\p)\Ga_\NA(s\f+(1-s)\p)
$$
as $t\to\infty$. This proves~\eqref{equ:ELNA}, and hence (i). Now assume $\{\phi_t\}$ is an almost geodesic ray in $\cE^{1,S}$ whose direction $\phi_\infty$ lies in $\cE^{1,S}_\NA$. Then 
$$
F(\phi_t)=\int_0^1 ds\int(\phi_t-\phi_\refe)\Ga(s\phi_t+(1-s)\phi_\refe),\quad F_\NA(\phi_\infty)=\int_0^1 ds\int\phi_\infty\,\Ga_\NA(s\phi_\infty).
$$
For any $s\in [0,1]$, \eqref{equ:slopeint} yields 
$$
t^{-1}\int(\phi_t-\phi_\refe)\Ga(s\phi_t+(1-s)\phi_\refe)\to\int\phi_\infty\,\Ga_\NA(s\phi_\infty),
$$
and (ii) follows by dominated convergence. 

Finally, assume $F$ is concave on $\cE^{1,S}$. Then $\int(\phi-\p)(\Ga(\p)-\Ga(\phi))\ge 0$ for all $\phi,\p\in\cH^S$ (see Lemma~\ref{lem:Fconc}). Passing to the slope at infinity along rays with analytic singularities, this yields $\int(\f-\p)\left(\Ga_\NA(\p)-\Ga_\NA(\f)\right)\ge 0$ for all $\f,\p$ in $\cH^S_\NA$, hence also in $\cE^{1,S}_\NA$. Using again Lemma~\ref{lem:Fconc}, we conclude $F_\NA$ is concave. 
\end{proof}

%
%
%%%%%%%%%%%%%%%%%%%%%%%%%%%%%%%%%%%%%%%%%%%%%%%%%%%%%%%%%%%%%%
\section{Weighted pluripotential theory}\label{sec:wppt}
%%%%%%%%%%%%%%%%%%%%%%%%%%%%%%%%%%%%%%%%%%%%%%%%%%%%%%%%%%%%%%

In this section, we first review the weighted pluripotential formalism in the complex setting following~\cite{BWN,Lah19}, and then introduce \na versions of the weighted Monge--Amp\`ere operators and energy functionals, extending~\cite{HLi23}. 

In what follows, $T_\C$ denotes a complex algebraic torus with maximal compact torus $T$ and dual lattices $M_\Z=M_\Z(T)$, $N_\Z=N_\Z(T)$ (see~\S\ref{sec:lattices}). 

%
%%%%%%%%%%%%%%%%%%%%%%%%%%%%%%%%%%%%%%%%%%%%%%%%%%%%%%%%%%%%%%%%%%%
%
%%%%%%%%%%%%%%%%%%%%%%%%%%%%%%%%%%%%%%%%%%%%%%%%%%%%%%%%%%%%%%
\subsection{Duistermaat--Heckman measures and weighted degrees}\label{sec:DH}

Consider for the moment an arbitrary complex projective scheme $Y$ of pure dimension $n$, equipped with a $T_\C$-action. Denote by $\{Y_\a\}_\a$ its set of irreducible components, and by $m_\a$ the multiplicity of $Y$ along $Y_\a$. 

To each (not necessarily ample) $T_\C$-equivariant $\Q$-line bundle $F$ on $Y$ is associated its \emph{Duistermaat--Heckman measure} $\DH_F$, a signed measure with compact support on $M_\R$. This measure is uniquely characterized in terms of equivariant intersection numbers by
\begin{equation}\label{equ:DHmom}
\int_{M_\R} \frac{\xi^d}{d!}\,\DH_F=n!\frac{(F^{n+d})_{T}}{(n+d)!}(\xi)
\end{equation}
for all $\xi\in N_\R$ and $d\in\N$. Here
$$
(F^{n+d})_{T}:=\pi_\star\left(c_1^{T}(F)^{n+d}\cap[Y]_{T}\right)\in\CH_{-d}^{T}(\Spec\C)_\Q\simeq S^d M_\Q
$$
is viewed as a rational homogeneous polynomial of degree $d$ on $N_\R$, where $\pi\colon Y\to\Spec\C$ denotes the structure morphism, $c_1^{T}(F)\in\CH^1_{T}(Y)$ the equivariant first Chern class, and $[Y]_{T}=\sum_\a m_\a[Y_\a]_{T}\in\CH_n^{T}(Y)$ the equivariant fundamental class (see~\cite{EG}). Note that 
\begin{equation}\label{equ:DHdec}
\DH_F=\sum_\a m_\a\DH_{F|_{Y_\a}}. 
\end{equation}

\begin{rmk} When $\xi$ is rational, by construction of equivariant Chow (co)homology~\cite[\S 2.2]{EG}, the right-hand side of~\eqref{equ:DHmom} can be realized as a usual top self-intersection number on the total space of a fiber bundle with typical fiber $Y$, compare~\cite[\S 3.1]{BHJ1} and~\cite[\S 2.1]{HLi23}. 
\end{rmk}

The Duistermaat--Heckman measure also admits the following differential geometric description. Each smooth $T$-invariant metric $\phi\in C^\infty(F)^T$ induces a moment map $m_\phi\colon Y\to M_\R$ for the closed $T$-invariant $(1,1)$-form $\ddc\phi$ (the curvature form), 
given by 
\begin{equation}\label{equ:moment}
-m_\phi^\xi:=\langle m_\phi,\xi\rangle=\cL_{J\xi}\phi:=\frac{d}{dt}\bigg|_{t=0} \exp(Jt\xi)^\star\phi
\end{equation}
for $\xi\in N_\R$, which thus satisfies the moment map equation
\begin{equation}\label{equ:mom}
-dm_\phi^\xi=\ddc\phi(\xi,\cdot), 
\end{equation}
see for instance~\cite[\S 3.1]{BJT}. Then 
\begin{equation}\label{equ:DH2}
\DH_F=(m_\phi)_\star\left[(\ddc\phi)^{n}\wedge\d_Y\right]
\end{equation}
with $\d_Y=\sum_\a m_\a \d_{Y_\a}$ the integration current on $Y$. Indeed, this formula is well-known when $Y$ is smooth, and the general case follows by pulling back $F$ to an equivariant resolution of singularities of $Y$. 

\smallskip

When $F$ is further ample, the equivariant Riemann--Roch theorem implies that $\DH_F$ is a positive measure which describes the distribution of the weights of the $T$-action on $\Hnot(Y,mF)$ as $m\to\infty$ (see for instance~\cite[Appendix~B]{BHJ1}). Furthermore, choosing $\phi\in\cH^T(F)$ in~\eqref{equ:DH2} shows that $\DH_F$ has support equal to 
$$
m_\phi(Y)=:P_F=\bigcup_\a P_{F|_{Y_\a}}\subset M_\R. 
$$
Here $P_{F|_{Y_\a}}=m_\phi(Y_\a)$ is, for each irreducible component $Y_\a$, a rational (convex) polytope (independent of the choice of $\phi$) by the Atiyah--Guillemin--Sternberg theorem, and is called the \emph{moment polytope} of $(Y_\a,F)$. 

Returning to the case of an arbitrary equivariant $\Q$-line bundle $F$, we introduce:
\begin{defi} Given any `weight' function $v\in C^\infty(M_\R)$, we define the \emph{$v$-weighted degree} of $F$ as 
\begin{equation}\label{equ:wdeg}
\deg_v(F):=\int_{M_\R} v\,\DH_F\in\R. 
\end{equation}
\end{defi}
By~\eqref{equ:DHmom} for $v=1$ we recover the usual degree
$$
\deg_1(F)=\deg(F):=(F^{n}). 
$$
More generally, for a monomial $v=\xi^d/d!$ \eqref{equ:DHmom} yields
\begin{equation}\label{equ:degpol}
\deg_{\xi^d/d!}(F)=n!\frac{(F^{n+d})_{T}}{(n+d)!}(\xi). 
\end{equation}
By~\eqref{equ:DHdec} and~\eqref{equ:DH2}, we have
\begin{equation}\label{equ:degmet}
\deg_v(F)=\sum_\a m_\a \deg_v(F|_{Y_\a})\quad\text{ where}\quad\deg_v(F|_{Y_\a})=\int_{Y_\a} v(m_\phi)(\ddc\phi)^{n}. 
\end{equation}
When $F$ is further ample, $v\mapsto\deg_v(F)$ is monotone increasing; in particular, 
\begin{equation}\label{equ:wvolbd}
(F^{n})\inf_{P_F} v\le\deg_v(F)\le(F^{n})\sup_{P_F} v. 
\end{equation}

\begin{lem}\label{lem:diffdeg} For all $F,G\in\Pic(Y)^{T}_\Q$, the directional derivative
\begin{equation}\label{equ:twdeg}
\deg_v'(F;G):=\frac{d}{ds}\bigg|_{s=0}\deg_v(F+sG)\in\R
\end{equation}
is well-defined, and linear with respect to $G$. 
\end{lem}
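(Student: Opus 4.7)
The plan is to prove both assertions at once by producing an explicit integral formula for $\deg_v'(F;G)$, and then read off linearity from it.

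First, pick smooth $T$-invariant reference metrics $\phi\in C^\infty(F)^T$ and $\psi\in C^\infty(G)^T$ (which exist after possibly passing to a sufficiently divisible multiple so that $F$ and $G$ are honest line bundles, and then averaging any smooth Hermitian metric under the compact group $T$). For $s\in\R$ set $\phi_s:=\phi+s\psi$; when $s\in\Q$ this defines a smooth $T$-invariant metric on $F+sG$. From~\eqref{equ:moment} the moment maps satisfy $m_{\phi_s}=m_\phi+s m_\psi$, and the curvature forms satisfy $\ddc\phi_s=\ddc\phi+s\ddc\psi$. Plugging into~\eqref{equ:degmet} yields
\[
\deg_v(F+sG)=\sum_\a m_\a\int_{Y_\a} v(m_\phi+s m_\psi)(\ddc\phi+s\ddc\psi)^n,
\]
for all $s\in\Q$. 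The right-hand side is manifestly a smooth function of $s\in\R$, since the integrand depends smoothly on $s$ and $Y$ is compact, so differentiation under the integral sign is legitimate.

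Differentiating at $s=0$, I obtain
\[
\deg_v'(F;G)=\sum_\a m_\a\int_{Y_\a}\left[\langle\nabla v(m_\phi),m_\psi\rangle\,(\ddc\phi)^n+n\,v(m_\phi)\,(\ddc\phi)^{n-1}\wedge\ddc\psi\right],
\]
which in particular is a finite real number, proving well-definedness.

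For linearity in $G$: homogeneity $\deg_v'(F;cG)=c\deg_v'(F;G)$ for $c\in\Q$ is immediate by the chain rule applied to $s\mapsto\deg_v(F+s(cG))=\deg_v(F+(cs)G)$. For additivity, given $G_1,G_2\in\Pic(Y)^T_\Q$ with smooth $T$-invariant metrics $\psi_1,\psi_2$, the sum $\psi_1+\psi_2$ is a $T$-invariant metric on $G_1+G_2$ whose moment map is $m_{\psi_1}+m_{\psi_2}$ and whose curvature is $\ddc\psi_1+\ddc\psi_2$; since the integrand in the formula above is linear in the pair $(m_\psi,\ddc\psi)$, one concludes $\deg_v'(F;G_1+G_2)=\deg_v'(F;G_1)+\deg_v'(F;G_2)$.

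There is no real obstacle: everything reduces to differentiation under an integral sign on a compact space with smooth integrands. The only point worth double-checking is that the smooth extension of the integral formula from $s\in\Q$ to $s\in\R$ agrees at rational $s$ with $\deg_v(F+sG)$ as defined via~\eqref{equ:degmet}; this is automatic by construction since the formula~\eqref{equ:degmet} itself was stated for arbitrary smooth $T$-invariant metrics.
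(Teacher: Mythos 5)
Your proof is correct and follows essentially the same route as the paper's: both pick smooth $T$-invariant metrics, write $\deg_v(F+sG)$ via the differential-geometric formula~\eqref{equ:degmet}, and differentiate under the integral sign to obtain exactly the expression~\eqref{equ:degder}. You spell out the linearity argument and the rational-versus-real subtlety (which the paper relegates to Remark~\ref{rmk:dconv} and a terse ``the result follows''), but the underlying computation is identical.
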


\begin{proof} Pick $\phi\in C^\infty(F)^T$, $\tau\in C^\infty(G)^T$. By~\eqref{equ:degmet} we have 
$$
\deg_v(F+sG)=\int_Y v(m_{\phi}+s m_{\tau})(\ddc(\phi+s\tau))^{n}
$$
and the result follows, together with the differential geometric expression
\begin{equation}\label{equ:degder}
\deg_v'(F;G)=\int_Y\left[ n v(m_{\phi})\ddc\tau+\langle v'(m_{\phi}),m_{\tau}\rangle \ddc\phi\right]\wedge(\ddc\phi)^{n-1}. 
\end{equation}
\end{proof}

\begin{rmk}\label{rmk:dconv} Here, and on a few other occasions in this paper, $\phi+s\tau$ really makes sense as a metric on a $\Q$-line bundle only when $s$ is rational. However, this is a purely notational obstacle that is easily circumvented by introducing instead potentials with respect to reference metrics, or by arguing in the tensor product with $\R$ of the appropriate Picard group of isomorphism classes of metrized (equivariant) line bundles.
\end{rmk}

\begin{rmk} While $F\mapsto\deg_v(F)=\int_{M_\R} v\,\DH_F$ is differentiable, the above expression~\eqref{equ:degder} for its derivative involves $v'$, and $F\mapsto\DH_F$ itself is thus not differentiable. When $v$ is a monomial $\xi^{d}/d!$ with $\xi\in N_\R$, Lemma~\ref{lem:diffdeg} also follows from~\eqref{equ:DHmom}, which further yields
$$
\deg_{\xi^d/d!}'(F;G)=n!\frac{(F^{n+d-1}\cdot G)_{T}}{(n+d-1)!}(\xi)
$$
In the general case, a purely algebro-geometric proof of Lemma~\ref{lem:diffdeg} can presumably be provided by approximating $v$ in $C^1$-topology by polynomials, in the spirit of~\cite[\S 5.3]{HLi23}. 
\end{rmk}

As a consequence of \eqref{equ:DHmom} and the invariance of equivariant intersection numbers under flat deformation, we note for later use: 

\begin{lem}\label{lem:degDH} Pick a test configuration $(\cX,\cL)$ for $(X,L)$, with central fiber $\cX_0=\sum_E b_E E$. Then:  

\begin{equation}\label{equ:DHdectc}
\DH_L=\DH_{\cL|_{\cX_0}}=\sum_E b_E\DH_{\cL|_E}, 
\end{equation}
\begin{equation}\label{equ:degdec}
\deg_v(L)=\sum_E b_E \deg_v(\cL|_E),\quad\deg_v'(L;G)=\sum_E b_E\deg_v'(\cL|_E;G_\cX|_E)
\end{equation}
for any equivariant $\Q$-line bundle $G$ on $X$, $v\in C^\infty(M_\R)$. If $\cL$ is further ample, then 
\begin{equation}\label{equ:poldec}
P_L=\bigcup_E P_{\cL|_E}. 
\end{equation}
\end{lem}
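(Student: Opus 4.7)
The equality $\DH_L=\DH_{\cL|_{\cX_0}}$ in~\eqref{equ:DHdectc} will follow from the invariance of the equivariant intersection numbers under the flat deformation $\pi\colon\cX\to\P^1$: since $T$ commutes with the $\C^\times$-action of the test configuration, $T$ acts trivially on the base $\P^1$, so the two fibers $\cX_0$ and $\cX_\infty$ of $\pi$ are $T$-equivariantly linearly equivalent Cartier divisors on $\cX$, being both equivalent to $\pi^\star\cO_{\P^1}(1)$ (which, since $T$ acts trivially on $\P^1$, carries a canonical equivariant structure). Consequently $[\cX_0]_T=[\cX_\infty]_T\in\CH_n^T(\cX)$, and pairing with $c_1^T(\cL)^{n+d}$ gives
\[
(\cL|_{\cX_0}^{n+d})_T=(\cL|_{\cX_\infty}^{n+d})_T\qquad\text{for all } d\ge 0.
\]
Combined with the $T$-equivariant isomorphism $(\cX_\infty,\cL|_{\cX_\infty})\simeq(X,L)$ coming from~\eqref{equ:tc}, the right-hand side equals $(L^{n+d})_T$, and the characterization~\eqref{equ:DHmom} now yields $\DH_L=\DH_{\cL|_{\cX_0}}$. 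The second equality in~\eqref{equ:DHdectc} is then an immediate instance of~\eqref{equ:DHdec}, applied to the pure-dimensional scheme $\cX_0$ with irreducible components $E$ and multiplicities $b_E$.

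Next, integrating~\eqref{equ:DHdectc} against the weight $v$ immediately gives the first formula in~\eqref{equ:degdec}. For the derivative formula, one applies that first formula to the test configuration $(\cX,\cL+sG_\cX)$ for $(X,L+sG)$:
\[
\deg_v(L+sG)=\sum_E b_E\,\deg_v\!\bigl(\cL|_E+sG_\cX|_E\bigr),
\]
where both sides are interpreted as in Remark~\ref{rmk:dconv}, and then differentiates at $s=0$ using Lemma~\ref{lem:diffdeg} together with the linearity of $G\mapsto\deg_v'(F;G)$.

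Finally, for~\eqref{equ:poldec}, assume $\cL$ is relatively ample. Then $\cL|_E$ is ample on each irreducible component $E$ of $\cX_0$, so $\DH_{\cL|_E}$ is a positive measure with $\supp\DH_{\cL|_E}=P_{\cL|_E}$. Since all $b_E>0$ and $\DH_L$ is itself a positive measure (as $L$ is ample), the identity $\DH_L=\sum_E b_E\DH_{\cL|_E}$ forces
\[
P_L=\supp\DH_L=\bigcup_E\supp\DH_{\cL|_E}=\bigcup_E P_{\cL|_E}.
\]

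The main obstacle is the initial step: rigorously identifying the equivariant cycle classes $[\cX_0]_T$ and $[\cX_\infty]_T$ in $\CH_n^T(\cX)$. This requires using that $T$ acts trivially on $\P^1$ to lift the standard linear equivalence $\cX_0\sim\cX_\infty$ to a $T$-equivariant one; it is otherwise the $T$-equivariant analogue of the usual flat-deformation invariance that underlies the very definition of the Donaldson--Futaki invariant (cf.~\cite{BHJ1}), and all subsequent parts of the lemma reduce formally to this.
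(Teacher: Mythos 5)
Your proof is correct and follows exactly the route the paper indicates: the authors state the lemma as a direct consequence of~\eqref{equ:DHmom} and the invariance of equivariant intersection numbers under flat deformation, and give no further argument, which is precisely the chain $[\cX_0]_T=[\cX_\infty]_T\in\CH_n^T(\cX)$ together with the $T$-equivariant identification $(\cX_\infty,\cL|_{\cX_\infty})\simeq(X,L)$ that you spell out. One small caveat: the reason $T$ acts trivially on $\P^1$ is not merely that $T$ commutes with the $\C^\times$-action, but that the $T$-equivariant structure on $(\cX,\cL)$ restricts over $\P^1\setminus\{0\}$ to the fiberwise action on $X\times(\P^1\setminus\{0\})$ via~\eqref{equ:tc}, which is trivial on the $\P^1$-factor and hence trivial on all of $\P^1$ by density; once this is noted, the rest of your argument goes through as intended.
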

Here $G_\cX\in\Pic(\cX)_\Q^T$ denotes the pull-back of $G$ to $\cX$, which is implicitly assumed to dominate $X\times\P^1$. 

%
%%%%%%%%%%%%%%%%%%%%%%%%%%%%%%%%%%%%%%%%%%%%%%%%%%%%%%%%%%%%%%
\subsection{Weighted complex pluripotential theory}\label{sec:weighted}
From now on we return to the general framework of thus papier, \ie $(X,L)$ is a smooth projective variety equipped with an ample $\Q$-line bundle. We assume given a compact torus $T\subset\Aut(X,L)$, with dual lattices $M_\Z=M_\Z(T)$, $N_\Z=N_\Z(T)$, see~\S\ref{sec:lattices}, and denote by 
$$
P=P_L\subset M_\R=N_\R^\vee\simeq(\Lie T)^\vee
$$
the moment polytope. 

We use~\cite[\S 3]{BJT} as a reference for what follows. Fix a weight function $v\in C^\infty(M_\R)$. Following~\cite{BWN,Lah19}, we associate to any smooth $T$-invariant metric $\phi\in C^\infty(L)^T$ its \emph{$v$-weighted \ma measure} 
$$
\MA_v(\phi):=v(m_\phi)(\ddc\phi)^{n}, 
$$
a $T$-invariant signed measure on $X$ of total mass
\begin{equation}\label{equ:totalwMA}
\int_X\MA_v(\phi)=\int_P v\,\DH_L=\deg_v(L), 
\end{equation}
see~\eqref{equ:degmet}. The (unweighted, normalized) \ma measure is thus
$$
\MA(\phi)=V^{-1}\MA_1(\phi)=\MA_{V^{-1}}(\phi). 
$$
The weighted \ma measure of any $\phi\in\cH^T$ satisfies 
\begin{equation}\label{equ:MAvMA}
V(\inf_P v)\MA(\phi)\le\MA_v(\phi)\le V(\sup_P v)\MA(\phi).
\end{equation}
In particular, $\MA_v(\phi)\ge 0$ iff $v\ge 0$ on $P$. 

Given any equivariant $\Q$-line bundle $F$ and $\tau\in C^\infty(F)^T$, the \emph{$\tau$-twisted, $v$-weighted \ma measure} of $\phi\in C^\infty(L)^T$ is defined as the directional derivative\footnote{See Remark~\ref{rmk:dconv}.}
\begin{equation}\label{equ:twder}
\MA_v^\tau(\phi):=\frac{d}{ds}\bigg|_{s=0}\MA_v(\phi+s\tau). 
\end{equation}
Explicitly, 
\begin{equation}\label{equ:twMA} 
\MA_v^\tau(\phi)=n v(m_\phi)\,\ddc\tau\wedge (\ddc\phi)^{n-1}+\langle v'(m_\phi),m_\tau\rangle(\ddc\phi)^n. 
\end{equation}
When $\phi\in\cH^T$ and $v>0$ on $P$, this can also be written 
\begin{equation}\label{equ:twMAMA}
\MA_v^\tau(\phi)=\left(\tr_{\ddc\phi}(\ddc\tau)+\langle(\log v)'(m_\phi),m_\tau\rangle\right)\MA_v(\phi).
\end{equation}
By~\eqref{equ:degder} this defines a signed measure on $X$ of total mass
\begin{equation}\label{equ:totaltwMA}
\int_X\MA_v^\tau(\phi)=\deg_v'(L;F). 
\end{equation}

\begin{rmk} We emphasize that $\MA_v^\tau(\phi)$ depends on the $T$-action on $F$, which determines the moment map $m_\tau$. In the notation of~\cite{BJT} we have $\MA_v^\tau(\phi)=\MA_v^{\ddcT\tau}(\phi)$ with $\ddcT\tau=(\ddc\tau,m_\tau)$ the equivariant curvature form of $\tau$. 
\end{rmk}

When $F$ is trivial, \cite[(3.14),(3.15)]{BJT} yields 
\begin{equation}\label{equ:twquad}
\int_X f\MA_v^g(\phi)=-n\int_X v(m_\phi)(\ddc\phi)^{n-1}\wedge df\wedge d^c g
\end{equation}
for any $f\in C^\infty(X)$ and $g\in C^\infty(X)^T$. This shows that the weighted \ma operator $\MA_v\colon C^\infty(L)^T\to \Cz(X)^\vee$
satisfies the symmetry property~\eqref{equ:sym}, and hence admits an {\EL} functional
$$
\en_v\colon C^\infty(L)^T\to\R,
$$
called the \emph{$v$-weighted \ma energy}. 

Gor any $\tau\in C^\infty(F)^T$, the operator $\MA_v^\tau\colon C^\infty(L)^T\to \Cz(X)^\vee$ also admits an {\EL} functional
$$
\en_v^\tau\colon C^\infty(L)^T\to\R,
$$
the \emph{$\tau$-twisted, $v$-weighted \ma energy}, given by 
$$
\en_v^\tau(\phi)=\frac{d}{ds}\bigg|_{s=0}\en_v(\phi+s\tau)
$$
for $\phi\in C^\infty(L)^T$. Extending~\cite[Proposition~3.40]{BJT} we next show: 

\begin{prop}\label{prop:MAsgh} Assume that $v\in C^\infty(M_\R)$ is positive on $P$, and normalized by $\deg_v(L)=1$, so that $\MA_v$ maps $\cH^T$ into $\cM^{1,T}$. Then: 
\begin{itemize}
\item[(i)] $\MA_v\colon(\cH^T/\R,\ii)\to(\cM^{1,T},\done)$ is {\sgh}; 
\item[(ii)] if $\tau\in C^\infty(F)^T$ is psh, then there exists $a,b>0$ such that $a\MA_v^\tau+b\MA_v$ induces a {\sgh} map $(\cH^T/\R,\ii)\to(\cM^{1,T},\done)$. 
\end{itemize}
\end{prop}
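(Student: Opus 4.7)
Both parts reduce to quadratic estimates for mixed Monge--Amp\`ere masses via the fundamental theorem of calculus in $\phi$. Throughout, since $v\in C^\infty(M_\R)$ is positive on the compact polytope $P$ and $m_\phi(X)=P$ for every $\phi\in\cH^T$ by Atiyah--Guillemin--Sternberg, the quantities $v(m_\phi),(\log v)'(m_\phi),v''(m_\phi),\dots$, viewed as functions on $X$, are uniformly bounded.

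\emph{Proof of~(i).} Pick $\phi_1,\phi_2\in\cH$ and $\phi_3,\phi_4\in\cH^T$, and set $\phi_s:=(1-s)\phi_4+s\phi_3\in\cH^T$. Since $\phi_3-\phi_4\in C^\infty(X)^T$, combining~\eqref{equ:twder} with~\eqref{equ:twquad} gives
\begin{equation*}
\int(\phi_1-\phi_2)(\MA_v(\phi_3)-\MA_v(\phi_4))=-n\int_0^1\!ds\int v(m_{\phi_s})(\ddc\phi_s)^{n-1}\wedge d(\phi_1-\phi_2)\wedge d^c(\phi_3-\phi_4).
\end{equation*}
The pairing $(f,g)\mapsto\int v(m_{\phi_s})(\ddc\phi_s)^{n-1}\wedge df\wedge d^cg$ is a nonnegative symmetric bilinear form on $C^\infty(X)$ (locally, $df\wedge d^cg$ is proportional to $\langle\nabla f,\nabla g\rangle\,dV$), so Cauchy--Schwarz applies. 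Combined with Lemma~\ref{lem:quaden}~\eqref{equ:quaden1} (applied separately to $\phi_1-\phi_2$ and to $\phi_3-\phi_4$, each decomposed via $\phi_\refe$ as a difference of psh metrics) and with~\eqref{equ:Iconv} to absorb $\ii(\phi_s)\lesssim\max\{\ii(\phi_3),\ii(\phi_4)\}$, this yields
\begin{equation*}
\Big|\!\int(\phi_1-\phi_2)(\MA_v(\phi_3)-\MA_v(\phi_4))\Big|\lesssim\ii(\phi_3,\phi_4)^{\a_n/2}\max_i(1+\ii(\phi_i))^{1-\a_n/2},
\end{equation*}
which by Lemma~\ref{lem:sghop} is the {\sgh} condition for $\MA_v$.

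\emph{Proof of~(ii).} By~\eqref{equ:twMAMA},
\begin{equation*}
\MA_v^\tau(\phi)=\big(\tr_{\ddc\phi}(\ddc\tau)+\langle(\log v)'(m_\phi),m_\tau\rangle\big)\MA_v(\phi);
\end{equation*}
since $\tau$ is psh, $\tr_{\ddc\phi}(\ddc\tau)\ge 0$, while $|\langle(\log v)'(m_\phi),m_\tau\rangle|\le M$ uniformly on $X$ for some $M=M(v,\tau)$. Choosing $a:=1$ and $b:=M+|\deg_v'(L;F)|+1$ makes $\mu:=a\MA_v^\tau+b\MA_v\ge 0$ and of constant positive total mass by~\eqref{equ:totaltwMA}; a final rescaling brings $\mu$ into $\cM^{1,T}$. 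By part~(i) it remains to establish the {\sgh} bound for $\MA_v^\tau$ itself. Telescoping $\MA_v^\tau(\phi_3)-\MA_v^\tau(\phi_4)=\int_0^1\tfrac{d}{ds}\MA_v^\tau(\phi_s)\,ds$ and differentiating~\eqref{equ:twMA} produces four terms, the dominant one being
\begin{equation*}
n(n-1)v(m_{\phi_s})\,\ddc\tau\wedge\ddc(\phi_3-\phi_4)\wedge(\ddc\phi_s)^{n-2}.
\end{equation*}
Testing against $f:=\phi_1-\phi_2$ and integrating by parts once (using closedness of $\ddc\tau\wedge(\ddc\phi_s)^{n-2}$) brings this into the form $\int v(m_{\phi_s})\,\ddc\tau\wedge(\ddc\phi_s)^{n-2}\wedge df\wedge d^c(\phi_3-\phi_4)$, up to bounded correction terms from $d(v(m_{\phi_s}))$. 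Cauchy--Schwarz against the positive $(n-1,n-1)$-form $v(m_{\phi_s})\ddc\tau\wedge(\ddc\phi_s)^{n-2}$, followed by Lemma~\ref{lem:quaden}~\eqref{equ:quaden2}---which precisely accommodates the extra nef factor $\ddc\tau$---closes this term with exponent $\a_n/2$. The three remaining lower-order terms carry only $(\ddc\phi_s)^{n-1}$ or $(\ddc\phi_s)^n$ with uniformly bounded coefficients, and are absorbed by the argument of~(i).

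The main obstacle will be the bookkeeping for those three lower-order terms arising from differentiating~\eqref{equ:twMA}, which contain the moment map $m_{\phi_3-\phi_4}$ of a $T$-invariant \emph{function} (as opposed to a metric on a line bundle), together with derivatives of $v\circ m_\bullet$ appearing after integration by parts. One must verify that each such contribution can be rewritten, via the moment-map relation~\eqref{equ:mom}, into a form controlled by one or two applications of Lemma~\ref{lem:quaden}, up to a multiplicative constant depending only on $\tau$ and $v$.
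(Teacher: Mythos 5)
Part~(i) of your proposal is correct and follows essentially the same route as the paper: combine~\eqref{equ:twder} and~\eqref{equ:twquad} to write the difference of weighted Monge--Amp\`ere masses as a single mixed integral, Cauchy--Schwarz against the positive $(n-1,n-1)$-form $v(m_{\phi_s})(\ddc\phi_s)^{n-1}$, and close with Lemma~\ref{lem:quaden}~\eqref{equ:quaden1} and~\eqref{equ:Iconv}. The exponent $\a_n/2$ you end up with (versus $\a_n$ quoted in the paper) is irrelevant, since {\sgh} requires only some exponent in $(0,1)$. Your choice of $a,b$ in the first paragraph of~(ii) is also fine, using~\eqref{equ:twMAMA} and~\eqref{equ:totaltwMA}.

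However, your proof of~(ii) is incomplete, and you acknowledge as much in your final paragraph. The strategy of differentiating the explicit formula~\eqref{equ:twMA} for $\MA_v^\tau$ in $\phi$ along $\phi_3-\phi_4$ and then integrating by parts against $f=\phi_1-\phi_2$ produces the moment map $m_{\phi_3-\phi_4}$ of a function, second derivatives $v''$ of the weight, and several terms which you list as ``bookkeeping'' to be verified. The paper sidesteps this entirely by exploiting the symmetry of mixed second derivatives: rather than differentiating the formula~\eqref{equ:twMA}, one differentiates the \emph{already integrated-by-parts} identity
\begin{equation*}
\frac{d}{ds}\bigg|_{s=0}\int f\,\MA_v(\phi+s g)=-n\int v(m_\phi)(\ddc\phi)^{n-1}\wedge df\wedge d^c g
\end{equation*}
with respect to $\phi$ in the direction $\tau$. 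Since $\MA_v^\tau(\phi)=\frac{d}{dr}\big|_{r=0}\MA_v(\phi+r\tau)$, commuting the two derivatives gives directly
\begin{equation*}
\frac{d}{ds}\bigg|_{s=0}\int f\,\MA_v^\tau(\phi+s g)=-n\int \left[(n-1) v(m_\phi)\,\ddc\tau\wedge(\ddc\phi)^{n-2}+\langle v'(m_\phi),m_\tau\rangle(\ddc\phi)^{n-1}\right]\wedge df\wedge d^c g,
\end{equation*}
with no extraneous terms at all: there is no $m_{\phi_3-\phi_4}$ and no second derivative of $v$. Adding $B$ (your $M$) times the corresponding formula for $\MA_v$ makes the $(n-1,n-1)$-form in brackets nonnegative, after which a single Cauchy--Schwarz followed by~\eqref{equ:quaden1} and~\eqref{equ:quaden2} closes the estimate cleanly. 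In short: the gap you flag is real and not merely cosmetic, and the way to avoid it is not to push the integration by parts through the messy formula for $\MA_v^\tau$, but to differentiate \emph{after} the integration by parts has been performed, in the $\tau$-direction of the $\phi$-variable.
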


\begin{proof} In what follows $C>0$ denotes a constant only depending on $n$, $v$ and $\tau$, that is allowed to vary from line to line. Pick $\phi_1,\phi_2\in\cH$ and $\phi_3,\phi_4\in\cH^T$, and set $\phi_s:=s\phi_3+(1-s)\phi_4$ for $s\in [0,1]$. As a consequence of~\eqref{equ:twder} and~\eqref{equ:twquad}, we have 
\begin{equation}\label{equ:MAvint}
\int_X(\phi_1-\phi_2)\left(\MA_v(\phi_3)-\MA_v(\phi_4)\right)=-n\int_0^1 ds\int_X v(m_{\phi_s}) d(\phi_1-\phi_2)\wedge d^c (\phi_3-\phi_4)\wedge(\ddc\phi_s)^{n-1}. 
\end{equation}
By the Cauchy--Schwarz inequality, this implies
$$
\left|\int(\phi_1-\phi_2)\left(\MA_v(\phi_3)-\MA_v(\phi_4)\right)\right|\le C\sup_{s\in [0,1]}\left(\int d(\phi_1-\phi_2)\wedge d^c(\phi_1-\phi_2)\wedge(\ddc\phi_s)^{n-1}\right)^{1/2}
$$
$$
\times \left(\int d(\phi_3-\phi_4)\wedge d^c(\phi_3-\phi_4)\wedge(\ddc\phi_s)^{n-1}\right)^{1/2}\le C\ii(\phi_3,\phi_4)^{\a_n}\max_i\ii(\phi_i)^{1-\a_n},
$$
where the last inequality follows from~\eqref{equ:quaden1} and~\eqref{equ:Iconv}. This proves that $\MA_v$ satisfies~\eqref{equ:sgh1}, and (i) follows.  

Next set
$$
B:=\sup_{\a\in P,\,\b\in m_\tau(X)}\left(|v(\a)|+|\langle v'(\a),\b\rangle|\right)
$$
and
$$
\Ga(\phi):=\MA_v^\tau(\phi)+B\MA(\phi)=n v(m_\phi)\ddc\tau\wedge(\ddc\phi)^{n-1}+\left(\langle v'(m_\phi),m_\tau\rangle+B\right)(\ddc\phi)^n\ge 0
$$
for each $\phi\in\cH^T$ (recall $\ddc\tau\ge 0$). In view of~\eqref{equ:twder}, \eqref{equ:twquad} can be rewritten as
\begin{equation}\label{equ:derMA}
\frac{d}{ds}\bigg|_{s=0}\int f\,\MA_v(\phi+sg)=-n\int v(m_\phi)(\ddc\phi)^{n-1}\wedge df\wedge d^c g
\end{equation}
for $f\in C^\infty(X)$ and $g\in C^\infty(X)^T$.  Differentiating with respect to $\phi$ in the direction of $\tau$ and using again~\eqref{equ:twder}, we infer
$$
\frac{d}{ds}\bigg|_{s=0}\int f\,\MA_v^\tau(\phi+s g)=-n\int \left[(n-1) v(m_\phi)\ddc\tau\wedge(\ddc\phi)^{n-2}+\langle v'(m_\phi),m_\tau\rangle(\ddc\phi)^{n-1}\right]\wedge df\wedge d^c g. 
$$
Combined with~\eqref{equ:MAvint} (for $v=1$), this yields 
$$
\int(\phi_1-\phi_2)\left(\Ga(\phi_3)-\Ga(\phi_4)\right)=-n\int_0^1 ds\int d(\phi_1-\phi_2)\wedge d^c(\phi_3-\phi_4)\wedge(\ddc\phi_s)^{n-2}\wedge\eta_s
$$
where
$$
\eta_s:=(n-1)v(m_{\phi_s}) \ddc\tau+(\langle v'(m_{\phi_s}),m_\tau\rangle+B)\ddc\phi_s, 
$$
which satisfies 
$$
0\le\eta_s\le nB\left(\ddc\tau+\ddc\phi_s\right).
$$
By Cauchy--Schwarz we infer
$$
\left|\int(\phi_1-\phi_2)\left(\Ga(\phi_3)-\Ga(\phi_4)\right)\right|\le n\sup_{s\in [0,1]}\left(\int d(\phi_1-\phi_2)\wedge d^c(\phi_1-\phi_2)\wedge(\ddc\phi_s)^{n-2}\wedge\eta_s\right)^{1/2}
$$
$$
\times\left(\int d(\phi_3-\phi_4)\wedge d^c(\phi_3-\phi_4)\wedge(\ddc\phi_s)^{n-2}\wedge\eta_s\right)^{1/2}. 
$$
Here 
$$
\int d(\phi_1-\phi_2)\wedge d^c(\phi_1-\phi_2)\wedge(\ddc\phi_s)^{n-2}\wedge\eta_s\le nB\int d(\phi_1-\phi_2)\wedge d^c(\phi_1-\phi_2)\wedge(\ddc\phi_s)^{n-2}\wedge\ddc\tau
$$
$$
+nB\int d(\phi_1-\phi_2)\wedge d^c(\phi_1-\phi_2)\wedge(\ddc\phi_s)^{n-1}\le C\max_i\ii(\phi_i)
$$
by~\eqref{equ:quaden2}, \eqref{equ:quaden1} and~\eqref{equ:Iconv}. Similarly, 
$$
\int d(\phi_3-\phi_4)\wedge d^c(\phi_3-\phi_4)\wedge(\ddc\phi_s)^{n-2}\wedge\eta_s\le C\ii(\phi_3,\phi_4)^{\a_n}\max_i\ii(\phi_i)^{1-\a_n}, 
$$
and we conclude 
$$
\left|\int(\phi_1-\phi_2)\left(\Ga(\phi_3)-\Ga(\phi_4)\right)\right|\le C\ii(\phi_3,\phi_3)^{\a_n}\max_i\ii(\phi_i)^{1-\a_n}.
$$
Since, for any $\phi\in\cH^T$, $\Ga(\phi)=\MA_v^\tau(\phi)+B\MA_v(\phi)$ has total mass $\deg_v'(L;F)+B\deg_v(L)$, setting 
$$
a:=(\deg_v'(L;F)+B \deg_v(L))^{-1},\quad b:=aB
$$
ensures that $a\MA_v^\tau+b\MA_v=a\Ga$ takes values in $\cM^1$ and satisfies~\eqref{equ:sgh1}, which concludes the proof of (ii).  
\end{proof}

\begin{prop}\label{prop:MAext} For any $v\in C^\infty(M_\R)$ and $\tau\in C^\infty(F)^T$, the weighted \ma operators $\MA_v,\,\MA_v^\tau\colon\cH^T\to \Cz(X)^\vee$ and their {\EL} functionals $\en_v,\,\en_v^\tau\colon\cH^T\to\R$ admit unique continuous\footnote{For the weak topology of $\Cz(X)^\vee$.} extensions 
$$
\MA_v,\,\MA_v^\tau\colon\cET\to \Cz(X)^\vee,\quad \en_v,\,\en_v^\tau\colon\cET\to\R,
$$
the latter still being {\EL} functionals for the former, that are further {\sgh}. Moreover, for any $\phi\in\cET$, the signed measures $\MA_v(\phi)$, $\MA_v^\tau(\phi)$ have finite energy (see Definition~\ref{defi:finen}). 
\end{prop}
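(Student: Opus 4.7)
The strategy is to reduce to the setting of Proposition~\ref{prop:MAsgh} by linearity, then extend by uniform continuity to the completion, and finally deduce finite energy from the fact that the extended operators land in $\cM^1$. Throughout, continuity of the extensions will be with respect to the strong topology on $\cE^{1,T}$.

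First, I would reduce to positive weights and psh twisting metrics. The operator $v\mapsto\MA_v(\phi)$ is $\R$-linear in $v$ for each fixed $\phi$, as is clear from the differential-geometric formula $\MA_v(\phi)=v(m_\phi)(\ddc\phi)^n$ on $\cH^T$. Writing $v=v_1-v_2$ with $v_i\in C^\infty(M_\R)$ positive on $P$ (for instance $v_1=v+C$, $v_2=C$ for $C\gg 1$) and normalizing by $\deg_{v_i}(L)$, this reduces $\MA_v$ to a finite combination of operators covered by Proposition~\ref{prop:MAsgh}(i). Similarly, the twisted operator $\tau\mapsto\MA_v^\tau(\phi)$ is $\R$-linear in $\tau$, thanks to the explicit formula~\eqref{equ:twMA} and the linearity of the moment map $\tau\mapsto m_\tau$ in $\tau$ (this follows from~\eqref{equ:mom}). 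Any $T$-equivariant $\Q$-line bundle $F$ can be written as a difference of ample $T$-equivariant ones, and on each such ample bundle we can choose a smooth $T$-invariant strictly psh reference metric; by subtracting such references we reduce to the case where $\tau$ is psh, after a further decomposition. Combining with the decomposition of $v$ as above, we reduce $\MA_v^\tau$ to a finite combination of operators of the form handled by Proposition~\ref{prop:MAsgh}(ii).

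Second, with these reductions in place, each constituent operator is a {\sgh} map $(\cH^T/\R,\ii)\to(\cM^{1,T},\done)$. Since $(\cM^1,\done)$ is complete and $\cET\subset\cE^1$ is the closure of $\cH^T$ in $(\cE^1,\dd_1)$, each such {\sgh} map extends uniquely to a {\sgh} map $(\cE^{1,T}/\R,\ii)\to(\cM^{1,T},\done)$. Reassembling the linear combinations gives the desired extensions $\MA_v,\MA_v^\tau\colon\cET\to \Cz(X)^\vee$, and these are continuous in the weak topology on $\Cz(X)^\vee$ since $\done$-convergence in $\cM^1$ implies weak convergence and our extensions differ from $\cM^1$-valued maps by fixed constants of integration. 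For the {\EL} functionals, the argument of Lemma~\ref{lem:NAtrans2}(i) applies verbatim in the complex setting: defining $\en_v(\phi):=\int_0^1 ds\int (\phi-\phi_\refe)\MA_v(s\phi+(1-s)\phi_\refe)$ and likewise for $\en_v^\tau$ produces the desired {\sgh} extensions on $\cET$ satisfying the {\EL} relation~\eqref{equ:EL}, by dominated convergence along approximating sequences in $\cH^T$ together with the quasi-triangle estimate~\eqref{equ:Iconv}.

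Third, for finite energy of the measures, note that each constituent positive operator $\MA_{v_i}$ and $a\MA_v^\tau+b\MA_v$ from Proposition~\ref{prop:MAsgh} takes values in $\cM^1$ by construction, and hence so do their extensions to $\cET$ by continuity. Since $\MA_v(\phi)$ and $\MA_v^\tau(\phi)$ are (up to normalization) $\R$-linear combinations of such $\cM^1$-valued functions, and the space of signed measures of finite energy is a vector subspace of $\Cz(X)^\vee$ (every $\f\in\cE^1$ being integrable against the total variation of any such combination), the signed measures $\MA_v(\phi),\MA_v^\tau(\phi)$ have finite energy for all $\phi\in\cET$.

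The main obstacle is purely organizational, namely ensuring that the linearity reductions are carried out compatibly with the normalizations of total mass required to apply Proposition~\ref{prop:MAsgh}, and that the $T$-equivariant decomposition $F=F_1-F_2$ of line bundles really produces smooth $T$-invariant psh reference metrics. The latter follows by averaging a smooth strictly psh metric on an ample $T$-linearized line bundle over the compact torus $T$, using $T$-linearizability of any ample $\Q$-line bundle after multiplication by a positive integer (Sumihiro's theorem). The genuine analytic content is entirely concentrated in Proposition~\ref{prop:MAsgh}.
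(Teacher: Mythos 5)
Your proposal is correct and follows essentially the same route as the paper: reduce by linearity to $v>0$ on $P$ and $\tau$ psh (so that Proposition~\ref{prop:MAsgh} applies), extend by density of $\cH^T$ in $\cE^{1,T}$, and invoke the argument of Lemma~\ref{lem:NAtrans2} to produce the {\sgh} {\EL} functionals. The extra detail you supply on the linearity reductions and on finite energy of the resulting signed measures is accurate (though the appeal to Sumihiro's theorem is unnecessary, since $F$ is given as $T$-equivariant and $L$ provides a $T$-equivariant ample reference).
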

\begin{proof} By linearity, we may assume $v>0$ on $P$ and $\tau$ psh. By Proposition~\ref{prop:MAsgh}, $\MA_v$ is {\sgh}, as well as $a\MA^\tau_v+b\MA_v$ for some $a,b>0$. Since $\cH^T$ is dense in $\cE^{1,T}$, the result is thus proved just as in Lemma~\ref{lem:NAtrans2}. 
\end{proof}

We next show: 
\begin{prop}\label{prop:raden} For any $v\in C^\infty(M_\R)$ and $\tau\in C^\infty(M)^T$, the functionals $\en_v,\,\en_v^\tau\colon\cET\to\R$ admit {\sgh} radial transforms $\en_{v,\ra},\,\en_v^{\tau,\ra}\colon\cE^{1,T}_\ra\to\R$. For any almost geodesic ray $\{\phi_t\}$ in $\cET$ we further have 
\begin{equation}\label{equ:slopen}
\en_{v,\ra}(\phi_\infty)=\lim_{t\to\infty} t^{-1}\en_v(\phi_t),\quad\en_v^{\tau,\ra}(\phi_\infty)=\lim_{t\to\infty} t^{-1}\en_v^\tau(\phi_t). 
\end{equation}
\end{prop}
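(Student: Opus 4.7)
The strategy combines the semi-global Hölder extension from Proposition~\ref{prop:MAext} with a decomposition trick that reduces to the geodesically concave case, where Proposition~\ref{prop:radial} applies. The first observation is that by Proposition~\ref{prop:sgh}, once $\en_v$ and $\en_v^\tau$ are shown to admit radial transforms on $\cE^{1,T}_\ra$ (defined via psh geodesic rays), these transforms are automatically {\sgh} and the slope formula~\eqref{equ:slopen} extends from geodesic rays to all almost geodesic rays. The task thus reduces to proving that $\lim_{t\to\infty} t^{-1}\en_v(\phi_t)$ and $\lim_{t\to\infty} t^{-1}\en_v^\tau(\phi_t)$ exist along every psh geodesic ray $\{\phi_t\}$ in $\cE^{1,T}$.

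For the untwisted case, I would exploit the $\R$-linearity of $v\mapsto\en_v$ and $v\mapsto\MA_v$ together with the fact that the exponential weights $e^{\langle\xi,\cdot\rangle}$ (indexed by $\xi\in N_\R$) are smooth, positive, and log-concave, and that their linear span is dense in $C^\infty(M_\R)$ on the compact set $P$ in $C^2$-topology, by Stone--Weierstrass. The $C^2$-continuous dependence of $\en_v$ on $v$ (uniformly on bounded subsets of $\cE^{1,T}$), which follows from~\eqref{equ:twMA} and the {\sgh} estimates of Proposition~\ref{prop:MAsgh}, then reduces us to proving existence of slopes for $v$ strictly positive and log-concave on $P$. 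For such $v$, $\en_v$ is geodesically concave along psh geodesic segments in $\cH^T$, by the Prékopa-type argument of~\cite{BWN,Lah19}. Strong continuity of $\en_v$ on $\cE^{1,T}$ (Proposition~\ref{prop:MAext}) together with density of $\cH^T$ extends the concavity to psh geodesics in $\cE^{1,T}$, and Proposition~\ref{prop:radial} produces $\en_{v,\ra}$.

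For the twisted case, I would first write the equivariant $\Q$-line bundle $F$ as a difference of ample equivariant $\Q$-line bundles, decompose $\tau=\tau_1-\tau_2$ with each $\tau_i$ smooth $T$-invariant psh, and use linearity to reduce to $\tau$ psh. Proposition~\ref{prop:MAsgh}(ii) then provides $a,b>0$ so that $a\MA_v^\tau+b\MA_v$ is a nonnegative {\sgh} operator, whose {\EL} functional $a\en_v^\tau+b\en_v$ should, after the same log-concavity reduction applied simultaneously to $v$ and to the geometric factor $\tau$, be geodesically concave. Combining with the previous step yields $\en_v^{\tau,\ra}$.

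The main obstacle will be Step~3: verifying that the Berman--Witt Nyström/Lahdili concavity of $\en_v$ along psh geodesic segments in $\cH^T$ passes cleanly to psh geodesics in the finite-energy space $\cE^{1,T}$, where the metrics may be singular along the geodesic. This requires approximating endpoints in $\cE^{1,T}$ by sequences in $\cH^T$, controlling the associated psh geodesic segments in the strong topology, and invoking the {\sgh} estimates of Proposition~\ref{prop:MAext} together with the canonical approximation procedure of Proposition~\ref{prop:canapp} to transfer the concavity. The analogous step in the twisted case is similarly delicate and will require identifying the precise concave functional that dominates $a\en_v^\tau+b\en_v$ up to {\sgh} terms, which may force one to argue indirectly via the nonnegative operator $a\MA_v^\tau+b\MA_v$ rather than via an explicit log-concavity statement for a single effective weight.
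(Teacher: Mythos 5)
Your high-level plan (linearity reduction, geodesic convexity/concavity along psh geodesics, Proposition~\ref{prop:radial}, then Proposition~\ref{prop:sgh} to get the {\sgh} property and the almost-geodesic slope formula) is structurally the same as the paper's, and your observation that Proposition~\ref{prop:sgh} bootstraps from geodesic rays to almost geodesic rays is exactly right. However, the core of your argument takes a detour that is both unnecessary and technically shaky, and it contains a sign confusion.

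The paper's proof rests on Lemma~\ref{lem:enconc}: for \emph{any} $v\ge 0$ on $P$ (no log-concavity needed), the weighted energy $\en_v$ is \emph{affine linear} along psh geodesics (part (ii), citing~\cite[Proposition~2.17]{BWN}), and for any psh $\tau\in C^\infty(F)^T$, $\en_v^\tau$ is geodesically \emph{convex} (part (iii), citing~\cite[Lemma~7]{Lah23}). These already hold on $\cE^{1,T}$. Thus after the standard linearity reduction to $v>0$ and $\tau$ psh, both functionals are convex (indeed, $\en_v$ is affine) along geodesics, so Proposition~\ref{prop:radial} applies immediately. There is no need for your density-of-exponentials reduction to log-concave $v$. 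That step is also problematic in itself: Stone--Weierstrass gives $C^0$-density, not the $C^2$-density you invoke, and making the ``$C^2$-continuous dependence of $\en_v$ on $v$, uniformly on bounded subsets'' precise enough to pass slopes to the limit would require an estimate of the form $|t^{-1}\en_{v_j}(\phi_t)-t^{-1}\en_v(\phi_t)|\le C\|v_j-v\|_{C^2}$ uniformly in $t$, which is more delicate than you acknowledge. The affine-linearity fact makes all of this moot.

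There is also a sign error running through your twisted case: you aim to show $a\en_v^\tau+b\en_v$ is geodesically \emph{concave}, but the correct statement (Lemma~\ref{lem:enconc}(iii)) is that $\en_v^\tau$ is geodesically \emph{convex} for $\tau$ psh, and Proposition~\ref{prop:radial} is phrased for convex lsc functions. Since $\en_v^\tau$ is real-valued, existence of the slope follows either way, so this is not fatal; but ``identifying the precise concave functional that dominates $a\en_v^\tau+b\en_v$'' is chasing the wrong sign and suggests you do not have the correct convexity statement in hand. Replacing your Step~3 and Step~4 by the direct appeal to Lemma~\ref{lem:enconc} would fix both the overcomplication and the sign issue.
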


\begin{lem}\label{lem:enconc} For any $v\in C^\infty(M_\R)$ such that $v\ge 0$ on $P$ and any smooth psh metric $\tau\in C^\infty(F)^T$, the weighted energy functionals $\en_v,\,\en_v^\tau\colon\cET\to\R$ satisfy: 
\begin{itemize}
\item[(i)] $\en_v$ is monotone increasing, and concave on affine segments;
\item[(ii)] $\en_v$ is affine linear on psh geodesics;
\item[(iii)] $\en_v^\tau$ is convex on psh geodesics. 
\end{itemize}
\end{lem}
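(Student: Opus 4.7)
For~(i), monotonicity is immediate from the EL integral formula~\eqref{equ:EL}: since $v\ge 0$ on $P$ and $m_\phi(X)\subset P$, the measure $\MA_v(\phi)\ge 0$ for every $\phi\in\cH^T$. For concavity on affine segments in $\cH^T$, it suffices to show $\frac{d^2}{ds^2}\big|_{s=0}\en_v(\phi+sf)\le 0$ for every $f\in C^\infty(X)^T$; this second derivative equals $\int f\,\MA_v^f(\phi)$, and applying~\eqref{equ:twquad} with $g=f$ yields $-n\int v(m_\phi)(\ddc\phi)^{n-1}\wedge df\wedge d^c f\le 0$, since $v(m_\phi)\ge 0$ pointwise. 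Extend both statements to $\cE^{1,T}$ via the strong continuity provided by Proposition~\ref{prop:MAext}.

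For~(ii) and~(iii), the central observation is that the same computation, applied to an \emph{arbitrary} smooth psh path $\{\psi_t\}$ of $T$-invariant metrics (not necessarily a geodesic), yields
\begin{equation*}
\tfrac{d^2}{dt^2}\en_v(\psi_t)=\int v(m_{\psi_t})\bigl[\ddot\psi_t(\ddc\psi_t)^n-n\,d\dot\psi_t\wedge d^c\dot\psi_t\wedge(\ddc\psi_t)^{n-1}\bigr],
\end{equation*}
again using~\eqref{equ:twquad} with $f=g=\dot\psi_t$. The bracketed integrand is, up to a standard normalization, the density along $X\times\{t\}$ of $(\ddc\Psi)^{n+1}$, where $\Psi$ is the associated $S^1$-invariant metric on $X\times D$. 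If $\{\psi_t\}$ is a geodesic, then $(\ddc\Psi)^{n+1}=0$ and the integrand vanishes, proving~(ii) on smooth geodesics. If $\{\psi_t\}$ is merely a psh path, then $(\ddc\Psi)^{n+1}\ge 0$ and $v\ge 0$, so $\frac{d^2}{dt^2}\en_v(\psi_t)\ge 0$: thus \emph{$\en_v$ is convex on every smooth psh path}.

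The plan for~(iii) is then to combine both facts. Given a smooth psh geodesic $\{\phi_t\}$ and a smooth psh metric $\tau$, consider the two-parameter family $\psi_{s,t}:=\phi_t+s\tau$ for $s\ge 0$. Since $\tau$ is psh, $\{\psi_{s,t}\}_t$ is a smooth psh path for each fixed $s\ge 0$, so $f(s,t):=\partial_t^2\en_v(\psi_{s,t})\ge 0$ by the previous paragraph, while $f(0,t)=0$ by~(ii). The right-derivative in $s$ is therefore nonnegative, and equals $\partial_s|_0 f(s,t)=\partial_t^2\en_v^\tau(\phi_t)$ by interchange of derivatives (legitimate as $\en_v$ depends smoothly on its arguments). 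Hence $t\mapsto\en_v^\tau(\phi_t)$ is convex on smooth geodesics.

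The principal obstacle is the approximation step needed to pass from smooth psh geodesics to arbitrary psh geodesics in $\cE^{1,T}$, which by Chen's theorem are only $C^{1,\bar 1}$. We propose to approximate given boundary data $\phi_0,\phi_1\in\cE^{1,T}$ by smooth strictly psh metrics $\phi_0^j,\phi_1^j\in\cH^T$ (possibly after perturbing $L$ by a small ample class to ensure smooth geodesics exist), obtaining geodesics $\{\phi_t^j\}$ that converge strongly to $\{\phi_t\}$ in $\cE^{1,T}$ for each $t$. The strong continuity of $\en_v$ and $\en_v^\tau$ from Proposition~\ref{prop:MAext} then transfers affinity (in~(ii)) and convexity (in~(iii)) to the limit. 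Alternatively, one can carry out the displayed second-variation computations directly in the Bedford--Taylor pluripotential formalism, using that $v(m_{\psi_t})$ is a bounded continuous function and that $(\ddc\Psi)^{n+1}$ is a well-defined nonnegative current.
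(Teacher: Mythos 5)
Your computational core is correct and matches what the cited references do: the second-variation identity obtained from~\eqref{equ:twquad}, the identification of the bracketed density with a positive multiple of $(\ddc\Psi)^{n+1}$ on a fiber $X\times\{z\}$, and the perturbation $\psi_{s,t}=\phi_t+s\tau$ for~(iii). Note, however, that the paper itself gives no argument here --- its entire proof is the citation ``See [Proposition~2.17]{BWN} for (i), (ii), and [Lemma~7]{Lah23} for (iii)'' --- so you are offering a self-contained derivation. It has two gaps.

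The serious one is the regularity reduction. You write that one should ``approximate given boundary data \ldots\ by smooth strictly psh metrics \ldots\ (possibly after perturbing $L$ by a small ample class to ensure smooth geodesics exist).'' This does not work: even between two smooth strictly psh endpoints, the psh geodesic $\{\phi_t\}$ is only $C^{1,\bar 1}$, never smooth in general, and perturbing $L$ leaves this unchanged. Approximating the boundary data reduces from $\cE^{1,T}$ to $\cH^T$ endpoints, but you then still need to regularize the geodesic itself. The standard device is Chen's $\e$-geodesic: solve $(\ddc\Psi_\e)^{n+1}=\e\,\Omega^{n+1}$ on $X\times A$ with the given boundary values, obtaining a smooth family with the bracket equal to $\e$ times a fixed positive density. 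The second-variation formula then gives $\partial_t^2\en_v(\phi^\e_t)=\int v(m_{\phi_t^\e})\e\,\Omega^{n+1}\to 0$ as $\e\to0$, which, together with uniform convergence $\phi_t^\e\to\phi_t$ and strong continuity of $\en_v$, yields affineness in~(ii); a parallel refinement works for~(iii). Your alternative remark about working in the Bedford--Taylor formalism directly on a $C^{1,\bar 1}$ geodesic is a viable second route but needs justification of the integration by parts and of the meaning of $d\dot\phi\wedge d^c\dot\phi\wedge(\ddc\phi_t)^{n-1}$, which you do not supply.

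The smaller gap is in~(iii) as stated. You conclude $f(s,t)\ge 0$ for $s>0$ from ``$\en_v$ is convex on every smooth psh path.'' But that step used $v\ge 0$ on the relevant moment image, which for $\psi_{s,t}=\phi_t+s\tau$ is contained in $P_L+s\,m_\tau(X)$, not in $P=P_L$. The lemma only assumes $v\ge 0$ on $P$, and if $v$ vanishes somewhere on $\partial P$ it may be negative just outside, so $f(s,t)\ge 0$ is not guaranteed for $s>0$. The fix is to compute $\partial_s|_0 f(s,t)$ directly: writing $f(s,t)=\int v(m_{\phi_t}+s\,m_\tau)\,\mathrm{bracket}_s$, the geodesic equation gives $\mathrm{bracket}_0=0$, so the term involving $v'$ drops out and one is left with $\partial_s|_0 f(s,t)=\int v(m_{\phi_t})\,\partial_s|_0\,\mathrm{bracket}_s$. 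Since $\mathrm{bracket}_s\ge0$ pointwise for all $s\ge0$ and $\mathrm{bracket}_0=0$, the pointwise right-derivative $\partial_s|_0\,\mathrm{bracket}_s$ is nonnegative, and $v(m_{\phi_t})\ge0$ by the hypothesis $v\ge0$ on $P$; hence $\partial_t^2\en_v^\tau(\phi_t)\ge 0$. This repair requires only $v\ge 0$ on $P$, as the lemma states.
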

\begin{proof} See~\cite[Proposition~2.17]{BWN} for (i), (ii), and~\cite[Lemma~7]{Lah23} for (iii). 
\end{proof}
Recall from Lemma~\ref{lem:Fconc} that concavity in (i) is equivalent to 
\begin{equation}\label{equ:enconc}
\int(\phi-\p)\MA_v(\phi)\le\en_v(\phi)-\en_v(\p)\le\int(\phi-\p)\MA_v(\p)
\end{equation}
for all $\phi,\p\in\cET$, and also
\begin{equation}\label{equ:Iv}
\ii_v(\phi,\p):=\int(\phi-\p)\left(\MA_v(\p)-\MA_v(\phi)\right)\ge 0, 
\end{equation}
\begin{equation}\label{equ:Jv}
\jj_v(\phi,\p):=\int(\phi-\p)\MA_v(\p)-\en_v(\phi)+\en_v(\p)\ge 0. 
\end{equation}

\begin{proof}[Proof of Proposition~\ref{prop:raden}] As above, we may assume $v>0$ on $P$ and $\tau$ psh, by linearity. Lemma~\ref{lem:enconc} then guarantees the existence of radial transforms, and the rest follows from Proposition~\ref{prop:MAext} and Proposition~\ref{prop:sgh}.
\end{proof} 
 
\subsection{The weighted Calabi--Yau theorem}

We assume in what follows that $v$ is positive on $P$, and normalized so that 
$$
\deg_v(L)=\int_P v\,\DH_L=1,
$$
so that the weighted Monge--Amp\`ere operator $\MA_v\colon\cET\to C^0(X)^\vee$ takes values in the space $\cM^{1,T}$ of $T$-invariant measures of finite energy. 

\begin{thm}\label{thm:wMA} The weighted \ma operator induces a homeomorphism 
$$
\MA_v\colon\cET/\R\simto\cM^{1,T}
$$
with respect to the strong topology on both sides. 
\end{thm}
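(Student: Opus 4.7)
The strategy is to adapt the unweighted Calabi--Yau theorem of~\cite{BBEGZ} to the weighted, $T$-equivariant setting using the semi-global Hölder estimates from Proposition~\ref{prop:MAsgh} and the concavity of $\en_v$ from Lemma~\ref{lem:enconc}. I divide it into well-definedness/continuity, injectivity, surjectivity (via the variational method), and inverse continuity.

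\emph{Well-definedness and continuity.} Since $v$ depends only on the moment map and $m_{\phi+c}=m_\phi$ for $c\in\R$, the operator $\MA_v$ is translation-invariant and descends to $\cET/\R$. By Proposition~\ref{prop:MAsgh} and its extension in Proposition~\ref{prop:MAext}, the induced map $\MA_v:(\cET/\R,\ii)\to(\cM^{1,T},\done)$ is semi-globally Hölder. Since $\ii$ determines the strong quotient topology on $\cET/\R$ and $\done$ determines the strong topology on $\cM^{1,T}$, strong continuity follows.

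\emph{Injectivity.} Suppose $\MA_v(\phi)=\MA_v(\psi)$ with $\phi,\psi\in\cET$. Setting $\phi_s:=s\phi+(1-s)\psi$ and differentiating in $s$, the formula~\eqref{equ:twquad} applied to the $T$-invariant function $g=\phi-\psi$ (for which $m_g=0$) yields
\[
0=\ii_v(\phi,\psi)=n\int_0^1 ds\int_X v(m_{\phi_s})\,d(\phi-\psi)\wedge d^c(\phi-\psi)\wedge(\ddc\phi_s)^{n-1}.
\]
Since $v$ is positive on the compact polytope $P$, we have $v(m_{\phi_s})\ge c_0>0$, and the right-hand side is bounded below by a positive multiple of $\ii(\phi,\psi)$. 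Hence $\ii(\phi,\psi)=0$, and the classical unweighted result forces $\phi-\psi\in\R$.

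\emph{Surjectivity via variational method.} Given $\mu\in\cM^{1,T}$, consider
\[
F_\mu(\phi):=\en_v(\phi)-\int_X(\phi-\phi_\refe)\,d\mu,\quad\phi\in\cET.
\]
By Proposition~\ref{prop:MAext} and Lemma~\ref{lem:holdmu}, both terms are semi-globally Hölder, so $F_\mu$ is strongly continuous; by Lemma~\ref{lem:enconc}, $F_\mu$ is concave; and since $\deg_v(L)=1=\mu(X)$ it is translation-invariant, thus descends to $\cET/\R$. Normalizing $\sup\phi=0$, the bound $|\int(\phi-\phi_\refe)d\mu|\lesssim (1+\dd_1(\phi))^{1-\a_n}$ together with the standard estimate $-\en_v(\phi)\gtrsim\dd_1(\phi)$ on the sup-normalized slice (obtained from~\eqref{equ:enconc} and $\MA_v(\phi_\refe)\approx\MA(\phi_\refe)$) yield coercivity of $F_\mu$ modulo $\R$. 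Strong compactness of sublevel sets of $\dd_1$ in $\cE^{1,T}/\R$ (which follows from the unweighted theory, restricted to the closed $T$-invariant subspace) then produces a maximizer $\phi^*\in\cET$. The Euler--Lagrange equation for $F_\mu$ reads $\MA_v(\phi^*)=\mu$, proving surjectivity.

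\emph{Inverse continuity and main obstacle.} If $\mu_j\to\mu$ strongly in $\cM^{1,T}$, the associated normalized solutions $\phi_j$ satisfy $\dd_1(\phi_j)\lesssim 1+\jj_\NA^v(\mu_j)\lesssim 1+\done(\mu_j)$, which is uniformly bounded. By strong compactness, a subsequence converges strongly to some $\phi'\in\cET$, and continuity of $\MA_v$ combined with injectivity gives $\phi'=\phi$, the unique normalized solution of $\MA_v(\phi)=\mu$; a standard argument promotes this to convergence of the whole sequence. The main technical obstacle in this plan is establishing the coercivity of $F_\mu$ with explicit constants, which rests on the comparability $\ii_v\approx\ii$ on all of $\cET$ (not merely $\cH^T$); this is where the positivity of $v$ on $P$ and the extension results of Proposition~\ref{prop:MAext} are essential.
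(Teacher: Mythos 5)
Your overall strategy — variational method, concavity of $\en_v$, semi-global H\"older estimates — is the same as the paper's, but there are two genuine gaps in the surjectivity step, and a third in the inverse-continuity step.

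\emph{Compactness.} You invoke ``strong compactness of sublevel sets of $\dd_1$ in $\cE^{1,T}/\R$'' to extract a maximizer $\phi^*$. This is false: $\dd_1$-balls in $\cE^1$ (hence in $\cE^{1,T}$) are not compact in the strong topology, since $\cE^1$ is a complete infinite-dimensional metric space. What is compact is the space of sup-normalized metrics in the \emph{weak} topology. Thus the maximizing sequence $(\phi_j)$ converges only weakly to some $\phi\in\PSH^T(L)$, and to conclude that $\phi$ realizes $\jj_v(\mu)$ one must establish that $F_\mu$ is upper semicontinuous along weak limits. The paper achieves this by proving separately that $\en_v$ extended by $-\infty$ outside $\cET$ is usc for the weak topology (Lemma~\ref{lem:monusc}), and that $\phi\mapsto\int(\phi-\phi_\refe)\,\mu$ is weakly continuous on bounded sets. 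Your claim that $F_\mu$ is ``strongly continuous'' is correct but useless for producing a maximizer, precisely because strong compactness fails.

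\emph{The Euler--Lagrange equation.} You assert that ``the Euler--Lagrange equation for $F_\mu$ reads $\MA_v(\phi^*)=\mu$.'' This is the crux of the argument and cannot simply be declared. The difficulty is that for $f\in C^0(X)^T$ the perturbation $\phi^*+sf$ is not psh, so the variation $\tfrac{d}{ds}\en_v(\phi^*+sf)$ does not make direct sense. The paper instead varies along $\phi_s:=\env(\phi^*+sf)\in\cET$, uses the orthogonality relation $\int(\phi^*+sf-\phi_s)\MA_v(\phi_s)=0$ (Lemma~\ref{lem:ortho}) to compute $\tfrac{d}{ds}\big|_{s=0}\en_v(\phi_s)=\int f\,\MA_v(\phi^*)$ (Lemma~\ref{lem:diffen}), and then observes $g(s):=\en_v(\phi_s)-\int(\phi^*+sf-\phi_\refe)\,\mu\le g(0)$ to obtain $g'(0)=0$. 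Without the envelope trick and orthogonality, the identity $\MA_v(\phi^*)=\mu$ does not follow.

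\emph{Inverse continuity.} Here you again invoke strong compactness, which fails. The paper proves instead a direct quantitative bound $\ii(\phi_1,\phi_2)\le C\,\done(\mu_1,\mu_2)\max_i(1+\done(\mu_i))$ (Lemma~\ref{lem:topemb}), derived from the comparability $\ii_v\approx\ii$ together with the bilinear estimate~\eqref{equ:estMA}. This gives continuity of the inverse without any compactness argument. Your injectivity argument via $\ii_v$ is fine and is implicitly what the paper uses too.

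Your high-level plan is sound, and the decomposition into concavity, coercivity, compactness, and Euler--Lagrange is the right one; but strong compactness needs to be replaced by weak compactness plus usc, and the derivation of the Euler--Lagrange equation needs the envelope/orthogonality machinery.
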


While the proof is basically contained in~\cite{BWN,HLi23}, we provide some details that will directly carry over to \na case in the next section.

\begin{lem}\label{lem:I} For all $\phi,\p\in\cET$ we have 
\begin{equation}\label{equ:IvI}
(\inf_P v)V \ii(\phi,\p)\le\ii_v(\phi,\p)\le (\sup_P v)V \ii(\phi,\p)
\end{equation}

\end{lem}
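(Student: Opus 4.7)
The plan is to prove the inequality first for smooth metrics $\phi,\p\in\cH^T$ via the polarization identity~\eqref{equ:MAvint}, and then extend to $\cE^{1,T}$ by strong continuity. Consider $\phi_s:=s\phi+(1-s)\p$ for $s\in[0,1]$, which lies in $\cH^T$ by convexity. Applying~\eqref{equ:MAvint} with $\phi_1=\phi_3=\phi$ and $\phi_2=\phi_4=\p$ yields
\[
\ii_v(\phi,\p)=n\int_0^1 ds\int_X v(m_{\phi_s})\,d(\phi-\p)\wedge d^c(\phi-\p)\wedge(\ddc\phi_s)^{n-1},
\]
and specializing the same identity to $v\equiv 1$ (or arguing directly as in Lemma~\ref{lem:quaden}) gives
\[
V\,\ii(\phi,\p)=n\int_0^1 ds\int_X d(\phi-\p)\wedge d^c(\phi-\p)\wedge(\ddc\phi_s)^{n-1}.
\]

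Since $\phi_s\in\cH^T$, the Atiyah--Guillemin--Sternberg theorem ensures that $m_{\phi_s}(X)=P$ for every $s\in[0,1]$. In particular,
\[
\inf_P v\le v(m_{\phi_s}(x))\le\sup_P v\qquad\text{for all }x\in X,\ s\in[0,1].
\]
The positive measure $d(\phi-\p)\wedge d^c(\phi-\p)\wedge(\ddc\phi_s)^{n-1}$ can therefore be used to sandwich the weighted integrand between the unweighted one multiplied by $\inf_P v$ and $\sup_P v$. Integrating over $s$ and $X$ gives~\eqref{equ:IvI} for $\phi,\p\in\cH^T$.

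To extend the inequality to $\phi,\p\in\cE^{1,T}$, pick sequences $\phi_j,\p_j\in\cH^T$ converging strongly to $\phi,\p$ in $\cE^{1,T}$ (such sequences exist by density of $\cH^T$ in $\cE^{1,T}$, obtained \eg\ by averaging the canonical approximants of Proposition~\ref{prop:canapp} or by direct regularization). The functionals $\ii$ and $\ii_v$ are both strongly continuous on $\cE^{1,T}\times\cE^{1,T}$: indeed, $\MA$ and $\MA_v$ are strongly continuous from $\cE^{1,T}$ to $\cM^1$ (by Proposition~\ref{prop:MAext}, and using~\eqref{equ:estMA} in the case of $\ii$), and $\MA(\p),\MA_v(\p)$ have finite energy so that $\psi\mapsto\int(\psi-\phi_\refe)\mu$ is strongly continuous on bounded subsets of $\cE^{1,T}$ for $\mu=\MA(\p),\MA_v(\p)$. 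Passing to the limit $j\to\infty$ in the inequality $(\inf_P v)V\,\ii(\phi_j,\p_j)\le\ii_v(\phi_j,\p_j)\le(\sup_P v)V\,\ii(\phi_j,\p_j)$ yields~\eqref{equ:IvI}.

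There is no essential obstacle here: all work is done by~\eqref{equ:MAvint}, which reduces the weighted functional to a bilinear form dominated in both directions by the unweighted one, and the only mild subtlety is the approximation step, which is standard given the strong continuity of the weighted Monge--Amp\`ere operator already established.
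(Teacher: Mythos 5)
Your proposal is correct and follows essentially the same approach as the paper's: reduce to $\cH^T$ by continuity, apply the polarization identity~\eqref{equ:MAvint}, and use that the form $d(\phi-\p)\wedge d^c(\phi-\p)\wedge(\ddc\phi_s)^{n-1}$ is positive so that $v(m_{\phi_s})$ can be sandwiched between $\inf_P v$ and $\sup_P v$. The paper's own proof is terser (it doesn't spell out the unweighted identity or the AGS justification for $m_{\phi_s}(X)=P$, and it dispatches the approximation step in one clause), but the argument is the same.
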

\begin{proof} By continuity it is enough to show this for $\phi,\p\in\cH^T$. By~\eqref{equ:MAvint} we then have
$$
\ii_v(\phi,\p)=n\int_0^1 ds\int_X v(m_{\phi_s}) d(\phi-\p)\wedge d^c (\phi-\p)\wedge(\ddc\phi_s)^{n-1}
$$
with $\phi_s:=s\phi+(1-s)\p$. The result follows since $d(\phi-\p)\wedge d^c (\phi-\p)\wedge(\ddc\phi_s)^{n-1}\ge 0$ for all $s$.
\end{proof}

\begin{lem}\label{lem:topemb} Pick $\phi_1,\phi_2\in\cET$ and set $\mu_i:=\MA_v(\phi_i)$. Then there exists $C>0$ only depending on $n$, $v$ and $\phi_\refe$ such that 
$$
\ii(\phi_1,\phi_2)\le C\done(\mu_1,\mu_2)\max_i\left(1+\done(\mu_i)\right). 
$$
In particular, $\MA_v\colon\cET/\R\hto\cM^{1,T}$ is a topological embedding. 
\end{lem}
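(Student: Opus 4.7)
The plan is to reduce the weighted statement to the unweighted estimate~\eqref{equ:estMA} via Lemma~\ref{lem:I}, and then close a bootstrap loop to convert $\ii(\phi_i)$ on the right-hand side into $\done(\mu_i)$. First I would write
\[
\ii(\phi_1,\phi_2)\le(V\inf_P v)^{-1}\ii_v(\phi_1,\phi_2),
\]
using Lemma~\ref{lem:I} and the positivity of $v$ on $P$. Since $\ii_v(\phi_1,\phi_2)=\int(\phi_1-\phi_2)(\mu_2-\mu_1)$ by~\eqref{equ:Iv}, and since $\mu_i=\MA_v(\phi_i)$ lies in $\cM^1$ by Proposition~\ref{prop:MAext}, I apply the estimate~\eqref{equ:estMA} from the unweighted pluripotential theory to obtain
\begin{equation}\label{equ:keystar}
\ii(\phi_1,\phi_2)\lesssim\done(\mu_1,\mu_2)\max_i(1+\ii(\phi_i))^{1/2}.
\end{equation}

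The main step is then a bootstrap: specializing \eqref{equ:keystar} to $\phi_2=\phi_\refe$ (so that $\ii(\phi_\refe)=0$ and $\mu_2=\mu_\refe^v:=\MA_v(\phi_\refe)$), and using the triangle inequality $\done(\mu_i,\mu_\refe^v)\le\done(\mu_i)+\done(\mu_\refe,\mu_\refe^v)$ together with the fact that $\done(\mu_\refe,\mu_\refe^v)$ is a constant depending only on $v$ and $\phi_\refe$, we get
\[
\ii(\phi_i)\lesssim(1+\done(\mu_i))(1+\ii(\phi_i))^{1/2}.
\]
This is a quadratic inequality in $(1+\ii(\phi_i))^{1/2}$; absorbing it yields $\ii(\phi_i)\lesssim(1+\done(\mu_i))^2$. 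Plugging back into \eqref{equ:keystar} immediately gives the desired inequality
\[
\ii(\phi_1,\phi_2)\lesssim\done(\mu_1,\mu_2)\max_i(1+\done(\mu_i)).
\]

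Finally, the topological embedding statement follows from the displayed inequality together with: injectivity modulo $\R$ of $\MA_v$ on $\cET$, which is deduced from Lemma~\ref{lem:I} (if $\mu_1=\mu_2$ then $\ii_v(\phi_1,\phi_2)=0$ by \eqref{equ:Iv}, hence $\ii(\phi_1,\phi_2)=0$, so $\phi_1=\phi_2$ mod $\R$); strong continuity of $\MA_v\colon\cET/\R\to\cM^{1,T}$, which already comes from Proposition~\ref{prop:MAext}; and continuity of the inverse, since a strongly convergent sequence in $\cM^{1,T}$ is in particular $\done$-bounded, so the inequality forces $\ii$-convergence, equivalent to strong convergence in $\cET/\R$ by~\eqref{equ:JI}. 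I expect the only delicate point is the bootstrap step, where one must be careful that the constants depend only on $n$, $v$, and $\phi_\refe$ (and not on the $\phi_i$); this is transparent here because the reference measures $\mu_\refe$ and $\mu_\refe^v$ only contribute an additive constant to $\done$.
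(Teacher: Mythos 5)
Your proof is correct and follows essentially the same route as the paper's: reduce to the unweighted quasi-metric via Lemma~\ref{lem:I}, apply \eqref{equ:estMA}, then close the loop by specializing to $\phi_2=\phi_\refe$ and absorbing the resulting quadratic inequality, with the triangle inequality handling the shift from $\MA_v(\phi_\refe)$ to $\MA(\phi_\refe)$. Your closing remarks on injectivity and bicontinuity just spell out what the paper leaves implicit.
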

\begin{proof} Set $\mu_{v,\refe}:=\MA_v(\phi_\refe)$. By~\eqref{equ:IvI} and~\eqref{equ:estMA} we have 
$$
\ii(\phi_1,\phi_2)\le C\ii_v(\phi_1,\phi_2)=C\int(\phi_1-\phi_2)(\mu_2-\mu_1)\le C\done(\mu_1,\mu_2)\max_i\left(1+\ii(\phi_i)\right)^{1/2}. 
$$
Similarly, 
$$
\ii(\phi_i)\le C\ii_v(\phi_i)=C\int(\phi_i-\phi_\refe)(\mu_{v,\refe}-\mu_i)\le C\done(\mu_i,\mu_{v,\refe})(1+\ii(\phi_i))^{1/2}. 
$$
This implies $\ii(\phi_i)\le C(1+\done(\mu_i,\mu_{v,\refe}))^2$, and hence $\ii(\phi_i)\le C(1+\done(\mu_i))^2$, by the triangle inequality. The desired estimate follows, and implies the final point since the topologies of $\cE^1/\R$ and $\cM^1$ are respectively defined by the quasi-metrics $\ii$ and $\done$ (see~\S\ref{sec:comppt}). 
\end{proof}

\begin{lem}\label{lem:ortho} Pick $\phi\in\cET$, $f\in C^0(X)^T$, and consider the singular usc metric $\p:=\phi+f$ on $L$ and its psh envelope $\env(\p)$, see~\eqref{equ:pshenv}. Then $\env(\p)\in\cET$, and we have the \emph{orthogonality relation}
\begin{equation}\label{equ:wortho}
\int_X\left(\p-\env(\p)\right)\MA_v(\env(\p))=0. 
\end{equation}
\end{lem}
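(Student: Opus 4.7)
The plan is to reduce the weighted orthogonality to its unweighted counterpart~\eqref{equ:ortho} via a Radon--Nikodym comparison between $\MA_v(\env(\p))$ and $\MA(\env(\p))$. First, $\env(\p)$ indeed lies in $\cET$: the sandwich $\phi-\sup_X|f|\le\env(\p)\le\phi+\sup_X f$ shows $\env(\p)\in\cE^{1,T}$ since both bounds are in $\cE^1$, while $T$-invariance follows from the $T$-invariance of $\p=\phi+f$ and uniqueness of the psh envelope.

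The key step is to propagate the pointwise sandwich~\eqref{equ:MAvMA}, valid a priori only for smooth $\tilde\phi\in\cH^T$, to the measure-level inequality
$$
V(\inf_P v)\,\MA(\tilde\phi)\le\MA_v(\tilde\phi)\le V(\sup_P v)\,\MA(\tilde\phi)
$$
for every $\tilde\phi\in\cE^{1,T}$. To prove this, approximate $\tilde\phi$ strongly in $\cE^{1,T}$ by a sequence $\tilde\phi_j\in\cH^T$ (e.g., by combining Demailly regularization with averaging over the compact group $T$). Proposition~\ref{prop:MAext} then gives $\MA_v(\tilde\phi_j)\to\MA_v(\tilde\phi)$ and $\MA(\tilde\phi_j)\to\MA(\tilde\phi)$ strongly in $\cM^1$, hence weakly as positive Radon measures; testing against arbitrary nonnegative continuous functions on $X$, the sandwich inequalities for $\tilde\phi_j$ pass to the weak limit.

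Specialized to $\tilde\phi=\env(\p)$, this yields $\MA_v(\env(\p))\le V(\sup_P v)\,\MA(\env(\p))$. On the other hand, the unweighted orthogonality~\eqref{equ:ortho} extends from the continuous case to the setting of envelopes of continuous perturbations $\p=\phi+f$ of a finite-energy potential $\phi$ by standard monotone approximation (compare~\cite{DDL,trivval}), giving $\int_X(\p-\env(\p))\MA(\env(\p))=0$. Since $\p\ge\env(\p)$ pointwise, the nonnegative integrand vanishes $\MA(\env(\p))$-a.e., and therefore also $\MA_v(\env(\p))$-a.e. by the sandwich, proving~\eqref{equ:wortho}. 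The main substantive input is the measure comparison in the second step, which depends crucially on the strong continuity of $\MA_v$ on $\cE^{1,T}$ from Proposition~\ref{prop:MAext}; the remaining reduction to the unweighted orthogonality is then essentially a measure-theoretic formality.
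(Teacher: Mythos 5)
Your proof is correct and follows essentially the same route as the paper's: sandwich $\MA_v(\env(\p))$ by a constant multiple of $\MA(\env(\p))$ and reduce to the unweighted orthogonality relation~\eqref{equ:ortho}. The paper's version is more terse—it cites the comparison estimate directly (referring to~\eqref{equ:MAcomp}, though~\eqref{equ:MAvMA} is the relevant complex-analytic statement for $\cH^T$) and concludes in one line—whereas you supply the approximation argument extending~\eqref{equ:MAvMA} from $\cH^T$ to $\cE^{1,T}$ and explicitly flag the extension of~\eqref{equ:ortho} from continuous metrics to $C^0$-perturbations of finite-energy potentials, details the paper takes for granted.
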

\begin{proof} The first point is clear from the definition of $\env(\p)$. Since $0\le\p-\env(\p)$ and $0\le\MA_v(\env(\p))\le C\MA(\env(\p))$ for some $C>0$ (see~\eqref{equ:MAcomp}), we have 
$$
0\le\int\left(\p-\env(\p)\right)\MA_v(\env(\p))\le C\int\left(\p-\env(\p)\right)\MA(\env(\p)),
$$
where the right-hand integral vanishes by the usual (unweighted) orthogonality relation~\eqref{equ:ortho}. 
\end{proof}

\begin{lem}\label{lem:diffen} For any $\phi\in\cET$ and $f\in C^0(X)^T$ we have 
$$
\frac{d}{ds}\bigg|_{s=0}\en_v(\env(\phi+s f))=\int_X f\,\MA_v(\phi).
$$
\end{lem}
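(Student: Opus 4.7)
Set $\phi_s:=\env(\phi+sf)$; since $\phi$ itself is psh we have $\phi_0=\env(\phi)=\phi$. The first task is to control how $\phi_s$ deviates from $\phi$. The Lipschitz property of the envelope~\eqref{equ:envlip} (extended to the weighted/equivariant setting and applied with $\phi$ as a reference) gives the uniform bound
\[
  |\phi_s-\phi|\le |s|\,\sup_X|f|,
\]
in particular $\phi_s\to\phi$ uniformly, and $\phi_s\in\cET$ for $|s|$ small. From uniform convergence one deduces strong convergence $\phi_s\to\phi$ in $\cET$ (sandwich $\en_v$ using monotonicity and translation covariance, then combine with $L^1$-convergence), hence $\MA_v(\phi_s)\to\MA_v(\phi)$ in $\cM^{1,T}$ by the continuity part of Proposition~\ref{prop:MAext}. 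Since $f$ is continuous and $T$-invariant we conclude
\[
\int_X f\,\MA_v(\phi_s)\;\longrightarrow\;\int_X f\,\MA_v(\phi)\quad\text{as }s\to 0.
\]

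The next step is to sandwich the difference quotient using the concavity of $\en_v$, \eqref{equ:enconc}:
\[
\int_X(\phi_s-\phi)\,\MA_v(\phi_s)\;\le\;\en_v(\phi_s)-\en_v(\phi)\;\le\;\int_X(\phi_s-\phi)\,\MA_v(\phi).
\]
The upper bound is handled immediately by the envelope inequality $\phi_s\le\phi+sf$: it gives $\int(\phi_s-\phi)\MA_v(\phi)\le s\int f\,\MA_v(\phi)$ when $s>0$ and the reverse bound when $s<0$. The lower bound is where the orthogonality relation~\eqref{equ:wortho} (Lemma~\ref{lem:ortho}) enters: applied to $\psi=\phi+sf$, it yields
\[
\int_X\bigl((\phi+sf)-\phi_s\bigr)\,\MA_v(\phi_s)=0,
\]
i.e.\ $\int(\phi_s-\phi)\MA_v(\phi_s)=s\int f\,\MA_v(\phi_s)$. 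Combining the two ends of the sandwich gives, for $s>0$,
\[
  \int_X f\,\MA_v(\phi_s)\;\le\;\frac{\en_v(\phi_s)-\en_v(\phi)}{s}\;\le\;\int_X f\,\MA_v(\phi),
\]
and the reverse chain for $s<0$.

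Letting $s\to 0^\pm$, the already established convergence $\int f\,\MA_v(\phi_s)\to\int f\,\MA_v(\phi)$ squeezes both one-sided difference quotients to $\int_X f\,\MA_v(\phi)$, which is the desired identity. The main potential obstacle is the strong convergence $\phi_s\to\phi$ in $\cET$ for general (possibly unbounded) $\phi$: one must verify it in a way that feeds into the strong continuity of $\MA_v$ guaranteed by Proposition~\ref{prop:MAext}. This is however routine once one notices that $\phi_s-\phi$ is uniformly bounded by $|s|\sup|f|$, which by monotonicity of $\en$ forces $\en(\phi_s)\to\en(\phi)$ and combined with weak $L^1$-convergence characterises strong convergence in $\cE^1$. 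Every other ingredient—orthogonality, concavity, continuity of $\MA_v$—is already available from the preceding results in the section.
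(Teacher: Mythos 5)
Your proof is correct and follows essentially the same route as the paper's: control $\phi_s=\env(\phi+sf)$ via the envelope Lipschitz estimate, sandwich the difference quotient of $\en_v$ via concavity~\eqref{equ:enconc}, use $\env(\phi+sf)\le\phi+sf$ on one side and the weighted orthogonality relation~\eqref{equ:wortho} on the other, and squeeze as $s\to 0$. The only (harmless) differences from the paper's proof are that you make the $\dd_1$-convergence argument explicit and handle both signs of $s$, whereas the paper only treats $s>0$.
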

\begin{proof} Pick $s>0$.  By~\eqref{equ:envlip}, we have 
\begin{equation}\label{equ:envest}
\env(\phi+s f)=\phi+O(s),
\end{equation}
and hence $\env(\phi+s f)\in\cET$. 
By concavity of $\en_v$ we further have 
$$
\int\left(\env(\phi+s f)-\phi\right)\MA_v(\env(\phi+sf)\le\en_v(\env(\phi+s f))-\en_v(\phi)\le\int\left(\env(\phi+s f)-\phi\right)\MA_v(\phi),
$$
see~\eqref{equ:enconc}. Since $\env(\phi+s f)\le\phi+s f$, the right-hand side satisfies 
$$
\int\left(\env(\phi+s f)-\phi\right)\MA_v(\phi)\le s\int f\,\MA_v(\phi),
$$
while~\eqref{equ:wortho} yields 
$$
\int\left(\env(\phi+s f)-\phi\right)\MA_v(\env(\phi+sf)=s\int f\MA_v(\env(\phi+sf). 
$$
Thus 
$$
\int f\,\MA_v(\env(\phi+s f))\le\frac{\en_v(\env(\phi+s f))-\en_v(\phi)}{s}\le\int f\,\MA_v(\phi).
$$
Now \eqref{equ:envest} implies $\dd_1\left(\env(\phi+s f),\phi\right)\to 0$ as $s\to 0$, and hence $\MA_v(\env(\phi+s f))\to\MA(\phi)$ weakly. We infer
$$
\lim_{s\to 0}\frac{\en_v(\env(\phi+s f))-\en_v(\phi)}{s}=\int f\,\MA_v(\phi),
$$
and the result follows. 
\end{proof}

\begin{lem}\label{lem:monusc} Extend $\en_v$ to $\PSH^T(L)$ by setting $\en_v\equiv-\infty$ outside $\cET$. Then 
$$
\en_v\colon\PSH^T(L)\to\R\cup\{-\infty\}
$$
is monotone increasing, and usc in the weak topology.
\end{lem}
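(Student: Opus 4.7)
The statement is a weighted analogue of the classical result for the unweighted Monge--Amp\`ere energy (see \cite{BWN} for the weighted version), and my plan is to follow the familiar two-step scheme: prove monotonicity on $\cET$ and then use the standard upper-envelope regularization together with continuity of $\en_v$ along decreasing sequences.

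For monotonicity, suppose $\phi\le\psi$ in $\PSH^T(L)$. If $\phi\notin\cET$ the inequality is automatic since $\en_v(\phi)=-\infty$. Otherwise $\phi\in\cE^1$, hence $\en(\psi)\ge\en(\phi)>-\infty$ by monotonicity of the unweighted energy, which shows $\psi\in\cET$ as well; Lemma~\ref{lem:enconc}(i) then gives $\en_v(\phi)\le\en_v(\psi)$.

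For upper semicontinuity, pick a sequence (or net) $\phi_j\to\phi$ in the weak topology, with $\phi\in\PSH^T(L)$ by compactness of $T$. Set $\tilde\phi_j:=(\sup_{k\ge j}\phi_k)^*\in\PSH^T(L)$; by a standard Hartogs-type argument (using that weak convergence of psh functions is convergence in $L^1_{\loc}$), the sequence $\tilde\phi_j$ decreases pointwise to $\phi$. By Part 1, $\en_v(\tilde\phi_j)\ge\sup_{k\ge j}\en_v(\phi_k)$, so it suffices to establish the
\smallskip

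\noindent\textbf{Claim.} If $\psi_j\downarrow\psi$ in $\PSH^T(L)$, then $\en_v(\psi_j)\downarrow\en_v(\psi)$.
\smallskip

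\noindent Granted this, letting $j\to\infty$ yields $\en_v(\phi)\ge\limsup_j\en_v(\phi_j)$, which is the desired usc property.

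The proof of the claim splits according to whether $\psi\in\cET$. If $\psi\in\cET$, then by classical pluripotential theory the decreasing convergence $\psi_j\downarrow\psi$ is automatically strong in $(\cE^1,\dd_1)$ (since $\en(\psi_j)\downarrow\en(\psi)$ by monotone convergence of the unweighted energy, and $\dd_1\approx\ii_1$ via~\eqref{equ:d1I1}); the {\sgh} extension of $\en_v$ to $\cET$ provided by Proposition~\ref{prop:MAext} then forces $\en_v(\psi_j)\to\en_v(\psi)$. If $\psi\notin\cET$, then $\en(\psi_j)\to-\infty$; since $v>0$ on $P$, the bound~\eqref{equ:IvI} gives $\ii_v(\psi_j,\phi_\refe)\gtrsim\ii(\psi_j,\phi_\refe)\to+\infty$, and combining this with the cocycle formula $\en_v(\psi_j)-\en_v(\phi_\refe)=\int_0^1\int(\psi_j-\phi_\refe)\MA_v(s\psi_j+(1-s)\phi_\refe)\,ds$ and the concavity inequality~\eqref{equ:enconc} yields $\en_v(\psi_j)\to-\infty=\en_v(\psi)$.

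The main obstacle is the decreasing-limit continuity claim when $\psi\notin\cET$; here one must transfer the blow-up of the unweighted energy to the weighted one, which is what~\eqref{equ:MAvMA} and~\eqref{equ:IvI} are designed for. Alternatively, one may invoke directly the analogous monotone convergence theorem for $\en_v$ proved in~\cite[\S 2]{BWN}, in which case the whole argument reduces to the upper-envelope regularization in Part 2 above.
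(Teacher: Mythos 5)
Your proof is correct in outline, but it takes a genuinely different route from the paper's, and one step in your sketch of the decreasing-limit claim is not quite right as stated.

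The paper's argument is more direct: it first establishes the identity
$$
\en_v(\phi)=\inf\bigl\{\en_v(\psi)\mid\psi\in\cH^T,\ \psi\ge\phi\bigr\}
$$
for all $\phi\in\PSH^T(L)$ (this is exactly your Claim restricted to decreasing sequences of smooth metrics, combined with monotonicity), and then applies the Hartogs lemma in the form $\sup_X(\phi_j-\psi)\to\sup_X(\phi-\psi)$: given $\psi\in\cH^T$ with $\psi\ge\phi$ and $\e>0$, eventually $\phi_j\le\psi+\e$, whence $\en_v(\phi_j)\le\en_v(\psi)+\e$ by monotonicity, and taking $\limsup$, then $\e\to0$, then the infimum over $\psi$ gives the result. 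You instead regularize the whole sequence by the decreasing upper envelopes $\tilde\phi_j=(\sup_{k\ge j}\phi_k)^*$ and reduce to decreasing-sequence continuity of $\en_v$. Both routes rest on the same key ingredient (decreasing convergence of $\en_v$), but the paper's version avoids the pointwise envelope fact $\tilde\phi_j\downarrow\phi$ and only needs Hartogs for sups, which is a lighter tool. Your approach is a perfectly valid alternative.

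The one place your sketch is imprecise is the case $\psi\notin\cET$ of the Claim. Knowing $\ii_v(\psi_j,\phi_\refe)\gtrsim\ii(\psi_j,\phi_\refe)\to+\infty$ does not by itself give $\en_v(\psi_j)\to-\infty$: writing $\ii_v(\psi_j,\phi_\refe)=\jj_v(\psi_j,\phi_\refe)+\jj_v(\phi_\refe,\psi_j)$, the piece that controls $-\en_v(\psi_j)$ is $\jj_v(\psi_j,\phi_\refe)$, and bounding the other piece $\jj_v(\phi_\refe,\psi_j)$ already involves $\en_v(\psi_j)$, so the argument as phrased is circular. The clean fix is more elementary and does not need $\ii_v$ at all: after translating so that $\psi_j\le\phi_\refe$ (harmless, since translation shifts $\en$ and $\en_v$ by the same constant $\deg_v(L)=1$), plug $\MA_v\ge V(\inf_Pv)\,\MA$ from~\eqref{equ:MAvMA} into the cocycle integral to get $\en_v(\psi_j)\le V(\inf_Pv)\,\en(\psi_j)\to-\infty$. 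This is in effect the comparison~\eqref{equ:encomp} used elsewhere in the paper, and it is also what your cited alternative via~\cite{BWN} would deliver. With that correction, your argument is complete.
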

\begin{proof} Since $\en_v$ is monotone increasing and (strongly) continuous on $\cET$, for any $\phi\in\PSH^T(L)$ we have 
$$
\en_v(\phi)=\inf\left\{\en_v(\p)\mid\p\in\cH^T,\,\p\ge\phi\right\}. 
$$
Now pick a weakly convergent sequence $\phi_j\to\phi$ in $\PSH^T(L)$, and $\p\in\cH^T$ such that $\p\ge\phi$. Then $\sup(\phi_j-\p)\to\sup(\phi-\p)\le 0$. For any $\e>0$ we thus have $\phi_j\le\p+\e$ for all $j\gg 1$, and hence $\en_v(\phi_j)\le\en_v(\p)+\e$. This yields, as desired, $\limsup_j\en_v(\phi_j)\le\en_v(\phi)$. 
\end{proof}

\begin{proof}[Proof of Theorem~\ref{thm:wMA}] We pick a reference metric $\phi_\refe\in\cH^T$, and denote by $\cF$ the set of sup normalized metrics $\phi\in\cET$, \ie $\sup(\phi-\phi_\refe)=0$. Recall that $\en,\en_v$ are normalized by 
$$
\en(\phi_\refe)=\en_v(\phi_\refe)=0,
$$
so that 
\begin{equation}\label{equ:encomp}
C^{-1}\en(\phi)\le\en_v(\phi)\le C\en(\phi)\le 0
\end{equation}
for any $\phi\in\cF$ and a uniform constant $C>0$. As a consequence of~\eqref{equ:estMA}, there exists $A,B>0$ such that 
\begin{equation}\label{equ:quad}
\left|\int(\phi-\phi_\refe)\,\mu\right|\le A(-\en(\phi))^{1/2}+B
\end{equation}
for all $\phi\in\cF$. Now pick $\mu\in\cM^{1,T}$, and choose a sequence $(\phi_j)$ in $\cF$ such that $\en_v(\phi_j)-\int(\phi_j-\phi_\refe)\mu$ converges to 
\begin{equation}\label{equ:envee}
\jj_v(\mu):=\sup_{\phi\in\cET}\left(\en_v(\phi)-\int(\phi-\phi_\refe)\,\mu\right),
\end{equation}
where the supremum can equivalently be taken over $\cF$, by translation invariance of $\phi\mapsto\en_v(\phi)-\int(\phi-\phi_\refe)\,\mu$. 
By~\eqref{equ:encomp} and~\eqref{equ:quad}, we have $\jj_v(\mu)<\infty$ and $\en(\phi_j)\ge-C$, so that $(\phi_j)$ stays in a bounded subset of $\cF$. By weak compactness of the space of normalized psh metrics, we may assume $(\phi_j)$ converges weakly to some $\phi\in\PSH(L)$, which necessarily lies in $\cF$ since $(\phi_j)$ is bounded in $\cET$. We claim that $\phi$ achieves~\eqref{equ:envee}. Indeed, Lemma~\ref{lem:holdmu} and Lemma~\ref{lem:monusc} respectively yield $\int(\phi_j-\phi_\refe)\,\mu\to\int(\phi-\phi_\refe)\,\mu$ and $\limsup_j\en_v(\phi_j)\le\en_v(\phi)$, which yields
$$
\en_v(\phi)-\int(\phi-\phi_\refe)\,\mu\ge \limsup_j\left(\en_v(\phi_j)-\int(\phi_j-\phi_\refe)\,\mu\right)=\jj_v(\mu)
$$
and proves the claim. Finally, we claim that $\MA_v(\phi)=\mu$. Since both sides are $T$-invariant, this amounts to $\int f\,\MA_v(\phi)=\int f\,\mu$ for $f\in C^0(X)^T$. Define $g\colon\R\to\R$ by 
$$
g(s):=\en_v(\env(\phi+s f))-\int(\phi+sf-\phi_\refe)\,\mu.
$$
Since $\phi+s f\ge\env(\phi+s f)=:\phi_s\in\cET$, we have 
$$
g(s)\le \en_v(\phi_s)-\int(\phi_s-\phi_\refe)\,\mu\le\jj_v(\mu)=g(0). 
$$
By Lemma~\ref{lem:diffen} this implies, as desired, $\int f\,\MA_v(\phi)-\int f\,\mu=g'(0)=0$. 
\end{proof}

%
%%%%%%%%%%%%%%%%%%%%%%%%%%%%%%%%%%%%%%%%%%%%%%%%%%%%%%%%%%%%%%
\subsection{Non-Archimedean weighted pluripotential theory}
Building on the machinery developed in~\S\ref{sec:NAtrans}, we introduce here \na analogues of the weighted operators and functionals of~\S\ref{sec:weighted} (recovering in particular the \na weighted \ma energy of~\cite{HLi23}). 

We fix an arbitrary weight function $v\in C^\infty(M_\R)$, and a smooth $T$-invariant metric $\tau\in C^\infty(F)^T$ on an equivariant $\Q$-line bundle $F$. 

\begin{prop-def}\label{prop:wNAMA} The operators $\MA_v,\,\MA_v^\tau\colon\cH^T\to \Cz(X)^\vee$ both admit non-Archimedean transforms 
$$
\MA_{v,\NA},\,\MA_{v,\NA}^F\colon\cH^T_{\NA}\to \Cz(X_\NA)^\vee, 
$$
which are further described as follows. Pick $\f\in\cH^T_{\NA}$, and write $\f=\f_\cL$ for an equivariant semiample snc test configuration $(\cX,\cL)$ for $(X,L)$ that dominates $X\times\P^1$, with central fiber $\cX_0=\sum_E b_E E$. Then 
\begin{equation}\label{equ:wNAMA}
\MA_{v,\NA}(\f)=\sum_E b_E \deg_v(\cL|_E)\,\d_{v_{E}}
\end{equation}
and
\begin{equation}\label{equ:twNAMA}
\MA_{v,\NA}^F(\f)=\sum_E b_E\deg_v'(\cL|_E;F_\cX)\,\d_{v_{E}},
\end{equation}
see~\S\ref{sec:DH}. 
\end{prop-def}
Note that $\MA_{v,\NA}(\f)=\MA_{L,v,\NA}(\f)$ and $\MA_{v,\NA}^F(\f)=\MA_{L,v,\NA}^F(\f)$ both depend on $L$, and satisfy\footnote{See Remark~\ref{rmk:dconv}.}
$$
\MA_{L,v,\NA}^F(\f)=\frac{d}{ds}\bigg|_{s=0}\MA_{L+sM,v,\NA}(\f), 
$$
see~\eqref{equ:twdeg}. Like their complex counterparts, these signed measures have total mass 
\begin{equation}\label{equ:totalNAMA}
\int_{X_\NA}\MA_{v,\NA}(\f)=\deg_v(L),\quad\int_{X_\NA}\MA_{v,\NA}^F(\f)=\deg_v'(L;F). 
\end{equation}

For any $\f\in\cH^T_{\NA}$ and $v\in C^\infty(M_\R)$ we further have 
\begin{equation}\label{equ:MAcomp}
V (\inf_Pv)\MA_\NA(\f)\le\MA_{v,\NA}(\f)\le V(\sup_Pv)\MA_\NA(\f). 
\end{equation}

\begin{exam} The function $0\in\cH^T_{\NA}$ satisfies 
$$
\MA_{v,\NA}(0)=\deg_v(L)\,\d_{v_\triv},\quad\MA_{v,\NA}^F(0)=\deg_v'(L;F)\,\d_{v_\triv}
$$
with $v_\triv\in X_\NA$ the trivial valuation.
\end{exam}

\begin{rmk} Taking into account~\eqref{equ:wdeg}, the expression~\eqref{equ:wNAMA} for the \na weighted \ma measure is similar to the formula appearing in the work of E.~Inoue~\cite[Theorem~3.71~(1)]{InoEnt2}. However, the Duisteermaat--Heckman measure of $E$ therein is with respect to the induced $\C^\times$-action on $\cX_0$, not with respect to an ambient torus action as in our case. 
\end{rmk}

\begin{proof}[Proof of Proposition~\ref{prop:wNAMA}] Pick a ray with analytic singularities $\{\phi_t\}$ in $\cH^T$, with \na limit $\phi_\NA\in\cH^T_{\NA}$. Thus $\{\phi_t\}$ is induced by a smooth $(S^1\times T)$-invariant metric $\Phi$ on $\cL$ for an equivariant semiample test configuration $(\cX,\cL)$ such that $\phi_\NA=\f_\cL$. 

We need to show that the rays of signed measures $\{\MA_v(\phi_t)\}_{t\ge 0}$ and $\{\MA_v^\tau(\phi_t)\}_{t\ge 0}$ respectively admit~\eqref{equ:wNAMA}, \eqref{equ:twNAMA} as their \na limits. 

By linearity with respect to $v$, we may assume $v>0$ on $P$, and hence $\mu_t:=\MA_v(\phi_t)\ge 0$. The corresponding $S^1$-invariant family of measures $\{\tmu_z\}_{z\in\DD^\times}$ on $X\times\DD^\times$ is given by 
$$
\tmu_z= v(m_\Phi)(\ddc\Phi)^n\wedge\d_{\cX_z},
$$
where $m_\Phi:\cX\to M_\R$ is defined as in~\eqref{equ:moment} and $\d_{\cX_z}$ denotes the integration current on $\cX_z$. As $z\to 0$ in $\DD$ we thus have 
$$
\tmu_z\to\tmu_0:=v(m_\Phi)(dd\Phi)^n\wedge\d_{\cX_0}=\sum_E b_E v(m_\Phi)(\ddc\Phi)^n\wedge\d_{E}
$$
weakly in $\cX$. By Pille-Schneider's criterion (Proposition~\ref{prop:PS}), it follows, as desired, that $\{\mu_t\}$ admits 
$$
\mu_\NA:=\sum_E b_E \left(\int_{E} v(m_\Phi)(\ddc\Phi)^n\right)\d_{v_{E}}=\sum_E b_E\deg_v(\cL|_E)\d_{v_{E}}
$$
as \na limit. 

As in the proof of Proposition~\ref{prop:sgh}, we may further find $B>0$ such that 
$$
\nu_t:=\MA_v^\tau(\phi_t)+B\mu_t\ge 0
$$
for all $t$. By~\eqref{equ:twMA}, the corresponding $S^1$-invariant family $\{\tnu_z\}_{z\in\DD^\times}$ satisfies
$$
\tnu_z=\left[n v(m_\Phi)\ddc\tau\wedge(\ddc\Phi)^{n-1}+\left(\langle v'(m_\Phi),m_\tau\rangle+B\right)(\ddc\Phi)^n\right]\wedge\d_{\cX_z},
$$
which converges weakly in $\cX$ to 
$$
\tnu_0:=\left[n v(m_\Phi)\ddc\tau\wedge(\ddc\Phi)^{n-1}+\left(\langle v'(m_\Phi),m_\tau\rangle+B\right)(\ddc\Phi)^n\right]\wedge\d_{\cX_0}
$$
$$
=\sum_E b_E \left[n v(m_\Phi)\ddc\tau\wedge(\ddc\Phi)^{n-1}+\left(\langle v'(m_\Phi),m_\tau\rangle+B\right)(\ddc\Phi)^n\right]\wedge\d_{E}.
$$
Using again Proposition~\ref{prop:PS}, it follows that the \na limit of $\{\nu_t\}$ is 
$$
\nu_\NA:=\sum_E b_E\left(\int_{E}n v(m_\Phi)\ddc\tau\wedge(\ddc\Phi)^{n-1}+\left(\langle v'(m_\Phi),m_\tau\rangle+B\right)(\ddc\Phi)^n\right)\d_{v_{E}}
$$
$$
=\sum_E b_E \deg_v'(cL|_E;F_\cX)\d_{v_{E}}+B\mu_\NA,
$$
see~\eqref{equ:degder}. The result follows. 
\end{proof}

Recall from Corollary~\ref{cor:invdense} that $\cH^T_\NA$ is dense in $\cE^{1,T}_\NA$. We may now state: 

\begin{thm}\label{thm:NAMAext} Pick $v\in C^\infty(M_\R)$, an equivariant $\Q$-line bundle $F$, and $\tau\in C^\infty(F)^T$. Then:  
\begin{itemize}
\item[(i)] $\MA_{v,\NA},\,\MA_{v,\NA}^F\colon\cH^T_{\NA}\to \Cz(X_\NA)^\vee$ uniquely extend to continuous\footnote{Recall that $\Cz(X)^\vee$ is equipped with the weak topology.} operators 
\begin{equation}\label{equ:MAext}
\MA_{v,\NA},\,\MA_{v,\NA}^F\colon\cE^{1,T}_{\NA}\to \Cz(X_\NA)^\vee, 
\end{equation}
and $\MA_{v,\NA}(\f)$, $\MA_{v,\NA}^F(\f)$ further have finite energy for each $\f\in\cE^{1,T}_{\NA}$ (see Definition~\ref{defi:finen});\item[(ii)] the operators~\eqref{equ:MAext} admit {\sgh} {\EL} functionals 
$$
\en_{v,\NA},\,\en_{v,\NA}^F\colon\cE^{1,T}_{\NA}\to\R; 
$$
\item[(iii)] for any almost geodesic rays $\{\phi_{1,t}\}$, $\{\phi_{2,t}\}$ in $\cE^1$, $\{\phi_{3,t}\}$ in $\cET$ such that $\phi_{1,\infty},\phi_{2,\infty}\in\cE^1_\NA$, $\phi_{3,\infty}\in\cE^{1,T}_{\NA}$, we have
$$
t^{-1}\int_X(\phi_{1,t}-\phi_{2,t})\MA_v(\phi_{3,t})\to \int_{X_\NA}(\phi_{1,\infty}-\phi_{2,\infty})\MA_{v,\NA}(\phi_{3,\infty}),
$$
$$
t^{-1}\int_X(\phi_{1,t}-\phi_{2,t})\MA_v^\tau(\phi_{3,t})\to \int_{X_\NA}(\phi_{1,\infty}-\phi_{2,\infty})\MA_{v,\NA}^F(\phi_{3,\infty}),
$$
and
$$
t^{-1}\en_v(\phi_{3,t})\to\en_{v,\NA}(\phi_{3,\infty}),\quad t^{-1}\en_v^\tau(\phi_{3,t})\to\en_{v,\NA}^F(\phi_{3,\infty}).
$$
In particular, the rays of signed measures $\{\MA_v(\phi_{3,t})\}$, $\{\MA_v^\tau(\phi_{3,t})\}$ admit $\MA_{v,\NA}(\phi_{3,\infty})$, $\MA_{v,\NA}^F(\phi_{3,\infty})$ as their \na limits, and $\en_{v,\NA}$, $\en_{v,\NA}^F$ coincide with the restrictions to $\cE^{1,T}_{\NA}$ of the radial transforms $\en_{v,\ra},\en_v^{\tau,\ra}\colon\cE^{1,T}_\ra\to\R$ (see Proposition~\ref{prop:raden}). 
\end{itemize}
\end{thm}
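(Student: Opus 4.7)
The theorem is essentially an application of the general non-Archimedean extension machinery of \S\ref{sec:NAtrans} (specifically Lemmas~\ref{lem:NAtrans1} and~\ref{lem:NAtrans2}) to the weighted Monge--Amp\`ere operators on $\cH^T$, once we have the input established in Propositions~\ref{prop:MAsgh} and~\ref{prop:wNAMA}. The first step is a reduction by linearity. Any weight function $v\in C^\infty(M_\R)$ can be written as $v=(v+C)-C$ with $C\gg 1$ so that both summands are positive on $P$, and $\MA_v$ depends linearly on $v$; after a further rescaling we may assume $v>0$ on $P$ and $\deg_v(L)=1$, which makes $\MA_v$ take values in the space $\cM^{1,T}$ of $T$-invariant probability measures of finite energy. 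Similarly, by Proposition~\ref{prop:anal}~(ii) we may write $\tau=\tau_1-\tau_2$ with $\tau_i\in C^\infty(L_i)^T$ smooth and (strictly) psh on ample $\Q$-line bundles $L_i$ with $F=L_1-L_2$, so by $F$-linearity of $\MA_v^F$ and $\MA_{v,\NA}^F$ it suffices to treat a psh $\tau$.

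Under these assumptions, Proposition~\ref{prop:MAsgh}~(i) yields that $\MA_v\colon(\cH^T/\R,\ii)\to(\cM^{1,T},\done)$ is {\sgh}, while Proposition~\ref{prop:MAsgh}~(ii) gives constants $a,b>0$ such that the positive, translation-invariant operator $\Ga:=a\MA_v^\tau+b\MA_v$ takes values in $\cM^{1,T}$ (after normalizing by its constant total mass) and is {\sgh} as well. On the other hand, Proposition--Definition~\ref{prop:wNAMA} identifies $\MA_{v,\NA}$ and $\MA_{v,\NA}^F$ (and hence $\Ga_\NA:=a\MA_{v,\NA}^F+b\MA_{v,\NA}$) as the non-Archimedean transforms of $\MA_v$, $\MA_v^\tau$ and $\Ga$ on $\cH^T$ in the sense of Definition~\ref{defi:NAtrmeas}. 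Applying Lemma~\ref{lem:NAtrans1} to $\MA_v$ and $\Ga$ we obtain the unique continuous extensions to $\cE^{1,T}$ and $\cE^{1,T}_\NA$ of the statement, both sides being {\sgh} with respect to $(\ii,\done)$ and $(\ii_\NA,\d_{1,\NA})$; the required finite energy of $\MA_{v,\NA}(\f)$ and $\MA_{v,\NA}^F(\f)$ follows since {\sgh} maps land in $\cM^1_\NA$. The twisted operator $\MA_{v,\NA}^F=a^{-1}(\Ga_\NA-b\MA_{v,\NA})$ is then recovered as a signed-measure combination, proving (i).

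For (ii), we apply Lemma~\ref{lem:NAtrans2} to $\MA_v$ and to $\Ga$, producing {\sgh} Euler--Lagrange functionals on $\cE^{1,T}_\NA$ that are the {\na} transforms of $\en_v$ and of $a\en_v^\tau+b\en_v$ respectively; subtracting, we obtain the {\sgh} functional $\en_{v,\NA}^F$ and verify directly that it is an {\EL} functional for $\MA_{v,\NA}^F$. The slope formulas in (iii) for the integrals against $\MA_v$ and $\MA_v^\tau$ along almost geodesic rays are exactly the content of Lemma~\ref{lem:NAtrans1}~(ii) applied to $\MA_v$ and to $\Ga$ (again combined linearly). The slope formulas for the energy functionals are Lemma~\ref{lem:NAtrans2}~(iii). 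Finally, to identify $\en_{v,\NA}$ and $\en_{v,\NA}^F$ with the restrictions of the radial transforms $\en_{v,\ra}$, $\en_v^{\tau,\ra}$ of Proposition~\ref{prop:raden}, we observe that both quantities compute the same slope at infinity $\lim_{t\to\infty}t^{-1}\en_v(\phi_t)$ (resp.~$\lim t^{-1}\en_v^\tau(\phi_t)$) along any almost geodesic ray $\{\phi_t\}\subset\cE^{1,T}$ whose direction lies in $\cE^{1,T}_\NA\subset\cE^{1,T}_\ra$: the radial-transform version is the content of Proposition~\ref{prop:sgh}~(ii) (applied to the {\sgh} functionals $\en_v$, $\en_v^\tau$), while the non-Archimedean version is (iii). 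Since every $\f\in\cE^{1,T}_\NA$ arises as the direction of the maximal geodesic ray it determines, this forces $\en_{v,\NA}=\en_{v,\ra}|_{\cE^{1,T}_\NA}$ and $\en_{v,\NA}^F=\en_v^{\tau,\ra}|_{\cE^{1,T}_\NA}$.

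The main technical obstacles have already been overcome in Propositions~\ref{prop:MAsgh} and~\ref{prop:wNAMA}, so the remaining argument is essentially a formal assembly. The only subtlety is that $\MA_v^\tau$ need not be a positive measure with positive total mass, which prevents a direct application of the $\cM^1$-valued extension lemmas; this is circumvented by the standard device of working with the auxiliary positive combination $\Ga=a\MA_v^\tau+b\MA_v$ furnished by Proposition~\ref{prop:MAsgh}~(ii).
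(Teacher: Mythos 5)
Your proposal is correct and follows essentially the same route as the paper, which reduces by linearity to $v>0$ on $P$ and $\tau$ psh, invokes Proposition~\ref{prop:MAsgh} to get that $\MA_v$ and a positive combination $a\MA_v^\tau+b\MA_v$ are {\sgh} operators $\cH^T\to\cM^1$, and then cites Lemmas~\ref{lem:NAtrans1} and~\ref{lem:NAtrans2}; you have simply unpacked the formal assembly that the paper leaves implicit. One small remark: your citation of Proposition~\ref{prop:anal}~(ii) for writing $\tau$ as a difference of smooth psh metrics on ample $\Q$-line bundles is slightly off (that statement concerns rays with analytic singularities), but the underlying fact you need is standard and is exactly what is used in the proof of that proposition, so there is no mathematical gap.
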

Note that~\eqref{equ:MAcomp} remains valid for $\f\in\cE^{1,T}_{\NA}$; in particular $\MA_{v,\NA}(\f)\ge 0$ if $v\ge 0$ on $P$. By Proposition~\ref{prop:MAsgh}, if $v>0$ on $P$ and $F$ is ample, then $\MA_{v,\NA}^F(\f)+B\MA_{v,\NA}(\f)\ge 0$ for a constant $B>0$ independent of $\f$. 

\begin{proof}[Proof of Theorem~\ref{thm:NAMAext}] By linearity, we may again assume $v>0$ on $P$ and $\tau$ psh. Then Proposition~\ref{prop:MAsgh} shows that $\MA_v$ and $\MA_v+B\MA_v^\tau$ are {\sgh} operators $\cH^T\to\cM^1$ for some $B>0$, and the rest follows from Lemmas~\ref{lem:NAtrans1} and~\ref{lem:NAtrans2}.
\end{proof}

\begin{rmk} In~\cite[\S 5.3]{HLi23}, $\en_{v,\NA}\colon\cH^T_{\NA}\to\R$ is defined by approximating $v$ by polynomials and setting 
$$
\en_{\xi^d/d!,\NA}(\f_\cL):=(n+1)!\frac{(\cL^{n+1+d})_T}{(n+1+d)!}(\xi)
$$
for any ample test configuration $(\cX,\cL)$ (compare~\eqref{equ:degpol}). By~\cite[Proposition~5.8]{HLi23}, this definition also computes the \na transform of $\en_v\colon\cH^T\to\R$, and hence concides with our construction of $\en_{v,\NA}$. 
\end{rmk}

Finally we may state the following \na analogue of Theorem~\ref{thm:wMA}. 

\begin{thm}\label{thm:NAwMA} If $v>0$ on $P$ and $\deg_v(L)=1$, then the weighted \na \ma operator induces a homeomorphism 
$$
\MA_{v,\NA}\colon\cE^{1,T}_{\NA}/\R\simto\cM^{1,T}_{\NA}. 
$$
\end{thm}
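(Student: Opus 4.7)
My plan is to follow the variational strategy used in the proof of Theorem~\ref{thm:wMA}, adapting each ingredient to the non-Archimedean setting. Since continuity of $\MA_{v,\NA}$ is already provided by Theorem~\ref{thm:NAMAext}(i), what remains is injectivity, surjectivity, and continuity of the inverse on the image.

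First I would establish the NA analogues of Lemmas~\ref{lem:I} and~\ref{lem:topemb}, which jointly yield the topological embedding statement and, in particular, injectivity with continuous inverse. The comparison $V(\inf_P v)\,\ii_\NA\le\ii_{v,\NA}\le V(\sup_P v)\,\ii_\NA$ on $\cE^{1,T}_\NA$ follows from linearity of $v\mapsto\ii_{v,\NA}$ together with the concavity of $\en_{w,\NA}$ for any $w\ge 0$ on $P$, itself a consequence of Lemma~\ref{lem:NAtrans2}(iv) applied to the concave complex functional $\en_w$. Combined with~\eqref{equ:estMANA}, exactly as in Lemma~\ref{lem:topemb}, this gives $\ii_\NA(\f_1,\f_2)\le C\,\d_{1,\NA}(\mu_1,\mu_2)\max_i(1+\d_{1,\NA}(\mu_i))$ for $\mu_i=\MA_{v,\NA}(\f_i)$, settling the topological embedding.

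For surjectivity, I would fix $\mu\in\cM^{1,T}_\NA$ and maximize the functional $\f\mapsto\en_{v,\NA}(\f)-\int\f\,\mu$ over $\cE^{1,T}_\NA$, taking $0\in\cE^{1,T}_\NA$ as basepoint. By translation invariance the supremum can be restricted to the set $\cF$ of sup-normalized potentials ($\sup_{X_\NA}\f=0$, so $\f\le 0$). On $\cF$, the sandwich~\eqref{equ:MAcomp} gives $\en_{v,\NA}\approx\en_\NA\le 0$, and the NA analogue of~\eqref{equ:quad}, extracted from~\eqref{equ:estMANA} with $\nu=\d_{v_\triv}=\MA_\NA(0)$, controls $|\int\f\,\mu|$ by $\sqrt{-\en_\NA(\f)}$. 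A maximizing sequence is therefore bounded in $\cE^{1,T}_\NA$; using the weak compactness of $\PSH^T_\NA(L)/\R$ afforded by the envelope property (\S\ref{sec:env}), I extract a weak limit $\f$ which lies in $\cE^{1,T}_\NA$ by the energy bound.

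The main obstacle is showing that $\f$ attains the supremum: this requires the NA analogue of Lemma~\ref{lem:monusc}, namely weak upper semicontinuity of $\en_{v,\NA}$ on $\PSH^T_\NA(L)$, extended by $-\infty$ off $\cE^{1,T}_\NA$. The argument should parallel the complex case: monotonicity of $\en_{v,\NA}$ (from $\MA_{v,\NA}\ge 0$), strong continuity on $\cE^{1,T}_\NA$ (Theorem~\ref{thm:NAMAext}(ii)), and density of $\cH^T_\NA$ in $\cE^{1,T}_\NA$ (Corollary~\ref{cor:invdense}) together give $\en_{v,\NA}(\f)=\inf\{\en_{v,\NA}(\p)\mid\p\in\cH^T_\NA,\,\p\ge\f\}$, after which the standard usc behavior of sup differences in the weak topology closes the loop. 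Weak continuity of $\f\mapsto\int\f\,\mu$ on bounded subsets, the other ingredient for attainment, is a general fact for measures of finite energy recalled in~\S\ref{sec:NAppt}. Once $\f$ is a maximizer, the Euler--Lagrange identity $\MA_{v,\NA}(\f)=\mu$ follows exactly as in Lemma~\ref{lem:diffen} by differentiating $s\mapsto\en_{v,\NA}(\env(\f+sf))-\int(\f+sf)\mu$ at $s=0$; this rests on the concavity of $\en_{v,\NA}$ (Lemma~\ref{lem:NAtrans2}(iv)), the strong continuity $\MA_{v,\NA}(\env(\f+sf))\to\MA_{v,\NA}(\f)$ as $s\to 0$, and the weighted orthogonality $\int(\f+sf-\env(\f+sf))\MA_{v,\NA}(\env(\f+sf))=0$, which is obtained from the unweighted NA orthogonality~\eqref{equ:orthoNA} sandwiched through~\eqref{equ:MAcomp}.
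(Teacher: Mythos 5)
Your proposal is correct and takes essentially the same approach as the paper, which simply states that the proof of Theorem~\ref{thm:wMA} carries over to the non-Archimedean case, with the NA analogues of~\eqref{equ:enconc} and~\eqref{equ:IvI} obtained via slopes at infinity through Theorem~\ref{thm:NAMAext}. Your justification of the NA version of~\eqref{equ:IvI} (linearity in $v$ plus concavity of $\en_{w,\NA}$ via Lemma~\ref{lem:NAtrans2}(iv)) is a minor repackaging of the same slopes-at-infinity mechanism, and the remaining ingredients (NA envelope property, orthogonality sandwiched through~\eqref{equ:MAcomp}, upper semicontinuity of $\en_{v,\NA}$, weak continuity of $\f\mapsto\int\f\,\mu$, Euler--Lagrange via differentiating along the envelope) are exactly the NA translations the paper intends.
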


The proof of Theorem~\ref{thm:wMA} carries over to the \na case without change, noting that the \na analogues of~\eqref{equ:enconc} and~\eqref{equ:IvI} follow by passing to slopes at infinity along geodesic rays, using Theorem~\ref{thm:NAMAext}. 

For later use we also show: 

\begin{prop}\label{prop:orthoPL} Assume $v>0$ on $P$. Pick $\f\in\cE^{1,T}_{\NA}$, and a $T$-equivariant snc test configuration $(\cX,\cL)$, with associated dual complex $\D_\cX\subset X_\NA$ and retraction map $p_\cX\colon X_\NA\to\D_\cX$. Then: 
\begin{itemize}
\item[(i)] $\MA_{v,\NA}(\f)$ is supported in $\D_\cX$ iff $\f=\env(\f\circ p_\cX)$;
\item[(ii)] if $\f=\env(\f_\cL)$ then $\MA_{v,\NA}(\f)$ is supported in the (finite) set of divisorial valuations $v_E\in X_\div$ associated to the irreducible components $E$ of $\cX_0$ that are not contained in the augmented base locus $\B_+(\cL/\P^1)$ relative to the morphism $\cX\to\P^1$. 
\end{itemize}
\end{prop}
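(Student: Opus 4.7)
The overall strategy is to reduce both assertions to their unweighted counterparts via the bound~\eqref{equ:MAcomp}. Since $v>0$ on $P$, for every $\f\in\cE^{1,T}_\NA$ the positive measures $\MA_{v,\NA}(\f)$ and $\MA_\NA(\f)$ are bounded above and below by positive multiples of each other, and hence share the same support. It therefore suffices to prove (i) and (ii) with $\MA_{v,\NA}$ replaced by $\MA_\NA$.

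For~(i), I would rely on the basic fact that, for any $\f\in\PSH_\NA(L)$, the pullback $\f\circ p_\cX$ is itself $L$-psh and satisfies $\f\le\f\circ p_\cX$, with equality on $\D_\cX$ (see~\cite[Appendix~A]{trivval} and the proof of Lemma~\ref{lem:suppenv}). Combined with $p_\cX\circ p_\cX=p_\cX$, this makes the hypothesis $\f=\env(\f\circ p_\cX)$ equivalent to $\f=\f\circ p_\cX$. For ``$\Leftarrow$'', given such $\f$ factoring through $\D_\cX$, I would approximate $\f|_{\D_\cX}$ uniformly by PL functions $\chi_j$ on $\D_\cX$; by~\eqref{equ:PLdual}, the pullbacks $\tilde\chi_j:=\chi_j\circ p_\cX$ lie in $\PL(X_\NA)$, are linear on each face of $\D_\cX$, and are of the form $\f_{\cL_j}$ for a test configuration $\cL_j$ on $\cX$ itself. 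Uniform convergence $\tilde\chi_j\to\f$ combined with the NA analogue of~\eqref{equ:envlip} gives $\env(\tilde\chi_j)\to\f$ uniformly, hence $\MA_\NA(\env(\tilde\chi_j))\to\MA_\NA(\f)$ weakly. Lemma~\ref{lem:suppenv} ensures each $\MA_\NA(\env(\tilde\chi_j))$ is supported at vertices $\{v_E\}\subset\D_\cX$, so the weak limit is supported in the closed set $\D_\cX$.

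For ``$\Rightarrow$'', assuming $\supp\MA_\NA(\f)\subset\D_\cX$, set $\tilde\f:=\f\circ p_\cX$, which is $L$-psh with $\f\le\tilde\f$ and $\f=\tilde\f$ on $\D_\cX$. Since $\tilde\f\circ p_\cX=\tilde\f$, the ``$\Leftarrow$'' direction already proved yields $\supp\MA_\NA(\tilde\f)\subset\D_\cX$, on which $\tilde\f=\f$. The NA domination principle~\cite[Corollary~10.6]{trivval} applied to the pair $(\tilde\f,\f)$ then gives $\tilde\f\le\f$ on $X_\NA$, which combined with $\f\le\tilde\f$ yields $\f=\tilde\f=\env(\f\circ p_\cX)$.

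For~(ii), since $\f_\cL$ is PL and linear on each face of $\D_\cX$, the proof of Lemma~\ref{lem:suppenv} gives that $\MA_\NA(\env(\f_\cL))$ is atomic, supported at the vertices $v_E$, and that no $v_E$ with $E\subset\B_+(\cL/\P^1)$ lies in the contact locus. Its two-step argument---first using the vanishing order formula of Lemma~\ref{lem:B-} to exclude $E\subset\B_-(\cL/\P^1)$, then reducing the general augmented case via the perturbation $\cL_\e=(1+\e)^{-1}(\cL-\e\cA)$ together with~\eqref{equ:Bpert} and the uniform continuity of the envelope---carries over verbatim, and transfers to the weighted measure via the support equality of the initial reduction. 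The principal obstacle in the argument is the ``$\Rightarrow$'' direction of~(i), where the domination principle can only be applied after one has first reduced $\supp\MA_\NA(\tilde\f)$ to $\D_\cX$ via the ``$\Leftarrow$'' direction; a secondary technical point is verifying that each PL approximant $\tilde\chi_j$ is genuinely realized by a test configuration on $\cX$ (rather than on a refinement), so that Lemma~\ref{lem:suppenv} outputs support in $\D_\cX$ and not merely in a larger dual complex.
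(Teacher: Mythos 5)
Your reduction to the unweighted case via~\eqref{equ:MAcomp} and your treatment of part~(ii) via Lemma~\ref{lem:suppenv} are exactly what the paper does. For part~(i), however, the paper simply cites~\cite[Theorem~8.10]{NAGreen}, whereas you give a self-contained argument by PL approximation and the domination principle; that is a genuinely different route.

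There is a real issue in your argument for~(i), though. The entire strategy rests on the claim that $\f\circ p_\cX$ is itself $L$-psh for every $\f\in\PSH_\NA(L)$ and every snc model $\cX$, which you cite to~\cite[Appendix~A]{trivval}. But the facts the paper extracts from that reference (in the proof of Lemma~\ref{lem:suppenv}) are only that $\f\le\f\circ p_\cX$ and that $\f$ is convex on faces of $\D_\cX$---neither of which says $\f\circ p_\cX$ is psh. If $\f\circ p_\cX$ were always psh, the statement of the proposition would presumably read ``$\f=\f\circ p_\cX$'' rather than ``$\f=\env(\f\circ p_\cX)$'', so this discrepancy should make you suspicious. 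Fortunately, your argument can be repaired by never passing from $\env(\f\circ p_\cX)$ to $\f\circ p_\cX$: for ``$\Leftarrow$'', the approximants $\tilde\chi_j=\chi_j\circ p_\cX$ converge uniformly to $\f\circ p_\cX$ (not to $\f$), and the Lipschitz bound then gives $\env(\tilde\chi_j)\to\env(\f\circ p_\cX)=\f$, which is all you use; for ``$\Rightarrow$'', set $\tilde\f:=\env(\f\circ p_\cX)$ instead of $\f\circ p_\cX$---this is psh by construction, satisfies $\f\le\tilde\f\le\f\circ p_\cX$ and hence $\tilde\f=\f$ on $\D_\cX$, and the domination principle then gives $\tilde\f\le\f$, as needed. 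A secondary point you should not leave implicit: the uniform PL approximation of $\f|_{\D_\cX}$ requires $\f|_{\D_\cX}$ to be continuous, which is not automatic for $\f\in\cE^{1,T}_\NA$ (only usc, convex on faces, and finite-valued is immediate); either justify this or replace uniform approximation by a decreasing approximation $\f_j\in\cH^T_\NA$, $\f_j\searrow\f$, and use Bedford--Taylor type continuity of $\MA_\NA$ along decreasing nets to pass the support condition to the limit.
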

\begin{proof} By~\eqref{equ:MAcomp}, we have $\supp\MA_{v,\NA}(\f)=\supp\MA_\NA(\f)$. It therefore suffices to prove (i) and (ii) in the unweighted case, which respectively follow from~\cite[Theorem~8.10]{NAGreen} and Lemma~\ref{lem:suppenv}. 
\end{proof}

\begin{rmk} It is natural to look for purely algebro-geometric proofs of the \na results of this section, and in particular of Theorem~\ref{thm:NAwMA}, working over an arbitrary algebraically closed field (of characteristic $0$) as in~\cite{trivval}. Using~\cite[Theorem~7.4]{HLi23}, one can likely show that $\en_{v,\NA}\colon\cH^T_{\NA}\to\R$ as defined in~\cite[\S 5.3]{HLi23} is an {\EL} functional of $\MA_{v,\NA}\colon\cH^T_\NA\to \Cz(X_\NA)^\vee$, as defined by~\eqref{equ:wNAMA}. The main remaining obstacles are then to show directly (\ie without passing to slopes at infinity from the complex case) that: 
\begin{itemize}
\item[(a)] the operator $\MA_{v,\NA}\colon\cH^T_{\NA}\to\cM^{1,T}_{\NA}$ defined by~\eqref{equ:wNAMA} is {\sgh}; 
\item[(b)] $\en_{v,\NA}\colon\cH^T_{\NA}\to\R$ is concave, and the \na analogue of~\eqref{equ:IvI} holds.
\end{itemize}
As in~\cite{HLi23}, this can presumably be approached by first dealing with the case of a polynomial weight, using Hodge index type inequalities for equivariant intersection numbers, and then treating the general case by polynomial approximation (see~\cite{HLiuklt} for a step in that direction.) 
\end{rmk}

%
%%%%%%%%%%%%%%%%%%%%%%%%%%%%%%%%%%%%%%%%%%%%%%%%%%%%%%%%%%%%%%%%%%%
%
\section{Entropy asymptotics}\label{sec:entasym}
In this section we keep the setup of~\S\ref{sec:wppt}. We fix a positive weight $v\in C^\infty(P,\R_{>0})$ on the moment polytope $P\subset M_\R$. We also fix a reference metric $\p_{K_X}\in C^\infty(K_X)^T$, and denote by $\mu_\refe=e^{2\p_{K_X}}$ the associated $T$-invariant volume form. 
% 
%
%%%%%%%%%%%%%%%%%%%%%%%%%%%%%%%%%%%%%%%%%%%%%%%%%%%%%%%%%%%%%%%%%%%
%
\subsection{Radial vs.~\na weighted entropy}

\begin{defi} The \emph{weighted entropy functional} $\ent_v\colon\cET\to\R\cup\{+\infty\}$ is defined by setting for $\phi\in\cET$ 
$$
\ent_v(\phi):=\Ent(\MA_v(\phi)|\mu_\refe). 
$$
\end{defi}
Recall that the right-hand side denotes the relative entropy of $\MA_v(\phi)$ and $\mu_\refe$ (see~\S\ref{sec:ent}). For any $a\in\R_{>0}$ and $\phi\in\cET$ we have $\MA_{av}(\phi)=a\MA_v(\phi)$, and hence 
\begin{equation}\label{equ:entscale}
\ent_{av}(\phi)=a\ent_v(\phi)+a\log a. 
\end{equation}

\begin{prop}\label{prop:hent} The weighted entropy $\ent_v\colon\cET\to\R\cup\{+\infty\}$ is strongly lsc (see Definition~\ref{defi:stronglsc}), bounded below, and it admits a radial transform 
$\ent_{v,\ra}\colon\cE^{1,T}_\ra\to[0,+\infty]$. Further: 
\begin{itemize}
\item[(i)] $\ent_{v,\ra}$ is lsc; 
\item[(ii)] for any almost geodesic ray $\{\phi_t\}$ in $\cET$ we have 
$$
\ent_{v,\ra}(\phi_\infty)\le\liminf_{t\to\infty} t^{-1} \ent_v(\phi_t);
$$
\item[(iii)] on $\cE^{1,T}_\ra\setminus\cE^{1,T}_{\NA}$ we have $\ent_{v,\ra}\equiv+\infty$. 
\end{itemize} 
\end{prop}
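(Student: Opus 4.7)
The plan is to proceed in three stages, addressing first the structural properties and reserving~(iii) as the main substantive point.

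\emph{Lower bound and strong lower semi-continuity of $\ent_v$.} From~\eqref{equ:entlow}, and since $\MA_v(\phi)$ has fixed positive total mass $\deg_v(L)$ independent of $\phi$, we immediately obtain $\ent_v(\phi)\ge\deg_v(L)\log(\deg_v(L)/\mu_\refe(X))$. For strong lower semi-continuity on closed $\dd_1$-balls, I would use~\eqref{equ:MAvMA} to write $\MA_v(\phi)=h\,\MA(\phi)$ with density $h:=Vv(m_\phi)$ uniformly bounded between two positive constants depending only on $v$. A direct manipulation of
\[
\Ent(h\MA(\phi)|\mu_\refe)=\int h\log h\,\MA(\phi)+\int h\log(d\MA(\phi)/d\mu_\refe)\,d\MA(\phi)
\]
together with those bounds yields an estimate $\ent_v(\phi)\le A\,\ent(\phi)+B$ for uniform $A,B>0$, where $\ent(\phi):=\Ent(\MA(\phi)|\mu_\refe)$ is the usual (unweighted) entropy. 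Hence each sublevel set $B\cap\{\ent_v\le c\}$ for a closed $\dd_1$-ball $B\subset\cET$ is contained in a sublevel set of $\ent$ intersected with $B$, which is compact by the classical compactness theorem of~\cite{BBEGZ}. Lower semi-continuity of $\ent_v$ itself follows from the strong continuity of $\MA_v\colon\cET\to\cM^{1,T}$ established in Proposition~\ref{prop:MAext}, combined with the weak lower semi-continuity of relative entropy visible in~\eqref{equ:entleg}.

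\emph{Existence of the radial transform and assertions~(i), (ii).} The key input is geodesic convexity of $\ent_v$ along psh geodesics in $\cET$, a weighted extension of the Berman--Berndtsson theorem~\cite{BerBer} established in~\cite{BWN} and further developed in the weighted setting we need. Combined with the lower semi-continuity from the first step, Proposition~\ref{prop:radial} then supplies the radial transform $\ent_{v,\ra}\colon\cE^{1,T}_\ra\to\R\cup\{+\infty\}$, automatically lower semi-continuous, proving~(i). Since $\ent_v$ is bounded below, the convex function $t\mapsto\ent_v(\phi_t)$ has non-negative slope at infinity along any psh geodesic ray, so $\ent_{v,\ra}$ takes values in $[0,+\infty]$. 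Assertion~(ii) is then a direct application of Lemma~\ref{lem:slopemono} to $F=\ent_v$.

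\emph{Assertion~(iii) is where I expect the main obstacle.} Given $\hat\f\in\cE^{1,T}_\ra\setminus\cE^{1,T}_{\NA}$, choose a psh geodesic ray $\{\phi_t\}$ emanating from the reference metric $\phi_\refe$ and representing $\hat\f$, with non-Archimedean limit $\phi_\NA\in\cE^{1,T}_{\NA}$, and let $\{\phi_t^{\max}\}$ be the maximal psh geodesic ray emanating from $\phi_\refe$ and directed by $\phi_\NA$ under the canonical embedding $\cE^{1,T}_{\NA}\hookrightarrow\cE^{1,T}_\ra$. By maximality $\phi_t\le\phi_t^{\max}$, and the hypothesis $\hat\f\notin\cE^{1,T}_{\NA}$ is exactly the statement that $\{\phi_t\}$ and $\{\phi_t^{\max}\}$ are \emph{not} parallel, so that $\dd_1(\phi_t,\phi_t^{\max})$ grows linearly in $t$. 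The plan is to argue by contradiction, assuming $\ent_{v,\ra}(\hat\f)<+\infty$, so that $t^{-1}\ent_v(\phi_t)$, and hence $t^{-1}\Ent(\MA_v(\phi_t)|\mu_\refe)$, is uniformly bounded. I would then combine this entropy control with a weighted version of the Moser--Trudinger inequality, adapting the arguments of~\cite{BBGZ} to accommodate the bounded density factor $h=Vv(m_{\phi_t})$, to obtain integrated moment estimates on the non-negative quantity $\phi_t^{\max}-\phi_t$ against $\MA_v(\phi_t)$ and $\mu_\refe$. Together with the affine behavior of $\en_v$ along psh geodesics (Lemma~\ref{lem:enconc}), these moment bounds should force $\dd_1(\phi_t,\phi_t^{\max})=o(t)$, contradicting the non-parallelism of the two rays. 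Making this comparison argument rigorous in the weighted context—with careful tracking of the variable density $h$, which itself depends on the ray through the moment map $m_{\phi_t}$—is the main technical difficulty.
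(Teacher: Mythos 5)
Your proof of the first two parts contains a genuine error, and your treatment of~(iii) is not a proof but a speculative outline, so let me address both.

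\textbf{The convexity claim is wrong.} You assert that $\ent_v$ is geodesically convex, citing~\cite{BerBer,BWN}, but those references establish convexity of the (weighted) \emph{Mabuchi functional} $\mab_{v,w}=\ent_v+\rr_v+\en_{vw}$, not of the entropy term alone. The entropy $\ent_v(\phi)=\Ent(\MA_v(\phi)\,|\,\mu_\refe)$ is not known to be geodesically convex, and indeed the paper carefully avoids this claim: Lemma~\ref{lem:hent} shows only that $\ent_v+C\en_v^{\phi_\refe}$ is geodesically convex for $C\gg 1$, by writing it as the convex functional $\mab_{v,0}$ plus the convex correction $C\en_v^{\phi_\refe}-\rr_v=\en_v^{C\phi_\refe-\p_{K_X}}$ (Lemma~\ref{lem:enconc}~(iii) applies once $C\phi_\refe-\p_{K_X}$ is psh). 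This matters: Proposition~\ref{prop:radial} requires convexity to produce the radial transform, so your argument does not yield $\ent_{v,\ra}$ directly. The correct route is to apply Propositions~\ref{prop:radial} and Lemma~\ref{lem:slopemono} to $F:=\ent_v+C\en_v^{\phi_\refe}$, and then recover $\ent_{v,\ra}=F_\ra-C\big(\en_v^{\phi_\refe}\big)_\ra$, using that $\big(\en_v^{\phi_\refe}\big)_\ra$ exists, is {\sgh}, and computes the slope along any almost geodesic ray (Proposition~\ref{prop:raden}). Your statements~(i) and~(ii) then follow exactly as you indicate, but for $F$ rather than $\ent_v$ itself.

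\textbf{Assertion~(iii) is missing its key ingredient.} Your contradiction argument identifies the right two rays ($\phi_t\le\phi_t^{\max}$) and the right conclusion ($\dd_1(\phi_t,\phi_t^{\max})=o(t)$), but the bridge you propose—a ``weighted Moser--Trudinger inequality'' giving integrated moment estimates—is left entirely unspecified, and you correctly flag this as where you expect trouble. The paper's argument (due to Chi Li in the unweighted case) turns on a specific fact you do not mention: the two $S^1$-invariant metrics $\Phi$ and $\widetilde\Phi$ have the \emph{same Lelong numbers} on all test configurations (this is exactly what $\phi_\NA=\phi_\NA^{\max}$ encodes), so by~\cite{hiro} one has $e^{\widetilde\Phi-\Phi}\in L^p$ for every $p<\infty$, hence
$$\liminf_{t\to\infty} t^{-1}\log\int_X e^{p(\tphi_t-\phi_t)}\,\mu_\refe\le 2.$$
Feeding this into the Legendre dual formula~\eqref{equ:entleg} with test function $p(\tphi_t-\phi_t)$ and using $\MA_v\ge c\MA$, $\tphi_t\ge\phi_t$, and~\eqref{equ:Econc} gives $\ent_{v,\ra}(\phi_\infty)\ge pc\,\dd_{1,\ra}(\phi_\NA,\phi_\infty)-2$ for every $p$, forcing $\phi_\NA=\phi_\infty$ when the left side is finite. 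Without the Skoda-type integrability coming from the equality of Lelong numbers, the mere finiteness of $t^{-1}\ent_v(\phi_t)$ does not by itself control $\tphi_t-\phi_t$; this is precisely the leverage your outline lacks.

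The first part of your write-up (lower bound from~\eqref{equ:entlow}, comparison $c\,\ent\lesssim\ent_v\lesssim C\,\ent+C'$ via~\eqref{equ:MAvMA}, compactness of sublevel sets from~\cite{BBEGZ}, lsc from continuity of $\MA_v$ and~\eqref{equ:entleg}) is essentially correct and matches the paper's strategy, though the one-line ``direct manipulation'' hides a small sign issue: you must handle the region where $\log(d\MA(\phi)/d\mu_\refe)<0$ separately, using that $x\mapsto xe^x$ is bounded below.
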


By~\eqref{equ:entscale}, we may assume after scaling $v$ and $\mu_\refe$ that 
$$
\deg_v(L)=\int_P v\,\DH_L=1,\quad\int_X\mu_\refe=1, 
$$
which implies $\ent_v\ge 0$ on $\cET$, see~\eqref{equ:entlow}. 

\begin{lem}\label{lem:hent} Pick $\phi_\refe\in\cH^T$. For any $C\gg 1$, $\ent_v+C\en_v^{\phi_\refe}$ is then convex on $\cET$ with respect to psh geodesics. 
\end{lem}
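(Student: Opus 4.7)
The plan is to reduce convexity of $\ent_v+C\en_v^{\phi_\refe}$ to the known geodesic convexity of the weighted Mabuchi functional via a weighted Chen--Tian--Lahdili decomposition. On $\cH^T$ one has an identity of the form
\[
\mab_v(\phi)=\ent_v(\phi)+\en_v^{\tau}(\phi)+c\,\en_v(\phi),
\]
where $\tau\in C^\infty(K_X)^T$ is a smooth $T$-invariant metric on $K_X$ canonically determined by the reference volume form $\mu_\refe=e^{2\p_{K_X}}$ (so $\tau$ differs from $\p_{K_X}$ only by sign conventions), and $c\in\R$ is a cohomological constant. This is the weighted analogue, due to Lahdili and Berman--Witt Nystr\"om~\cite{Lah19,BWN}, of the classical Chen--Tian formula recalled in the introduction; it extends from $\cH^T$ to all of $\cET$ by the strong continuity of $\en_v$, $\en_v^\tau$ and lower semicontinuity of $\ent_v$.

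Using the linearity of $\tau\mapsto\en_v^\tau$ established in Proposition~\ref{prop:MAext}, the first step is to rewrite
\[
\ent_v+C\en_v^{\phi_\refe}=\mab_v-\en_v^{\tau}-c\,\en_v+C\en_v^{\phi_\refe}=\mab_v+\en_v^{C\phi_\refe-\tau}-c\,\en_v.
\]
Next, I would analyze each term separately along an arbitrary psh geodesic in $\cET$: first, $\en_v$ is affine by Lemma~\ref{lem:enconc}~(ii); second, $\mab_v$ is convex, as a consequence of the weighted analogue of the Berman--Berndtsson theorem~\cite{BerBer} established in the references above (and recalled in the introduction); third, for $C\gg1$ the metric $C\phi_\refe-\tau$ is a smooth psh $T$-invariant metric on the ample $\Q$-line bundle $CL-K_X$, since $\phi_\refe\in\cH^T$ is strictly psh and $L$ is ample, so that Lemma~\ref{lem:enconc}~(iii) provides convexity of $\en_v^{C\phi_\refe-\tau}$ along psh geodesics. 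Summing these three contributions gives convexity of $\ent_v+C\en_v^{\phi_\refe}$ on $\cET$.

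The main obstacle is ensuring that the weighted Chen--Tian--Lahdili decomposition and the geodesic convexity of $\mab_v$ propagate from the smooth setting to the finite-energy setting $\cET$: the decomposition passes by strong continuity, but convexity of $\mab_v$ along psh geodesics in $\cET$ is the deep input, relying on the subharmonicity of fiberwise Bergman-type weights in the weighted setting. Once this is in hand the rest of the argument is formal, the only flexibility being in the choice of $C$, which need only be large enough to make $C\phi_\refe-\tau$ a smooth psh metric on an ample $\Q$-line bundle.
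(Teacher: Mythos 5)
Your argument is essentially the same as the paper's: you write $\ent_v + C\en_v^{\phi_\refe}$ as the weighted Mabuchi functional (which is geodesically convex on $\cET$ by Lahdili / Apostolov--Jubert--Lahdili) plus the twisted energy $\en_v^{C\phi_\refe-\tau}$ (convex for $C\gg1$ by Lemma~\ref{lem:enconc}(iii)) plus an affine term in $\en_v$, exactly as in the paper, which uses $\mab_{v,0}=\ent_v+\en_v^{\p_{K_X}}$ and the same rearrangement. One small remark in your favor: you correctly require $C\phi_\refe-\tau$ (a metric on $CL-K_X$) to be psh, whereas the paper's proof text states the condition as ``$\p_{K_X}+C\phi_\refe$ psh''—an apparent sign slip, harmless since both conditions hold for $C\gg1$.
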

\begin{proof} By~\cite[Theorem~6.1]{AJL}, the weighted Mabuchi functional $\mab_{v,0}:=\ent_v+\enR_v$ is convex, where $\enR_v:=\en_v^{\p_{K_X}}$ is the weighted Ricci energy. By Lemma~\ref{lem:enconc}~(iii), it follows that $\ent_v+C\en_v^{\phi_\refe}$ is convex as well for any $C>0$ such that $\p_{K_X}+C\phi_\refe$ is psh. 
\end{proof} 

\begin{proof}[Proof of Proposition~\ref{prop:hent}] By continuity of $\MA_v\colon\cET\to\cM^1$, $\ent_v\colon\cET\to [0,+\infty]$ is lsc. In fact, it follows from~\cite[Theorem~2.17]{BBEGZ} and~\eqref{equ:MAvMA}that $\ent_v$ is strongly lsc, see~\cite[Theorem~6.17]{AJL}. 

Pick $C\gg 1$ such that $F:=\ent_v+C\en_v^{\phi_\refe}$ is convex and lsc. Then $F_\ra$ is lsc (see Proposition~\ref{prop:radial}), and satisfies $F_\ra(\phi_\infty)\le\liminf_{t\to\infty} t^{-1}  F(\phi_t)$ for any almost geodesic ray (see Lemma~\ref{lem:slopemono}). Since $\ent_{v,\ra}=F_\ra-C\left(\en_v^{\phi_\refe}\right)_\ra$ where $\left(\en_v^{\phi_\refe}\right)_\ra$ is {\sgh} and satisfies $\left(\en_v^{\phi_\refe}\right)_\ra(\phi_\infty)=\lim_{t\to\infty} t^{-1}\en_v^{\phi_\refe}(\phi_t)$ (see Proposition~\ref{prop:raden}), this proves the first assertion as well as (i) and (ii). 
 
In the unweighted case, (iii) is a reformulation of~\cite[Theorem~1.2]{LiGeod}, and this implies the general case since $\MA_v\approx\MA$ (see~\eqref{equ:MAvMA}). For the sake of completeness, let us nevertheless briefly repeat Chi Li's argument, which directly adapts to the weighted case. Given a geodesic ray $\{\phi_t\}$ such that $\ent_{v,\ra}(\phi_\infty)<\infty$, we need to show that the associated \na potential $\phi_\NA\in\cE^{1,T}_{\NA}\subset\cE^{1,T}_\ra$ coincides with $\phi_\infty$. Denote by $\{\tphi_t\}$ the unique geodesic ray in $\cET$ emanating from $\phi_0$ and directed by $\phi_\NA$. Then $\tphi_t\ge\phi_t$, and the $S^1$-invariant psh metrics $\Phi$, $\widetilde{\Phi}$ on $(X,L)\times\DD^\times$ induced by $\{\phi_t\}$, $\{\tphi_t\}$ have the same Lelong numbers on all test configurations (see~\cite[Definition~6.5]{YTD}). By~\cite{hiro} it follows that $e^{\widetilde{\Phi}-\Phi}\in L^p$ for all $p\in [1,\infty)$. After passing to logarithmic polar coordinates, this amounts to $\int_0^\infty e^{-2t}\left(\int_X e^{p(\tphi_t-\phi_t)}\,\mu_\refe\right)dt<\infty$, which implies
$$
\liminf_{t\to\infty} t^{-1}\log\int_X e^{p(\tphi_t-\phi_t)}\,\mu_\refe\le 2. 
$$
On the other hand, \eqref{equ:entleg} yields
$$
\ent_v(\phi_t)=\Ent(\MA_v(\phi_t)|\mu_\refe)\ge p\int_X(\tphi_t-\phi_t)\MA_v(\phi_t)-\log\int_X e^{p(\tphi_t-\phi_t)}\,\mu_\refe
$$
Now $\tphi_t\ge\phi_t$ implies 
$$
\int_X(\tphi_t-\phi_t)\MA_v(\phi_t)\ge c\int_X(\tphi_t-\phi_t)\MA(\phi_t)\ge c\left(\en(\tphi_t)-\en(\phi_t)\right)=c\,\dd_1(\tphi_t,\phi_t). 
$$
with $c:=\inf_P v>0$, using~\eqref{equ:Econc}. Altogether this yields
$$
\ent_{v,\ra}(\phi_\infty)\ge p\,c\,\dd_{1,\ra}(\phi_\NA,\phi_\infty)-2.
$$
Since this holds for all $p<\infty$ and $\ent_{v,\ra}(\phi_\infty)$ is assumed to be finite, we conclude $\dd_{1,\ra}(\phi_\NA,\phi_\infty)=0$, \ie $\phi_\NA=\phi_\infty$, and the result follows.
\end{proof}

\begin{defi}\label{defi:wNAent} The \emph{$v$-weighted \na entropy functional} $\ent_{v,\NA}\colon\cE^{1,T}_{\NA}\to[0,+\infty]$ is defined by setting, for $\f\in\cE^{1,T}_{\NA}$, 
$$
\ent_{v,\NA}(\f):=\ent_\NA(\MA_{v,\NA}(\f))=\int_{X_\NA} A_X\MA_{v,\NA}(\f). 
$$
\end{defi}

We are now in a position to state the main technical result of this article. 

\begin{thm}\label{thm:main} The radial and \na entropy functionals 
$$
\ent_{v,\ra}\colon\cE^{1,T}_\ra\to[0,+\infty],\quad\ent_{v,\NA}\colon\cE^{1,T}_{\NA}\to[0,+\infty]
$$
coincide on $\cE^{1,T}_{\NA}\subset\cE^{1,T}_\ra$. 
\end{thm}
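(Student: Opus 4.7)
The plan is to establish both inequalities separately. The inequality $\ent_{v,\NA}(\f) \le \ent_{v,\ra}(\f)$ on $\cE^{1,T}_\NA$ will be obtained by adapting Chi Li's argument~\cite{LiGeod} to the weighted setting, using the comparability $\MA_v \approx \MA$ from~\eqref{equ:MAcomp} together with the monotonicity of $A_X$-integration. The substance of the proof lies in the reverse inequality
\[
\ent_{v,\ra}(\f) \le \ent_{v,\NA}(\f), \quad \f \in \cE^{1,T}_\NA,
\]
which is the weighted analogue of Conjecture~1.6 in~\cite{LiGeod}. The first step is to reduce to the case $\f = \env(\f_\cL)$ for a $T$-equivariant snc semiample test configuration $(\cX,\cL)$. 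By Lemma~\ref{lem:cvent}, the pushforwards $(p_{\cX_j})_\star\MA_{v,\NA}(\f)$ converge strongly in $\cM^{1,T}_\NA$ and their entropies converge to $\ent_{v,\NA}(\f)$. Solving the weighted \na Monge--Amp\`ere equation (Theorem~\ref{thm:NAwMA}) produces potentials $\f_j \in \cE^{1,T}_\NA$ converging strongly to $\f$, and Proposition~\ref{prop:orthoPL}~(i) identifies each $\f_j$ with an envelope $\env(\f_{\cL_j})$ after suitable refinement. Lower semicontinuity of $\ent_{v,\ra}$ (Proposition~\ref{prop:hent}~(i)) then reduces the inequality to the case of test configuration envelopes.

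Given $\f = \env(\f_\cL)$, Proposition~\ref{prop:tcgray} shows that the maximal geodesic ray $\{\phi_t\}$ directed by $\f$ and emanating from $\phi_\refe \in \cH^T$ corresponds to the largest $S^1$-invariant psh metric $\widetilde\Phi$ on $\cL|_\DD$ with boundary values $\phi_\refe$. After adding a large constant to $\f_\cL$---which leaves all relevant quantities unchanged---we may assume $\f_\cL > 0$, so Lemma~\ref{lem:B+tc} yields $\B_+(\cL/\P^1) = \B_+(\cL) \subset \cX_0$. Fix a smooth $S^1 \times T$-invariant metric $\Psi$ on the whole of $\cL$ whose restriction to $\cL|_{\partial\DD}$ equals $\phi_\refe$, and let $\Phi$ be the largest $S^1 \times T$-invariant psh metric on $\cL$ dominated by $\Psi$. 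Then $\Phi$ induces a $T$-invariant ray $\{\widetilde\phi_t\}$ in $\cE^{1,T}$ which, by Corollary~\ref{cor:charmray} together with Example~\ref{exam:bdext}, is almost geodesic with direction $\f$. Proposition~\ref{prop:hent}~(ii) then reduces the problem to showing
\[
\liminf_{t \to \infty} t^{-1}\ent_v(\widetilde\phi_t) \le \ent_{v,\NA}(\f).
\]

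The global envelope $\Phi$ is controlled by the Berman--Demailly envelope theorem~\cite{BerDem} (with the Di Nezza--Trapani correction~\cite{DNT}): the current $\ddc\Phi$ has $L^\infty$-density on $\cX \setminus \B_+(\cL)$. By Proposition~\ref{prop:orthoPL}~(ii) combined with~\eqref{equ:MAcomp}, the measure $\MA_{v,\NA}(\f)$ is supported on the finite set of divisorial valuations $v_E$ associated to components $E \not\subset \B_+(\cL)$---precisely where $\Phi$ is regular. The slope computation then proceeds along the template of~\cite[Theorem~3.6]{BHJ2}: the $S^1$-invariant family $\tmu_z = v(m_\Phi)(\ddc\Phi)^n \wedge \d_{\cX_z}$ extends across $z=0$ to a measure $\tmu_0$ which is smooth on each component $E \not\subset \B_+(\cL)$ and, by the envelope bound, puts no mass on $\B_+(\cL)$. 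Pille-Schneider-type hybrid convergence (Proposition~\ref{prop:PS}) together with a logarithmic density asymptotic involving $A_X$ then delivers
\[
\lim_{t \to \infty} t^{-1} \ent_v(\widetilde\phi_t) = \sum_{E \not\subset \B_+(\cL)} A_X(v_E)\, b_E \deg_v(\cL|_E) = \int_{X_\NA} A_X\, \MA_{v,\NA}(\f) = \ent_{v,\NA}(\f).
\]

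The main obstacle is to justify rigorously the asymptotic for the logarithmic density of $\MA_v(\widetilde\phi_t)$ with respect to $\mu_\refe$, and in particular to show that the contribution to $\ent_v(\widetilde\phi_t)$ coming from a neighborhood of $\B_+(\cL)$ is $o(t)$. Near $\B_+$, the metric $\Phi$ can have logarithmic singularities that might in principle contribute linearly to entropy; controlling them requires the quantitative BDDNT bound together with a perturbation argument---replace $\cL$ by $\cL + \e \cA$ for a relatively ample $\cA$, where the perturbed test configuration falls into the setting of~\cite{BHJ2}, and pass to $\e \to 0^+$ using the continuity of the weighted \na energy functionals (Theorem~\ref{thm:NAMAext}). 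The interplay of the weight $v$ with the moment map of $\Phi$, which can become unbounded near $\B_+(\cL)$, introduces a further layer of difficulty that can be handled by first approximating $v$ by polynomial weights and exploiting the intersection-theoretic formula~\eqref{equ:degpol}.
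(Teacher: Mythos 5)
Your overall architecture matches the paper: establish the easy direction by adapting Chi Li's argument (this is Lemma~\ref{lem:entcomp}), reduce the hard direction to PL envelopes by an entropy-convergence approximation (the paper's Proposition~\ref{prop:entenv}), and for PL envelopes compare the local geodesic ray with a global psh envelope $\Phi=\env(\Psi_\cL)$ controlled by Berman--Demailly/Di Nezza--Trapani, exploiting the support restriction on $\MA_{v,\NA}(\f)$ from Proposition~\ref{prop:orthoPL}~(ii). However, two places in your outline do not hold together.

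First, in the reduction you assert $\f=\env(\f_\cL)$ with $(\cX,\cL)$ \emph{semiample}, and you invoke Corollary~\ref{cor:charmray} and Example~\ref{exam:bdext} (both of which require semiampleness). But if $\cL$ is semiample then $\f_\cL\in\cH_\NA$ is already psh and $\env(\f_\cL)=\f_\cL$, so this is essentially the $\cH_\NA$ case already covered by~\cite{BHJ2}. The density argument does \emph{not} produce potentials of that form: Proposition~\ref{prop:orthoPL}~(i) identifies $\f_j$ with $\env(\f_j\circ p_{\cX_j})$, but $\f_j\circ p_{\cX_j}$ is merely a continuous function on $\D_{\cX_j}$, not a PL function attached to a test configuration. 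One needs a further PL-approximation of $\f_j|_{\D_{\cX_j}}$ \emph{inside} the dual complex, together with the observation that $\env$ is uniformly Lipschitz and that entropy restricted to measures supported in a fixed dual complex is continuous (these are Lemmas~\ref{lem:entsupp} and~\ref{lem:PLapp} in the paper). The ``after suitable refinement'' hides exactly this. Once this is fixed, the reduction must land on $\f=\env(f)$ for arbitrary $f\in\PL(X_\NA)^T$, and the almost-geodesicity of $\{\tilde\phi_t\}$ needs a direct argument (Lemma~\ref{lem:tworays}) rather than Corollary~\ref{cor:charmray}.

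Second, your treatment of the entropy slope near $\B_+(\cL)$ is where the argument breaks. You correctly flag this as ``the main obstacle,'' but the proposed fix---perturbing to $\cL+\e\cA$ and passing to $\e\to0^+$ via continuity of the \na energy functionals---does not work: $\cL+\e\cA$ is not (semi)ample for a general PL function $f=\f_\cL$, so the perturbed object does not fall in the setting of~\cite{BHJ2}; and even when it does, letting $\e\to 0$ only yields one-sided inequalities in the wrong direction, since both $\ent_{v,\ra}$ and $\ent_{v,\NA}$ are only lower semicontinuous. The paper's actual mechanism is a decomposition rather than a perturbation. Write
\[
\ent_v(\phi_t)=\Ent\!\left(\mu_t\,\middle|\,\nu_t\right)+\int_X(\p_t-\p_{K_X})\,\mu_t,\qquad \mu_t=\MA_v(\phi_t),
\]
where $\{\p_t\}$ is the ray coming from a smooth $(S^1\times T)$-invariant metric on $K^{\log}_{\cX/\P^1}$. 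Then $t^{-1}\int(\p_t-\p_{K_X})\mu_t\to\int A_\cX\,\MA_{v,\NA}(\f)=\ent_{v,\NA}(\f)$ by Theorem~\ref{thm:NAMAext}~(iii) and the fact that $\MA_{v,\NA}(\f)$ is supported in $\D_\cX$ where $A_\cX=A_X$. For the first term, the BDDNT estimate $0\le\ddc\Phi\le Ae^{-BF}\om_\cX$ (with $F$ having analytic singularities along $\B_+(\cL)$), combined with $\nu_t\ge\e\,\om_t^n$ (Lemma~\ref{lem:adapted}), gives $\Ent(\mu_t|\nu_t)\le C\bigl(1+\int_X(-f_t)\,\MA_v(\phi_t)\bigr)$; and since $f_\NA$ vanishes exactly on $\supp\MA_{v,\NA}(\f)$, the right-hand side is $o(t)$. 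That pairing of the BDDNT curvature bound against the support of the non-Archimedean measure---rather than a limiting argument---is the step your proposal is missing.
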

Explicitly, for any geodesic ray $\{\phi_t\}$ in $\cET$ such that $\phi_\infty\in\cE^{1,T}_{\NA}\subset\cE^{1,T}_\ra$, 
$$
t^{-1}\ent_v(\phi_t)=t^{-1} \int_X\log\left(\frac{\MA_v(\phi_t)}{\mu_\refe}\right)\MA_v(\phi_t)
$$
converges to
$$
\ent_{v,\NA}(\phi_\infty)=\int_{X_\NA} A_X\MA_{v,\NA}(\phi_\infty).
$$
The proof of Theorem~\ref{thm:main} will be completed in~\S\ref{sec:thmmain} below. As a first step, extending~\cite[Theorem~5.3]{LiGeod} to the weighted setting we prove: 

\begin{lem}\label{lem:entcomp} On $\cE^{1,T}_{\NA}\subset\cE^{1,T}_\ra$ we have $\ent_{v,\NA}\le\ent_{v,\ra}$. 
\end{lem}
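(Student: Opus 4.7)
Plan: Since $A_X=\sup_\cX A_\cX$ pointwise as $\cX$ ranges over the directed set of snc test configurations, and $\MA_{v,\NA}(\f)$ is a nonnegative measure, monotone convergence gives
\[
\ent_{v,\NA}(\f)=\sup_\cX\int_{X_\NA}A_\cX\,\MA_{v,\NA}(\f),
\]
reducing the claim to showing $\int_{X_\NA}A_\cX\,\MA_{v,\NA}(\f)\le\ent_{v,\ra}(\f)$ for each fixed snc $\cX$. This is the weighted analog of the inequality established by Chi Li~\cite{LiGeod} in the unweighted case, and I adapt that argument.

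Fix such an $\cX$, and choose a smooth $(S^1\times T)$-invariant Hermitian metric on the $\Q$-line bundle $K^{\log}_{\cX/\P^1}$ on $\cX$. Restricting this metric to the smooth fibers $\cX_z\simeq X$ yields a ray $\{\psi_t\}$ in $C^\infty(K_X)^T$ with analytic singularities and non-Archimedean limit $A_\cX=\f_{K^{\log}_{\cX/\P^1}}$. The key BHJ2-style input (Example~\ref{exam:adapted}) is that the log-canonical structure of $K^{\log}_{\cX/\P^1}$ along $\cX_{0,\redu}$ forces the associated volume forms $\nu_t:=e^{2\psi_t}$ to have polynomial total mass: there exists $d\in\{0,\dots,n\}$ with $\int_X\nu_t=O(t^d)$.

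Let $\{\phi_t\}$ be a psh geodesic ray in $\cE^{1,T}$ directed by $\f$. Assuming $v$ normalized so $\deg_v(L)=1$, so that $\MA_v(\phi_t)$ is a probability measure, the Gibbs variational principle~\eqref{equ:entleg} applied with test function $g_t:=2(\psi_t-\p_{K_X})$ yields
\[
\ent_v(\phi_t)\ge 2\int_X(\psi_t-\p_{K_X})\,\MA_v(\phi_t)-\log\int_X\nu_t.
\]
Dividing by $t$ and taking $\liminf$: the polynomial growth of $\nu_t$ ensures $t^{-1}\log\int\nu_t\to 0$. For the first term I invoke a slope formula, which is an extension of Theorem~\ref{thm:NAMAext}(iii) from rays of psh metrics on $L$ to rays of smooth functions on $X$, obtained by decomposing $\psi_t-\p_{K_X}$ as a difference of psh rays on two ample line bundles via Proposition~\ref{prop:anal}(ii), together with the semi-global H\"older estimates for $\MA_v$. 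Tracking the factors of $2$ appearing in the conventions $\mu_\refe=e^{2\p_{K_X}}$ and $A_\cX=\f_{K^{\log}_{\cX/\P^1}}$, exactly as in~\cite[Theorem~3.6]{BHJ2}, we conclude
\[
\ent_{v,\ra}(\f)=\liminf_{t\to\infty}t^{-1}\ent_v(\phi_t)\ge\int_{X_\NA}A_\cX\,\MA_{v,\NA}(\f),
\]
and taking the supremum over $\cX$ completes the argument.

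The main obstacle is the polynomial growth estimate $\int_X e^{2\psi_t}=O(t^d)$, which is where the log-canonical nature of $K^{\log}_{\cX/\P^1}$ along $\cX_{0,\redu}$ is crucially used; this is the heart of~\cite{BHJ2} and can be derived in the hybrid setting via Proposition~\ref{prop:PS} applied to the relative volume form of the chosen smooth metric on $K^{\log}_{\cX/\P^1}$. Happily, the weighted aspect adds little, since the integral $\int_X\nu_t$ does not involve $\MA_v$ at all, and the weighted measure only enters the first term on the right, where it is handled by the slope formula of Theorem~\ref{thm:NAMAext}(iii). A secondary but delicate technical point is the passage from rays of psh metrics on $L$ to rays of smooth functions on $X$ in the slope formula, which is handled by combining Proposition~\ref{prop:anal}(ii) with the semi-global H\"older continuity of $\MA_v$ (Proposition~\ref{prop:MAsgh}(i)).
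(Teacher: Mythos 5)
Your argument is correct and follows essentially the same route as the paper's proof: the paper decomposes $\ent_v(\phi_t)=\Ent(\mu_t|\nu_t)+\int(\p_t-\p_{K_X})\mu_t$ and invokes the lower bound $\Ent(\mu_t|\nu_t)\ge-\log\nu_t(X)$ from~\eqref{equ:entlow}, which is algebraically the same move as your direct application of the Gibbs inequality~\eqref{equ:entleg} with test function $\log(\nu_t/\mu_\refe)$. The only (harmless) over-complication is your plan to ``extend'' Theorem~\ref{thm:NAMAext}(iii) via a decomposition of $\psi_t-\p_{K_X}$ into psh rays on auxiliary line bundles; this is unnecessary, since the final assertion of that theorem already states that $\MA_{v,\NA}(\phi_{3,\infty})$ is the non-Archimedean limit of $\{\MA_v(\phi_{3,t})\}$ in the sense of Definition~\ref{defi:NAmeas}, which by definition yields the slope formula against \emph{any} ray with analytic singularities in $C^\infty(X)^T$, in particular against $\psi_t-\p_{K_X}$.
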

\begin{proof} Again, using~\eqref{equ:entscale} we assume for simplicity $\deg_v(L)=1$. Pick a geodesic ray $\{\phi_t\}$ such that $\phi_\infty\in\cE^{1,T}_{\NA}\subset\cE^{1,T}_\ra$ and set $\mu_t:=\MA_v(\phi_t)$. Pick also an snc test configuration $\cX\to\P^1$ for $X$,  a ray $\{\p_t\}_{t\ge0}$ in $C^\infty(K_X)^T$ corresponding to an $(S^1\times T)$-invariant smooth metric on $K_{\cX/\P^1}^{\log}$, and denote by $\nu_t=e^{2\p_t}$ the associated ray of smooth volume forms on $X$. Recall that $\mu_\refe=e^{2\p_{K_X}}$. Then
$$
\ent_v(\phi_t)=\frac12\int_X\log\left(\frac{\mu_t}{\mu_\refe}\right)\mu_t=\Ent(\mu_t|\nu_t)+\int_X\left(\p_t-\p_{K_X}\right)\mu_t.
$$
Since $\nu_t(X)\sim t^d$ for some $d\in\{0,\dots,n\}$ (see Example~\ref{exam:adapted}) and $\mu_t$ is a probability measure, \eqref{equ:entlow} yields 
$$
\Ent(\mu_t|\nu_t)\ge -\log \nu_t(X)\ge-C\log t
$$
for some uniform constant $C>0$. On the other hand, setting $f_t:=\p_t-\p_{K_X}$ defines a ray with analytic singularities in $C^\infty(X)^T$ such that $f_\NA=\f_{K_{\cX/\P^1}^{\log}}=A_\cX$. By Theorem~\ref{thm:NAMAext}~(iii), we infer
$$
t^{-1} \int(\p_t-\p_{K_X})\mu_t\to\int_{X_\NA}A_\cX\,\MA_{v,\NA}(\phi_\infty), 
$$
and hence 
$$
\ent_{v,\ra}(\phi_\infty)\ge\limsup_{t\to\infty} t^{-1}\left(\int_X(\p_t-\p_{K_X})\mu_t-C\log t\right)=\int_{X_\NA}A_\cX\MA_{v,\NA}(\phi_\infty). 
$$
Thus
\begin{align*}
\ent_{v,\ra}(\phi_\infty) & \ge\sup_\cX\int_{X_\NA}A_\cX\MA_{v,\NA}(\phi_\infty)=\ent_{v,\NA}(\phi_\infty),
\end{align*}
see~\eqref{equ:NAentsup}. This completes the proof.
\end{proof}

To avoid repetition it will be convenient to introduce the following local terminology. 

\begin{defi}\label{defi:good} We shall say that $\f\in\cE^{1,T}_{\NA}\subset\cE^{1,T}_\ra$ is \emph{good} if $\ent_{v,\ra}(\f)\le\ent_{v,\NA}(\f)$.
\end{defi}
In view of Lemma~\ref{lem:entcomp}, Theorem~\ref{thm:main} says that all $\f\in\cE^{1,T}_{\NA}$ are good.  

%
%%%%%%%%%%%%%%%%%%%%%%%%%%%%%%%%%%%%%%%%%%%%%%%%%%%%%%%%%%%%%%%%%%%
\subsection{Envelopes of PL functions are good}

The following result is the main step towards the proof of Theorem~\ref{thm:main}. 

\begin{thm}\label{thm:adapted} For any $f\in\PL(X_\NA)^T$, the psh envelope $\f:=\env(f)\in\cE^{1,T}_{\NA}$ is good. 
\end{thm}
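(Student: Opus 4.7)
The plan is to construct an almost geodesic ray $\{\phi_t^g\}_{t\ge0}$ in $\cET$ with direction $\f:=\env(f)$ along which $\liminf_{t\to\infty}t^{-1}\ent_v(\phi_t^g)\le\ent_{v,\NA}(\f)$; Proposition~\ref{prop:hent}~(ii) will then yield the goodness of $\f$, since Lemma~\ref{lem:entcomp} gives the reverse inequality. By translation-invariance of both $\MA_v$ and $\MA_{v,\NA}$, I may first add a positive constant to $f$ and assume $f>0$ on $X_\NA$. Writing $f=\f_\cL$ for a $T$-equivariant snc test configuration $(\cX,\cL)$ dominating $X\times\P^1$, Lemma~\ref{lem:B+tc} ensures $\B_+(\cL)=\B_+(\cL/\P^1)\subset\cX_0$, and Proposition~\ref{prop:orthoPL}~(ii) tells us that $\MA_{v,\NA}(\f)$ is supported on the (finite) set of divisorial valuations $v_E$ attached to irreducible components $E$ of $\cX_0$ with $E\not\subset\B_+(\cL)$.

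Fix a smooth $(S^1\times T)$-invariant metric $\tau$ on $\cL$ that restricts to $\phi_\refe$ on $\cL|_{\cX\setminus\cX_0}\simeq p_1^\star L|_{\DD^\times}$, and set $\Phi:=P(\tau)$, the largest psh metric on $\cL|_\DD$ with $\Phi\le\tau$. Since $\cL$ is big on $\cX$ (as $\B_+(\cL)\subsetneq\cX$), the Berman--Demailly theorem, as completed by Di Nezza--Trapani, asserts that $(\ddc\Phi)^{n+1}$ has $L^\infty_{loc}$ density with respect to a smooth volume form on $\cX\setminus\B_+(\cL)$. The induced $S^1$-invariant ray $\{\phi_t^g\}$ of metrics on $L$ is to be shown to be an almost geodesic ray in $\cET$ with direction $\f=\env(f)$: the maximal geodesic ray $\{\phi_t^m\}$ directed by $\env(f)$ also extends to a psh metric on $\cL|_\DD$ (by Proposition~\ref{prop:tcgray}), and, upon choosing $\tau$ so that its boundary restriction dominates $\phi_\refe$, a comparison-principle argument forces $\phi_t^g$ and $\phi_t^m$ to stay within a bounded $L^\infty$-distance, hence within bounded $\dd_1$-distance.

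To compute the slope of $\ent_v$ along $\{\phi_t^g\}$, pick a ray $\{\p_t\}$ with analytic singularities in $C^\infty(K_X)^T$ associated to a smooth $(S^1\times T)$-invariant metric on the relative log canonical test configuration $K^{\log}_{\cX/\P^1}$, so that $\p_\NA=A_\cX$. Decomposing as in the proof of Lemma~\ref{lem:entcomp},
\[
\ent_v(\phi_t^g)=\Ent\bigl(\MA_v(\phi_t^g)\bigm|e^{2\p_t}\bigr)+\int_X(\p_t-\p_{K_X})\,\MA_v(\phi_t^g).
\]
The second summand has slope $\int A_\cX\,\MA_{v,\NA}(\f)$ by the non-Archimedean limit formalism of~\S\ref{sec:NAlimmeas} applied to the ray $\{\p_t-\p_{K_X}\}$ (which can be written as a difference of almost geodesic rays in $\cE^1$ via Proposition~\ref{prop:anal}~(ii) and Example~\ref{exam:analmost}, so that an adaptation of Theorem~\ref{thm:NAMAext}~(iii) applies). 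Since $\MA_{v,\NA}(\f)$ is supported on divisorial valuations attached to components of $\cX_0$, where $A_\cX$ and $A_X$ agree, this slope equals $\ent_{v,\NA}(\f)$, as required.

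The main obstacle is to show that $t^{-1}\Ent(\MA_v(\phi_t^g)|e^{2\p_t})\to 0$. This is where the containment $\B_+(\cL)\subset\cX_0$ becomes decisive: each fiber $\cX_z\simeq X$ with $z\ne0$ lies entirely in $\cX\setminus\B_+(\cL)$, so a Fubini-type slicing of the BDD--T $L^\infty_{loc}$ bound on $(\ddc\Phi)^{n+1}$ over the projection $\cX\to\P^1$ yields a uniform-in-$t$ upper bound on the density of $\MA_v(\phi_t^g)$ with respect to $\mu_\refe$. The correction factor $e^{2(\p_t-\p_{K_X})}=e^{2\p_t}/\mu_\refe$ grows at most polynomially in $t$ by Example~\ref{exam:adapted}, so the density of $\MA_v(\phi_t^g)$ with respect to $e^{2\p_t}$ is bounded by a polynomial in $t$, and integrating the logarithm of this density against the (unit-mass) probability measure $\MA_v(\phi_t^g)$ yields $\Ent(\MA_v(\phi_t^g)|e^{2\p_t})=O(\log t)=o(t)$, completing the argument.
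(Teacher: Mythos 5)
Your overall strategy is the same as the paper's (reduce to a slope bound for $\ent_v$ along a suitable almost geodesic ray, split $\ent_v$ into $\Ent(\mu_t|\nu_t)$ plus a Ricci-energy term, and use the support of $\MA_{v,\NA}(\f)$ to evaluate the second term), but the key step---controlling $\Ent(\mu_t|\nu_t)$---contains two genuine gaps.

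First, you invoke Berman--Demailly--Di Nezza--Trapani for the envelope $\Phi=P(\tau)$ taken \emph{over the disc} $\cL|_\DD$. That theorem is a statement about psh envelopes of smooth metrics in big cohomology classes on a \emph{compact} Kähler manifold; it does not apply to envelopes with boundary constraints on a manifold with boundary such as $\cL|_\DD$. This is precisely why the paper instead works with the global envelope $\Phi=\env(\Psi_\cL)$ taken over the compact total space $\cX\to\P^1$. This forces an extra step (Lemma~\ref{lem:tworays}): the global envelope restricted to $\DD^\times$ is no longer the maximal geodesic ray, and one must prove it remains within uniformly bounded $L^\infty$-distance of it, which uses a gluing construction that itself relies on $\B_+(\cL)\subset\cX_0$. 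Moreover, BDD--T gives a Hessian bound $0\le\ddc\Phi\le Ae^{-BF}\om_\cX$ on $\cX\setminus\B_+(\cL)$ (with $F$ quasi-psh and $\to-\infty$ along $\B_+(\cL)$), not an $L^\infty_{loc}$ bound on $(\ddc\Phi)^{n+1}$; one then restricts this Hessian bound to the fibres $\cX_z$ and takes the $n$-th power.

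Second, the claim that the fibrewise density of $\MA_v(\phi_t)$ (relative to $\mu_\refe$ or $e^{2\p_t}$) is uniformly, or even polynomially, bounded in $t$ is false. As $t\to\infty$ the fibres $\cX_{e^{-t}}$ accumulate on $\cX_0\supset\B_+(\cL)$, where the BDD--T constant $e^{-BF}$ blows up. Restricted to a fibre this gives $\mu_t\le A'e^{-nBf_t}\nu_t$ with $f_t=\iota_t^\star F$, and $-f_t$ grows \emph{linearly} in $t$ near the centres of those $v_E$ with $E\subset\B_+(\cL)$, so the density can grow exponentially. The estimate that actually makes the argument work is $\Ent(\mu_t|\nu_t)\le C\bigl(1+\int_X(-f_t)\MA_v(\phi_t)\bigr)$, combined with $t^{-1}\int_X(-f_t)\MA_v(\phi_t)\to\int_{X_\NA}(-f_\NA)\,\MA_{v,\NA}(\f)=0$; the last equality holds because $\MA_{v,\NA}(\f)$ is supported on the $v_E$ with $E\not\subset\B_+(\cL)=\B_+(\cL/\P^1)$ (Proposition~\ref{prop:orthoPL}(ii)) and $f_\NA$ vanishes there. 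You correctly identify this support property, but use it only for the Ricci-energy term; it is equally essential for the entropy term, and without it the argument collapses.
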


Our proof relies on the following observation: 

\begin{lem}\label{lem:crit} Pick $\f\in\cE^{1,T}_{\NA}$, and assume given an almost geodesic ray $\{\phi_t\}$ in $\cET$ directed by $\f$ such that 
\begin{equation}\label{equ:ugood}
\ent_{v,\NA}(\f)\ge\liminf_{t\to\infty}t^{-1} \ent_v(\phi_t). 
\end{equation}
Then $\f$ is good. 
\end{lem}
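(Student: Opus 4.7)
The statement follows almost immediately from what has already been established. The plan is simply to chain together Proposition~\ref{prop:hent}(ii) with the hypothesis.

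More precisely, since $\{\phi_t\}$ is an almost geodesic ray in $\cE^{1,T}$ directed by $\f$, we have $\phi_\infty=\f\in\cE^{1,T}_\NA\subset\cE^{1,T}_\ra$. Proposition~\ref{prop:hent}(ii) then yields
\[
\ent_{v,\ra}(\f)=\ent_{v,\ra}(\phi_\infty)\le\liminf_{t\to\infty}t^{-1}\ent_v(\phi_t).
\]
Combining this with the assumed inequality~\eqref{equ:ugood} gives
\[
\ent_{v,\ra}(\f)\le\liminf_{t\to\infty}t^{-1}\ent_v(\phi_t)\le\ent_{v,\NA}(\f),
\]
which is exactly the condition that $\f$ be good in the sense of Definition~\ref{defi:good}. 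There is no obstacle here; the content of the lemma is just that the slope inequality provided by Proposition~\ref{prop:hent}(ii), which holds along \emph{any} almost geodesic ray, is enough to upgrade the hypothesis~\eqref{equ:ugood} from a statement about the particular ray $\{\phi_t\}$ to the intrinsic comparison $\ent_{v,\ra}(\f)\le\ent_{v,\NA}(\f)$. The real work will come in the subsequent results, where one has to actually construct a ray $\{\phi_t\}$ for which~\eqref{equ:ugood} can be verified (using envelopes of PL functions, the Berman--Demailly--Di Nezza--Trapani estimate on the singularities of global psh envelopes, and Lemma~\ref{lem:B+tc} to locate the support of the Monge--Amp\`ere measure away from $\B_+$).
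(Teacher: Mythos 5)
Your proof is correct and is exactly the paper's argument: invoke Proposition~\ref{prop:hent}(ii) to get $\ent_{v,\ra}(\f)\le\liminf_{t\to\infty}t^{-1}\ent_v(\phi_t)$ and then chain with the hypothesis~\eqref{equ:ugood} to conclude $\ent_{v,\ra}(\f)\le\ent_{v,\NA}(\f)$, which is Definition~\ref{defi:good}. Nothing to add.
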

\begin{proof} This is a direct consequence of Proposition~\ref{prop:hent}~(ii), 
%Lemma~\ref{lem:slopemono}, 
which yields $\liminf_{t\to\infty} t^{-1} \ent_v(\phi_t)\ge\ent_{v,\ra}(\f)$. 
\end{proof}

It will thus suffice to construct an almost geodesic ray $\{\phi_t\}$ in $\cET$ directed by $\f$ that satisfies~\eqref{equ:ugood}. Pick an equivariant snc test configuration $(\cX,\cL)$ such that $f=\f_\cL$. By translation invariance of $\ent_v$ and $\ent_{v,\NA}$, we may assume, after adding to $\cL$ a large enough multiple of $\cX_0$, that $f=\f_\cX>0$ on $X_\NA$, and hence 
$$
\B_+(\cL/\P^1)=\B_+(\cL)\subset\cX_0,
$$
see Lemma~\ref{lem:B+tc}. In particular, $\cL$ is big (globally on $\cX$). Fix a smooth $(S^1\times T)$-invariant metric $\Psi_\cL$ on  $\cL$, and denote by $\Phi:=\env(\Psi_\cL)$ its (global) psh envelope. Since $\B_+(\cL)\subset\cX_0$, $\Phi$ is locally bounded away from $\cX_0$, and its restriction to $\cL|_{\DD^\times}$ thus determines a psh ray $\{\phi_t\}$ in $\cET$. We claim that $\{\phi_t\}$ does the job. 

\begin{lem}\label{lem:tworays} The ray $\{\phi_t\}$ is almost geodesic and directed by $\f\in\cE^{1,T}_{\NA}\subset\cE^{1,T}_\ra$.
\end{lem}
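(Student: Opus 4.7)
The claim has two parts: that each $\phi_t$ belongs to $\cE^{1,T}$, and that $\{\phi_t\}$ is almost geodesic with direction $\f = \env(\f_\cL)$. The first part is an immediate consequence of the Berman--Demailly--Di Nezza--Trapani regularity theorem: since $\Psi_\cL$ is smooth and $\cL$ is big, the global psh envelope $\Phi = \env(\Psi_\cL)$ is locally bounded on $\cX \setminus \B_+(\cL)$. Under our normalization, $\B_+(\cL) \subset \cX_0$ by Lemma~\ref{lem:B+tc}, so $\Phi$ is locally bounded on $\cX \setminus \cX_0 \simeq X \times (\P^1 \setminus \{0\})$. Each fiber restriction $\phi_t = \Phi|_{X \times \{e^{-t}\}}$ is therefore a bounded $T$-invariant psh metric on $L$, hence lies in $\cE^{1,T}$, with $T$-invariance inherited from that of $\Psi_\cL$.

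For the second part, set $\phi_0 := \Phi|_{X \times \{1\}}$ and let $\{\psi_t\}_{t \ge 0}$ denote the maximal geodesic ray in $\cE^{1,T}$ emanating from $\phi_0$ and directed by $\f$. By Proposition~\ref{prop:tcgray}, $\{\psi_t\}$ corresponds to the largest $S^1$-invariant psh metric $\Psi$ on $\cL|_\DD$ whose restriction to $\cL|_{\partial\DD}$ is $\phi_0$. By $S^1$-invariance of $\Psi_\cL$ (hence of $\Phi$), the restriction $\Phi|_\DD$ is itself an $S^1$-invariant psh metric on $\cL|_\DD$ with the same boundary values $\phi_0$, so maximality forces $\Phi|_\DD \le \Psi$, whence $\phi_t \le \psi_t$ for all $t \ge 0$. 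Property (D1) of the Darvas metric then yields $\dd_1(\phi_t, \psi_t) = \en(\psi_t) - \en(\phi_t)$. Since $\{\psi_t\}$ is a maximal geodesic ray with direction $\f \in \cE^{1,T}_\NA$, the results of \cite{YTD,LiGeod} recalled in the introduction give $\en(\psi_t)/t \to \en_\NA(\f)$.

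It remains to show $\en(\phi_t)/t \to \en_\NA(\f)$, which combined with the above yields $\dd_1(\phi_t, \psi_t) = o(t)$, and hence almost geodesicity with direction $\f$. The upper bound $\limsup_{t \to \infty} t^{-1}\en(\phi_t) \le \en_\NA(\f)$ follows immediately from monotonicity of $\en$ and $\phi_t \le \psi_t$. The matching lower bound is the main obstacle: the plan is to compute $(n+1) V \en(\phi_t)$ via integration-by-parts on $\cX$, expressing it in terms of the global Monge--Amp\`ere mass of $(\ddc\Phi)^{n+1}$ over an appropriate sub-annulus, then identifying this mass with the non-Archimedean energy. Here one uses crucially the Berman--Demailly--Di Nezza--Trapani regularity to guarantee that $(\ddc\Phi)^{n+1}$ is well-defined despite the singularities of $\Phi$ in $\B_+(\cL) \subset \cX_0$, together with the global orthogonality relation confining $(\ddc\Phi)^{n+1}$ to the contact locus $\{\Phi = \Psi_\cL\}$, and Proposition~\ref{prop:orthoPL}(ii), which describes $\MA_\NA(\f)$ as a divisorial measure supported on components of $\cX_0$ not contained in $\B_+(\cL/\P^1)$. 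Matching these two descriptions yields the desired identity.
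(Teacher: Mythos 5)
Your proposal correctly identifies the easy half of the argument. Establishing $\phi_t\in\cE^{1,T}$ from the local boundedness of $\Phi$ on $\cX\setminus\cX_0$ (using $\B_+(\cL)\subset\cX_0$) is fine, and the one-sided comparison $\phi_t\le\psi_t$ (or $\phi_t\le\tphi_t+O(1)$) by the maximality characterization of the geodesic ray from Proposition~\ref{prop:tcgray} is exactly right: $\Phi|_{\overline\DD}-C$ is, for $C\gg1$, an $S^1$-invariant psh metric on $\cL|_{\overline\DD}$ lying below $\phi_\refe$ on the boundary circle, so it is dominated by $\tPhi$.

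The genuine gap is in the other direction. You reduce matters to showing $\liminf_{t\to\infty}t^{-1}\en(\phi_t)\ge\en_\NA(\f)$ and propose to compute this slope by integration by parts on $\cX$, relating it to the global Monge--Amp\`ere mass $(\ddc\Phi)^{n+1}$ over an annulus and then to $\MA_\NA(\f)$ via an orthogonality relation in dimension $n+1$. This is only a plan, not a proof, and it faces real obstacles: $\Phi$ is singular along $\B_+(\cL)$, so making sense of $(\ddc\Phi)^{n+1}$ and any orthogonality relation there requires care; and for a general psh (non-geodesic) ray the derivative $\tfrac{d}{dt}\en(\phi_t)=\int\dot\phi_t\MA(\phi_t)$ is not simply the MA mass of $\Phi$ over an annulus---boundary terms and the non-vanishing of $(\ddc\Phi)^{n+1}$ away from $\cX_0$ need to be handled.

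The paper sidesteps all of this. Instead of comparing energies, it proves the stronger two-sided estimate $\sup_X|\phi_t-\tphi_t|=O(1)$, from which $t^{-1}\dd_1(\phi_t,\tphi_t)\le t^{-1}\sup_X|\phi_t-\tphi_t|\to0$ is immediate. The direction you are missing is obtained by a gluing construction: since $\Phi$ is locally bounded away from $\cX_0$, there is $A\gg1$ with $\tPhi\le\Phi+A$ on $\cL|_{\DD\setminus\overline\DD_{1/2}}$; one then defines a psh metric $\Psi$ globally on $\cL$ equal to $\Phi+A$ outside $\overline\DD_{1/2}$ and to $\max\{\Phi+A,\tPhi\}$ on $\DD$. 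The global psh metric $\Psi$ satisfies $\Psi\le\Psi_\cL+O(1)$ by smoothness of $\Psi_\cL$, hence $\Psi\le\env(\Psi_\cL)+O(1)=\Phi+O(1)$, which gives $\tPhi\le\Phi+O(1)$ on $\cL|_\DD$. This avoids any Monge--Amp\`ere mass computation and is the missing ingredient you would need to find to close the argument.
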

\begin{proof} Denote by $\{\tphi_t\}$ the geodesic ray in $\cET$ emanating from $\phi_\refe$ and directed by $\f$. We need to show $t^{-1}\dd_1(\phi_t,\tphi_t)\to 0$ (see Definition~\ref{defi:almost}). We are going to show $\sup_X|\phi_t-\tphi_t|=O(1)$, which will yield, as desired, 
$$
t^{-1}\dd_1(\phi_t,\tphi_t)\le t^{-1}\sup_X|\phi_t-\tphi_t|\to 0.
$$
The $(S^1\times T)$-invariant psh metric $\tPhi$ on $\cL|_{\overline{\DD}}$ corresponding to $\{\tphi_t\}$ is characterized as the largest such metric satisfying $\tPhi|_{\cL_1=L}\le\phi_\refe$ (see Proposition~\ref{prop:tcgray}). For $C\gg1$, $\Phi|_{\overline{\DD}}-C$ has these properties, so $\Phi|_{\DD}\le\tPhi+O(1)$. 

Conversely, since $\B_+(\cL)\subset\cX_0$, the psh metric $\Phi$ on $\cL$ is locally bounded away from $\cX_0$, and hence there exists $A\gg1$ such that 
    \[
    \tPhi\le \Phi+A
    \quad \text{on}\quad 
    \cL|_{\DD\setminus\overline{\DD}_{1/2}}.
    \]
    We can therefore define a psh metric $\Psi$ globally on $\cL$ by 
    \[
    \Psi
    :=\begin{cases}
        \Phi+A
        &\text{on}\quad\cL|_{\P^1\setminus\overline{\DD}_{1/2}}\\
        \max\{\Phi+A,\tPhi\}
        &\text{on}\quad\cL|_\DD.
    \end{cases}
    \]
    On the one hand, $\tPhi\le\Psi$ on $\cL|_\DD$. On the other hand, $\Psi$ is psh and $\Psi_\cL$ is smooth, so $\Psi\le\Psi_\cL+O(1)$, and hence $\Psi\le\Phi+O(1)$ by the definition of $\Phi=\env(\Psi_\cL)$. Thus $\tPhi\le\Phi+O(1)$ on $\cL|_\DD$. We have thus proved $\tPhi=\Phi|_\DD+O(1)$, \ie $\sup_X|\phi_t-\tphi_t|=O(1)$. 
\end{proof}

In what follows we denote by 
$$
\iota_t\colon X=\cX_1\simto\cX_{e^{-t}}\hto\cX
$$
the canonical embedding induced by the $\C^\times$-action on $\cX$. 

\begin{lem}\label{lem:adapted} Let $\{\p_t\}_{t\ge0}$ be the ray with analytic singularities in $C^\infty(K_X)^T$ corresponding to an $S^1$-invariant smooth metric on $K_{\cX/\P^1}^{\log}$. Denote by $\nu_t:=e^{2\p_t}$ the associated ray of smooth volume forms on $X$, pick a K\"ahler form $\om_\cX$ on $\cX$, and set $\om_t:=\iota_t^\star\om_\cX$. Then $\nu_t\ge\e\om_t^n$ for some uniform constant $\e>0$. 
\end{lem}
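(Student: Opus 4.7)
The plan is to re-express both $\nu_t$ and $\om_t^n$ as pull-backs under $\iota_t$ of fiberwise quantities defined globally on $\cX$, and then deduce the bound from a uniform positive lower bound for their ratio on the total space. By the construction of the ray $\{\p_t\}$, via the equivariant trivialization $\cX|_{\P^1\setminus\{0\}}\simeq X\times(\P^1\setminus\{0\})$ and the canonical identification $K^{\log}_{\cX/\P^1}|_{\cX_\tau}=K_{\cX_\tau}$ on smooth fibers, the metric $\p_t\in C^\infty(K_X)^T$ is obtained from the smooth $(S^1\times T)$-invariant metric $\Psi$ on $K^{\log}_{\cX/\P^1}$ by restriction to $\cX_{e^{-t}}$ and pull-back along $\iota_t\colon X\simto\cX_{e^{-t}}$. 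Consequently
\[
\nu_t=\iota_t^*\bigl(\nu_\Psi|_{\cX_{e^{-t}}}\bigr),\qquad \om_t^n=\iota_t^*\bigl(\om_\cX^n|_{\cX_{e^{-t}}}\bigr),
\]
where $\nu_\Psi$ denotes the smooth family of positive volume forms on smooth fibers induced by $\Psi$. It therefore suffices to prove that the pointwise fiberwise ratio
\[
F(x):=\frac{\nu_\Psi|_{\cX_{\pi(x)}}(x)}{\om_\cX^n|_{\cX_{\pi(x)}}(x)},\qquad x\in\cX\setminus\cX_0,
\]
admits a positive lower bound $\e$ on $\pi^{-1}(\{0<|\tau|\le 1\})$; pulling back by $\iota_t$ then yields $\nu_t\ge\e\,\om_t^n$ on $X$ uniformly in $t\ge 0$.

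I would establish this lower bound by a local analysis in snc coordinates, whose crux is that the relative log canonical bundle $K^{\log}_{\cX/\P^1}$ is precisely designed to admit local non-vanishing generators extending across the central fiber. Near a smooth point of a single component $E\subset\cX_0$ of multiplicity $b$, with local coordinates $(z,w_1,\ldots,w_n)$ and $\pi=z^b$, the log pull-back relation $\pi^*\tfrac{d\tau}{\tau}=b\,\tfrac{dz}{z}$ kills the log form $\tfrac{dz}{z}$ in the quotient $\Omega^{\log}_{\cX/\P^1}=\Omega^{\log}_\cX/\pi^*\Omega^{\log}_{\P^1}$, so that $\sigma:=dw_1\wedge\cdots\wedge dw_n$ is a nowhere-vanishing local section of $K^{\log}_{\cX/\P^1}$ extending across $E$. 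Since $|\sigma|^2_\Psi$ is smooth positive and $\sigma|_{\cX_\tau}$ remains a generator of $K_{\cX_\tau}$ for all small $\tau$, the density of $\nu_\Psi|_{\cX_\tau}$ in the $w$-coordinates depends smoothly and positively on $(z,w)$ up to $z=0$; the same holds for $\om_\cX^n|_{\cX_\tau}$, whose tangent directions are spanned by $\partial_{w_j}$. Hence $F$ extends smoothly and positively to a neighborhood of $E$ in $\cX$.

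A parallel computation at an intersection point $E_1\cap E_2$, using log coordinates $(z_1,z_2,w_1,\ldots,w_{n-1})$ with $\pi=z_1^{b_1}z_2^{b_2}$ and the local generator $\tfrac{dz_1}{z_1}\wedge dw$ of $K^{\log}_{\cX/\P^1}$, yields the same conclusion at snc singular points of $\cX_0$. Combining these local lower bounds via a finite open cover of the compact set $\pi^{-1}(\{|\tau|\le 1\})\subset\cX$ then produces the desired uniform $\e>0$. The main technical obstacle is the intersection-point case, where one must verify that the log generator restricts to a nowhere-vanishing holomorphic section of $K_{\cX_\tau}$ on every nearby smooth fiber and produces a smooth positive density in suitable fiber coordinates—routine once the snc structure is unwound, but somewhat tedious in book-keeping.
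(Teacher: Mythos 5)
Your proof takes essentially the same route as the paper, which refers to the final part of the proof of Lemma~3.10 in~\cite{BHJ2}: pass to the pointwise fiberwise ratio $F$ on $\cX$ and bound it below by a local snc computation plus compactness of $\pi^{-1}(\{|\tau|\le 1\})$. The reduction step and the single-component analysis are correct. However, your expectation for the intersection-point case is off. With the coordinates and generator you set up ($\pi=z_1^{b_1}z_2^{b_2}$, $\sigma=\frac{dz_1}{z_1}\wedge dw$), the density of $\nu_\Psi|_{\cX_\tau}$ against the Euclidean measure in $(z_1,w)$-coordinates carries a factor $|z_1|^{-2}$ and is therefore \emph{not} smooth up to the stratum; and $\om_\cX^n|_{\cX_\tau}$ in these coordinates has density comparable to $1+(b_1/b_2)^2|z_2/z_1|^2$ (from $dz_2=-\tfrac{b_1 z_2}{b_2 z_1}\,dz_1$ on the fiber). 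The ratio is
\[
F\approx\left(|z_1|^2+(b_1/b_2)^2|z_2|^2\right)^{-1},
\]
which blows up near $E_1\cap E_2$ rather than extending smoothly and positively as you claim. Fortunately a lower bound is all the lemma requires, and the blow-up only helps: on $\{|z_1|,|z_2|\le 1\}$ one gets $F\ge (1+(b_1/b_2)^2)^{-1}$ up to the smooth positive factor coming from $\Psi$ and $\om_\cX$, and your compactness step then yields the uniform $\e>0$. So your conclusion stands, but for a slightly different reason than you anticipate, and it is worth noting that the corresponding upper bound $\nu_t\le C\om_t^n$ genuinely fails near snc strata.
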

\begin{proof} This follows from a simple computation in local  coordinates, see the final part of the proof of~\cite[Lemma~3.10]{BHJ2}.
\end{proof}

\begin{proof}[Proof of Theorem~\ref{thm:adapted}] As explained above, it suffices to check~\eqref{equ:ugood}. As in Lemma~\ref{lem:adapted}, set $\mu_t:=\MA_v(\phi_t)$, pick a ray of smooth metrics $\p_t$ on $K_X$ corresponding to an $(S^1\times T)$-invariant smooth metric on $K_{\cX/\P^1}^{\log}$, denote by $\nu_t=e^{2\p_t}$ the associated ray of smooth volume forms on $X$, and write
$$
\ent_v(\phi_t)=\frac12\int_X\log\left(\frac{\mu_t}{\mu_\refe}\right)\mu_t=\Ent(\mu_t|\nu_t)+\int_X\left(\p_t-\p_{K_X}\right)\mu_t. 
$$
Theorem~\ref{thm:NAMAext}~(iii) yields here again
$$
\lim_{t\to\infty}t^{-1}\int_X\left(\p_t-\p_{K_X}\right)\mu_t=\int_{X_\NA} A_\cX\MA_{v,\NA}(\f). 
$$
Since $\f=\env(\f_\cL)$, Proposition~\ref{prop:orthoPL}~(i) further implies that $\MA_{v,\NA}(\f)$ is supported in the dual complex $\D_\cX\subset X_\NA$. Since $A_\cX=A_X$ on $\D_\cX$, we conclude
\begin{equation}\label{equ:slopeMAg}
\lim_{t\to\infty}t^{-1}\int_X\left(\p_t-\p_{K_X}\right)\MA_v(\phi_t)=\int_{X_\NA} A_X\MA_{v,\NA}(\f)=\ent_{v,\NA}(\f).
\end{equation}
Next we claim that 
\begin{equation}\label{equ:entsup}
\limsup_{t\to\infty}t^{-1}\Ent(\mu_t|\nu_t)\le 0.
\end{equation}
Combined with~\eqref{equ:slopeMAg} this will conclude the proof of~\eqref{equ:ugood}, and hence of Theorem~\ref{thm:adapted}. Our proof of~\eqref{equ:entsup} relies on the regularity of psh envelopes in a big $(1,1)$-class~\cite{BerDem} (where a gap in some step of the proof was later fixed in~\cite{DNT}). 

Pick an $(S^1\times T)$-invariant, strictly psh metric $\Psi_+$ on $\cL$ with analytic singularities along $\B_+(\cL)$, and smooth on its complement. Consider the corresponding quasi-psh function $F:=\Psi_+-\Psi_\cL$ on $\cX$, and denote by $\{f_t\}$ the associated ray with analytic singularities in $C^\infty(X)^T$, \ie $f_t=\iota_t^\star F$. 

Choose also an $(S^1\times T)$-invariant K\"ahler form $\om_\cX$ on $\cX$ and set $\om_t:=\iota_t^\star\om_\cX$. By~\cite[Theorem~1.4]{BerDem}, the psh envelope $\Phi=\env(\Psi_\cL)$ is $C^{1,\bar 1}$ outside $\B_+(\cL)$, and further satisfies the curvature estimate 
$$
0\le\ddc\Phi\le Ae^{-BF}\om_\cX
$$
on $\cX\setminus\B_+(\cL)$ for some constants $A,B>0$. Since 
$$
\mu_t=\MA_v(\phi_t)\le C\MA(\phi_t)=C'\iota_t^\star(\ddc\Phi)^n,
$$
this implies $\mu_t\le A' e^{-nBf_t}\om_t^n$. On the other hand, Lemma~\ref{lem:adapted} yields $\om_t^n\le C\nu_t$, and hence 
$$
\log\left(\frac{\mu_t}{\nu_t}\right)\le A-Bf_t
$$
for some uniform constants $A,B>0$. Since the mass of $\mu_t=\MA_v(\phi_t)$ is independent of $t$, this yields 
\begin{equation}\label{equ:EntMA}
\Ent(\mu_t|\nu_t)=\int_X\log\left(\frac{\mu_t}{\nu_t}\right)\mu_t\le C\left(1+\int_X (-f_t)\MA_v(\phi_t)\right). 
\end{equation}
Now $\Psi_+$ has analytic singularities along $\B_+(\cL)$, hence $\{f_t\}$ is a ray with analytic singularities in $C^\infty(X)^T$ whose \na limit $f_\NA\in\PL(X_\NA)$ satisfies $f_\NA(v_E)=0$ for each component $E$ of $\cX_0$ not contained in $\B_+(\cL)$.
By Theorem~\ref{thm:NAMAext}~(iii), we have 
$$
\lim_{t\to\infty}t^{-1}\int_X f_t\MA_v(\phi_t)=\int_{X_\NA} f_\NA\MA_{v,\NA}(\f), 
$$
which vanishes since $\MA_{v,\NA}(\f)$ is supported in the set of $v_E$'s such that $E$ is not contained in $\B_+(\cL/\P^1)=\B_+(\cL)$ (see Proposition~\ref{prop:orthoPL}~(ii)). As a result, \eqref{equ:EntMA} yields
$$
\limsup_{t\to\infty}t^{-1}\Ent(\mu_t|\nu_t)\le C\lim_{t\to\infty}t^{-1}\int_X (-f_t)\MA_v(\phi_t)=0, 
$$
completing the proof. 
\end{proof}

%%%%%%%%%%%%%%%%%%%%%%%%%%%%%%%%%%%%%%%%%%%%%%%%%%%%%%%%%%%%%%%%%%%
%
\subsection{Proof of Theorem~\ref{thm:main}}\label{sec:thmmain}
Recall that Theorem~\ref{thm:main} asserts that all $\f\in\cE^{1,T}_{\NA}$ are good (see Definition~\ref{defi:good}). To prove this, it will be convenient to introduce the following terminology.

\begin{defi} We say that a sequence (or net) $(\f_j)$ in $\cE^{1,T}_{\NA}$ \emph{converges in entropy} to $\f\in\cE^{1,T}_{\NA}$ if $\f_j\to\f$ in $\cE^{1,T}_{\NA}$ and $\ent_{v,\NA}(\f_j)\to\ent_{v,\NA}(\f)$.
\end{defi}
We then have: 

\begin{lem}\label{lem:good} The set of good functions $\f\in\cE^{1,T}_{\NA}$ is closed with respect to convergence in entropy.
\end{lem}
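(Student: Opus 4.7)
The proof should be essentially immediate from the lower semicontinuity of the radial entropy functional established in Proposition~\ref{prop:hent}(i), combined with the fact that $\cE^{1,T}_{\NA} \hookrightarrow \cE^{1,T}_{\ra}$ is an isometric embedding (see~\S\ref{sec:geodrays}).

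Concretely, suppose $(\f_j)$ is a sequence of good functions in $\cE^{1,T}_{\NA}$ converging in entropy to $\f \in \cE^{1,T}_{\NA}$. Since strong convergence in $\cE^{1,T}_{\NA}$ corresponds to convergence with respect to $\dd_{1,\NA}$, and the embedding $(\cE^{1,T}_{\NA},\dd_{1,\NA}) \hookrightarrow (\cE^{1,T}_{\ra},\dd_{1,\ra})$ is isometric, we have $\f_j \to \f$ in $\cE^{1,T}_{\ra}$ as well. By Proposition~\ref{prop:hent}(i), $\ent_{v,\ra}$ is lsc on $\cE^{1,T}_{\ra}$, so
\[
\ent_{v,\ra}(\f) \le \liminf_j \ent_{v,\ra}(\f_j).
\]
Since each $\f_j$ is good, we have $\ent_{v,\ra}(\f_j) \le \ent_{v,\NA}(\f_j)$, and since $\ent_{v,\NA}(\f_j) \to \ent_{v,\NA}(\f)$ by the convergence-in-entropy assumption, we conclude
\[
\ent_{v,\ra}(\f) \le \liminf_j \ent_{v,\NA}(\f_j) = \ent_{v,\NA}(\f),
\]
showing that $\f$ is good.

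There is no significant obstacle here: the entire content is packaged into (i) the isometric embedding into the asymptotic cone, which transports strong $\cE^{1,T}_{\NA}$-convergence into $\dd_{1,\ra}$-convergence, and (ii) the lower semicontinuity of $\ent_{v,\ra}$, for which the argument was already given in the proof of Proposition~\ref{prop:hent} (convexity mod a semi-globally Hölder perturbation $C\en_v^{\phi_\refe}$). The role of the hypothesis of convergence \emph{in entropy} (rather than merely in $\cE^{1,T}_{\NA}$) is essential, since $\ent_{v,\NA}$ is itself only lsc and we need an equality in the limit on the right-hand side.
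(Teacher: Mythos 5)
Your proof is correct and follows essentially the same argument as the paper: use the lower semicontinuity of $\ent_{v,\ra}$ from Proposition~\ref{prop:hent}, the goodness of each $\f_j$, and the convergence $\ent_{v,\NA}(\f_j)\to\ent_{v,\NA}(\f)$ to conclude $\ent_{v,\ra}(\f)\le\ent_{v,\NA}(\f)$. The only difference is that you spell out the isometric embedding $\cE^{1,T}_{\NA}\hookrightarrow\cE^{1,T}_{\ra}$, which the paper leaves implicit.
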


\begin{proof} Assume $\f_j\to\f$ in entropy and $\f_j$ is good for all $j$. Since $\ent_{v,\ra}$ is lsc on $\cE^{1,T}_\ra\supset\cE^{1,T}_{\NA}$ (see Proposition~\ref{prop:hent}), we have 
$$
\ent_{v,\ra}(\f)\le\liminf_j\ent_{v,\ra}(\f_j)\le\liminf_j\ent_{v,\NA}(\f_j)=\ent_\NA(\f),
$$
and the result follows. 
\end{proof}
In view of Theorem~\ref{thm:adapted}, Theorem~\ref{thm:main} will thus be a consequence of the following result. 

\begin{prop}\label{prop:entenv} The set of $\f\in\cE^{1,T}_{\NA}$ of the form $\f=\env(f)$ with $f\in\PL(X_\NA)^T$ is dense in $\cE^{1,T}_{\NA}$ with respect to convergence in entropy. 
\end{prop}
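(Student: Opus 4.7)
The idea is to realize $\f$ as a limit in entropy in two stages: first approximate the measure $\mu:=\MA_{v,\NA}(\f)\in\cM^{1,T}_\NA$ by its pushforwards to dual complexes, and then approximate the resulting MA solutions by envelopes of PL functions that are ``constant along the retraction''.

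\emph{Stage 1 (dual complex reduction).} For each $T$-equivariant snc test configuration $\cX$, set $\mu_\cX:=(p_\cX)_\star\mu$. Note that $T$ acts trivially on $\D_\cX$ (a connected compact group acting by simplicial automorphisms of a finite simplicial complex acts through a finite quotient, hence trivially), so $\mu_\cX$ is $T$-invariant. Lemma~\ref{lem:cvent} gives $\mu_\cX\to\mu$ strongly in $\cM^{1,T}_\NA$ together with $\Ent_\NA(\mu_\cX)\to\Ent_\NA(\mu)$. Applying Theorem~\ref{thm:NAwMA}, we solve the weighted \ma equation to get a normalized $\phi_\cX\in\cE^{1,T}_\NA$ with $\MA_{v,\NA}(\phi_\cX)=\mu_\cX$. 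Since $\mu_\cX$ is supported in $\D_\cX$, $\phi_\cX$ is continuous (by the regularity fact recalled in~\S\ref{sec:env}), and Proposition~\ref{prop:orthoPL}~(i) yields $\phi_\cX=\env(\phi_\cX\circ p_\cX)$. Thus $\phi_\cX\to\f$ in entropy as $\cX$ ranges over a cofinal sequence of equivariant snc models.

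\emph{Stage 2 (envelopes of PL functions).} Fix such a $\cX$, and uniformly approximate the continuous function $\phi_\cX|_{\D_\cX}$ by functions $g_j\in\PL(\D_\cX)$ (piecewise linear with respect to some subdivision of $\D_\cX$); such approximations exist by standard simplicial approximation. Set $f_j:=g_j\circ p_\cX$; the subdivision realizing $g_j$ can be taken to come from an snc refinement $\cX_j\to\cX$, so that $f_j$ equals the pull-back of a PL function on $\D_{\cX_j}$ and hence belongs to $\PL(X_\NA)^T$ by~\eqref{equ:PLdual} (the $T$-invariance is automatic since $T$ acts trivially on dual complexes). By~\eqref{equ:envlip}, $\env(f_j)\to\env(\phi_\cX\circ p_\cX)=\phi_\cX$ uniformly on $X_\NA$, hence strongly in $\cE^{1,T}_\NA$.

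\emph{Stage 3 (entropy convergence).} The crucial observation is that $\env(f_j)=\env(\env(f_j)\circ p_\cX)$: indeed any $L$-psh $\psi\le f_j$ satisfies $\psi\le\psi\circ p_\cX\le f_j\circ p_\cX=f_j$ since $f_j$ factors through $p_\cX$, which together with the reverse inequality $\env(f_j)\le\env(f_j)\circ p_\cX$ forces the equality. Applying Proposition~\ref{prop:orthoPL}~(i) again, $\MA_{v,\NA}(\env(f_j))$ is supported in the fixed dual complex $\D_\cX$. Since $\MA_{v,\NA}$ is strongly continuous, these measures converge to $\mu_\cX$ strongly, hence weakly. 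On $\D_\cX$ we have $A_X=A_\cX$, a continuous (PL) function, so weak convergence of measures supported in the compact $\D_\cX$ yields
\[
\ent_{v,\NA}(\env(f_j))=\int_{\D_\cX}A_\cX\,\MA_{v,\NA}(\env(f_j))\longrightarrow\int_{\D_\cX}A_\cX\,\mu_\cX=\ent_{v,\NA}(\phi_\cX).
\]
A diagonal extraction combining Stages 1 and 3 then produces a sequence $\env(f_k)$ converging to $\f$ in entropy.

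\emph{Main obstacle.} The subtle point is upgrading strong convergence of MA measures to convergence of entropies, since $A_X$ is only lsc and Proposition~\ref{prop:hent} gives only a liminf inequality. The entire point of the retraction-envelope construction is to confine all approximating MA measures to a \emph{fixed} compact dual complex $\D_\cX$, on which $A_X$ coincides with the continuous function $A_\cX$; this turns the lsc obstacle into a continuity statement.
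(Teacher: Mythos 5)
Your proof is correct and follows essentially the same three-step decomposition as the paper (Lemmas~\ref{lem:dualapp}, \ref{lem:entsupp}, \ref{lem:PLapp}): project $\MA_{v,\NA}(\f)$ to dual complexes, approximate by envelopes of PL functions factoring through $p_\cX$, and pass entropy to the limit by confining all approximating \ma measures to a fixed $\D_\cX$ where $A_X=A_\cX$ is continuous. Your inline verification that $\env(f_j)=\env(\env(f_j)\circ p_\cX)$ and your observation that $T$ acts trivially on $\D_\cX$ (so the pullbacks are automatically $T$-invariant) supply details the paper takes for granted.
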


\begin{lem}\label{lem:dualapp} The set of $\f\in\cE^{1,T}_\NA$ such that $\MA_{v,\NA}(\f)$ is supported in some dual complex is dense in $\cE^{1,T}_\NA$ with respect to convergence in entropy. 
\end{lem}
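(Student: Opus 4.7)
The plan is to push forward the measure $\mu:=\MA_{v,\NA}(\f)$ onto dual complexes of $T$-equivariant snc test configurations and then invert the weighted non-Archimedean Monge--Amp\`ere operator to produce the desired approximants. This is the natural non-Archimedean analogue of approximating a psh metric of finite energy by solving $\MA$-equations with better-behaved measures, and it leverages both Lemma~\ref{lem:cvent} (for the entropy convergence of pushforwards) and Theorem~\ref{thm:NAwMA} (the weighted Calabi--Yau theorem to produce the potentials).

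Concretely, given $\f\in\cE^{1,T}_\NA$, set $\mu:=\MA_{v,\NA}(\f)$; by the equivariance of $\MA_{v,\NA}$ the measure $\mu$ lies in $\cM^{1,T}_\NA$. First I would invoke equivariant resolution of singularities to see that the poset of $T$-equivariant snc test configurations $\cX$ is cofinal in the poset of snc test configurations; for each such $\cX$ the dual complex $\D_\cX\subset X_\NA$ and the retraction $p_\cX\colon X_\NA\to\D_\cX$ are then $T$-equivariant. Setting $\mu_\cX:=(p_\cX)_\star\mu$ therefore gives an element of $\cM^{1,T}_\NA$ supported in $\D_\cX$, and by Lemma~\ref{lem:cvent} one has $\mu_\cX\to\mu$ strongly in $\cM^1_\NA$ together with $\Ent_\NA(\mu_\cX)\to\Ent_\NA(\mu)$.

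Next I would apply Theorem~\ref{thm:NAwMA} to solve
$$
\MA_{v,\NA}(\f_\cX)=\mu_\cX
$$
with $\f_\cX\in\cE^{1,T}_\NA$, unique modulo constants. Since $\MA_{v,\NA}$ induces a homeomorphism $\cE^{1,T}_\NA/\R\simto\cM^{1,T}_\NA$, strong convergence $\mu_\cX\to\mu$ implies strong convergence $\f_\cX\to\f$ in $\cE^{1,T}_\NA/\R$; after fixing a consistent normalization (for instance, $\f_\cX(v_\triv)=\f(v_\triv)$) this upgrades to strong convergence in $\cE^{1,T}_\NA$. By construction $\MA_{v,\NA}(\f_\cX)$ is supported in $\D_\cX$, and
$$
\ent_{v,\NA}(\f_\cX)=\Ent_\NA(\mu_\cX)\longrightarrow\Ent_\NA(\mu)=\ent_{v,\NA}(\f),
$$
yielding convergence in entropy.

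The only real subtlety is the $T$-equivariance of the whole construction: one must check that $\mu$ is $T$-invariant (which follows from the equivariance of the algebraic definition~\eqref{equ:wNAMA} of $\MA_{v,\NA}$), that $p_\cX$ is $T$-equivariant (which uses a $T$-equivariant snc model, whence equivariant resolution), and that the solution $\f_\cX$ to the weighted Monge--Amp\`ere equation inherits $T$-invariance (which follows from the uniqueness modulo $\R$ in Theorem~\ref{thm:NAwMA}). Beyond these standard equivariance checks, the argument is essentially a direct combination of Lemma~\ref{lem:cvent} with the weighted Calabi--Yau theorem.
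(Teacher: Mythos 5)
Your proof is correct and matches the paper's argument essentially line for line: push $\mu=\MA_{v,\NA}(\f)$ forward to dual complexes via Lemma~\ref{lem:cvent}, then invert the weighted Monge--Amp\`ere operator via Theorem~\ref{thm:NAwMA}. The extra care you take with $T$-equivariance of the models/retractions and with choosing a normalization to pass from convergence in $\cE^{1,T}_\NA/\R$ to convergence in $\cE^{1,T}_\NA$ is a welcome clarification of details the paper leaves implicit, but it does not change the argument.
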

\begin{proof} Set $\mu:=\MA_v(\f)$. For each  equivariant snc test configuration $\cX$ with retraction map $p_\cX\colon X_\NA\to\D_\cX$, set $\mu_\cX:=(p_\cX)_\star\mu$. Then $(\mu_\cX)_\cX$ forms a net in $\cM^{1,T}$ that converges strongly to $\mu$, and satisfies $\Ent_\NA(\mu_\cX)\to\Ent_\NA(\mu)$ (see Lemma~\ref{lem:cvent}). By Theorem~\ref{thm:NAwMA} we can thus find $\f_\cX\in\cE^{1,T}_{\NA}$ such that $\mu_\cX=\MA_v(\f_\cX)$ and $\f_\cX\to\f$ in $\cE^{1,T}_{\NA}$, and we have in fact $\f_\cX\to\f$ in entropy since $\ent_{v,\NA}(\f_\cX)=\Ent_\NA(\mu_\cX)\to\Ent_\NA(\mu)=\ent_{v,\NA}(\f)$. 
\end{proof}

\begin{lem}\label{lem:entsupp} Consider a convergent sequence $\f_j\to\f$ in $\cE^{1,T}_{\NA}$, and assume given an snc test configuration $\cX$ such that $\f_j=\env(\f_j\circ p_\cX)$ for all $j$. Then $\f_j\to\f$ in entropy.
\end{lem}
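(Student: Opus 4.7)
The plan is to reduce the claim to convergence of integrals of a continuous PL function against strongly convergent measures supported in the fixed dual complex $\D_\cX$. First, I would apply Theorem~\ref{thm:NAwMA} (equivalently, the continuity of $\MA_{v,\NA}$ in Theorem~\ref{thm:NAMAext}) to conclude that the measures $\mu_j:=\MA_{v,\NA}(\f_j)$ converge strongly in $\cM^{1,T}_\NA$ to $\mu:=\MA_{v,\NA}(\f)$. The hypothesis $\f_j=\env(\f_j\circ p_\cX)$, combined with Proposition~\ref{prop:orthoPL}(i), ensures $\supp\mu_j\subset\D_\cX$ for every $j$, and since the strong topology refines the weak one, the limit $\mu$ is also supported in $\D_\cX$.

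Next I would exploit the fact that on the dual complex $\D_\cX$ the (generally only lsc) log-discrepancy function $A_X$ coincides with the PL function $A_\cX=\f_{K_{\cX/\P^1}^{\log}}$, which is continuous on all of $X_\NA$ (this is the same identity underlying~\eqref{equ:slopeMAg} in the proof of Theorem~\ref{thm:adapted}). Replacing $A_X$ by $A_\cX$ under the integrals then yields
\[
\ent_{v,\NA}(\f_j)=\int_{X_\NA}A_\cX\,\mu_j,\qquad \ent_{v,\NA}(\f)=\int_{X_\NA}A_\cX\,\mu.
\]

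To conclude, I would write the PL function $A_\cX$ as a difference $A_\cX=\tau_1-\tau_2$ of two elements of $\cE^1_\NA$ (valid because $\PL(X_\NA)$ is the $\Q$-linear span of $\cH_\NA\subset\cE^1_\NA$, absorbing rational scalars into the $\tau_i$). The estimate~\eqref{equ:estMANA} then gives
\[
\left|\int \tau_i(\mu_j-\mu)\right|\lesssim \d_{1,\NA}(\mu_j,\mu)\bigl(1+\ii_\NA(\tau_i)\bigr)^{1/2}\longrightarrow 0,
\]
from which $\int A_\cX\,\mu_j\to\int A_\cX\,\mu$ follows, and hence $\ent_{v,\NA}(\f_j)\to\ent_{v,\NA}(\f)$.

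I do not anticipate any significant obstacle: the whole argument is a packaging of earlier continuity results together with the compatibility $A_X|_{\D_\cX}=A_\cX|_{\D_\cX}$. The mildly subtle point is the passage from strong convergence of the $\mu_j$ to convergence of integrals against the continuous (but not a priori psh) function $A_\cX$; this is exactly what the PL decomposition and~\eqref{equ:estMANA} handle. Note also that one must verify $\f=\env(\f\circ p_\cX)$ in the limit in order to justify $\supp\mu\subset\D_\cX$, but this is automatic from the weak closedness of the condition ``supported in $\D_\cX$'' and another application of Proposition~\ref{prop:orthoPL}(i).
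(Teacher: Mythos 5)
Your proof is correct in substance, but it takes a heavier route than the paper and contains one imprecision worth flagging.

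The paper's own argument is lighter: strong convergence $\f_j\to\f$ in $\cE^{1,T}_\NA$ gives \emph{weak} convergence $\MA_{v,\NA}(\f_j)\to\MA_{v,\NA}(\f)$ in $\Cz(X_\NA)^\vee$ (this is already part of Theorem~\ref{thm:NAMAext}, or follows from Theorem~\ref{thm:NAwMA}); all these measures are supported in the fixed compact $\D_\cX$ (Proposition~\ref{prop:orthoPL}(i), plus weak closedness of the support condition for the limit); and on $\D_\cX$ the lsc function $A_X$ agrees with the globally continuous PL function $A_\cX$. Then $\ent_{v,\NA}(\f_j)=\int A_\cX\,\MA_{v,\NA}(\f_j)$ converges to $\int A_\cX\,\MA_{v,\NA}(\f)=\ent_{v,\NA}(\f)$ simply because weak convergence of measures is, by definition, convergence of integrals against continuous test functions. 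No strong convergence of measures, no PL decomposition, no appeal to~\eqref{equ:estMANA} is needed. You instead upgrade to strong convergence and then control $\int A_\cX(\mu_j-\mu)$ via~\eqref{equ:estMANA}, which works but is overkill. The one imprecise step in your version is the claim that rational scalars can be absorbed into the $\tau_i\in\cE^1_\NA$: the set of $L$-psh potentials of finite energy is \emph{not} stable under multiplication by rationals $>1$ (scaling $c\phi$ with $c>1$ produces a $cL$-psh potential, not an $L$-psh one). What is true is that one can write $A_\cX=\e^{-1}(\tau_1-\tau_2)$ with $\tau_1,\tau_2\in\cH_\NA$ and $\e\in\Q_{>0}$ small (choose a test configuration $\cX$ dominating $X\times\P^1$ with $A_\cX=\f_D$ for a vertical divisor $D$, pick a relatively ample $\cA=L_\cX+H$, and take $\e$ small enough that $\cA+\e D$ is still relatively ample); the prefactor $\e^{-1}$ then just rides along outside the integral, and the estimate~\eqref{equ:estMANA} still gives the convergence. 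So this is a fixable slip rather than a gap, but it is worth noticing, since the clean statement you wanted ($A_\cX$ a genuine difference of two $\cE^1_\NA$ elements) is false in general.
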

\begin{proof} By Proposition~\ref{prop:orthoPL}~(i), all measures $\MA_v(\f_j)$ are supported in $\D_\cX$. Since $\f_j\to\f$ in $\cE^{1,T}_{\NA}$ we have $\MA_v(\f_j)\to\MA_v(\f)$ weakly in $\Cz(X)^\vee$. Thus $\MA_v(\f)$ is supported in $\D_\cX$ as well, and hence 
$$
\ent_v(\f_j)=\int_{\D_\cX} A_X\,\MA_v(\f_j)\to\int_{\D_\cX} A_X\,\MA_v(\f)=\ent_v(\f), 
$$
by continuity of $A_X|_{\D_\cX}$. 
\end{proof}

In view of Lemma~\ref{lem:dualapp}, Proposition~\ref{prop:entenv} follows from: 
\begin{lem}\label{lem:PLapp} Pick $\f\in\cE^{1,T}_{\NA}$ such that $\MA_v(\f)$ is supported in a dual complex. Then there exists a sequence $(f_j)$ in $\PL(X_\NA)^T$ such that $\env(f_j)\to\f$ in entropy. 
\end{lem}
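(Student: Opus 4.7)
The plan is to leverage Proposition~\ref{prop:orthoPL}(i), which, combined with the hypothesis, realizes $\f=\env(\f\circ p_\cX)$ for some $T$-equivariant snc test configuration $\cX$ with dual complex $\D_\cX\hookrightarrow X_\NA$ and retraction $p_\cX\colon X_\NA\to\D_\cX$. Since $T$ is connected and preserves each irreducible component of $\cX_0$, it acts trivially on $\D_\cX\subseteq X_\NA$ (the local equations of the boundary components being $T$-eigenvectors), so any continuous function on $\D_\cX$ pulls back via $p_\cX$ to a $T$-invariant function on $X_\NA$.

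I would then uniformly approximate the continuous function $\f|_{\D_\cX}$ by rational PL functions $g_j$ on $\D_\cX$, each linear on a fine polyhedral subdivision of $\D_\cX$ realized as the dual complex of a $T$-equivariant toroidal snc refinement $\cX_j\to\cX$; setting $f_j:=g_j\circ p_{\cX_j}$ gives $f_j\in\PL(X_\NA)^T$ with $f_j=\f_{\cL_j}$ for some test configuration $\cL_j$ on $\cX_j$. Since $\cX_j$ is a toroidal refinement, $\D_{\cX_j}$ has the same underlying subset of $X_\NA$ as $\D_\cX$, and Proposition~\ref{prop:orthoPL}(ii) places $\supp\MA_{v,\NA}(\env(f_j))$ in the vertex set of $\D_{\cX_j}\subseteq\D_\cX$. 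Proposition~\ref{prop:orthoPL}(i) then yields $\env(f_j)=\env(\env(f_j)\circ p_\cX)$, so that $\env(f_j)$ is determined by its values on $\D_\cX$. Using the orthogonality relation~\eqref{equ:orthoNA} on the contact locus together with the fact that the vertex set of $\D_{\cX_j}$ becomes dense in $\D_\cX$ as $j\to\infty$, and combining with the uniform convergence $g_j\to\f|_{\D_\cX}$, one obtains $\env(f_j)\to\f$ uniformly on $\D_\cX$, hence (by the envelope representation $\f=\env(\f\circ p_\cX)$ and the Lipschitz property of psh envelopes in sup norm) uniformly on $X_\NA$. In particular $\env(f_j)\to\f$ strongly in $\cE^{1,T}_\NA$, and Lemma~\ref{lem:entsupp} applied with the fixed test configuration $\cX$ finally delivers $\env(f_j)\to\f$ in entropy.

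The main technical obstacle is that the approximations $g_j$ of $\f|_{\D_\cX}$ must be realized as $\f_{\cL_j}$ for test configurations on \emph{toroidal} refinements $\cX_j\to\cX$: only toroidal refinements keep $\D_{\cX_j}$ within $\D_\cX$ as a subset of $X_\NA$, thereby ensuring the crucial support property $\supp\MA_{v,\NA}(\env(f_j))\subseteq\D_\cX$. This is a standard fact from toroidal geometry (every polyhedral subdivision of $\D_\cX$ arises from a suitable sequence of toroidal blowups), but it must be carried out $T$-equivariantly; likewise, deducing uniform convergence of $\env(f_j)$ from uniform convergence of $g_j$ requires careful handling of the discrepancy between the retractions $p_\cX$ and $p_{\cX_j}$ away from $\D_\cX$, the key point being that both $\env(f_j)$ and $\f$ are envelopes of their own pullbacks along $p_\cX$ and are thus determined by their restrictions to $\D_\cX$.
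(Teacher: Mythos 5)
Your overall strategy matches the paper's: write $\f=\env(\f\circ p_\cX)$ via Proposition~\ref{prop:orthoPL}(i), approximate $\f|_{\D_\cX}$ uniformly by $\Q$-PL functions $g_j$ on $\D_\cX$, pull back to $X_\NA$, and conclude via Lemma~\ref{lem:entsupp}. However, the detour through $T$-equivariant toroidal refinements $\cX_j\to\cX$ is superfluous. A toroidal refinement does not change the underlying subset $|\D_{\cX_j}|=|\D_\cX|\subset X_\NA$, and the retraction maps consequently agree, $p_{\cX_j}=p_\cX$; so your $f_j=g_j\circ p_{\cX_j}$ is just $g_j\circ p_\cX$. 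The paper directly invokes~\eqref{equ:PLdual}, which gives $p_\cX^\star\PL(\D_\cX)\subset\PL(X_\NA)$, to conclude $f_j\in\PL(X_\NA)^T$, with no need to realize $f_j$ as $\f_{\cL_j}$ on a refinement. (Your reason for $T$-invariance---$T$ connected, hence fixing $\D_\cX$ pointwise, and commuting with $p_\cX$---is correct and is the same one the paper relies on.)

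The orthogonality-plus-density argument you propose for the uniform convergence of $\env(f_j)$ to $\f$ on $\D_\cX$ is not a working argument: knowing that $\MA_{v,\NA}(\env(f_j))$ concentrates on an increasingly dense set of vertices of $\D_\cX$ does not by itself control $\|\env(f_j)-\f\|_{\D_\cX,\infty}$, and in particular gives no lower bound on $\env(f_j)$. The correct step---which is also the one you invoke only at the very end, but applied globally rather than just for the passage from $\D_\cX$ to $X_\NA$---is the $1$-Lipschitz property of psh envelopes in sup-norm (the non-Archimedean analogue of~\eqref{equ:envlip}): since $g_j\to\f|_{\D_\cX}$ uniformly, $f_j=g_j\circ p_\cX\to(\f|_{\D_\cX})\circ p_\cX=\f\circ p_\cX$ uniformly on $X_\NA$, and hence $\env(f_j)\to\env(\f\circ p_\cX)=\f$ uniformly on $X_\NA$, directly. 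Likewise, the hypothesis of Lemma~\ref{lem:entsupp} ($\env(f_j)=\env(\env(f_j)\circ p_\cX)$) does not require Proposition~\ref{prop:orthoPL}(ii): from $f_j=f_j\circ p_\cX$ one gets $\env(f_j)\le\env(f_j)\circ p_\cX\le f_j$, whence $\env(\env(f_j)\circ p_\cX)$ is squeezed between $\env(f_j)$ and itself.
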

\begin{proof} By Proposition~\ref{prop:orthoPL}~(i), we have $\f=\env(\f\circ p_\cX)$. 
Pick a sequence of $\Q$-PL functions $f_j$ on $\D_\cX$ such that $f_j\to\f|_{\D_\cX}$, and still denote by $f_j=f_j\circ p_\cX\in\PL(X_\NA)^T$ their extensions to $X_\NA$. Then $\env(f_j)\to\env(\f\circ p_\cX)=\f$ uniformly on $X_\NA$, and hence $\env(f_j)\to\f$ in entropy, by Lemma~\ref{lem:entsupp}. 
\end{proof}

%
%%%%%%%%%%%%%%%%%%%%%%%%%%%%%%%%%%%%%%%%%%%%%%%%%%%%%%%%%%%%

\section{Stability notions and proof of the main results}\label{sec:wstab}
We are finally ready to provide precise definitions of the stability notions that appear in Theorem~B, and to prove the latter. Along the way, we prove Theorem~C. 

In what follows we fix a  (not necessarily maximal) compact torus $T\subset\Aut(X,L)$, with moment polytope by $P\subset M_\R(T)$, and also fix weight functions $v,w\in C^\infty(M_\R)$ such that $v>0$ on $P$ (from~\S\ref{sec:wext2} on we also assume $w>0$ on $P$.)

%%%%%%%%%%%%%%%%%%%%%%%%%%%%%%%%%%%%%%%%%%%%%%%%%%%%%%%%%%%%
\subsection{Weighted cscK metrics and the weighted Mabuchi functional}\label{sec:wext1}

We define the \emph{$v$-weighted scalar curvature}
$$
S_v(\phi)\in C^\infty(X)^T
$$
of $\phi\in\cH^T$ as that of the equivariant K\"ahler form $\ddcT\phi:=(\ddc\phi,m_\phi)$ in the sense of~\cite{BJT} (whose conventions differ slightly from~\cite{Lah19,AJL}), \ie as the weighted trace 
$$
S_v(\phi):=\tr_{\phi,v}\Ric_v^T(\ddcT\phi)
$$
of the equivariant weighted Ricci curvature 
$$
\Ric_v^T(\ddcT\phi):=-\ddcT\p_v\quad\text{where}\quad\p_v:=\tfrac 12\log\MA_v(\phi)\in C^\infty(K_X)^T. 
$$
Recall that the weighted trace of the equivariant curvature form $\ddcT\tau=(\ddc\tau,m_\tau)$ of any metric $\tau\in C^\infty(F)^T$ on some equivariant $\Q$-line bundle $F$ is given by
$$
\tr_{\phi,v}(\ddcT\tau)=\tr_\phi(\ddc\tau)+\langle (\log v)'(m_\phi),m_\tau\rangle=\frac{\MA_v^\tau(\phi)}{\MA_v(\phi)}. 
$$
Thus $S_v(\phi)\MA_v(\phi)=-\MA_v^{\p_v}(\phi)$, and hence
\begin{equation}\label{equ:scalmass}
\int_X S_v(\phi)\MA_v(\phi)=-\deg_v'(L;K_X)
\end{equation}
for any $\phi\in\cH_T$, see~\eqref{equ:totaltwMA}. The following more explicit expression of the weighted scalar curvature (which is essentially equivalent to~\cite[Lemma~3.24]{BJT}) shows compatibility with Lahdili's original definition in~\cite{Lah19} (up to a factor $v$.)

\begin{lem}\label{lem:Sv} Pick a basis $(\xi_\a)$ of $N_\R$. For any $\phi\in\cH^T$, the weighted scalar curvature $S_v(\phi)$ is then related to the usual scalar curvature $S(\phi)$ by 
$$
S_v(\phi)=S(\phi)-\tfrac 12|(\log v)'(m_\phi)|^2_\phi-\tfrac12\sum_{\a\b}(\log v)''_{\a\b}(m_\phi)\langle \xi_a,\xi_b\rangle_\phi. 
$$
\end{lem}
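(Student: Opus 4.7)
The plan is to unfold the definition $S_v(\phi) = -\tr_{\phi,v}\ddcT \psi_v$ via the decomposition
\[
\psi_v = \psi_1 + \tfrac12\log v(m_\phi), \qquad \psi_1 := \tfrac12\log(\ddc\phi)^n,
\]
where $\psi_1$ is the classical Ricci potential of $\omega := \ddc\phi$ (so that $-\tr_\phi\ddc\psi_1 = S(\phi)$). Applying the weighted trace formula $\tr_{\phi,v}\ddcT\tau = \tr_\phi\ddc\tau + \langle(\log v)'(m_\phi), m_\tau\rangle$ to each summand of $\psi_v$ splits $S_v(\phi)$ into four contributions: the unweighted scalar curvature $S(\phi)$, a Laplacian-type piece $-\tfrac12\tr_\phi\ddc\log v(m_\phi)$, and two pairings $-\langle(\log v)'(m_\phi), m_{\psi_1}\rangle$ and $-\tfrac12\langle(\log v)'(m_\phi), m_{\log v(m_\phi)}\rangle$.

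Next I would introduce the moment-map coordinates $\mu_\alpha := \langle m_\phi, \xi_\alpha\rangle \in C^\infty(X)^T$ and expand everything in them. The chain rule gives
\[
\ddc\log v(m_\phi) = \sum_\alpha(\log v)'_\alpha(m_\phi)\,\ddc\mu_\alpha + \sum_{\alpha\beta}(\log v)''_{\alpha\beta}(m_\phi)\,d\mu_\alpha\wedge d^c\mu_\beta,
\]
while the convention $\langle m_\tau,\xi\rangle = \cL_{J\xi}\tau$ yields $\langle m_{\log v(m_\phi)},\xi_\beta\rangle = \sum_\alpha(\log v)'_\alpha(m_\phi)\,\cL_{J\xi_\beta}\mu_\alpha$. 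Three standard pointwise identities on $(X,\omega)$ now do the work: (i) $\cL_{J\xi_\beta}\mu_\alpha = \langle\xi_\alpha,\xi_\beta\rangle_\phi$, which follows from the moment map equation $d\mu_\alpha = \iota_{\xi_\alpha}\ddc\phi$ together with the Kähler compatibility $\omega(Y, JZ) = g(Y, Z)$; (ii) the trace identity $\tr_\phi(d\mu_\alpha\wedge d^c\mu_\beta) = \langle\xi_\alpha,\xi_\beta\rangle_\phi$, a consequence of (i) via $d^c\mu_\beta = -g(\xi_\beta,\cdot)$; and (iii) $\tr_\phi\ddc\mu_\alpha = 2\langle m_{\psi_1},\xi_\alpha\rangle$, coming from $\cL_{J\xi_\alpha}\omega^n = n\,\ddc\mu_\alpha\wedge\omega^{n-1}$ applied to $\psi_1 = \tfrac12\log\omega^n$.

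Substituting back, identity (ii) produces the $(\log v)''$-term of the statement with the correct $-\tfrac12$ factor, while identity (i) yields $\langle(\log v)'(m_\phi), m_{\tfrac12\log v(m_\phi)}\rangle = \tfrac12|(\log v)'(m_\phi)|^2_\phi$. The remaining first-derivative pieces, coming from $-\tfrac12\sum_\alpha(\log v)'_\alpha(m_\phi)\tr_\phi\ddc\mu_\alpha$ and $-\langle(\log v)'(m_\phi), m_{\psi_1}\rangle$, are linked by identity (iii); after bookkeeping their coefficients, they cancel, leaving precisely $-\tfrac12|(\log v)'(m_\phi)|^2_\phi - \tfrac12\sum_{\alpha\beta}(\log v)''_{\alpha\beta}(m_\phi)\langle\xi_\alpha,\xi_\beta\rangle_\phi$ as claimed.

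No individual step is deep; the main effort is careful tracking of sign and normalization conventions (especially for $d^c$, the moment map sign, and the factor relating $\psi_1$ to $\log\det g$). As the excerpt notes, the resulting identity is essentially equivalent to \cite[Lemma~3.24]{BJT}, so an alternative and equally valid strategy is to set up the dictionary $\MA_v^\tau(\phi)/\MA_v(\phi) = \tr_{\phi,v}\ddcT\tau$ between the weight-derivative and equivariant-curvature languages, and then invoke that lemma directly, bypassing the explicit computation sketched above.
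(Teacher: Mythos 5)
Your proposal follows the paper's proof in essentially the same way: the same decomposition $\psi_v=\psi_1+\tfrac12\log v(m_\phi)$, the same expansion via the weighted trace formula into the same four contributions, and the same mechanism (chain rule for $\tr_\phi\ddc$, cancellation of the first-derivative cross terms against the moment-map pairing for $\psi_1$, identification of the remaining two terms). Your identities (i) and (ii) correspond to the paper's final display and to the $h''$-term of its intermediate formula~\eqref{equ:Dg}, while your identity (iii) is exactly the content of the ingredient the paper cites as $m_\psi=-\tfrac12\Delta_\phi m_\phi$ from~\cite[(3.7)]{BJT}; you re-derive it from the variation of $\omega^n$ instead of citing it, which is a legitimate and arguably more self-contained route. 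The alternative you mention at the end---bypassing the computation by invoking~\cite[Lemma~3.24]{BJT}---is the same shortcut the paper itself alludes to in the sentence preceding the lemma.

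One caution worth flagging: for the first-derivative pieces to cancel as you assert, identity (iii) must read $\tr_\phi\ddc\mu_\alpha=-2\langle m_{\psi_1},\xi_\alpha\rangle$ (equivalently, $m_{\psi_1}=-\tfrac12\Delta_\phi m_\phi$ with $\Delta_\phi=\tr_\phi\ddc$), whereas you state it with a $+$ sign; with your stated sign the two first-derivative terms would \emph{double} rather than cancel. This is precisely the kind of sign/normalization issue you flag yourself---it hinges on the conventions in~\eqref{equ:moment} and~\eqref{equ:mom} and on the precise sense in which $m_\phi^\xi$ differs from $\langle m_\phi,\xi\rangle$ by a sign---so a careful write-up should pin it down, but the structure of the argument is correct and matches the paper's.
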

Here $\langle\xi_\a,\xi_\b\rangle_\phi\in C^\infty(X)$ denotes the pointwise scalar product with respect to the K\"ahler metric $\ddc\phi$ of $\xi_a,\xi_\b\in N_\R$, viewed as vector fields on $X$, and $|(\log v)'(m_\phi)|_\phi$ similarly denotes the pointwise length of $(\log v)'(m_\phi)\colon X\to M_\R^\vee=N_\R$. 

\begin{proof} Set $h:=\log v$ and $\p:=\tfrac12\log\MA(\phi)\in C^\infty(K_X)^T$. Then $\p_v=\p+\tfrac 12 h(m_\phi)$, $S(\phi)=-\tr_\phi\ddcT\p$, and hence 
\begin{multline*}
-S_v(\phi)=\tr_{\phi,v}\left(\ddcT\p_v\right)=\tr_{\phi,v}(\ddcT\p)+\tfrac12\tr_{\phi,v}(\ddcT h(m_\phi))\\
=-S(\phi)+\langle h'(m_\phi),m_\p\rangle+\tfrac12\D_\phi h(m_\phi)+\tfrac12\langle h'(m_\phi),m_{h(m_\phi)}\rangle. 
\end{multline*}
Using  
$$
\D_\phi h(m_\phi)=\sum_\a h'_\a(m_\phi)\D_\phi m_\phi^{\xi_\a}+\sum_{\a\b} h''_{\a\b}(m_\phi)\tr_\phi(dm_\phi^{\xi_\a}\wedge d^c m_\phi^{\xi_\b}), 
$$
$m_\p=-\tfrac 12\D_\phi m_\phi$ (see~\cite[(3.7)]{BJT}), and~\eqref{equ:mom}, we get
\begin{equation}\label{equ:Dg}
\D_\phi h(m_\phi)=-2\langle h'(m_\phi),m_\p\rangle+\sum_{\a\b} h''_{\a\b}(m_\phi)\langle\xi_\a,\xi_\b\rangle_\phi,  
\end{equation}
and hence $S_v(\phi)=S(\phi)-\tfrac12\langle h'(m_\phi),m_{h(m_\phi)}\rangle-\tfrac 12\sum_{\a\b} h''_{\a\b}(m_\phi)\langle\xi_\a,\xi_\b\rangle_\phi$. Finally~\eqref{equ:moment} and~\eqref{equ:mom} yield 
$$
\langle h'(m_\phi),m_{h(m_\phi)}\rangle=-\sum_\a h'_\a(m_\phi) d(h(m_\phi))(J\xi_\a)=-\sum_{\a\b} h'_\a(m_\phi) h'_\b(m_\phi) dm^{\xi_\b}_\phi(J\xi_\a)
$$
$$
=\sum_{\a\b}h'_\a(m_\phi) h'_\b(m_\phi)\langle\xi_\a,\xi_\b\rangle_\phi=|h'(m_\phi)|_\phi^2,
$$
and the result follows. 
\end{proof}
We shall say for convenience that $\phi\in\cH^T$ (instead of $\ddcT\phi$) is a \emph{$(v,w)$-weighted csc metric} if 
$$
S_v(\phi)=w(m_\phi).
$$
To find weighted csc metrics, we use the \emph{weighted Mabuchi functional} 
$$
\mab_{v,w}\colon\cH^T\to\R, 
$$
first introduced by Lahdili~\cite{Lah19,Lah23}. By construction, it is an Euler--Lagrange functional for the measure-valued operator
$$
\phi\mapsto \left(w(m_\phi)-S_v(\phi)\right)\MA_v(\phi), 
$$
and its critical points are thus precisely the $(v,w)$-weighted csc metrics. It is explicitly given by the \emph{weighted Chen--Tian formula}
\begin{equation}\label{equ:wtrelmab}
\mab_{v,w}(\phi)=\ent_v(\phi)+\rr_v(\phi)+\en_{vw}(\phi),
\end{equation}
where 
\begin{itemize}
\item the weighed entropy $\ent_v(\phi)=\Ent(\MA_v(\phi)|\mu_\refe)$ is the relative entropy of $\MA_v(\phi)$ with respect to a $T$-invariant reference volume form $\mu_\refe$; 

\item the \emph{weighted Ricci energy} $\rr_v(\phi)=\en_v^{\p_\refe}(\phi)$ is a twisted weighted energy with $\p_\refe:=\tfrac12\log\mu_\refe\in C^\infty(K_X)^T$.

\end{itemize}

The Chen--Tian formula provides a natural lsc extension 
$$
\mab_{v,w}\colon\cE^{1,T}\to\R\cup\{+\infty\}
$$
of the weighted Mabuchi functional, which is in fact characterized as the maximal lsc extension (see~\cite[Lemma~6.16]{AJL}). 

By~\cite[Theorem~6.1]{AJL}, $\mab_{v,w}$ is geodesically convex on $\cE^{1,T}$. In view of Lemma~\ref{lem:holdpert} it is also strongly lsc. Indeed, the entropy functional $\ent_v$ is strongly lsc by Proposition~\ref{prop:hent}, and the remaining two terms in~\eqref{equ:wtrelmab} are \sgh, see Proposition~\ref{prop:MAext}. 

\medskip

The following variational characterization of weighted csc metrics plays a key role in the proof of Theorem~B.

\begin{lem}\label{lem:mabmin} Assume $v$ is log-concave. Then the weighted csc metrics in $\cH^S$ are precisely the minimizers of $\mab_{v,w}$ over $\cE^{1,S}$. Furthermore, they are unique modulo $\Aut^S(X,L)$. 
\end{lem}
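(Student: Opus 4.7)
The plan is to establish three claims in turn: (a) any $\phi\in\cH^S$ with $S_v(\phi)=w(m_\phi)$ is a minimizer of $\mab_{v,w}$ on $\cE^{1,S}$; (b) conversely, any minimizer of $\mab_{v,w}$ on $\cE^{1,S}$ lies in $\cH^S$ and satisfies $S_v(\phi)=w(m_\phi)$; and (c) two such minimizers differ by the action of an element of $\Aut^S(X,L)$.

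For (a), the easy direction, I would proceed as follows. Given $\phi\in\cH^S$ with $S_v(\phi)=w(m_\phi)$ and $\p\in\cE^{1,S}$ arbitrary, let $\{\phi_t\}_{t\in[0,1]}$ be the psh geodesic in $\cE^{1,T}$ with $\phi_0=\phi$ and $\phi_1=\p$. Uniqueness of psh geodesics and $S$-invariance of the endpoints force $\phi_t\in\cE^{1,S}$ for all $t$. Log-concavity of $v$ is precisely the hypothesis under which \cite[Theorem~6.1]{AJL} yields geodesic convexity of $\mab_{v,w}$, so $t\mapsto\mab_{v,w}(\phi_t)$ is convex on $[0,1]$. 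Since $\phi$ is a smooth critical point of $\mab_{v,w}$ in the sense that its Euler--Lagrange equation is satisfied, the one-sided derivative at $t=0^+$ is nonnegative---this is a standard computation of the slope of $\mab_{v,w}$ along a psh geodesic emanating from a smooth metric---and convexity then forces $\mab_{v,w}(\p)\ge\mab_{v,w}(\phi)$.

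Part (b) is the main obstacle and rests on a regularity theorem: every minimizer $\phi\in\cE^{1,S}$ of $\mab_{v,w}$ is in fact smooth, i.e., in $\cH^S$. This is the weighted analog of the Chen--Cheng smoothness result \cite{CC2} for cscK minimizers, and I would carry it out in a separate appendix, adapting the continuity-method approach of \cite{CC2} to the weighted setting by means of the weighted a priori estimates now available from recent work of Di Nezza--Jubert--Lahdili \cite{DJL1,DJL2} and Han--Liu \cite{HLiu}. Once smoothness of $\phi$ is established, minimality yields the Euler--Lagrange equation $(w(m_\phi)-S_v(\phi))\MA_v(\phi)=0$, and since $\MA_v(\phi)$ is a strictly positive smooth measure (recall $v>0$ on $P$ and $\phi\in\cH$), we conclude $S_v(\phi)=w(m_\phi)$.

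For (c), I would invoke the weighted version of the Berman--Berndtsson rigidity theorem \cite{BerBer} established in \cite{AJL}. If $\phi_0,\phi_1\in\cH^S$ are two minimizers, the psh geodesic $\{\phi_t\}$ joining them is $S$-invariant, and by convexity together with equality of the endpoint values, $\mab_{v,w}$ is constant (hence affine) along it. The strict convexity analysis of \cite{BerBer}, in its weighted incarnation in \cite{AJL}, then forces this geodesic to be induced by a real one-parameter subgroup of the complexification of a torus in $\Aut^S(X,L)$. This yields an element $g\in\Aut^S(X,L)$ with $g\cdot\phi_0=\phi_1$, proving the uniqueness modulo $\Aut^S(X,L)$.
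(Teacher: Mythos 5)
Your three-part structure (cscK $\Rightarrow$ minimizer; minimizer $\Rightarrow$ smooth cscK; uniqueness modulo $\Aut^S(X,L)$) matches the paper exactly, and parts (b) and (c) are carried out the same way: the regularity of minimizers is offloaded to an appendix adapting the Chen--Cheng continuity method with the a priori estimates of Di Nezza--Jubert--Lahdili and Han--Liu, and uniqueness is obtained from the strict-convexity rigidity of Berman--Berndtsson in its weighted form.

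In part (a) you take a slightly different route. You join the smooth weighted cscK metric $\phi$ to an arbitrary $\psi\in\cE^{1,S}$ by a psh geodesic and argue that the one-sided slope of $\mab_{v,w}$ at $t=0^+$ is $\ge 0$ "by a standard computation" using the Euler--Lagrange equation, then conclude by convexity. The paper instead cites \cite[Theorem~1]{Lah19} (Lahdili, adapting Berman--Berndtsson) for the statement that $\phi$ minimizes $\mab_{v,w}$ over $\cH^T$, and then extends the lower bound from $\cH^T$ to all of $\cE^{1,T}$ using the fact that $\mab_{v,w}\colon\cE^{1,T}\to\R\cup\{+\infty\}$ is the \emph{maximal} lsc extension of its restriction to $\cH^T$ (\cite[Lemma~6.16]{AJL}): a maximal lsc extension of a function bounded below by a constant $c$ on a dense set is again bounded below by $c$. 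Your slope argument is morally the same, but be aware that the "standard computation" of the $t=0^+$ slope along a geodesic with a finite-energy (non-smooth) endpoint is precisely the nontrivial analytic content packaged in Lahdili's/Berman--Berndtsson's theorem: the entropy term is only lsc and the geodesic is at best $C^{1,\bar 1}$, so differentiating $\ent_v(\phi_t)$ at $t=0$ against the Euler--Lagrange density is not immediate. Citing Lahdili directly, as the paper does, and then invoking the maximal-lsc-extension property is a cleaner way to bypass the slope computation at a rough endpoint; your version should either restrict the geodesic argument to smooth endpoints and then approximate, or quote the slope identity from the weighted Berman--Berndtsson literature rather than calling it standard.
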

\begin{proof} Adapting~\cite{BerBer}, 
    Lahdili proved that any weighted csc metric $\phi\in\cH^S$ minimizes $\mab_{v,w}$ over $\cH^T$~\cite[Theorem~1]{Lah19}. By approximation (see~\cite[Theorem~6.1]{AJL}), it follows that $\phi$ is also a minimizer over $\cE^{1,T}$, and hence over $\cE^{1,S}$. Conversely, it was proven by Chen--Cheng~\cite{CC2} in the unweighted case that any minimizer lies in $\cH$. In Corollary~\ref{cor:regmin} we adapt their strategy to the weighted case, building upon~\cite{DJL2}, showing that any minimizer over $\cE^{1,S}$ lies in $\cH^S$, and hence is a weighted csc metric. The uniqueness part goes back to Bando and Mabuchi~\cite{BM} in the Fano case. In our case, the proof is a simple adaptation of the argument in~\cite{BerBer,CPZ15,Lah23}. 
\end{proof}

Finally, we establish the following general unipotent invariance property. 

\begin{prop}\label{prop:mabinv} Assume $v$ is log-concave on $P$. Then $\mab_{v,w}\colon\cH^T\to\R$ is automatically invariant under any unipotent subgroup of $\Aut^T(X,L)$. 
\end{prop}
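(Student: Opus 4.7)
By the Euler--Lagrange framework of~\S\ref{sec:EL}, since $\mab_{v,w}$ is an EL functional for the $\Aut^T(X,L)$-equivariant measure-valued operator $\phi\mapsto(w(m_\phi)-S_v(\phi))\MA_v(\phi)$, it is quasi-invariant by~\eqref{equ:quasi}, producing a continuous group character $\chi_{v,w}\colon\Aut^T(X,L)\to\R$ with $\mab_{v,w}(g\cdot\phi)=\mab_{v,w}(\phi)+\chi_{v,w}(g)$. Since any unipotent subgroup $U\subset\Aut^T(X,L)$ is connected and equals $\exp(\Lie U)$, it suffices to show that the associated Futaki invariant $\Fut_{v,w}=d\chi_{v,w}\colon\Lie\Aut^T(X,L)\to\R$ vanishes on $\Lie U$.

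Fix $\xi\in\Lie U$, a reference $\phi_\refe\in\cH^T$, and consider the orbit ray $\phi_t:=e^{t\xi}\cdot\phi_\refe$ for $t\in\R_{\ge0}$. Quasi-invariance yields the exact linear identity
$$
\mab_{v,w}(\phi_t)-\mab_{v,w}(\phi_\refe)=t\,\Fut_{v,w}(\xi).
$$
The plan is to bound the left-hand side by $O(\log t)$, which will force $\Fut_{v,w}(\xi)=0$. The key geometric input is that orbits of unipotent subgroups have logarithmic distance growth in $\cE^{1,T}$: since $\xi$ is nilpotent, $e^{t\xi}$ acts on sections of $L^{\otimes m}$ (for $m$ sufficiently divisible that $mL$ is very ample) by matrices whose operator norm grows polynomially, $\|e^{t\xi}\|\lesssim t^k$ for some $k$. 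Projectivising, this translates into $\sup_X|\phi_t-\phi_\refe|=O(\log t)$, which implies $\dd_1(\phi_t,\phi_\refe)=O(\log t)$, by~\eqref{equ:d1I1} and the fact that $\MA(\phi_t)$ and $\MA(\phi_\refe)$ are probability measures.

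Using the Chen--Tian decomposition $\mab_{v,w}=\ent_v+\rr_v+\en_{vw}$, the semi-global H\"older continuity of the twisted and weighted Monge--Amp\`ere energies (Proposition~\ref{prop:MAext}) gives
$$
|\rr_v(\phi_t)-\rr_v(\phi_\refe)|+|\en_{vw}(\phi_t)-\en_{vw}(\phi_\refe)|\lesssim(\log t)^{\a}(1+\log t)^{1-\a}=O(\log t).
$$
For the entropy, naturality of the weighted Monge--Amp\`ere operator gives $\MA_v(\phi_t)=(e^{t\xi})^\star\MA_v(\phi_\refe)$, so the change-of-variables formula yields
$$
\ent_v(\phi_t)-\ent_v(\phi_\refe)=-\int_X\log J_t\,d\MA_v(\phi_\refe),
$$
where $J_t$ is the Jacobian of $e^{t\xi}$ against $\mu_\refe$. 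Since $e^{t\xi}$ acts polynomially in $t$, one has $|\log J_t|=O(\log t)$ pointwise (away from a proper Zariski closed subset where the bound is uniformly integrable against finite-energy measures, thanks to $v>0$ on $P$), giving $\ent_v(\phi_t)-\ent_v(\phi_\refe)=O(\log t)$. Combining, $\mab_{v,w}(\phi_t)-\mab_{v,w}(\phi_\refe)=O(\log t)$, and comparison with the linear identity above forces $\Fut_{v,w}(\xi)=0$.

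The main obstacle is the rigorous verification of the logarithmic growth for the entropy term: while the pointwise Jacobian of a unipotent flow is manifestly polynomial in $t$, one must control the behaviour of $\log J_t$ near the fixed locus of the flow and ensure that the integral against $\MA_v(\phi_\refe)$ remains $O(\log t)$. Here the log-concavity of $v$ enters through the fact that $\mab_{v,w}$ is geodesically convex on $\cE^{1,T}$ (via~\cite[Theorem~6.1]{AJL}) and strongly lsc (by Lemma~\ref{lem:holdpert} combined with Proposition~\ref{prop:hent} and Proposition~\ref{prop:MAext}), which provides an alternative route: if the logarithmic bound on the entropy seems delicate, one can instead identify $\{\phi_t\}$ as an almost geodesic ray with direction $0\in\cE^{1,T}_\ra$ (the vertex of the asymptotic cone), then invoke the weighted analogue of Theorem~\ref{thm:main} together with the trivial vanishing $\mab^{\mathrm{rel}}_{v,w,\NA}(0)=0$ to conclude.
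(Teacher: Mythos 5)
Your proposal takes a genuinely different route from the paper's proof, and the core idea works, but you have mislocated the role of the log-concavity hypothesis.

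The paper's argument does not use the unipotent orbit ray $\{e^{t\xi}\cdot\phi_\refe\}$ at all. Instead (Lemma~\ref{lem:unipot}), it picks a basis of $T$-eigensections $(s_j)$ in which $U$ is represented by strictly upper-triangular matrices, and considers the Fubini--Study ray $\phi_t=\tfrac12\log\sum_j|s_j|^2e^{-2jt}$. Along this ray the variation $\cL_\xi\phi_t$ decays \emph{exponentially}, $|\cL_\xi\phi_t|\lesssim e^{-t}$, so $\Fut(\xi)=\int(\cL_\xi\phi_t)\,\mab_{v,w}'(\phi_t)\to0$, provided only that $\int_X|\mab_{v,w}'(\phi)|$ stays bounded over Fubini--Study metrics $\phi\in\cH^T_m$. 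That uniform bound is the content of Lemma~\ref{lem:curvbound}, and \emph{that} is where log-concavity enters: the sign $f\le0$ of the Hessian term forces the absence of cancellations in $\int_X S_v(\phi)\MA_v(\phi)$, so that $\int_X|S_v(\phi)|\MA_v(\phi)$ is controlled by the Ricci upper bound alone. No geodesic convexity or asymptotic-cone machinery is needed.

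Your route, by contrast, evaluates $\mab_{v,w}$ directly along the orbit ray and derives a contradiction with the exact linear identity $\mab_{v,w}(\phi_t)=\mab_{v,w}(\phi_\refe)+t\Fut(\xi)$ from an $O(\log t)$ upper bound. Both sub-routes you sketch can be completed. For the main route, rather than wrestling with pointwise Jacobian bounds near the fixed locus, the clean way to finish the entropy estimate is to observe that
$$
\ent_v(\phi_t)-\ent_v(\phi_\refe)=-2\int_X\bigl(e^{t\xi}\cdot\p_{K_X}-\p_{K_X}\bigr)\circ e^{t\xi}\,\MA_v(\phi_\refe),
$$
write $K_X=L_1-L_2$ with $L_i$ ample, and apply the unipotent sup-norm estimate to each $\p_i\in\cH^T(L_i)$: this gives $\sup_X|e^{t\xi}\cdot\p_{K_X}-\p_{K_X}|=O(\log t)$ directly, avoiding any integrability issue. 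The alternative route (orbit rays are almost geodesic with direction $0$, Lemma~\ref{lem:slopemono} gives $0=\mab_{v,w,\ra}(0)\le\Fut(\xi)$, replace $\xi$ by $-\xi$) is cleaner still, and you do not even need the weighted Theorem~\ref{thm:main}: the radial transform at the vertex is trivially zero because the constant ray is a geodesic.

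However, the attribution at the end is wrong. Geodesic convexity of $\mab_{v,w}$, which your alternative route relies on, holds for any positive weight $v$ on $P$ (this is how~\cite[Theorem~6.1]{AJL} is cited in Lemma~\ref{lem:hent}, with no log-concavity assumption), and strong lower semicontinuity likewise does not use it. So your argument, as written, appears to prove the conclusion without the log-concavity hypothesis. That is not itself a contradiction with the paper (the hypothesis could be superfluous for this proposition), but you should flag this explicitly rather than claim that log-concavity "enters through geodesic convexity," which it does not. The trade-off between the two approaches: the paper's proof is more elementary in that it requires only the a priori curvature bound of Lemma~\ref{lem:curvbound}, at the price of the log-concavity hypothesis; yours leans on the much deeper input of geodesic convexity of the weighted Mabuchi functional, but then dispenses with the pointwise curvature analysis entirely.
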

Our proof is inspired by~\cite[\S6]{Mab90}, which contains a similar result for the Monge--Amp\`ere energy $\en$. 

\begin{lem}\label{lem:unipot} Assume $F\colon\cH^T\to\R$ is an Euler--Lagrange functional of a measure-valued operator $\Ga\colon\cH^T\to C^0(X)^\vee$, such that:
\begin{itemize}
    \item[(i)] $\Ga$ is equivariant with respect to $G:=\Aut^S(X,L)$; 
    \item[(ii)] for $m\gg 1$, $M_m:=\sup_{\phi\in\cH_m^T}\int_X|\Ga(\phi)|$ is finite, where $\cH_m^T\subset\cH^T$ denotes the set of Fubini--Study metrics associated to $T$-invariant Hermitian norms on $\Hnot(X,mL)$.
\end{itemize}
Then $F$ is automatically invariant under the action of any unipotent subgroup $U\subset G$. 
\end{lem}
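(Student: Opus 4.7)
The plan is to show that the continuous group character arising from the quasi-invariance of $F$ vanishes on $U$, by bounding it sublinearly along each one-parameter subgroup. From the Euler--Lagrange property together with the $G$-equivariance of $\Ga$, the argument recalled in \eqref{equ:quasi}--\eqref{equ:char} produces a continuous group homomorphism $\chi_\Ga\colon G\to\R$, $\chi_\Ga(g):=F(g\cdot\phi)-F(\phi)$, whose vanishing on $U$ is equivalent to $U$-invariance of $F$. Since $\exp\colon\Lie U\to U$ is a diffeomorphism, it suffices to show that for every $\xi\in\Lie U$ the continuous $\R$-homomorphism $t\mapsto\chi_\Ga(\exp(t\xi))=ct$ has slope $c=c(\xi)$ equal to zero.

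Fix $m\gg 1$ and a $T$-invariant Hermitian inner product $H_0$ on $\Hnot(X,mL)$, with associated Fubini--Study potential $\phi_0=\phi_{FS,m}(H_0)\in\cH_m^T$. Let $\rho_m\colon G\to\GL(\Hnot(X,mL))$ be the induced representation: since $\xi$ is nilpotent, $d\rho_m(\xi)$ is nilpotent on $\Hnot(X,mL)$ and $\rho_m(g_t)=\exp(t\,d\rho_m(\xi))$ is unipotent with determinant $1$. As $G$ commutes with $T$, $(g_t)_*H_0:=H_0(\rho_m(g_t)^{-1}\cdot,\rho_m(g_t)^{-1}\cdot)$ is again $T$-invariant, so $g_t\phi_0=\phi_{FS,m}((g_t)_*H_0)$ stays in $\cH_m^T$. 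Connect $H_0$ to $H_t:=(g_t)_*H_0$ by the geodesic $\{H_s\}_{s\in[0,1]}$ of the space $\cB_m^T$ of $T$-invariant positive-definite Hermitian forms, endowed with its canonical affine-invariant Riemannian metric; the geodesic stays in $\cB_m^T$ by $T$-equivariance, and the associated path $\phi_s:=\phi_{FS,m}(H_s)$ lies entirely in $\cH_m^T$. A direct computation using the Bergman-kernel formula $\phi_{FS,m}(H)=\tfrac{1}{m}\log\langle H^{-1}\tilde s,\tilde s\rangle$ for the evaluation vector $\tilde s=(s_j)_j$ yields the pointwise bound
\[
\|\dot\phi_s\|_\infty\le\tfrac{1}{m}\bigl\|H_s^{-1/2}\dot H_s H_s^{-1/2}\bigr\|_{\mathrm{op}},
\]
while along the geodesic the Frobenius norm $\|H_s^{-1/2}\dot H_s H_s^{-1/2}\|_F$ is constant and equal to the length $L_m(t):=d_{\cB_m^T}(H_0,H_t)$. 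Applying the Euler--Lagrange identity \eqref{equ:EL} along this path and invoking (ii) gives
\[
|\chi_\Ga(g_t)|=\left|\int_0^1\int_X\dot\phi_s\,\Ga(\phi_s)\,ds\right|\le M_m\int_0^1\|\dot\phi_s\|_\infty\,ds\le\frac{M_m\,L_m(t)}{m}.
\]

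The main step is the crucial \emph{logarithmic} estimate $L_m(t)=O_{m,\xi}(\log|t|)$ as $|t|\to\infty$, which is where unipotency is used. Explicitly, $L_m(t)=\|\log(A_t^*A_t)\|_F$ with $A_t:=H_0^{1/2}\rho_m(g_t)^{-1}H_0^{-1/2}=\exp(-tB)$ and $B:=H_0^{1/2}d\rho_m(\xi)H_0^{-1/2}$ nilpotent. Hence $A_t$ is polynomial in $t$ of degree at most the nilpotency index $k\le h^0(mL)$, and $|\det A_t|=1$: the eigenvalues of the positive Hermitian matrix $A_t^*A_t$ therefore multiply to $1$ and grow at most polynomially in $|t|$, so each of their logarithms is bounded in absolute value by $O_{m,\xi}(\log|t|)$, giving the desired Frobenius bound.

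Combining the previous two displays yields $|ct|=|\chi_\Ga(g_t)|=O_{m,\xi}(\log|t|)$, which forces $c=0$. Thus $\chi_\Ga$ vanishes on every one-parameter subgroup of $U$, hence on $U$, and $F$ is $U$-invariant. The main obstacle is the logarithmic length bound: it is essential that unipotency of $\rho_m(g_t)$, i.e.\ $|\det\rho_m(g_t)|=1$, forces the $U$-orbit in the symmetric space $\cB_m^T$ to be horospherical and displaces the base point only sublinearly, so that the quantitative control provided by hypothesis (ii) defeats the linear growth of $|ct|$.
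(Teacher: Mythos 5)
Your proof is correct, but it takes a genuinely different route from the paper's. The paper works at the Lie-algebra level with the Futaki character, using the identity $\Fut_F(\xi)=\int(\cL_\xi\phi)\,\Ga(\phi)$ and the freedom to choose $\phi$: it picks a $T$-eigenbasis $(s_j)$ triangularizing $\rho_m(\xi)$ and the diagonal scaling ray $\phi_t=\tfrac1m\log\sum_j|s_j|^2e^{-2jt}$, along which Cauchy--Schwarz gives the \emph{exponential} pointwise decay $\|\cL_\xi\phi_t\|_\infty\le Ce^{-t}$; combined with (ii), $|\Fut_F(\xi)|\le M_mCe^{-t}\to 0$. You instead work at the group level with $\chi_\Ga$, connect $\phi_0$ to $g_t\phi_0$ by the Bergman geodesic, and obtain $|\chi_\Ga(g_t)|\le (M_m/m)\,L_m(t)$; unipotency enters via $\det A_t=1$ plus polynomial growth of $A_t$, which caps $L_m(t)$ at $O(\log|t|)$. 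Both approaches exploit the same geometric fact (unipotent orbits in the space of Bergman metrics are horospherical), but the paper's ray is a scaling ray chosen independently of the orbit, producing exponential rather than logarithmic decay and making the argument shorter; your version is more explicit about \emph{why} unipotency matters and is arguably easier to transport to other settings (e.g.\ it never needs the $T$-eigenbasis to triangularize $\xi$, only the standard $\CAT(0)$ geometry of $\cB_m^T$). One small imprecision: invoking~\eqref{equ:EL} "along this path" is an abuse, since that display is stated for affine segments; what you really use is the chain-rule identity $\tfrac{d}{ds}F(\phi_s)=\int\dot\phi_s\,\Ga(\phi_s)$ for a smooth path, which is a mild but implicit regularity assumption on $F$ (the same one the paper uses to make the Futaki invariant well-defined).
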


\begin{proof} As $U$ is unipotent, its image in $\GL(\Hnot(X,mL))$ is contained in the standard unipotent group with respect to some basis $(s_j)$. As $U$ commutes with $T$, we may further assume that each $s_j$ is a $T$-eigensection. Indeed, the $T$-weight decomposition of $\Hnot(X,mL)$ is preserved by $U$, which is thus represented by a standard unipotent group in some basis of eigensections. Pick $\xi
\in\Lie U$, and denote by $(\xi_{ij})$ its matrix in the basis $(s_j)$, so that $\xi\cdot s_j=\sum_{i<j} \xi_{ij} s_i$. Consider the ray of Fubini--Study metrics
$$
\phi_t:=\frac 12\log\sum_j|s_j|^2 e^{-2jt},\quad t\ge 0,
$$
which lies in $\cH^T$ since each $s_j$ is a $T$-eigensection. Then 
$\cL_\xi\phi_t=\frac{\sum_j\langle \xi\cdot s_j,s_j\rangle e^{-2jt}}{\sum_j|s_j|^2 e^{-2jt}}$, and Cauchy--Schwarz yields  
$$
|\cL_\xi\phi_t|^2\le\frac{\sum_j|\xi\cdot s_j|^2 e^{-2jt}}{\sum_j |s_j|^2 e^{-2jt}}\le\frac{\sum_j\sum_{i<j}|\xi_{ij}|^2|s_i|^2 e^{-2it}}{\sum_j|s_j|^2 e^{-2jt}}\le C e^{-2t}. 
$$
Using~\eqref{equ:FutLie} and~(ii), the corresponding Futaki invariant (see~\ref{sec:EL}) satisfies  
$$
|\Fut_F(\xi)|=\left|\int(\cL_\xi\phi_t)F'(\phi_t)\right|\le M_m C e^{-t}. 
$$
Letting $t\to+\infty$ it follows that $\Fut_F\equiv 0$ on $\Lie U$, and the result follows. 
\end{proof}

\begin{lem}\label{lem:curvbound} Assume $v$ is log-concave on $P$. Pick $\phi\in\cH^T$, and assume a Ricci upper bound 
$\Ric(\ddc\phi)\le A\ddc\phi$ with $A>0$. Then $\int_X|S_v(\phi)|\MA_v(\phi)\le C$ with $C>0$ only depending on $L$, $v$ and $A$.  
\end{lem}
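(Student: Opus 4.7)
The plan is to expand $S_v(\phi)$ via Lemma~\ref{lem:Sv}, exploit log-concavity to pin down signs, and then reduce the full $L^1$ estimate to a Bochner-type bound on the squared norm of Hamiltonian Killing fields. Writing $h := \log v \in C^\infty(P)$, Lemma~\ref{lem:Sv} gives
\[
S_v(\phi) = S(\phi) - \tfrac{1}{2}|h'(m_\phi)|^2_\phi - \tfrac{1}{2}\sum_{\alpha\beta} h''_{\alpha\beta}(m_\phi)\langle\xi_\alpha,\xi_\beta\rangle_\phi,
\]
and the Ricci upper bound yields $S(\phi) = \tr_\phi\Ric(\ddc\phi) \le An$ pointwise. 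Since $v$ is log-concave, the matrix $(h''_{\alpha\beta}(m_\phi))$ is negative semidefinite, while the Gram matrix $(\langle\xi_\alpha,\xi_\beta\rangle_\phi)$ is positive semidefinite; hence their Frobenius pairing satisfies $\sum h''_{\alpha\beta}(m_\phi)\langle\xi_\alpha,\xi_\beta\rangle_\phi \le 0$ pointwise on $X$. Dropping the nonpositive term $-\tfrac{1}{2}|h'(m_\phi)|^2_\phi$ then gives the one-sided pointwise estimate $(S_v(\phi))_+ \le An - \tfrac{1}{2}\sum h''_{\alpha\beta}(m_\phi)\langle\xi_\alpha,\xi_\beta\rangle_\phi$.

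Next I would invoke the total-mass identity~\eqref{equ:scalmass}, $\int_X S_v(\phi)\MA_v(\phi) = -\deg_v'(L;K_X)$, whose right-hand side depends only on $L$ and $v$. Since $\int_X|S_v|\MA_v = 2\int_X(S_v)_+\MA_v - \int_X S_v\MA_v$, bounding the first integral via the pointwise estimate above reduces everything to controlling
\[
U := -\int_X \sum_{\alpha\beta} h''_{\alpha\beta}(m_\phi)\langle\xi_\alpha,\xi_\beta\rangle_\phi\, \MA_v(\phi) \ge 0.
\]
Since $h''$ is uniformly bounded on the compact polytope $P$, the negative semidefinite matrix $-h''(m_\phi)$ is dominated by a $v$-dependent constant times the identity, and so $-\sum h''_{\alpha\beta}(m_\phi)\langle\xi_\alpha,\xi_\beta\rangle_\phi \le C_v \sum_\alpha |\xi_\alpha|^2_\phi$ pointwise. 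Combined with $v$ bounded above on $P$, the task reduces to bounding $\int_X |\xi|^2_\phi (\ddc\phi)^n$ for each $\xi \in N_\R$ uniformly in $\phi$, by a constant depending only on $L$, $v$, $A$.

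The essential step is this last integral bound, for which the plan is a classical Bochner--Lichnerowicz argument. Each $\xi \in N_\R = \Lie T$ is a real holomorphic Killing vector field on $(X,\ddc\phi)$ (as $\phi \in \cH^T$), and its Hamiltonian $H := m^\xi_\phi\colon X \to \R$ takes values in a bounded subset of $\R$ determined by $P$, while $|\xi|^2_\phi = |\nabla H|^2_\phi$. The K\"ahler Bochner--Lichnerowicz identity for Hamiltonian Killing functions asserts
\[
\int_X(\Delta_\phi H)^2(\ddc\phi)^n = 2\int_X \Ric(\ddc\phi)(\nabla H,\nabla H)(\ddc\phi)^n,
\]
which follows by combining the Bochner formula $\tfrac{1}{2}\Delta_\phi|\xi|^2_\phi = |\nabla\xi|^2_\phi - \Ric(\xi,\xi)$ for Killing $\xi$ with the identity $|\nabla^2 H|^2_\phi = |\nabla\xi|^2_\phi$ (since $J$ is parallel) and the $J$-invariance of the Ricci form. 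The Ricci upper bound then gives $\int(\Delta_\phi H)^2(\ddc\phi)^n \le 2A\int|\nabla H|^2_\phi(\ddc\phi)^n$, and pairing this with the integration by parts $\int|\nabla H|^2_\phi(\ddc\phi)^n = -\int H\,\Delta_\phi H\,(\ddc\phi)^n$ via Cauchy--Schwarz produces $\int|\nabla H|^2_\phi(\ddc\phi)^n \le 2A\|H\|^2_\infty V_L$. Applying this to a basis of $N_\R$ and reassembling, $U$ and hence $\int_X|S_v(\phi)|\MA_v(\phi)$ are bounded by constants depending only on $L$, $v$, $A$. The main obstacle is the Bochner--Lichnerowicz step: it is the only place where the K\"ahler structure and the holomorphic Killing property of $\xi$ are used, and I would take care to either spell out or cite a clean reference, since without a Ricci lower bound one must route through $\int(\Delta_\phi H)^2$ rather than try to bound the Hessian or gradient directly.
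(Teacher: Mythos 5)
Your proof is correct but takes a genuinely different route at the key step. Both arguments expand $S_v(\phi)$ via Lemma~\ref{lem:Sv}, use $S(\phi)\le An$ from the Ricci bound, exploit log-concavity to make $f:=\tfrac12\sum_{\a\b} h''_{\a\b}(m_\phi)\langle\xi_\a,\xi_\b\rangle_\phi$ nonpositive, and ultimately reduce to bounding $-\int_X f\,\MA_v(\phi)$ from above. They diverge on how that bound is obtained. The paper rewrites $f=\langle h'(m_\phi),m_\p\rangle+\tfrac12\D_\phi h(m_\phi)$ via~\eqref{equ:Dg} with $\p=\tfrac12\log\MA(\phi)$ the Ricci potential; the Ricci upper bound makes $\p+A\phi$ strictly psh (after slightly enlarging $A$), so its moment map is confined to the fixed polytope $P_{K_X+AL}$, hence $m_\p$ is uniformly bounded, the pairing $\langle h'(m_\phi),m_\p\rangle$ is controlled, and $\int\D_\phi h(m_\phi)\MA(\phi)=0$ kills the other term. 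You instead dominate $-f$ pointwise by $C_v\sum_\a|\xi_\a|^2_\phi$ and bound each $\int_X|\xi|^2_\phi(\ddc\phi)^n$ via the Bochner--Lichnerowicz identity $\int(\D_\phi H)^2=2\int\Ric(\nabla H,\nabla H)$ for the Hamiltonian $H=m^\xi_\phi$, followed by the Ricci upper bound and a Cauchy--Schwarz absorption that needs only $\|H\|_\infty\le C(P)$. You also streamline the bookkeeping by writing $|S_v|=2(S_v)_+-S_v$ and using the one-sided pointwise bound $(S_v)_+\le An-f$ to drop the $|h'|^2$ term entirely, whereas the paper keeps that term and bounds it afterwards via the fixed total mass of $S_v(\phi)\MA_v(\phi)$. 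Both approaches use the Ricci upper bound in an essential way; the paper's stays entirely inside the moment-map formalism already set up in \S\ref{sec:weighted}, while yours imports a classical Riemannian identity which, as you rightly flag, should be given a clean reference or short in-place derivation. Both deliver the stated conclusion with constants depending only on $L$, $v$, $A$.
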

\begin{proof} The assumption implies $S(\phi)=\tr_\phi\Ric(\ddc\phi)\le An$, which shows that 
$$
\int_X|S(\phi)|\MA(\phi)\le\int_X(2An- S(\phi))\MA(\phi)=2An-\bar S
$$
is bounded in terms of $L$ and $A$. Thus
\begin{equation}\label{equ:intSbd}
\int_X|S(\phi)|\MA_v(\phi)\le V\sup_P|v|\int_X|S(\phi)\MA(\phi)\le C(L,v,A).
\end{equation}
Using the notation of Lemma~\ref{lem:Sv}, set 
$$
f:=\tfrac 12\sum_{\a\b}h''_{\a\b}(m_\phi)\langle\xi_\a,\xi_\b\rangle_\phi\in C^\infty(X)^T,
$$
so that 
$$
S_v(\phi)=S(\phi)-\tfrac 12|h'(m_\phi)|^2_\phi-f.
$$
As $h$ is concave on $P$, we have $f\le 0$, and it will suffice to prove $\int_X f\MA_v(\phi)\ge -C(L,v,A)$. Indeed, using~\eqref{equ:intSbd}, this will imply 
$$
\tfrac 12\int_X|h'(m_\phi)|^2_\phi\MA_v(\phi)=\int_X S(\phi)\MA_v(\phi)-\int_X S_v(\phi)\MA_v(\phi)-\int_X f\MA_v(\phi)\le C(L,v,A), 
$$
since $\int_X S_v(\phi)\MA_v(\phi)$ is independent of $\phi\in\cH^T$ (see~\eqref{equ:scalmass}), and hence 
$$
\int_X|S_v(\phi)|\MA_v(\phi)\le\int_X\left(|S(\phi)|+\tfrac 12\int_X|h'(m_\phi)|^2_\phi+|f|\right)\MA_v(\phi)\le C(L,v,A).
$$
Set $\p:=\tfrac 12\log\MA(\phi)\in C^\infty(K_X)^T$. Then $\ddc\p=-\Ric(\ddc\phi)\ge-A\ddc\phi$. After slightly increasing $A$, $\p+A\phi\in \cH(K_X+AL)^T$ is strictly psh, and its moment map $m_\p+Am_\phi$ thus takes values in the fixed moment polytope $P_{K_X+AL}$. Since $m_\phi$ takes values in $P_L$, it follows that $m_\p$ is bounded in terms of $L$ and $A$ only. 

By~\eqref{equ:Dg} we have 
$$
f=\langle h'(m_\phi),m_\p\rangle+\tfrac12\D_\phi h(m_\phi).
$$
Since $\int_X\D_\phi h(m_\phi)\MA(\phi)=0$, and 
$$
|\langle h'(m_\phi),m_\p\rangle|\le\sup_P\|h'\|\sup_X|m_\p|\le C(L,v,A)
$$ 
we conclude, as desired, $\int_X f\MA_v(\phi)\ge -C(L,v,A)$. 
\end{proof}

\begin{proof}[Proof of Proposition~\ref{prop:mabinv}] By Lemma~\ref{lem:unipot}, it suffices to show that 
$$
\Ga(\phi):=\mab_{v,w}'(\phi)=\left(w(m_\phi)-S_v(\phi)\right)\MA_v(\phi)
$$
satisfies $\sup_{\phi\in\cH_m^T}\int_X|\Ga(\phi)|<\infty$. On the one hand,  
$$
\int_X|w(m_\phi)|\MA_v(\phi)=\int_P|vw|\DH(X,L)
$$
is independent of $\phi$. On the other hand, for each $\phi\in\cH_m^T$ the Ricci curvature of the Fubini--Study metric $\ddc\phi$ is bounded above by $N_m$ (see for instance~\cite[Lemma~6.6]{BHJ2}). Lemma~\ref{lem:curvbound} thus yields a uniform bound for $\int_X|S_v(\phi)|\MA_v(\phi)$, and the result follows. 
\end{proof}

\begin{rmk} The proof of Lemma~\ref{lem:curvbound}, and hence Proposition~\ref{prop:mabinv}, apply just as well if $v$ is instead assumed to be log-convex.
\end{rmk}

%%%%%%%%%%%%%%%%%%%%%%%%%%%%%%%%%%%%%%%%%%%%%%%%%%%%%%%%%%%%
\subsection{The \na weighted Mabuchi functional}\label{sec:NAMab}
The non-Archi\-medean cousin of $\mab_{v,w}$ is the \emph{\na weighted Mabuchi functional} 
$$
\mab_{v,w,\NA}\colon\cE^{1,T}_\NA\to\R\cup\{+\infty\},
$$
given by 
\begin{equation}\label{equ:wtNArelmab}
\mab_{v,w,\NA}=\ent_{v,\NA}+\rr_{v,\NA}+\en_{vw,\NA}.
\end{equation}
Here the $v$-weighted entropy $\ent_{v,\NA}$ is the integral of the log discrepancy function against the $v$-weighted non-Archimedean Monge--Amp\`ere measure, see Definition~\ref{defi:wNAent}. The definition of the other two terms is part of Theorem~\ref{thm:NAMAext}; the \na weighted Ricci energy $\rr_{v,\NA}$ equals $\en_{v,\NA}^{K_X}$.

The next result generalizes Theorem~C in the introduction.
\begin{thm}\label{thm:slwmab} The radial weighted Mabuchi functional $\mab_{v,w,\ra}\colon\cE^{1,T}_\ra\to\R\cup\{+\infty\}$ satisfies 
   \begin{equation*}
\mab_{v,w,\ra}(\f)
=\begin{cases}
    \mab_{v,w,\NA}(\f)\ &\text{if }\f\in\cE^{1,T}_\NA\\
    +\infty\ &\text{otherwise}.
\end{cases}
\end{equation*}
\end{thm}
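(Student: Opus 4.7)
The plan is to decompose the radial transform term by term following the weighted Chen--Tian formula~\eqref{equ:wtrelmab}, and then to identify each summand with its non-Archimedean counterpart by invoking the previously established slope formulas.

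First I would show that, for any geodesic ray $\{\phi_t\}$ in $\cE^{1,T}$ with direction $\f=\phi_\infty\in\cE^{1,T}_\ra$,
\begin{equation*}
\mab_{v,w,\ra}(\f)=\ent_{v,\ra}(\f)+\rr_{v,\ra}(\f)+\en_{vw,\ra}(\f).
\end{equation*}
The function $t\mapsto\mab_{v,w}(\phi_t)$ is convex by~\cite[Theorem~6.1]{AJL}, so its slope at infinity exists in $\R\cup\{+\infty\}$ and equals $\mab_{v,w,\ra}(\f)$; the radial transforms $\rr_{v,\ra}=\en_v^{\p_\refe,\ra}$ and $\en_{vw,\ra}$ exist and compute the slope as an honest limit along $\{\phi_t\}$ by Proposition~\ref{prop:raden}; and the radial transform $\ent_{v,\ra}$ exists as a limit along $\{\phi_t\}$ by Proposition~\ref{prop:hent}. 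Decomposing $\mab_{v,w}(\phi_t)$ via~\eqref{equ:wtrelmab} and dividing by $t$ then yields the claimed identity in $\R\cup\{+\infty\}$.

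Next, if $\f\notin\cE^{1,T}_\NA$, then Proposition~\ref{prop:hent}(iii) gives $\ent_{v,\ra}(\f)=+\infty$, whereas $\rr_{v,\ra}(\f)$ and $\en_{vw,\ra}(\f)$ are finite, being values of the semi-globally H\"older radial transforms of Proposition~\ref{prop:raden}. Hence $\mab_{v,w,\ra}(\f)=+\infty$, as required.

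Finally, suppose $\f\in\cE^{1,T}_\NA\subset\cE^{1,T}_\ra$. The key input is Theorem~\ref{thm:main}, which yields $\ent_{v,\ra}(\f)=\ent_{v,\NA}(\f)$. By Theorem~\ref{thm:NAMAext}(iii), the restrictions of the radial transforms $\rr_{v,\ra}$ and $\en_{vw,\ra}$ to $\cE^{1,T}_\NA$ coincide respectively with the non-Archimedean Ricci energy $\rr_{v,\NA}=\en_{v,\NA}^{K_X}$ and the non-Archimedean weighted Monge--Amp\`ere energy $\en_{vw,\NA}$. Substituting these identifications into the decomposition established in the first step and comparing with the definition~\eqref{equ:wtNArelmab} of the non-Archimedean weighted Mabuchi functional gives $\mab_{v,w,\ra}(\f)=\mab_{v,w,\NA}(\f)$, completing the proof.

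The main obstacle in this argument is not the assembly above but the entropy slope identity $\ent_{v,\ra}=\ent_{v,\NA}$ on $\cE^{1,T}_\NA$, which is precisely Theorem~\ref{thm:main} and whose proof occupies all of~\S\ref{sec:entasym}; once that and the Chen--Tian decomposition with its non-Archimedean counterpart are in hand, the statement of Theorem~\ref{thm:slwmab} reduces to routine bookkeeping.
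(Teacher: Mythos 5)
Your proposal is correct and follows essentially the same route as the paper's own (very terse) proof: both reduce to the termwise decomposition of the radial transform via the weighted Chen--Tian formula, and then invoke Theorem~\ref{thm:NAMAext}(iii) for the energy terms, Theorem~\ref{thm:main} for the entropy on $\cE^{1,T}_\NA$, and Proposition~\ref{prop:hent}(iii) for the $+\infty$ case off $\cE^{1,T}_\NA$. Your write-up is actually a bit more careful than the paper's (which leaves the decomposition step implicit), and you correctly attribute the $+\infty$ statement to Proposition~\ref{prop:hent}(iii) rather than to Theorem~\ref{thm:main} as the paper's wording suggests.
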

\begin{proof}
    It follows from Theorem~\ref{thm:NAMAext} that $\rr_{v,\ra}=\rr_{v,\NA}$ and $\en_{vw,\ra}=\en_{vw,\NA}$ on $\cE^{1,T}_\NA$. Moreover, Theorem~\ref{thm:main} shows that $\ent_{v,\ra}=\ent_{v,\NA}$ on $\cE^{1,T}_\NA$, whereas $\ent_{v,\ra}=+\infty$ on $\cE^{1,T}_\ra\setminus\cE^{1,T}_\NA$. The result follows.
\end{proof}

As an important technical consequence, we obtain the following invariance result.
 
\begin{cor}\label{cor:mabinv} For any compact subgroup $S\subset\Aut^T(X,L)$ containing $T$, the following conditions are equivalent: 
\begin{itemize}
    \item[(i)] $\mab_{v,w,\NA}\ge 0$ on the set $\cP_\R^S\subset\cE^{1,S}_\NA$ of $S$-equivariant real product test configurations; 
    \item[(ii)] $\mab_{v,w,\NA}\equiv 0$ on $\cP_\R^S$;
    \item[(iii)] $\mab_{v,w}\colon\cE^{1,S}\to\R\cup\{+\infty\}$ is invariant under $\Aut^S(X,L)$.  
\end{itemize}
 
\end{cor}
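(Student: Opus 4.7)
My approach will be to leverage the quasi-invariance of $\mab_{v,w}$ under $G:=\Aut^S(X,L)$ to reduce~(iii) to the vanishing of a single character $\chi\colon G\to\R$, and then to connect $\chi$ to $\mab_{v,w,\NA}|_{\cP_\R^S}$ through the slope formula of Theorem~\ref{thm:slwmab}. The starting point is that, since $T\subset S$, every $g\in G$ commutes with $T$, so the operators $\MA_v$, $\MA_v^{\p_{K_X}}$, $\MA_{vw}$ whose Euler--Lagrange functionals assemble into the Chen--Tian formula~\eqref{equ:wtrelmab} are $G$-equivariant. By the general formalism of~\S\ref{sec:EL} this yields a continuous homomorphism $\chi\colon G\to\R$ satisfying $\mab_{v,w}(g\cdot\phi)-\mab_{v,w}(\phi)=\chi(g)$ for all $\phi\in\cE^{1,S}$ and $g\in G$, so~(iii) becomes simply $\chi\equiv 0$.

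Next, for any maximal compact torus $T'\subset G$ and any $\xi\in N_\R(T')$, picking a $(S\cdot T')$-invariant reference $\phi_0\in\cH$ (obtained by averaging), the ray $\{e^{it\xi}\cdot\phi_0\}_t$ is a psh geodesic ray in $\cE^{1,S}$ directed by $\f_\xi\in\cP_\R^S$ (by Lemma~\ref{lem:mabgeod} and Proposition~\ref{prop:qi5}), so Theorem~\ref{thm:slwmab} will give
\[
\mab_{v,w,\NA}(\f_\xi)=\lim_{t\to\infty}t^{-1}\mab_{v,w}(e^{it\xi}\cdot\phi_0)=\lim_{t\to\infty}t^{-1}\chi(e^{it\xi})=:c(\xi).
\]
Since $t\mapsto\chi(e^{it\xi})$ is a continuous homomorphism $\R\to\R$, it is linear, so $c$ is real-linear on $N_\R(T')$ and is just the restriction of the differential $d\chi\colon\Lie G\to\R$ along the complex-structure image of $N_\R(T')$. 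This makes both~(iii)$\Rightarrow$(ii) ($\chi\equiv 0$ forces $c\equiv 0$ on every $N_\R(T')$, whose images exhaust $\cP_\R^S$ as $T'$ varies) and~(ii)$\Rightarrow$(i) (trivial) immediate.

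The main implication is~(i)$\Rightarrow$(iii). From assumption~(i) I get $c(\xi)\ge 0$ for every $\xi\in N_\R(T')$ and every compact torus $T'\subset G$; applying this also to $-\xi$ and using real-linearity of $c$ then forces $c\equiv 0$, so $d\chi$ vanishes on $J\Lie T'$ for every maximal compact torus $T'\subset G$ (where $J$ denotes the complex structure). The complementary ``compact'' directions come for free: $\chi$ is a continuous homomorphism from the maximal compact subgroup $G_c\subset G$ to $\R$, hence trivial on $G_c$, so $d\chi$ vanishes on $\Lie G_c\supset\Lie T'$, and therefore on the full real Lie subalgebra $\Lie T'_\C=\Lie T'\oplus J\Lie T'$ for every such $T'$. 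The unipotent directions are handled by Proposition~\ref{prop:mabinv}: since $T\subset S$ gives $G\subset\Aut^T(X,L)$, $\mab_{v,w}$ is automatically invariant under every unipotent subgroup of $G$, so $d\chi$ vanishes on the Lie algebra of every such subgroup, in particular on $\Lie R_u(G)$ and on each root space of a Levi subgroup $L\subset G$. Writing $G=L\ltimes R_u(G)$ and using the root space decomposition $\Lie L=\Lie T'_\C\oplus\bigoplus_\alpha\mathfrak{g}_\alpha$ for a maximal torus $T'_\C\subset L$, I will conclude that $d\chi\equiv 0$ on $\Lie G$, and hence $\chi\equiv 0$ by connectedness of $G$.

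The main obstacle I expect is the Lie-algebraic bookkeeping at the end of~(i)$\Rightarrow$(iii): tracking the complex-structure twist between real tori and their complexifications while assembling vanishing on compact, noncompact, and unipotent directions into full vanishing on $\Lie G$, especially since $G$ need not be reductive \emph{a priori} and Proposition~\ref{prop:mabinv} is essential to clear the unipotent radical. Everything else is a direct application of the slope formula together with the quasi-invariance/Futaki formalism of~\S\ref{sec:EL}.
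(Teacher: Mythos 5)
Your proposal is correct, and it runs very close to the paper's own proof: both rely on the Futaki-character formalism of \S\ref{sec:EL} together with the slope formula (Theorem~\ref{thm:slwmab}) to reduce everything to the restriction of the character $\chi$ to tori, and both invoke Proposition~\ref{prop:mabinv} to clear the unipotent directions. The difference is entirely in the endgame for establishing full $G$-invariance from torus-invariance: the paper argues at the group level, noting that the union of all algebraic tori of a Levi subgroup $L\subset G$ contains the (Zariski-dense, hence analytically dense) locus of regular semisimple elements, which forces $L$-invariance, whereupon $U$-invariance from Proposition~\ref{prop:mabinv} finishes via $G=LU$. You instead work infinitesimally, combining $d\chi|_{\Lie T'_\C}=0$ (from $c\equiv 0$ together with triviality of $\chi$ on the compact part) with $d\chi|_{\mathfrak g_\alpha}=0$ for each root space (each $\mathfrak g_\alpha$ being the Lie algebra of a unipotent root subgroup $U_\alpha$, to which Proposition~\ref{prop:mabinv} applies) and $d\chi|_{\Lie U}=0$, then assemble these via the root-space decomposition of $\Lie L$ and $\Lie G=\Lie L\oplus\Lie U$. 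Both arguments are valid; the Lie-algebraic route is somewhat more elementary in that it avoids invoking density of the regular semisimple locus, at the modest cost of tracking the bookkeeping you flag as the ``main obstacle.''

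One small inaccuracy worth fixing: you justify the existence of the character $\chi$ by asserting that the individual operators $\MA_v$, $\MA_v^{\p_{K_X}}$, $\MA_{vw}$ entering the Chen--Tian decomposition~\eqref{equ:wtrelmab} are $G$-equivariant. This is not true for $\MA_v^{\p_{K_X}}$: as~\eqref{equ:twMA} shows, it involves $\ddc\p_{K_X}$ and $m_{\p_{K_X}}$ for the fixed reference metric $\p_{K_X}$, which is typically not $G$-invariant, so that operator is not $G$-equivariant. The correct justification, which is also what the paper uses implicitly via Lemma~\ref{lem:unipot}, is that the full Euler--Lagrange operator of $\mab_{v,w}$, namely $\Gamma(\phi)=(w(m_\phi)-S_v(\phi))\MA_v(\phi)$, \emph{is} intrinsic in $\phi$ and hence $G$-equivariant; this already gives quasi-$G$-invariance of $\mab_{v,w}$ and the character $\chi$, and the rest of your argument goes through unchanged.
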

We emphasize that $\Aut^S(X,L)$ is not \emph{a priori} assumed to be reductive here. 

\begin{proof} Set $G:=\Aut^S(X,L)$. Denote by $U$ its unipotent radical, and pick a Levi subgroup $L\subset G$, \ie a reductive subgroup mapping isomorphically to $G/U$. 

Given any compact torus $T\subset T'\subset G$, recall that $\xi\mapsto\f_\xi$ defines an embedding $N_\R(T')\hto\cE^{1,T}_\NA$. As a consequence of Theorem~\ref{thm:slwmab}, we see that 
$$
\mab_{v,w,\NA}(\f_\xi)=\frac{d}{dt}\bigg|_{t=0}\mab_{v,w}((e^{it\xi})^\star\phi)=\int_X \cL_{J\xi}\phi\left(w(m_\phi)-S_v(\phi)\right)\MA_v(\phi)
$$
for any $\xi\in N_\R(T')$ and $\phi\in\cH^{T'}$. By linearity in $\xi$, this is non-negative on $N_\R(T')$ iff it vanishes, which is also equivalent to $\mab_{v,w}$ being $T'_\C$-invariant (see~\S\ref{sec:EL}). 

Since $\cP_\R^S=\bigcup_{T'} N_\R(T')$, it follows that (i) and (ii) are equivalent, and also equivalent to the invariance of $\mab_{v,w}\colon\cE^{1,S}\to\R\cup\{+\infty\}$ under any algebraic torus of $\Aut^S(X,L)$, which is implied by (iii).

Conversely, assume (ii). The union of all algebraic tori of the reductive group $L$ contains the set of its regular semisimple elements, which is a non-empty Zariski open subset (see~\cite[Theorem~12.3]{Bor}). It is thus dense in $L$ in the analytic topology, which implies that $\mab_{v,w}$ is $L$-invariant. 

By Proposition~\ref{prop:mabinv}, $\mab_{v,w}$ is also automatically $U$-invariant. It is thus invariant under $G=LU$, which proves (ii)$\Rightarrow$(iii). 
\end{proof}

%%%%%%%%%%%%%%%%%%%%%%%%%%%%%%%%%%%%%%%%%%%%%%%%%%%%%%%%%%%%
\subsection{Weighted extremal metrics and the relative weighted Mabuchi functional}\label{sec:wext2}
In general, there are obstructions to the existence of a $(v,w)$-weighted csc metric $\phi\in\cH^T$, the most basic one being the mass compatibility condition
$$
-\deg_v'(L;K_X)=\int_X S_v(\phi)\MA_v(\phi)=\int_X w(m_\phi)\MA_v(\phi)=\deg_{vw}(L), 
$$
which is equivalent to $\mab_{v,w}$ being translation invariant. 

\medskip

From now we further assume $w>0$ on $P$. Then one can look instead for a \emph{$(v,w)$-weighted extremal metric} $\phi\in\cH^T$, defined as a $(v,w\ell^\ext)$-weighted csc metric, where $\ell^\ext\in N_\R(T)\oplus\R$ is the \emph{extremal function}, \ie the unique affine linear function on $M_\R(T)$ such that the \emph{relative weighted Mabuchi functional}
$$
\mab^\rel:=\mab_{v,w\ell^\ext}\colon\cE^{1,T}\to\R\cup\{+\infty\} 
$$
is both translation invariant and $T_\C$-invariant. 

The \emph{relative weighted non-Archimedean Mabuchi functional} is naturally defined as
$$
\mab^\rel_\NA:=\mab_{v,w\ell^\ext,\NA}\colon\cE^{1,T}_\NA\to\R\cup\{+\infty\}
$$
Generalizing a classical result of Calabi~\cite{Cal}, results of Lahdili imply:  

\begin{prop}\label{prop:wtred} Pick a compact subgroup $S\subset\Aut^T(X,L)$ containing $T$, and suppose there exists an $S$-invariant $(v,w)$-weighted extremal metric $\phi\in\cH^S$. Then $\Aut^S(X,L)$ is reductive, and the stabilizer of $\phi$ is a maximal compact subgroup. 
\end{prop}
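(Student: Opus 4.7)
The plan is to combine three ingredients: (i) a weighted Berman--Berndtsson-type variational characterization of extremal metrics, (ii) automatic $G$-invariance of the relative weighted Mabuchi functional via our non-Archimedean machinery, where $G := \Aut^S(X,L)$, and (iii) a weighted Matsushima--Lichnerowicz decomposition of $\Lie G$ at $\phi$. The first two steps take place entirely inside the framework developed in the paper, while the third step is where one genuinely uses that $\phi$ is $(v,w)$-weighted extremal.

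First, I would show that the $S$-invariant $(v,w)$-weighted extremal metric $\phi$ is a minimizer of $\mab^\rel = \mab_{v,w\ell^\ext}$ on $\cE^{1,S}$. Since $\phi \in \cH^S$ is a critical point and $\mab_{v,w\ell^\ext}$ is geodesically convex on $\cE^{1,T}$ by~\cite[Theorem~6.1]{AJL}, the weighted analogue of the Berman--Berndtsson argument (\cite[Theorem~1]{Lah19}, rerun for $w\ell^\ext$ in place of $w$) shows that $\phi$ minimizes $\mab^\rel$ over $\cE^{1,T}$, hence over the smaller set $\cE^{1,S}$. The weighted uniqueness statement of Lemma~\ref{lem:mabmin} (applied with weight $w\ell^\ext$, which is admissible near $P$ since $w>0$ and $\ell^\ext$ is affine and may be assumed positive on $P$ after passing to a lattice translate without changing the equation) then guarantees that minimizers are unique modulo $G$.

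Second, I would apply Corollary~\ref{cor:mabinv} to $\mab^\rel$. The existence of a minimizer makes $\mab^\rel$ bounded below on $\cE^{1,S}$, so for any $\xi$ in the Lie algebra of any compact torus $T \subset T' \subset G$ the radial slope $\mab^\rel_\ra(\f_\xi) = \mab^\rel_\NA(\f_\xi)$ (Theorem~\ref{thm:slwmab}) is $\ge 0$. But this slope is the directional derivative of $\mab^\rel$ at $\phi$ in the direction of $\cL_{J\xi}\phi$, and is therefore linear in $\xi$; non-negativity plus linearity forces it to vanish identically on $N_\R(T')$. Since every element of $\cP^S_\R$ arises from such a torus, Corollary~\ref{cor:mabinv} gives $G$-invariance of $\mab^\rel$ on $\cE^{1,S}$. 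Combined with uniqueness mod $G$ from Step~1, this shows that the set of minimizers of $\mab^\rel$ is precisely the orbit $G\cdot\phi$, and its stabilizer $K := \Stab_G(\phi)$ is compact by properness of the $G$-action on $\cE^{1,S}$ (see~\cite[Proposition~1.8]{BJT}).

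Finally, to upgrade compactness of $K$ to maximality in $G$ and to deduce that $G$ is reductive, I would invoke the weighted Matsushima--Lichnerowicz theorem of Lahdili~\cite[Proposition~1]{Lah19} (see also~\cite[\S4]{AJL}): the $(v,w\ell^\ext)$-weighted csc equation $S_v(\phi) = w(m_\phi)\ell^\ext(m_\phi)$ forces a Hodge-type splitting
\[
\Lie G = \Lie K \oplus J\,\Lie K
\]
of the complex Lie algebra of $G$ as a $K$-module, where $\Lie K$ is identified with the real Lie algebra of Hamiltonian Killing fields of $(X,\ddc\phi,m_\phi)$ lying in $\Lie G$. This decomposition exhibits $G$ as the complexification of the compact real form $K$, hence $G$ is reductive and $K$ is a maximal compact subgroup. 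The main obstacle is of course this third step, the weighted Matsushima decomposition; but as stated, the key identity is Lahdili's: for any $\xi \in \Lie G$ one writes its $(1,0)$-part as $\bar\partial^\ast$ applied to an explicit first-order differential operator on $\phi$ whose defect is a pointwise multiple of $S_v(\phi) - w(m_\phi)\ell^\ext(m_\phi)$, which vanishes identically precisely because $\phi$ is $(v,w)$-weighted extremal.
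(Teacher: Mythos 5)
Your proposal takes a genuinely different and much longer route than the paper. The paper's proof is two lines: Lahdili's Theorem~B.1 of~\cite{Lah19} (the weighted Matsushima--Lichnerowicz theorem) shows that $H := \Aut^T(X,L)$ is the complexification of the identity component of the group of $\phi$-isometries commuting with $T$; since $S$ is a compact subgroup of this maximal compact of $H$, Lemma~\ref{lem:reduc} in Appendix~A then gives that $\Aut^S(X,L) = H^S$ is the complexification of the compact group $K = (H_c)^S$, hence reductive with $K$ maximal compact. No variational machinery is needed.

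Two comments on your version. First, your Steps~1 and~2 (minimization of $\mab^\rel$, $G$-invariance via Corollary~\ref{cor:mabinv}) do not feed into your conclusion: compactness of the stabilizer $K$ by itself gives nothing about reductivity of $G$ or maximality of $K$. The actual content is entirely in Step~3, and it is also a detour through heavier machinery (regularity of minimizers, Corollary~\ref{cor:regmin}) than the direct Matsushima argument requires. Second, and more seriously, your Step~3 asserts a Hodge-type splitting $\Lie G = \Lie K \oplus J\,\Lie K$ directly at $G = \Aut^S(X,L)$, citing Lahdili. But Lahdili's weighted Matsushima theorem is formulated for $\Aut^T(X,L)$ (the group commuting with the torus used to define the moment map), not for the centralizer of an arbitrary compact $S \supset T$. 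Establishing the splitting at $\Aut^S(X,L)$ requires passing to $S$-fixed points, and the compatibility of the Cartan/polar decomposition with taking fixed points of a compact subgroup is exactly the content of Lemma~\ref{lem:reduc}. Without invoking that lemma (or reproving it), your Step~3 has a gap at precisely the point where the proposition is more general than the classical Matsushima statement.
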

\begin{proof} As a consequence of~\cite[Theorem~B.1]{Lah19}, $H:=\Aut^T(X,L)$ is the complexification of the identity component of the group of holomorphic isometries of $\phi$ that commute with $T$. Now $\Aut^S(X,L)$ coincides with the identity component $H^S$ of the centralizer of the compact subgroup $S\subset H$, and the result thus follows from Lemma~\ref{lem:reduc}. 
\end{proof}

\begin{rmk} It seems to be unknown whether reductivity still holds in the setting of~\S\ref{sec:wext1}, \ie for weighted csc metrics with $w\in C^\infty(P)$ arbitrary. This is actually the only obstruction to stating Theorem~B in this greater generality. 
\end{rmk}

%
%%%%%%%%%%%%%%%%%%%%%%%%%%%%%%%%%%%%%%%%%%%%%%%%%%%%%%%%%%%%
\subsection{Stability notions}\label{sec:stabnot}
As above, pick a compact subgroup $S\subset\Aut^T(X,L)$ containing $T$. Recall that $\cP_\R^S=\cP_\R\cap\cE^{1,S}_\NA$ denotes the set of $S$-equivariant real product test configurations. 

We are now ready to introduce the key stability notions.
\begin{defi}\label{defi:stab}
We say that $(X,L)$ is:
\begin{itemize}
  \item[(i)] \emph{$S$-equivariantly relatively weighted $\hK$-semistable} if $\mab^\rel_\NA(\f)\ge0$ for all $\f\in\cE^{1,S}_\NA$; 
  
  \item[(ii)] \emph{$S$-equivariantly relatively weighted $\hK$-polystable} if (i) holds, with equality only if $\f\in\cP_\R^S$;
  \item[(iii)] \emph{uniformly $S$-equivariantly relatively weighted $\hK$-polystable} if $\Aut^S(X,L)$ is reductive, and there exists $\sigma>0$ such that $\mab^\rel_\NA(\f)
  \ge\sigma\dd_{1,\NA}(\f,\cP_\R^S)$ for all $\f\in\cE^{1,S}_\NA$.
  \end{itemize}
\end{defi}
The reductivity assumption in (iii) guarantees that $\cP_\R^S$ is closed (see Corollary~\ref{cor:prodclosed}), so that (iii)$\Rightarrow$(ii)$\Rightarrow$(i).

\begin{rmk} We simplify the (admittedly quite heavy) terminology as follows: 
\begin{itemize}
    \item if $S=T$, we drop `$S$-equivariantly';
    \item if $v=w=1$, we drop `weighted'; 
    \item if $\ell^\ext$ is constant, we drop `relatively'. 
\end{itemize}
Thus the notion of (uniform) $\hK$-polystability in Theorem~A corresponds to $S=T=\{e\}$ and $v=w=1$. 
\end{rmk}

Next we compare our notions from the usual ones in K-stability. We consider the non-relative, unweighted case $T=\{e\}$ and $v=w=1$, so that $\mab^\rel=\mab$ and $\mab^\rel_\NA=\mab_\NA$ are the usual Mabuchi functional. 

By definition, $(X,L)$ is $S$-equivariantly K-semistable if the Donaldson--Futaki invariant $\DF(\cX,\cL)\ge0$ for all $S$-equivariant ample test configurations $(\cX,\cL)$, and K-polystable if, furthermore, equality holds iff $(\cX,\cL)$ is a product test configuration. 

Recall also that $\cP_\Q^S=\cP_\R\cap\cH^S_\NA$ parametrizes the set of $S$-equivariant rational product test configurations $(\cX,\cL)$, \ie such that the base change of $(\cX,\cL)$ by $z\mapsto z^d$ is a (usual) product test configuration for any sufficiently divisible $d>0$. 

\begin{prop}\label{prop:Kpscrit}
    We have that:
    \begin{itemize}
    \item[(i)] $(X,L)$ is $S$-equivariantly K-semistable iff $\mab_\NA(\f)\ge0$ for $\f\in\cH^S_\NA$;
    \item[(ii)]
    $(X,L)$ is $S$-equivariantly K-polystable if (i) holds, with equality only if $\f\in\cP_\Q^S$. 
    \end{itemize}
\end{prop}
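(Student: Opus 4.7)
The proof rests on comparing the classical K-stability definitions with the non-Archimedean formulation through the bijection $\cH_\NA^S\leftrightarrow\{S\text{-equivariant ample test configurations for }(X,L)\}$ given by $\cL\mapsto\f_\cL$ (see~\S\ref{sec:NAppt}), together with the inequality
\[
\DF(\cX,\cL)\ge\mab_\NA(\f_\cL)
\]
from~\cite{BHJ1}, with equality iff the central fiber $\cX_0$ is reduced. In addition both functionals are $\Q_{>0}$-homogeneous, and the normalized base change $(\cX,\cL)\mapsto(\cX_d,\cL_d)$ along $z\mapsto z^d$ corresponds to the $\Q_{>0}$-scaling $\f_\cL\mapsto d\cdot\f_\cL$, preserves $S$-equivariance, and produces a reduced central fiber for $d$ sufficiently divisible.

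For~(i), the reverse implication is immediate: if $\mab_\NA\ge 0$ on $\cH_\NA^S$, then $\DF(\cX,\cL)\ge\mab_\NA(\f_\cL)\ge 0$ for every $S$-equivariant ample test configuration. For the forward implication, given $\f=\f_\cL\in\cH_\NA^S$ one picks $d$ sufficiently divisible so that $(\cX_d,\cL_d)$ has reduced central fiber; then $\mab_\NA(d\cdot\f)=\DF(\cX_d,\cL_d)\ge 0$ by $S$-equivariant K-semistability, and $\Q_{>0}$-homogeneity of $\mab_\NA$ yields $\mab_\NA(\f)\ge 0$.

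For~(ii), assume additionally that the equality $\mab_\NA(\f)=0$ forces $\f\in\cP_\Q^S$. Part~(i) already gives $S$-equivariant K-semistability. Suppose now $\DF(\cX,\cL)=0$ for some $S$-equivariant ample test configuration. The chain $0=\DF(\cX,\cL)\ge\mab_\NA(\f_\cL)\ge 0$ must then be an equality throughout, yielding simultaneously that $\cX_0$ is reduced and that $\f_\cL\in\cP_\Q^S$. By the latter condition, $\f_\cL=\f_\xi$ for some $\xi\in N_\Q(T)$ with $T\subset\Aut^S(X,L)$ a compact torus. Writing $\xi=\eta/d$ with $\eta\in N_\Z(T)$ primitive integer, the scaling identity $d\cdot\f_\xi=\f_\eta$ shows via the bijection that $(\cX_d,\cL_d)$ coincides with the genuine product test configuration $X\times\P^1$ with $\C^\times$-action via $\eta$.

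The remaining step, which is the main technical obstacle, is to deduce from reducedness of $\cX_0$ that $d=1$, so that $\xi=\eta$ is an integer cocharacter and $(\cX,\cL)$ is itself a product test configuration in the sense of Example~\ref{exam:prodtc}. This follows from a local computation: the degree-$d$ quotient map $(\cX_d,\cL_d)\to(\cX,\cL)$ is a cyclic $\mu_d$-cover ramified along $\cX_0$ with index $d$ (a $\mu_d$-invariant ring computation on a chart $X\times\P^1$ near a point of $X\times\{0\}$, with $\mu_d$ acting diagonally via $(\eta|_{\mu_d},\zeta)$, makes the nilpotent structure explicit), so each component of $\cX_0$ acquires multiplicity $d$. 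Reducedness thus imposes $d=1$, and the integer cocharacter $\eta$ inherits $S$-equivariance from $\xi$, completing the proof.
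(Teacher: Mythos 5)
Your proof of~(i) matches the paper's argument: the inequality $\DF\ge\mab_\NA$ with equality iff $\cX_0$ is reduced, combined with $\Q_{>0}$-homogeneity and the reducedness of the central fiber after sufficiently divisible normalized base change.

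For~(ii), you prove only one implication (the non-Archimedean criterion implies $S$-equivariant K-polystability), whereas the paper's proof establishes the converse as well, using the argument of Corollary~\ref{cor:mabinv} to show that $\mab_\NA\equiv0$ on $\cP_\Q^S$ when $(X,L)$ is K-polystable. Since the statement says ``if'' this may be intentional, but you should be aware the paper treats it as an equivalence. Within the direction you do prove, you take a genuinely different route at the key step. Having derived that $\cX_0$ is reduced and $\f_\cL\in\cP_\Q^S$, you must conclude $(\cX,\cL)$ is a genuine product; the paper isolates this as Lemma~\ref{lem:ptccrit}, which invokes~\cite[Corollary~1.16]{trivval} (reducedness of $\cX_0$ implies the naive base change is already normal, hence has central fiber $\cong\cX_0$) to conclude $\cX_0\cong X$. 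You instead write $\f_\cL=\f_{\eta/d}$ with $\eta$ primitive, identify $(\cX_d,\cL_d)$ with the product test configuration via $\eta$ (using $d\cdot\f_{\eta/d}=\f_\eta$ and the bijection $\cL\leftrightarrow\f_\cL$), note $\cX=\cX_d/\mu_d$, and argue that $\cX_0$ has multiplicity $d$, which is correct because $\eta$ primitive forces $\eta(\mu_d)$ to act faithfully hence generically freely on $X$, so $\cX_d\to\cX$ is \'etale in codimension one along $\cX_{d,0}$ and the relation $z=w^d$ descends. This buys a more explicit, self-contained argument at the price of checking the generic freeness; the paper's appeal to the normality of the base change is slicker but less transparent. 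One imprecision: you describe $\cX_d\to\cX$ as ``ramified along $\cX_0$ with index $d$.'' In fact the quotient map is \'etale there in codimension one; the index $d$ belongs to the base map $\P^1_w\to\P^1_z$, and the multiplicity of $\cX_0$ comes from the equation $z=w^d$, not from branching of the cover. The conclusion is unaffected, but you should state the mechanism correctly.
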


\begin{lem}\label{lem:ptccrit}
    Let $(\cX,\cL)$ be a rational product test configuration. Then $(\cX,\cL)$ is a product test configuration iff $\cX_0$ is reduced.
\end{lem}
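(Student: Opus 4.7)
The forward direction is immediate: a product test configuration is $(\cX,\cL)\simeq (X,L)\times\P^1$ equivariantly via some cocharacter $\C^\times\to\Aut(X,L)$, so $\cX_0\simeq X\times\{0\}$ is reduced (and irreducible).

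For the converse, suppose $(\cX,\cL)$ is a rational product test configuration with $\cX_0$ reduced. By definition there exist a torus $T_\C\subset\Aut(X,L)$ and $\xi\in N_\Q(T)$ with $\f_\cL=\f_\xi$. Write $\xi=\eta/d$ with $\eta\in N_\Z(T)$ and $d\ge1$; the aim is to show that $\xi$ itself lies in $N_\Z(T)$. Let $\cX_\eta=X\times\P^1$ be the honest product test configuration associated to $\eta$, carrying the $\C^\times$-action $\lambda\cdot(x,z)=(\eta(\lambda)\cdot x,\lambda z)$. By construction $\cX_\eta$ is the normalized $d$-th base change of $\cX$, so since $\cX$ is normal, $\cX\simeq\cX_\eta/\mu_d$, and the induced map $\cX\to\P^1$ factors the projection via $w=z^d$.

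The crux is to relate the multiplicity of the central fiber to $\eta$. Since $T_\C$ acts faithfully on the irreducible variety $X$, the finite subgroup $\eta(\mu_d)\subset T$ also acts faithfully, hence generically freely; consequently $\cX_0$ has a unique irreducible component $E$, with reduction $X/\eta(\mu_d)$. I would then compute the multiplicity $b_E$ by describing the local ring $\cO_{\cX_0}$ at the generic point of $E$ as the $\mu_d$-invariants of $K(X)\otimes_\C\C[z]/(z^d)$, with $\mu_d$ acting on $K(X)$ through $\eta$ and by scaling on $z$. Decomposing the second factor by weight and using that $K(X)$ is, as a $K(X)^{\eta(\mu_d)}=K(E)$-representation of $\eta(\mu_d)$, the regular representation, one finds a one-dimensional $K(E)$-invariant in the weight-$j$ summand precisely when the character $\zeta\mapsto\zeta^{-j}$ of $\mu_d$ descends through the quotient $\mu_d\twoheadrightarrow\eta(\mu_d)$. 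The number of such $j\in\{0,\dots,d-1\}$ equals $|\eta(\mu_d)|$, giving the key multiplicity formula
\[
b_E=|\eta(\mu_d)|.
\]

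The reducedness hypothesis $b_E=1$ now forces $\eta(\mu_d)=\{1\}$, i.e., $\mu_d\subset\ker\eta$. Choosing a basis $(\eta_1,\dots,\eta_r)$ of $N_\Z(T)\simeq\Z^r$ to express $\eta$, one has $\ker\eta=\mu_{\gcd(\eta_1,\dots,\eta_r)}$, so $\mu_d\subset\ker\eta$ amounts to $d$ dividing each $\eta_i$, equivalently $\xi=\eta/d\in N_\Z(T)$. By the bijective correspondence between ample normal test configurations and their Fubini--Study potentials (\S\ref{sec:NAppt}), $(\cX,\cL)$ is then isomorphic to the honest product test configuration associated to the integer cocharacter $\xi$. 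The main obstacle is the multiplicity formula $b_E=|\eta(\mu_d)|$; the weight-decomposition calculation above is essentially a local computation, but some care is needed to handle the possible codimension-one fixed loci of $\eta(\mu_d)$, which is however circumvented by working only at the generic point of $E$ where the action is free.
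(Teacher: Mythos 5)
Your proof is correct, but it takes a genuinely different route from the one in the paper. The paper's argument is a short formal one: it invokes Corollary~1.16 of [BJ22], which says that reducedness of the central fiber of a normal test configuration forces every finite base change to remain normal. Then, for $d$ sufficiently divisible, the (unnormalized) base change $\cX_d$ is already normal and hence equal to the normalized base change, which is a product with central fiber $\cong X$; but the central fiber of any base change $\cX_d$ is canonically isomorphic (as a scheme) to $\cX_0$ itself, so $\cX_0\cong X$ and one concludes using the (implicit) fact that a normal test configuration with $\cX_0\cong X$ is a product. You instead realize $\cX$ explicitly as the quotient $(X\times\A^1)/\mu_d$ of the normalized base change, and compute the central-fiber multiplicity from scratch via a weight-space/normal-basis computation of $(K(X)\otimes_\C\C[w]/(w^d))^{\mu_d}$, arriving at the clean identity $b_E=|\eta(\mu_d)|$; reducedness then directly forces $\mu_d\subset\ker\eta$, i.e., $\xi=\eta/d\in N_\Z(T)$, and the potential correspondence $\cH_\NA\leftrightarrow\{\text{ample normal test configurations}\}$ finishes. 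Your argument is longer but self-contained (it neither needs the external normality result nor the ``$\cX_0\cong X$ iff product'' fact), and it produces the multiplicity formula $b_E=|\eta(\mu_d)|$ as a byproduct; the paper's is shorter and avoids any local computation at the cost of two structural inputs. Your concern about fixed loci is appropriately addressed by working only at the generic point of $E$, where faithfulness of $T_\C\curvearrowright X$ makes the $\eta(\mu_d)$-action free and the normal basis theorem gives the regular-representation decomposition of $K(X)$ over $K(E_\redu)$.
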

\begin{proof} Recall that $(\cX,\cL)$ is a product test configuration iff $\cX_0$ is isomorphic to $X$ (and hence is reduced). Assume $\cX_0$ is reduced. Since $\cX_0$ is isomorphic to the central fiber of the base change $(\cX_d,\cL_d)$ of $(\cX,\cL)$ by $z\mapsto z^d$, it follows from~\cite[Corollary~1.16]{trivval} that $(\cX_d,\cL_d)$ is normal. By assumption, the central fiber of $\cX_d$ is isomorphic to $X$ for $d$ sufficiently divisible. It is also isomorphic to $\cX_0$, and the result follows. 
\end{proof}

\begin{proof}[Proof of Proposition~\ref{prop:Kpscrit}]
    The equivalence in~(i) was proved in~\cite[Proposition~8.2]{BHJ1}.   Let us recall the argument. First, $\DF(\cX,\cL)\ge\mab(\f_\cL)$ with equality iff $\cX_0$ is reduced. Thus $\mab\ge0$ on $\cH^S_\NA$ implies $S$-equivariant K-semistability, and the converse follows from the observation that the Mabuchi functional is equivariant under the scaling action of $\Q_{>0}$, and scaling by an integer corresponds to normalized base change $(\cX_d,\cL_d)$ under $z\mapsto z^d$, whose central fiber is reduced when $d$ is sufficiently divisible.    

    We now turn to polystability. The subset of $\cH^S_\NA$ given by product test configurations is exactly $\cP_\Z^S\subset\cP_\Q^S$, and by Lemma~\ref{lem:ptccrit}, these are exactly the elements of $\cP_\Q^S$ whose associated ample test configuration has reduced central fiber.

    If $(X,L)$ is $S$-equivariantly K-polystable, then it is K-semistable, and the same argument as for Corollary~\ref{cor:mabinv} shows that $\mab_\NA\equiv0$ on $\cP_\Q^S$. Now pick $\f\in\cH^S_\NA\setminus\cP_\Q^S$. For any $d\ge1$, we have $d\cdot\f\in\cH^S_\NA\setminus\cP_\Q^S$, and if $d$ is sufficiently divisible, the  ample test configuration $(\cX_d,\cL_d)$ associated to $d\cdot\f\in\cH^S_\NA$ has reduced special fiber. This yields 
    $$
    \mab_\NA(\f)=d^{-1}\mab_\NA(d\cdot\f)=d^{-1}\DH(\cX_d,\cL_d)>0.
    $$
    Conversely, suppose $\mab_\NA\ge0$ on $\cH^S_\NA$ with equality precisely on $\cP_\Q^S$. Then $(X,L)$ is K-semistable. Now pick any non-product  ample test configuration $(\cX,\cL)$, and set $\f=\f_\cL$. If $\f\not\in\cP_\Q$, then $\DH(\cX,\cL)\ge\mab_\NA(f)>0$. If instead $\f\in\cP_\Q\setminus\cP_\Z$, then $\cX_0$ is non-reduced by Lemma~\ref{lem:ptccrit}, so $\DH(\cX,\cL)>\mab_\NA(\f)\ge0$, and we are done.
\end{proof}
As a direct consequence of Proposition~\ref{prop:Kpscrit}, we get: 
\begin{cor}\label{cor:pstabs}
    If $(X,L)$ is \^ K-semi/polystable, then it is K-semi/polystable. 
\end{cor}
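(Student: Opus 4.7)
The proof is essentially a direct comparison of the two stability conditions via the inclusions $\cH^S_\NA\subset\cE^{1,S}_\NA$ and $\cP_\R^S\cap\cH^S_\NA=\cP_\Q^S$ (the latter being noted in the introduction and~\S\ref{sec:NAppt}), combined with the characterizations already established in Proposition~\ref{prop:Kpscrit}.

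For the semistable case, I would begin by observing that if $(X,L)$ is $S$-equivariantly $\hK$-semistable, then by Definition~\ref{defi:stab}(i) we have $\mab_\NA(\f)\ge0$ for all $\f\in\cE^{1,S}_\NA$; restricting to the subset $\cH^S_\NA\subset\cE^{1,S}_\NA$ of Fubini--Study potentials thus yields $\mab_\NA\ge0$ on $\cH^S_\NA$. An application of Proposition~\ref{prop:Kpscrit}(i) then gives $S$-equivariant K-semistability.

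For the polystable case, the argument is almost as short. Suppose $(X,L)$ is $S$-equivariantly $\hK$-polystable. The semistability step just performed already shows $\mab_\NA\ge0$ on $\cH^S_\NA$, so it only remains to verify the rigidity of the equality case. If $\f\in\cH^S_\NA$ satisfies $\mab_\NA(\f)=0$, then by Definition~\ref{defi:stab}(ii) we must have $\f\in\cP_\R^S$; but $\f$ also lies in $\cH^S_\NA$, so $\f\in\cP_\R^S\cap\cH^S_\NA=\cP_\Q^S$. By Proposition~\ref{prop:Kpscrit}(ii), this proves $S$-equivariant K-polystability.

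There is no real obstacle in this argument: all the genuine content—the lsc extension of $\mab_\NA$ to $\cE^{1,S}_\NA$, the identification $\cP_\R^S\cap\cH^S_\NA=\cP_\Q^S$, and the comparison of $\DF$ with $\mab_\NA$ via normalized base change—has already been established in the body of the paper. The corollary merely packages these ingredients together, and I expect the written proof to be only a few lines long.
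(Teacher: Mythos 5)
Your proof is correct and matches the paper's approach exactly: the paper gives no written proof beyond stating that the corollary is a direct consequence of Proposition~\ref{prop:Kpscrit}, and your argument simply spells out that consequence via the inclusion $\cH^S_\NA\subset\cE^{1,S}_\NA$ and the identity $\cP_\R^S\cap\cH^S_\NA=\cP_\Q^S$.
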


\begin{rmk}\label{rmk:uKps}
    In view of Proposition~\ref{prop:Kpscrit} it is natural to define $(X,L)$ to be \emph{uniformly K-polystable} iff $\Aut^0(X,L)$ is reductive and there exists $\sigma>0$ such that $\mab_\NA(\f)\ge\sigma\dd_{1,\NA}(\f,\cP_\R)$ for all $\f\in\cH_\NA$. This is clearly implied by uniform $\hK$-polystability, and it coincides with uniform K-stability as defined in~\cite{Der,BHJ1} when $\Aut^0(X,L)$ is trivial. 
\end{rmk}

\smallskip
Finally we compare our notions to the one introduced by Chi Li in~\cite{LiGeod}. Again we consider the unweighted case. Li fixes a compact subgroup $S\subset\Aut(X,L)$, and lets $T_\C$ be the identity component of the center of the reductive group $S_\C$, which is an algebraic torus. In our notation, $(X,L)$ being \emph{$S_\C$-uniformly K-stable for models} in the sense of~\cite[Definition~2.25]{LiGeod} amounts to the existence of $\sigma>0$ such that 
\begin{equation}\label{equ:LiCrit}
\mab_\NA(\f)\ge\sigma\dd_{1,\NA}(\f,N_\R(T))
\end{equation}
for all $\f\in\cE^{1,S}_\NA$ of the form $\f=\env(\f_\cL)$, where $(\cX,\cL)$ is a \emph{not necessarily ample} $S$-equivariant test configuration (\ie a \emph{model} in the sense of~\cite{LiGeod}) for $(X,L)$. Now, the argument used in the proof of Proposition~\ref{prop:entenv} (as well as in~\cite[Proposition~6.3]{LiGeod}) shows that it is equivalent to demand that~\eqref{equ:LiCrit} hold for all $\f\in\cE^{1,S}_\NA$. The precise relation to our stability notion is as follows: 

\begin{lem}\label{lem:Licomp} Given a compact subgroup $S\subset\Aut(X,L)$, the following are equivalent:
\begin{itemize}
    \item[(i)] $(X,L)$ is $S_\C$-uniformly K-stable for models;
    \item[(ii)] $(X,L)$ is uniformly $S$-equivariantly $\hK$-polystable, and $\Aut^S(X,L)$ further coincides with the identity component $T_\C$ of the center of $S_\C$. 
\end{itemize}
\end{lem}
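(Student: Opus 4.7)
The implication (ii) $\Rightarrow$ (i) will be essentially by inspection: when $\Aut^S(X,L)=T_\C$ is itself a torus, its unique maximal compact torus is $T$, so $\cP_\R^S$ coincides with the image of $N_\R(T)$ in $\cE^1_\NA$, and the uniformity estimate of Definition~\ref{defi:stab}(iii) is precisely Li's estimate in (i).

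For (i) $\Rightarrow$ (ii), the first goal is to show $\cP_\R^S=N_\R(T)$. Since $T_\C\subset\Aut^S(X,L)$ always holds, $N_\R(T)\subset\cP_\R^S$. Applying (i) to $\f\in\cP_\R^S$ yields $\mab_\NA(\f)\ge 0$, and Corollary~\ref{cor:mabinv} then upgrades this to $\mab_\NA\equiv 0$ on $\cP_\R^S$. Feeding this back into (i) forces $\dd_{1,\NA}(\f,N_\R(T))=0$ for every $\f\in\cP_\R^S$; since $N_\R(T)$ is the image of the complete metric space $N_\R(T)$ under a quasi-isometric embedding, it is closed in $\cE^1_\NA$, so $\cP_\R^S\subset N_\R(T)$, giving equality. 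In particular, $\dd_{1,\NA}(\cdot,N_\R(T))=\dd_{1,\NA}(\cdot,\cP_\R^S)$, and (i) is literally the uniformity estimate in Definition~\ref{defi:stab}(iii), modulo the reductivity requirement.

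To finish, I will deduce $G:=\Aut^S(X,L)=T_\C$. First observe that $T_\C$ is central in $G$: elements of $G$ commute with $S$, hence with $Z(S_\C)\supset T_\C$. For any compact torus $T'\subset G$, the product $T\cdot T'$ is therefore again a compact torus in $G$ containing $T$, so $N_\R(T\cdot T')\subset\cP_\R^S=N_\R(T)$. A topological dimension count---using that both $N_\R(T)\hookrightarrow\cE^1_\NA$ and $N_\R(T\cdot T')\hookrightarrow\cE^1_\NA$ are quasi-isometric embeddings, the former factoring through the latter---forces $T\cdot T'=T$, so $T'\subset T$. Thus $T_\C$ is a maximal algebraic torus of $G$.

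The main obstacle will be reductivity of $G$: once this is known, the standard identity $Z_G(T_\C)=T_\C$ for a reductive group with maximal torus $T_\C$, combined with $Z_G(T_\C)=G$ coming from centrality of $T_\C$, immediately yields $G=T_\C$. To obtain reductivity I plan to invoke Chi Li's existence result from~\cite{LiGeod}, which shows that (i) implies the existence of an $S$-invariant cscK metric in $c_1(L)$; Proposition~\ref{prop:wtred} (Matsushima--Calabi) then gives reductivity of $\Aut^S(X,L)$. This is not circular, since Lemma~\ref{lem:Licomp} does not enter the proofs of either Theorem~B or Li's theorem.
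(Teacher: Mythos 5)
Your proof is correct, but the route to $G:=\Aut^S(X,L)=T_\C$ in the direction (i)$\Rightarrow$(ii) differs from the paper's. The paper invokes Li's coercivity estimate~\cite[Proposition~6.2]{LiGeod} to get that $\mab$ is coercive modulo $T_\C$ on $\cE^{1,S}$, then Lemma~\ref{lem:coersub} to deduce that $G/T_\C$ is compact, \ie that $T_\C$ is a parabolic subgroup of $G$; since a torus that is parabolic contains a Borel, is therefore itself a Borel, and hence forces $G$ to be a torus, one gets $G=T_\C$ in one shot, with reductivity coming for free. You instead establish $\cP_\R^S=N_\R(T)$ directly from the uniform inequality in (i) combined with Corollary~\ref{cor:mabinv} and the closedness of the quasi-isometric image of $N_\R(T)$, deduce that $T_\C$ is central and a maximal torus of $G$, and then import reductivity from Li's full PDE existence theorem plus Matsushima (Proposition~\ref{prop:wtred}); the standard fact $Z_G(T_\C)=T_\C$ for a connected reductive group with maximal torus $T_\C$ then gives $G=T_\C$. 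Both approaches work and you are correct that there is no circularity, since Lemma~\ref{lem:Licomp} is not used in the proof of Theorem~B nor in Li's results. The trade-off is that you invoke a deep analytic existence result where the paper's version only needs the coercivity proposition and an elementary argument about parabolic tori, so the paper's route stays within the functional-analytic framework and is lighter in its dependencies. Your direct derivation of $\cP_\R^S=N_\R(T)$ and of the maximality of $T_\C$ via injectivity of the embeddings is a nice alternative observation, and the (ii)$\Rightarrow$(i) direction matches the paper's essentially verbatim.
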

Thus (i) is in general strictly stronger than the first half of (ii), since $T_\C=\{e\}$ as soon as $S_\C$ is semisimple. 

\begin{proof} Assume (i). By~\cite[Proposition~6.2]{LiGeod} (and its proof), $\mab\colon\cE^{1,S}\to\R\cup\{+\infty\}$ is then coercive modulo $Z_\C$. It is thus coercive modulo $G$, and $G/T_\C$ is further compact, \ie $T_\C$ is a parabolic subgroup (see Lemma~\ref{lem:coersub}). Since $T_\C\subset G$ is a torus, it follows that $G=T_\C$. This shows (i)$\Rightarrow$(ii), the converse being clear since $G=T_\C$ implies that the image of $N_\R(T)\hto\cE^{1,S}_\NA$ concides with $\cP_\R^S$. 
\end{proof}

%%%%%%%%%%%%%%%%%%%%%%%%%%%%%%%%%%%%%%%%%%%%%%%%%%%%%%%%%%%%
\subsection{Proof of Theorems~A \& B}
Theorem~A is a special case of Theorem~B. To prove the latter, set $G:=\Aut^S(X,L)$. Since the set $\cP_\R^S\subset\cE^{1,S}_\NA$ of $S$-equivariant real product test configurations corresponds to the set $\cD_G\subset\cE^{1,S}_\ra$ of toric $G$-directions (see Proposition~\ref{prop:qi5}), Theorem~\ref{thm:slwmab} together with Corollary~\ref{cor:mabinv} show that $(X,L)$ is (uniformly) $S$-equivariantly relatively weighted $\hK$-polystable iff the functional $\mab^\rel\colon\cE^{1,S}\to\R\cup\{+\infty\}$ is $G$-invariant and (uniformly) geodesically stable modulo $G$ (see Definition~\ref{defi:geodstab}). By Proposition~\ref{prop:wtred} and Lemma~\ref{lem:mabmin}, the latter functional further satisfies the assumptions of Theorem~\ref{thm:mincrit}, which thus implies Theorem~B.

As a consequence of Theorem~B, we obtain the following generalization of Zhuang's results in the Fano case~\cite{Zhua21}:
\begin{cor}\label{cor:equivstab} The following are equivalent:
\begin{itemize}
\item[(i)] $(X,L)$ is (uniformly) $\hK$-polystable; 
\item[(ii)] $(X,L)$ is (uniformly) $S$-equivariantly $\hK$-polystable for some compact subgroup $S\subset\Aut(X,L)$;
\item[(iii)] $(X,L)$ is (uniformly) $S$-equivariantly $\hK$-polystable for any compact subgroup $S\subset\Aut(X,L)$.
\end{itemize}
\end{cor}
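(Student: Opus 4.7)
My plan is to reduce the corollary to a statement about existence of $S$-invariant cscK metrics via Theorems~A and~B, and then to conclude by a Cartan-type fixed-point argument in $(\cE^1,\dd_1)$. Since we are in the unweighted non-relative case, Theorem~A identifies~(i) (in either form) with the existence of a cscK metric in $c_1(L)$, while Theorem~B applied with $T=\{e\}$ and $v=w=1$ identifies (uniform) $S$-equivariant $\hK$-polystability with the existence of an $S$-invariant cscK metric. Conditions~(i),~(ii),~(iii) thus become, respectively: existence of a cscK metric; existence of an $S$-invariant cscK metric for some compact subgroup $S\subset\Aut(X,L)$; and existence for every such $S$. The implications (iii)$\Rightarrow$(ii)$\Rightarrow$(i) are then trivial by specializing to $S=\{e\}$.

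The substantive remaining implication is
\begin{equation*}
\text{a cscK metric exists} \Longrightarrow \text{an $S$-invariant cscK metric exists, for every compact }S\subset\Aut(X,L).
\end{equation*}
To prove it, I would first note that the existence of a minimizer forces $\mab\colon\cE^1\to\R\cup\{+\infty\}$ to be bounded below, and hence to be outright $\Aut(X,L)$-invariant: indeed, by~\S\ref{sec:EL}, $\mab$ is quasi-invariant, and its associated group character $\chi_\mab\colon\Aut(X,L)\to\R$, being bounded below, must vanish. Consequently, the set $\cK\subset\cE^1$ of minimizers of $\mab$---which by~\cite{BDL2,CC2} coincides with the set of cscK metrics in $c_1(L)$---is $\Aut(X,L)$-invariant. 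It is also closed, as $\mab$ is lsc, and geodesically convex, as $\mab$ is geodesically convex along psh geodesics, so it inherits from $(\cE^1,\dd_1)$ the structure of a complete Busemann-convex geodesic space on which $\Aut(X,L)$ acts by isometries.

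For any compact subgroup $S\subset\Aut(X,L)$, the restriction of this action to $S$ is then a compact group of isometries of the Busemann-convex space $\cK$, and Proposition~\ref{prop:Buse}(iv) produces an $S$-fixed point in $\cK$, \ie an $S$-invariant cscK metric. This completes the proof of (i)$\Rightarrow$(iii). The main conceptual step is the reduction via Theorems~A and~B to a purely metric question about fixed points of a compact isometric action; no significant analytic obstacle arises in the fixed-point step itself, the only verification needed being that $\cK$ is closed and geodesically convex in $\cE^1$, which is immediate from the lower semicontinuity and geodesic convexity of $\mab$, and that $\mab$ is not merely quasi-invariant but genuinely $\Aut(X,L)$-invariant once a minimum exists.
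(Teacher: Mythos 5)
Your proof is correct, but it takes a genuinely different route from the paper's. The paper proves (i)$\Rightarrow$(iii) by invoking the Matsushima--Lichnerowicz theorem (via Proposition~\ref{prop:wtred}): the stabilizer of a cscK metric $\phi$ is a maximal compact subgroup of $\Aut^0(X,L)$, and since all maximal compacts are conjugate, one can replace $\phi$ by $g^\star\phi$ for a suitable $g$ so that the new metric is $S'$-invariant. Your argument instead exploits the convexity structure of the problem directly: you observe that the set $\cK$ of minimizers of $\mab$ (equivalently, of cscK metrics) is a nonempty, closed, geodesically convex subset of the complete Busemann-convex space $(\cE^1,\dd_1)$, hence itself a complete Busemann-convex metric space carrying an isometric action of the compact group $S$, and then apply Proposition~\ref{prop:Buse}~(iv) (the Basso/Cartan-type fixed-point theorem) to produce an $S$-fixed point. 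This is a valid and rather clean alternative: it bypasses the group-theoretic input (reductivity of $\Aut^0(X,L)$, conjugacy of maximal compacts) at the cost of invoking the fixed-point theorem for compact isometry groups of Busemann-convex spaces, which the paper already imports for other purposes. It is also, if anything, slightly more robust to the subgroup $S\subset\Aut(X,L)$ possibly meeting several connected components of $\Aut(X,L)$. Two small remarks: (a) the $\Aut(X,L)$-invariance of $\cK$ is even more immediate than you suggest, since cscK metrics are manifestly preserved by pullback under automorphisms, so the detour through the vanishing of the Futaki character of $\mab$---while correct---is not strictly needed; (b) you should make sure to observe that the restriction of the distinguished class of psh geodesics to $\cK$ is still distinguished in the sense of Definition~\ref{defi:disting} (closed under limits and restrictions), which follows from $\cK$ being closed and geodesically convex, so that Proposition~\ref{prop:Buse}~(iv) indeed applies.
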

In particular, one can take $S=T$ to be a maximal compact torus in $\Aut^0(X,L)$. As the the latter is reductive when (i)---(iii) hold, we then have $\Aut^T(X,L)=T_\C$. 
\begin{proof} Pick two (possibly trivial) compact subgroups $S,S'\subset\Aut(X,L)$, and assume $(X,L)$ is (uniformly) $S$-equivariantly $\hK$-polystable. By (the equivariant version of) Theorem~A, there exists an $S$-invariant csc metric $\phi\in\cH^S$. The Matsushima--Lichnerowicz theorem now shows that $\Aut^0(X,L)$ is reductive, and that the stabilizer of $\phi$ in $\Aut^0(X,L)$ is a maximal compact subgroup: see Proposition~\ref{prop:wtred}. After acting on $\phi$ by an automorphism, we may thus assume that its stabilizer contains $S'$ (but perhaps not $S$ anymore). The converse direction of Theorem~A then implies that $(X,L)$ is also (uniformly) $S'$-equivariantly $\hK$-polystable. 
\end{proof}

%
%%%%%%%%%%%%%%%%%%%%%%%%%%%%%%%%%%%%%%%%%%%%%%%%%%%%%%%%%%%%
\subsection{A valuative criterion for $\hK$-polystability}\label{sec:divstab}

The stability notions defined in~\S\ref{sec:stabnot} can appear rather intractable in practice, as they involve conditions on the $\dd_{1,\NA}$-completion of the space of ample test configurations, \ie the space $\cE^1_\NA$ of finite energy potentials. 

Building on~\cite{nakstab2}, we show here that uniform $\hK$-polystability admits a somewhat more concrete characterization in terms of convex combinations of divisorial valuations, \ie elements of the set $\cM_\div\subset\cM^1_\NA$ of divisorial measures. For simplicity we treat only the unweighted case.

As in~\cite{nakstab2}, we define 
$$
\b\colon\cM^1_\NA\to\R\cup\{+\infty\}
$$
by transporting the translation invariant functional $\mab_\NA\colon\cE^1_\NA\to\R\cup\{+\infty\}$ via the isomorphism $\MA_\NA\colon\cE^1_\NA/\R\simto\cM^1_\NA$. 

The restriction of $\b$ to divisorial measures can be computed as follows. Pick a finite set $\{v_1,\dots,v_r\}\subset X_\div$ of divisorial valuations, written as $v_i=d_i^{-1}\ord_{D_i}$ with $d_i\in\Q_{>0}$ and $D_1,\dots,D_r\subset Y$ prime divisors on a birational model $\rho\colon Y\to X$. 

By Proposition~\ref{prop:endivmeas}, the energy of any divisorial measure $\mu=\sum_{i=1}^r a_i\d_{v_i}$ with support in the $v_i$ can be expressed as the Legendre transform
$$
\jj_{L,\NA}(\mu)=\sup_{t\in\R^r_{\ge 0}}\left(S_L(t)-a\cdot t\right)
$$
of the concave function 
$$
S_L(t):=V_L^{-1}\int_0^\infty\vol\left(\rho^\star L-\sum_i (\la-t_i)_+ d_iD_i\right)d\la, 
$$
while~\cite[\S 4.2]{nakstab2} yields
$$
\b(\mu)=\sum_i a_i A_X(v_i)+\frac{d}{dt}\bigg|_{s=0}\jj_{L+s K_X,\NA}(\mu). 
$$
The $\b$-invariant of a divisorial measure can thus in principle be computed from the volume function on divisor classes, by integration and differentiation.

\medskip

Next consider a compact torus $T\subset\Aut^0(X,L)$, and set $N_\R=N_\R(T)$. There is a natural \emph{twisting action} $(\xi,v)\mapsto\xi\star v$ of $N_\R$ on $X_\NA^T$, such that $N_\Q$ preserves the subspace $X_\div^T$ of $T$-invariant divisorial valuations (see for instance~\cite[\S 2.4]{LiKE}). 

This induces a twisting action $(\xi,\mu)\mapsto\xi\star\mu$ of $N_\Q$ on the space $\cM_\div^T$ of $T$-invariant divisorial measures, and our valuative criterion can now be stated as follows (compare~\cite[Theorem~1.3]{LiKE}). 

\begin{thm}\label{thm:valcrit} Assume $\Aut^0(X,L)$ is reductive, and pick a maximal compact torus $T\subset\Aut(X,L)$. The following are equivalent: 
\begin{itemize}
    \item[(i)] $(X,L)$ is uniformly $\hK$-polystable; 
    \item[(ii)] there exists $\sigma>0$ such that 
    $$
    \b(\mu)\ge\sigma\inf_{\xi\in N_\Q} \jj_\NA(\xi\star\mu)\text{ for all }\mu\in\cM_\div^T.
    $$
\end{itemize}
\end{thm}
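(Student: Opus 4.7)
The strategy is to dualize uniform $\hK$-polystability to a coercivity condition on $\cM^{1,T}_\NA$ via the non-Archimedean Monge--Amp\`ere homeomorphism, and then exploit the density of $T$-invariant divisorial measures together with the compatibility of $\MA_\NA$ with the twisting action.

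First I would perform an equivariant reduction. By Corollary~\ref{cor:equivstab}, uniform $\hK$-polystability is equivalent to uniform $T$-equivariant $\hK$-polystability. Since $\Aut^0(X,L)$ is reductive and $T$ is a maximal compact torus therein, $\Aut^T(X,L)=T_\C$, and Proposition~\ref{prop:qi4} identifies $\cP_\R^T$ with the image of the quasi-isometric embedding $\xi\mapsto\f_\xi$ of $N_\R$ into $\cE^{1,T}_\NA$. The defining inequality thus reads
$$\mab_\NA(\f)\ge\sigma\inf_{\xi\in N_\R}\dd_{1,\NA}(\f,\f_\xi) \quad\text{for all } \f\in\cE^{1,T}_\NA.$$

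Next, I would transport this through the strong homeomorphism $\MA_\NA\colon\cE^{1,T}_\NA/\R\simto\cM^{1,T}_\NA$ of Theorem~\ref{thm:NAwMA}. By definition $\b$ corresponds to $\mab_\NA$, while the twisting action of $\xi\in N_\R$ on $X_\NA^T$ intertwines with the shift $\f\mapsto\f-\f_\xi$ on $T$-invariant potentials, so that $\xi\star\MA_\NA(\f)=\MA_\NA(\f-\f_\xi)$ modulo translations. Combining this with the quantitative comparisons $\dd_{1,\NA}(\f,\psi)\approx\ii_{1,\NA}(\f,\psi)$ and $\jj_\NA(\nu)\approx\d_{1,\NA}(\nu,\d_{v_\triv})$ (up to the universal constant, which may be absorbed into $\sigma$), the condition becomes
$$\b(\mu)\ge\sigma\inf_{\xi\in N_\R}\jj_\NA(\xi\star\mu) \quad\text{for all } \mu\in\cM^{1,T}_\NA.$$

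The last step is to restrict testing to $\mu\in\cM_\div^T$ and $\xi\in N_\Q$. For (i)$\Rightarrow$(ii), this amounts to restricting the measure and using continuity of $\xi\mapsto\jj_\NA(\xi\star\mu)$ on $N_\R$ together with the density of $N_\Q$ in $N_\R$ to get $\inf_{N_\Q}=\inf_{N_\R}$. For (ii)$\Rightarrow$(i), the strong density of $\cM_\div^T$ in $\cM^{1,T}_\NA$ (cf.~Lemma~\ref{lem:cvent} and the argument of Lemma~\ref{lem:dualapp}), together with lower semicontinuity of $\b$ and sufficient upper semicontinuity of $\mu\mapsto\inf_\xi\jj_\NA(\xi\star\mu)$ under strong approximation by divisorial measures, would propagate the inequality to all of $\cM^{1,T}_\NA$.

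The hardest part will be the twist--Monge--Amp\`ere intertwining in the second paragraph: establishing that $\xi\star\MA_\NA(\f)=\MA_\NA(\f-\f_\xi)$ modulo translations, and converting $\inf_\xi\dd_{1,\NA}(\f,\f_\xi)$ into $\inf_\xi\jj_\NA(\xi\star\mu)$ with the same multiplicative constant up to a universal factor. This is largely contained in~\cite{nakstab2}, but requires careful bookkeeping of normalizations. A secondary subtlety is to verify that the semicontinuity of $\mu\mapsto\inf_\xi\jj_\NA(\xi\star\mu)$ is strong enough to pass coercivity from $\cM_\div^T$ back to $\cM^{1,T}_\NA$; checking this may require adapting the approximation procedure of Proposition~\ref{prop:entenv}.
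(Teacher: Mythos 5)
Your overall strategy (equivariant reduction via Corollary~\ref{cor:equivstab}, transport through the Monge--Amp\`ere homeomorphism, density of divisorial measures) is essentially the paper's, but two of the concrete claims you flag as "the hardest part" are not just subtle --- they are stated incorrectly.

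First, the intertwining is not $\xi\star\MA_\NA(\f)=\MA_\NA(\f-\f_\xi)$ modulo translations. The paper constructs a genuine twist action $(\xi,\f)\mapsto\xi\star\f$ on $\cE^{1,T}_\NA$, induced by changing the $\C^\times$-action on test configurations by a cocharacter of $T$, and the equivariance one gets is $\xi\star\MA_\NA(\f)=\MA_\NA(\xi\star\f)$. This twist is not a translation by $\f_\xi$: already $\xi\star 0=\f_\xi$, whereas $0-\f_\xi=-\f_\xi$; and more structurally, the Fubini--Study map from filtrations to potentials is not additive (compare~\eqref{equ:FSprod}: $\f_{\rho+\xi}\le\f_\rho+\f_\xi$ with strict inequality in general). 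What the paper actually uses is that the $\star$-action is \emph{isometric} on $(\cE^{1,T}_\NA,\dd_{1,\NA})$, so $\dd_{1,\NA}(\f,\f_\xi)=\dd_{1,\NA}(\f,\xi\star 0)=\dd_{1,\NA}((-\xi)\star\f,0)$, and then normalizes $\sup\f=0$ to convert distance-to-zero into the energy $\jj_\NA$ via~\cite[Lemma~5.6]{nakstab1}.

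Second, for (ii)$\Rightarrow$(i) the semicontinuity you invoke runs the wrong way. Write $F(\mu):=\inf_{\xi}\jj_\NA(\xi\star\mu)$. As an infimum of continuous functions of $\mu$, $F$ is naturally \emph{upper} semicontinuous, i.e.\ $\limsup_i F(\mu_i)\le F(\mu)$. But along the approximating sequence $\mu_i\to\mu$ with $\b(\mu_i)\to\b(\mu)$ (provided by~\cite[Theorem~3.10]{nakstab2}, which is the key input), the inequality $\b(\mu_i)\ge\sigma F(\mu_i)$ passes to the limit as $\b(\mu)\ge\sigma\limsup_i F(\mu_i)$, and to conclude you need $\limsup_i F(\mu_i)\ge F(\mu)$ --- an lsc-type lower bound, which usc does not give. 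The clean way to close this gap is to pick, for each $i$, a (near-)minimizer $\xi_i$ of $\jj_\NA(\cdot\star\mu_i)$, establish that the $\xi_i$ stay in a compact subset of $N_\R$ (using coercivity of $\jj_\NA(\xi\star\cdot)$ as $\|\xi\|\to\infty$, uniformly along the sequence), and pass to a convergent subsequence $\xi_i\to\xi_\infty$ so that $\b(\mu)\ge\sigma\jj_\NA(\xi_\infty\star\mu)\ge\sigma F(\mu)$. Invoking "upper semicontinuity" of $F$, as you do, will not suffice.
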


\begin{proof} Twisting the $\C^\times$-action of a $T$-equivariant test configuration by a cocharacter of $T$ induces an action 
$$
N_\Q\times\cH^T_\NA\to\cH^T_\NA\quad (\xi,\f)\mapsto\xi\star\f, 
$$
with respect to which $\MA_\NA\colon\cH^T_\NA\to\cM^T_\div$ is equivariant (see~\cite[Lemma~2.19]{LiKE}). Arguing in terms of spectral measures as in the proof of Proposition~\ref{prop:qi4}, one easily checks that 
$$
\dd_{1,\NA}(\xi\star\f,\xi\star\f')=\dd_{1,\NA}(\f,\f'),\quad\dd_{1,\NA}(\xi\star\f,\xi'\star\f)\le C\|\xi-\xi'\|
$$
for all $\xi,\xi'\in N_\Q$, $\f,\f'\in\cH^T_\NA$ and a uniform constant $C>0$. As a consequence, the $N_\Q$-action on $\cH^T_\NA$ uniquely extends to an isometric action 
$$
N_\R\times\cE^{1,T}_\NA\to\cE^{1,T}_\NA\quad (\xi,\f)\mapsto\xi\star\f. 
$$
Note that $\xi\star 0=\f_\xi$ is the function considered in~\eqref{equ:FSprod}. Similarly, the action of $N_\Q$ on $\cM^T_\div$ extends to a continuous action of $N_\R$ on $\cM^{1,T}_\NA$, with respect to which $\MA_\NA\colon\cE^{1,T}_\NA/\R\simto\cM^{1,T}_\NA$ remains equivariant. 

Now (i) is equivalent to uniform $T$-equivariant $\hK$-stability (see Corollary~\ref{cor:equivstab}), \ie 
$$
\mab_\NA(\f)\ge\sigma\inf_{\xi\in N_\R}\dd_{1,\NA}(\f,\xi\star 0)=\sigma\inf_{\xi\in N_\R}\dd_{1,\NA}(\xi\star\f,0)
$$
for all $\f\in\cE^{1,T}_\NA$ and $\sigma>0$. By translation invariance of $\mab_\NA$, we may further assume that $\f\in\cE^{1,T}_\NA$ is normalized by $\sup\f=0$. Then 
$$
\dd_{1,\NA}(\xi\star\f,0)\approx\jj_\NA(\xi\star\f)=\jj_\NA(\xi\star\mu)
$$ 
with $\mu=\MA_\NA(\f)$, see~\cite[Lemma~5.6]{nakstab1}. Using the above discussion, we conclude that (i) is equivalent to 
$\b(\mu)\ge\sigma\inf_{\xi\in N_\R}\jj_\NA(\xi\star\mu)$ for $\mu\in\cM^{1,T}_\NA$ and $\sigma>0$, which is also equivalent to (ii) since any $\mu\in\cM^{1,T}_\NA$ can be written as a limit of $\mu_i\in\cM^T_\div$ such that $\b(\mu_i)\to\b(\mu)$ (see~\cite[Theorem~3.10]{nakstab2}). 
\end{proof}

%}

%
%%%%%%%%%%%%%%%%%%%%%%%%%%%%%%%%%%%%%%%%%%%%%%%%%%%%%%%%%%%%%%%%%%%
%
\appendix

%
%%%%%%%%%%%%%%%%%%%%%%%%%%%%%%%%%%%%%%%%%%%%%%%%%%%%%%%%%%%%
\section{Reductive groups, symmetric spaces, and Tits buildings}\label{sec:reductive}
In this appendix we collect some results about complex reductive groups and their associated geometric objects: symmetric spaces and conical Tits buildings. All the material here is surely known to experts, but we provide some details for the benefit of the reader.
%
%%%%%%%%%%%%%%%%%%%%%%%%%%%%%%%%%%%%%%%%%%%%%%%%%%%%%%%%%%%%
\subsection{The symmetric space of a complex reductive group} 
%As references for the material here we use~\cite{Humphreys,Milne}. 
Recall that a connected linear complex algebraic group $G$ is \emph{reductive} if its unipotent radical is trivial. Equivalently, $G$ is reductive iff it arises as the complexification of a compact (connected) Lie group $G_c$. The latter then sits as a maximal compact subgroup of $G$, and the Lie group exponential induces a polar decomposition 
$$
G_c\times\Lie G_c\simto G=G_ce^{i\Lie G_c}. 
$$
For later use we note:
\begin{lem}\label{lem:reduc} Assume $G$ is a complex reductive group with maximal compact subgroup $G_c\subset G$, and $S\subset G_c$ a compact subgroup. Then the identity component $G^S$ of the centralizer of $S$ is the complexification of the compact group $G_c^S$; in particular, it is reductive. 
\end{lem}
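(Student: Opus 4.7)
The plan is to exploit the polar (Cartan) decomposition of the complex reductive group $G$ and show that it is compatible with the conjugation action of $S$. Writing $P := \Lie G_c$, the polar decomposition asserts that the map
\[
G_c \times P \to G, \qquad (k,\xi) \mapsto k\, e^{i\xi}
\]
is a diffeomorphism, and in particular that every $g \in G$ has a \emph{unique} expression $g = k\, e^{i\xi}$ with $k\in G_c$ and $\xi\in P$. The conjugation action of any $s \in S \subset G_c$ on $G$ preserves $G_c$ (since $S \subset G_c$) and acts linearly on $P$ through $\Ad(s)$, so by functoriality of the exponential it preserves the polar decomposition.

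The first step is then to combine the uniqueness of the polar decomposition with this compatibility: an element $g = k\, e^{i\xi}$ commutes with all of $s \in S$ if and only if $s k s^{-1} = k$ and $\Ad(s)\xi = \xi$ for all $s$, that is, $k \in Z_{G_c}(S)$ and $\xi \in P^S := (\Lie G_c)^S$. Consequently the full centralizer factors as
\[
Z_G(S) = Z_{G_c}(S)\cdot \exp(iP^S),
\]
and the map $Z_{G_c}(S) \times P^S \to Z_G(S)$ is a diffeomorphism.

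The second step is to pass to identity components. Since $P^S$ is a linear subspace of $P$, it is connected, so the displayed diffeomorphism identifies the identity component $G^S$ of $Z_G(S)$ with $G_c^S \cdot \exp(iP^S)$. On the other hand, $G_c^S$ is a compact connected Lie group, and its Lie algebra is precisely $\{\xi \in P \mid \Ad(s)\xi = \xi\ \forall s \in S\} = P^S$. Therefore $G^S = G_c^S \cdot \exp(i\,\Lie G_c^S)$ is exactly the polar decomposition of the complexification of $G_c^S$, which in particular shows that $G^S$ is reductive with maximal compact subgroup $G_c^S$.

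The only genuine technical point is the compatibility of the polar decomposition with the $S$-action, which comes for free from the uniqueness statement together with the naturality of $\exp$. Everything else is bookkeeping about connected components; in particular the diffeomorphism $Z_{G_c}(S)\times P^S \simto Z_G(S)$ automatically forces the identity component on the left to match that on the right, so no separate connectedness argument is required beyond noting that $P^S$ is a vector space.
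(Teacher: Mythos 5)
Your proof is correct and uses essentially the same argument as the paper, namely uniqueness of the polar decomposition $G=G_c\,e^{i\Lie G_c}$ combined with its compatibility with the $\Ad(S)$-action. The only difference is that you spell out the passage from the full centralizer $Z_G(S)=Z_{G_c}(S)\cdot\exp(iP^S)$ to the identity component via connectedness of $P^S$, a step the paper elides.
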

\begin{proof} We are grateful to Michel Brion for the following elementary argument. By uniqueness of the polar decomposition $G=G_c e^{i\Lie G_c}$, we have  
$$
G^{S}=G_c^{S}e^{i(\Lie G_c)^{S}}
$$
where $(\Lie G_c)^{S}$ denotes the fixed point set of the adjoint action of $S$ on $\Lie G_c$, \ie the Lie algebra of $G_c^{S}$. We conclude
$$
G^S=G_c^{S} e^{i\Lie(G_c^{S})}, 
$$
and the result follows. 
\end{proof}

To a complex reductive group $G$ and a maximal compact subgroup $G_c$ is associated the symmetric space 
\[
\Sigma(G):=G/G_c,
\]
with basepoint $[e]\in \Sigma(G)$. Since all maximal compact subgroups of $G$ are conjugate, $\Sigma(G)$ is essentially independent of the choice of $G_c$, which is recovered as the stabilizer in $G$ of the basepoint. The polar decomposition induces a diffeomorphism 
\begin{equation}\label{equ:expred}
\Lie G_c\simto \Sigma(G)\quad \xi\mapsto [e^{i\xi}]. 
\end{equation}
The choice of a $G_c$-invariant scalar product on $\Lie G_c$ further turns $\Sigma(G)$ into a Riemannian symmetric space of non-compact type, and~\eqref{equ:expred} then coincides with the Riemannian exponential map at the basepoint $[e]\in \Sigma(G)$. 

The metric space $\Sigma(G)$ is $\CAT$ and in particular Busemann convex, so we can consider its asymptotic cone $\Sigma_\ra(G)$, consisting of geodesic rays in $\Sigma(G)$ modulo parallelism, see~\S\ref{sec:asym}. The exponential map induces an identification of $\Sigma_\ra(G)$ with the set $\Lie G_c$ endowed with the \emph{Tits metric}
$$
\dT(\xi,\xi')=\lim_{t\to\infty} t^{-1}\dd([e^{it\xi}],[e^{it\xi'}]).
$$
Note that $\dT(\xi,0)=\|\xi\|$ coincides with the length of $\xi$ in the given Euclidean norm $\|\cdot\|$ on $\Lie G_c$. 

\begin{exam}\label{exam:symtorus} Assume $G=T_\C$ is an algebraic torus, with maximal compact torus $T$. For any choice of $T$-invariant scalar product on $\Lie T$ we get canonical isometries
$$
\Sigma(T_\C)\simeq\Lie T\simeq\Sigma_\ra(T_\C). 
$$
\end{exam}
In the general case, by Busemann convexity of $\Sigma(G)$, for all $\xi,\xi'\in\Lie G_c$ we have 
$$
\dd([e^{i\xi}],[e^{i\xi'}])\le\dT(\xi,\xi').
$$
Combined with~\eqref{equ:expred}, this shows that the Tits topology of $\Lie G_c\simeq\Sigma_\ra(G)$ is stronger than its finite dimensional vector space topology. It is in fact strictly stronger in general, as the Tits topology is typically not locally compact (probably as soon as $G$ is not a torus). Indeed, as we will see shortly, $\Sigma_\ra(G)$ can be identified with the conical Tits building $\Sigma_\NA(G)$, and the latter generally fails to be locally compact, see Example~\ref{exam:building}.

%
%%%%%%%%%%%%%%%%%%%%%%%%%%%%%%%%%%%%%%%%%%%%%%%%%%%%%%%%%%%%
\subsection{The conical Tits building}\label{sec:Tits}
Now consider the ground field $\C$ equipped with the (non-Archimedean) trivial absolute value. An analogue of the symmetric space $\Sigma(G)$ is then the \emph{conical Tits building}\footnote{The space $\Sigma_\NA(G)$ can be viewed as the cone over the usual (spherical) Tits building of $G$. In French, the conical Tits building is known as an \emph{immeuble vectoriel}, see~\cite{Rou78}.} $\Sigma_\NA(G)$ defined as in~\cite[\S7.4.2]{BruhatTitsI}. 

To describe it, fix a maximal compact torus $T\subset G$, with normalizer $N_G(T)$. The (finite) Weyl group $W=N_G(T_\C)/T_\C$ acts on the character and cocharacter lattices
$$
M_\Z=M_\Z(T),\quad N_\Z=N_\Z(T),
$$
preserving the (finite) set $\Phi\subset M_\Z$ of roots of $(G,T)$, that is, nonzero eigenvalues for the adjoint action of $T$ on $\Lie G$. 

To each root $\g\in\Phi$ is associated a `transvection' subgroup $U_\g\simeq(\C,+)$ of $G$, normalized by $T$ and such that, for any $G$-module $V$ with weight decomposition $V=\bigoplus_{\a\in M_\Z(T)} V_\a$ with respect to $T$, we have 
\begin{equation}\label{equ:unipact}
U_\g\cdot V_\a\subset\bigoplus_{k\ge0} V_{\a+k\g}.
\end{equation}
for any $\a\in M_\Z(T)$. 

Up to conjugation, each parabolic subgroup of $G$ can now be written as 
\begin{equation}\label{equ:Pxi}
P_\xi
:=T_\C\prod_{\g\in\Phi,\,\langle\g,\xi\rangle\ge0}U_\g.
\end{equation}
for some $\xi\in N_\R$. When $\xi\in N_\Z$ is a cocharacter $\C^\times\to G$, $P_\xi$ can also be described as the set of $g\in G$ such that $\xi(\tau)g\xi(\tau)^{-1}$ admits a limit in $G$ as $\tau\to 0$ in $\C^\times$, see~\cite[\S 5.2]{GIT} (and Lemma~\ref{lem:Iwa} below for the general case). 

Following~\cite[\S5.4]{BerkBook}, we may now describe the conical Tits building of $G$ as 
\[
\Sigma_\NA(G):=G\times  N_\R/\!\sim,
\] 
where $(g,\xi)\sim(g',\xi')$ iff there exists $w\in N_G(T)$ such that $w\cdot\xi=\xi'$ and $g^{-1}g'w\in P_\xi$. This is designed so that:
\begin{itemize}
   \item the $G$-action on the first factor of $G\times N_\R$ descends to a $G$-action on $\Sigma_\NA(G)$, whose stabilizers range over all parabolic subgroups of $G$; more precisely, the stabilizer of $[(g,\xi)]$ coincides with $gP_\xi g^{-1}$.  
    \item the map $\xi\mapsto[(e,\xi)]$ defines an embedding $N_\R\hto\Sigma_\NA(G)$, whose images under the $G$-action, called \emph{apartments}, cover $\Sigma_\NA(G)$; 
    \item if a point of $\Sigma_\NA(G)$ lies in two different apartments, then the two corresponding points of $N_\R$ lie in the same $W$-orbit; 
\end{itemize}
For later use we note:
\begin{lem}\label{lem:descend} A map $f\colon N_\R\to\cS$ to a $G$-set $\cS$ (uniquely) extends to a $G$-equivariant map $f\colon\Sigma_\NA(G)\to\cS$ iff $f$ is $W$-equivariant and, for any $\xi\in N_\R$, $f(\xi)$ is fixed by $P_\xi$, \ie by $T_\C$ and by $U_\g$ for each root $\g\in\Phi$ such that $\langle\g,\xi\rangle\ge 0$. 
\end{lem}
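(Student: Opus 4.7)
The plan is to produce the extension by the forced formula and verify well-definedness, the whole argument being essentially a routine unfolding of the presentation $\Sigma_\NA(G)=G\times N_\R/\!\sim$.

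First, uniqueness is immediate from $G$-equivariance: since every point of $\Sigma_\NA(G)$ can be written as $[(g,\xi)]=g\cdot[(e,\xi)]$, any $G$-equivariant extension is forced to be given by the formula $\tilde f(g,\xi):=g\cdot f(\xi)$. For the necessity of the conditions, I would assume a $G$-equivariant extension exists. Recalling that the stabilizer of $[(e,\xi)]$ in $G$ is $P_\xi$ (noted among the structural bullet points preceding the statement), $G$-equivariance of $\tilde f$ forces $f(\xi)=\tilde f([(e,\xi)])$ to be $P_\xi$-fixed. The $W$-equivariance is then obtained from the elementary observation that, for any $w\in N_G(T)$ lifting $\bar w\in W$, one has $(w,\xi)\sim(e,w\cdot\xi)$ in the presentation of $\Sigma_\NA(G)$ (take the auxiliary element of $N_G(T)$ to be $w$ itself, so that $w^{-1}w=e\in P_\xi$), so $G$-equivariance yields $f(w\cdot\xi)=\tilde f([(e,w\cdot\xi)])=\tilde f([(w,\xi)])=w\cdot f(\xi)$; this is well-defined in $\bar w\in W$ precisely because $T_\C\subset P_\xi$ fixes $f(\xi)$.

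For the sufficiency direction, I would simply define $\tilde f\colon G\times N_\R\to\cS$ by $\tilde f(g,\xi):=g\cdot f(\xi)$ and check that it descends to $\Sigma_\NA(G)$. So suppose $(g,\xi)\sim(g',\xi')$, \ie there exists $w\in N_G(T)$ with $w\cdot\xi=\xi'$ and $h:=g^{-1}g'w\in P_\xi$. The identity to check, $g\cdot f(\xi)=g'\cdot f(\xi')$, rewrites as
\[
f(\xi)=(g^{-1}g')\cdot f(\xi')=h w^{-1}\cdot f(w\cdot\xi).
\]
Applying $W$-equivariance (which is meaningful in $\cS$ because the assumption that $f(\xi)$ is $P_\xi$-fixed in particular makes it $T_\C$-fixed), we have $f(w\cdot\xi)=w\cdot f(\xi)$, so the right-hand side becomes $h\cdot f(\xi)$, which equals $f(\xi)$ by the $P_\xi$-fixing hypothesis. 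Hence $\tilde f$ descends, and the resulting map $\Sigma_\NA(G)\to\cS$ is $G$-equivariant directly from the defining formula.

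There is no real obstacle in this proof; the only mildly delicate point is to keep track of the fact that $W$-equivariance is a statement about the action of representatives in $N_G(T)$, and its well-definedness modulo $T_\C$ is automatic from the $P_\xi$-fixing condition. Everything else amounts to bookkeeping with the equivalence relation defining $\Sigma_\NA(G)$.
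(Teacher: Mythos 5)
Your proof is correct. The paper states this lemma without proof, treating it as a routine unwinding of the quotient presentation $\Sigma_\NA(G)=G\times N_\R/\!\sim$, and your argument---the forced formula $\tilde f(g,\xi)=g\cdot f(\xi)$, its descent through the equivalence relation, and reading off necessity from the stabilizer $P_\xi$ of $[(e,\xi)]$ together with the relation $(w,\xi)\sim(e,w\cdot\xi)$---is precisely the expected one.
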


The $G$-action on $\Sigma_\NA(G)$ commutes with the scaling action of $\R_{>0}$ on $\Sigma_\NA(G)$, whose restriction to each apartment corresponds to the standard scaling action on the vector space $N_\R$. 
 
Crucially, the building property of $\Sigma_\NA(G)$ guarantees that any two points lie in a common apartment. Any $G$-invariant metric on $\Sigma_\NA(G)$ is thus determined by its restriction to $N_\R\hto\Sigma_\NA(G)$, which is necessarily $W$-invariant. Conversely, any $W$-invariant norm on $N_\R$ gives rise to a (unique) $G$-invariant metric on $\Sigma_\NA(G)$, which is further complete (see for instance~\cite[\S 3]{BE}). 

\medskip

Now fix as above a maximal compact subgroup $G_c\subset G$, a $G_c$-invariant Euclidean norm on $\Lie G_c$, and a maximal compact torus $T\subset G_c$. 

Since $W$ is also equal to $N_{G_c}(T)/T$, the restriction of the Euclidean norm on $\Lie G_c$ to $\Lie T$ is $W$-invariant, and we get a $W$-equivariant isometric embedding
$$
N_\R\simeq\Lie T\simto\Sigma_\ra(T_\C)\hto\Sigma_\ra(G),
$$
which maps $\xi\in\Lie T$ to the direction of the geodesic ray $\{[e^{it\xi}]\}_{t\ge 0}$ of $\Sigma(G)$, see Example~\ref{exam:symtorus}. Since every one-parameter subgroup of $G_c$ lies in a maximal compact torus, which is conjugated in $G_c$ to $T$, we further have $\Lie G_c=\bigcup_{g\in G_c} g\cdot\Lie T$, and hence 
\begin{equation}\label{equ:Sradu}
\Sigma_\ra(G)=\bigcup_{g\in G_c} g\cdot N_\R. 
\end{equation}

\begin{thm}\label{thm:isom} The isometric embedding $N_\R\hto\Sigma_\ra(G)$ uniquely extends to a $G$-equivariant isometric isomorphism $\Sigma_\NA(G)\simto\Sigma_\ra(G)$. 
\end{thm}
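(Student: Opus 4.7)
The plan is to apply Lemma~\ref{lem:descend} to the $W$-equivariant isometric embedding $N_\R\hookrightarrow\Sigma_\ra(G)$, $\xi\mapsto\f_\xi$, in order to produce a $G$-equivariant map $\tilde f\colon\Sigma_\NA(G)\to\Sigma_\ra(G)$ extending it, and then to verify that $\tilde f$ is a bijective isometry.

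First I would check the two hypotheses of Lemma~\ref{lem:descend}. The $W$-equivariance is essentially tautological: for $w\in N_{G_c}(T)\subset G_c$ the identity $we^{it\xi}=e^{it\mathrm{Ad}(w)\xi}\,w$ in $G$ descends to $[we^{it\xi}]=[e^{it\mathrm{Ad}(w)\xi}]$ in $\Sigma(G)$, so that passing to directions gives $w\cdot\f_\xi=\f_{\mathrm{Ad}(w)\xi}$. The $P_\xi$-fixing splits into two parts. The $T_\C$-part is elementary: writing $h=te^{i\eta}\in T_\C$ with $t\in T$, $\eta\in\Lie T$, the ray $[he^{it\xi}]=[e^{i(\eta+t\xi)}]$ differs from $[e^{it\xi}]$ by a bounded translation inside the flat apartment through $\xi$, and so has the same direction. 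The $U_\gamma$-part for $\langle\gamma,\xi\rangle\ge 0$ is the main obstacle. For this I would appeal to the Iwasawa-type decomposition (Lemma~\ref{lem:Iwa}) referenced in the paper's footnote: it expresses any $u\in U_\gamma$ in such a way that the conjugate $e^{-it\xi}ue^{it\xi}$ is controlled asymptotically when $\langle\gamma,\xi\rangle\ge 0$, yielding $d_{\Sigma(G)}([ue^{it\xi}],[e^{it\xi}])=o(t)$ and hence equality of directions in $\Sigma_\ra(G)$.

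Once $\tilde f$ is constructed, I would verify it is an isometry by a one-apartment argument. By construction $\tilde f|_{\Lie T}$ preserves the chosen $W$-invariant Euclidean norm, and by $G$-equivariance the same holds on every apartment $g\cdot\Lie T$. The global isometry then follows from the building axiom that any two points of $\Sigma_\NA(G)$ lie in a common apartment, combined with the decomposition~\eqref{equ:Sradu} which plays the analogous role on the $\Sigma_\ra$ side. Injectivity is then automatic, and surjectivity also follows from~\eqref{equ:Sradu}: every $g\cdot\Lie T\subset\Sigma_\ra(G)$ with $g\in G_c$ is the image of the apartment through $[g,\cdot]$ in $\Sigma_\NA(G)$.

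The decisive difficulty lies in the unipotent invariance in Step~1. The $G$-action on $\Sigma_\ra(G)$ is genuinely more subtle than a pure adjoint action for non-compact group elements, so a direct polar-decomposition calculation for $ue^{it\xi}$ can easily make the invariance look delicate or even false; the correct argument has to isolate the contracting effect of $U_\gamma$ under conjugation by $e^{it\xi}$ when $\langle\gamma,\xi\rangle\ge 0$, which is exactly the content of the structural Lemma~\ref{lem:Iwa} that the paper has set aside for this purpose.
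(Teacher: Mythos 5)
Your proposal follows essentially the same route as the paper: apply Lemma~\ref{lem:descend} after checking $W$-equivariance and $P_\xi$-invariance, with the latter supplied by Lemma~\ref{lem:Iwa}, and then read off isometry and bijectivity from the apartment decomposition on both sides. The only cosmetic deviation is that you split the $P_\xi$-verification into a $T_\C$-part and a $U_\gamma$-part, whereas Lemma~\ref{lem:Iwa} already states that $P_\xi$ as a whole is the stabilizer of $\xi$ in $\Sigma_\ra(G)$, making the $T_\C$ hand-check unnecessary.
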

The existence of such an isomorphism is well known, see e.g.~\cite{KL97}. The key point is the following result, which can be proved as in~\cite[pp.245--248]{BGS85} using the Iwasawa decomposition of $G$ with respect to any Weyl chamber in $\Lie T\simeq N_\R$ containing $\xi$. 

\begin{lem}\label{lem:Iwa} 
For any $\xi\in\Lie T\simeq N_\R$, the parabolic subgroup $P_\xi\subset G$ coincides with the stabilizer of $\xi\in\Sigma_\ra(G)$  \ie the set of $g\in G$ such that $\{[e^{-it\xi} g e^{it\xi}]\}_{t\ge 0}$ remains bounded in $\Sigma(G)$. 
\end{lem}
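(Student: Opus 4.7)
The plan is to prove the two inclusions separately. By $G$-invariance of the Riemannian metric on $\Sigma(G) = G/G_c$, one has
\[
d\bigl([e], [e^{-it\xi} g e^{it\xi}]\bigr) = d\bigl([e^{it\xi}], g\cdot[e^{it\xi}]\bigr),
\]
so the boundedness of $\{[e^{-it\xi} g e^{it\xi}]\}_{t\ge 0}$ in $\Sigma(G)$ is equivalent to the geodesic rays $\{[e^{it\xi}]\}$ and $\{g\cdot[e^{it\xi}]\}$ staying at bounded distance, i.e., to $g$ fixing the direction $\xi\in\Sigma_\ra(G)$ under the isometric $G$-action. Denote this set of $g$ by $H_\xi$; the claim is $H_\xi = P_\xi$.

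For the inclusion $P_\xi\subseteq H_\xi$ I would use the presentation~\eqref{equ:Pxi}. Conjugation by $e^{-it\xi}\in T_\C$ fixes $T_\C$ pointwise (since $T_\C$ is abelian), and acts on each root space $\Lie G_\gamma$ via the character $\gamma$, namely by the scalar $\gamma(e^{-it\xi})=e^{-t\langle\gamma,\xi\rangle}$ in the paper's conventions. Hence for $u=\exp(X)\in U_\gamma$ one has $e^{-it\xi}ue^{it\xi}=\exp(e^{-t\langle\gamma,\xi\rangle}X)$, which is constant if $\langle\gamma,\xi\rangle=0$ and converges to the identity as $t\to+\infty$ if $\langle\gamma,\xi\rangle>0$; in either case the trajectory remains in a compact subset of $G$. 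Since the conjugation map is a group homomorphism, finite products of such generators yield trajectories bounded in $G$, hence in $\Sigma(G)$; this covers all of $P_\xi$.

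For the reverse inclusion $H_\xi\subseteq P_\xi$ I would use the Iwasawa decomposition adapted to $\xi$. Choose a closed Weyl chamber $\bar C\subset\Lie T$ containing $\xi$, let $\Phi^+\subset\Phi$ be the associated positive roots (so $\langle\gamma,\xi\rangle\ge 0$ for $\gamma\in\Phi^+$), and set $A:=\exp(i\Lie T)\subset T_\C$ and $N:=\prod_{\gamma\in\Phi^+}U_\gamma$. The Iwasawa decomposition $G=G_c\cdot A\cdot N$ yields a real-analytic diffeomorphism $A\times N\simto\Sigma(G)$, $(a,n)\mapsto[an]$. For $g=kan$ in Iwasawa form, using that $A$ is abelian and normalizes $N$,
\[
e^{-it\xi}ge^{it\xi}=\bigl(e^{-it\xi}ke^{it\xi}\bigr)\cdot a\cdot n_t,\qquad n_t:=e^{-it\xi}ne^{it\xi}.
\]
The factor $n_t$ stays bounded in $N$ by the easy direction, as $\Phi^+\subseteq\{\gamma:\langle\gamma,\xi\rangle\ge 0\}$. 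Iwasawa-decomposing $k_t:=e^{-it\xi}ke^{it\xi}=k''_t\cdot a''_t\cdot n''_t$, the Iwasawa coordinates of $e^{-it\xi}ge^{it\xi}$ in $A\times N\simeq\Sigma(G)$ read $(a''_t a,\,a^{-1}n''_ta\cdot n_t)$, so boundedness in $\Sigma(G)$ forces $a''_t$ to remain in a compact subset of $A$.

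The main obstacle is then to show that, for $k\in G_c$, the Iwasawa $A$-component of $e^{-it\xi}ke^{it\xi}$ remains bounded as $t\to+\infty$ only when $k\in K_\xi:=G_c\cap P_\xi$; granting this, $g=kan$ with $k\in K_\xi$ yields $g\in K_\xi\cdot A\cdot N\subseteq P_\xi$ (since $A\subset T_\C$ and $N\subseteq\prod_{\langle\gamma,\xi\rangle\ge 0}U_\gamma$ by the choice of chamber), completing the proof. Geometrically, the ray $g\cdot[e^{it\xi}]$ converges as $t\to+\infty$ to a point of the flag manifold $G/P_\xi\simeq G_c/K_\xi$, which coincides with the basepoint $[eP_\xi]$ precisely when $g\in P_\xi$. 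Concretely, one invokes the Bruhat-type decomposition $G_c=\bigsqcup_w K_\xi\cdot w\cdot K_\xi$, with $w$ ranging over double-coset representatives of the Weyl group of $G_c$ modulo that of $K_\xi$: for $k\in G_c\setminus K_\xi$, a nontrivial such $w$ forces $e^{-it\xi}ke^{it\xi}$ to pick up nontrivial components in the opposite unipotent radical $\prod_{\langle\gamma,\xi\rangle<0}U_\gamma$, whose $e^{-it\xi}$-conjugates expand exponentially; this expansion is recorded in the Iwasawa $A$-coordinate $a''_t$ and forces it to diverge.
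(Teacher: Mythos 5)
Your reduction to the stabilizer of $\xi$ in $\Sigma_\ra(G)$, the easy inclusion $P_\xi\subseteq H_\xi$ via the presentation~\eqref{equ:Pxi}, and the choice of Iwasawa decomposition adapted to a Weyl chamber containing $\xi$ are all sound, and they match the route the paper takes by pointing to~\cite[pp.~245--248]{BGS85}. However, the converse inclusion $H_\xi\subseteq P_\xi$ is not actually established. A small point first: the inference ``boundedness in $\Sigma(G)$ forces $a''_t$ to remain in a compact subset of $A$'' does not follow just from $A\times N\simto\Sigma(G)$ being a diffeomorphism (a diffeomorphism need not be proper on one factor for the ambient metric); you need the classical estimate that the Iwasawa projection $H\colon G\to\Lie A$ satisfies $\|H(g)\|\le d([e],[g])$, which should be quoted or proved.

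The decisive gap is the step you yourself flag as ``the main obstacle''. The finite decomposition $G_c=\bigsqcup_w K_\xi\,w\,K_\xi$, with $w$ ranging over Weyl double-coset representatives, is false in general: take $G=\mathrm{SL}(2,\C)$, $G_c=\mathrm{SU}(2)$ and $\xi$ regular, so $K_\xi=T$; then $T\backslash\mathrm{SU}(2)/T$ is a closed interval (latitudes on the flag sphere $S^2$), not the two-element set indexed by $W$. The correct algebraic input is the Bruhat decomposition of the \emph{complex} group, $G=\bigsqcup_{w\in W_\xi\backslash W/W_\xi}P_\xi\,w\,P_\xi$. Since $H_\xi$ is a subgroup containing $P_\xi$, writing $g=p_1\tilde w p_2$ with $p_i\in P_\xi$ and $\tilde w\in N_G(T_\C)\cap G_c$ reduces one to showing $\tilde w\in H_\xi\Rightarrow\tilde w\in W_\xi$, which follows because $e^{-it\xi}\tilde we^{it\xi}=\tilde w\,e^{it(\xi-\tilde w^{-1}\xi)}$ and so $d([e],[e^{-it\xi}\tilde we^{it\xi}])=t\,\|\xi-\tilde w^{-1}\xi\|$; alternatively one argues infinitesimally on $\Lie G$ as in~\cite{BGS85}. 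As written, your argument rests on a false decomposition and does not close the inclusion $H_\xi\subseteq P_\xi$.
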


\begin{proof}[Proof of Theorem~\ref{thm:isom}] The only thing left to show is that $N_\R\to\Sigma_\ra(G)$ satisfies the two conditions of Lemma~\ref{lem:descend}. We already saw that it is $W$-equivariant, and the second condition follows from Lemma~\ref{lem:Iwa}. 
\end{proof}

\begin{exam}\label{exam:building} Assume $G=\GL(r,\C)$, $G_c=\mathrm{U}(r)$. Then $\Sigma(G)$ and $\Sigma_\NA(G)$ can respectively be identified with the space of Hermitian norms on $\C^r$ and the space of \na norms on $\C^r$ with respect to the trivial absolute value on $\C$. The latter space is not locally compact as soon as $r>1$. 
\end{exam}
%
%%%%%%%%%%%%%%%%%%%%%%%%%%%%%%%%%%%%%%%%%%%%%%%%%%%%%%%%%%%%%%%%%%%
%
\section{Regularity of minimizers}\label{sec:minreg}
In this section we fix a compact torus $T\subset\Aut(X,L)$, with moment polytope $P\subset M_\R$, and two weights $v,w\in C^\infty(M_\R)$ with $v$ is positive and log-concave on $P$ (and no further assumption on $w$ here). 

We also pick a compact subgroup $S\subset\Aut^T(X,L)$ containing $T$. Using~\cite{DJL2} and following the strategy of~\cite{CC2}, our goal in this appendix is to show:

\begin{thm}\label{thm:regmin} Any minimizer of the weighted Mabuchi functional $\mab_{v,w}\colon\cE^{1,S}\to\R\cup\{+\infty\}$ lies in $\cH^S$, and hence is an $S$-invariant $(v,w)$-weighted csc metric. 
\end{thm}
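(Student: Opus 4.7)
The plan is to adapt the Chen--Cheng scheme~\cite{CC2} to the weighted $S$-equivariant setting, relying crucially on the weighted a priori estimates of Di Nezza--Jubert--Lahdili~\cite{DJL1,DJL2} and Han--Liu~\cite{HLiu}, which extend the Chen--Cheng $L^\infty$, $C^{1,1}$ and higher-order bounds under the log-concavity hypothesis on $v$. Let $\phi\in\cE^{1,S}$ be a minimizer of $\mab_{v,w}$; in particular $\mab_{v,w}(\phi)<\infty$, so the weighted Chen--Tian formula~\eqref{equ:wtrelmab} combined with the {\sgh} estimates on $\rr_v$ and $\en_{vw}$ yields $\ent_v(\phi)<\infty$. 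Writing $\mu:=\MA_v(\phi)=e^{2F}\mu_\refe$, we thus have $F\in L^1(e^{2F}\mu_\refe)$, and by the weighted Calabi--Yau theorem (Theorem~\ref{thm:wMA}) $\phi$ is the unique (modulo constants) solution in $\cE^{1,S}$ of the weighted Monge--Amp\`ere equation $\MA_v(\cdot)=\mu$.

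First I would extract the weak Euler--Lagrange equation. Using $S$-invariant test functions $f\in C^\infty(X)^S$ and the envelope differentiation formula of Lemma~\ref{lem:diffen} applied to the weighted energy, combined with the Legendre duality~\eqref{equ:entleg} for the entropy and the concavity input of Lemma~\ref{lem:enconc}, one obtains that $\phi$ satisfies the weighted cscK equation
\[
S_v(\phi)=w(m_\phi)
\]
in a weak sense: for every $f\in C^\infty(X)^S$, the pairing of $f$ against the distributional weighted scalar curvature of $\ddcT\phi$ (computed from $\mu=\MA_v(\phi)$ via $S_v(\phi)\mu=-\MA_v^{\tfrac12\log\mu}(\phi)$) equals $\int_X f\,w(m_\phi)\mu$. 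This is the weighted analogue of the ``weak cscK equation'' identified in~\cite[\S 3]{CC2}.

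Next I would regularize by a twisted weighted continuity method. Following~\cite{DJL2}, fix a reference $\phi_\refe\in\cH^S$ and solve, for $\e\in(0,1]$, the perturbed weighted cscK equation
\[
S_v(\phi_\e)+\e\,\tr_{\phi_\e,v}(\ddcT\phi_\refe)=w(m_{\phi_\e})+c_\e,
\]
with $c_\e$ chosen to enforce mass compatibility (this uses the $T$-equivariance of both sides and the reductivity-free mass identity~\eqref{equ:scalmass}). By the weighted openness/closedness results of~\cite{DJL1,DJL2}, smooth $S$-invariant solutions $\phi_\e\in\cH^S$ exist and, by comparing weighted Mabuchi values, satisfy $\phi_\e\to\phi$ strongly in $\cE^{1,S}$ as $\e\to 0$, with a uniform bound $\ent_v(\phi_\e)\le\ent_v(\phi)+o(1)$. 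The log-concavity of $v$ now enters decisively: it underlies the weighted convexity of $\mab_{v,w}$ (Lemma~\ref{lem:hent}) needed to control the approximation, and it is the structural hypothesis behind the weighted Chen--Cheng estimates of~\cite{DJL2,HLiu}.

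Finally, apply those a priori estimates to the family $\phi_\e$: from the uniform entropy bound one derives a uniform $L^\infty$ bound, then a uniform $C^{1,1}$ estimate on $\ddc\phi_\e$, and then, via the $S$-invariant weighted elliptic bootstrap of~\cite[\S 5]{DJL2}, uniform $C^{k,\a}$ bounds for every $k$. Extracting a smooth $S$-invariant limit and using uniqueness in Theorem~\ref{thm:wMA} forces $\phi\in\cH^S$ and yields $S_v(\phi)=w(m_\phi)$ classically. The main obstacle I anticipate is the first step: making the weak Euler--Lagrange equation rigorous at a minimizer that is only known to lie in $\cE^{1,S}$, where neither $S_v(\phi)$ nor $m_\phi$ is \emph{a priori} defined pointwise; this requires combining the envelope-based differentiation of Lemma~\ref{lem:diffen} with the weighted Ding-type convexity inequalities in~\cite{AJL}, and is the place where the full strength of log-concavity of $v$ is first used.
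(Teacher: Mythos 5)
Your overall skeleton --- a twisted weighted continuity path, the a~priori estimates of Di~Nezza--Jubert--Lahdili and Han--Liu, a smooth limit --- is in the right spirit, and the ingredients you cite (Lemma~\ref{lem:hent}, the DJL estimates) are indeed used. But there is a genuine gap in the first step, and the paper's resolution of it is the key point you are missing.

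You propose to first derive a weak Euler--Lagrange equation for the minimizer $\phi\in\cE^{1,S}$, acknowledging at the end that this is the hard step. In fact, this step is bypassed entirely in the paper (following the Chen--Cheng strategy): one never differentiates $\mab_{v,w}$ at a minimizer of finite energy. The entropy term $\ent_v$ is not differentiable on $\cE^{1,S}$, and none of the tools you mention (Lemma~\ref{lem:diffen}, the Legendre formula~\eqref{equ:entleg}, the convexity inequalities in~\cite{AJL}) actually produce a distributional weighted cscK equation at $\phi$; the contact locus/envelope differentiation applies to the energy piece only, and the entropy is controlled by a sup, not by a derivative. The paper's replacement for this step is a comparison argument that requires no differentiation: one fixes $\tau\in\cH^S$, sets $F_s^\tau=\mab_{v,w}+s\,\jj_v(\tau,\cdot)$, and shows (Lemmas~\ref{lem:JEL}--\ref{lem:Cs}) that for $s>0$ this strictly convex functional has a unique smooth critical point $\phi_s\in\cC_s$ which is its minimizer. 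Since $\phi$ minimizes $\mab_{v,w}$ and $\phi_s$ minimizes $F_s^\tau$, one gets
\[
\mab_{v,w}(\phi_s)+s\,\jj_v(\tau,\phi_s)\le\mab_{v,w}(\phi)+s\,\jj_v(\tau,\phi)\le\mab_{v,w}(\phi_s)+s\,\jj_v(\tau,\phi),
\]
hence $\jj_v(\tau,\phi_s)\le\jj_v(\tau,\phi)$. \emph{This} is what furnishes the uniform $\dd_1$-bound on $\phi_s$, which feeds the DJL a~priori estimates (Lemma~\ref{lem:regmin}) and lets one run the path down to $s=0$.

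Your proposed convergence claim ``$\phi_\e\to\phi$ strongly with $\ent_v(\phi_\e)\le\ent_v(\phi)+o(1)$'' is also asserted without argument, and it is not what actually happens. The path $\phi_s$ converges smoothly (up to a subsequence) to some weighted cscK metric $\tilde\phi$ as $s\downarrow\inf\cS=0$, but $\tilde\phi$ is not directly identified with $\phi$ at this stage. The paper then runs a second loop: it takes $\tau=\phi_i$ for a sequence $\phi_i\in\cH^S$ with $\dd_1(\phi_i,\phi)\to0$, producing weighted cscK metrics $\tilde\phi_i$ with $\jj_v(\phi_i,\tilde\phi_i)\le\jj_v(\phi_i,\phi)\to0$. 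The $\tilde\phi_i$ are bounded in $C^\infty$ (DJL estimates again, now for unperturbed cscK metrics), converge to a smooth weighted cscK metric $\tilde\phi$, and passing to the limit gives $\jj_v(\phi,\tilde\phi)=0$, hence $\phi=\tilde\phi+\mathrm{cst}\in\cH^S$. Your appeal to ``uniqueness in Theorem~\ref{thm:wMA}'' does not deliver this: the Calabi--Yau theorem identifies metrics with the same weighted Monge--Amp\`ere measure, but nothing in your construction guarantees that the smooth limit and $\phi$ share the same $\MA_v$. What closes the argument is the strict positivity of $\jj_v$ off the diagonal (mod $\R$), not the injectivity of $\MA_v$.

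Finally, a smaller point: the paper's continuity path is determined by the functional $F_s^\tau$ rather than prescribed equation-side; its critical points solve $S_v(\phi)=(w-s\tilde v)(m_\phi)+s\,\tr_{\phi,v}(\ddcT\tau)$ with $\tilde v(\a)=n+\langle(\log v)'(\a),\a\rangle$ a non-constant correction term coming from Lemma~\ref{lem:JEL}. Your ansatz $S_v(\phi_\e)+\e\,\tr_{\phi_\e,v}(\ddcT\phi_\refe)=w(m_{\phi_\e})+c_\e$ with a constant $c_\e$ is therefore not quite the same path, and with a constant normalizer the functional whose critical points it describes need not be a strictly convex perturbation of $\mab_{v,w}$, so the uniqueness/minimality input (Lemma~\ref{lem:Cs}) --- which is what makes the comparison argument work --- would be lost.
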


\begin{cor}\label{cor:regmin} Assume $w>0$ on $P$. Then any minimizer of the relative weighted Mabuchi functional $\mab^\rel\colon\cE^{1,S}\to\R\cup\{+\infty\}$ is an $S$-invariant $(v,w)$-weighted extremal metric. 
\end{cor}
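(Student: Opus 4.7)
The plan is to deduce the corollary as an immediate formal consequence of Theorem~\ref{thm:regmin}. Recall from \S\ref{sec:wext2} that the relative weighted Mabuchi functional is, by construction,
\[
\mab^\rel = \mab_{v,\, w\ell^{\ext}},
\]
where $\ell^{\ext}\in N_\R(T)\oplus\R$ is the extremal affine linear function on $M_\R$ (whose existence is guaranteed by the assumption $w>0$ on $P$, which makes the normalization determining $\ell^{\ext}$ well-posed). By the very definition adopted in \S\ref{sec:wext2}, a $(v,w)$-weighted extremal metric is the same thing as a $(v,w\ell^{\ext})$-weighted csc metric, so the content of the corollary is exactly the regularity of minimizers of $\mab_{v,w\ell^{\ext}}$.

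Now the hypotheses of Theorem~\ref{thm:regmin}, as stated at the beginning of this appendix, require only that $v\in C^\infty(M_\R)$ be positive and log-concave on $P$, with no condition whatsoever on the second weight beyond smoothness on $M_\R$. Since $\ell^{\ext}$ is affine linear (hence smooth) on $M_\R$ and $w\in C^\infty(M_\R)$, the product $w\ell^{\ext}$ lies in $C^\infty(M_\R)$, and we may apply Theorem~\ref{thm:regmin} with the pair $(v,w\ell^{\ext})$. The conclusion is that any minimizer of $\mab^\rel=\mab_{v,w\ell^{\ext}}$ on $\cE^{1,S}$ lies in $\cH^S$ and is an $S$-invariant $(v,w\ell^{\ext})$-weighted csc metric, which, unwinding the definition, is an $S$-invariant $(v,w)$-weighted extremal metric. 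There is no real obstacle here, since the only potentially delicate point---that $\ell^{\ext}$ may change sign on $P$---is irrelevant: Theorem~\ref{thm:regmin} imposes no positivity assumption on the second weight, as it is the log-concavity of $v$ (inherited unchanged from the hypothesis of the corollary) that drives the a priori estimates of~\cite{DJL2,HLiu} used in its proof.
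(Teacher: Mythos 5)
Your proof is correct and follows exactly the same route as the paper's: identify $\mab^\rel=\mab_{v,w\ell^\ext}$, observe that a $(v,w)$-weighted extremal metric is by definition a $(v,w\ell^\ext)$-weighted csc metric, and apply Theorem~\ref{thm:regmin} with second weight $w\ell^\ext$. Your extra remark that Theorem~\ref{thm:regmin} places no positivity condition on the second weight (so the possible sign change of $\ell^\ext$ on $P$ is harmless) is a useful clarification that the paper leaves implicit.
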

\begin{proof} By definition, $\mab^\rel=\mab_{v,w\ell^\ext}$, and a $(v,w)$-weighted extremal metric is a $(v,w\ell^\ext)$-weighted csc metric. The result is thus a special case of Theorem~\ref{thm:regmin}. 
\end{proof}

For the moment fix $\tau\in\cH^S$, and recall that
$$
\jj_v(\tau,\phi)=\int(\tau-\phi)\MA_v(\phi)-\en_v(\tau)+\en_v(\phi)\ge 0.
$$
for $\phi\in\cE^{1,S}$. 

\begin{lem}\label{lem:JEL} The restriction of $\jj_v(\tau,\cdot)$ to $\cH^S$ is an {\EL} functional of 
$$
\phi\mapsto\MA_v^{\tau}(\phi)-\MA_v^\phi(\phi)=\left(\tr_{\phi,v}(\ddcT\tau)-\tilde v(m_\phi)\right)\MA_v(\phi)
$$
where 
$$
\tr_{\phi,v}(\ddcT\tau):=\tr_{\ddc\phi}(\ddc\tau)+\langle(\log v)'(m_\phi),m_\tau\rangle
$$
is the weighted trace of the equivariant curvature form $\ddcT\tau$, and $\tilde v\in C^\infty(P)$ is defined by 
$$
\tilde v(\a):=n+\langle(\log v)'(\a),\a\rangle. 
$$
Equivalently, we have 
\begin{equation}\label{equ:Jen}
\jj_v(\tau,\phi)=\en_v^\tau(\phi)-\en_v^\tau(\tau)+\en_{\tilde v}(\phi)-\en_{\tilde v}(\tau)
\end{equation}
for all $\phi\in\cH^S$ (and hence also on $\cE^{1,S}$). 
\end{lem}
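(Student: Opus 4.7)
The plan is to verify both claims by direct differentiation of $\jj_v(\tau,\phi)=\int(\tau-\phi)\MA_v(\phi)-\en_v(\tau)+\en_v(\phi)$. For the EL identity, I would differentiate in a direction $f\in C^\infty(X)^T$. Since $\en_v$ is an EL functional for $\MA_v$, the two $\int f\,\MA_v(\phi)$ contributions (one from differentiating $\en_v(\phi)$ and one from $-\int f\,\MA_v(\phi)$ in the product rule applied to $\int(\tau-\phi)\MA_v(\phi)$) cancel, leaving
\[
\delta_f\jj_v(\tau,\phi)=\int(\tau-\phi)\MA_v^f(\phi).
\]
The EL symmetry~\eqref{equ:sym} applied to $\MA_v$ converts this to $\int f\,\MA_v^{\tau-\phi}(\phi)$, where $\tau-\phi\in C^\infty(X)^T$ is interpreted as a function, with moment map $m_{\tau-\phi}=m_\tau-m_\phi$ by the moment map equation~\eqref{equ:moment} and $T$-invariance. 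Linearity of the formula~\eqref{equ:twMA} in the direction then yields $\MA_v^{\tau-\phi}(\phi)=\MA_v^\tau(\phi)-\MA_v^\phi(\phi)$, establishing the EL relation.

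For the explicit factorization I substitute directly into~\eqref{equ:twMA}. Combining the curvature and moment-map terms gives $\MA_v^\tau(\phi)=v(m_\phi)\bigl(\tr_{\ddc\phi}(\ddc\tau)+\langle(\log v)'(m_\phi),m_\tau\rangle\bigr)(\ddc\phi)^n=\tr_{\phi,v}(\ddcT\tau)\MA_v(\phi)$. The expression for $\MA_v^\phi(\phi)$ follows from a scaling computation: since $\phi+s\phi=(1+s)\phi$ is a metric on $(1+s)L$ with $m_{(1+s)\phi}=(1+s)m_\phi$ and $(\ddc((1+s)\phi))^n=(1+s)^n(\ddc\phi)^n$, differentiating $v((1+s)m_\phi)(1+s)^n(\ddc\phi)^n$ at $s=0$ yields $v(m_\phi)\bigl[n+\langle(\log v)'(m_\phi),m_\phi\rangle\bigr](\ddc\phi)^n=\tilde v(m_\phi)\MA_v(\phi)$, and subtracting gives the desired factored form.

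Finally, the cocycle identity~\eqref{equ:Jen} can be verified by checking that both sides vanish at $\phi=\tau$ (immediate from $\jj_v(\tau,\tau)=0$ and pairwise cancellation on the right) and that their $\phi$-derivatives agree. By the Schwarz identity
\[
\delta_f\delta_\tau\en_v=\delta_\tau\delta_f\en_v=\delta_\tau\int f\,\MA_v(\phi)=\int f\,\MA_v^\tau(\phi),
\]
the $\phi$-derivative of $\en_v^\tau$ in direction $f$ is $\int f\,\MA_v^\tau(\phi)$, and a parallel computation identifies the $\phi$-derivative of $\en_{\tilde v}$ with the measure needed to reconstruct $-\MA_v^\phi(\phi)$; equivalently, one can integrate the EL relation along the affine segment $\psi_s=(1-s)\tau+s\phi$ using~\eqref{equ:EL} and recognize the two resulting pieces as the indicated energy differences. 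The main technical obstacle throughout is careful bookkeeping of moment maps, alternately viewing $\tau-\phi$ as a function (with $m_{\tau-\phi}=m_\tau-m_\phi$) and as the infinitesimal linear combination of two metrics on $L$ (so that $\MA_v^{\tau-\phi}(\phi)$ decomposes as $\MA_v^\tau(\phi)-\MA_v^\phi(\phi)$ by direction-linearity of~\eqref{equ:twMA}); both interpretations must be shown to agree term-by-term, which is what makes the symmetry~\eqref{equ:sym} applicable in this weighted setting.
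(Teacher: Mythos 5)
Your derivation of the Euler--Lagrange identity is correct and follows the same route as the paper: differentiate $\jj_v(\tau,\cdot)$, watch the two $\int f\,\MA_v(\phi)$ terms cancel, pass $\int(\tau-\phi)\MA_v^f(\phi)$ through the symmetry~\eqref{equ:twquad} to get $\int f\,\MA_v^{\tau-\phi}(\phi)$, and use direction-linearity. Your scaling computation for $\MA_v^\phi(\phi)$ is a valid (if slightly longer) alternative to the paper's route via~\eqref{equ:twMAMA}, where one just plugs $\tau=\phi$ and observes $\tr_{\ddc\phi}(\ddc\phi)=n$.

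The weak spot is your sketch of the cocycle identity~\eqref{equ:Jen}. You assert that ``a parallel computation identifies the $\phi$-derivative of $\en_{\tilde v}$ with the measure needed to reconstruct $-\MA_v^\phi(\phi)$,'' but this is not so: by definition $(\en_{\tilde v})'(\phi)=\MA_{\tilde v}(\phi)=\tilde v(m_\phi)(\ddc\phi)^n$, whereas what is needed is $-\MA_v^\phi(\phi)=-\tilde v(m_\phi)\,v(m_\phi)(\ddc\phi)^n=-\MA_{v\tilde v}(\phi)$. These differ by a factor of $-v(m_\phi)$. So the right-hand side of~\eqref{equ:Jen} as printed does not differentiate to $\MA_v^\tau(\phi)-\MA_v^\phi(\phi)$; the $\en_{\tilde v}$ terms should in fact read $-\en_{v\tilde v}(\phi)+\en_{v\tilde v}(\tau)$. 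This is evidently a slip in the paper's statement (the paper's own proof verifies only the EL identity, then says ``the result follows'' without checking the displayed cocycle, and the only downstream use, in Lemma~\ref{lem:Jconv}, is unaffected since $\en_{v\tilde v}$ is also affine linear on psh geodesics). The point is that had you actually carried out the ``parallel computation'' you invoke, you would have discovered the mismatch; as written, your justification of~\eqref{equ:Jen} is incorrect, and this is the one genuine gap in an otherwise sound reproduction of the paper's argument.
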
 
\begin{proof} Pick $f\in C^\infty(X)^T$. By~\eqref{equ:twder} and~\eqref{equ:twquad}, we have 
\begin{multline*}
\frac{d}{ds}\bigg|_{s=0}\int(\tau-(\phi+sf))\MA_v(\phi+sf))=-\int f\,\MA_v(\phi)+\int(\tau-\phi)\MA_v^f(\phi) \\ =\int f\left(\MA_v^{\tau-\phi}(\phi)-\MA_v(\phi)\right)=\int f\left(\MA_v^\tau(\phi)-\MA_v^\phi(\phi)-\MA_v(\phi)\right). 
\end{multline*}
Since $\en_v'(\phi)=\MA_v(\phi)$, we infer
$$
\frac{d}{ds}\bigg|_{s=0}\jj_v(\tau,\phi+s f)=\int f\left(\MA_v^{\tau}(\phi)-\MA_v^\phi(\phi)\right),
$$
which also equals 
$$
\int f\left(\tr_{\phi,v}(\ddcT\tau)-\tilde v(m_\phi)\right)\MA_v(\phi)
$$
in view of~\eqref{equ:twMAMA}. The result follows. 
\end{proof}

\begin{lem}\label{lem:Jconv} The functional $\jj_v(\tau,\cdot)\colon\cE^{1,S}\to\R$ is geodesically strictly convex (modulo the translation action of $\R$). 
\end{lem}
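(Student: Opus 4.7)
The plan is to use the decomposition of $\jj_v(\tau,\cdot)$ provided by Lemma~\ref{lem:JEL} to split the functional along any psh geodesic into a convex piece and an affine piece, and then invoke a weighted Berndtsson-type strict convexity theorem for the twisted energy.

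First, let $\{\phi_t\}_{t\in[0,1]}$ be a psh geodesic in $\cE^{1,S}$. By~\eqref{equ:Jen},
\[
\jj_v(\tau,\phi_t)=\bigl[\en_v^\tau(\phi_t)-\en_v^\tau(\tau)\bigr]+\bigl[\en_{\tilde v}(\phi_t)-\en_{\tilde v}(\tau)\bigr].
\]
Since $\tau\in\cH^S$ is smooth psh and $v>0$ on $P$, Lemma~\ref{lem:enconc}(iii) shows that the first bracket is convex in $t$. For the second bracket, I would write $\tilde v=(\tilde v+C)-C$ with $C>0$ large enough that $\tilde v+C\ge 0$ on $P$; by linearity of $v\mapsto \en_v$ together with Lemma~\ref{lem:enconc}(ii), both $\en_{\tilde v+C}(\phi_t)$ and $\en_C(\phi_t)$ are affine in $t$, hence so is $\en_{\tilde v}(\phi_t)$. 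This establishes convexity of $t\mapsto \jj_v(\tau,\phi_t)$ and reduces strict convexity modulo $\R$-translation to the corresponding statement for $t\mapsto \en_v^\tau(\phi_t)$.

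For this last step, I would invoke the weighted analogue of Berndtsson's strict positivity theorem: when $\tau$ is smooth strictly psh and $v>0$ is log-concave on $P$, the convex function $t\mapsto \en_v^\tau(\phi_t)$ is affine only if the $S^1$-invariant psh metric $\Phi$ on $p_1^\star L|_\DD$ associated to $\{\phi_t\}$ is constant in the $X$-direction modulo $\R$, \ie $\phi_t-\phi_0\in\R$. Concretely, the distributional Laplacian on $\DD$ of $t\mapsto\en_v^\tau(\phi_t)$ can be written as an integral over $X\times\DD$ of a positive semi-definite quadratic form in the first derivatives of $\Phi$, and strict positivity of $\ddc\tau$ combined with log-concavity of $v$ makes this form positive definite transversally to the $\R$-action. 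This is the weighted extension of the strict convexity argument of~\cite{BerBer}, which in the form needed here is essentially carried out in~\cite{Lah23,AJL}.

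The main obstacle is precisely this last strict convexity step: the convexity itself is immediate from the decomposition and known results, but extracting strict convexity modulo translation requires the detailed second-variation computation for the weighted twisted energy along a psh geodesic, and the log-concavity of $v$ is essential to control the sign of the weight-correction terms that arise.
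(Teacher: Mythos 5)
Your proof is correct and takes essentially the same route as the paper: decompose via~\eqref{equ:Jen}, observe that $\en_{\tilde v}$ is affine on psh geodesics (your sign-splitting $\tilde v=(\tilde v+C)-C$ makes explicit the extension of Lemma~\ref{lem:enconc}(ii) beyond nonnegative weights, which the paper leaves implicit), and reduce strict convexity to that of $\en_v^\tau$. The weighted Berndtsson-type strict convexity you gesture at in your final paragraph is precisely~\cite[Corollary~4]{Lah23}, which the paper cites directly, so the step you flag as the main obstacle is a packaged result rather than something to re-derive from the second variation.
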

\begin{proof} Since $\tau\in\cH^S$, $\en_v^\tau$ is geodesically strictly convex mod $\R$ on $\cE^{1,T}$, see~\cite[Corollary~4]{Lah23}. On the other hand, $\en_{\tv}$ is affine linear on geodesics, and the result thus follows from~\eqref{equ:Jen}. 
\end{proof}

For each $s\in\R_{>0}$ consider the functional $F_s^\tau\colon\cE^{1,S}\to\R\cup\{+\infty\}$ defined by 
$$
F_s^\tau(\phi):=\mab_{v,w}(\phi)+s\jj_v(\tau,\phi). 
$$
By Lemma~\ref{lem:JEL}, $F_s^\tau\colon\cH^S\to\R$ satisfies 
$$
(F_s^\tau)'(\phi)=\left(w(m_\phi)-S_v(\phi)+s(\tr_{\phi,v}(\ddcT\tau)-\tilde v(m_\phi))\right)\MA_v(\phi),
$$
and hence gives rise to the weighted version of Chen's continuity path considered\footnote{Note that the weighted scalar curvature in~\cite{DJL2}, which follows the convention of~\cite{Lah19}, differs from ours by a factor $v$.} in~\cite[\S 3.1]{DJL2}. 
Denote by 
$$
\cC_s:=\{\phi\in\cH^S\mid (F_s^\tau)'(\phi)=0\}
$$
the set of critical points of $F_s^\tau$ on $\cH^S$. It consists of all $\phi\in\cH^S$ such that 
$$
S_v(\phi)=(w-s\tilde v)(m_\phi)+s\tr_{\phi,v}(\ddcT\tau),
$$
\ie $(s\tau)$-twisted, $(v,w-s\tilde v)$-weighted csc metrics.

\begin{lem}\label{lem:Cs} For any $s>0$, $\cC_s$ consists of at most one point (modulo $\R$) which minimizes $F_s^\tau$. 
%For $s=0$, $\cC_0\subset\cH^S$ is a finite dimensional closed submanifold, each of whose component is an $S$-orbit. 
\end{lem}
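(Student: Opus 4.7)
The plan is to exhibit $F_s^\tau$ as \emph{strictly} geodesically convex modulo $\R$ on $\cE^{1,S}$ for every $s>0$, and then to deduce from the {\EL} condition that each $\phi\in\cC_s$ is in fact a minimizer of $F_s^\tau$. Uniqueness modulo $\R$ will then follow immediately.

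For the first step, recall that by~\cite[Theorem~6.1]{AJL} the weighted Mabuchi functional $\mab_{v,w}$ is geodesically convex on $\cE^{1,T}$, while by Lemma~\ref{lem:Jconv} the functional $\jj_v(\tau,\cdot)$ is \emph{strictly} geodesically convex modulo translations. Adding $s>0$ times the latter to the former produces a strictly geodesically convex (mod $\R$) functional $F_s^\tau\colon\cE^{1,S}\to\R\cup\{+\infty\}$, which is moreover strongly lsc by Lemma~\ref{lem:holdpert}, Proposition~\ref{prop:hent}, and the {\sgh} regularity of the twisted and untwisted weighted energies from Proposition~\ref{prop:MAext}.

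Now pick $\phi\in\cC_s$. By Lemma~\ref{lem:JEL} (combined with the standard {\EL} formula $\mab_{v,w}'(\phi)=(w(m_\phi)-S_v(\phi))\MA_v(\phi)$), the condition $(F_s^\tau)'(\phi)=0$ says exactly that the linear variation of $F_s^\tau$ at $\phi$ along $C^\infty(X)^S$ vanishes. For any $\psi\in\cE^{1,S}$, let $\{\phi_t\}_{t\in[0,1]}$ be the psh geodesic in $\cE^{1,S}$ joining $\phi_0=\phi$ and $\phi_1=\psi$. Since $\phi\in\cH^S$, a standard regularity result (argued as in~\cite[\S3]{DJL2} in the weighted setting, following the scheme of~\cite{CC2}) gives that the right derivative at $t=0$ of the convex function $t\mapsto F_s^\tau(\phi_t)$ can be computed by pairing $(F_s^\tau)'(\phi)$ with the initial velocity $\dot\phi_0^+$, hence vanishes. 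Geodesic convexity then yields $F_s^\tau(\psi)\ge F_s^\tau(\phi)$, so $\phi$ is a minimizer.

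Finally, if $\phi_0,\phi_1\in\cC_s$, the previous step gives $F_s^\tau(\phi_0)=F_s^\tau(\phi_1)=\inf F_s^\tau$. Convexity along the psh geodesic $\{\phi_t\}$ joining them forces $F_s^\tau(\phi_t)$ to be constant, and strict convexity modulo $\R$ then forces the geodesic to be a translation, i.e.\ $\phi_1=\phi_0+c$ for some $c\in\R$. The main obstacle in the argument is the second step: although the {\EL} condition is an identity of measures and the matching infinitesimal statement along smooth affine perturbations is automatic, translating it into the vanishing of the initial slope of $F_s^\tau$ along psh geodesics whose other endpoint is only of finite energy requires either the approximation procedure and strong lsc of $F_s^\tau$ recalled above, or invoking directly the regularity of geodesics emanating from a smooth metric established in the weighted setting by~\cite{DJL2}.
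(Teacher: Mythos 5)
Your proposal is correct and follows essentially the same route as the paper's own (extremely terse) proof: establish strict geodesic convexity of $F_s^\tau$ modulo $\R$ from the geodesic convexity of $\mab_{v,w}$ plus Lemma~\ref{lem:Jconv}, show that any critical point is a minimizer, and deduce uniqueness from strict convexity. The paper compresses all of this into ``the result follows,'' so you have fleshed out what is implicit there.

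You correctly flag the one nontrivial step: passing from ``$(F_s^\tau)'(\phi)=0$ as a measure'' (an {\EL} identity, tested against smooth affine perturbations) to ``the initial slope of $t\mapsto F_s^\tau(\phi_t)$ along the psh geodesic from $\phi$ to an arbitrary $\psi\in\cE^{1,S}$ is nonnegative.'' The energy terms $\rr_v$, $\en_{vw}$, $\en_v^\tau$, $\en_{\tilde v}$ are {\sgh} and their slopes at $t=0$ are the pairings with the initial velocity $\dot\phi_0^+$, which is bounded by geodesic regularity, so the measure-pairing makes sense and vanishes there. The delicate point is $\ent_v$, which is only known to be convex along geodesics; the clean statement one actually needs is the one-sided inequality $\frac{d}{dt}\big|_{t=0^+}\ent_v(\phi_t)\ge\int\dot\phi_0^+\,\ent_v'(\phi)\,\MA_v(\phi)$, not an equality, and this is the weighted analogue of the Berman--Berndtsson inequality that the paper quotes via~\cite[Theorem~1]{Lah19} (for $\mab_{v,w}$) and~\cite{DJL2} (for the continuity path). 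Your proposed appeal to the regularity of geodesics emanating from a smooth metric is the right mechanism, though as stated ``can be computed by pairing... hence vanishes'' slightly overstates what is known; the inequality, which is all you use, is the correct formulation. This matches the paper's implicit reliance on the same circle of results.
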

\begin{proof} Since $\mab_{v,w}$ is geodesically convex, Lemma~\ref{lem:Jconv} implies that $F_s^\tau$ also is geodesically strictly convex mod $\R$ on $\cE^{1,T}$ for $s>0$, and the result follows. 
%In the setting of extremal metrics (and when $G=\C^\times$ is trivial), the second point is a classical result due to Calabi~\cite{Cal}. Let us briefly explain how to prove it in our weighted context, using the implicit function theorem. 
\end{proof}

The existence of a minimizer of $\mab_{v,w}$ on $\cE^{1,S}$ implies that it is bounded below; it is thus automatically translation invariant, which amounts to saying that 
\begin{equation}\label{equ:normvvw}
\int \mab_{v,w}'(\phi)=\int_X (w(m_\phi)-S_v(\phi))\MA_v(\phi)=\deg_{vw}(L)+\deg_v'(L;K_X)=0. 
\end{equation}

\begin{lem}\label{lem:regmin} The set $\cS:=\{s\in\R_{>0}\mid\cC_s\ne\emptyset\}$ is non-empty and open. Furthermore, if $s\in \cS$, $\phi_s\in\cC_s$ and $C>0$ satisfy $s\le C$, $\dd_1(\phi_s)\le C$, then $\phi_s$ is $C^\infty$-bounded, \ie $\|\phi_s\|_{C^k}\le A_k(C,\tau)$ for all $k\in\N$. 
\end{lem}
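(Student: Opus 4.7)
The plan is to run Chen's continuity method along the family $\{F_s^\tau\}_{s>0}$, in the weighted form developed by Di Nezza--Jubert--Lahdili~\cite{DJL1,DJL2} and Han--Liu~\cite{HLiu}. The three ingredients are: non-emptiness of $\cS$ via an endpoint analysis at $s=\infty$; openness via an implicit function theorem argument powered by Lemma~\ref{lem:Jconv}; and the quantitative $C^\infty$-bound via the weighted Chen--Cheng estimates.

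First, to show $\cS\ne\emptyset$, I will start the continuity method from $s=\infty$. Setting $\e=1/s$, the critical point equation
$$
w(m_\phi)-S_v(\phi)+s(\tr_{\phi,v}(\ddcT\tau)-\tilde v(m_\phi))=0
$$
rewrites as $\e(w(m_\phi)-S_v(\phi))+\tr_{\phi,v}(\ddcT\tau)-\tilde v(m_\phi)=0$. At $\e=0$, the choice $\phi=\tau$ is an explicit solution, since $\tr_{\tau,v}(\ddcT\tau)=\tilde v(m_\tau)$ by the very definition of $\tilde v$ in Lemma~\ref{lem:JEL}. I will then apply the implicit function theorem in suitable H\"older spaces of $S$-invariant functions, modulo $\R$-translations, to produce $\phi_\e\in\cC_{1/\e}$ for all $\e$ sufficiently close to $0$; this is essentially the existence step worked out in~\cite{DJL2}.

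For the openness of $\cS$, I will linearize at a given $\phi_s\in\cC_s$. The Hessian of $F_s^\tau$ at $\phi_s$ is a self-adjoint elliptic fourth-order operator---a $v$-weighted Lichnerowicz-type operator---perturbed by the strictly positive second-order operator obtained by differentiating $s(\MA_v^\tau-\MA_v^\phi)$. Strict geodesic convexity modulo $\R$ from Lemma~\ref{lem:Jconv} forces its kernel in $C^\infty(X)^S$ to consist of constants only: otherwise one could build a smooth non-constant zero-Hessian direction and contradict strict convexity. Fredholm theory and the implicit function theorem applied on $\cH^S/\R$ then yield a smooth family $\phi_{s'}\in\cC_{s'}$ for all $s'$ in a neighborhood of $s$.

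The main obstacle, and the content of the quantitative assertion, is the $C^\infty$ a priori bound, and this is where the proof is delicate. I will rewrite the critical point equation as a coupled system of a weighted complex Monge--Amp\`ere equation $\MA_v(\phi)=e^F\mu_\refe$ together with a second-order PDE for $F$ of the schematic form $\Delta_{\phi,v}F=-w(m_\phi)+s(\tr_{\phi,v}(\ddcT\tau)-\tilde v(m_\phi))+(\text{l.o.t.})$, so that the problem fits into the weighted Chen--Cheng framework. The bound $\dd_1(\phi_s)\le C$ controls the weighted entropy $\ent_v(\phi_s)$ via Proposition~\ref{prop:hent}, which is the classical entry point for such estimates. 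Feeding this into the weighted $L^\infty$ estimate of~\cite{DJL1,HLiu}, then the weighted $C^{1,\bar 1}$ and higher-order Schauder estimates of~\cite{HLiu,DJL2}, I will obtain $\|\phi_s\|_{C^k}\le A_k(C,\tau)$ uniformly on $\{s\le C\}$. The hard work lies entirely in invoking and slightly adapting those deep weighted analogues of Chen--Cheng's estimates, which are the principal new technical inputs beyond the unweighted case.
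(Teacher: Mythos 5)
Your structure for non-emptiness and openness of $\cS$ matches the paper: both ultimately reduce to the weighted Hashimoto--Zeng result and its implicit function theorem consequence, which the paper cites as \cite[Corollary~3.4, Theorem~3.5]{DJL2}. Your implicit-function-theorem argument for openness, powered by the strict convexity of Lemma~\ref{lem:Jconv}, is exactly what is encoded there.

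However, there is a genuine gap in your proof of the quantitative $C^\infty$ bound, at the step where you derive the entropy bound. You write that ``the bound $\dd_1(\phi_s)\le C$ controls the weighted entropy $\ent_v(\phi_s)$ via Proposition~\ref{prop:hent}.'' Proposition~\ref{prop:hent} asserts that $\ent_v$ is \emph{strongly lsc}, meaning that sublevel sets of $\ent_v$ intersected with closed $\dd_1$-balls are compact. This is a lower-semicontinuity/compactness statement and yields no upper bound on $\ent_v$ from a $\dd_1$-bound alone; closed balls in $\cE^{1,S}$ contain metrics of arbitrarily large entropy. Concretely, a bound on $\dd_1(\phi_s)$ does not bound $\ent_v(\phi_s)$.

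The entropy bound must instead come from the fact that $\phi_s$ \emph{minimizes} $F_s^\tau$, which you never invoke (it is Lemma~\ref{lem:Cs}). The correct argument: since $\phi_s$ is a minimizer, $F_s^\tau(\phi_s)\le F_s^\tau(\phi_\refe)$. The functionals $\rr_v$, $\en_{vw}$ and $\jj_v(\tau,\cdot)$ are all \sgh\ (Proposition~\ref{prop:MAext} and Lemma~\ref{lem:JEL}), hence bounded in terms of $\dd_1(\phi_s)\le C$, and $s\jj_v(\tau,\phi_s)$ is bounded using $s\le C$ as well. Subtracting these from $F_s^\tau(\phi_s)$ then bounds $\ent_v(\phi_s)$ in terms of $C$ and $\tau$. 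Once the entropy is bounded, your invocation of the weighted Chen--Cheng a~priori estimates (\cite[Theorem~2.1]{DJL2}) and elliptic bootstrapping is the right conclusion and matches the paper. But as written, the pivotal passage from the hypotheses of the lemma to an entropy bound is unjustified, and the minimizer property is the missing ingredient.
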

\begin{proof} The first part follows from the weighted version of~\cite{Has19,Zen} established in~\cite[Corollary~3.4, Theorem~3.5]{DJL2}. Strictly speaking, their proof deals with the case $S=T$, but the argument can be restricted without change to $S$-invariant metrics. To see the second part, note that $\jj_v(\tau,\phi_s)$ and the energy part of $\mab_{v,w}(\phi_s)$ are both bounded in terms of $C$. Since $F_s^\tau(\phi_s)\le F_s^\tau(\phi_\refe)=0$, it follows that $\ent_v(\phi_s)$ is bounded in terms of $C$. The desired $C^\infty$-bound now follows from~\cite[Theorem~2.1]{DJL2} combined with standard ellipticity estimates. 
\end{proof}

\begin{proof}[Proof of Theorem~\ref{thm:regmin}] Assume $\phi\in\cE^{1,S}$ minimizes $\mab_{v,w}$. Pick as above $\tau\in\cH^S$, and use the notation of Lemma~\ref{lem:regmin}. For each $s\in\cS$, $\phi$ and $\phi_s$ respectively minimize $\mab_{v,w}$ and $F_s^\tau=\mab_{v,w}+s\jj_v(\tau,\cdot)$. Thus 
$$
\mab_{v,w}(\phi_s)+s\jj_v(\tau,\phi_s)\le\mab_{v,w}(\phi)+s\jj_v(\tau,\phi)\le\mab_{v,w}(\phi_s)+s\jj_v(\tau,\phi), 
$$
and hence $\jj_v(\tau,\phi_s)\le\jj_v(\tau,\phi)$. This yields an a priori bound $\dd_1(\phi_s)\le C$, and hence an \emph{a priori} $C^\infty$-bound on $\phi_s$, by Lemma~\ref{lem:regmin}. Now take a sequence $s_j\in S$ that converges to $s_\infty:=\inf\cS$, and $\phi_j\in\cC_{s_j}$. Then $\phi_{s_j}$ converges smoothly, up to a subsequence, to $\tilde\phi\in\cC_{s_\infty}$ such that $\jj_v(\tau,\tilde\phi)\le\jj_v(\tau,\phi)$. By openness of $\cS$, we must therefore have $s_\infty=0$, \ie $\tilde\phi$ is a weighted csc metric.  

Now write $\phi$ as the $\dd_1$-limit of a sequence $\phi_i\in\cH^S$. For each $i$, the first part of the proof applied to $\tau:=\phi_i$ yields a weighted csc metric $\tilde\phi_i\in\cH^S$ such that $\jj_v(\phi_i,\tilde\phi_i)\le\jj_v(\phi_i,\phi)$. Thus $(\tilde\phi_i)$ is bounded in $\cE^1$. Since $\tilde\phi_i$ is weighted csc, it follows from~\cite[Theorem~2.1]{DJL2}  that is bounded in $C^\infty$, and we may thus assume that $\tilde\phi_i$ converges smoothly to a weighted csc metric $\tilde\phi\in\cH^S$. Now $\jj_v(\phi_i,\tilde\phi_i)\to\jj_v(\phi,\tilde\phi)$, $\jj_v(\phi_i,\phi)\to 0$, thus $\jj_v(\phi,\tilde\phi)=0$, which implies, as desired, that $\phi=\tilde\phi+\mathrm{cst}$ is smooth. 
\end{proof}

%
% 
%%%%%%%%%%%%%%%%%%%%%%%%%%%%%%%%%%%%%%%%%%%%
%
% 

%
% 
%
%%%%%%%%%%%%%%%%%%%%%%%%%%%%%%%%%%%%%%%%%%%%%%%%%%%%%%%%%%%%%%%%%%%
%
%
%
\end{document}